\newcommand{\1}{\mathbf{1}}
\newcommand{\pp}{ {\partial} }
\theoremstyle{plain}
\newtheorem{theorem}{Theorem}[section]
\newtheorem{lemma}[theorem]{Lemma}
\newtheorem{prop}[theorem]{Proposition}
\newtheorem{corollary}[theorem]{Corollary}
\newtheorem{remark}{Remark}[theorem]
\newcommand{\bremark}{\begin{remark} \em}
	\newcommand{\eremark}{\end{remark} }
\numberwithin{equation}{section}
\title[Singularity formation with multiple rates]{Co-existence of Type II blow-ups with multiple blow-up rates for five-dimensional heat equation  with critical nonlinear boundary conditions}
\author[J. Wei]{Juncheng Wei}
\author[Z. Ye]{Zikai Ye}
\author[X. Zeng]{Xiaoyu Zeng}
\author[Q. Zhang]{Qidi Zhang}
\address[J. Wei]{
	Department of Mathematics,
	Chinese University of Hong Kong,
	Shatin, NT, Hong Kong}
\email{\href{mailto:jcwei@math.ubc.ca}{jcwei@math.ubc.ca}}
\address[Z. Ye]{
	Department of Mathematics,
	University of British Columbia
	\newline\indent
	V6T 1Z2, Vancouver-BC, Canada}
\email{\href{mailto:yezikai@math.ubc.ca}{yezikai@math.ubc.ca}}
\address[Q. Zhang]{
Department of Mathematics, The University of Hong Kong, Hong Kong, China
	}
\email{\href{mailto:qdzhang@hku.hk}{qdzhang@hku.hk}}
\address[X. Zeng]{ Center for Mathematical Sciences,
  Wuhan University of Technology
  \newline \indent Wuhan 430070, P.R. China
	}
\email{\href{mailto:xyzeng@whut.edu.cn}{xyzeng@whut.edu.cn}}
\subjclass{35A21, 35B33, 35B44}
\keywords{Heat equation with critical boundary condition; Gluing method; Finite-time blow-up; Multiple rates at prescribed points}
\begin{document}

\begin{abstract}
    We consider the following five-dimensional heat equation with critical boundary condition
\begin{equation*}
	\partial_t u=\Delta u
	\mbox{ \ in \ }  \mathbb{R}_+^5\times (0,T) ,
	\quad
	-\partial_{x_5}u =|u|^\frac{2}{3}u   \mbox{ \ on \ } \pp \mathbb{R}^5_+ \times (0,T) .
\end{equation*}
Given $\mathfrak{o}$ distinct boundary points $q^{[i]} \in \partial \mathbb{R}_+^5$,
and
$\mathfrak{o}$ integers $l_i\in \mathbb{N}$ (possibly duplicated), $i=1,2,\dots, \mathfrak{o}$,
for $T>0$ sufficiently small,
we construct a finite-time blow-up solution $u$ with a type II blow-up rate $(T-t)^{-3l_i -3}$ for $x$ near $q^{[i]}$. This seems to be the first result of the co-existence of type II blowups with different blow-up rates. To accommodate highly unstable blowups with different blowup rates, we first develop a unified linear theory for the inner problem with more time decay in the blow-up scheme through restriction on the spatial growth of the right-hand side, and then use vanishing adjustment functions for deriving multiple rates at distinct points.
This paper is inspired by \cite{harada2019higher, FreeHMF2023, Zhang-Zhao2023}.

\end{abstract}
    \maketitle

{
  \hypersetup{linkcolor=black}
  \tableofcontents
}

\section{Introduction and main results}
In this paper we consider the heat equation with the critical boundary conditions
\begin{equation}\label{qd2023Nov21-1}
	\partial_t u=\Delta u
	\mbox{ \ in \ }  \mathbb{R}_+^{n}\times (0,T) ,
	\quad
	-\partial_{x_n}u =|u|^{\frac{2}{n-2} } u   \mbox{ \ on \ } \pp \mathbb{R}^{n}_+ \times (0,T)  ,
	\quad
	u(\cdot,0)=u_0  \mbox{ \ in \ }  \mathbb{R}^{n}_+ ,
\end{equation}
where the dimension $n\ge 3$, $\tilde{x}\in \mathbb{R}^{n-1}, x_n\in \mathbb{R}_+$, and $x=\left(\tilde{x},x_n\right) \in \mathbb{R}_+^n$.

The phenomena of finite-time singularity formation triggered by the superlinear right-hand side appear in many evolution equations. It is interesting to reveal how boundary conditions influence the behaviour of solutions for the evolution equations.

In this paper, we give a comprehensive study of the possible phenomenon of the finite-time blow-up of heat equations with the nonlinear boundary condition.
The regularity and existence problems of elliptic and parabolic equations with linear and nonlinear oblique derivatives are extensively studied. We refer to \cite[Chapter III-5, Chapter V-7]{parabolicBook1968}, \cite{ lieberman1996second, Lieberman2012CPAA, lieberman2013oblique-book}.

For an integer $n\ge 2$, $\alpha\in (0,1), p>0, -\infty\le t_0<t_1\le \infty$, the heat equation with algebraic power nonlinear boundary condition in $\mathbb{R}_+^n$ has the form
\begin{equation}\label{qd2023Nov19-1}
	\left\{
	\begin{aligned}
	&	\partial_t u =\Delta_{\tilde{x}} u + \frac{1-2\alpha}{x_n} \partial_{x_n} u + \partial_{x_n x_n} u
		=
		x_n^{-(1-2\alpha)} \nabla_x \cdot \left( x_n^{1-2\alpha} \nabla_x u \right)
		& &
		\mbox{ \ for \ } (x,t) \in \mathbb{R}_+^n \times (t_0,t_1) ,
		\\
	&	-\lim\limits_{x_n \downarrow 0} \frac{u(\tilde{x},x_n,t) - u(\tilde{x},0,t) }{x_n^{2\alpha} } =
 \lim\limits_{x_n \downarrow 0}
 \frac{-x_n^{1-2\alpha} \partial_{x_n} u(\tilde{x},x_n,t)}{2\alpha}
 =
 \left( \left|u\right|^{p-1} u \right) \left(\tilde{x},0,t\right)
		& &
		\mbox{ \ for \ }  (\tilde{x}, t) \in \mathbb{R}^{n-1} \times (t_0,t_1) ,
\\
&
u\left(x,t_0\right) = u_0\left(x\right)
& &
\mbox{ \ for \ } x\in \mathbb{R}_+^n ,
	\end{aligned}
\right.
\end{equation}
where the initial value is vacuum if $t_0 =-\infty$.
This equation appears in the extension form of $$\frac{\Gamma(-\alpha)}{4^\alpha \Gamma(\alpha)} \left(\partial_t-\Delta_{\tilde{x}} \right)^\alpha u(\tilde{x},0,t)=  (|u|^{p-1}u)(\tilde{x},0,t)$$ for $(\tilde{x},t) \in \mathbb{R}^{n-1} \times (-\infty,t_1)$ with the Gamma function $\Gamma(\cdot)$ according to \cite[Theorem 1.7, 1.8]{stinga2017regularity}.
\eqref{qd2023Nov21-1} is a special case of \eqref{qd2023Nov19-1}.

For $(t_0,t_1)=\mathbb{R}, p>0$,
\eqref{qd2023Nov19-1} is dilation and translation invariant in the sense that for $u$ solving \eqref{qd2023Nov19-1},
\begin{equation}\label{qd202325-1}
\lambda u \left(\lambda^{\frac{p-1}{2\alpha}} \left(\tilde{x} -\xi\right), \lambda^{\frac{p-1}{2\alpha}} x_n, \lambda^{\frac{p-1}{\alpha}} \left(t -s\right) \right)
\end{equation}
still satisfies \eqref{qd2023Nov19-1} with $\lambda>0, \xi\in \mathbb{R}^{n-1}$, $s\in \mathbb{R}$.
The energy associated with \eqref{qd2023Nov19-1} is
\begin{equation*}
	J\left[ u\right] :=
	\frac{1}{2}
	\int_{\mathbb{R}_+^n }   x_n^{1-2\alpha} \left| \nabla_x u \right|^2 dx
	-
	\frac{2\alpha}{p+1}
	\int_{\mathbb{R}^{n-1}}
	\left|u \right|^{p+1}\left(\tilde{x},0,t\right) d\tilde{x}  .
\end{equation*}
For $u$ with sufficient smoothness and spatial decay, $\frac{d}{d t} J\left[ u\right] = -\int_{\mathbb{R}_+^n} x_n^{1-2\alpha} \left|\partial_t u \right|^2 dx$.
Note that
\begin{equation*}
J\left[ \lambda u\left( \lambda^{\frac{p-1}{2\alpha} } \left(\tilde{x} -\xi\right), \lambda^{\frac{p-1}{2\alpha} } x_n, t \right) \right] = ( \lambda^{\frac{p-1}{2\alpha} } )^{\frac{4\alpha}{p-1}+2\alpha -(n-1) }
J\left[ u\left(\tilde{x},x_n,t\right) \right] .
\end{equation*}
For this reason, we define the energy critical exponent
\begin{equation*}
p_{\alpha,S} :=
\begin{cases}
	\infty
&
\mbox{ \ if \ }
n\le 1+2\alpha
\\
 \frac{n-1+2\alpha}{n-1-2\alpha}
 &
 \mbox{ \ if \ }
 n>1+2\alpha
\end{cases}
\end{equation*}
and
$p< (=, >)
 p_{\alpha, S}$ is called the energy subcritical (critical, supercritical) case.

\eqref{qd2023Nov19-1} has a close relationship with the well-known Fujita-type equation
\begin{equation}\label{2023Nov07-1}
	\partial_t u-\Delta u = |u|^{p-1} u \quad \mbox{ \ in \ } \Omega \times (0,T) ,
	\quad
u(\cdot,0) = u_0  \quad \mbox{ \ in \ } \Omega ,
\end{equation}
where $\Omega$ is domain in $\mathbb{R}^n$. There are three important exponents for \eqref{2023Nov07-1}: the Fujita exponent $p_F$, the Sobolev exponent $p_S$, and the Joseph-Lundgren exponent $p_{JL}$, which are defined respectively as
\begin{equation*}
	p_F:=1+\frac2n,\quad
	p_S:=\begin{cases}
		\infty ~&\mbox{ if }~n=1,2
		\\
		\frac{n+2}{n-2}~&\mbox{ if }~n\geq 3 ,
	\end{cases}
	\quad
	p_{JL}:=
	\begin{cases}
		\infty & \mbox{ if }~n\le 10
		\\
		1+\frac{4}{n-4-2\sqrt{n-1}}
		& \mbox{ if }~n\ge 11 .
	\end{cases}
\end{equation*}
The dynamics and colorful phenomena of \eqref{2023Nov07-1} are sensitive to the power $p$.

Given an initial value $u_0 \in L^\infty(\Omega)$, there is a unique solution of \eqref{2023Nov07-1} in $L^\infty(\Omega \times (0,t) )$ for $t\in (0,T)$ with a maximum life time $T\le \infty$.
We say that $u$ blows up in finite time if $T<\infty$, and in this case, $\limsup_{t\uparrow T} \| u\left(\cdot,t\right) \|_{L^\infty(\Omega)} =\infty$. For $p>1$, there are two types of blow-ups depending on the rates compared with the ODE solution of \eqref{2023Nov07-1}, $(p-1)^{-\frac{1}{p-1}} (T-t)^{-\frac{1}{p-1}}$,
\begin{align}
& \text{Type I}: \limsup_{t \uparrow T}(T-t)^{\frac{1}{p-1}}\|u(\cdot,t)\|_{L^\infty(\Omega) }<\infty , \label{typeI-def}
\\
& \text{Type II}: \limsup_{t \uparrow T }(T-t)^{\frac{1}{p-1}}\|u(\cdot,t)\|_{L^\infty(\Omega) }=\infty.  \label{typeII-def}
\end{align}
Indeed, once $u$ blows up at $T$, by comparison theorem, $\| u(\cdot,t)\|_{L^\infty(\Omega)} \ge (p-1)^{-\frac{1}{p-1}} (T-t)^{-\frac{1}{p-1}}$ with $p>1$ in bounded domain with zero Dirichlet boundary condition by \cite[Proposition 23.1]{Souplet19book}.
From the celebrated work \cite{Fujita66} of Fujita, \eqref{2023Nov07-1} possesses a global nontrivial solution $u\ge 0$ if and only if $p>p_F$.

For the subcritical case $1<p<p_S$, under the assumption of \eqref{typeI-def}, $1<p\le p_S$, Giga and Kohn \cite{giga1985asymptotically} gave the asymptotic behavior of solutions in the parabolic cylinder.
Giga and Kohn \cite{giga1987characterizing} proved \eqref{typeI-def} in a bounded convex domain or $\mathbb{R}^n$, provided that either $(i)$ $u_0\ge 0$ and $p$ satisfies $1<p<p_S$ or $n\le 2$, or
$(ii)$
$1<p<\frac{3n+8}{3n-4}$ or $n=1$.
Finally,
Giga, Matsui and Sasayama \cite{Giga04Indiana, Giga04-MathMethod} proved \eqref{typeI-def} in a bounded, convex domain or $\mathbb{R}^n$ without assumptions on the sign of solutions. Quittner \cite{Quittner2021-Duke} proved that $\partial_t u=\Delta u + u^p$ in $\mathbb{R}^n\times \mathbb{R}$ does not possess positive classical solutions and there is only type I blow-up for non-negative classical solutions of \eqref{2023Nov07-1} in arbitrary domain of $\mathbb{R}^n$ without convex assumption.
For $1<p\le p_S$, Souplet \cite[Theorem 1.1]{Souplet19-simplyproof} gave a much simpler proof when the initial value is bounded, nonnegative, radially symmetric, and nonincreasing.

For the critical case $p= p_S$,
Filippas, Herrero and Vel\'azquez \cite[Section 6]{FHV00} excluded the type II blow-up in the positive radial and monotonically decreasing class for $n\ge 3$.
The same result was obtained when the monotone assumption was removed by Matano and Merle \cite[Theorem 1.7]{MatanoMerle04},
and in higher dimensions $n\ge 7$ for positive solutions in bounded convex domains or $\mathbb{R}^n$ without radially symmetric assumptions by Wang and Wei \cite{wang2021refined}.

By formal asymptotic analysis in \cite{FHV00}, it is conjectured that there exist sign-changing type II blow-up solutions in lower dimensions $n=3,4,5,6$ and rigorous construction are given in a series of work \cite{Schweyer12JFA,type25D,ni4D,harada2019higher,harada6D,li2022slow}.
For $n=5$, del Pino, Musso, and the first author \cite{type25D} constructed the first rate.
Harada \cite{harada2019higher} adopted a self-similar variable and then adjusted eigenfunctions with cut-off functions to achieve the fast time decay for the outer problem at the blow-up point for deriving other rates. Zhang and Zhao \cite{Zhang-Zhao2023} first constructed finite-time blow-up solutions with multiple different rates by the gluing method. Zhang and Zhao first split the right-hand side of the outer problem into several parts. Second, they introduced the idea of using the solution of the heat equation to eliminate the derivatives for the influence between distinct blow-up points. Finally, applying Tylor expansion deduced the desired vanishing condition for the outer problem at the prescribed blow-up points.

In the case $p_S<p<p_{JL}$, Matano and Merle \cite{MatanoMerle04} excluded the type II blow-up in the radially symmetric class when $\Omega$ is a ball or $\mathbb{R}^n$ (under some additional requirement). In the non-radial case, del Pino, Musso, and the first author \cite{delMussoWei2021} constructed positive type II blow-up solutions in some special domain in $\mathbb{R}^n$ for $n\ge 7$, $p=\frac{n+1}{n-3} \in (p_S, p_{JL})$.
Du \cite[Theorem 1.3]{du2021finiteArx} gave counterpart classification results for finite-time blow-up solutions of rotational symmetric harmonic map heat flow in dimension $3\le n <7$.

For $p>p_S$, various characterizations of
type I and type II blow-ups are established in \cite{Merle09JFA}.

For $p=p_{JL}$, Seki \cite{Seki18} constructed a type II blow-up solution.

For $p>p_{JL}$, Herrero and Vel\'azquez \cite{HV1993} first constructed a type II blow-up solution in the radial case and Mizoguchi \cite{Mizoguchi04} gave a simpler proof.
Collot \cite{Collot2017} constructed non-radial type II blow-up solutions under some restrictions of the exponent $p>p_{JL}$.

Mizoguchi and Souplet \cite[Theorem 1]{Mizoguchi-Souplet19-typeILp} built the connection between type I blow-up and $L^{\frac{n(p-1)}{2}}$ norm blow-up.
We refer to the monograph \cite{Souplet19book} for various developments about Fujita-type equations.


Compared with the Fujita-type equations, there are much fewer studies about \eqref{qd2023Nov19-1} and variations of \eqref{qd2023Nov19-1}.    By Proposition \ref{2023Nov23-3-prop}, when $\left(t_0,t_1\right) = \left(-\infty,T\right)$, $p>1$, there exists a solution of \eqref{qd2023Nov19-1} with the form
\begin{equation}\label{qd2023Nov21-u1-def}
	u_1(x,t) = \frac{x_n^{2\alpha}}{4^\alpha \Gamma(\alpha)}
	\int_0^\infty
	\frac{ e^{-\frac{x_n^2}{4\tau}}  }
	{\tau^{1+\alpha} }
	\left(
	\pm
	C_{\alpha,p}  \right)
	\left(T-t+\tau\right)^{\frac{-\alpha}{p-1}}    d\tau ,
	\quad
	C_{\alpha,p} :=
	\left[
	\frac{1}{4^\alpha \Gamma(\alpha)}
	\int_1^\infty \frac{1-z^{\frac{-\alpha}{p-1}} }{\left(z-1\right)^{1+\alpha} } dz
	\right]^{\frac{1}{p-1}} >0
\end{equation}
independent of $\tilde{x}$ and satisfying
\begin{equation}\label{qd2023Nov21-2}
	u_1 \left(\tilde{x},0,t\right) =
	\pm C_{\alpha,p} \left(T-t\right)^{\frac{-\alpha}{p-1}}
	,
	\quad
	u_1(x,t) \sim
	\pm \left(
	\max\left\{	T-t , x_n^2 \right\} \right)^{\frac{-\alpha}{p-1}}
	\mbox{ \ for \ } (x,t)\in \overline{\mathbb{R}_+^{n} } \times \left(-\infty,T\right)
	,
\end{equation}
where ``$\sim$'' only depends on $\alpha,p$.
Compared with the time rate of \eqref{qd2023Nov21-u1-def},
we define the type I and type II blow-up for \eqref{qd2023Nov19-1} as
\begin{align}
	&	\text{Type I}: \limsup _{t \uparrow T}(T-t)^{\frac{\alpha}{p-1}}\|u(\cdot,t)\|_{L^\infty(\Omega) }<\infty ,
	\label{Boundary-typeI-def}
	\\
	&	\text{Type II}: \limsup _{t \uparrow T }(T-t)^{\frac{\alpha}{p-1}}\|u(\cdot,t)\|_{L^\infty(\Omega) }=\infty.
	\label{Boundary-typeII-def}
\end{align}

Fila and Quittner \cite[p.205]{fila1991blow} considered
\begin{equation*}
\partial_t u =\Delta u \mbox{ \ in \ } B\times (0,T),
\quad
\partial_{\nu} u =u^p \mbox{ \ on \ } \partial B \times (0,T),
\quad
u(\cdot,0) = u_0 \mbox{ \ in \ } \bar{B}
\end{equation*}
with an outer normal derivative $\partial_{\nu}$ and a unit ball $B\subset \mathbb{R}^n$ in the radial class and got the convergence result for blow-up solutions under some non-negative restrictions on $u_0$ and its derivatives.

As a counterpart of \cite{Quittner2021-Duke}, Quittner \cite{quittner2020optimal} proved that there is no positive classical bounded solution of \eqref{qd2023Nov19-1} with $\alpha=\frac{1}{2}$, $1<p<p_{\frac{1}{2}, S}$, and only type I blow-up \eqref{Boundary-typeI-def} can occur for positive classical solutions in a bounded smooth domain.
See
Quittner and Souplet \cite[Theorem 4.1]{Quittner-Souplet12} for type I blow-up results in the half space without the sign assumption about the solution.

We believe many results in Fujita-type equations can be realized in \eqref{qd2023Nov19-1} with more general nonlinear boundary conditions.

Now, we state our main results. By classical results of Li and Zhu \cite[Theorem 1.2]{LiZhu1995} (see also \cite{Escobar88, ChenLiOu2006}), for $n\ge 3$, all  nonzero nonnegative solutions to
\begin{equation}\label{U-eq}
	\Delta U =0
	\quad
	\mbox{ \ in \ } \mathbb{R}_+^n,
	\quad
	-\pp_{x_n} U = U^{\frac{n}{n-2} }
	\quad
	\mbox{ \ on \ } \pp \mathbb{R}_+^n
\end{equation}
 are given by
 \begin{equation}
 \label{Usol}
 c^{\frac{n-2}{2}} \big\{   |\tilde{x}- \tilde{x}_0|^2 + \left[x_n+ (n-2)^{-1}c \right]^2  \big\}^{-\frac{n-2}{2}}, \quad  x=(\tilde{x},x_n)\in \mathbb{R}_+^n, \ \tilde{x}_0\in \mathbb{R}^{n-1}, \ c>0.
 \end{equation}
We refer to \cite{Li-Li06-nonlinearYama, 03Li-Zhang-LiouHarn} for Liouville theorems for more Yamabe-type equations. Let
\begin{equation}\label{qd24Jan14-U}
	U(x)=
(n-2)^{\frac{n-2}{2}} \left[   |\tilde{x}|^2 + \left(1+x_n\right)^2  \right]^{-\frac{n-2}{2}}
	, \quad  x=(\tilde{x},x_n)\in \mathbb{R}_+^n.
\end{equation}

As in \cite[p.2977]{FHV00}, we set
\begin{equation}\label{eq-heat-1}
	\Theta_l(x,t)=-(T-t)^{l}
	\left(L_l^{\frac{n-2}{2}}(0)\right)^{-1} L_l^{\frac{n-2}{2}} \left(\frac{|x|^2}{4(T-t)} \right)  \quad \mbox{ \ for \ } n,l \in \mathbb{N}, \ n\ge 1
\end{equation}
with the modified Laguerre polynomials
\begin{equation*}
	L_l^{\frac{n-2}{2}}(r) := r^{-\frac{n-2}{2}} e^r \frac{d^l}{dr^l} \left(r^{\frac{n-2}{2}+l} e^{-r} \right) .
\end{equation*}
Obviously,
$L_l^{\frac{n-2}{2}}(r) = \sum_{i=0}^l c_i r^i$ with some constants $c_i \in \mathbb{R}$, and $c_0 = L_l^{\frac{n-2}{2}}(0) > 0$.
$\Theta_l(x,t)$
satisfies $\Theta_l(0,t) = -(T-t)^{l}$, and for $n\ge 2$,
\begin{equation}\label{eq-heat}
	\pp_t \Theta_l =\Delta  \Theta_l  \mbox{ \ in \ } \mathbb{R}^{n}_+ \times (-\infty,T) ,
	\quad
	-\pp_{x_n} \Theta_l =0
	\mbox{ \ on \ } \pp\mathbb{R}^{n}_+ \times (-\infty,T) ,
\end{equation}
where the first formula in \eqref{eq-heat} can be deduced by the second equation below \cite[p.2977 (A1)]{FHV00}.

Inspired by the work \cite{harada2019higher, FreeHMF2023, Zhang-Zhao2023}, we construct finite-time blow-up solutions at a finite number of prescribed blow-up points with multiple rates for \eqref{qd2023Nov21-1} with $n=5$. The main theorem is stated as follows.
\begin{theorem}\label{main}
	Consider
\begin{equation}\label{fractextend}
	\partial_t u=\Delta u
	\mbox{ \ in \ }  \mathbb{R}_+^5\times (0,T) ,
	\quad
	-\partial_{x_5}u =|u|^\frac{2}{3}u   \mbox{ \ on \ } \pp \mathbb{R}^5_+ \times (0,T) .
\end{equation}
Given an integer $\mathfrak{o}\ge 1$,  $\mathfrak{o}$ distinct boundary points $q^{[i]} \in \partial \mathbb{R}_+^5$,
and
$\mathfrak{o}$ integers $l_i\in \mathbb{N}$ (possibly duplicated), $i=1,2,\dots, \mathfrak{o}$, $\delta =  \min\limits_{1\leq i\neq j\leq \mathfrak{o}} |q^{[i]}-q^{[j]}| / 32$,
then
for $T>0$ sufficiently small, there exists a finite-time blow-up solution of the form
\begin{equation*}
	\begin{aligned}
		u(x,t) = \ &
		\sum_{i=1}^{\mathfrak{o}} \bigg\{
		3^{\frac{3}{2}} \mu_i^{-\frac{3}{2}}
		 \bigg[   \left|\frac{\tilde{x}-\xi^{[i]}}{\mu_i} \right|^2 + \left(1+\frac{x_5}{\mu_i}\right)^2  \bigg]^{-\frac{3}{2}}
		\eta\left(\frac{x-q^{[i]}}{2\delta} \right)
		+
		\Theta_{l_i}(x-q^{[i]},t)
		\eta\left(\frac{x-q^{[i]}}{\delta} \right)
		\\
		&
		+
		O\left(  |\ln{T}|^{2}(T-t)^{l_i} \eta\left( \frac{|x-q^{[i]}|}{2\sqrt{T-t}}  \right) \right)  \bigg\}
		+ O\left(|\ln{T}|^{-\frac{1}{5}} \langle x\rangle^{-2} \left(1-\sum_{i=1}^{\mathfrak{o}}\eta\left( \frac{|x-q^{[i]}|}{\sqrt{T-t}}  \right) \right)
  \right),
	\end{aligned}
\end{equation*}
where $x=(\tilde{x},x_5)$, $\Theta_{l_i}$ is defined in \eqref{eq-heat-1} with $n=5$, and $\mu_i = \mu_i(t) \in C^1( [0,T), \mathbb{R}_+), \xi^{[i]} = \xi^{[i]}(t) \in C^1( [0,T), \mathbb{R}^4)$ satisfy
\begin{equation*}
\mu_i =
\big( D_i + O(|\ln T|^{-\frac{1}{15}}) \big) (T-t)^{2l_i+2},
\quad
\left| (\xi^{[i]},0)-q^{[i]}\right|
\le |\ln T|^{-\frac{1}{15}}
 (T-t)^{2l_i+2}
\end{equation*}
with some constants $D_i>0$ independent of $T$.
The initial value $u(\cdot,0) \in C_c^{\infty}(\overline{\mathbb{R}_+^5})$ with the support in $\cup_{i=1}^{\mathfrak{o}} \overline{B_5^+(q^{[i]}, 4\delta )}$.
In particular, $\|u(\cdot,t)\|_{L^\infty(\overline{B_5^+(q^{[i]}, 4\delta )})} \sim (T-t)^{-3l_i -3}$ for $i=1,2,\dots, \mathfrak{o}$.

\end{theorem}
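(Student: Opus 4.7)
The plan is to build the solution by an inner--outer gluing construction adapted from \cite{harada2019higher, FreeHMF2023, Zhang-Zhao2023}, keeping track of how to simultaneously realize different type II rates at the distinct boundary points $q^{[i]}$. First I would fix the first approximation
\begin{equation*}
u_*(x,t)=\sum_{i=1}^{\mathfrak{o}}\bigl\{ \mu_i^{-3/2} U\bigl( (x-(\xi^{[i]},0))/\mu_i\bigr)\,\eta_{2\delta,i}(x) + \Theta_{l_i}(x-q^{[i]},t)\,\eta_{\delta,i}(x)\bigr\},
\end{equation*}
where $\eta_{r,i}(x)=\eta(|x-q^{[i]}|/r)$ is a smooth cutoff, $U$ is the bubble in \eqref{qd24Jan14-U} with $n=5$, and the modulation parameters $\mu_i(t)>0$, $\xi^{[i]}(t)\in\mathbb{R}^4$ are to be found as small perturbations of the predicted leading orders $\mu_i\sim D_i(T-t)^{2l_i+2}$, $\xi^{[i]}(0)=q^{[i]}$. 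Compute the error $\mathcal{E}[u_*]:=\partial_t u_* - \Delta u_*$ in the bulk and the boundary error $-\partial_{x_5}u_* - |u_*|^{2/3}u_*$ on $\partial\mathbb{R}^5_+$. The role of $\Theta_{l_i}$ is precisely to absorb the leading boundary value $U^{5/3}(\tilde x/\mu_i,0)$ after integration against the heat kernel, as in \cite{harada2019higher}, producing a residual that decays faster than the naive size for a cleverly chosen $\mu_i$.

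Then I would look for the true solution $u=u_*+\phi$ and decompose $\phi=\sum_i \tilde\eta_i \phi_i^{\mathrm{in}}(\tfrac{x-(\xi^{[i]},0)}{\mu_i},t)+\psi$ with inner variables supported on $|x-q^{[i]}|\lesssim\mu_i^{1/2}$ (say) and an outer $\psi$ defined on the whole half-space. This produces a coupled system of $\mathfrak{o}$ inner problems posed on $\mathbb{R}^5_+\times(0,T)$ after the self-similar rescaling $y=(x-(\xi^{[i]},0))/\mu_i$, $\tau=\tau_i(t)$, each of the form
\begin{equation*}
\mu_i^2\partial_\tau\phi_i^{\mathrm{in}}=\Delta_y\phi_i^{\mathrm{in}}\ \text{in }\mathbb{R}^5_+,\qquad -\partial_{y_5}\phi_i^{\mathrm{in}}=\tfrac{5}{3}U^{2/3}\phi_i^{\mathrm{in}}+\mathcal{H}_i[\psi,\mu_i,\xi^{[i]}]\ \text{on }\partial\mathbb{R}^5_+,
\end{equation*}
plus an outer equation $\partial_t\psi=\Delta\psi+\mathcal{G}[\phi^{\mathrm{in}}_\bullet,\mu_\bullet,\xi^{[i]}_\bullet]$ with a nonlinear boundary condition of order $U^{2/3}\psi\,\eta_{\delta,i}$. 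Because $U^{2/3}$ appears as a Hardy-type potential with kernel generated by the scaling/translation zero modes of \eqref{U-eq}, solvability of each inner problem requires orthogonality of $\mathcal{H}_i$ against these modes; these orthogonality conditions furnish the ODE system that determines $\mu_i(t)$ and $\xi^{[i]}(t)$ and forces the rate $\mu_i\sim (T-t)^{2l_i+2}$ (matching $\Theta_{l_i}$ through the boundary integral as in \cite{harada2019higher}).

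The two new ingredients advertised in the abstract enter next. For the inner linear theory I would prove a unified existence/estimate statement: given any right-hand side whose spatial growth on $\mathbb{R}^5_+$ is capped (say, polynomially of a fixed order) after subtracting a finite projection onto the resonant modes, the Neumann-heat flow for the linearized boundary operator $-\partial_{y_5}-\tfrac{5}{3}U^{2/3}$ admits a solution with prescribed strong time-decay $\tau^{-\sigma}$, uniformly in the index $i$. Restricting the allowed spatial growth is what buys the additional time decay needed for large $l_i$, and the uniformity across $i$ is what allows the same scheme to handle different $l_i$'s simultaneously. For the outer problem I would introduce vanishing adjustment functions: by Taylor expanding $\psi$ at each $q^{[i]}$ up to order depending on $l_i$ and subtracting off a localized polynomial whose coefficients are determined by derivatives of $\psi$ at $q^{[i]}$, I can ensure that the outer correction enters each inner problem with the spatial decay/vanishing demanded by the inner linear theory. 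The adjustment is made possible by convolving residuals against the heat kernel, which smooths out the coupling between distant blow-up points and, crucially, lets me pretend in first approximation that the bubbles at different $q^{[j]}$, $j\ne i$, contribute nothing to the inner problem at $q^{[i]}$, because their contribution can be absorbed into the adjustment.

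Once both linear theories are in place, I would close the argument by a Schauder/contraction fixed point on a weighted Banach space whose norms track the rates $(T-t)^{l_i}$ inside the parabolic region $|x-q^{[i]}|\lesssim\sqrt{T-t}$, the rate $|\ln T|^{-1/5}\langle x\rangle^{-2}$ in the far field, and refined decay $\mu_i^{-3/2}\times(\text{inner weight})$ in the self-similar inner regions. The modulation parameters are solved as a coupled nonlinear ODE for $(\mu_i,\xi^{[i]})$ with a small source, yielding $\mu_i = (D_i+O(|\ln T|^{-1/15}))(T-t)^{2l_i+2}$ and $|\xi^{[i]}-q^{[i]}|\lesssim |\ln T|^{-1/15}(T-t)^{2l_i+2}$. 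The initial data is then cut off and smoothed inside $\cup_i \overline{B_5^+(q^{[i]},4\delta)}$. The main obstacle is the compatibility between the different time-decay requirements coming from different $l_i$: the inner problem at a fast point (large $l_i$) needs the outer data to vanish to high order there, while simultaneously the outer data has to carry only mild decay near a slow point; this is exactly what the restricted-spatial-growth linear theory combined with the vanishing adjustment machinery is designed to reconcile, and most of the analysis should reduce to verifying that the adjusted residuals live in the correct weighted spaces and that the $\mathfrak{o}$ orthogonality conditions decouple to leading order into $\mathfrak{o}$ independent ODEs of the same structural type as in \cite{harada2019higher}.
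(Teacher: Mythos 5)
Your plan is correct and follows essentially the same route as the paper: the same first approximation (bubbles plus $\Theta_{l_i}$ plus cutoffs), the same inner--outer gluing with orthogonality conditions driving $(\mu_i,\xi^{[i]})$, the same two novel ingredients (inner linear theory with restricted spatial growth for uniform strong time decay, and localized heat-flow ``vanishing adjustment'' functions to decouple the Taylor data of the outer solution at the different $q^{[i]}$), and a Schauder fixed point at the end. One small conceptual misstatement: $\Theta_{l_i}$ does not ``absorb the boundary value $U^{5/3}$'' (that is exactly cancelled because $U$ solves the steady problem); rather $\Theta_{l_i}$ enters the boundary error through the linear term $\tfrac{5}{3}U^{2/3}\Theta_{l_i}$, and it is the projection of this term onto the scaling mode $Z_n$ that produces the ODE $\dot\mu_i\mu_i^{1/2}\sim -(T-t)^{l_i}$ forcing $\mu_i\sim(T-t)^{2l_i+2}$.
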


\begin{remark}
The cut-off functions
$\eta(\frac{x-q^{[i]}}{2 \delta}), \eta(\frac{x-q^{[i]}}{\delta})$ are for the purpose of avoiding the influence between distinct blow-up points and
the multiple relationship between $2\delta$ and $\delta$ is not essential.
The radial property of $\eta(x)$ helps simplify the error on the boundary a lot.
See \eqref{qd24Jan13-3}.
\end{remark}
\begin{remark}
It is possible to add the type I blow-up rate in a solution with multiple rates.
\end{remark}

The proof relied on the parabolic gluing method established in the pioneering work \cite{Green16JEMS, 2020HMF}. This method has the powerful ability to analyze concentration phenomena in finite-time and infinite-time cases and
localize the blow-up points to achieve various concentration phenomena. We refer to \cite{2dEuler2020, 3dEuler-filament2022, 17halfHMF, 18type2Yamabeflow}.

We develop the linear theory for the inner problem with more time decay in the blow-up scheme through spatial restriction on the right-hand sides. We illustrate necessary norms before stating the next proposition.

Given a non-negative function $\ell(\tau)$, we define the following norms for the linear theory for the inner problem. For $-\infty < \tau_0<\tau_1\le \infty$, $\varsigma\in (0,1)$,
\begin{equation}\label{qd24Jan25-10}
	\begin{aligned}
		&	\|g\|_{\sigma,2+a,\ell(\tau),\mathbb{R}^n_+,\tau_0,\tau_1} :=
		\inf \left\{
		C \ | \
		|g(y,\tau)| \le C \tau^{\sigma} \langle y\rangle^{-2-a} \1_{|y|\le \ell(\tau)} \mbox{ \ for \ } y\in \mathbb{R}_+^n, \tau_0<\tau<\tau_1
		\right\} ,
		\\
		&	\|h\|_{\sigma,1+a,\ell(\tau),\mathbb{R}^{n-1},\tau_0,\tau_1} :=
		\inf \left\{
		C \ | \
		|h(\tilde{y},\tau)| \le C \tau^{\sigma} \langle \tilde{y}\rangle^{-1-a} \1_{|\tilde{y}|\le \ell(\tau)}  \mbox{ \ for \ } \tilde{y} \in \mathbb{R}^{n-1},   \tau_0 < \tau <\tau_1
		\right\} ,
		\\
		&
		Q((\tilde{y},\tau),r) :=
		\left\{ (\tilde{z},s)\in \mathbb{R}^{n-1} \times (\tau_0,\tau_1) \ | \ |\tilde{z}-\tilde{y}|<r, \tau-r^2<s<\tau \right\} ,
		\\
		&
		[h]_{C^{\varsigma,\frac{\varsigma}{2}} ( Q((\tilde{y},\tau),|\tilde{y}|/2) ) }
		:=\sup\limits_{(\tilde{y}^{[1]} ,\tau_1), (\tilde{y}^{[2]} ,\tau_2)
			\in Q((\tilde{y},\tau),|\tilde{y}|/2) }
		\frac{\left| h(\tilde{y}^{[1]} ,\tau_1) - h(\tilde{y}^{[2]} ,\tau_2) \right| }{\left( \max\{|\tilde{y}^{[1]}-\tilde{y}^{[2]}| , |\tau_1-\tau_2|^{1/2} \} \right)^{\varsigma} }  ,
		\\
		&	[h]_{\sigma,1+a,\ell(\tau),\varsigma,\mathbb{R}^{n-1},\tau_0,\tau_1} :=
		\inf \left\{
		C \ | \
		[h]_{C^{\varsigma,\frac{\varsigma}{2}} (Q((\tilde{y},\tau),|\tilde{y}|/2) ) }
		\le C \tau^{\sigma} \langle \tilde{y}\rangle^{-1-a-\varsigma} \1_{|\tilde{y}|\le 2\ell(\tau) }  \mbox{ \ for \ } \tilde{y} \in \mathbb{R}^{n-1},   \tau_0 < \tau <\tau_1
		\right\} ,
		\\
		&
		\|h\|_{\sigma,1+a,\ell(\tau),\varsigma,\mathbb{R}^{n-1},\tau_0,\tau_1} :=
		\|h\|_{\sigma,1+a,\ell(\tau),\mathbb{R}^{n-1},\tau_0,\tau_1}
		+
		[h]_{\sigma,1+a,\ell(\tau),\varsigma,\mathbb{R}^{n-1},\tau_0,\tau_1} .
	\end{aligned}
\end{equation}

We arrive at the following linear theory for the inner problem:
\begin{prop}\label{prop-23Oct24-1}
	
	Given an integer $n\ge 5$, consider
	\begin{equation}\label{eqn-23Oct24-1}
		\pp_\tau\phi=\Delta \phi+ g
		\mbox{ \ in \ }  \mathbb{R}^{n}_+\times (\tau_0, \tau_1),
		\quad
		-\pp_{y_n} \phi =
		\frac{n}{n-2} U^{\frac{2}{n-2}}\phi+h
		\mbox{ \ on \ }\pp \mathbb{R}^n_+\times(\tau_0,\tau_1) ,
		\quad
		\phi(y,\tau_0)= C_{\phi} \tilde{Z}_0(y)
		\mbox{ \ in \ }  \mathbb{R}^{n}_+ .
	\end{equation}
	Suppose that $1\le \tau_0 <\tau_1\le \infty$, $\ell(\tau)$ satisfies $C_\ell^{-1} \tau^p\le \ell(\tau) \le C_\ell \tau^p$ with a constant $C_\ell\ge 1$,
	\begin{equation}\label{23Oct24-1}
		2<a<n-2,
		\quad
		a^{-1}<p\le \frac{1}{2},
		\quad
		\iota\in (0,\frac{1}{4}),
		\quad
		\sigma -p a  +2\iota n > 0  ,
		\quad
		\varsigma\in (0,1),
	\end{equation}    $\|g\|_{\sigma,2+a,\ell(\tau),\mathbb{R}^n_+,\tau_0,\tau_1} <\infty$, $\|h\|_{\sigma,1+a,\ell(\tau),\varsigma,\mathbb{R}^{n-1},\tau_0,\tau_1}<\infty$, and
	$g=g(y,\tau)$, $h=h(\tilde{y},\tau)$ satisfy the orthogonality conditions
	\begin{equation}\label{gh-ortho-Oct20}
		\int_{\mathbb{R}^n_+} g(y,\tau)Z_j(y)dy
		+
		\int_{\mathbb{R}^{n-1}} h(\tilde{y},\tau) Z_j(\tilde{y},0) d \tilde{y} =0
		\quad
		\mbox{ \ for \ } \tau\in (\tau_0, \tau_1),
		\quad j=1,2,\dots,n
	\end{equation}
 with $Z_j$ given in \eqref{qd2023Nov28-1},
	$\tilde{Z}_0(y) \in C^2( \mathbb{R}^{n}_+ )\cap C^{1,\varsigma}(\overline{\mathbb{R}^{n}_+ })$  satisfies
\begin{equation}\label{til-Z0-Oct20}
		\tilde{Z}_0(y)=0 \mbox{ \ for \ }
		|y|\ge C_0,
		\quad
		\int_{\mathbb{R}_+^n}  \tilde{Z}_0  Z_j dy = 0 \quad \mbox{ \ for \ } j=1,2,\dots,n,
		\quad
		\int_{\mathbb{R}_+^n}  \tilde{Z}_0  Z_0 dy \ne 0
	\end{equation}
	with $Z_0$ given in \eqref{Z0-eq} and a constant $C_0>0$,
	then there exist $\phi=\phi[g,h]$ and a constant $C_{\phi}=C_{\phi}[g,h]$ as linear mappings of $g,h$ solving
	\eqref{eqn-23Oct24-1} and satisfying
	\begin{equation}\label{23Oct24-2}
		\int_{\mathbb{R}^n_+}\phi(y,\tau)Z_j(y)dy=0
		\quad \mbox{ \ for \ }  \tau\in (\tau_0, \tau_1),
		\quad
		j=1,2, \dots,n,
	\end{equation}
and
	\begin{equation}\label{23Oct24-3}
		\begin{aligned}
			&
			|\phi|\lesssim
			\Big(
			\tau^\sigma \langle y \rangle^{-a}
			\1_{|y|\le \ell(\tau) }
			+
			\tau^{\sigma} \ell^{-a}(\tau) e^{- \iota \frac{|y|^2}{\tau}}
			\1_{|y|> \ell(\tau) }
			\Big) \Big( \|g\|_{\sigma,2+a,\ell(\tau),\mathbb{R}^n_+,\tau_0,\tau_1} + \|h\|_{\sigma,1+a,\ell(\tau),\mathbb{R}^{n-1},\tau_0,\tau_1} \Big),
			\\
			&
			|\nabla\phi|  \lesssim
			\Big(
			\tau^\sigma \langle y \rangle^{-1-a}
			\1_{|y|\le \ell(\tau) }
			+
			\tau^{\sigma} \ell^{-a}(\tau) |y|^{-1}
			\1_{\ell(\tau)<|y|\le \tau^{\frac{1}{2}} }
			+
			\tau^{\sigma-\frac{1}{2}} \ell^{-a}(\tau)  e^{-  \tilde{\iota}  \frac{|y|^2}{\tau}}
			\1_{ |y|> \tau^{\frac{1}{2}} }
			\Big)
			\\
			&
			\qquad  \qquad  \times
			\Big( \|g\|_{\sigma,2+a,\ell(\tau),\mathbb{R}^n_+,\tau_0,\tau_1} + \|h\|_{\sigma,1+a,\ell(\tau),\varsigma,\mathbb{R}^{n-1},\tau_0,\tau_1} \Big),
			\\
			&
			|C_{\phi}| \lesssim \tau_0^\sigma \Big( \|g\|_{\sigma,2+a,\ell(\tau),\mathbb{R}^n_+,\tau_0,\tau_1} + \|h\|_{\sigma,1+a,\ell(\tau),\mathbb{R}^{n-1},\tau_0,\tau_1} \Big)
		\end{aligned}
	\end{equation}
	with a constant $\tilde{\iota} \in (0,\iota)$, where all ``$\lesssim$'' are independent of $\tau_0, \tau_1, g, h$.
	
\end{prop}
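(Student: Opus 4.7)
The plan is to establish this proposition in three stages: an a priori pointwise bound by a blow-up/contradiction argument, a gradient estimate via parabolic Schauder theory, and existence by a finite-horizon approximation. The orthogonality conditions \eqref{gh-ortho-Oct20} are compatible with the kernel $\mathrm{span}\{Z_1,\dots,Z_n\}$ of the linearized operator around the bubble $U$, because a direct computation of $\frac{d}{d\tau}\int_{\mathbb{R}_+^n}\phi Z_j\,dy$, using integration by parts together with the stationary boundary equation satisfied by each $Z_j$, reduces to $\int g Z_j\,dy+\int h Z_j\,d\tilde y$, which vanishes. Combined with $\int \tilde Z_0 Z_j\,dy=0$ from \eqref{til-Z0-Oct20}, this propagates \eqref{23Oct24-2} from $\tau_0$ to the whole interval. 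The scalar $C_\phi$ is then uniquely determined by demanding that the $Z_0$-component of $\phi$ stay inside the weighted class, a scalar equation solvable precisely because $\int \tilde Z_0 Z_0\,dy\ne 0$.

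For the a priori bound \eqref{23Oct24-3} I would argue by contradiction. Take sequences $(\phi_k,g_k,h_k,C_{\phi,k})$ satisfying the hypotheses with the norms of $g_k,h_k$ and $|C_{\phi,k}|/\tau_{0,k}^\sigma$ tending to zero, while $\phi_k$ is normalized so that the pointwise bound in the first line of \eqref{23Oct24-3} achieves value $1$ at some $(y_k,\tau_k)$. One then distinguishes four regimes according to the location of $y_k$: (i) $|y_k|$ bounded, (ii) $|y_k|\to\infty$ with $|y_k|\ll\ell(\tau_k)$, (iii) $|y_k|\sim\ell(\tau_k)$, (iv) $|y_k|\gg\ell(\tau_k)$. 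In (i)--(iii) a parabolic rescaling around $(y_k,\tau_k)$ yields a nontrivial bounded ancient limit $\phi_\infty$ of either the full linearized half-space problem (in (i)) or of a free heat equation on $\mathbb{R}^n$ or on $\mathbb{R}^n_+$ with Neumann boundary (in (ii), (iii), where $U^{2/(n-2)}$ becomes negligible after rescaling); the inherited orthogonalities together with the nondegeneracy of $U$ from \cite{LiZhu1995} and standard Liouville rigidity for the heat equation force $\phi_\infty\equiv 0$, a contradiction. In the outer regime (iv), both $g$ and $h$ are supported inside $\{|y|\le \ell(\tau)\}$, so $\phi$ solves an essentially free heat equation there; one compares with the explicit Gaussian barrier $\tau^\sigma\ell^{-a}(\tau)e^{-\iota|y|^2/\tau}$, whose dominance is guaranteed by the structural condition $\sigma-pa+2\iota n>0$ in \eqref{23Oct24-1}, tuned to make $(\partial_\tau-\Delta)$ of the barrier nonnegative and to match the inner estimate on the transition surface $\{|y|=\ell(\tau)\}$.

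The gradient estimate (second line of \eqref{23Oct24-3}) follows by applying boundary parabolic Schauder theory to $\phi$ at unit scale around each point $(y,\tau)$; this is precisely where the Hölder seminorm $[h]_{\sigma,1+a,\ell,\varsigma,\ldots}$ in \eqref{qd24Jan25-10} is required, since oblique boundary Schauder estimates need Hölder boundary data. For existence, I would solve \eqref{eqn-23Oct24-1} on each truncated interval $(\tau_0,T_0)$ with $T_0<\tau_1$ by standard parabolic semigroup theory, determine $C_\phi=C_\phi(T_0)$ uniquely by enforcing the vanishing of the $Z_0$-component of $\phi(\cdot,T_0)$ (a scalar equation solvable by $\int\tilde Z_0 Z_0\ne 0$), and pass $T_0\nearrow\tau_1$ using the uniform a priori bound. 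The main obstacle will be regime (iii) of the blow-up argument: the indicator $\mathbf{1}_{|y|\le\ell(\tau)}$ in the norms \eqref{qd24Jan25-10} breaks translation invariance under parabolic rescaling, so matching the inner polynomial decay $\langle y\rangle^{-a}$ to the outer Gaussian envelope across $\{|y|=\ell(\tau)\}$ requires a delicate choice of barrier; here the restrictions $a^{-1}<p\le 1/2$ from \eqref{23Oct24-1} become essential, since $p\le 1/2$ forces $\ell(\tau)\ll\tau^{1/2}$, keeping the transition well inside the parabolic self-similar region, while $pa>1$ makes the inner bound at $|y|=\ell(\tau)$ small enough to be absorbed by a Gaussian of the prescribed size.
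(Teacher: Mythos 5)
Your high-level architecture is the right one, and the two framing ideas match the paper: you determine $C_\phi$ by killing the exponentially growing $Z_0$-component (the paper implements this explicitly via the substitution $\phi=\phi_1+c(\tau)\tilde Z_0$, solving a scalar ODE for $c(\tau)$ and setting $C_\phi=c(\tau_0)$), and the bound \eqref{23Oct24-3} is proved by contradiction followed by a rescaling-and-compactness argument, with the gradient estimate obtained afterwards from parabolic Schauder theory as in Lemma \ref{24Jan08-1-lem}. Where your plan diverges, and where I think it would actually break down, is in the treatment of the outer spatial regimes.

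You split the contradiction sequence according to the location of $y_k$ and propose genuine parabolic rescalings around $(y_k,\tau_k)$ in regimes (ii) $1\ll|y_k|\ll\ell(\tau_k)$ and (iii) $|y_k|\sim\ell(\tau_k)$, invoking Liouville rigidity for the limit. This is the strategy of earlier works (cf. \cite[Proposition A.2]{2022-gluingsurvey}), but the present paper explicitly discards it: it notes that the scaling argument when the spatial variable goes to infinity "is not necessary" and replaces regimes (ii)--(iv) entirely by the direct convolution estimates of Appendix \ref{convo-sec} (Lemmas \ref{2023Oct01-1-lem}, \ref{bd-ann-23Lem}, \ref{lem:far-23Oct16}). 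Those estimates show $|\mathcal{T}_1[g_k]|, |\mathcal{T}_2[h_k]|\lesssim o_k(1)\,w(y,\tau)$ and, crucially, that $|\mathcal{T}_2[U^{2/(n-2)}\phi_k]|$ gains an extra factor $\langle y\rangle^{-\epsilon}$ or $\tau^{-p\epsilon}$ over the weight $w$; feeding these back into the representation formula forces $|y_k|$ to remain bounded. Your regime-by-regime plan has two concrete problems that the convolution approach sidesteps. First, in (ii) the natural bound $\tau_k^\sigma\langle y\rangle^{-a}$, after translation by $y_k$ and dilation by $|y_k|$, is not uniformly comparable to its value at the new origin over fixed compact sets — on the side facing the original origin the normalized bound blows up — so you cannot extract a locally bounded limit without additional control. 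Second, the orthogonality conditions against $Z_1,\dots,Z_n$ are localized near the origin and simply do not pass to a limit taken after translating to infinity; you would not have any rigidity beyond the bare Liouville theorem for the free heat equation, which does not kill nontrivial constants. You flag regime (iii) yourself as "the main obstacle" requiring "a delicate choice of barrier," but you don't supply it, and the indicator $\mathbf{1}_{|y|\le\ell(\tau)}$ makes the matching non-self-similar exactly as you fear. You would need either the paper's convolution lemmas or an equivalent substitute to close that gap; as written, the proof of the claimed a priori bound in the outer regimes is missing.

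One smaller point: the paper's reduction to the auxiliary Proposition \ref{prop-23Oct04-1} is done so that the contradiction sequence enjoys the full orthogonality, including against $Z_0$, from the start (via the $c(\tau)$-shooting). This ensures the ancient limit obtained after the time-rescaling satisfies all of $j=0,1,\dots,n$, which is what lets the energy identity and the nondegeneracy Lemma \ref{qd23Nov28-2-lem} kill it. Your description of the orthogonality propagation is in the right spirit but leaves the $j=0$ case implicit; in the paper it is the one case that is not a trivial ODE with zero right-hand side and is disposed of only because the whole problem has been shifted into the orthogonal complement of $Z_0$ beforehand.
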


\begin{remark}\label{qd24Jan14-9-rem}
	By Lemma \ref{z2023Nov10-lem}, there exists $\tilde{Z}_0(y) \in  C^{\infty}(\overline{\mathbb{R}^{n}_+ })$ satisfying the assumption \eqref{til-Z0-Oct20}.
\end{remark}

\begin{remark}
The orthogonality conditions \eqref{gh-ortho-Oct20} are initiated from \cite[Proposition 4.1, 4.2]{FreeHMF2023}.
\end{remark}

\textbf{Main difficulties and novelties.}

$\bullet$
Due to the lack of an ODE method for the linearized equations around steady state solution $U(x)$, the strategy for deriving the linear theory for the inner problem in \cite[Section 7]{Green16JEMS} does not work here. Instead, we resort to the parabolic blow-up argument to get a desired linear theory. The parabolic blow-up argument was first introduced in the linear theory of mode $1$ in \cite[Section 7.3]{2020HMF}, and this argument works for many heat flows of Schr\"odinger operator with nondegenerate property. See \cite[Section 5.1]{2019-fracInfinite} for instance. In the proof of \cite[Proposition A.2]{2022-gluingsurvey}, the authors modified the blow-up argument when the spatial variable goes to infinity.

For $\mathcal{T}_{\mathbb{R}^n }\left[\cdot\right]$ defined in \eqref{zz24Jan02-3} with $n>2$,
if $b \le 2$, $\lim\limits_{t\to \infty}t^{-a}\mathcal{T}_{\mathbb{R}^n }\left[ t^a \langle x \rangle^{-b} \right](0,t) =\infty$ by the similar lower bound estimate in \cite[Lemma A.1]{infi4D};
if $b\ge n$ or $a \le -1$, $\lim\limits_{t\to \infty}
\big\{
\big[(t^a \langle x \rangle^{2-b} )^{-1} \mathcal{T}_{\mathbb{R}^n }\left[ t^a \langle x \rangle^{-b} \right](x,t) \big]\big|_{|x|=\sqrt{t}}
\big\} =\infty $ by
similar lower bound estimates in  \cite[Lemma A.1]{infi4D} and \cite[Lemma A.2]{infi4D} respectively.
Thus
the parameter about the growth of time in \cite[Proposition A.2]{2022-gluingsurvey} should be optimal
in the algebraic power sense.

To obtain better time decay for the linear theory in a wider application, we impose restrictions on the spatial growth of the right-hand side. Additionally, we discover that the scaling argument when the spatial variable goes to infinity is not necessary in previous works. Instead, we utilize convolution estimates directly.

$\bullet$
For the outer problem, Harada \cite{harada2019higher,harada6D} used a linear combination of eigenfunctions corresponding $\Delta_z -\frac{z}{2} \cdot\nabla_z$ with cut-off functions as the modified functions to adjust the time vanishing at the blow-up point in the outer problem.
However, the algebraic growth of these eigenfunctions and the rough cut-off functions cause complexity in calculation.

We realized that the modified functions
need not be eigenfunctions corresponding $\Delta_z -\frac{z}{2} \cdot\nabla_z$.
The key point is that the modified functions are not orthogonal with these eigenfunctions, so we have more flexibility in the choice of modified functions and simplify the proof for the outer problem. See Corollary \ref{orth-lem} and the application in Section \ref{section-outer}.

$\bullet$
In order to derive multiple rates at distinct points, we introduce vanishing adjustment functions to eliminate derivatives at arbitrarily prescribed finitely many points.
 This method works for more general parabolic equations. See Proposition \ref{lemma-cons1}.

The method of adjusting the initial value for improving the vanishing condition of the outer problem was implemented in \cite[p.386]{2020HMF} for the case of one bubble with the first rate. Zhang and Zhao \cite[p.8]{Zhang-Zhao2023} introduced the idea of using the heat equation with some special initial value to improve the vanishing at the blow-up points for multiple rates. We make this process clearer and summarize it as the vanishing adjustment functions. A counterpart for the Schr\"odinger equation is given in \cite[Lemma 4.1]{BourgainWang97}.

$\bullet$
The compactness argument for the parameters in the initial value in the fixed-point argument in Subsection \ref{qd24Jan29-1-sec} deduces a smooth initial value $u(x,0)$ with compact support. This can not be derived from the parabolic regularity theory. Although for a fixed $t\in (0, T)$, $u(x,t)$ is smooth, the support of $u(x,t)$ is usually not compact.

$\bullet$
Convolution estimates in Appendix \ref{convo-sec} are in the spirit of  \cite[Appendix A]{infi4D} (See also \cite[Appendix B.1]{sun2021bubble}). We combine the comparison theorem to approach the best constant in the power of the exponential term. Appendix \ref{convo-sec} is prepared for the proof of Proposition \ref{prop-23Oct04-1} and also establishes the foundation for the long-time dynamics of \eqref{qd2023Nov21-1}.

{\bf Notations:}
\begin{itemize}
\item
Denote $\mathbb{R}_+ =\left(0,\infty\right)$,  $\overline{\mathbb{R}_+} =\left[0,\infty\right)$,  $\mathbb{R}_+^n = \mathbb{R}^{n-1} \times \mathbb{R}_+$,
	$\overline{\mathbb{R}_+^n} = \mathbb{R}^{n-1} \times \overline{\mathbb{R}_+}$.

\item
Denote the natural number set $\mathbb{N} = \left\{ 0,1,\dots \right\}$, the set of natural numbers n-tuples
 $\mathbb{N}^n = \underbrace{\mathbb{N}\times \mathbb{N} \times \dots \times \mathbb{N} }_{\mbox{ \rm n multiplicities} }$.
For $C_1, C_2 \in \mathbb{R}$, denote $C_1 \mathbb{N} + C_2 = \left\{ C_1 i + C_2 \ | \ i\in \mathbb{N} \right\}$. In particular, $2\mathbb{N} \ (2\mathbb{N} + 1)$ is the set of even (odd) numbers.

\item
For a $m\times n$ matrix $A=\left(A_{ij}\right)_{m\times n}$, denote $\| A \|_{\ell_1} =\sum_{i=1}^{m} \sum_{j=1}^n |A_{ij}|$.

\item
For $x\in \mathbb{R}^n$, denote the Japanese bracket $\langle x\rangle =\sqrt{1+|x|^2}$.

\item
Given $q\in \mathbb{R}^n$, $r>0$, denote $B_{n}(q,r) = \left\{ x\in \mathbb{R}^n \ | \ \left|x-q\right|<r \right\}$, $B_{n}^+(q,r) = B_{n}(q,r) \cap \mathbb{R}_+^n$.

\item For any $x\in \mathbb{R}^n$, we use $\tilde{x}$, $x_n$  to denote the first $n-1$ components and the nth component of $x$ respectively.

\item
$C(a,b,\dots)$ denotes a constant only depending on parameters $a,b,\dots$.

\item We write $a\lesssim b$ (resp. $a \gtrsim b$) if there exists a  constant $C > 0$ independent of $T$ such that $a \le  Cb$ (resp. $a \ge  Cb$). Set $a \sim b$ if $b \lesssim a \lesssim b$.
Given a non-negative function $g$, $O(g)$ denotes some function $f$ satisfying $|f|\lesssim g$.

\item We write $a\lesssim_{\alpha, \beta,\dots} b$ (resp. $a \gtrsim_{\alpha, \beta,\dots} b$) to emphasize that there exists a  constant $C(\alpha, \beta,\dots) > 0$ such that $a \le  C(\alpha, \beta,\dots) b$ (resp. $a \ge  C(\alpha, \beta,\dots) b$). Set $a \sim_{\alpha, \beta,\dots} b$ if $b \lesssim_{\alpha, \beta,\dots} a \lesssim_{\alpha, \beta,\dots} b$.

\item For constants $C_1, C_2>0$, the symbol $C_1 \ll C_2$ denotes that there exists a constant $c>0$ sufficiently small such that $C_1\le c C_2$.

\item  For $C\in \mathbb{R}$, the ceiling function $\lceil C \rceil $ denotes the smallest integer greater than or equal to $C$.

\item Given a positive integer $n$, a multi-index $\mathbf{m}=(m_1, m_2, \dots, m_n) \in \mathbb{N}^n$, and a sufficiently smooth function $f$ defined in a domain of $\mathbb{R}^n$, denote $D_{x}^{\mathbf{m}} f = \partial_{x_1}^{m_1}  \partial_{x_2}^{m_2} \cdots  \partial_{x_n}^{m_n} f$.

\item  $\eta(x)$ is a smooth radial cut-off function in $\mathbb{R}^n$ satisfying  $\eta(x)=1$ for $|x|\le 1$, $\eta(x)=0$ for $|x|\ge 2$, and $0\le \eta(x) \le 1$ in $ \mathbb{R}^n$.

\item Denote $\1_{\Omega}(x)$ as the indicator function with $\1_{\Omega}(x)=1$ if $x\in \Omega$ and $\1_{\Omega}(x)=0$ if $x\not\in \Omega$. We will use $\1_{\Omega}$ to denote $\1_{\Omega}(x)$ if no ambiguity.

\end{itemize}

The structure of this paper is as follows. Section \ref{Preliminary} is the preliminary to prepare some backgrounds and useful estimates. In Section \ref{Approximate solution and inner-outer gluing scheme}, we calculate the errors of the approximate solution and give the inner-outer gluing system. Section \ref{app-blowup} builds the linear theory of inner problem by blow-up argument. Section \ref{formal derivation} gives the formal calculation of $\mu_i$ and introduces the topology for the fixed-point argument. In Section \ref{section-outer}, we study the outer problem and obtain apriori estimates. Finally, in Section \ref{section-gluing}, we use the Schauder fixed-point theorem to solve the inner-outer gluing system away from $T$ and then deduce Theorem \ref{main}.

\section{Preliminary}\label{Preliminary}

\subsection{Kernels and the eigenfunction with negative eigenvalue}

For $n\ge 3$,
by dilation and translation invariance \eqref{qd202325-1}, the linearized operator of the steady equation of \eqref{qd2023Nov21-1} around $U$  has kernels
\begin{equation}\label{qd2023Nov28-1}
\begin{aligned}
&
Z_i(x):=\pp_{x_i} U
=
-(n-2)^{\frac{n}{2}} \left[
|\tilde{x}|^2 +\left(1+x_n\right)^2
\right]^{-\frac{n}{2}} x_i
,
\quad
i=1,2,\dots, n-1,
\\
&
Z_n(x):=\frac{n-2}{2}U+x\cdot\nabla U
=
2^{-1}
(n-2)^{\frac{n}{2}} \left[ |\tilde{x}|^2 + (1+x_n)^2\right]^{-\frac{n}{2}} \left( 1-|x|^2\right) ,
\end{aligned}
\end{equation}
which satisfy
\begin{equation}\label{Zj-eq}
	\Delta Z_i =0 \quad \mbox{ \ in \ } \mathbb{R}_+^n,
	\quad
-\pp_{x_n} Z_i = \frac{n}{n-2}  U^{\frac{2}{n-2} } Z_i
\quad
	\mbox{ \ on \ } \pp \mathbb{R}_+^n,
	\qquad
i=1,2,\dots, n.
\end{equation}	

By Proposition \ref{z24Jan25-6-prop}, the corresponding eigenvalue problem
\begin{equation}\label{Z0-eq}
				- \Delta Z_0=\lambda_0 Z_0
			\quad	\mbox{ \ in \ }  \mathbb{R}^{n}_+,
		\quad
				-\pp_{x_n} Z_0  =\frac{n}{n-2}U^{\frac{2}{n-2}}Z_0
			\quad
				 \mbox{ \ on \ }\pp \mathbb{R}^n_+
		\end{equation}
has only one negative eigenvalue $\lambda_0$ and $\lambda_0$ is simple with an eigenfunction  $Z_0(x) \in C^\infty(\overline{\mathbb{R}_+^n}) \cap H^1(\mathbb{R}_+^n)$ satisfying $\| Z_0 \|_{L^2(\mathbb{R}^n_+)} =1$,
$0<Z_0(x) \le C e^{-\nu|x|} $ in $\overline{\mathbb{R}_+^n}$ for all  $\nu \in \left[0, \sqrt{-\lambda_0} \right)$
with a constant $C$ depending on $n, \lambda_0, \nu$.

\subsection{Properties of the operator $A_z=\Delta_z-\frac{z}{2} \cdot \nabla_z$}\label{eigen}

Given a domain $\Omega \subset \mathbb{R}^n$,
define the weighted $L^2(\Omega)$ space by
\begin{equation}\label{z24Jan25-8}
L_\rho^2(\Omega):=\big\{f \ | \ \|f\|_{L_{\rho}^2(\Omega)} < \infty\big\}
\end{equation}
equipped with the inner product and the norm
\begin{equation*}
\left(f_1,f_2\right)_{L_{\rho}^2(\Omega)}:=\int_{\Omega}f_1(z)f_2(z)\rho(z)dz ,
\quad \rho(z) := e^{-\frac{|z|^2}{4}},
\quad
\left\|f\right\|_{L_{\rho}^2(\Omega)} :=
\left(f,f\right)_{L_{\rho}^2(\Omega)}^{1/2}
.
\end{equation*}

Define the weighted $H^1\left( \Omega\right)$ space by
\begin{equation}\label{z24Jan25-7}
H_\rho^1(\Omega):=\left\{f \ | \ f , \nabla f \in L^2_\rho(\Omega) \right\}
\end{equation}
equipped with the inner product and the norm
\begin{equation*}
\left(f_1,f_2\right)_{H_\rho^1(\Omega)} :=
\left(\nabla f_1, \nabla f_2\right)_{L_{\rho}^2(\Omega)}
+
\left(f_1,f_2\right)_{L_{\rho}^2(\Omega)}
,
\quad
\left\|f\right\|_{H_\rho^1(\Omega)} :=
\left(f,f\right)_{H_\rho^1(\Omega)}^{1/2} .
\end{equation*}

Denote $A_z=\Delta_z-\frac{z}{2} \cdot \nabla_z$. For $f,g \in C^2(\mathbb{R}_+^n)\cap C^1(\overline{\mathbb{R}_+^n})$,
\begin{equation}\label{qd23Dec11-2}
\begin{aligned}
	\left(A_z f, g\right)_{L_{\rho}^2(\mathbb{R}_+^n)}
	= \ &
-(\nabla f ,\nabla g)_{L_{\rho}^2(\mathbb{R}_+^n)}
+
\int_{\partial \mathbb{R}_+^n}
g(z) e^{-\frac{|z|^2}{4}} \left(-\partial_{z_n} f\right) dS
	\\
	= \ &
	\left(A_z g, f\right)_{L_{\rho}^2(\mathbb{R}_+^n)}+
	\int_{\partial \mathbb{R}_+^n}  g(z) e^{-\frac{|z|^2}{4}} \left(-\partial_{z_n} f\right)(z)
	dS
	-
	\int_{\partial \mathbb{R}_+^n}
	f(z)
	e^{-\frac{|z|^2}{4}} \left(-\partial_{z_n} g\right)(z)
	dS
	.
\end{aligned}
\end{equation}

Denote $\tilde{H}_\alpha(s) = H_\alpha(\frac{s}{2})$, $\alpha\in\mathbb{N}$, $s\in \mathbb{R}$, where $H_\alpha(r) = \left(-1\right)^\alpha e^{r^2} \frac{d^\alpha }{dr^\alpha} (e^{-r^2})$ is the Hermite polynomial. Two basic properties of $\tilde{H}_{\alpha}(x)$ are given in the following lemma without proof.
\begin{lemma}\label{z24Jan31-2-lem}

\begin{enumerate}
\item\label{qd24Jan31-1} $\tilde{H}_{\alpha}(x)$ is an even (odd) polynomial of order $\alpha$ provided that $\alpha$ is an even (odd) number. And $\tilde{H}_{\alpha}(0)\ne 0$ if $\alpha$ is even, and $\frac{d}{dx} \tilde{H}_{\alpha}(0) \ne 0$ if $\alpha$ is odd.

\item $ (\tilde{H}_{\alpha} )_{\alpha \in \mathbb{N}}$ is the basis of the eigenfunctions of $- \left(\pp_{ss} -\frac{s}{2}\pp_s \right)$ in $L_\rho^2(\mathbb{R})$ satisfying $- \left(\pp_{ss} -\frac{s}{2}\pp_s \right) \tilde{H}_{\alpha}(s)  = \frac{\alpha}{2} \tilde{H}_{\alpha}(s)$, and is an orthogonal basis in $L_\rho^2(\mathbb{R})$.

\end{enumerate}

\end{lemma}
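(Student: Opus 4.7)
The plan is to reduce both parts to well-known facts about the classical Hermite polynomials $H_\alpha$ via the rescaling $r = s/2$. For part (1), from Rodrigues' formula $H_\alpha(r) = (-1)^\alpha e^{r^2}\frac{d^\alpha}{dr^\alpha}(e^{-r^2})$ and the fact that $e^{-r^2}$ is even (so each derivative swaps parity), one sees that $H_\alpha$ is a polynomial of degree $\alpha$ whose parity equals that of $\alpha$. Substituting $r = s/2$ preserves both the degree and the parity, giving the first assertion for $\tilde{H}_\alpha$. For the nondegeneracy of the leading nonzero term at $0$: when $\alpha$ is even, the standard formula $H_\alpha(0) = (-1)^{\alpha/2}\alpha!/(\alpha/2)! \neq 0$ (derived by inserting $r=0$ into the generating function $e^{2rt - t^2} = \sum_\alpha H_\alpha(r)\, t^\alpha/\alpha!$, or by induction from the three-term recurrence $H_{\alpha+1} = 2rH_\alpha - 2\alpha H_{\alpha-1}$) yields $\tilde{H}_\alpha(0) = H_\alpha(0) \neq 0$. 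When $\alpha$ is odd, $\frac{d}{dx}\tilde{H}_\alpha(0) = \tfrac{1}{2}H_\alpha'(0)$ and the differentiation identity $H_\alpha'(r) = 2\alpha H_{\alpha-1}(r)$ reduces this to $\alpha H_{\alpha-1}(0) \neq 0$, which is the (even) case just handled.

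For part (2), I would verify the eigenvalue identity by a direct chain-rule computation. The classical Hermite equation reads $H_\alpha''(r) - 2r H_\alpha'(r) + 2\alpha H_\alpha(r) = 0$. Since $\tilde{H}_\alpha'(s) = \tfrac{1}{2}H_\alpha'(s/2)$ and $\tilde{H}_\alpha''(s) = \tfrac{1}{4}H_\alpha''(s/2)$, a two-line manipulation gives
\begin{equation*}
-\tilde{H}_\alpha''(s) + \frac{s}{2}\tilde{H}_\alpha'(s)
= -\frac{1}{4}\bigl(s H_\alpha'(s/2) - 2\alpha H_\alpha(s/2)\bigr) + \frac{s}{4}H_\alpha'(s/2)
= \frac{\alpha}{2}\tilde{H}_\alpha(s),
\end{equation*}
establishing the eigenvalue relation with $\lambda_\alpha = \alpha/2$. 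Orthogonality follows by the change of variables $r = s/2$ from the classical relation $\int_{\mathbb{R}} H_\alpha(r) H_\beta(r) e^{-r^2}\, dr = 0$ for $\alpha \neq \beta$, since the weight transforms as $e^{-r^2} = e^{-s^2/4} = \rho(s)$ and $dr = ds/2$.

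Completeness of $\{\tilde{H}_\alpha\}$ in $L^2_\rho(\mathbb{R})$ is inherited directly from the classical completeness of $\{H_\alpha\}$ in $L^2(\mathbb{R}, e^{-r^2}dr)$, which can be proved either by showing that polynomials are dense in this weighted space (using that the moment problem for the Gaussian weight is determinate, or by a Stone–Weierstrass argument applied after multiplication by $e^{-r^2/2}$) or by recognizing $-(\partial_{ss} - \tfrac{s}{2}\partial_s)$ as a self-adjoint operator on $L^2_\rho(\mathbb{R})$ with compact resolvent whose eigenfunctions are smooth and polynomially bounded. There is no real obstacle: the whole statement is a transparent change-of-variables repackaging of standard Hermite theory, which is why the paper opts to record it without proof.
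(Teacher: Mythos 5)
Your proof is correct and is exactly the standard change-of-variables reduction to classical Hermite theory; the paper explicitly records this lemma ``without proof,'' so there is no authorial argument to compare against, but your derivation (Rodrigues parity argument plus $H_\alpha(0)\neq 0$ and $H_\alpha' = 2\alpha H_{\alpha-1}$ for part (1); the Hermite ODE under $r=s/2$, the weight transformation $e^{-r^2}\,dr = \tfrac{1}{2}\rho(s)\,ds$, and inherited completeness for part (2)) is precisely what the paper is implicitly invoking. No gaps.
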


Denote a multi-index $\bm{\alpha}=(\alpha_1, \alpha_2,\dots, \alpha_n)$ with each $\alpha_i\in\mathbb{N}$, and ${\bf{ \tilde{H} } }_{\bm{\alpha}}(z):= \prod_{i=1}^n \tilde{H}_{\alpha_i}(z_i ) $.
By separation of variables, $( {\bf{ \tilde{H} } }_{\bm{\alpha}} )_{ {\bm{\alpha}} \in \mathbb{N}^n }$ is the basis of the eigenfunctions of $-A_z$ in $L_\rho^2(\mathbb{R}^n)$ satisfying $-A_z {\bf{ \tilde{H} } }_{\bm{\alpha}}(z)  =  2^{-1} \|\bm{\alpha}\|_{\ell_1}  {\bf{ \tilde{H} } }_{\bm{\alpha}}(z)$, and is an orthogonal basis in $L_\rho^2(\mathbb{R}^n)$.

Given a domain $\Omega \subset \mathbb{R}^n$, denote
\begin{equation}\label{eigen3}
	\begin{aligned}
&
\mathcal{E}_{i/2}(\Omega) :=
\left\{ {\bf {\tilde{H} }}_{\bm{\alpha}}(z), z\in \Omega  \ | \  |\bm{\alpha}| = i
\right\} ,
\\
		&
		\mathcal{E}_{i/2,\rm{even}}(\Omega):=
		\left\{ {\bf \tilde{H}}_{\bm{\alpha} }(z) \in \mathcal{E}_{i/2}(\Omega)  \ | \ \alpha_n \text{ is even}
		\right\}
		,
	\quad
		\mathcal{E}_{i/2,\rm{odd}}(\Omega):=
		\left\{ {\bf \tilde{H}}_{\bm{\alpha}}(z) \in \mathcal{E}_{i/2}(\Omega)  \ | \ \alpha_n \text{ is odd}\right\} .
	\end{aligned}
\end{equation}

Some basic properties of Hermite polynomials are listed in the next lemma without proof.
\begin{lemma}\label{lem-Az}
\begin{enumerate}
\item\label{z24Jan31-5}
The eigenvalue problem
$ -A_z h =\lambda h $ in $\mathbb{R}^n$, $h \in L_\rho^2(\mathbb{R}^n)$ has the eigenvalue $\lambda=\frac{i}{2}$, $i\in \mathbb{N}$ with $\mathcal{E}_{i/2}(\mathbb{R}^n)$ as the orthogonal basis of the associated eigenspace.

\item
For $f_1, f_2 \in \cup_{j=0}^\infty\mathcal{E}_{j/2}(\mathbb{R}^n)$ and $f_1\ne f_2$, we have $\left(f_1,f_2\right)_{L_\rho^2(\mathbb{R}^n) }=0$.

\item\label{z24Jan31-6}
For $i\in \mathbb{N}$, denote $S_1 = \left\{ f\in H_\rho^1(\mathbb{R}^n) \ | \ (f,g)_{L_\rho^2(\mathbb{R}^n) }=0 \mbox{ \ for all
\ }
g\in \cup_{j=0}^i \mathcal{E}_{j/2}(\mathbb{R}^n) \right\}$, then
\begin{equation}\label{eigen3-2}
\inf_{ 0\not\equiv f\in S_1}
 \left\|f\right\|_{L_\rho^2(\mathbb{R}^n) }^{-2}
\left\|\nabla f\right\|_{L_\rho^2(\mathbb{R}^n) }^2
=
\frac{i+1}{2}
.
\end{equation}
\end{enumerate}

\end{lemma}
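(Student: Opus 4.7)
The proof has three parts, each of which builds on the one-dimensional Hermite theory already recorded in Lemma~\ref{z24Jan31-2-lem}. The unifying idea is that $A_z$ separates variables: if we write $A_z = \sum_{i=1}^n A_{z_i}$ where $A_{z_i} := \partial_{z_i}^2 - \frac{z_i}{2}\partial_{z_i}$, then tensor products of one-dimensional Hermite eigenfunctions are automatically eigenfunctions of $A_z$ with eigenvalue equal to the sum. This gives part (1) immediately in the ``$\supset$'' direction, since ${\bf\tilde{H}}_{\bm{\alpha}}$ satisfies $-A_z {\bf\tilde{H}}_{\bm{\alpha}}= \tfrac{1}{2}\|\bm{\alpha}\|_{\ell_1}{\bf\tilde{H}}_{\bm{\alpha}}$.

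For the ``$\subset$'' direction of part (1), the key is completeness: $\{{\bf\tilde{H}}_{\bm{\alpha}}\}_{\bm{\alpha}\in\mathbb{N}^n}$ is an orthogonal basis of $L^2_\rho(\mathbb{R}^n)$. I would deduce this from the one-dimensional statement by a standard tensor product argument: the Gaussian weight factorizes, so $L^2_\rho(\mathbb{R}^n)=\bigotimes_{i=1}^n L^2_\rho(\mathbb{R})$ in the Hilbert tensor product sense, and a tensor product of orthonormal bases is an orthonormal basis of the tensor product. Once completeness is in hand, any eigenfunction can be expanded in the basis, and the fact that eigenfunctions belonging to distinct eigenvalues are orthogonal forces the expansion to be supported on $\{\bm{\alpha}:\|\bm{\alpha}\|_{\ell_1}=i\}$ whenever the eigenvalue is $i/2$. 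This shows $\mathcal{E}_{i/2}(\mathbb{R}^n)$ spans the eigenspace, establishing part (1). Part (2) is then immediate: distinct elements of $\cup_j \mathcal{E}_{j/2}(\mathbb{R}^n)$ either lie in different eigenspaces (orthogonal by symmetry of $A_z$) or in the same eigenspace but correspond to different multi-indices, and in the latter case orthogonality reduces to the one-dimensional orthogonality of Hermite polynomials through Fubini.

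For part (3), the crucial identity is the weighted Green's formula obtained from \eqref{qd23Dec11-2} with $\Omega=\mathbb{R}^n$ and no boundary term:
\begin{equation*}
\|\nabla f\|_{L^2_\rho(\mathbb{R}^n)}^2 = -\left(A_z f, f\right)_{L^2_\rho(\mathbb{R}^n)} \quad \text{for } f\in H^1_\rho(\mathbb{R}^n),
\end{equation*}
obtained by integration by parts using $\nabla\rho = -\tfrac{z}{2}\rho$ (with a routine cutoff/density approximation to handle the Gaussian decay at infinity). Expanding $f = \sum_{\bm{\alpha}} c_{\bm{\alpha}} {\bf\tilde{H}}_{\bm{\alpha}}$ in the orthogonal basis from part (1) and using Parseval gives
\begin{equation*}
\|\nabla f\|_{L^2_\rho}^2 = \sum_{\bm{\alpha}\in\mathbb{N}^n} \tfrac{1}{2}\|\bm{\alpha}\|_{\ell_1}\, |c_{\bm{\alpha}}|^2 \|{\bf\tilde{H}}_{\bm{\alpha}}\|_{L^2_\rho}^2, \qquad \|f\|_{L^2_\rho}^2 = \sum_{\bm{\alpha}\in\mathbb{N}^n} |c_{\bm{\alpha}}|^2 \|{\bf\tilde{H}}_{\bm{\alpha}}\|_{L^2_\rho}^2.
\end{equation*}
For $f\in S_1$ the orthogonality conditions eliminate every multi-index with $\|\bm{\alpha}\|_{\ell_1}\le i$, so the sum ranges only over $\|\bm{\alpha}\|_{\ell_1}\ge i+1$, yielding the lower bound $\tfrac{i+1}{2}$ for the Rayleigh quotient. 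Sharpness is reached by any single test function $f={\bf\tilde{H}}_{\bm{\alpha}}$ with $\|\bm{\alpha}\|_{\ell_1}=i+1$, which lies in $S_1$ by part (2).

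The only genuine technical point is the density/decay argument needed to justify the integration by parts producing $\|\nabla f\|_{L^2_\rho}^2 = -(A_zf,f)_{L^2_\rho}$ for arbitrary $f\in H^1_\rho$; everything else is spectral bookkeeping with the tensorized Hermite basis. I would handle this via a standard cutoff $\chi_R(z)=\eta(z/R)$, deriving the identity for $\chi_R f$ (which is compactly supported and smooth after mollification) and passing $R\to\infty$ using the Gaussian weight to kill the boundary contribution. Completeness of $\cup_j \mathcal{E}_{j/2}(\mathbb{R}^n)$ in $L^2_\rho(\mathbb{R}^n)$ is the next most delicate point, but it follows from the well-known result that Hermite functions are dense in $L^2$ of the real line with Gaussian weight, together with a tensor product/Stone--Weierstrass style argument on products of polynomials.
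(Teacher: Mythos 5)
The paper states Lemma~\ref{lem-Az} explicitly \emph{without} proof (``Some basic properties of Hermite polynomials are listed in the next lemma without proof''), so there is no in-text argument to compare against. Your proof is correct and is the standard one. A few remarks on the points you rightly flag as requiring care. For part~(1), the cleanest way to close the ``$\subset$'' direction is to observe that $-A_z$ with core $C_c^\infty(\mathbb{R}^n)$ is essentially self-adjoint in $L^2_\rho(\mathbb{R}^n)$ (it is the classical Ornstein--Uhlenbeck operator), so any $L^2_\rho$ eigenfunction lies in the domain of the self-adjoint closure and your orthogonality-of-distinct-eigenspaces argument applies verbatim; without invoking self-adjointness you would have to justify the integration by parts $(-A_zh,\tilde{H}_{\bm{\alpha}})_{L^2_\rho}=(h,-A_z\tilde{H}_{\bm{\alpha}})_{L^2_\rho}$ directly, which needs some a priori gradient control on $h$ beyond mere $L^2_\rho$ membership. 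For part~(3), the Parseval-type identity for $\|\nabla f\|^2_{L^2_\rho}$ relies not only on the form identity $\|\nabla f\|^2_{L^2_\rho}=-(A_zf,f)_{L^2_\rho}$ but also on the fact that $\{\nabla{\bf\tilde{H}}_{\bm{\alpha}}\}$ are mutually $L^2_\rho$-orthogonal — which you get for free from that same form identity applied to $\tilde{H}_{\bm{\alpha}},\tilde{H}_{\bm{\beta}}$ — and that the Hermite partial sums of $f\in H^1_\rho$ converge to $f$ in the $H^1_\rho$ norm, which is a standard but nontrivial density fact; your cutoff $\eta(z/R)$ plus mollification sketch is the right way to establish it. The attainment of the infimum by any ${\bf\tilde{H}}_{\bm{\alpha}}$ with $\|\bm{\alpha}\|_{\ell_1}=i+1$ is correct since such a function is a polynomial, hence trivially in $H^1_\rho$, and lies in $S_1$ by part~(2). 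In short: the approach is sound, and the technical points you identify are exactly the ones that need filling in.
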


A counterpart in the half space $\mathbb{R}_+^n$ is the following.
\begin{lemma}\label{lemma-eigen}

	\begin{enumerate}
		\item\label{zz23Nov30-1}
		The eigenvalue problem with the Neumann (Dirichlet) boundary condition
	\begin{equation}\label{zz23Nov30-4}
-A_{z} h =\lambda h
\quad
\mbox{ \ in \ } \mathbb{R}_+^n,
\quad
-\partial_{z_n} h=0 \  ( h =0)
\quad
\mbox{ \ on \ }  \pp\mathbb{R}_+^n
	\end{equation}
in $L_\rho^2(\mathbb{R}_+^n)$
has the eigenvalue $\lambda=\frac{i}{2}$ with $i\in \mathbb{N}$,
and $\mathcal{E}_{i/2,\rm{even}}(\mathbb{R}_+^n) \ ( \mathcal{E}_{i/2,\rm{odd}}(\mathbb{R}_+^n) )$ is the orthogonal basis of the associated eigenspace.

\item\label{zz23Nov30-3}
For $f_1, f_2 \in \cup_{j=0}^\infty \mathcal{E}_{j/2,\rm{even}}(\mathbb{R}_+^n) \ ( \mathcal{E}_{j/2,\rm{odd}}(\mathbb{R}_+^n) )$ and $f_1\ne f_2$, we have $\left(f_1,f_2\right)_{L_\rho^2(\mathbb{R}_+^n)}=0$.

\item\label{zz23Nov30-2}
For any $i\in \mathbb{N}$, denote $S_2 = \left\{ f\in H_\rho^1(\mathbb{R}_+^n) \ | \
(f,g)_{L_\rho^2(\mathbb{R}_+^n)}=0  \mbox{ \ for all \ } g\in
\cup_{j=0}^i \mathcal{E}_{j/2,\rm{even}}(\mathbb{R}_+^n) \right\}$,
		\begin{equation}\label{eigen3-5}
			\inf_{ 0\not\equiv f\in S_2}
	 \left\|f\right\|_{L_\rho^2(\mathbb{R}_+^n)}^{-2}
		\left\|\nabla f\right\|_{L_\rho^2(\mathbb{R}_+^n)}^2
		= \frac{i+1}{2}.
		\end{equation}
	
	\end{enumerate}

\end{lemma}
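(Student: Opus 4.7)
The strategy is to reduce each assertion to the corresponding full-space statement in Lemma \ref{lem-Az} by reflecting across $\partial\mathbb{R}_+^n=\{z_n=0\}$. The key observation is that, by Lemma \ref{z24Jan31-2-lem}(\ref{qd24Jan31-1}), the product eigenfunction ${\bf\tilde{H}}_{\bm\alpha}(z)=\prod_i\tilde{H}_{\alpha_i}(z_i)$ is even in $z_n$ when $\alpha_n$ is even and odd in $z_n$ when $\alpha_n$ is odd; consequently $-\partial_{z_n}{\bf\tilde{H}}_{\bm\alpha}|_{z_n=0}=0$ in the first case and ${\bf\tilde{H}}_{\bm\alpha}|_{z_n=0}=0$ in the second. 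Thus $\mathcal{E}_{i/2,\mathrm{even}}(\mathbb{R}_+^n)$ (resp.\ $\mathcal{E}_{i/2,\mathrm{odd}}(\mathbb{R}_+^n)$) is exactly the restriction to $\mathbb{R}_+^n$ of the Neumann- (resp.\ Dirichlet-) compatible elements of $\mathcal{E}_{i/2}(\mathbb{R}^n)$.

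For the Neumann problem I would associate to any $f\in L_\rho^2(\mathbb{R}_+^n)$ its even reflection $\tilde f(\tilde z,z_n):=f(\tilde z,|z_n|)$ and for any $f\in L_\rho^2(\mathbb{R}_+^n)$ vanishing on $\partial\mathbb{R}_+^n$ its odd reflection $\tilde f(\tilde z,z_n):=\mathrm{sgn}(z_n)f(\tilde z,|z_n|)$. Direct computation gives the doubling identities $\|\tilde f\|_{L_\rho^2(\mathbb{R}^n)}^2=2\|f\|_{L_\rho^2(\mathbb{R}_+^n)}^2$ and, when $f\in H_\rho^1(\mathbb{R}_+^n)$, $\|\nabla\tilde f\|_{L_\rho^2(\mathbb{R}^n)}^2=2\|\nabla f\|_{L_\rho^2(\mathbb{R}_+^n)}^2$. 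The Neumann condition ensures that the even reflection has no distributional derivative concentrated on $\{z_n=0\}$, so $\tilde f\in H_\rho^1(\mathbb{R}^n)$ and satisfies $-A_z\tilde f=\lambda\tilde f$ weakly on $\mathbb{R}^n$ whenever $f$ satisfies \eqref{zz23Nov30-4}; conversely restriction of any even-in-$z_n$ eigenfunction on $\mathbb{R}^n$ produces an eigenfunction of the Neumann problem. The analogous statement for the Dirichlet problem uses the odd reflection, where the boundary vanishing is automatic. Part (\ref{zz23Nov30-1}) then follows from Lemma \ref{lem-Az}(\ref{z24Jan31-5}) by passing the bases $\mathcal{E}_{i/2}(\mathbb{R}^n)$ through this correspondence and splitting them into even and odd parts in $z_n$. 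Part (\ref{zz23Nov30-3}) is immediate from Lemma \ref{lem-Az}(2) and the doubling of integrals: for distinct $f_1,f_2\in\cup_j\mathcal{E}_{j/2,\mathrm{even}}(\mathbb{R}_+^n)$, $(f_1,f_2)_{L_\rho^2(\mathbb{R}_+^n)}=\tfrac12(f_1,f_2)_{L_\rho^2(\mathbb{R}^n)}=0$.

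For the variational identity \eqref{eigen3-5}, take $f\in S_2$ and form its even extension $\tilde f$. The extension is automatically $L_\rho^2(\mathbb{R}^n)$-orthogonal to every element of $\cup_{j=0}^i\mathcal{E}_{j/2,\mathrm{odd}}(\mathbb{R}^n)$ because the integrand is odd in $z_n$, while the doubling identity converts orthogonality of $f$ to $\cup_{j=0}^i\mathcal{E}_{j/2,\mathrm{even}}(\mathbb{R}_+^n)$ into orthogonality of $\tilde f$ to $\cup_{j=0}^i\mathcal{E}_{j/2,\mathrm{even}}(\mathbb{R}^n)$. Hence $\tilde f$ is admissible in \eqref{eigen3-2}, and combining that bound with the doubling of $L_\rho^2$ and $\dot H_\rho^1$ norms yields $\|\nabla f\|_{L_\rho^2(\mathbb{R}_+^n)}^2/\|f\|_{L_\rho^2(\mathbb{R}_+^n)}^2\ge(i+1)/2$; equality is attained on any nonzero element of $\mathcal{E}_{(i+1)/2,\mathrm{even}}(\mathbb{R}_+^n)$, which belongs to $S_2$ by the orthogonality already established. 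The only delicate point is confirming that the even reflection has no spurious surface contribution to $\nabla\tilde f$, which is the standard trace-free reflection argument and is the only place where the boundary behaviour of $f$ enters.
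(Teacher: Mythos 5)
Your proposal is correct and follows essentially the same route as the paper's own proof: even/odd reflection across $\{z_n=0\}$, the doubling identities for $L_\rho^2$ and $H_\rho^1$ norms and inner products, the parity argument showing the even extension of an element of $S_2$ is automatically orthogonal to the odd eigenfunctions, and then applying the full-space variational characterization from Lemma \ref{lem-Az}\eqref{z24Jan31-6} with equality attained on $\mathcal{E}_{(i+1)/2,\rm{even}}(\mathbb{R}_+^n)$. The only stylistic difference is that you flag the absence of a surface term in $\nabla\tilde f$ as the delicate point of the reflection argument, which the paper leaves implicit.
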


\begin{proof}

Given any function $f$ in $\overline{\mathbb{R}_+^n}$, denote
\begin{equation*}
f_{\rm e}(z) :=
\begin{cases}
f(z) & \mbox{ \ if \ } z_n\ge 0
\\
f(\tilde{z}, -z_n) & \mbox{ \ if \ } z_n < 0
\end{cases}
\mbox{ \ and \ }
f_{\rm o}(z) :=
\begin{cases}
f(z) & \mbox{ \ if \ } z_n\ge 0
\\
-f(\tilde{z}, -z_n) & \mbox{ \ if \ } z_n < 0.
\end{cases}
\end{equation*}

\eqref{zz23Nov30-1}.
For the Neumann boundary condition, obviously $h_{\rm e}(z) \in L_{\rho}^2(\mathbb{R}^n)$ and it is straightforward to derive $-A_{z} h_{\rm e}(z)=\lambda h_{\rm e}(z) $ in $\mathbb{R}^n$. By Lemma \ref{lem-Az} \eqref{z24Jan31-5}, $\partial_{z_n} h_{\rm e} |_{z_n=0} =0$, and Lemma \ref{z24Jan31-2-lem} \eqref{qd24Jan31-1}, then $\lambda\in \frac{1}{2} \mathbb{N}$ and $h\in \mathcal{E}_{\lambda,\rm{even}}(\mathbb{R}_+^n)$. On the other hand, all elements in $ \mathcal{E}_{i/2,\rm{even}}(\mathbb{R}_+^n)$, $i \in \mathbb{N}$ satisfy \eqref{zz23Nov30-4} with $\lambda= i/2$.

For the Dirichlet boundary condition, the conclusion can be deduced by analyzing $h_{\rm o}(z)$ similarly.

\eqref{zz23Nov30-3}.	
For any $g_1, g_2$ in $\overline{\mathbb{R}_+^n}$, $\left(g_{1{\rm e}}, g_{2{\rm e}} \right)_{L_\rho^2(\mathbb{R}^n)} =2 \left(g_1,g_2\right)_{L_\rho^2(\mathbb{R}_+^n)} = \left(g_{1{\rm o}}, g_{2{\rm o}} \right)_{L_\rho^2(\mathbb{R}^n)}$. Then the result holds.
	
\eqref{zz23Nov30-2}. For any $f\in S_2$, we have $f_{\rm e}(z)\in H^1_\rho(\mathbb{R}^n)$;
$ \left(f_{\rm e}, g\right)_{L_\rho^2(\mathbb{R}^n)}=0 $ for all
$ g \in \mathcal{E}_{j/2,\rm{odd}}(\mathbb{R}^n), j\in \mathbb{N} $;
$(f_{\rm e}, g)_{L_\rho^2(\mathbb{R}^n)}=2(f,g)_{L_\rho^2(\mathbb{R}_+^n)}=0 $ for all $g\in \cup_{j=0}^{i} \mathcal{E}_{j/2,\rm{even}}(\mathbb{R}^n)$. Applying Lemma \ref{lem-Az} \eqref{z24Jan31-6} to $f_{\rm e}$, we get $\inf_{ 0\not\equiv f\in S_2}
	 \left\|f\right\|_{L_\rho^2(\mathbb{R}_+^n)}^{-2}
		\left\|\nabla f\right\|_{L_\rho^2(\mathbb{R}_+^n)}^2
		\ge \frac{i+1}{2}$. The equality sign can be attained by $\mathcal{E}_{(i+1)/2,\rm{even}}(\mathbb{R}_+^n)$.
\end{proof}

The next lemma gives a general localized modification method for approximate orthogonal functions.
\begin{lemma}\label{z2023Nov10-lem}
	Given a domain $\Omega\subset \mathbb{R}^n$ (possibly unbounded), a weight $w(z)$ (possibly sign-changing), and an integer $m \ge 1$, suppose that for
	$i,j = 1,2,\dots, m$, $\vartheta_i(z) \vartheta_j(z) w(z) \in L_{\rm loc}^1(\Omega)$, a function $\chi(z) \in L^\infty(\Omega)$ with compact support,
	\begin{equation*}
		\begin{aligned}
			&
			\sum\limits_{l=1,l\ne i}^m \left|
			\int_{\Omega} \vartheta_i(z)  \vartheta_l(z) w(z) \chi(z) dz \right| < \left|
			\int_{\Omega} \vartheta_i^2(z)  w(z) \chi(z) dz \right| <\infty
			\quad
			\mbox{ \ for \ }
			i =  1,2,\dots, m ,
		\end{aligned}
	\end{equation*}
	then
	there exist functions $\tilde{\vartheta}_i(z)$, $i=1,2,\dots,m$ of the form $\tilde{\vartheta}_i(z) = \sum\limits_{l=1}^{m} a_{il} \vartheta_l (z) \chi(z)$, where $\left(a_{il}\right)_{m\times m}$ is the inverse matrix of  $\left( \int_{\Omega} \vartheta_i(z)  \vartheta_l(z) w(z) \chi(z) dz \right)_{m\times m}$, such that
	\begin{equation*}
		\int_{\Omega} \tilde{\vartheta}_i(z) \vartheta_j(z) w(z) dz
		=
		\delta_{ij} \quad \mbox{ \ for \ } i, j =1,2,\dots, m.
	\end{equation*}
	
\end{lemma}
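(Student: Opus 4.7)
The plan is to set up the ansatz and reduce the problem to inverting the Gram-type matrix $M := \bigl( \int_{\Omega} \vartheta_i(z) \vartheta_l(z) w(z) \chi(z) \, dz \bigr)_{m \times m}$. Plugging $\tilde{\vartheta}_i = \sum_{l=1}^m a_{il} \vartheta_l \chi$ into the desired orthogonality relation yields
\begin{equation*}
\int_{\Omega} \tilde{\vartheta}_i(z)\vartheta_j(z) w(z)\,dz
= \sum_{l=1}^{m} a_{il}\int_{\Omega} \vartheta_l(z) \vartheta_j(z) \chi(z) w(z)\,dz
= (AM)_{ij},
\end{equation*}
where $A = (a_{il})_{m\times m}$. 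Hence the identity $\int_\Omega \tilde{\vartheta}_i\vartheta_j w\,dz = \delta_{ij}$ is equivalent to $AM = I$, i.e., $A = M^{-1}$. The proof therefore comes down to confirming that $M$ is invertible under the hypotheses.

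Next, I would invoke the Levy--Desplanques theorem (strictly diagonally dominant matrices are nonsingular). The hypothesis
\begin{equation*}
\sum_{l=1,\, l\neq i}^{m} \Bigl|\int_{\Omega} \vartheta_i \vartheta_l w \chi\,dz\Bigr|
< \Bigl|\int_{\Omega} \vartheta_i^2\, w \chi\,dz\Bigr| < \infty
\end{equation*}
says exactly that $\sum_{l\neq i}|M_{il}| < |M_{ii}|$ for each row $i$, so $M$ is strictly diagonally dominant and in particular invertible. The finiteness clause $|M_{ii}|<\infty$ together with the strict inequality also ensures that every entry of $M$ is finite, so $M^{-1}$ is a genuine $m\times m$ matrix with finite entries.

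With $A := M^{-1}$ in hand, I would set $\tilde{\vartheta}_i(z) := \sum_{l=1}^m a_{il} \vartheta_l(z)\chi(z)$. The local integrability hypothesis $\vartheta_i \vartheta_j w \in L^1_{\mathrm{loc}}(\Omega)$ combined with the compact support of $\chi$ and $\chi \in L^\infty(\Omega)$ guarantees that each integral $\int_\Omega \tilde{\vartheta}_i \vartheta_j w\,dz$ is well defined, and the computation in the first paragraph shows that $(AM)_{ij}=\delta_{ij}$, which is the desired orthonormality.

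There is essentially no main obstacle: the statement is a linear-algebra repackaging of strict diagonal dominance, and the only care needed is to keep track of the measure-theoretic issues (support of $\chi$ and local integrability of the products) so that all integrals in the argument make sense.
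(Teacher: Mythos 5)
Your proof is correct and takes the same route as the paper: identify the matrix $M$, observe the hypothesis is exactly strict diagonal dominance, invoke nonsingularity (Levy--Desplanques), and set $A = M^{-1}$. The paper compresses all of this into a single sentence; your writeup is simply a fuller account of the same argument.
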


\begin{proof}
The conclusion is deduced by the non-singularity of the strictly diagonally dominant matrix.
\end{proof}

We arrange the eigenfunctions of \eqref{zz23Nov30-4} with the Neumann boundary condition according to non-decreasing eigenvalues and label them as $\left(e_i(z)\right)_{i}$, $i =0,1,\dots$.
Due to the multiplicity of eigenspaces, the arrangement appears to be non-unique.
We only fix one sequence $\left(e_i(z)\right)_{i}$ and $e_i(z)$ satisfies
\begin{equation}\label{qd2023Dec17-2}
	-A_{z} e_i(z) =\lambda_i e_i(z)
 \quad
\mbox{ \ in \ } \mathbb{R}_+^n,
	\quad
	-\partial_{z_n}e_i(z) =0
\mbox{ \ on \ }  \pp\mathbb{R}_+^n
 ,
\end{equation}
where the eigenvalues $\lambda_i$ are non-decreasing about $i$ and $\lambda_i \to \infty$ as $i\to \infty$. Define the eigenvalue counting function of $-A_{z}$ by
\begin{equation}\label{qd24Feb13-1}
N(C) := \# \left\{ i\in \mathbb{N} \ | \   \lambda_i \le C \right\}  , \quad C\in \mathbb{R} .
\end{equation}

\begin{corollary}\label{orth-lem}
Given $m\in \mathbb{N}$, there exists a constant $M>0$ sufficiently large and a non-singular symmetric constant real-valued matrix $(a_{il})_{(m+1)\times (m+1)}$ to  make
	\begin{equation}\label{eq2.16}
		\tilde{e}_i(z) := \sum\limits_{l=0}^{m} a_{il} e_l (z) \eta(\frac{z}{M}),\quad i=0,1, \dots, m
	\end{equation}
	satisfy
	\begin{equation}\label{eq2.17}
		\partial_{z_n}\tilde{e}_i=0
		\quad	\mbox{ \ on \ }\pp\mathbb{R}_+^n
		\quad
		\mbox{ \ and \ }
		\quad
		 \left( \tilde e_i , e_j  \right)_{L_\rho^2(\mathbb{R}_+^n)} =\delta_{ij }
		\quad \mbox{ \ for \ } i, j =0,1,\dots, m.
	\end{equation}
	
\end{corollary}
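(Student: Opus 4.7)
The proof is a direct application of Lemma \ref{z2023Nov10-lem} with the choices $\Omega = \mathbb{R}_+^n$, $\vartheta_i(z) = e_i(z)$, $w(z) = \rho(z) = e^{-|z|^2/4}$, and the cut-off $\chi(z) = \eta(z/M)$. The task splits into three verifications: the diagonal-dominance hypothesis of that lemma, the symmetry of the resulting matrix, and the Neumann boundary condition.

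First I would establish the diagonal dominance for $M$ large. By Lemma \ref{lemma-eigen}\eqref{zz23Nov30-3}, the sequence $(e_i)_{i\in\mathbb{N}}$ is orthogonal in $L_\rho^2(\mathbb{R}_+^n)$, so $\int_{\mathbb{R}_+^n} e_i e_l \rho\, dz = \delta_{il} \|e_i\|_{L_\rho^2}^2$ with $\|e_i\|_{L_\rho^2}>0$. Since each $e_i$ is (by Lemma \ref{lemma-eigen}\eqref{zz23Nov30-1}) a linear combination of Hermite products ${\bf \tilde H}_{\bm\alpha}$, which are polynomials, the integrand $e_i e_l \rho$ is integrable on $\mathbb{R}_+^n$ and dominated convergence yields
\begin{equation*}
B_{il}(M) := \int_{\mathbb{R}_+^n} e_i(z) e_l(z) \rho(z) \eta(z/M)\, dz \xrightarrow[M\to\infty]{} \delta_{il} \|e_i\|_{L_\rho^2}^2.
\end{equation*}
Hence for $M$ sufficiently large (depending on $m$), the $(m+1)\times(m+1)$ matrix $B(M)$ is strictly diagonally dominant, so the hypothesis of Lemma \ref{z2023Nov10-lem} is satisfied. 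The lemma then produces coefficients $(a_{il}) = B(M)^{-1}$ and functions $\tilde e_i = \sum_{l=0}^m a_{il} e_l\, \eta(z/M)$ with $(\tilde e_i, e_j)_{L_\rho^2(\mathbb{R}_+^n)} = \delta_{ij}$.

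Symmetry of $(a_{il})$ is immediate: $B_{il}(M) = B_{li}(M)$ since $e_i e_l = e_l e_i$, and the inverse of a symmetric nonsingular matrix is symmetric. The real-valuedness is automatic from the construction. For the Neumann condition $\partial_{z_n}\tilde e_i = 0$ on $\partial\mathbb{R}_+^n$, I would argue termwise: $\partial_{z_n} e_l|_{z_n=0} = 0$ by \eqref{qd2023Dec17-2}, and because $\eta$ is \emph{radial} the identity $\nabla \eta(z/M) = M^{-1} \eta'(|z|/M)\, z/|z|$ shows $\partial_{z_n} \eta(z/M)|_{z_n=0} = 0$. Expanding $\partial_{z_n}(e_l \cdot \eta(z/M)) = (\partial_{z_n} e_l)\eta(z/M) + e_l\, \partial_{z_n}\eta(z/M)$ and restricting to $z_n=0$ then gives the claim.

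There is no serious obstacle here — the statement is essentially the quantitative assertion that a smooth Gaussian-localization of a finite piece of an orthogonal family can be turned biorthogonal by an arbitrarily small linear correction. The only point requiring care is making sure the truncation does not destroy the Neumann boundary condition, which is why radiality of $\eta$ is used (and is flagged in the remark after Theorem \ref{main}). Everything else is a mechanical application of Lemma \ref{z2023Nov10-lem}.
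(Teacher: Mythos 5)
Your proof is correct and follows essentially the same route as the paper: invoke the orthogonality statement Lemma \ref{lemma-eigen}\eqref{zz23Nov30-3} together with Lemma \ref{z2023Nov10-lem} with $\Omega=\mathbb{R}_+^n$, $w=\rho$, $\chi=\eta(\cdot/M)$ for $M$ large, then use radiality of $\eta$ and $\partial_{z_n}e_l|_{z_n=0}=0$ to preserve the Neumann condition. Your write-up is a bit more explicit than the paper's in two places: it records the dominated-convergence argument that gives strict diagonal dominance of $B(M)$ for $M$ large, and it supplies the symmetry of $(a_{il})$ from $B(M)^T=B(M)$, a point the paper asserts in the statement but does not comment on in the proof.
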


\begin{proof}
By Lemma \ref{lemma-eigen} \eqref{zz23Nov30-3}, and Lemma \ref{z2023Nov10-lem} in the case $\Omega = \mathbb{R}_+^n$, $w(z) = \rho(z)$, $(e_i(z))_{0\le i \le m}$, $\chi(z)= \eta(\frac{z}{M})$ with $M>0$ sufficiently large, we get the desired $\tilde{e}_i(z)$ satisfying the second property in \eqref{eq2.17}.
Since $\eta(x)$ is a smooth radial cut-off function and $\partial_{z_n}e_i =0$, $i\in \mathbb{N}$
on $\pp\mathbb{R}_+^n $, we have
$\partial_{z_n}\tilde{e}_i=0$ on $ \pp\mathbb{R}_+^n $.
\end{proof}

\subsection{Green's functions in $\mathbb{R}_+^n$}

The fundamental solution with the Neumann boundary condition in $\mathbb{R}_+^n$ has the form
\begin{equation}\label{qd23Dec22-1}
	G_n(x,t,z,s) =\left[ 4\pi (t-s)\right]^{-\frac{n}{2}}
	\left[
	e^{-\frac{|\tilde{x} - \tilde{z}|^2+ |x_n - z_n|^2 }{4(t-s)}}
	+
	e^{-\frac{|\tilde{x} -\tilde{z}|^2+ |x_n + z_n|^2 }{4(t-s)}}  \right]
	\mbox{ \ for \ } t>s .
\end{equation}
Obviously,
\begin{equation}\label{qd24Feb04-1}
\begin{aligned}
&
\partial_t^\iota D_{x}^{\mathbf{m}} G_n(x,t,z,s), \iota \in \mathbb{N},
\mathbf{m} \in \mathbb{N}^n
\mbox{ is odd (even) about $x_n$ if $m_n$ is odd (even) for } t>s.
\\
&
\partial_t^\iota D_{x}^{\mathbf{m}} G_n((\tilde{x},0),t,z,s) = 0
\mbox{ if $m_n$ is odd for } t>s.
\end{aligned}
\end{equation}

By Green's identities,  for $f$ with sufficient smoothness and decay, we have
\begin{equation}\label{qd2023Dec03-3}
	\begin{aligned}
		f(x,t) = \ &
		\int_{t_0}^{t}
		\int_{\mathbb{R}_+^n}
		G_n(x,t,z,s) \left( \partial_s f - \Delta_z f \right)(z,s) dz ds
		\\
		&
		+
		\int_{t_0}^{t}\int_{\mathbb{R}^{n-1}}
		G_n(x,t,(\tilde{z},0),s) 	\left[\left(-\pp_{z_n} f\right)(  ( \tilde{z},0),s ) \right]  d\tilde{z} ds
		+
		\int_{\mathbb{R}_+^n}
		G_n(x,t,z, t_0) f(z,t_0)  dz.
	\end{aligned}
\end{equation}
\eqref{qd2023Dec03-3} can be used for representing solutions in some weak sense with right-hand sides, boundary value and initial value in some Sobolev spaces.

\begin{lemma}\label{z24Feb13-2-lem}
		
		Let $n\ge 2$ be an integer, $-\infty < t_1 < t_2 < \infty$,
		\begin{equation}\label{qd2023Dec11-1}
				\pp_t \bar{\theta} = \Delta_x \bar{\theta}
				+
				g_1(x,t) \mbox{ \ in \ }
				\mathbb{R}_+^n \times (t_1,t_2),
	\quad
				-\partial_{x_n } \bar{\theta}
				=
				g_2(\tilde{x} ,t )
	 \mbox{ \ on \ }  \partial\mathbb{R}_+^n \times (t_1,t_2),
	\quad
				\bar{\theta}(x ,t_1 )  = g_3 (x )
				 \mbox{ \ in \ } \mathbb{R}_+^n .
		\end{equation}
		Given $t_3\in [t_2,\infty)$, $q\in \partial \mathbb{R}_+^n$, set
		\begin{equation*}
		\begin{aligned}
&
			s = s(t) = - \ln (t_3-t ) \in (s_1,s_2),
\quad
s_i = s(t_i) = - \ln (t_3-t_i), \ i=1,2 ,
			\quad
			z = z(x ,t ) = (t_3-t )^{-\frac 1 2 } \left(x -q\right),
\\
&
\mbox{that is, }
\quad
t  = t(s) = t_3-e^{-s} ,
\quad
t_i =t(s_i) =t_3-e^{-s_i} , \ i=1,2 ,
\quad
x = x(z,s) = e^{-\frac{s}{2}} z + q ;
\\
&
\bar{\theta}(x,t):= \theta \big( (t_3-t)^{-\frac 1 2 } \left(x -q\right) , - \ln (t_3-t) \big)
,
\quad
\mbox{ that is, }
\quad
\theta(z,s) = \bar{\theta}\big( e^{-\frac{s}{2}} z +q, t_3-e^{-s} \big)  .
		\end{aligned}
		\end{equation*}
Then \eqref{qd2023Dec11-1} is equivalent to
		\begin{equation*}
			\begin{cases}
				\partial_s \theta
				=   \Delta_z \theta
				-
				\frac{z}{2} \cdot \nabla_z \theta
				+
				e^{-s}
				g_1(e^{-\frac{s}{2}} z + q,t_3-e^{-s})
				&
				\mbox{ \ for \ }
				(z,s) \in  \mathbb{R}_+^n \times (s_1,s_2),
				\\
			-\partial_{z_n} \theta =
				e^{-\frac{s}{2}} g_2(e^{-\frac{s}{2}} \tilde{z} + \tilde{q},t_3-e^{-s})
				&
				\mbox{ \ for \ }
				(z,s) \in  \partial\mathbb{R}_+^n \times (s_1,s_2),
				\\
				\theta(z,s_1)
				=
				g_3\big( (t_3-t_1 )^{\frac{1}{2}} z+q \big)
				&
				\mbox{ \ for \ }
				z \in  \mathbb{R}_+^n  .
			\end{cases}
		\end{equation*}
		
	\end{lemma}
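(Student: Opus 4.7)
\textbf{Proof plan for Lemma \ref{z24Feb13-2-lem}.} The assertion is a self-similar change-of-variables computation, so the plan is to verify each of the three relations (interior equation, boundary condition, initial datum) by a direct chain rule applied to the defining identity $\bar\theta(x,t)=\theta(z(x,t),s(t))$ with $z=(t_3-t)^{-1/2}(x-q)$ and $s=-\ln(t_3-t)$.

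First I would compute the basic derivatives of the change of variables. From $s(t)=-\ln(t_3-t)$ we get $\partial_t s = (t_3-t)^{-1}=e^{s}$, and from $z(x,t)=(t_3-t)^{-1/2}(x-q)$ we get $\partial_{x_i} z_j = e^{s/2}\delta_{ij}$ and $\partial_t z = \tfrac{1}{2}(t_3-t)^{-1}z = \tfrac{e^s}{2}z$. Hence the chain rule gives
\begin{equation*}
\partial_t \bar\theta
= e^{s}\,\partial_s\theta + \tfrac{e^s}{2}\,z\cdot\nabla_z\theta,
\qquad
\nabla_x\bar\theta = e^{s/2}\nabla_z\theta,
\qquad
\Delta_x\bar\theta = e^{s}\Delta_z\theta .
\end{equation*}
Substituting into the interior equation $\partial_t\bar\theta=\Delta_x\bar\theta+g_1$ and dividing by $e^{s}$ yields
\begin{equation*}
\partial_s\theta
=\Delta_z\theta - \tfrac{z}{2}\cdot\nabla_z\theta
+ e^{-s}g_1\bigl(e^{-s/2}z+q,\,t_3-e^{-s}\bigr),
\end{equation*}
which is exactly the first line of the target system.

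Next I would handle the boundary condition. Since $x_n=e^{-s/2}z_n$ and $q\in\partial\mathbb{R}^n_+$ means $q_n=0$, the hyperplane $\{x_n=0\}$ corresponds precisely to $\{z_n=0\}$. Using $\partial_{x_n}\bar\theta=e^{s/2}\partial_{z_n}\theta$, the Neumann-type condition $-\partial_{x_n}\bar\theta=g_2(\tilde x,t)$ becomes
\begin{equation*}
-\partial_{z_n}\theta\bigl|_{z_n=0}
= e^{-s/2}g_2\bigl(e^{-s/2}\tilde z+\tilde q,\,t_3-e^{-s}\bigr),
\end{equation*}
which is the second line. For the initial condition, evaluating at $t=t_1$, i.e.\ $s=s_1=-\ln(t_3-t_1)$, we have $x=e^{-s_1/2}z+q=(t_3-t_1)^{1/2}z+q$, and so $\theta(z,s_1)=\bar\theta((t_3-t_1)^{1/2}z+q,t_1)=g_3((t_3-t_1)^{1/2}z+q)$, matching the third line.

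There is no genuine obstacle here, only bookkeeping: one must keep track of the $e^s$, $e^{s/2}$ factors coming from the parabolic scaling of $t$ versus $x$, and carefully re-express the arguments of $g_1,g_2,g_3$ in the new variables via the inverse map $(z,s)\mapsto (x,t)=(e^{-s/2}z+q,\,t_3-e^{-s})$. The equivalence is two-way because each step of the chain rule is reversible, so the converse direction (from the $\theta$-system to the $\bar\theta$-system) follows by the same computation read backwards.
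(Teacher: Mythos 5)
Your proof is correct and follows essentially the same route as the paper: both apply the chain rule to $\bar\theta(x,t)=\theta\bigl((t_3-t)^{-1/2}(x-q),-\ln(t_3-t)\bigr)$, collect the factors of $e^{s}$ and $e^{s/2}$, and then change variables to land on the $\theta$-system. The bookkeeping of $\partial_t s=e^s$, $\partial_t z=\tfrac{e^s}{2}z$, $\nabla_x=e^{s/2}\nabla_z$, and the identification of $\{x_n=0\}$ with $\{z_n=0\}$ (using $q_n=0$) are exactly as in the paper's proof.
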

	
	\begin{proof}
Plug $\bar{\theta}(x,t):= \theta \big( (t_3-t)^{-\frac 1 2 } \left(x -q\right) , - \ln (t_3-t) \big) $ into \eqref{qd2023Dec11-1}, then
		\begin{equation*}
			\begin{aligned}
				& 2^{-1} \left(t_3-t \right)^{-\frac{3}{2}}
(x-q) \cdot \left(\nabla_z \theta\right)\big( (t_3-t )^{-\frac 1 2 } \left(x -q\right) , - \ln (t_3-t ) \big)
				\\
				& +
				\left(t_3-t \right)^{-1} \left(\partial_s \theta\right)\big( (t_3-t )^{-\frac 1 2 } \left(x -q\right) , - \ln (t_3-t ) \big)
				\\
				= \ &
				\left(t_3-t \right)^{-1} \left( \Delta_z \theta\right)\big( (t_3-t )^{-\frac 1 2 } \left(x -q\right) , - \ln (t_3-t ) \big)
				+
				g_1(x ,t )  \mbox{ \ in \ }
				\mathbb{R}_+^n \times (t_1,t_2) ,
\\
&
\left(t_3-t \right)^{-\frac{1}{2}}
\left( -\partial_{z_n} \theta\right)\big( (t_3-t )^{-\frac 1 2 } \left(x -q\right) , - \ln (t_3-t ) \big) =
g_2(\tilde{x} ,t )  \mbox{ \ on \ }  \partial\mathbb{R}_+^n \times (t_1,t_2) .
			\end{aligned}
		\end{equation*}
Changing the variables deduces the conclusion.
	\end{proof}

\begin{lemma}\label{qd23Dec04-2-lem}
	
	Let $n\ge 2$ be an integer, $-\infty<s_1<s_2 \le \infty$,
	\begin{equation}\label{qd2023Dec03-2}
		\pp_{s}\theta  = \Delta_z \theta - \frac z 2  \cdot \nabla_z \theta
		+f_1(z,s)  \mbox{ \ in \ }  \mathbb{R}_+^n \times (s_1,s_2),
		\quad
		-\partial_{z_n} \theta =
		f_2(\tilde{z},s)  \mbox{ \ on \ }  \partial \mathbb{R}_+^n \times (s_1,s_2),
		\quad
		\theta(z,s_1) =  f_3(z)  \mbox{ \ in \ }  \mathbb{R}_+^n .
	\end{equation}
Given $t_3\in \mathbb{R}$, $q\in \partial \mathbb{R}_+^n$,  set
	\begin{equation*}
\begin{aligned}
&
		t = t(s) := t_3-e^{-s} \in (t_1,t_2),
		\quad
		t_{i} = t_{i}(s_i) := t_3-e^{-s_i}, \ i=1,2,
		\quad
		x = x(z,s) := e^{-\frac{s}{2}} z + q,
\\
&
\mbox{that is, \ }
s = s(t) = - \ln (t_3-t),
\quad
s_i = s(t_i ) = - \ln (t_3-t_i), \ i=1,2 ,
\quad
z = z(x,t) = (t_3-t)^{-\frac 1 2 } (x-q) ;
\\
&
\bar{\theta}(x,t):= \theta \big( (t_3-t)^{-\frac 1 2 } (x-q), - \ln (t_3-t) \big)
,
\quad
\mbox{ that is, }
\quad
\theta(z,s) = \bar{\theta}\big(e^{-\frac{s}{2}} z + q, t_3-e^{-s} \big)  .
\end{aligned}
	\end{equation*}
Then \eqref{qd2023Dec03-2} is equivalent to
	\begin{equation}\label{qd2023Dec03-1}
		\begin{cases}
			\pp_{t} \bar{\theta} = \Delta_{x} \bar{\theta}
			+
			(t_3-t)^{-1} f_1\big((t_3-t)^{-\frac 1 2 } (x-q), - \ln (t_3-t) \big)  & \mbox{ \ for \ }
			(x,t) \in  \mathbb{R}_+^n \times (t_1,t_2),
			\\
		-\partial_{x_n} \bar{\theta}
			=
			(t_3-t)^{-\frac{1}{2}}
			f_2\big( (t_3-t)^{-\frac 1 2 } (\tilde{x} -\tilde{q} ),
			- \ln (t_3-t)
			\big)
			& \mbox{ \ for \ }
			(x,t) \in  \partial \mathbb{R}_+^n \times (t_1,t_2),
			\\
			\bar{\theta}(x,t_{1})  = f_3 (e^{\frac{s_1}{2}} (x-q)) & \mbox{ \ for \ } x\in \mathbb{R}_+^n .
		\end{cases}
	\end{equation}
	Moreover, the solution of \eqref{qd2023Dec03-2} is given by
	\begin{equation}\label{qd23Dec04-1}
		\begin{aligned}
			&
			\theta (z, s)
			=
			\int_{s_1}^{s}
			\int_{\mathbb{R}_+^n}
			H_n\left(z,s,w,\sigma\right)  f_1(w, \sigma) dw d\sigma
			\\
			&
			+
			\int_{s_1}^{s}
			\int_{\mathbb{R}^{n-1}}
			H_n\left(z,s,(\tilde{w},0),\sigma\right)  f_2(\tilde{w}, \sigma) d \tilde{w} d\sigma
			+
			\int_{\mathbb{R}_+^n}
			H_n\left(z,s,w,s_1\right)  f_3 (w)  dw   ,
		\end{aligned}
	\end{equation}
	where
	\begin{equation*}
		\begin{aligned}
			&
			H_n\left(z,s,w,\sigma\right) :=
			e^{-\sigma \frac{n}{2} }
			G_n(e^{-\frac{s}{2}} z,t_3-e^{-s},e^{-\frac{\sigma}{2}} w,t_3 -e^{-\sigma})
			=
			\left[ 4\pi \left( 1-e^{\sigma-s}\right)\right]^{-\frac{n}{2}}
			\times
			\\
			&
			\left[
			\exp \left(
			-\frac{\left| e^{\frac{\sigma -s}{2}} \tilde{z} -\tilde{w}\right|^2 + \left| e^{\frac{\sigma -s}{2}} z_n -w_{n} \right|^2 }{4\left(1-e^{\sigma-s} \right)}
			\right)+
			\exp \left(
			-\frac{\left| e^{\frac{\sigma -s}{2}} \tilde{z} -\tilde{w}\right|^2 + \left| e^{\frac{\sigma -s}{2}} z_n +w_{n} \right|^2 }{4\left(1-e^{\sigma-s} \right)}
			\right)
			\right].
		\end{aligned}
	\end{equation*}

\end{lemma}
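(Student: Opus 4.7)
The transformation and its inverse have already been carried out in Lemma~\ref{z24Feb13-2-lem}, so the equivalence between \eqref{qd2023Dec03-2} and \eqref{qd2023Dec03-1} reduces to the same chain-rule computation read in the reverse direction. Writing $\bar\theta(x,t) = \theta\bigl((t_3-t)^{-1/2}(x-q),\,-\ln(t_3-t)\bigr)$ and differentiating, I would get
\[
\partial_t\bar\theta = \tfrac{1}{2}(t_3-t)^{-3/2}(x-q)\cdot(\nabla_z\theta) + (t_3-t)^{-1}\,\partial_s\theta, \quad \Delta_x\bar\theta = (t_3-t)^{-1}\Delta_z\theta, \quad -\partial_{x_n}\bar\theta = (t_3-t)^{-1/2}(-\partial_{z_n}\theta),
\]
and substitution into $\partial_s\theta = \Delta_z\theta - \tfrac{z}{2}\cdot\nabla_z\theta + f_1$, $-\partial_{z_n}\theta = f_2$ produces \eqref{qd2023Dec03-1}; the initial-time correspondence $t_1 = t_3 - e^{-s_1}$ and the datum $\bar\theta(\cdot,t_1) = f_3(e^{s_1/2}(\cdot-q))$ are then immediate.

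For the representation formula, the plan is to apply the Neumann representation \eqref{qd2023Dec03-3} to $\bar\theta$ over the cylinder $\mathbb{R}_+^n \times (t_1,t)$, using the interior source, boundary source and initial datum read off from \eqref{qd2023Dec03-1}. This yields three integrals involving $G_n$, the time weights $(t_3-\tau)^{-1}$ and $(t_3-\tau)^{-1/2}$, and the data $f_1, f_2, f_3$ evaluated on rescaled arguments. I would then change the dummy variables via $\sigma = -\ln(t_3-\tau)$, $w = (t_3-\tau)^{-1/2}(y-q)$, whose Jacobians are $d\tau = e^{-\sigma}d\sigma$, $dy = e^{-n\sigma/2}dw$, and $d\tilde y = e^{-(n-1)\sigma/2}d\tilde w$ on the boundary. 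A short check shows that the weights $(t_3-\tau)^{-1}$ and $(t_3-\tau)^{-1/2}$ combine with the time Jacobian to leave a common factor $e^{-n\sigma/2}$ in every one of the three integrals, which I would absorb into $G_n$ to define
\[
H_n(z,s,w,\sigma) := e^{-n\sigma/2}\,G_n\bigl(e^{-s/2}z+q,\,t_3-e^{-s},\,e^{-\sigma/2}w+q,\,t_3-e^{-\sigma}\bigr).
\]

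Finally I would derive the closed form of $H_n$ announced in the lemma by substituting the Gaussian \eqref{qd23Dec22-1} and using $t-\tau = e^{-\sigma}(1-e^{\sigma-s})$ together with $e^{-s/2}z - e^{-\sigma/2}w = e^{-\sigma/2}(e^{(\sigma-s)/2}z - w)$ componentwise (noting $q_n=0$, so $x_n=e^{-s/2}z_n$ and $y_n = e^{-\sigma/2}w_n$). The $e^{-\sigma}$ in each squared difference cancels against the $e^{-\sigma}$ in $4(t-\tau)$, so each Gaussian exponent reduces to the claimed form $|e^{(\sigma-s)/2}z \mp w|^2 / [4(1-e^{\sigma-s})]$; the prefactor reduces from $[4\pi e^{-\sigma}(1-e^{\sigma-s})]^{-n/2}$ to $[4\pi(1-e^{\sigma-s})]^{-n/2}$ after absorbing $e^{-n\sigma/2}$. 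The main obstacle is purely notational: keeping track of the Jacobian factors, the time-weights from \eqref{qd2023Dec03-1}, and the $e^{-\sigma}$ from $t-\tau$ so that they combine cleanly into a single $e^{-n\sigma/2}$; once this cancellation is tallied, the representation \eqref{qd23Dec04-1} follows at once.
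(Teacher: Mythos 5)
Your proposal is correct and follows essentially the same route as the paper: chain rule to pass between \eqref{qd2023Dec03-2} and \eqref{qd2023Dec03-1}, the Neumann representation \eqref{qd2023Dec03-3} applied to $\bar\theta$, then the substitution $\sigma=-\ln(t_3-\tau)$, $w=(t_3-\tau)^{-1/2}(y-q)$ with Jacobian bookkeeping and translation invariance $G_n(x+q,t,z+q,s)=G_n(x,t,z,s)$ (valid because $q\in\partial\mathbb{R}_+^n$ so $q_n=0$) to arrive at $H_n$. The only cosmetic difference is that you invoke Lemma~\ref{z24Feb13-2-lem} for the chain-rule step while the paper recomputes the derivatives inline.
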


\begin{proof}
	
	By direct calculation,
	\begin{equation*}
		\begin{aligned}
			&
			\bar{\theta}(x,t_{1})
			=
			f_3 (e^{\frac{s_1}{2}} (x-q))
			,
			\quad
			\left( -\partial_{x_{n}} \bar{\theta} \right)\left( (\tilde{x},0),t \right)
			=
			(t_3-t)^{-\frac{1}{2}}
			f_2\big( (t_3-t)^{-\frac 1 2 } (\tilde{x} -\tilde{q} ),
			- \ln (t_3-t)
			\big)  ,
			\\
			&
			\pp_{s} \theta =
			\nabla_{x} \bar{\theta} \cdot z e^{-\frac{s}{2}} (-\frac{1}{2})
			+
			\pp_{t} \bar{\theta} e^{-s},
			\quad
			\pp_{z_i} \theta =\pp_{x_{i}} \bar{\theta} e^{-\frac{s}{2}},
			\quad
			\pp_{z_i z_i} \theta =\pp_{x_{i} x_{i}} \bar{\theta} e^{-s} .
		\end{aligned}
	\end{equation*}
Then \eqref{qd2023Dec03-2} is equivalent to \eqref{qd2023Dec03-1}. By \eqref{qd2023Dec03-3}, the solution of \eqref{qd2023Dec03-1} is given by
	\begin{equation*}
		\begin{aligned}
			&	\bar{\theta}(x,t) =
			\int_{t_{1}}^{t}
			\int_{\mathbb{R}_+^n}
			G_n(x,t,v,\vartheta) (t_3-\vartheta)^{-1} f_1((t_3-\vartheta)^{-\frac 1 2 } (v-q), - \ln (t_3-\vartheta) ) dv d\vartheta
			\\
			&
			+
			\int_{t_{1}}^{t}\int_{\mathbb{R}^{n-1}}
			G_n(x,t,(\tilde{v},0),\vartheta) 	(t_3-\vartheta)^{-\frac{1}{2}}
			f_2\big( (t_3-\vartheta)^{-\frac 1 2 } (\tilde{v} -\tilde{q}) ,
			- \ln (t_3-\vartheta)
			\big)  d\tilde{v} d\vartheta
			\\
			&
			+
			\int_{\mathbb{R}_+^n}
			G_n(x,t,v, t_{1}) f_3 (e^{\frac{s_1}{2}} (v-q))  dv  .
		\end{aligned}
	\end{equation*}
	Then
	\begin{equation*}
		\begin{aligned}
			&
			\theta (z, s) = \bar{\theta}\big(e^{-\frac{s}{2}} z + q, t_3-e^{-s} \big)
			\\
			= \ &
			\int_{t_3-e^{-s_1}}^{t_3-e^{-s}}
			\int_{\mathbb{R}_+^n}
			G_n(e^{-\frac{s}{2}} z+ q,t_3-e^{-s},v,\vartheta) (t_3-\vartheta)^{-1} f_1((t_3-\vartheta)^{-\frac 1 2 } (v-q), - \ln (t_3-\vartheta) ) dv d\vartheta
			\\
			&
			+
			\int_{t_3-e^{-s_1}}^{t_3-e^{-s}}\int_{\mathbb{R}^{n-1}}
			G_n(e^{-\frac{s}{2}} z+ q,t_3-e^{-s},(\tilde{v},0),\vartheta) 	(t_3-\vartheta)^{-\frac{1}{2}}
			f_2\big( (t_3-\vartheta)^{-\frac 1 2 } (\tilde{v} - \tilde{q}),
			- \ln (t_3-\vartheta)
			\big)  d\tilde{v} d\vartheta
			\\
			&
			+
			\int_{\mathbb{R}_+^n}
			G_n(e^{-\frac{s}{2}} z+ q,t_3-e^{-s},v, t_3-e^{-s_1}) f_3 (e^{\frac{s_1}{2}} (v-q) )  dv
			\\
			= \ &
			\int_{s_1}^{s}
			\int_{\mathbb{R}_+^n}
			e^{-\sigma \frac{n}{2} }
			G_n(e^{-\frac{s}{2}} z,t_3-e^{-s},e^{-\frac{\sigma}{2}} w,t_3 -e^{-\sigma})  f_1(w, \sigma) dw d\sigma
			\\
			&
			+
			\int_{s_1}^{s}
			\int_{\mathbb{R}^{n-1}}
			e^{-\sigma \frac{n}{2} }
			G_n(e^{-\frac{s}{2}} z,t_3-e^{-s},(e^{-\frac{\sigma}{2}} \tilde{w} ,0),t_3 -e^{-\sigma})  f_2(\tilde{w}, \sigma) d \tilde{w} d\sigma
			\\
			&
			+
			\int_{\mathbb{R}_+^n}
			e^{-s_1 \frac{n}{2}}
			G_n(e^{-\frac{s}{2}} z,t_3-e^{-s},e^{-\frac{s_1}{2}} w, t_3-e^{-s_1})  f_3 (w)  dw  ,
		\end{aligned}
	\end{equation*}
where we used $G_n(x+q,t,z+q,s) = G_n(x,t,z,s)$ with $q\in \partial \mathbb{R}_+^n$ for the last step.
\end{proof}

\subsection{Some barrier functions}

\begin{lemma}\label{lemma-barrier1}

Let $n\ge 2$ be an integer.
For $\tilde y\in \mathbb{R}^{n-1}$, $y_n\ge 0$, $y=(\tilde y, y_n)$, $\vartheta> 0$, $a>0$, denote
$
P(y) =
\big|(\tilde y,y_n+1+\vartheta|\tilde y|)\big|^{-a} $, then
$
\pp_{y_n}P
\sim_{a,\vartheta}
- \langle y \rangle^{-a-1} $,
$
\left|
y\cdot\nabla P
\right| \lesssim_{a,\vartheta}
\langle y \rangle^{-a} $.
Moreover, if $a\in (0,n-2)$, $0< \vartheta <C(n,a)$ with a sufficiently small constant $C(n,a)>0$ only depending on $n,a$,
then $ \Delta P \lesssim_{n,a,\vartheta} -  \langle y \rangle^{-a-2} $.

\end{lemma}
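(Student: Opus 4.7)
My plan is to introduce the convenient variables $r := |\tilde y|$ and $s := y_n + 1 + \vartheta r$, so that $P = F^{-a/2}$ with $F(y) := r^2 + s^2$, and reduce everything to direct differentiation of $F$. First I would record the size equivalences
\[
s \sim_\vartheta \langle y \rangle \quad \text{and} \quad F \sim_\vartheta \langle y \rangle^2 ,
\]
which follow from $s \ge \max\{1, y_n, \vartheta r\} \gtrsim_\vartheta 1 + |y|$, $s \le (1+\vartheta)(1+|y|)$, and $\langle y\rangle^2 \le r^2 + (y_n+1)^2 \le F \lesssim_\vartheta \langle y\rangle^2$.

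With these equivalences, the first two assertions are essentially immediate. A one-line chain-rule computation gives $\pp_{y_n} P = -a s F^{-a/2-1}$, whence $\pp_{y_n} P \sim_{a,\vartheta} -\langle y\rangle^{-a-1}$. For the Euler-type derivative, the identity
\[
r^2 + \vartheta r s + y_n s = r^2 + s(y_n + \vartheta r) = r^2 + s(s-1) = F - s
\]
leads, after a direct computation, to $y\cdot \nabla P = -a(F-s) F^{-a/2-1}$; since $s \ge 1$ forces $0 \le F - s \le F$, we obtain $|y\cdot\nabla P| \le a F^{-a/2} \sim_{a,\vartheta} \langle y\rangle^{-a}$.

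The main content is the Laplacian estimate, for which I would write
\[
\Delta P = \tfrac{a}{2} F^{-a/2-1}\Bigl[\bigl(\tfrac{a}{2}+1\bigr) F^{-1}|\nabla F|^2 - \Delta F \Bigr]
\]
and verify by direct differentiation that
\[
\Delta F = 2n + 2\vartheta^2 + 2(n-2)\vartheta \tfrac{s}{r}, \qquad |\nabla F|^2 = 4\bigl(F + 2\vartheta r s + \vartheta^2 s^2\bigr) .
\]
Substitution shows the bracket equals
\[
2(a+2-n) - 2\vartheta^2 - 2(n-2)\vartheta \tfrac{s}{r} + 4(a+2)\vartheta \tfrac{rs}{F} + 2(a+2)\vartheta^2 \tfrac{s^2}{F} .
\]
The leading term $2(a+2-n)$ is strictly negative since $a \in (0, n-2)$. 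The two positive perturbations are controlled via the elementary inequalities $rs/F \le 1/2$ and $s^2/F \le 1$, so their combined contribution is at most $2(a+2)\vartheta + 2(a+2)\vartheta^2$, while $-2(n-2)\vartheta s/r$ has the favorable sign and is discarded. Choosing $\vartheta$ small in terms of $n$ and $a$ then forces the bracket below $a+2-n < 0$, and combining with $F^{-a/2-1} \sim_\vartheta \langle y\rangle^{-a-2}$ gives the claim.

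The hard part is the algebraic bookkeeping of the perturbation terms generated by the $\vartheta$-tilt in $s$; conceptually the key observation is that the apparently singular term $2(n-2)\vartheta s/r$ in $\Delta F$ carries the \emph{favorable} sign in the bracket and therefore never obstructs the upper bound as $r \to 0$. A minor technical point is that $P$ fails to be $C^2$ along $\{\tilde y = 0\}$ since $|\tilde y|$ is not smooth there, but the pointwise bounds in the lemma hold on $\{\tilde y \ne 0\}$ where the computation is justified, and this is sufficient for the barrier applications later on.
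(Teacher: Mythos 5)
Your proof is correct, and while the overall strategy is the same as the paper's (direct differentiation plus a small-$\vartheta$ sign analysis), the algebraic organization is genuinely different and noticeably leaner. The paper expands all second partials of $P=g^{-a/2}$ with $g(y)=(1+\vartheta^2)|\tilde y|^2+(y_n+1)^2+2\vartheta(y_n+1)|\tilde y|$, collects them into $\Delta P = a\,g^{-a/2-1}I(t)$ with $t=(y_n+1)|\tilde y|^{-1}$, and then proves negativity of $I(t)$ by subtracting off the $t^3$ term and running a discriminant argument on the remaining quadratic. You instead write $P=F^{-a/2}$ with $F=r^2+s^2$, invoke the one-line identity $\Delta(F^{-a/2})=\tfrac{a}{2}F^{-a/2-1}\bigl[(\tfrac{a}{2}+1)F^{-1}|\nabla F|^2-\Delta F\bigr]$, and bound the resulting bracket using only the elementary inequalities $2rs\le F$ and $s^2\le F$ together with discarding the favorable term $-2(n-2)\vartheta s/r$. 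This avoids the explicit cubic and the discriminant computation; the trade-off is that your clean identity $y\cdot\nabla F=2(F-s)$ requires the slightly clever observation $s(y_n+\vartheta r)=s(s-1)$, while the paper's route is more mechanical but heavier. Both discard the same favorable $s/r$ (equivalently $t^3$) term, which is the essential structural observation. Your caveat about $P$ failing to be $C^2$ on $\{\tilde y=0\}$ is accurate and applies equally to the paper's proof; it is harmless for the barrier applications, where comparison holds across a set of measure zero.
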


\begin{proof}
	
Denote $ g(y)=\big|(\tilde y,y_n+1+\vartheta|\tilde y|)\big|^2=(1+\vartheta^2)|\tilde y|^2+(y_n+1)^2+2\vartheta (y_n+1)|\tilde y|  $.
Obviously, $g(y) \sim_{\vartheta} \langle y \rangle^2$, $P(y)=g^{-\frac{a}{2}}(y)$.
By direct calculation, for $i=1,2,\dots, n-1$,
\begin{equation*}
\begin{aligned}
\pp_{y_i}P = \ & -a
g^{-\frac{a}{2}-1}
\left[(1+\vartheta^2)y_i+\vartheta(y_n+1) y_i |\tilde y|^{-1} \right]  ,
\\
\pp_{y_iy_i}P =  \ &  a(a+2)
g^{-\frac{a}{2}-2}
\left[ 1+\vartheta^2 + \vartheta(y_n+1) |\tilde y|^{-1} \right]^2 y_i^2 \\
&
-a
g^{-\frac{a}{2}-1}
\left[ 1+\vartheta^2 +
\vartheta(y_n+1) |\tilde y|^{-1}
-
\vartheta (y_n+1)y_i^2
|\tilde y|^{-3}
\right],
\\
\pp_{y_n}P =  \ &  -a
g^{-\frac{a}{2}-1}
\left(\vartheta|\tilde y|+y_n+1\right),
\\
\pp_{y_ny_n}P = \ &
a(a+2)
g^{-\frac{a}{2}-2}
\left(\vartheta|\tilde y|+y_n+1 \right)^2-a g^{-\frac{a}{2}-1} ,
\end{aligned}
\end{equation*}
which implies $\pp_{y_n}P
\sim_{a,\vartheta}
- \langle y \rangle^{-a-1}$,
$ \left|
y\cdot\nabla P
\right| \lesssim_{a,\vartheta}
\langle y \rangle^{-a} $. Moreover,
\begin{equation*}
\begin{aligned}
\Delta P = \ & a(a+2) g^{-\frac{a}{2}-2} \left\{ \left[(1+\vartheta^2)^2+\vartheta^2\right]|\tilde y|^2+2\vartheta(\vartheta^2+2)(y_n+1)|\tilde y|
+
(\vartheta^2+1)(y_n+1)^2\right\}\\
&-a
g^{-\frac{a}{2}-1}
\left[
(\vartheta^2+1)(n-1)+1+
\vartheta(n-2)(y_n+1) |\tilde y|^{-1}
\right]\\
= \ &
a g^{-\frac{a}{2}-1} \Big\{ g^{-1} (a+2)\left[(\vartheta^4+3\vartheta^2+1)|\tilde y|^2+2\vartheta(\vartheta^2+2)(y_n+1)|\tilde y|+(\vartheta^2+1)(y_n+1)^2\right]
\\
&- (n-1)\vartheta^2 -n-
\vartheta(n-2)(y_n+1) |\tilde y|^{-1}
\Big\}
:=a g^{-\frac{a}{2}-1} I(t),
\end{aligned}
\end{equation*}
where we set $t=(y_n+1) |\tilde y|^{-1}\in (0,\infty)$ and
\begin{equation*}
\begin{aligned}
&
I(t):=
\left(a+2\right) \frac{\vartheta^4+3\vartheta^2+1 + 2\vartheta\left(\vartheta^2+2\right)t + \left(\vartheta^2+1\right) t^2}{\vartheta^2+1 +2\vartheta t + t^2}
-
\left(n-1\right) \vartheta^2 -n -\left(n-2\right)\vartheta t
\\
= \ &
-\frac{f(t)}{\vartheta^2+1 +2\vartheta t + t^2}  ,
\end{aligned}
\end{equation*}
where
\begin{equation*}
\begin{aligned}
f(t) := \ & t^3\vartheta(n-2)
+
t^2
\left[ \vartheta^2 \left(3n-a-7\right)
+n-a-2
\right]
+t \vartheta \left[
\vartheta^2 \left(3n -2a -8\right)
+3n -4a -10
\right]
\\
&
+
\vartheta^4 \left(n-a-3\right)
+
\vartheta^2 \left( 2n-3a-7\right)
+n-a-2  .
\end{aligned}
\end{equation*}
For $a<n-2$, by the discriminant of the quadratic polynomial, there exists a constant $C(n, a)>0$ sufficiently small such that for any $0< \vartheta <C(n, a)$, we have
\begin{equation*}
	 \vartheta^2 \left(3n-a-7\right)
	+n-a-2>0
\mbox{ \quad and \quad }
f(t) - t^3\vartheta(n-2) >0 \mbox{ \ for \ } t\ge 0 ,
\end{equation*}
which implies $I(t)<0$ for $t\ge 0$. Note that $\lim\limits_{t\to \infty} I(t) <0$. Thus, $\sup\limits_{t\ge 0} I(t) \le - C_1(n,a,\vartheta)$ with a constant $C_1(n,a,\vartheta)>0$. By $a>0$, we conclude the second result.
\end{proof}

\begin{lemma}\label{qd23Dec06-2-lem}
	Given $t_2\ge t_1> 0$, $a_1>-1$, $a_2\in \mathbb{R}$,
	\begin{equation*}
		\int_0^{t_1} \left(t_1-s\right)^{a_1} \left(t_2-s\right)^{a_2} ds \lesssim_{a_1,a_2}
		\begin{cases}
			t_2^{a_1+a_2+1}, & \mbox{ \ if \ } a_1+a_2>-1
			\\
			1+
			\ln \left( \frac{t_2}{t_2 -\left[ t_1 -\left(t_2-t_1\right)\right]_+}\right)
			, & \mbox{ \ if \ } a_1+a_2=-1
			\\
			\left(t_2-t_1\right)^{a_1+a_2+1} ,
			& \mbox{ \ if \ } a_1+a_2<-1  .
		\end{cases}
	\end{equation*}
	
\end{lemma}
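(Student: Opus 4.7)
The plan is to substitute $u = t_1 - s$, which reduces the problem to estimating
\begin{equation*}
I = \int_0^{t_1} u^{a_1}(\delta + u)^{a_2}\, du, \qquad \delta := t_2 - t_1 \geq 0,
\end{equation*}
and then to split the analysis into two regimes according to whether $\delta \geq t_1$ or $\delta < t_1$.

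In the first regime $\delta \geq t_1$, the factor $\delta + u$ is comparable to $\delta$ on the whole interval $[0, t_1]$, so
\begin{equation*}
I \sim \delta^{a_2} \int_0^{t_1} u^{a_1}\, du \sim \delta^{a_2}\, t_1^{a_1+1}.
\end{equation*}
Using $t_1 \leq \delta$ one has $\delta^{a_2} t_1^{a_1+1} \leq \delta^{a_1+a_2+1}$. Combined with $t_2 \sim \delta$, this yields the bound $t_2^{a_1+a_2+1}$ when $a_1+a_2 > -1$, the constant bound $1$ when $a_1+a_2 = -1$, and the bound $(t_2-t_1)^{a_1+a_2+1}$ when $a_1+a_2 < -1$. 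The middle case matches the claimed expression because $[t_1 - (t_2-t_1)]_+ = 0$ in this regime, so the logarithm vanishes.

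In the second regime $\delta < t_1$, I would split $I = I_1 + I_2$ at $u = \delta$. On $[0, \delta]$, $\delta + u \sim \delta$, giving $I_1 \sim \delta^{a_1+a_2+1}$. On $[\delta, t_1]$, $\delta + u \sim u$, giving $I_2 \sim \int_\delta^{t_1} u^{a_1+a_2}\, du$, which is evaluated in closed form. When $a_1+a_2 > -1$ both pieces are controlled by $t_1^{a_1+a_2+1} \leq t_2^{a_1+a_2+1}$; when $a_1+a_2 < -1$ both are controlled by $\delta^{a_1+a_2+1} = (t_2-t_1)^{a_1+a_2+1}$; when $a_1+a_2 = -1$ one gets $I \sim 1 + \ln(t_1/\delta)$. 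In this last case $t_2 - [t_1-(t_2-t_1)]_+ = 2\delta$, so the claimed bound reads $1 + \ln(t_2/(2\delta))$, and $\ln(t_1/\delta) - \ln(t_2/(2\delta)) = \ln(2t_1/t_2) \leq \ln 2$ since $t_1 \leq t_2$; this is an $O(1)$ error absorbed into the additive $1$.

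I do not anticipate a serious obstacle; the argument amounts to dyadic splitting and elementary beta-type integration. The one slightly subtle observation is that the unified expression $t_2 - [t_1-(t_2-t_1)]_+$ equals $t_2$ when $\delta \geq t_1$ and equals $2\delta$ when $\delta < t_1$, so the single compact logarithmic formula in the borderline case automatically covers both regimes produced by the splitting.
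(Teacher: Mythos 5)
Your proof is correct and is essentially the paper's argument: after the substitution $u=t_1-s$, your split point $u=\delta$ is exactly the paper's split at $s=[t_1-(t_2-t_1)]_+$, and on each piece you use the same comparability $\delta+u\sim\delta$ resp. $\delta+u\sim u$ that the paper uses (in the form $(t_2-s)\sim(t_2-t_1)$ resp. $(t_2-s)\sim(t_1-s)$). Separating the trivial regime $\delta\ge t_1$ up front and verifying the $O(1)$ discrepancy $\ln(2t_1/t_2)\le\ln 2$ in the logarithmic case are just more explicit bookkeeping of the same computation.
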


\begin{proof}
	\begin{equation*}
		\begin{aligned}
			&
			\bigg(	\int_0^{\left[ t_1-\left(t_2-t_1\right)\right]_+} + \int_{\left[ t_1-\left(t_2-t_1\right)\right]_+}^{t_1} \bigg) \left(t_1-s\right)^{a_1} \left(t_2-s\right)^{a_2} ds
			\\
		&	\sim_{a_1,a_2}
			\int_0^{\left[ t_1-\left(t_2-t_1\right)\right]_+}  \left(t_2-s\right)^{a_1+a_2} ds
			+
			\left(t_2-t_1\right)^{a_2}
			\int_{\left[ t_1-\left(t_2-t_1\right)\right]_+}^{t_1}
			\left(t_1-s\right)^{a_1}  ds .
		\end{aligned}
	\end{equation*}
Using $a_1>-1$, we conclude this lemma.
\end{proof}

\begin{lemma}\label{outerbarrieri1}
	Given an integer $n\ge 2$ and constants $s_1\in \mathbb{R}$, $C_f\ge 0$,
	consider
	\begin{equation}\label{barrier9}
 \left\{
 \begin{aligned}
		& \pp_ s \Phi = \Delta_z \Phi-\frac{z}{2} \cdot \nabla_z \Phi+ f_1(z,s)  \mbox{ \ in \ } \mathbb{R}^n_+\times ( s_1,\infty),
  \\
& -\pp_{z_n}\Phi = f_2(\tilde{z},s)
		\mbox{ \ on \ }
		\pp\mathbb{R}^n_+\times ( s_1,\infty),
		\quad
		\Phi(z,s_1)=f_3(z)
		\mbox{ \ in \ } \mathbb{R}^n_+ ,
   \end{aligned}
   \right.
	\end{equation}
	where $\Phi$ is given by the formula \eqref{qd23Dec04-1} formally.
	\begin{enumerate}
		\item\label{qd23Dec06-1}

If
$
\left| f_1(z,s)\right| \le C_f e^{\alpha_2 s } \left\langle e^{\alpha_1 s }
z \right\rangle^{-2-a},
\left| f_2(\tilde{z},s)\right| \le C_f e^{(\alpha_2 -\alpha_1) s } \left\langle e^{\alpha_1 s }
\tilde{z} \right\rangle^{-1-a}
,
f_3(z) \equiv 0
$
with $ (2+a)\alpha_1-\alpha_2 >0$, $a\in \left(0,n-2\right)$,
then there exists a constant $C(n,a,\alpha_1,\alpha_2)>0$ such that
\begin{equation*}
	|\Phi(z, s)|\le   C_f C(n,a,\alpha_1,\alpha_2)\left\{e^{(\alpha_2 - 2\alpha_1 )s} \langle e^{\alpha_1 s } z \rangle^{-a} +
	e^{-[ (2+a) \alpha_1 - \alpha_2] s_1}\right\}.
\end{equation*}

		\item\label{outerheatkerneli2}
		If $
		f_1(z,s) \equiv 0 ,
		f_2(\tilde{z},s)  \equiv 0 	$,
		then there exists a constant $C(n)>0$ such that
		\begin{equation*}
			\left| \Phi(z ,s) \right| \le C(n)
			\left( 1-e^{s_1-s}\right)^{-\frac{n}{4}}
			e^{  \frac{ |z|^2 }{4\left(1+e^{s-s_1}\right)} }
			\|f_3\|_{L_\rho^2(\mathbb{R}_+^n)} .
		\end{equation*}
		Moreover, if $|f_3(z)|\le  C_f |z|^a$ with $a\ge 0$, then there exists a constant $C(n,a)>0$ such that
		\begin{equation*}
			\left|\Phi(z,s) \right|\le C_f C(n,a) \left[ e^{a \frac{s_1-s}{2}} |z|^{a}
			+
			\left(1-e^{s_1-s}\right)^{\frac{a}{2}}
			\right].
		\end{equation*}
		
		\item\label{lemma-barrier3}
		If $
		\left| f_1(z,s)\right| \le C_f e^{\alpha s},
		f_2(\tilde{z},s)  \equiv 0
		,
		f_3(z) \equiv 0 $
		with $\alpha\in \mathbb{R}$, then
		\begin{equation*}
			\left| \Phi(z,s) \right|
			\le
			C_f
			\begin{cases}
				\alpha^{-1} \left(
				e^{\alpha s } -e^{\alpha s_1 } \right),
				& \mbox{ \ if \ } \alpha\ne 0
				\\
				s-s_1  , & \mbox{ \ if \ } \alpha= 0 .
			\end{cases}
		\end{equation*}
		
		\item\label{lemma-barrier4-0}
		If $ f_1(z,s) \equiv 0 ,
		\left|f_2(\tilde{z},s) \right| \le C_f e^{\alpha s}  \left| \tilde z \right|^a
		,
		f_3(z) \equiv 0 $
		with $\alpha\in \mathbb{R}$, $a\ge 0$,
		then there exists a constant $C(n,a,\alpha) >0$ such that
		\begin{equation*}
			\begin{aligned}
				\left| \Phi(z,s) \right|
				\le \ &
				C_f C(n,a,\alpha)
				\bigg[
				e^{-\frac{a}{2} s}
				\left|\tilde{z}\right|^a
				\begin{cases}
					e^{(\alpha+\frac{a}{2}) s_1 }, & \mbox{ \ if \ }
					\alpha+\frac{a}{2} <0
					\\
					\big\langle \ln \left( 1 -\left( 1-2e^{s_1-s}\right)_+\right)
					\big\rangle
					, & \mbox{ \ if \ } \alpha+\frac{a}{2}=0
					\\
					e^{(\alpha+\frac{a}{2}) s } ,
					& \mbox{ \ if \ } \alpha+\frac{a}{2}>0
				\end{cases}
				\\
				&
				+
				\begin{cases}
					e^{\alpha s_1}, & \mbox{ \ if \ } \alpha<0
					\\
					\big\langle \ln \left( 1 -\left( 1-2e^{s_1-s}\right)_+\right)
					\big\rangle
					, & \mbox{ \ if \ } \alpha=0
					\\
					e^{\alpha s} ,
					& \mbox{ \ if \ } \alpha>0
				\end{cases}
				\bigg].
			\end{aligned}
		\end{equation*}

	\end{enumerate}
	
\end{lemma}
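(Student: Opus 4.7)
The strategy is to handle part (1) by a barrier/comparison argument using Lemma \ref{lemma-barrier1}, and parts (2)--(4) by plugging the pointwise assumptions into the representation formula \eqref{qd23Dec04-1} and carrying out explicit Gaussian calculations. The main obstacle will be the barrier construction in part (1): the subordinated drift terms have the ``wrong'' scaling in $s$, so I need to verify that the Laplacian term dominates precisely on the scale $|y|\lesssim e^{\alpha_1 s}$, and to calibrate the constant summand $e^{-[(2+a)\alpha_1-\alpha_2]s_1}$ so that it takes over in the complementary tail without disturbing the boundary inequality.

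For part (1) I would try the supersolution
$$\bar\Phi(z,s)=M\Big\{e^{(\alpha_2-2\alpha_1)s}\,P(e^{\alpha_1 s}z)+e^{-[(2+a)\alpha_1-\alpha_2]s_1}\Big\},$$
with $P$ the barrier from Lemma \ref{lemma-barrier1} (applied with $\vartheta>0$ sufficiently small) and $M=C_f\,C(n,a,\alpha_1,\alpha_2)$ large. Setting $y=e^{\alpha_1 s}z$ and applying the chain rule, one computes
$$(\partial_s-\Delta_z+\tfrac{z}{2}\cdot\nabla_z)\bar\Phi \;=\; Me^{\alpha_2 s}\bigl(-\Delta P(y)\bigr)+Me^{(\alpha_2-2\alpha_1)s}\bigl[(\alpha_2-2\alpha_1)P(y)+(\alpha_1+\tfrac12)\,y\cdot\nabla P(y)\bigr].$$
Lemma \ref{lemma-barrier1} gives $-\Delta P\gtrsim\langle y\rangle^{-a-2}$ and bounds the drift bracket by $\langle y\rangle^{-a}$, so the Laplacian piece dominates in the range $\langle y\rangle\lesssim e^{\alpha_1 s}$ and produces the required supersolution inequality against $|f_1|\le C_f e^{\alpha_2 s}\langle y\rangle^{-a-2}$. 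In the complementary tail $\langle y\rangle\gtrsim e^{\alpha_1 s}$, the bound $|f_1|\le C_f e^{[\alpha_2-(2+a)\alpha_1]s}\le C_f e^{-[(2+a)\alpha_1-\alpha_2]s_1}$ holds (this is where the hypothesis $(2+a)\alpha_1-\alpha_2>0$ enters), and the constant summand absorbs it. On $\partial\mathbb{R}^n_+$, the sign of $\partial_{y_n}P|_{y_n=0}$ from Lemma \ref{lemma-barrier1} yields $-\partial_{z_n}\bar\Phi|_{z_n=0}\gtrsim Me^{(\alpha_2-\alpha_1)s}\langle e^{\alpha_1 s}\tilde z\rangle^{-a-1}\ge|f_2|$, while $\bar\Phi(\cdot,s_1)\ge 0= f_3$. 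Parabolic comparison in the half-space applied to $\bar\Phi\pm\Phi$ then closes the argument.

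For part (2), only the $f_3$-integral in \eqref{qd23Dec04-1} survives. The first estimate follows from Cauchy--Schwarz with the Gaussian weight $\rho$: the ``weighted-norm'' factor is $\|f_3\|_{L^2_\rho}$, and the remaining factor $\bigl(\int H_n^2(z,s,w,s_1)\,e^{|w|^2/4}\,dw\bigr)^{1/2}$ is a pure Gaussian integral that, after completing the square, produces $(1-e^{s_1-s})^{-n/4}\exp\bigl(|z|^2/[4(1+e^{s-s_1})]\bigr)$. For the polynomial-growth bound $|f_3|\le C_f|w|^a$, the triangle inequality $|w|^a\lesssim|e^{(s_1-s)/2}z|^a+|w-e^{(s_1-s)/2}z|^a$ splits $\int H_n|w|^a\,dw$ into a ``mean'' piece contributing $|e^{(s_1-s)/2}z|^a$ against unit Gaussian mass, and a centred Gaussian moment of order $(1-e^{s_1-s})^{a/2}$.

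Parts (3) and (4) reduce to direct integrations against $H_n$. Using $\int_{\mathbb{R}^n_+}H_n(z,s,w,\sigma)\,dw\le 2$, part (3) collapses to $C_f\int_{s_1}^s e^{\alpha\sigma}\,d\sigma$, which is elementary and yields the two stated cases. For part (4), the same splitting $|\tilde w|^a\lesssim|e^{(\sigma-s)/2}\tilde z|^a+|\tilde w-e^{(\sigma-s)/2}\tilde z|^a$ produces two $\sigma$-integrands, $|\tilde z|^a e^{-a(s-\sigma)/2}e^{\alpha\sigma}$ and $(1-e^{\sigma-s})^{a/2}e^{\alpha\sigma}$; each is then integrated against $\sigma\in(s_1,s)$ via Lemma \ref{qd23Dec06-2-lem}, and the trichotomy of that lemma delivers the three-regime dependence on the signs of $\alpha+a/2$ and of $\alpha$.
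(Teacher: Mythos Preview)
Your approach to parts (2)--(4) is essentially the same as the paper's (the paper transforms to physical coordinates via Lemma \ref{qd23Dec04-2-lem} first and then performs the same Gaussian computations, which is why Lemma \ref{qd23Dec06-2-lem} applies directly there; in your similarity-variable formulation you should also pick up a factor $(1-e^{\sigma-s})^{-1/2}$ from the boundary kernel, but this is integrable and causes no trouble).

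Part (1), however, has a genuine gap. The constant summand $e^{-[(2+a)\alpha_1-\alpha_2]s_1}$ in your $\bar\Phi$ is annihilated by the operator $\partial_s-\Delta_z+\tfrac{z}{2}\cdot\nabla_z$, so it contributes \emph{nothing} to the supersolution inequality. In the tail $\langle y\rangle\gtrsim e^{\alpha_1 s}$ the drift bracket $(\alpha_2-2\alpha_1)P+(\alpha_1+\tfrac12)y\cdot\nabla P$ is of size $\langle y\rangle^{-a}$ (and typically negative, since $y\cdot\nabla P\sim -aP$), while the Laplacian term $e^{\alpha_2 s}(-\Delta P)\sim e^{\alpha_2 s}\langle y\rangle^{-a-2}$ is smaller there by a factor $e^{2\alpha_1 s}\langle y\rangle^{-2}\ll 1$. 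Hence $(\partial_s-A_z)\bar\Phi$ can be strictly negative in that region, and $\bar\Phi$ is \emph{not} a supersolution. The point is not to bound $|f_1|$ in the tail (which your constant does) but to offset the negative drift residual of $\bar\Phi_1$ itself.

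The fix is to replace the constant by a time-dependent correction. Writing $\gamma=(2+a)\alpha_1-\alpha_2>0$, take
\[
\psi_2(s)\;=\;e^{-\gamma s_1}-e^{-\gamma s},
\]
so that $(\partial_s-A_z)\psi_2=\gamma e^{-\gamma s}>0$. Since the negative part of $(\partial_s-A_z)\bar\Phi_1$ is bounded below by $-C\,e^{(\alpha_2-2\alpha_1)s}\langle y\rangle^{-a}\ge -C\,e^{-\gamma s}$ uniformly in $y$ (the worst case being $\langle y\rangle\sim e^{\alpha_1 s}$), a large multiple of $\psi_2$ absorbs it. The final pointwise bound is unchanged because $0\le\psi_2\le e^{-\gamma s_1}$. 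This is exactly what the paper does, working in the equivalent physical coordinates $(x,t)$ where $\psi_2$ becomes $T_1^{\gamma}-(T_1-t)^{\gamma}$.
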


\begin{proof}
	
	Denote
	\begin{equation*}
		T_1=e^{- s_1},
		\quad
		\psi(x,t):= \Phi \big((T_1-t)^{-\frac 1 2 } x, - \ln (T_1-t) \big)
		,
		\quad
		\mbox{ that is, }
		\quad
		\Phi(z,s) = \psi( e^{-\frac{s}{2}} z,T_1-e^{-s} )
		.
	\end{equation*}
	Then by Lemma \ref{qd23Dec04-2-lem}, \eqref{barrier9} is equivalent to
	\begin{equation}\label{qd2023Dec03-10}
		\begin{cases}
			\pp_{t} \psi = \Delta_{x} \psi
			+
			(T_1-t)^{-1} f_1\big((T_1-t)^{-\frac 1 2 } x, - \ln (T_1-t) \big)  & \mbox{ \ for \ }
			(x,t) \in  \mathbb{R}_+^n \times (0,T_1),
			\\
			\left(-\partial_{x_{n}} \psi \right)((\tilde{x},0),t)
			=
			(T_1-t)^{-\frac{1}{2}}
			f_2\big( (T_1-t)^{-\frac 1 2 } \tilde{x},
			- \ln (T_1-t)
			\big)
			& \mbox{ \ for \ }
			(\tilde{x},t) \in \mathbb{R}^{n-1} \times (0,T_1),
			\\
			\psi(x,0)  = f_3 (e^{\frac{s_1}{2}} x) & \mbox{ \ for \ } x \in \mathbb{R}_+^n .
		\end{cases}
	\end{equation}
	
	\eqref{qd23Dec06-1}.
	In this case, we have $f_3 (e^{\frac{s_1}{2}} x)\equiv 0$,
	\begin{equation*}
\begin{aligned}
&
			(T_1-t)^{-1} \left| f_1\big((T_1-t)^{-\frac 1 2 } x, - \ln (T_1-t) \big) \right|
			\le
			C_f
			\left(T_1-t\right)^{-\alpha_2 -1}
			\langle
			\left(T_1-t\right)^{-\alpha_1 -\frac{1}{2}} x
			\rangle^{-2-a} ,
\\
&
	(T_1-t)^{-\frac{1}{2}}
\left|f_2\big( (T_1-t)^{-\frac 1 2 } \tilde{x},
- \ln (T_1-t)
\big)
\right| \le C_f
\left(T_1-t\right)^{-(\alpha_2-\alpha_1) -\frac{1}{2}}
\langle
\left(T_1-t\right)^{-\alpha_1 -\frac{1}{2}} \tilde{x}
\rangle^{-1-a}   .
\end{aligned}
	\end{equation*}
	By Lemma \ref{lemma-barrier1}, for $a\in \left(0,n-2\right)$, we set
	\begin{equation*}
		P(y)= \left|(\tilde y,y_n+1+\vartheta|\tilde y|)\right|^{-a}, \quad y=\left(T_1-t\right)^{-(\alpha_1+\frac 12)} x,
	\end{equation*}
	where $\vartheta= \vartheta(n,a) >0$ is a small constant to make $\Delta_y P(y) \lesssim_{n,a,\vartheta} -  \langle y \rangle^{-a-2}$.

	Set $\psi_1(x,t)= 2 (T_1-t)^{2\alpha_1-\alpha_2} P(y)$ with $y=\left(T_1-t\right)^{-\alpha_1-\frac 12} x$, then
	\begin{equation*}
		-\pp_{x_n}\psi_1\big|_{x_n=0}=-2\left(T_1-t\right)^{\alpha_1-\alpha_2-\frac 12} \pp_{y_n}P(y)\big|_{y_n=0}
		\sim_{a,\vartheta}
		\left(T_1-t\right)^{\alpha_1-\alpha_2-\frac 12}
		\langle \tilde y\rangle^{-a-1}
		,
	\end{equation*}
	and
	\begin{equation*}
		\pp_t\psi_1-\Delta_x\psi_1 =
		-\left(T_1-t\right)^{-\alpha_2 -1}  \Delta_y P (y)
		+
		g_1(x,t)  ,
	\end{equation*}
	where
	\begin{equation*}
		g_1(x,t)
		:=  \left(T_1-t\right)^{-\alpha_2 -1}  \left[-\Delta_y P (y)
-\left(4\alpha_1-2\alpha_2\right)
\left(T_1-t\right)^{2\alpha_1} P(y)
+
\left(2\alpha_1+1\right)
\left(T_1-t\right)^{2\alpha_1}
y\cdot \nabla_y P(y)
		\right].
	\end{equation*}
By the estimates of $P(y)$ in Lemma \ref{lemma-barrier1}, we have
	\begin{equation*}
		\begin{aligned}
			&
			-\left(T_1-t\right)^{- \alpha_2 -1}  \Delta_y P (y) \gtrsim_{n,a,\vartheta} \left(T_1-t\right)^{-\alpha_2 -1} \langle y \rangle^{-a-2} ,
			\\
		&	g_1(x,t)
			\ge
			\left(T_1-t\right)^{- \alpha_2 -1}
			C_1(n,a,\vartheta)
			\left[
			\langle y \rangle^{-a-2}
			- C_2(n,a,\vartheta,\alpha_1,\alpha_2)
			\left(T_1-t\right)^{2 \alpha_1 } \langle y \rangle^{-a}
			\right]
			\\
		&\qquad	\quad \ge
			-C_3(n,a,\vartheta,\alpha_1,\alpha_2)
			\left(T_1-t\right)^{\alpha_1 a +2\alpha_1-\alpha_2-1}
		\end{aligned}
	\end{equation*}
with some positive constants $C_i(*,*,\dots)$, $i=1,2,3$.
	In order to control $g_1(x,t)$, we set $\psi_2(x,t)=T_1^{\alpha_1 a +2\alpha_1-\alpha_2}-(T_1-t)^{\alpha_1 a +2\alpha_1-\alpha_2}$, which satisfies $\partial_{x_n} \psi_2 =0$,
	$(\pp_t-\Delta_x)\psi_2=
	\left( \alpha_1 a +2\alpha_1-\alpha_2 \right)
(T_1-t)^{\alpha_1 a +2\alpha_1-\alpha_2 - 1} $.
	Since $ \alpha_1 a +2\alpha_1-\alpha_2 >0$, $ C_f C_5(n,a,\vartheta,\alpha_1,\alpha_2)\big(\psi_1+ C_4(n,a,\vartheta,\alpha_1,\alpha_2)\psi_2 \big)$ is a supersolution of \eqref{qd2023Dec03-10} for some  large constants $C_i(n,a,\vartheta,\alpha_1,\alpha_2)>0$,  $i=4,5$. Hence, \eqref{qd23Dec06-1} is deduced by the comparison theorem and plugging $\Phi(z,s) = \psi( e^{-\frac{s}{2}} z,T_1-e^{-s} )$.
	
	\medskip
	
	\eqref{outerheatkerneli2}.
Using the even extension of $f_3$ in $z_n$ variable, we can deduce the first result by the following calculation, which is similar to the proof of \cite[Lemma 2.2]{Harada17system}.
For $g \in L_{\rho}^2(\mathbb{R}^n)$,
	\begin{equation*}
\begin{aligned}
&
	\left\{4\pi \left[1 -e^{-(s-s_1)} \right]\right\}^{-\frac{n}{2}} \int_{\mathbb{R}^n}
\exp\bigg(  -\frac{\big|e^{-\frac{s-s_1}{2}} z-w \big|^2}{4\big[1 -e^{-(s-s_1)} \big]} \bigg)
		g (w) dw
\\
\le \ &
\left\{4\pi \left[1 -e^{-(s-s_1)} \right]\right\}^{-\frac{n}{2}}
\bigg[
\int_{\mathbb{R}^n}
\exp\bigg(  -\frac{\big|e^{-\frac{s-s_1}{2}} z-w\big|^2}{2\left[1 -e^{-(s-s_1)} \right]}
\bigg)
e^{\frac{|w|^2}{4}} dw \bigg]^{\frac{1}{2}}
\| g\|_{L_{\rho}^2(\mathbb{R}^n)}
\\
= \ &
2^{-\frac{n}{2}} \pi^{-\frac{n}{4}}
\left[ 1+e^{-(s-s_1)}\right]^{-\frac{n}{4}}
\left[ 1-e^{-(s-s_1)}\right]^{-\frac{n}{4}}
e^{\frac{ |z|^2 }{4\left(1+e^{s-s_1}\right)}}
\| g\|_{L_{\rho}^2(\mathbb{R}^n)} .
\end{aligned}
	\end{equation*}
	
	For the second result, without loss of generality, one taking $C_f=1$, then	
	\begin{equation*}
		\begin{aligned}
			&
			|\psi(x,t)|\le \int_{\mathbb{R}^n_+}G_n(x,t,v,0)
			\left|f_3 (e^{\frac{s_1}{2}} v) \right| dv
			\le
			e^{a \frac{s_1}{2}}
			\int_{\mathbb{R}^n}
			(4\pi t)^{-\frac{n}{2} }
			e^{-\frac{|x-v|^2}{4t}}
			|v|^a
			dv
			\\
			= \ &
			e^{a \frac{s_1}{2}}
			(4\pi )^{-\frac{n}{2} } \int_{\mathbb{R}^n}e^{-\frac{|w|^2}{4}}
			|x-\sqrt{t}w|^a
			dw
			\le C(n,a)
			e^{a \frac{s_1}{2}}
			\int_{\mathbb{R}^n}e^{-\frac{|w|^2}{4}}
			\left(
			|x|^a+t^\frac{a}{2}|w|^a \right)dw
			\le C(n,a)
			e^{a \frac{s_1}{2}}
			\left(|x|^a+t^\frac{a}{2}  \right)  ,
		\end{aligned}
	\end{equation*}
	where we used $a\ge 0$ for the third ``$\le$''. Then we get the second result.

	\eqref{lemma-barrier3}.
	Take $C_f=1$.
	\begin{equation*}
		\begin{aligned}
			&
			\left|\psi(x,t) \right| \le
			\int_{0}^{t}
			\int_{\mathbb{R}_+^n}
			G_n(x,t,v,\vartheta)
			(T_1-\vartheta)^{-1}
			\left|	 f_1((T_1-\vartheta)^{-\frac 1 2 } v, - \ln (T_1-\vartheta) )  \right| dv d\vartheta
			\\
			\le \ &
			\int_{0}^{t} \int_{\mathbb{R}^n}
			\left[4\pi\left(t-\vartheta\right) \right]^{-\frac{n}{2}} e^{-\frac{|x-v|^2}{4(t-\vartheta)}}
			\left(T_1-\vartheta\right)^{-1-\alpha} dv d\vartheta
			\\
			= \ &
			\int_{0}^{t} \left(T_1-\vartheta\right)^{-1-\alpha}  d\vartheta
			=
			\begin{cases}
				\alpha^{-1} \left[
				\left(T_1-t\right)^{-\alpha}
				-T_1^{-\alpha}
				\right], & \mbox{ \ if \ } \alpha\ne 0
				\\
				\ln (\frac{T_1}{T_1-t} )  , & \mbox{ \ if \ } \alpha= 0 .
			\end{cases}
		\end{aligned}
	\end{equation*}
	Using the relationship $\Phi(z,s) = \psi( e^{-\frac{s}{2}} z,T_1-e^{-s} )$, we get the conclusion.
	
	\medskip
	
	\eqref{lemma-barrier4-0}.
	Set $C_f=1$.
	\begin{equation*}
		\begin{aligned}
			&	\left|\psi(x,t) \right| \le
			\int_{0}^{t}\int_{\mathbb{R}^{n-1}}
			G_n(x,t,(\tilde{v},0),\vartheta) 	(T_1-\vartheta)^{-\frac{1}{2}}
			\left|	f_2\big( (T_1-\vartheta)^{-\frac 1 2 } \tilde{v},
			- \ln (T_1-\vartheta)
			\big) \right|  d\tilde{v} d\vartheta
			\\
			\le \ &
			\int_{0}^{t}\int_{\mathbb{R}^{n-1}}
			2\left[4\pi\left(t-\vartheta\right)\right]^{-\frac{n}{2}}
			e^{-\frac{|\tilde{x}-\tilde{v}|^2 +x_n^2}{4(t-\vartheta)}}
			\left(T_1-\vartheta \right)^{-\frac 1 2 -\alpha-\frac{a}{2}}
			\left|\tilde{v}\right|^a	   d\tilde{v} d\vartheta
			\\
			= \ &
			\pi^{-\frac{n}{2}}
			\int_{0}^{t}
			\left(t-\vartheta\right)^{-\frac{1}{2}}
			\left(T_1-\vartheta\right)^{-\frac{1}{2} -\alpha -\frac{a}{2}}
			e^{-\frac{x_n^2}{4\left(t-\vartheta\right)}}
			\int_{\mathbb{R}^{n-1}}
			e^{-\left|\tilde{w}\right|^2}
			\left|\tilde{x} -2\sqrt{t-\vartheta} \tilde{w}\right|^a d\tilde{w} d \vartheta
			\\
			\le \ &
			C(n,a)
			\int_{0}^{t}
			\left(t-\vartheta\right)^{-\frac{1}{2}}
			\left(T_1-\vartheta\right)^{-\frac{1}{2} -\alpha -\frac{a}{2}}
			\left[ \left|\tilde{x}\right|^a
			+
			\left( t-\vartheta \right)^{\frac{a}{2}}
			\right]  d \vartheta
			\\
			\le \ &
			C(n,a,\alpha)
			\bigg[
			\left|\tilde{x}\right|^a
			\begin{cases}
				T_1^{-\alpha-\frac{a}{2}}, & \mbox{ \ if \ }
				\alpha+\frac{a}{2} <0
				\\
				1+
				\ln \left( \frac{T_1}{T_1 -\left[ t -\left(T_1-t\right)\right]_+}\right)
				, & \mbox{ \ if \ } \alpha+\frac{a}{2}=0
				\\
				\left(T_1-t\right)^{-\alpha-\frac{a}{2}} ,
				& \mbox{ \ if \ } \alpha+\frac{a}{2}>0
			\end{cases}
			+
			\begin{cases}
				T_1^{-\alpha}, & \mbox{ \ if \ } \alpha<0
				\\
				1+
				\ln \left( \frac{T_1}{T_1 -\left[ t -\left(T_1-t\right)\right]_+}\right)
				, & \mbox{ \ if \ } \alpha=0
				\\
				\left(T_1-t\right)^{-\alpha} ,
				& \mbox{ \ if \ } \alpha>0
			\end{cases}
			\bigg],
		\end{aligned}
	\end{equation*}
	where we used $a\ge 0$ for the third ``$\le$'' and Lemma \ref{qd23Dec06-2-lem} for the fourth ``$\le$''. Then the conclusion holds.
	
\end{proof}

\subsection{Vanishing adjustment functions}

In order to derive multiple rates at distinct points, we introduce vanishing adjustment functions to eliminate derivatives at arbitrarily prescribed finitely many points.
This method works for more general parabolic equations.

\begin{lemma}\label{qd23Dec22-4-lem}
	
	Suppose that $n\ge 1$, $l\in \mathbb{N}$, $u_0(x) \in C^{l}(\mathbb{R}^n)$ satisfies $D_{x}^{\mathbf{k}} u_0(x) \to 0$ as $|x|\to \infty$ for all $\mathbf{k} \in \mathbb{N}^n$ with $\| \mathbf{k}\|_{\ell_1} \le l$, $t_0\in \mathbb{R}$, denote $u(x,t) =
 \left[4\pi \left(t-t_0\right)\right]^{-\frac{n}{2}}
	\int_{\mathbb{R}^n}  e^{-\frac{|x-y|^2}{4\left(t-t_0\right)}} u_0(y) dy$,
	then for $\iota\in \mathbb{N}$, $\mathbf{m} \in \mathbb{N}^n$ with $ 2\iota +
	\|\mathbf{m}\|_{\ell_1} \le l$, we have $\partial_t^{\iota}	D_{x}^{\mathbf{m}} u\in C(\mathbb{R}^n\times [t_0,\infty) )$,
	\begin{equation*}
		\partial_t^{\iota}	D_{x}^{\mathbf{m}} u(x,t)
		\to \Delta_x^{\iota}    D_{x}^{\mathbf{m}} u_0(x)
		\mbox{ \ in \ }
		L^\infty(\mathbb{R}^n)
		\mbox{ \ as \ } t\downarrow t_0  ,
		\quad
		\| \partial_t^{\iota} D_{x}^{\mathbf{m}} u(\cdot,t) \|_{L^\infty(\mathbb{R}^n)}
		\le  \| \Delta_x^{\iota}    D_{x}^{\mathbf{m}} u_0 \|_{L^\infty(\mathbb{R}^n)}
  \mbox{ \ for \ } t \ge t_0 .
	\end{equation*}
	
\end{lemma}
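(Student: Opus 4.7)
\textbf{Proof proposal for Lemma \ref{qd23Dec22-4-lem}.} My plan is to reduce everything to the single identity
\begin{equation*}
\partial_t^{\iota} D_x^{\mathbf{m}} u(x,t) = \int_{\mathbb{R}^n} \left[4\pi(t-t_0)\right]^{-n/2} e^{-\frac{|x-y|^2}{4(t-t_0)}} \bigl(\Delta_y^{\iota} D_y^{\mathbf{m}} u_0\bigr)(y)\,dy, \quad t>t_0,
\end{equation*}
after which the two claimed properties are essentially immediate consequences of the facts that the Gaussian kernel is a probability density and that $\Delta_x^{\iota} D_x^{\mathbf{m}} u_0$ is a uniformly continuous function on $\mathbb{R}^n$ that vanishes at infinity.

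First I would justify the identity in three steps. (i) Under the hypothesis that $D_x^{\mathbf{k}} u_0 \to 0$ as $|x|\to\infty$ for all $\|\mathbf{k}\|_{\ell_1}\le l$, in particular each such derivative is bounded, so differentiation under the integral sign is legal and the spatial derivatives $D_x^{\mathbf{m}}$ may be moved onto $u_0$ by integration by parts (the boundary terms vanish because the Gaussian decays and $D^{\mathbf{k}} u_0 \to 0$ at infinity for all $\|\mathbf{k}\|_{\ell_1} \le \|\mathbf{m}\|_{\ell_1}$). (ii) Since $\partial_t G = \Delta_x G$ where $G$ is the free heat kernel on $\mathbb{R}^n$, each $\partial_t$ acting on $u$ may be replaced by $\Delta_x$. (iii) Combining these, all $\iota$ time derivatives and $\mathbf{m}$ spatial derivatives can be transferred onto $u_0$, giving the displayed identity; the condition $2\iota + \|\mathbf{m}\|_{\ell_1} \le l$ is exactly what ensures $\Delta_x^{\iota} D_x^{\mathbf{m}} u_0$ is well-defined, bounded, continuous, and vanishes at infinity.

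From the identity, the $L^\infty$ bound follows directly because $\int_{\mathbb{R}^n}[4\pi(t-t_0)]^{-n/2} e^{-|x-y|^2/(4(t-t_0))}\,dy = 1$ and the kernel is nonnegative. For the continuity and the $L^\infty$ convergence as $t\downarrow t_0$, I would invoke the standard heat-semigroup argument: writing $v := \Delta_x^{\iota} D_x^{\mathbf{m}} u_0$, a change of variables gives
\begin{equation*}
\partial_t^{\iota} D_x^{\mathbf{m}} u(x,t) - v(x) = \pi^{-n/2}\int_{\mathbb{R}^n} e^{-|w|^2}\bigl[v(x - 2\sqrt{t-t_0}\,w) - v(x)\bigr]\,dw,
\end{equation*}
and since $v$ is continuous and vanishes at infinity it is uniformly continuous on $\mathbb{R}^n$, so splitting the integral into $|w|\le R$ and $|w|>R$ and sending first $t\downarrow t_0$ and then $R\to\infty$ yields convergence in $L^\infty(\mathbb{R}^n)$. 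Continuity of $\partial_t^{\iota}D_x^{\mathbf{m}} u$ on $\mathbb{R}^n\times(t_0,\infty)$ is classical (dominated convergence applied to the smooth kernel), and the uniform convergence at $t=t_0$ upgrades this to joint continuity on $\mathbb{R}^n\times[t_0,\infty)$.

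The only delicate point is the integration-by-parts step moving $D_x^{\mathbf{m}}$ onto $u_0$ for all allowed $\mathbf{m}$; this is where the decay hypothesis $D_x^{\mathbf{k}} u_0(x)\to 0$ (rather than merely $u_0 \to 0$) is used, and it is exactly sufficient because the Gaussian kernel's spatial derivatives decay super-polynomially in $|x-y|$. Everything else is bookkeeping.
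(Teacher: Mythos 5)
Your proposal is correct and follows essentially the same route as the paper's proof: move the spatial derivatives onto $u_0$ by integration by parts, replace each $\partial_t$ by $\Delta_x$ via the heat equation for the kernel, move those onto $u_0$ as well to obtain $\partial_t^{\iota}D_x^{\mathbf{m}}u = G(\cdot,t-t_0)*\Delta^{\iota}D^{\mathbf{m}}u_0$, and then conclude from standard heat-kernel properties. Your write-up simply fills in the final step (the paper's ``by the property of the heat kernel'') via the uniform-continuity argument, and your closing remark slightly misattributes the role of the decay hypothesis — the boundary terms in the integration by parts already vanish from boundedness of the derivatives combined with Gaussian decay of the kernel; what the decay of $\Delta^{\iota}D^{\mathbf{m}}u_0$ at infinity actually buys is its uniform continuity, which is what drives the $L^\infty$ convergence — but this is a minor point and does not affect correctness.
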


\begin{proof}
	
	For $t>t_0$,
	\begin{equation*}
		\begin{aligned}
			&
			D_{x}^{\mathbf{m}}
			u(x,t)
			=
			\left[4\pi \left(t-t_0\right)\right]^{-\frac{n}{2}}
			\int_{\mathbb{R}^n}
			D_{x}^{\mathbf{m}}
			\left[ e^{-\frac{|x-y|^2}{4\left(t-t_0\right)}} \right] u_0(y) dy
			\\
			= \ &
			\left[4\pi \left(t-t_0\right)\right]^{-\frac{n}{2}}
			(-1)^{\|\mathbf{m}\|_{\ell_1}}
			\int_{\mathbb{R}^n}
			D_{y}^{\mathbf{m}}
			\left[ e^{-\frac{|x-y|^2}{4\left(t-t_0\right)}}   \right]
			u_0(y) dy
			=
			\left[4\pi \left(t-t_0\right)\right]^{-\frac{n}{2}}
			\int_{\mathbb{R}^n}     e^{-\frac{|x-y|^2}{4\left(t-t_0\right)}}
			D_{y}^{\mathbf{m}}  u_0(y) dy  ,
			\\
			&
			\partial_t^{\iota} D_{x}^{\mathbf{m}}
			u(x,t)
			=
			\int_{\mathbb{R}^n}
			\Delta_x^{\iota}
			\left\{
			\left[4\pi \left(t-t_0\right)\right]^{-\frac{n}{2}}
			e^{-\frac{|x-y|^2}{4\left(t-t_0\right)}} \right\}
			D_{y}^{\mathbf{m}}  u_0(y) dy
			=
			\int_{\mathbb{R}^n}
			\left[4\pi \left(t-t_0\right)\right]^{-\frac{n}{2}}
			e^{-\frac{|x-y|^2}{4\left(t-t_0\right)}}
			\Delta_y^{\iota}
			D_{y}^{\mathbf{m}}  u_0(y) dy  .
		\end{aligned}
	\end{equation*}
We conclude this lemma by the property of the heat kernel.
\end{proof}

\begin{lemma}\label{23Dec23-1-lem}
	
	Given $n\ge 2$, $u_0(x) \in L^\infty(\mathbb{R}_+^n)$, $t_0\in \mathbb{R}$, denote
	\begin{equation*}
		u(x,t) := \int_{\mathbb{R}_+^n}
		G_n(x,t,z, t_0) u_0(z)  dz
		=
		\left[4\pi (t-t_0)\right]^{-\frac{n}{2}}
		\int_{\mathbb{R}^n}
		e^{-\frac{|\tilde{x} - \tilde{z}|^2+ |x_n - z_n|^2 }{4(t-t_0)}}
		\left(u_0(z)\1_{z_n\ge 0} +
		u_0(\tilde{z},-z_n)  \1_{z_n< 0}
		\right)
		dz
	\end{equation*}
	with $G_n$ given in \eqref{qd23Dec22-1}, then for $t>t_0$, $\iota\in \mathbb{N}$, $\mathbf{m} \in \mathbb{N}^n$, $\partial_t^{\iota} D_{x}^{\mathbf{m}} u(x,t)$ is odd (even) about $x_n\in \mathbb{R}$ if $m_n$ is odd (even).
	In particular, $\partial_t^\iota D_{x}^{\mathbf{m}} u((\tilde{x},0),t) = 0$ if  $m_n$ is odd.
	
Under the additional assumptions that $u_0(x) \in C^{l}(\overline{\mathbb{R}_+^n} )$, $l\in \mathbb{N}$ satisfies that for $\mathbf{k}\in \mathbb{N}^n$ with $\|\mathbf{k}\|_{\ell_1} \le l$, $D_{x}^{\mathbf{k}} u_0(x) \to 0$ as $|x|\to \infty$ in $\overline{\mathbb{R}_+^n}$ and $D_{x}^{\mathbf{k}} u_0(x)|_{x_n=0}=0$ if $k_n $ is odd, then
	for $\iota\in \mathbb{N}$, $\mathbf{m} \in \mathbb{N}^n$ with $2\iota + \|\mathbf{m}\|_{\ell_1} \le l$, we have $\partial_t^{\iota} D_{x}^{\mathbf{m}} u \in C( \overline{\mathbb{R}_+^n} \times [t_0,\infty) ) $,
	\begin{equation*}
		\partial_t^{\iota} D_{x}^{\mathbf{m}} u(x,t) \to \Delta_x^{\iota} D_{x}^{\mathbf{m}} u_0(x) \mbox{ \ in \ } L^\infty( \mathbb{R}_+^n )
		\mbox{ \ as \ } t\downarrow t_0 ,
		\quad
		\| \partial_t^{\iota} D_{x}^{\mathbf{m}} u(\cdot,t) \|_{L^\infty(\mathbb{R}_+^n)}
		\le
		\| \Delta_x^{\iota} D_{x}^{\mathbf{m}} u_0  \|_{L^\infty(\mathbb{R}_+^n)} \mbox{ \ for \ } t \ge t_0 .
	\end{equation*}
	
\end{lemma}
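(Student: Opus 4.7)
The plan is to reduce everything to the full-space heat equation on $\mathbb{R}^n$ via even reflection across $\partial \mathbb{R}_+^n$, and then appeal to Lemma \ref{qd23Dec22-4-lem}. Define the even reflection
\begin{equation*}
\tilde{u}_0(z) := u_0(z)\mathbf{1}_{z_n\ge 0} + u_0(\tilde{z},-z_n)\mathbf{1}_{z_n<0},
\end{equation*}
so that the defining formula for $u$ reads
$u(x,t) = [4\pi(t-t_0)]^{-n/2}\int_{\mathbb{R}^n} e^{-|x-z|^2/[4(t-t_0)]}\tilde{u}_0(z)\,dz$
for $x\in\overline{\mathbb{R}_+^n}$.

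For the first (parity) claim, I would simply invoke \eqref{qd24Feb04-1}: for $t>t_0$, $\partial_t^\iota D_x^{\mathbf{m}} G_n(x,t,z,s)$ is odd (resp.\ even) in $x_n$ if $m_n$ is odd (resp.\ even). Differentiating under the integral (justified by the rapid Gaussian decay) in
$u(x,t)=\int_{\mathbb{R}_+^n} G_n(x,t,z,t_0)u_0(z)\,dz$
preserves this parity in $x_n$, and therefore $\partial_t^\iota D_x^{\mathbf{m}} u((\tilde{x},0),t)=0$ whenever $m_n$ is odd.

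For the second claim, the key step is to verify that the boundary compatibility hypothesis forces $\tilde{u}_0\in C^l(\mathbb{R}^n)$ with $D_z^{\mathbf{k}}\tilde{u}_0(z)\to 0$ as $|z|\to\infty$ for $\|\mathbf{k}\|_{\ell_1}\le l$. Away from $\{z_n=0\}$ this is immediate from $u_0\in C^l(\overline{\mathbb{R}_+^n})$ and its decay. On $\{z_n=0\}$, one-sided limits of $D_z^{\mathbf{k}}\tilde{u}_0$ from the two sides satisfy $D_z^{\mathbf{k}}\tilde{u}_0(\tilde{z},0^-)=(-1)^{k_n} D_z^{\mathbf{k}} u_0(\tilde{z},0^+)$, so matching requires exactly that $D_z^{\mathbf{k}} u_0|_{z_n=0}=0$ whenever $k_n$ is odd, which is the hypothesis. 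This is the main delicate step, and it is standard once the reflection identity is written carefully for each multi-index of order $\le l$.

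Granted $\tilde{u}_0\in C^l(\mathbb{R}^n)$ with decay, Lemma \ref{qd23Dec22-4-lem} applied on the whole space gives, for $2\iota+\|\mathbf{m}\|_{\ell_1}\le l$,
\begin{equation*}
\partial_t^\iota D_x^{\mathbf{m}} u(\cdot,t) \to \Delta_x^\iota D_x^{\mathbf{m}}\tilde{u}_0 \quad\text{in }L^\infty(\mathbb{R}^n)\text{ as }t\downarrow t_0,
\qquad \|\partial_t^\iota D_x^{\mathbf{m}} u(\cdot,t)\|_{L^\infty(\mathbb{R}^n)}\le \|\Delta_x^\iota D_x^{\mathbf{m}} \tilde{u}_0\|_{L^\infty(\mathbb{R}^n)}.
\end{equation*}
Since $\tilde{u}_0=u_0$ on $\mathbb{R}_+^n$ and $\Delta^\iota D_x^{\mathbf{m}}$ commutes with the even reflection (the reflection only changes $\tilde{u}_0$ on $\{z_n<0\}$, a set not affecting the sup over $\mathbb{R}_+^n$), restricting to $\overline{\mathbb{R}_+^n}$ yields the asserted continuity $\partial_t^\iota D_x^{\mathbf{m}} u\in C(\overline{\mathbb{R}_+^n}\times[t_0,\infty))$, the limit $\Delta_x^\iota D_x^{\mathbf{m}} u_0$ at $t\downarrow t_0$, and the uniform bound. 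The main obstacle is exactly the $C^l$-regularity of the even extension; the rest is a direct application of Lemma \ref{qd23Dec22-4-lem}.
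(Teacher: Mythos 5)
Your proposal is correct and follows essentially the same path as the paper's own proof: the parity claim via \eqref{qd24Feb04-1}, and the second claim by checking that the even extension $\tilde{u}_0$ satisfies the hypotheses of Lemma~\ref{qd23Dec22-4-lem} and then restricting to $\overline{\mathbb{R}_+^n}$. The paper states the reduction more tersely; you have just spelled out the one-sided matching of $D_z^{\mathbf{k}}\tilde{u}_0$ across $\{z_n=0\}$, which the paper leaves implicit.
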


\begin{proof}
	
\eqref{qd24Feb04-1} deduces the first result.
Under the additional assumptions of $u_0$, $u_0(x)\1_{x_n\ge 0} +
	u_0(\tilde{x},-x_n)  \1_{x_n< 0}$ satisfies the assumption of Lemma \ref{qd23Dec22-4-lem}, which implies the second result.
\end{proof}

We will give vanishing adjustment functions in the next proposition.
First, we will find a basis for derivatives at the prescribed finite number of points and then use the continuity in a short time to derive a basis for derivatives at these points at $t=T\ll 1$.

\begin{prop}\label{lemma-cons1}
	Suppose that $\mathfrak{l}$ is a positive integer, $p^{[1]}, p^{[2]}, \dots, p^{[\mathfrak{l}]}$ be arbitrary distinct points on $\pp\mathbb{R}^n_+$, $n\ge 2$, and a constant $d$ satisfies $0< d < \min_{1\le i\not=j\le \mathfrak{l}}|p^{[i]} -p^{[j]}| / 4$, $N_0\in \mathbb{N}$, then for $T\ll 1$, there exist  $V_{p^{[i]} , \mathbf{m}}(x,t)$, $i=1,2,\dots, \mathfrak{l}$ with $\mathbf{m}\in \mathbb{N}^n$, $\| \mathbf{m} \|_{\ell_1} \le N_0$, $m_n \in 2 \mathbb{N}$,  solving
	\begin{equation*}
		\pp_t V_{p^{[i]}, \mathbf{m} }=\Delta V_{p^{[i]}, \mathbf{m} }
		\mbox{ \ in \ }  \mathbb{R}^{n}_+\times (0,T] ,
		\quad
		-\pp_{x_n} V_{p^{[i]}, \mathbf{m} }=0
		\mbox{ \ on \ } \pp \mathbb{R}^{n}_+ \times (0,T] ,
\quad
V_{p^{[i]}, \mathbf{m} }(x,0) =
V_{p^{[i]},\mathbf{m},0}(x)
\mbox{ \ in \ }  \mathbb{R}^{n}_+ ,
	\end{equation*}
	and the following properties hold:
	\begin{enumerate}
		\item  $V_{p^{[i]},\mathbf{m},0}(x)$
		is smooth in $\overline{\mathbb{R}_+^n}$ and $V_{p^{[i]},\mathbf{m},0}(x) = 0$ in $\overline{\mathbb{R}^{n}_+  \backslash B_n^+(p^{[i]} ,2d)}$.
		
		\item
		$\partial_t^\iota D_{x}^{\mathbf{k}} V_{p^{[i]} , \mathbf{m}}((\tilde{x},0),t) = 0$ for $\tilde{x}\in \mathbb{R}^{n-1}$, $t\in [0,T]$, and $\iota\in \mathbb{N}$, $\mathbf{k}\in \mathbb{N}^n$, $k_n\in 2\mathbb{N}+1$.
		
		\item
		$D_x^{\mathbf{k}} V_{p^{[i]} , \mathbf{m} }(p^{[j]},T) = \delta_{\mathbf{m}, \mathbf{k}} \delta_{p^{[i]}, p^{[j]}}$ for $\mathbf{k}\in \mathbb{N}^n$, $\| \mathbf{k} \|_{\ell_1} \le N_0$, $j =1,2,\dots, \mathfrak{l}$.
		
		\item
		$\|\partial_t^{\iota} D_x^{\mathbf{k}} V_{p^{[i]} , \mathbf{m}}   \|_{L^\infty(\mathbb{R}_+^n \times [0,T] ) } \le C$ for $\iota\in\mathbb{N}$, $\mathbf{k}\in \mathbb{N}^n$, $2\iota + \| \mathbf{k} \|_{\ell_1} \le N_0$ with a constant $C>0$ only depending on $\mathfrak{l}$, $d$, $N_0$, $p^{[1]}, p^{[2]}, \dots, p^{[\mathfrak{l}]}$.
		
	\end{enumerate}
	
\end{prop}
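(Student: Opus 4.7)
For each fixed $i$ I would build $V_{p^{[i]},\mathbf{m}}$ as a finite linear combination of Neumann heat evolutions of polynomial-times-cutoff initial data all supported in $B_n^+(p^{[i]},2d)$; the coefficients are produced by inverting a linear system that encodes the prescribed derivative values at $(p^{[j]},T)$ for every $j$. Specifically, for each multi-index $\boldsymbol{\alpha}\in\mathbb{N}^n$ with $\alpha_n$ even and $\|\boldsymbol{\alpha}\|_{\ell_1}\le N_1$ (with $N_1$ large, to be chosen), set
\begin{equation*}
W_{p^{[i]},\boldsymbol{\alpha},0}(x):=\frac{(x-p^{[i]})^{\boldsymbol{\alpha}}}{\boldsymbol{\alpha}!}\,\eta\!\left(\frac{|x-p^{[i]}|}{d}\right).
\end{equation*}
This is smooth on $\overline{\mathbb{R}_+^n}$, supported in $\overline{B_n^+(p^{[i]},2d)}$ (since $d<\min_{i\ne j}|p^{[i]}-p^{[j]}|/4$ makes the relevant balls pairwise disjoint), and even in $x_n$ because $p^{[i]}_n=0$, $\alpha_n$ is even, and the cutoff depends only on $|x-p^{[i]}|^2=|\tilde x-\tilde p^{[i]}|^2+x_n^2$. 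Letting $W_{p^{[i]},\boldsymbol{\alpha}}(x,t)$ be the Neumann heat evolution, Lemma~\ref{23Dec23-1-lem} gives smoothness up to $t=0$, vanishing of every odd $x_n$-derivative on $\partial\mathbb{R}_+^n$ for all $t\in[0,T]$, and the bound $\|\partial_t^\iota D_x^{\mathbf{k}}W_{p^{[i]},\boldsymbol{\alpha}}\|_{L^\infty(\mathbb{R}_+^n\times[0,T])}\le\|\Delta^\iota D_x^{\mathbf{k}}W_{p^{[i]},\boldsymbol{\alpha},0}\|_{L^\infty(\mathbb{R}_+^n)}$.

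\textbf{Key step: surjectivity.} Consider the linear map
\begin{equation*}
\Phi_i(c):=\Bigl(D_x^{\mathbf{k}}\!\bigl[{\textstyle\sum}_{\boldsymbol{\alpha}}c_{\boldsymbol{\alpha}}W_{p^{[i]},\boldsymbol{\alpha}}\bigr](p^{[j]},T)\Bigr)_{(j,\mathbf{k})},\qquad j\in\{1,\dots,\mathfrak{l}\},\ k_n\text{ even},\ \|\mathbf{k}\|_{\ell_1}\le N_0.
\end{equation*}
The core assertion is that $\Phi_i$ is surjective for some sufficiently large (finite) $N_1=N_1(\mathfrak{l},N_0,d,p^{[1]},\dots,p^{[\mathfrak{l}]})$. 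I argue by duality. Using the representation $D_x^{\mathbf{k}}W_{p^{[i]},\boldsymbol{\alpha}}(p^{[j]},T)=\int_{\mathbb{R}_+^n}D_x^{\mathbf{k}}G_n(p^{[j]},T,y,0)\,W_{p^{[i]},\boldsymbol{\alpha},0}(y)\,dy$, any nonzero $(\lambda_{j,\mathbf{k}})$ in the cokernel would make $\Psi(y):=\sum_{j,\mathbf{k}}\lambda_{j,\mathbf{k}}D_x^{\mathbf{k}}G_n(p^{[j]},T,y,0)$ $L^2(\mathbb{R}_+^n)$-orthogonal to every $W_{p^{[i]},\boldsymbol{\alpha},0}$. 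Using $p^{[j]}_n=0$ and even $k_n$, a direct computation with \eqref{qd23Dec22-1} identifies $\Psi(y)=\sum_j R_j(y)\,e^{-|p^{[j]}-y|^2/(4T)}$ with polynomials $R_j$ of degree $\le N_0$; thus $\Psi$ is real-analytic on $\mathbb{R}_+^n$ and even in $y_n$. The cokernels form a nested decreasing sequence of subspaces in a fixed finite-dimensional space as $N_1\uparrow\infty$, hence stabilize. In the stabilized intersection, moment vanishing holds for \emph{all} $\boldsymbol{\alpha}$; the even $y_n$-extension of $\Psi\cdot\eta(|\cdot-p^{[i]}|/d)$ to $\mathbb{R}^n$ is then smooth, compactly supported, and has all moments zero, so it vanishes identically, forcing $\Psi\equiv 0$ on $B_n^+(p^{[i]},2d)$ and, by real-analyticity, on all of $\mathbb{R}_+^n$. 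Factoring out $e^{-|y|^2/(4T)}$ yields $\sum_j\tilde R_j(y)\,e^{p^{[j]}\cdot y/(2T)}\equiv 0$, and the linear independence of exponentials with distinct exponents $p^{[j]}/(2T)$ forces each $\tilde R_j\equiv 0$, hence every $\lambda_{j,\mathbf{k}}=0$, a contradiction. This surjectivity step is the principal obstacle; the rest is bookkeeping.

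\textbf{Assembly and verification.} Given surjectivity, pick any preimage $c^{(i,\mathbf{m})}$ of the standard basis vector $e_{(i,\mathbf{m})}$ under $\Phi_i$, and set $V_{p^{[i]},\mathbf{m}}:=\sum_{\boldsymbol{\alpha}}c^{(i,\mathbf{m})}_{\boldsymbol{\alpha}}W_{p^{[i]},\boldsymbol{\alpha}}$. Property~(1) is built into the support of each $W_{p^{[i]},\boldsymbol{\alpha},0}$; property~(2) is inherited from the parity of each $W_{p^{[i]},\boldsymbol{\alpha}}$ via Lemma~\ref{23Dec23-1-lem}; property~(3) for even $k_n$ is the defining identity $\Phi_i(c^{(i,\mathbf{m})})=e_{(i,\mathbf{m})}$, while for odd $k_n$ it is immediate from (2) and $p^{[j]}\in\partial\mathbb{R}_+^n$; property~(4) is the Lemma~\ref{23Dec23-1-lem} $L^\infty$-bound applied to the fixed smooth compactly supported function $V_{p^{[i]},\mathbf{m},0}$, the resulting constant depending only on the parameters listed.
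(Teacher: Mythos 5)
Your scheme is built from the same ingredients as the paper's — polynomial-times-cutoff initial data centered at each $p^{[i]}$, Neumann heat evolution, and the parity and $L^\infty$-bound facts of Lemma~\ref{23Dec23-1-lem} — but the key linear-algebra step is handled very differently. The paper takes the polynomial degree exactly equal to $N_0$, so the derivative-evaluation matrix $M_{(i,\mathbf{m}),(j,\mathbf{k})}=D_x^{\mathbf{k}}g_{p^{[i]},\mathbf{m}}(p^{[j]},T)$ has the same index set on both sides; at $T=0$ it is exactly the identity (this is the identity $D_x^{\mathbf{k}}g_{p^{[i]},\mathbf{m},0}(p^{[j]})=\delta_{\mathbf{m},\mathbf{k}}\delta_{p^{[i]},p^{[j]}}$), and by the continuity in Lemma~\ref{23Dec23-1-lem}, for $T\ll1$ it is strictly diagonally dominant, hence invertible with inverse bounded by an absolute constant. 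No duality argument, no real-analyticity, and no enlargement $N_1>N_0$ of the polynomial space is needed.

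Your surjectivity argument for $\Phi_i$ is plausible in outline at a fixed $T>0$, but it misses what the hypothesis ``$T\ll 1$'' is for. Property~(4) requires the constant $C$ to be independent of $T$, and your closing claim — that the bound follows because $V_{p^{[i]},\mathbf{m},0}$ is ``fixed'' — is incorrect: $V_{p^{[i]},\mathbf{m},0}=\sum_{\boldsymbol\alpha}c^{(i,\mathbf{m})}_{\boldsymbol\alpha}W_{p^{[i]},\boldsymbol\alpha,0}$ depends on $T$ through the coefficients, which solve a $T$-dependent system, and bounding them uniformly in $T$ is precisely the hard part that the duality argument does not provide. Indeed $\Phi_i$ degenerates as $T\downarrow 0$: your basis functions are supported only near $p^{[i]}$, so the entries at $(j,\mathbf{k})$ with $j\neq i$ are $O(e^{-cd^2/T})$, the range of $\Phi_i$ collapses toward the $j=i$ block, and the corresponding singular values tend to zero. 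One would then have to argue separately that the preimage of the specific target $e_{(i,\mathbf{m})}$ (which lies in the non-degenerate direction) stays bounded as $T\downarrow0$, and also that a single $N_1$ works for \emph{all} small $T$ — your nested-cokernel stabilization is only pointwise in $T$. Supplying these two missing pieces amounts to redoing the paper's small-$T$ perturbation-of-identity estimate, at which point the analyticity-and-exponentials detour becomes superfluous.
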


\begin{proof}
	
	Denote
	$ \phi_{\mathbf{m}}(x)
	=
	\eta(x/d)
	\prod_{j=1}^{n}
	\left( m_j ! \right)^{-1}
	x_j^{m_j}
	$,
	and
	$g_{p^{[i]} , \mathbf{m},0}(x) = \phi_{\mathbf{m}}(x-p^{[i]} )$. Since $\eta$ is radial, for $\mathbf{m} \in \mathbb{N}^n$ with $m_n\in 2\mathbb{N}$, $\mathbf{k}\in \mathbb{N}^n$ with $k_n\in 2\mathbb{N}+1$,
	$D_{x}^{\mathbf{k}} g_{p^{[i]} , \mathbf{m},0}$ is odd about $x_n$. In particular, $D_{x}^{\mathbf{k}} g_{p^{[i]} , \mathbf{m},0}(\tilde{x},0) \equiv 0$.
	For $\mathbf{m}, \mathbf{k}\in \mathbb{N}^n$,
	$D_{x}^{\mathbf{k}} g_{p^{[i]} , \mathbf{m},0}(p^{[j]}) =\delta_{\mathbf{m}, \mathbf{k}} \delta_{p^{[i]}, p^{[j]}}$.
	
	Denote $g_{p^{[i]} , \mathbf{m}}(x,t) = \int_{\mathbb{R}_+^n}
	G_n(x,t,z,0) g_{p^{[i]} , \mathbf{m},0}(z)  dz$.
	By Lemma \ref{23Dec23-1-lem}, $\| \partial_t^{\iota} D_x^{\mathbf{k}}
	g_{p^{[i]} , \mathbf{m}}(\cdot,t) \|_{L^\infty(\mathbb{R}_+^n)} $ $\le \| \Delta_x^{\iota} D_{x}^{\mathbf{k}} g_{p^{[i]} , \mathbf{m},0} \|_{L^{\infty}(\mathbb{R}_+^n)} $ $\lesssim_{\iota,\mathbf{k},\mathbf{m}} 1$;
	$\partial_t^\iota D_{x}^{\mathbf{k}} g_{p^{[i]} , \mathbf{m}}((\tilde{x},0),t) = 0$ for $\tilde{x}\in \mathbb{R}^{n-1}$, $t\in [0,\infty)$ if  $k_n\in 2\mathbb{N}+1$, $m_n\in 2\mathbb{N}$;
	and	
	given $\epsilon>0$, for all $  \mathbf{m}, \mathbf{k} \in \mathbb{N}^n$ satisfying $  \| \mathbf{m} \|_{\ell_{1}}, \| \mathbf{k} \|_{\ell_{1}} \le N_0$, $m_n\in 2\mathbb{N}$, and all $i,j=1,2, \dots, \mathfrak{l}$, there exists $T\ll 1$ such that $\big|D_x^{\mathbf{k}} g_{p^{[i]} , \mathbf{m}}(p^{[j]},T)  - \delta_{\mathbf{m}, \mathbf{k}} \delta_{p^{[i]}, p^{[j]}} \big| <\epsilon$.
	
	One taking $\epsilon$ sufficiently small, by the non-singularity of the strictly diagonally dominant matrix, then for $i=1,2,\dots, \mathfrak{l}$,
	there exist
	$V_{p^{[i]} , \mathbf{m}}(x,t)$ as a linear combination of $g_{p^{[i]} , \mathbf{k} }$, $\| \mathbf{k}\|_{\ell_1}\le N_0$ satisfying the conclusion.
\end{proof}

\subsection{Derivative estimate in the vanishing region}

In the next lemma, we only require $T$ to have a fixed upper bound, but it does not need to be small.
\begin{lemma}\label{lem-smooth}
	Suppose that $n\ge 2$ is an integer, $0<T\le C_T$, $g_1, g_2, g_3$ satisfy
	\begin{equation*}
		\left|g_1(x,t) \right|
		\le C_g (T-t)^{a_1} |x-q|^{b_1} \1_{|x-q|\ge r} ,
		\quad
		\left|  g_2(\tilde{x},t) \right|
		\le C_g (T-t)^{a_2} |\tilde{x}-\tilde{q}|^{b_2} \1_{|\tilde{x}-\tilde{q}|\ge r},
		\quad
		\left| g_3(x) \right|  \le C_g |x-q|^{b_3} \1_{|x-q|\ge r}
	\end{equation*}
for all $x=(\tilde{x},x_n) \in \mathbb{R}_+^n$, $t\in (0,T)$, where $C_T, C_g , r \in \mathbb{R}_+$, $a_1, a_2, b_1, b_2, b_3 \in \mathbb{R}$, $q=(\tilde  q, 0)\in \partial\mathbb{R}_+^n$, denote
	\begin{equation*}
		\begin{aligned}
			&
			f_1(x,t) =
			\int_{0}^{t}
			\int_{\mathbb{R}_+^n}
			G_n(x,t,z,s) g_1(z,s) dz ds ,
			\quad
			f_2(x,t) =
			\int_{0}^{t}\int_{\mathbb{R}^{n-1}}
			G_n(x,t,(\tilde{z},0),s) 	g_2(\tilde{z},s)  d\tilde{z} ds ,
			\\
			&
			f_3(x,t) =
			\int_{\mathbb{R}_+^n}
			G_n(x,t,z, 0) g_3(z)  dz
		\end{aligned}
	\end{equation*}
with $G_n$ given in \eqref{qd23Dec22-1}, then $f_1, f_2, f_3 \in C^\infty\big( \overline{B^+_{n}(q,r/2)}\times [0,T]\big)$, and
	for $\iota\in \mathbb{N}$, $\mathbf{m}\in \mathbb{N}^n$, $(x,t)\in \overline{B^+_{n}(q,r/2)} \times [0,T]$, we have
	\begin{equation*}
		\left| \pp_t^\iota D_x^{\mathbf{m}}
		f_i(x,t) \right| \lesssim_{n, \iota, r, a_i, b_i, C_T, \| \mathbf{m} \|_{\ell_1} }
		C_g
		t^3 e^{-\frac{r^2}{22 t}} , \ i=1,2,
		\quad
		\left| \pp_t^\iota D_x^{\mathbf{m}}
		f_3(x,t) \right|
		\lesssim_{n,\iota, r, b_3, C_T, \| \mathbf{m} \|_{\ell_1} }
		C_g
		t e^{-\frac{r^2}{21 t}}  .
	\end{equation*}
	Moreover, if $m_n$ is odd, then for
	$(\tilde{x},t)\in \overline{B_{n-1}(\tilde{q},r/2)} \times [0,T]$,
	$\pp_t^\iota D_x^{\mathbf{m}}
	f_i((\tilde{x}, 0),t) = 0$, $i=1,2, 3$.
\end{lemma}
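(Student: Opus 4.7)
The plan is to exploit the geometric separation between the support of the $g_i$'s and the evaluation region: since the sources vanish on $\{|z-q| < r\}$ while we work at $|x-q| \le r/2$, we have $|x-z| \ge r/2$ on the support, and by reflection (noting $q_n = 0$, so $|(\tilde z, -z_n)-q| = |z-q|$) also $|x - (\tilde z, -z_n)| \ge r/2$. The same holds for the boundary kernel appearing in $f_2$, where $|x-(\tilde z, 0)| \ge |\tilde x - \tilde z| \ge |\tilde z - \tilde q|-|\tilde x-\tilde q|\ge r/2$. Hence the integrations are bounded away from the singularity of $G_n$, and the Gaussian factors will supply strong exponential decay that dominates every polynomial loss.

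To extract the exponential factor, for each Gaussian term in $G_n$ I split
\[
\frac{|x-z|^2}{4(t-s)} \;=\; \frac{5|x-z|^2}{84(t-s)} + \frac{4|x-z|^2}{21(t-s)} \;\ge\; \frac{5|x-z|^2}{84(t-s)} + \frac{r^2}{21(t-s)},
\]
using $|x-z|^2 \ge r^2/4$. This gives $G_n(x,t,z,s) \lesssim (t-s)^{-n/2}\, e^{-5|x-z|^2/[84(t-s)]}\, e^{-r^2/[21(t-s)]}$, and analogously for the reflected summand and for the boundary kernel.

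For smoothness and the derivative bounds, since $|x-z| \ge r/2 > 0$ on the support, the integrand is $C^\infty$ jointly in $(x,t) \in \overline{B_n^+(q,r/2)} \times [0,T]$ and differentiation under the integral sign is justified. Each spatial derivative of $G_n$ produces a factor $(x_i-z_i)/(t-s)$, and $\partial_t G_n = \Delta_x G_n$ contributes two spatial derivatives. All resulting negative powers $(t-s)^{-N}$ and polynomial factors in $(x-z)/\sqrt{t-s}$ are absorbed by the exponential via the elementary inequality $(t-s)^{-N} e^{-r^2/[21(t-s)]} \lesssim_{N,r,C_T} e^{-r^2/[22(t-s)]}$. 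After scaling $w=(z-x)/\sqrt{t-s}$, the $z$-integral reduces to a bounded Gaussian moment against $|z-q|^{b_i}$; one handles $b_i \ge 0$ by $|z-q| \le |x-z|+r/2$ and $b_i < 0$ by $|z-q|^{b_i} \le r^{b_i}$ on the support. For $f_1, f_2$ the remaining $s$-integration is handled by Lemma \ref{qd23Dec06-2-lem}; the cosmetic $t^3$ factor in the stated bound is obtained by a further trade $(t-s)^{-M} e^{-r^2/[22(t-s)]} \lesssim t^3 e^{-r^2/[22t]}$, which is trivial for $t \in (0, C_T]$. For $f_3$ the analogous absorption yields $e^{-r^2/[22t]} \lesssim t\, e^{-r^2/[21t]}$.

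For the parity assertion when $m_n$ is odd, I would invoke \eqref{qd24Feb04-1}, which gives $\partial_t^\iota D_x^{\mathbf{m}} G_n((\tilde x, 0), t, z, s) = 0$; since differentiation passes under the integral sign, this vanishing transfers directly to $\partial_t^\iota D_x^{\mathbf{m}} f_i((\tilde x, 0), t)$. The main bookkeeping obstacle is tracking the higher derivatives of $G_n$ and verifying that every derivative-induced singularity of order $(t-s)^{-k}$ is absorbed by the Gaussian decay; this reduces to the trivial fact that $e^{-cr^2/(t-s)}$ defeats any polynomial in $(t-s)^{-1}$ uniformly on $(0, C_T]$.
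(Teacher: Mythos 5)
Your overall strategy matches the paper's: exploit the geometric separation between the support of $g_i$ and the evaluation region, differentiate under the integral sign, and absorb all derivative-induced polynomial singularities into the exponential decay. The parity assertion via \eqref{qd24Feb04-1} is also exactly what the paper does. However, there are two genuine errors in the ``cosmetic trades'' you invoke at the end, and they both stem from the same structural shortcut.

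The key technical step you skip is keeping the $|z-q|$ dependence inside the surviving Gaussian. The paper bounds the differentiated kernel by $e^{-\frac{|z-q|^2}{21(t-s)}}$ (see \eqref{qd24Feb07-1}), so that the spatial integral \eqref{qd24Feb07-2} is a tail of an incomplete Gamma function; by Lemma \ref{23Dec23-3-lem}\eqref{23Oct10-2} this tail contributes an \emph{extra factor} $(t-s)$ alongside the exponential $e^{-\frac{r^2}{21(t-s)}}$. Your split $\frac{|x-z|^2}{4(t-s)} \ge \frac{5|x-z|^2}{84(t-s)} + \frac{r^2}{21(t-s)}$ retains only the constant $e^{-\frac{r^2}{21(t-s)}}$ while the other piece is fully spent on the Gaussian volume; you therefore lose the $(t-s)$ factor, and the $s$-integral evaluates to $\sim t^2 e^{-\frac{r^2}{22t}}$, not $t^3 e^{-\frac{r^2}{22t}}$. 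Concretely, for $M\le 0$ one has $\int_0^t (t-s)^{-M} e^{-\frac{r^2}{22(t-s)}}\,ds \sim t^{2-M} e^{-\frac{r^2}{22t}}$, which is $\lesssim t^3 e^{-\frac{r^2}{22t}}$ only when $M\le -1$, i.e.\ when you already carry a factor $(t-s)$. The pointwise version you state, $(t-s)^{-M} e^{-\frac{r^2}{22(t-s)}} \lesssim t^3 e^{-\frac{r^2}{22t}}$, is false for $s$ near $0$ and $t$ small ($s=0$ gives $t^{-M}\lesssim t^3$, which fails for $t\to 0$ when $M>-3$). Likewise the $f_3$ trade $e^{-\frac{r^2}{22t}} \lesssim t\,e^{-\frac{r^2}{21t}}$ is false: dividing, $t^{-1} e^{\frac{r^2}{462t}}\to\infty$ as $t\to 0$; one may trade polynomial factors for a \emph{worse} (smaller) exponent, not a better one. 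Finally, Lemma \ref{qd23Dec06-2-lem} handles products of powers and is not the right tool for the $s$-integral with the surviving exponential; the relevant estimate is the incomplete-Gamma tail Lemma \ref{23Dec23-3-lem}\eqref{23Oct10-2}. The fix is to keep $|x-z|\ge |z-q|/2$ (and hence $|z-q|$) inside the Gaussian, as the paper does, so that the spatial integral supplies the additional $(t-s)$ that produces the stated $t^3$ and $t$ prefactors at the exponents $\frac{1}{22}$ and $\frac{1}{21}$ respectively.
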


\begin{proof}
	Due to the property of supports of $g_1, g_2, g_3$, by the parabolic regularity theory, then $f_1, f_2, f_3 \in C^\infty\big( \overline{B^+_{n}(q,r/2)}\times [0,T]\big)$.
	Notice that
	\begin{equation}\label{qd24Feb07-1}
		\begin{aligned}
			&
			|x-z|\le |x-(\tilde{z},-z_n)|
			\mbox{ \ for \ }
			x,z \in \overline{\mathbb{R}_+^n} ;
			\quad
			\mbox{for \ }
			|x-q|\le r/2, \
			|z-q|\ge r  , \mbox{ \  then \ }
			|x-z|\ge |z-q|/2 ,
			\\
			&
			\Big| \pp_t^\iota D_x^{\mathbf{m}} \Big[(t-s)^{-\frac{n}{2}}
			e^{-\frac{|\tilde{x} - \tilde{z}|^2+ |x_n \pm z_n|^2 }{4(t-s)}} \Big] \Big| \lesssim_{n,\iota,\| \mathbf{m} \|_{\ell_1} }
			(t-s)^{-\frac{n}{2} -\iota -\frac{\| \mathbf{m} \|_{\ell_1} }{2} }
			e^{-\frac{|\tilde{x} - \tilde{z}|^2+ |x_n \pm z_n|^2 }{5(t-s)}}
			\lesssim_{n,\iota,r,C_T, \| \mathbf{m} \|_{\ell_1} }
			e^{-\frac{|z-q|^2}{21(t-s)}}  .
		\end{aligned}
	\end{equation}
	For an integer $n_1 \ge 1$, $C_0 \in \mathbb{R}_+$, $t_1\in (0,C_T]$, $b\in \mathbb{R}$,
	\begin{equation}\label{qd24Feb07-2}
		\int_{\mathbb{R}^{n_1}}
		e^{-C_0 \frac{|z|^2}{t_1}}
		|z|^b \1_{|z|\ge r} dz
		\sim_{C_0, n_1}
		t_1^{\frac{b}{2}+\frac{n_1}{2}}
		\int_{C_0 \frac{r^2}{t_1}}^\infty e^{-y} y^{\frac{b}{2}+\frac{n_1}{2}-1} dy
		\sim_{C_0, n_1, r, b, C_T}
		t_1 e^{- C_0 \frac{r^2}{t_1} }  ,
	\end{equation}
	where we used Lemma \ref{23Dec23-3-lem}\eqref{23Oct10-2} for the last ``$\sim$''.
	Hereafter in this proof, we always assume $(x,t)\in \overline{B^+_{n}(q,r/2)} \times (0,T]$, and
 \eqref{qd24Feb07-1}, \eqref{qd24Feb07-2} will be used repetitively in calculation.
	\begin{equation*}
		\left| 	\pp_t^\iota D_x^{\mathbf{m}}
		f_3(x,t) \right|
		\lesssim_{n,\iota,r, C_T, \| \mathbf{m} \|_{\ell_1} }
		C_g
		\int_{\mathbb{R}_+^n}
		e^{-\frac{|z-q|^2}{21 t}}
		|z-q|^{b_3} \1_{|z-q|\ge r} dz
		\lesssim_{n,\iota, r, b_3, C_T, \| \mathbf{m} \|_{\ell_1} }
		C_g
		t e^{-\frac{r^2}{21 t}}  .
	\end{equation*}
	\begin{equation*}
		\begin{aligned}
			&\quad
			\Big| \int_{\mathbb{R}_+^n}
			\pp_t^{\bar{\iota}} D_x^{\overline{\mathbf{m} }} G_n(x,t,z,s) g_1(z,s) dz \Big|
			\lesssim_{n, \bar{\iota}, r, C_T, \|\overline{\mathbf{m} }\|_{\ell_1} }
			C_g
			(T-s)^{a_1}
			\int_{\mathbb{R}_+^n}
			e^{-\frac{|z-q|^2 }{21 (t-s)}}
			|z-q|^{b_1}
			\1_{|z-q|\ge r} dz
			\\
			&
			\lesssim_{n, \bar{\iota}, r, b_1, C_T, \|\overline{\mathbf{m} }\|_{\ell_1} }
			C_g
			(T-s)^{a_1}
			(t-s)
			e^{-\frac{r^2 }{21 (t-s)}}
		\end{aligned}
	\end{equation*}
for all $\bar{\iota}\in \mathbb{N}, \overline{\mathbf{m} } \in \mathbb{N}^n$.
	In particular, $\lim\limits_{s\uparrow t}  \int_{\mathbb{R}_+^n}
	\pp_t^{\bar{\iota}} D_x^{\overline{\mathbf{m} }} G_n(x,t,z,s) g_1(z,s) dz =0$. Thus,
	\begin{equation*}
		\begin{aligned}
			& \quad
			\left| 	\pp_t^\iota D_x^{\mathbf{m}}
			f_1(x,t) \right|
			=
			\Big|
			\int_{0}^{t} \int_{\mathbb{R}_+^n}
			\pp_t^\iota D_x^{\mathbf{m}}
			\left(
			G_n(x,t,z,s) \right) g_1(z,s) dz ds
			\Big|
			\\
			&
			\lesssim_{n,\iota,r,C_T, \| \mathbf{m} \|_{\ell_1} }
			C_g
			\int_{0}^{t}
			(T-s)^{a_1} \int_{\mathbb{R}_+^n}
			e^{-\frac{|z-q|^2}{21(t-s)}}
			|z-q|^{b_1} \1_{|z-q|\ge r} dz ds
			\\
			&
			\lesssim_{n,\iota,r, b_1, C_T, \| \mathbf{m} \|_{\ell_1} }
			C_g
			\int_{0}^{t}
			(T-s)^{a_1} (t-s)
			e^{-\frac{r^2}{21(t-s)}}   ds
			\\
			&
			\lesssim_{n, \iota, r, a_1, b_1, C_T, \| \mathbf{m} \|_{\ell_1} }
			C_g
			\int_{0}^{t}
			(t-s)
			e^{-\frac{r^2}{22(t-s)}}
			ds
			\sim_{n, \iota, r, a_1, b_1, C_T, \| \mathbf{m} \|_{\ell_1} }
			C_g
			t^3 e^{-\frac{r^2}{22 t}} ,
		\end{aligned}
	\end{equation*}
	where we used Lemma \ref{23Dec23-3-lem}\eqref{23Oct10-2} for the last ``$\sim$''.
	\begin{equation*}
		\begin{aligned}
			&  \quad
			\Big| \int_{\mathbb{R}^{n-1}} \pp_t^{\bar{\iota}} D_x^{\overline{\mathbf{m} }}
			G_n(x,t,(\tilde{z},0),s) 	g_2(\tilde{z},s)  d\tilde{z} \Big|
			\lesssim_{n, \bar{\iota}, r,C_T, \|\overline{\mathbf{m} }\|_{\ell_1}}
			C_g
			(T-s)^{a_2}
			\int_{\mathbb{R}^{n-1}}
			e^{-\frac{|\tilde{z}-\tilde{q}|^2}{21(t-s)}}
			|\tilde{z}-\tilde{q}|^{b_2}
			\1_{|\tilde{z}-\tilde{q}|\ge r}  d\tilde{z}
			\\
			&
			\lesssim_{n, \bar{\iota}, r,b_2,C_T, \|\overline{\mathbf{m} }\|_{\ell_1} }
			C_g
			(T-s)^{a_2} (t-s)
			e^{-\frac{r^2}{21 (t-s)}}
		\end{aligned}
	\end{equation*}
 for all $\bar{\iota}\in \mathbb{N}, \overline{\mathbf{m} } \in \mathbb{N}^n$.
	In particular,
	$\lim\limits_{s\uparrow t} \int_{\mathbb{R}^{n-1}}
	\pp_t^{\bar{\iota}} D_x^{\overline{\mathbf{m} }} G_n(x,t,(\tilde{z},0),s) 	g_2(\tilde{z},s)  d\tilde{z} = 0$.
	Thus,
	\begin{equation*}
		\begin{aligned}
			& \quad
			\left| \pp_t^\iota D_x^{\mathbf{m}}
			f_2(x,t) \right|
			=
			\Big|
			\int_{0}^{t}\int_{\mathbb{R}^{n-1}}
			\pp_t^\iota D_x^{\mathbf{m}}
			\left(
			G_n(x,t,(\tilde{z},0),s)
			\right)	g_2(\tilde{z},s)  d\tilde{z} ds
			\Big|
			\\
			&
			\lesssim_{n, \iota, r, C_T, \| \mathbf{m} \|_{\ell_1} }
			C_g
			\int_{0}^{t}
			(T-s)^{a_2}
			\int_{\mathbb{R}^{n-1}}
			e^{-\frac{|\tilde{z}-\tilde{q}|^2 }{21 (t-s)}}
			|\tilde{z}-\tilde{q}|^{b_2}
			\1_{|\tilde{z}-\tilde{q}|\ge r}
			d\tilde{z} ds
			\\
			&
			\lesssim_{n, \iota, r, b_2, C_T, \| \mathbf{m} \|_{\ell_1} }
			C_g
			\int_{0}^{t}
			(T-s)^{a_2}
			(t-s)
			e^{-\frac{r^2}{21(t-s)}}
			ds
			\lesssim_{n, \iota, r, a_2, b_2, C_T, \| \mathbf{m} \|_{\ell_1} }
			C_g
			t^3 e^{-\frac{r^2}{22 t}} .
		\end{aligned}
	\end{equation*}

If $m_n$ is odd, by \eqref{qd24Feb04-1}, $\pp_t^\iota D_x^{\mathbf{m}}
	f_i((\tilde{x}, 0),t) = 0$ for $t\in (0,T]$, $i=1,2, 3$. Combining $f_1, f_2, f_3 \in C^\infty\big( \overline{B^+_{n}(q,r/2)}\times [0,T]\big)$, we complete the proof.
\end{proof}

\subsection{Scaling argument}

\begin{lemma}\label{24Jan08-1-lem}
	Let $n\ge 2$ be an integer, $-\infty <t_0 <t_1\le \infty$, $v$ be a weak solution of
	\begin{equation}\label{z24Jan08-1}
		\partial_t v = \Delta v + h_1(x,t) \mbox{ \ in \ } \mathbb{R}_+^n \times (t_0,t_1),
		\quad
		\partial_{x_n} v + \beta^0(\tilde{x}, t) v = h_2(\tilde{x},t)
		\mbox{ \ on \ } \partial \mathbb{R}_+^n \times (t_0,t_1),
		\quad
		v(x,t_0) = v_0(x)
		\mbox{ \ in \ } \mathbb{R}_+^n .
	\end{equation}
	Given $x_*\in \overline{\mathbb{R}_+^{n} } , t_* \in (t_0,t_1), \rho\in\mathbb{R}_+$, suppose $\rho
	\| \beta^0 \|_{L^\infty  \big( B_{n-1}(\tilde{x}_*,2 \rho ) \times \big( \max\{ t_0,t_*-4\rho^2\} , t_* \big] \big) } \le C_{\beta^0,1}$, $v, h_1\in L_{\rm{loc}}^{\infty} ( \overline{\mathbb{R}_+^n} \times [t_0,t_1) )$,
$h_2 \in L_{\rm{loc}}^{\infty} (\mathbb{R}^{n-1} \times [t_0,t_1) )$,
then
there exist positive constants $C_1, \alpha_0$ only depending on $n$ such that if $\alpha \in (0,\alpha_0], v_0 \in C_{\rm{loc}}^{\alpha} ( \overline{\mathbb{R}_+^n} )$, we have
	\begin{equation}\label{qd24Jan08-2}
		\begin{aligned}
			&
			[v]_{C^{\alpha,\alpha/2} \big( B_n^+(x_*,\rho)
				\times \big( \max\{t_0, t_*-\rho^2 \}, t_* \big]
				\big) }
			\\
			\le \ &
			C_1 \langle C_{\beta^0,1} \rangle
			\rho^{-\alpha}
			\Big[
			\| v \|_{L^\infty
				\big(  B_n^+(x_*,2\rho)  \times \big( \max\{ t_0,t_*-4\rho^2\} , t_* \big] \big) }
			+
			\rho^2 \|h_1\|_{L^\infty
				\big( B_n^+(x_*,2\rho) \times \big( \max\{ t_0,t_*-4\rho^2\} , t_* \big] \big) }
			\\
			&
			+
			\1_{x_{*n} \le 4\rho}
			\
			\rho
			\| h_2 \|_{L^\infty  \big( B_{n-1}(\tilde{x}_*,2 \rho ) \times \big( \max\{ t_0,t_*-4\rho^2\} , t_* \big] \big) }
			+
			\1_{\sqrt{t_*-t_0} \le 4\rho} \
			\left( \| v_0 \|_{L^{\infty} ( B_n^+(x_*,2 \rho ) ) }
			+
			\rho^\alpha [ v_0 ]_{C^{\alpha } (B_n^+(x_*,2\rho )) }
			\right)	
			\Big].
		\end{aligned}
	\end{equation}
Under the additional assumption that for $\gamma \in (0,1)$,
\begin{equation}\label{qd24Jan07-2}
	\rho \|\beta^0\|_{L^{\infty} \big( B_{n-1}(\tilde{x}_*,2\rho) \times \big(\max\{t_0, t_* -4\rho^2 \}, t_* \big] \big) }
	+
	\rho^{1+\gamma} [\beta^0]_{C^{\gamma, \gamma/2} \big( B_{n-1}(\tilde{x}_*,2\rho) \times \big(\max\{t_0, t_* -4\rho^2 \}, t_* \big] \big) }
	\le C_{\beta^0,2} ,
\end{equation}
$h_2 \in C_{\rm{loc}}^{\gamma, \gamma/2} (\mathbb{R}^{n-1} \times [t_0,t_1) )$,
$v_0 \in C_{\rm{loc}}^{1+\gamma} ( \overline{\mathbb{R}_+^n} )$,
then we have
\begin{equation}\label{qd24Jan08-5}
	\begin{aligned}
		&
		\rho
		\| \nabla_x v \|_{L^\infty
			\big( B_n^+(x_*,\rho) \times \big( \max\{ t_0,t_*-\rho^2\} , t_* \big] \big) }
		+
		\rho^{1+\gamma} [\nabla_x v]_{C^{\gamma,\gamma/2} \big( B_n^+(x_*,\rho) \times \big( \max\{ t_0 , t_*-\rho^2\}, t_* \big] \big) }
		\\
		&
		+
		\rho^{1+\gamma}
		\sup\limits_{ x\in B_n^+(x_*,\rho) , \  \tau_1, \tau_2 \in ( \max\{ t_0, t_*-\rho^2\} , t_* ] }
		\frac{|v(x,\tau_1) - v(x,\tau_2) |}{|\tau_1-\tau_2 |^{(1+\gamma)/2} }
		\\
		\le \ &
		C_2
		\Big[
		\| v \|_{L^\infty
			\big( B_n^+(x_*,2\rho) \times \big( \max\{ t_0,t_*-4\rho^2\} , t_* \big] \big) }
		+
		\rho^2 \|h_1\|_{L^\infty
			\big( B_n^+(x_*,2\rho) \times \big( \max\{ t_0,t_*-4\rho^2\} , t_* \big] \big) }
		\\
		&
		+
		\1_{x_{*n} \le 4\rho} \
		\Big(
		\rho \|h_2\|_{L^{\infty} \big( B_{n-1}(\tilde{x}_*,2\rho) \times \big(\max\{t_0, t_* -4\rho^2 \}, t_* \big] \big) }
		+
		\rho^{1+\gamma} [h_2]_{C^{\gamma, \gamma/2} \big( B_{n-1}(\tilde{x}_*,2\rho) \times \big(\max\{t_0, t_* -4\rho^2 \}, t_* \big] \big) }
		\Big)
		\\
		&
		+
		\1_{ \sqrt{t_*-t_0} \le 4\rho } \
		\Big( \| v_0 \|_{L^{\infty} ( B_n^+(x_*,2\rho ) ) }
		+
		\rho^{1+\gamma} [ \nabla_x v_0 ]_{C^{\gamma } (B_n^+(x_*,2\rho )) } \Big) \Big]
	\end{aligned}
\end{equation}
with a positive constant $C_2$ only depending on $n, \gamma, C_{\beta^0,2}$.

\end{lemma}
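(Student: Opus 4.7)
The statement is a quantitative, scaled version of classical parabolic H\"older and Schauder estimates for the heat equation on the half-space with a Robin (zeroth-order oblique) boundary condition. My plan is to prove both \eqref{qd24Jan08-2} and \eqref{qd24Jan08-5} by a standard parabolic rescaling that reduces everything to the unit-scale estimates, which are then cited from the parabolic literature (for example Lieberman \cite{lieberman1996second, lieberman2013oblique-book}).

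First, introduce the rescaled unknown $\tilde v(y,s) := v(x_* + \rho y,\, t_* + \rho^2 s)$ for $y$ in the translated half-space $\{y : x_{*n} + \rho y_n \ge 0\}$ and $s$ in the corresponding parabolic time interval contained in $(-4,0]$. A direct computation shows that $\tilde v$ satisfies
\begin{equation*}
\partial_s \tilde v = \Delta_y \tilde v + \tilde h_1, \qquad \partial_{y_n} \tilde v + \tilde \beta^0 \tilde v = \tilde h_2 \mbox{ on } \partial,
\qquad \tilde v|_{s = s_0} = \tilde v_0,
\end{equation*}
where $\tilde h_1 := \rho^2 h_1(x_*+\rho y, t_*+\rho^2 s)$, $\tilde \beta^0 := \rho\beta^0$, $\tilde h_2 := \rho h_2$, and $\tilde v_0(y) := v_0(x_*+\rho y)$. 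The hypothesis $\rho \|\beta^0\|_{L^\infty} \le C_{\beta^0,1}$ (respectively \eqref{qd24Jan07-2}) exactly says that $\tilde\beta^0$ is uniformly bounded in $L^\infty$ (respectively in $C^{\gamma,\gamma/2}$) on the unit-scale domain, independently of $\rho$.

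For \eqref{qd24Jan08-2} I would invoke the De\,Giorgi--Nash--Moser type weak H\"older estimate for parabolic equations with a bounded Robin coefficient, applied on the unit parabolic cylinder and its boundary counterpart: there exists $\alpha_0(n) > 0$ and $C_1(n)$ such that for $\alpha \in (0,\alpha_0]$
\begin{equation*}
[\tilde v]_{C^{\alpha,\alpha/2}(B_1^+ \times (\max\{s_0,-1\},0])}
\le C_1 \langle C_{\beta^0,1}\rangle
\bigl( \|\tilde v\|_{L^\infty} + \|\tilde h_1\|_{L^\infty} + \1_{x_{*n}\le 4\rho} \|\tilde h_2\|_{L^\infty} + \1_{\sqrt{t_*-t_0}\le 4\rho} ( \|\tilde v_0\|_{L^\infty} + [\tilde v_0]_{C^\alpha} ) \bigr),
\end{equation*}
where the two indicator functions record whether the boundary $\{y_n = -x_{*n}/\rho\}$ or the initial slice $\{s = s_0\}$ actually intersects the doubled parabolic cylinder. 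Undoing the scaling $[\tilde v]_{C^{\alpha,\alpha/2}} = \rho^{\alpha}[v]_{C^{\alpha,\alpha/2}}$, $\|\tilde h_1\|_{L^\infty} = \rho^2\|h_1\|_{L^\infty}$, $\|\tilde h_2\|_{L^\infty} = \rho\|h_2\|_{L^\infty}$, $[\tilde v_0]_{C^\alpha} = \rho^\alpha [v_0]_{C^\alpha}$ produces exactly the powers of $\rho$ in \eqref{qd24Jan08-2}.

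For \eqref{qd24Jan08-5} the same rescaling is coupled with the classical parabolic Schauder estimate for the oblique derivative problem with H\"older data and $C^{\gamma,\gamma/2}$ boundary coefficient; the unit-scale version yields uniform bounds on $\|\nabla_y \tilde v\|_{L^\infty}$, $[\nabla_y \tilde v]_{C^{\gamma,\gamma/2}}$, and $\sup_y [\tilde v(y,\cdot)]_{C_s^{(1+\gamma)/2}}$ in terms of $\|\tilde v\|_{L^\infty}$, $\|\tilde h_1\|_{L^\infty}$, the parabolic $C^{\gamma,\gamma/2}$-norm of $\tilde h_2$, and $\|\tilde v_0\|_{L^\infty} + [\nabla_y \tilde v_0]_{C^\gamma}$, with constants depending only on $n,\gamma,C_{\beta^0,2}$. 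The indicator functions again come from whether the cylinder reaches the parabolic boundary. Scaling back replaces $\nabla_y$ by $\rho\nabla_x$, $\partial_s$ by $\rho^2 \partial_t$, and the H\"older seminorms in $y$ and $s$ acquire the stated powers $\rho^{1+\gamma}$, producing \eqref{qd24Jan08-5}.

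The only non-bookkeeping step is citing the right boundary estimate: the interior case is standard, but I want a version in which the dependence on $\beta^0$ enters only through $\langle C_{\beta^0,1}\rangle$ (respectively $C_{\beta^0,2}$) and in which the initial data contribution is localized to the doubled cylinder. Such statements can be extracted from Lieberman's monograph on oblique derivative problems \cite{lieberman2013oblique-book} by a routine finite covering of $B_1^+\times(\max\{s_0,-1\},0]$ by interior and boundary cylinders of fixed size, together with a cutoff argument that absorbs the $\tilde\beta^0\tilde v$ term; this covering argument is the main, but standard, technical step.
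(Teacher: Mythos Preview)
Your proposal is correct and follows essentially the same route as the paper: rescale by $\rho$ to the unit cylinder, observe that the rescaled Robin coefficient $\tilde\beta^0$ has uniform $L^\infty$ (resp.\ $C^{\gamma,\gamma/2}$) bound, apply a unit-scale parabolic H\"older (resp.\ Schauder) estimate for the oblique derivative problem, and scale back term by term. The paper cites \cite[Theorem 6.44]{lieberman1996second} (see also \cite[Corollary 7.6]{Lieberman2001-nonsmooth}) directly for the first estimate and \cite[Theorems 4.21 and 4.30]{lieberman1996second} for the second, so no separate covering or cutoff argument is needed; the dependence $\langle C_{\beta^0,1}\rangle$ (resp.\ on $C_{\beta^0,2}$) is already built into those statements.
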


\begin{proof}
	Set $ \tilde{v}(z,s) = v(x_*+ \rho  z , t_* + \rho^2 s)  $, that is, $
	v(x, t) =
	\tilde{v}\big( \rho^{-1}(x-x_*) , \rho^{-2}(t-t_*) \big) $. By direct calculation,
	\begin{equation*}
			\pp_{t}v =
			\rho^{-2} \left(\pp_{s}\tilde{v}\right)( \rho^{-1}(x-x_*) , \rho^{-2}(t-t_*) ) ,
			\quad
			D_x^{\mathbf{m}} v =
			\rho^{-\| \mathbf{m}\|_{\ell_1}}  ( D_z^{\mathbf{m}} \tilde{v} )( \rho^{-1}(x-x_*) , \rho^{-2}(t-t_*) )
	\end{equation*}
for $\mathbf{m} \in \mathbb{N}^n, \| \mathbf{m}\|_{\ell_1} \le 2$.
	Then \eqref{z24Jan08-1} is equivalent to
	\begin{equation*}
		\left\{
		\begin{aligned}
			&
			\pp_{s}\tilde{v}
			=  \Delta_z \tilde{v}
			+
			\tilde{h}_1(z,s)
			\mbox{ \ for \ }
			(z,s) \in Q	,
			\quad
			\pp_{z_n} \tilde{v}
			+
			\tilde{\beta}^0(\tilde{z},s) \tilde{v} =  \tilde{h}_2(\tilde{z},s)
			\mbox{ \ for \ } (z,s) \in S Q ,
			\\
			&
			\tilde{v}  = v_0(x_*+\rho z )
			\mbox{ \ for \ }
			(z,s) \in B Q,
		\end{aligned}
		\right.
	\end{equation*}
	where
	\begin{equation*}
		\begin{aligned}
			&
			\tilde{\beta}^0(\tilde{z},s)
			:= \rho \beta^0(\tilde{x}_*+\rho \tilde{z},t_*+\rho^2 s),
			\quad
			\tilde{h}_1(z,s):= \rho^2 h_1(x_*+ \rho  z , t_* + \rho^2 s),
			\quad
			\tilde{h}_2(\tilde{z},s):=\rho h_2(\tilde{x}_*+\rho \tilde{z},t_*+\rho^2 s )  ,
			\\
			&
			Q := \left\{ (z,s) \ | \  z\in A_1, \
			s\in \left(\rho^{-2}(t_0-t_*),
			\rho^{-2}(t_1-t_*)
			\right) \right\} ,
\quad
A_1 := \{ z \ | \ \tilde{z}\in\mathbb{R}^{n-1},
\
z_n> -\rho^{-1}x_{*n} \} ,
\\
			&
			S Q := \left\{ \big( (\tilde{z} , -\rho^{-1}x_{*n} ),s \big) \ | \
			(\tilde{z},s)\in A_2 \right\} ,
\quad
A_2 := \left\{ ( \tilde{z},s) \ | \ \tilde{z}\in\mathbb{R}^{n-1},
\
s\in \left(\rho^{-2}(t_0-t_*),
\rho^{-2}(t_1-t_*)
\right) \right\} ,
\\
			&
			B Q := \left\{ \big(z, \rho^{-2}(t_0-t_*) \big) \ | \ z\in A_1  \right\}  .
		\end{aligned}
	\end{equation*}
Obviously,
\begin{equation}\label{qd24Jan13-1}
	\begin{aligned}
&
(z,s)\in Q \Leftrightarrow x_*+\rho z \in \mathbb{R}_+^n, t_* + \rho^2 s \in (t_0,t_1);
\quad
(\tilde{z},s) \in A_2 \Leftrightarrow
\tilde{x}_*+\rho \tilde{z} \in \mathbb{R}^{n-1}, t_* + \rho^2 s \in (t_0,t_1) ;
\\
&
z \in A_1
\Leftrightarrow x_*+\rho z \in \mathbb{R}_+^n  .
\end{aligned}
\end{equation}

For any $r\in \mathbb{R}_+$, for $r>0$, denote $Q_n(r) := B_n(0,r) \times (-r^2,0]$. For brevity,
denote $\|f_1\|_{L^\infty  (Q_n(r) \cap S Q) } := \|f_1\|_{L^\infty  \big( \big\{ (\tilde{z},s) \ \big| \  \big( (\tilde{z} , -\rho^{-1}x_{*n} ),s \big) \in Q_n(r) \cap S Q \big\} \big) }$, $\| f_2 \|_{C^{\alpha} (Q_n(r)\cap B Q) } := \| f_2 \|_{C^{\alpha} \big( \big\{ z \ \big| \  \big(z, \rho^{-2}(t_0-t_*) \big) \in Q_n(r)\cap B Q \big\}
\big) }  $, and \\
$\|f_3 \|_{C^{\gamma,\gamma/2} (Q_n(r) \cap S Q) }$,
$\| f_4 \|_{C^{1+\gamma} (Q_n(r)\cap B Q) }$ are defined similarly.
Since
\begin{equation}\label{qd24Jan08-9}
		\|\tilde{\beta}^0\|_{L^\infty  (Q_n(2) \cap S Q) }
		\le
		\rho
		\| \beta^0(\tilde{x}_*+\rho \tilde{z},t_*+\rho^2 s ) \|_{L^\infty  (  Q_{n-1}(2) \cap A_2 ) }
		=
		\rho
		\| \beta^0 \|_{L^\infty  \big( B_{n-1}(\tilde{x}_*,2 \rho ) \times \big( \max\{ t_0,t_*-4\rho^2\} , t_* \big] \big) } \le C_{\beta^0,1} ,
\end{equation}
by \cite[p.140 Theorem 6.44]{lieberman1996second} (see also \cite[Corollary 7.6]{Lieberman2001-nonsmooth}), there exist positive constants $C_1, \alpha_0$ determined only by $n$ such that if $\alpha \in (0,\alpha_0], v_0 \in C_{\rm{loc}}^{\alpha} ( \overline{\mathbb{R}_+^n} )$, we have
	\begin{equation*}
		[\tilde{v}]_{C^{\alpha,\alpha/2} ( Q_n(1) \cap Q) }
		\le
C_1 \langle C_{\beta^0,1} \rangle
		\Big(
		\| \tilde{v} \|_{L^\infty
			( Q_n(2) \cap Q ) }
		+
		\|\tilde{h}_1 \|_{L^\infty
			(Q_n(2) \cap Q) }
		+
		\|\tilde{h}_2 \|_{L^\infty  (Q_n(2) \cap S Q) }
		+
		\| v_0(x_*+\rho z ) \|_{C^{\alpha} (Q_n(2)\cap B Q) } \Big)  .
	\end{equation*}
We will handle the above term by term and \eqref{qd24Jan13-1} will be used repetitively.
	\begin{equation}\label{qd24Jan08-4}
		\begin{aligned}
			&
			[\tilde{v}]_{C^{\alpha,\alpha/2} ( Q_n(1) \cap Q) }
			=
			[ v(x_*+ \rho  z , t_* + \rho^2 s) ]_{C^{\alpha,\alpha/2} ( Q_n(1) \cap Q) }
			\\
			= \ &
			\rho^{\alpha}
			\sup\limits_{(z^{[1]}, s_1), (z^{[2]}, s_2) \in  Q_n(1) \cap Q}
			\frac{\big| v(x_*+ \rho  z^{[1]} , t_* + \rho^2 s_1) - v(x_*+ \rho  z^{[2]} , t_* + \rho^2 s_2) \big|}{\big(\max\{ | (x_*+ \rho  z^{[1]} ) - (x_*+ \rho  z^{[2]}) |, | (t_* + \rho^2 s_1) - (t_* + \rho^2 s_2)|^{1/2} \} \big)^{\alpha} }
			\\
			= \ &
			\rho^{\alpha} [v]_{C^{\alpha,\alpha/2} \big( B_n^+(x_*,\rho) \times \big( \max\{t_0, t_*-\rho^2 \}, t_* \big] \big) }  ,
		\end{aligned}
	\end{equation}
where we used \eqref{qd24Jan13-1} for the last step.
	\begin{equation}\label{qd24Jan08-3}
		\begin{aligned}
			&
			\| \tilde{v} \|_{L^\infty
				( Q_n(2) \cap Q ) } = \| v \|_{L^\infty
				\big( B_n^+(x_*,2\rho) \times \big( \max\{ t_0,t_*-4\rho^2\} , t_* \big] \big) } ,
			\\
			&
			\|\tilde{h}_1 \|_{L^\infty
				(Q_n(2) \cap Q) } =
			\rho^2 \|h_1\|_{L^\infty
				\big( B_n^+(x_*,2\rho) \times \big( \max\{ t_0,t_*-4\rho^2\} , t_* \big] \big) } .
		\end{aligned}
	\end{equation}

	$\|\tilde{h}_2 \|_{L^\infty  (Q_n(2) \cap S Q) }$ is vacuum if $-\rho^{-1}x_{*n}<-4$. For $-\rho^{-1}x_{*n}\ge -4 \Leftrightarrow x_{*n} \le 4\rho$, similar to \eqref{qd24Jan08-9},
	\begin{equation*}
			\|\tilde{h}_2 \|_{L^\infty  (Q_n(2) \cap S Q) }
			\le
			\rho
			\| h_2 \|_{L^\infty  \big( B_{n-1}(\tilde{x}_*,2 \rho ) \times \big( \max\{ t_0,t_*-4\rho^2\} , t_* \big] \big) } .
	\end{equation*}

	$\| v_0(x_*+\rho z ) \|_{C^{\alpha} (Q_n(2)\cap B Q) }$ is vacuum if $\rho^{-2}(t_0-t_*) <-16$. For $\rho^{-2}(t_0-t_*) \ge -16 \Leftrightarrow \sqrt{t_*-t_0} \le 4\rho$,
\begin{equation}\label{z24Jan07-8}
	\begin{aligned}
		&
		[ v_0(x_*+\rho z ) ]_{C^{\alpha} (Q_n(2)\cap B Q) }
		\le
		\rho^\alpha
		\sup\limits_{x_*+\rho z^{[1]} , x_*+\rho z^{[2]} \in B_n^+(x_*,2\rho ) }
		\frac{|v_0(x_*+\rho z^{[1]} ) - v_0(x_*+\rho z^{[2]} )|}{| (x_*+\rho z^{[1]} ) - (x_*+\rho z^{[2]} ) |^{\alpha}}
		=
		\rho^\alpha [ v_0 ]_{C^{\alpha } (B_n^+(x_*,2\rho )) }  ,
		\\
		&
		\| v_0(x_*+\rho z ) \|_{L^{\infty} (Q_n(2)\cap B Q) }
		\le
		\| v_0 \|_{L^{\infty} ( B_n^+(x_*,2\rho ) ) }  .
	\end{aligned}
\end{equation}
As a result, we get \eqref{qd24Jan08-2}.
	
\medskip

Under the additional assumption,  we have
$ \|\tilde{\beta}^0\|_{C^{\gamma,\gamma/2} (Q_n(2) \cap S Q ) }
	\le C_{\beta^0,2} $ since \eqref{qd24Jan07-2} holds and
\begin{equation}\label{qd24Jan13-2}
	\begin{aligned}
		&
		[\tilde{\beta}^0]_{C^{\gamma,\gamma/2} (Q_n(2) \cap S Q ) }
		\le
		\rho
		\left[ \beta^0(\tilde{x}_*+\rho \tilde{z},t_*+\rho^2 s ) \right]_{C^{\gamma,\gamma/2} (  Q_{n-1}(2)\cap A_2 ) }
		\\
		= \ &
		\rho^{1+\gamma}
		\sup_{ (\tilde{z}^{[1]},s_1), (\tilde{z}^{[2]} ,s_2) \in Q_{n-1}(2)\cap A_2   }
		\frac{\big| \beta^0(\tilde{x}_*+\rho \tilde{z}^{[1]},t_*+\rho^2 s_1 )  -
		\beta^0(\tilde{x}_*+\rho \tilde{z}^{[2]},t_*+\rho^2 s_2 )
			\big|}{\big(\max\{ | (\tilde{x}_*+\rho \tilde{z}^{[1]}) -
			(\tilde{x}_*+\rho \tilde{z}^{[2]})|,
			| (t_*+\rho^2 s_1) -(t_*+\rho^2 s_2)|^{1/2} \} \big)^{\gamma} }
		\\
		= \ &
		\rho^{1+\gamma} [\beta^0]_{C^{\gamma, \gamma/2} \big( B_{n-1}(\tilde{x}_*,2\rho) \times \big(\max\{t_0, t_* -4\rho^2 \}, t_* \big] \big) } ,
		\\
		&
		\|\tilde{\beta}^0\|_{L^{\infty} (Q_n(2) \cap S Q ) }
		\le
		\rho \|\beta^0\|_{L^{\infty} \big( B_{n-1}(\tilde{x}_*,2\rho) \times \big(\max\{t_0, t_* -4\rho^2 \}, t_* \big] \big) }  .
	\end{aligned}
\end{equation}
	
Hence, by \cite[p.69 Theorem 4.21, and p.79 Theorem 4.30]{lieberman1996second}, we have
	\begin{equation*}
		\begin{aligned}
			\| \tilde{v} \|_{C^{1+\gamma, (1+\gamma)/2 } ( Q_n(1) \cap Q) }
			\le \ &
			C_{21}
			\Big(
			\| \tilde{v} \|_{L^\infty
				( Q_n(2) \cap Q ) }
			+
			\|\tilde{h}_1 \|_{L^\infty
				(Q_n(2) \cap Q) }
			\\
			&
			+
			\|\tilde{h}_2 \|_{C^{\gamma,\gamma/2} (Q_n(2) \cap S Q) }
			+
			\| v_0(x_*+\rho z ) \|_{C^{1+\gamma} (Q_n(2)\cap B Q) } \Big)
		\end{aligned}
	\end{equation*}
with a constant $C_{21}>0$ only depending on $n, \gamma, C_{\beta^0,2}$.
We will handle the above term by term.
	\begin{equation*}
		\| \tilde{v} \|_{C^{1+\gamma, (1+\gamma)/2 } ( Q_n(1) \cap Q) } =
		\| \tilde{v} \|_{L^{\infty} ( Q_n(1) \cap Q) }
		+
		\| \nabla_z \tilde{v} \|_{L^{\infty} ( Q_n(1) \cap Q) }
		+
		[ \nabla_z \tilde{v} ]_{C^{\gamma,\gamma/2} ( Q_n(1) \cap Q) }
		+
		[\tilde{v} ]_{C_t^{(1+\gamma)/2} ( Q_n(1) \cap Q) }  .
	\end{equation*}
	Therein, by \eqref{qd24Jan13-1},
	\begin{equation*}
		\| \nabla_z \tilde{v} \|_{L^{\infty} ( Q_n(1) \cap Q) }
		=
		\rho \| (\nabla_x v)(x_*+ \rho  z , t_* + \rho^2 s) \|_{L^{\infty} ( Q_n(1) \cap Q) }
		=
		\rho
		\| \nabla_x v \|_{L^\infty
			\big( B_n^+(x_*,\rho) \times \big( \max\{ t_0,t_*-\rho^2\} , t_* \big] \big) }  .
	\end{equation*}
	Similar to \eqref{qd24Jan08-4},
	\begin{equation*}
		[ \nabla_z \tilde{v} ]_{C^{\gamma,\gamma/2} ( Q_n(1) \cap Q) }
		=
		\rho
		[ (\nabla_x v)(x_*+ \rho  z , t_* + \rho^2 s) ]_{C^{\gamma,\gamma/2} ( Q_n(1) \cap Q) }
		=
		\rho^{1+\gamma} [\nabla_x v]_{C^{\gamma,\gamma/2} \big( B_n^+(x_*,\rho) \times \big( \max\{ t_0 , t_*-\rho^2\}, t_* \big] \big) }  .
	\end{equation*}
	\begin{equation*}
		\begin{aligned}
			&
			[\tilde{v} ]_{C_t^{(1+\gamma)/2} ( Q_n(1) \cap Q) }
			=
			\rho^{1+\gamma}  \sup\limits_{ (z,s_1),(z,s_2) \in Q_n(1) \cap Q }
			\frac{|v(x_*+ \rho  z , t_* + \rho^2 s_1) - v(x_*+ \rho  z , t_* + \rho^2 s_2) |}{| (t_* + \rho^2 s_1)- (t_* + \rho^2 s_2) |^{(1+\gamma)/2} }
			\\
			= \ &
			\rho^{1+\gamma}
			\sup\limits_{ x\in B_n^+(x_*,\rho) , \  \tau_1, \tau_2 \in ( \max\{ t_0, t_*-\rho^2\} , t_* ] }
			\frac{|v(x,\tau_1) - v(x,\tau_2) |}{|\tau_1-\tau_2 |^{(1+\gamma)/2} }  .
		\end{aligned}
	\end{equation*}
	
	$\| \tilde{v} \|_{L^\infty
		( Q_n(2) \cap Q ) } ,
	\|\tilde{h}_1 \|_{L^\infty
		(Q_n(2) \cap Q) }$ have been handled in \eqref{qd24Jan08-3}.
	
	$\|\tilde{h}_2 \|_{C^{\gamma,\gamma/2} (Q_n(2) \cap S Q) }$ is vacuum if $-\rho^{-1} x_{*n}<-4$. For $-\rho^{-1} x_{*n} \ge -4 \Leftrightarrow x_{*n} \le 4\rho$, similar to \eqref{qd24Jan13-2},
	\begin{equation*}
		\begin{aligned}
			&
			[\tilde{h}_2]_{C^{\gamma,\gamma/2} (Q_n(2) \cap S Q ) }
			\le
			\rho^{1+\gamma} [h_2]_{C^{\gamma, \gamma/2} \big( B_{n-1}(\tilde{x}_*,2\rho) \times \big(\max\{t_0, t_* -4\rho^2 \}, t_* \big] \big) } ,
			\\
			&
			\|\tilde{h}_2\|_{L^{\infty} (Q_n(2) \cap S Q ) }
			\le
			\rho \|h_2\|_{L^{\infty} \big( B_{n-1}(\tilde{x}_*,2\rho) \times \big(\max\{t_0, t_* -4\rho^2 \}, t_* \big] \big) }  .
		\end{aligned}
	\end{equation*}

	$\| v_0(x_*+\rho z ) \|_{C^{1+\gamma} ( Q_n(2) \cap B Q )}$ is vacuum if $\rho^{-2}(t_0-t_*) < -16 $.
	For $\rho^{-2}(t_0-t_*) \ge -16 \Leftrightarrow \sqrt{t_*-t_0} \le 4\rho$, similar to \eqref{z24Jan07-8}, we have
	\begin{equation*}
		\begin{aligned}
			&
			\| v_0(x_*+\rho z ) \|_{C^{1+\gamma} ( Q_n(2) \cap B Q ) }
			\sim_{\gamma}
			\| v_0(x_*+\rho z ) \|_{L^{\infty} (  Q_n(2) \cap B Q ) }
			+
			\rho [ (\nabla_x v_0)(x_*+\rho z ) ]_{C^{\gamma} ( Q_n(2) \cap B Q ) }
			\\
			\le \ &
			\| v_0 \|_{L^{\infty} ( B_n^+(x_*,2\rho ) ) }
			+
			\rho^{1+\gamma} [ \nabla_x v_0 ]_{C^{\gamma } (B_n^+(x_*,2\rho )) }  .
		\end{aligned}
	\end{equation*}
	In sum, we obtain \eqref{qd24Jan08-5}.	
\end{proof}

\section{Approximate solution and inner-outer gluing scheme}\label{Approximate solution and inner-outer gluing scheme}

The finite-time blow-up solutions of heat equations with the critical boundary condition are expected to behave like the dilation of steady-state solutions. Given an integer $\mathfrak{o}\ge 1$, we define
\begin{equation*}
	U_{\mu_i,\xi^{[i]}}(x) :=\mu_i^{-\frac{n-2}{2}}U\left(\frac{\tilde{x}-\xi^{[i]}}{\mu_i},\frac{x_n}{\mu_i}\right) ,
	\quad \tilde{x}\in\mathbb{R}^{n-1}, \ x_n\in \mathbb{R}_+, \ x=\left(\tilde{x}, x_n\right),
	\quad i=1,2,\dots,\mathfrak{o}
\end{equation*}
with $\mu_i=\mu_i(t) \in C^{1}\left([0,T),\mathbb{R}_+ \right)$, $\xi^{[i]}=\xi^{[i]}(t)
=
\big( \xi_1^{[i]}(t) ,\xi_2^{[i]}(t),\dots, \xi_{n-1}^{[i]}(t)  \big) \in C^{1}\left([0,T), \mathbb{R}^{n-1} \right)$ to be determined later.
Given arbitrary $\mathfrak{o}$ distinct points $q^{[i]}$ on $\partial \mathbb{R}_+^n$, $i=1,2,\dots, \mathfrak{o}$, we define
\begin{equation*}
x^{[i]}:= x-q^{[i]} ,
\quad
	y^{[i]}:=\frac{x-(\xi^{[i]}, 0) }{\mu_i},
	\quad
	z^{[i]} :=\frac{x-q^{[i]}}{\sqrt{T-t}},
	\quad
	\delta:= \frac{1}{32} \min\limits_{1\leq i\neq j\leq \mathfrak{o}} |q^{[i]}-q^{[j]}| ,
\end{equation*}
and some cut-off functions
\begin{equation}\label{cut-off}
\eta_{R}(y^{[i]}) :=\eta(\frac{y^{[i]}}{R})
\mbox{ \ with \ }
R=|\ln{T}|
,
    \quad
\eta_{C \delta}(x^{[i]}):=\eta(\frac{x^{[i]}}{C \delta}) \mbox{ \ for \ } C>0.
\end{equation}

Given $\mathfrak{o}$ integers $l_i\in \mathbb{N}$ (could  be duplicated), $i=1,2,\dots, \mathfrak{o}$, denote $l_{\rm{max}} := \max\limits_{i=1,2,\dots, \mathfrak{o}} l_i $.
We look for the solution of the form
\begin{equation*}
	u=\sum_{i=1}^{\mathfrak{o}}\left(
	U_{\mu_i,\xi^{[i]}} (x)
\eta_{2\delta}(x^{[i]})
	+
	\Theta_{l_i}(x^{[i]},t)\eta_{\delta}(x^{[i]})+
	\mu_{i}^{-\frac{n-2}{2}} \phi_i(y^{[i]},t) \eta_{R}(y^{[i]}) \right)+\psi(x,t),
\end{equation*}
where $\Theta_{l_i}$ is given in \eqref{eq-heat-1} satisfying \eqref{eq-heat} and $\phi_i, \psi$ will be determined later.
Denote
\begin{equation*}
	{\bm{\mu}}=(\mu_1, \mu_2,\dots,\mu_{\mathfrak{o}}),
	\quad
	\bm{\xi} =
	(\xi^{[1]}, \xi^{[2]},\dots, \xi^{[\mathfrak{o}]}) ,
	\quad
	\bm{\phi} =
	(\phi_1,\phi_2,\dots,\phi_{\mathfrak{o}}) .
\end{equation*}

We introduce  the error operators as
\begin{equation*} \mathcal{E}_1[u]:=-\pp_t u+\Delta_x u
	\mbox{ \ in \ }  \mathbb{R}^{n}_+\times (0,T) ,
	\qquad
	\mathcal{E}_2[u]:=
	\pp_{x_n}u +|u|^\frac{2}{n-2}u
	\mbox{ \ on \ } \partial \mathbb{R}^{n}_+ \times (0,T) .
\end{equation*}
Solving \eqref{qd2023Nov21-1} is equivalent to making $\mathcal{E}_1[u] =0$ and $\mathcal{E}_2[u] =0$. By \eqref{eq-heat},
direct calculations give that \begin{equation*}
	\begin{aligned}
		& \mathcal{E}_1[u] =
		\left(-\partial_t + \Delta_x \right) \psi
		+
		\sum_{i=1}^{\mathfrak{o}}
		\Big[
		\mathcal{E}_1\left[U_{\mu_i,\xi^{[i]}} (x)\right] \eta_{2\delta}(x^{[i]})
		+
		\mathcal{E}_{U,i}^{\rm{cut}}
		+
		\mathcal{E}_{\Theta,i}^{\rm{cut}}
		\\
		& +
		\mu_i^{-\frac{n+2}{2}}
		\eta_{R}(y^{[i]})
		\left(
		-  \mu_i^2  \pp_t \phi_i(y^{[i]},t)
		+
		  \Delta_{y^{[i]}} \phi_i(y^{[i]},t)
		  \right)
		+
		\Lambda_{1,i}[\phi_i,\mu_i,\xi^{[i]}] + \Lambda_{2,i}[\phi_i,\mu_i,\xi^{[i]}]
		\Big],
	\end{aligned}
\end{equation*}
where
\begin{equation}\label{qd23Dec11-3}
\begin{aligned}
&
	\mathcal{E}_1\left[U_{\mu_i,\xi^{[i]}} (x)\right] =
	\dot{\mu}_i
	\mu_i^{-\frac{n}{2}}  Z_n(y^{[i]})
	+
	\mu_i^{-\frac{n}{2}}
	\dot{\xi}^{[i]} \cdot  \left( \nabla_{\tilde{y}^{[i]}} U \right)(\tilde{y}^{[i]},y_n^{[i]}) ,
\\
&
\mathcal{E}_{U,i}^{\rm{cut}} :=
2
\nabla_x \left(
U_{\mu_i,\xi^{[i]}} (x) \right)
\cdot
\nabla_x \left(
\eta_{2\delta}(x^{[i]}) \right)
+
U_{\mu_i,\xi^{[i]}} (x)
\Delta_x\left(
\eta_{2\delta}(x^{[i]}) \right) ,
\\
&
\mathcal{E}_{\Theta,i}^{\rm{cut}}:=
2\nabla_x\left(\Theta_{l_i}(x^{[i]},t) \right) \cdot \nabla_x\left( \eta_{\delta}(x^{[i]}) \right)
+
\Theta_{l_i}(x^{[i]},t)
\Delta_x\left(
\eta_{\delta}(x^{[i]}) \right)  ,
\\
&
\Lambda_{1,i}[\phi_i,\mu_i,\xi^{[i]}] :=
\mu_i^{-\frac{n-2}{2}}
\left[
\frac{\dot{\xi}^{[i]}}{\mu_i R}  \cdot \left( \nabla_{\tilde{x}} \eta\right)(\frac{\tilde{y}^{[i]}}{R},\frac{y_n^{[i]}}{R})
+
\frac{(\mu_i R)'}{\mu_i R}
\frac{y^{[i]}}{R} \cdot \left( \nabla_{x} \eta\right)(\frac{y^{[i]}}{R} )
\right] \phi_i(y^{[i]},t)
\\
& \qquad
+
\mu_i^{-\frac{n+2}{2}}
R^{-2} \phi_i(y^{[i]},t) \left(\Delta_x \eta\right)(\frac{y^{[i]}}{R})
+
2 \mu_i^{-\frac{n+2}{2}}
R^{-1} \nabla_{y^{[i]}} \phi_i(y^{[i]},t) \cdot \left(\nabla_x \eta\right)(\frac{y^{[i]}}{R}) ,
\\
&
\Lambda_{2,i}[\phi_i,\mu_i,\xi^{[i]}] :=
\mu_i^{-\frac{n-2}{2}}
\eta_{R}(y^{[i]})
\left[ \mu_i^{-1} \dot{\xi}^{[i]} \cdot \nabla_{\tilde{y}^{[i]}} \phi_i(y^{[i]},t)
+ \dot{\mu}_i \mu_i^{-1}
\left(
\frac{n-2}{2}  \phi_i(y^{[i]},t)
+
y^{[i]} \cdot \nabla_{y^{[i]}} \phi_i(y^{[i]},t)
\right)
\right].
\end{aligned}
\end{equation}

Since $\eta(x)$ is a smooth radial cut-off function, we have $\partial_{x_n} f \big|_{x_n=0} = 0$ on $\partial \mathbb{R}^{n}_+ \times (0,T)$ for $f= \eta_{R}(y^{[i]}) , \eta_{\delta}(x^{[i]} ), \eta_{2\delta}(x^{[i]})$. Combining \eqref{U-eq} for $U_{\mu_i,\xi^{[i]}}$ and \eqref{eq-heat} for $\Theta_{l_i}$, we have
\begin{equation}\label{qd24Jan13-3}
	\begin{aligned}
		&
		\mathcal{E}_2[u] =\sum_{i=1}^{\mathfrak{o}}
		\Big[
		- \left(U_{\mu_i,\xi^{[i]}} (x) \right)^{\frac{n}{n-2}}
		\eta_{2\delta}(x^{[i]})
		+
		\mu_i^{-\frac{n}{2}} \eta_{R}(y^{[i]}) \pp_{y_n^{[i]}} \phi_i(y^{[i]},t)
		\Big] +
		\partial_{x_n} \psi
		+|u|^\frac{2}{n-2}u
		\\
		= \ &
		\sum_{i=1}^{\mathfrak{o}}
		\Big[
		\left(U_{\mu_i,\xi^{[i]}} (x) \right)^{\frac{n}{n-2}}
		\left(
		\eta_{2\delta}^{\frac{n}{n-2}}(x^{[i]})
		-
		\eta_{2\delta}(x^{[i]})
		\right)
		+
		\mu_i^{-\frac{n}{2}} \eta_{R}(y^{[i]}) \pp_{y_n^{[i]}} \phi_i(y^{[i]},t)
		\Big] +
		\partial_{x_n} \psi
		\\
		& +
		\sum_{i=1}^{\mathfrak{o}}
		\frac{n}{n-2}
		\left(U_{\mu_i,\xi^{[i]}} (x) \right)^{\frac{2}{n-2}}
		\eta_{2\delta}^{\frac{2}{n-2}}(x^{[i]})
		\left(
		\Theta_{l_i}(x^{[i]},t)\eta_{\delta}(x^{[i]})+
		\mu_{i}^{-\frac{n-2}{2}} \phi_i(y^{[i]},t) \eta_{R}(y^{[i]}) +\psi(x,t)
		\right)
		+ \mathcal{N}\left[\psi,\bm{\phi},\bm{\mu},\bm{\xi}\right]
	\end{aligned}
\end{equation}
with the nonlinear term
\begin{equation*}
	\begin{aligned}
		&
		\mathcal{N}\left[\psi,\bm{\phi},\bm{\mu},\bm{\xi}\right]
		:=
		|u|^\frac{2}{n-2}u -\sum_{i=1}^{\mathfrak{o}} \left(U_{\mu_i,\xi^{[i]}} (x) \right)^{\frac{n}{n-2}}
		\eta_{2\delta}^{\frac{n}{n-2}}(x^{[i]})
		\\
		&
		-\sum_{i=1}^{\mathfrak{o}}
		\frac{n}{n-2}
		\left(U_{\mu_i,\xi^{[i]}} (x) \right)^{\frac{2}{n-2}}
		\eta_{2\delta}^{\frac{2}{n-2}}(x^{[i]})
		\left(
		\Theta_{l_i}(x^{[i]},t)\eta_{\delta}(x^{[i]})+
		\mu_{i}^{-\frac{n-2}{2}} \phi_i(y^{[i]},t) \eta_{R}(y^{[i]}) +\psi(x,t)
		\right).
	\end{aligned}
\end{equation*}

In order to make $\mathcal{E}_1[u]=0$ in $\mathbb{R}_+^n \times \left(0,T\right)$ and  $\mathcal{E}_2[u]=0$ on $\partial \mathbb{R}_+^n \times \left(0,T\right)$, it suffices to solve the following inner-outer gluing system.

{\textbf{The inner problems:}} For $i=1,2,\dots, \mathfrak{o}$,
\begin{equation}\label{cyl-inner}
\begin{cases}
\mu_i^2  \pp_t \phi_i
=
\Delta_{y^{[i]}} \phi_i
+
\mathcal{H}_{1,i} [ \mu_i,\xi^{[i]} ]
& \mbox{ \ for \ }
t\in \left(0,T\right) ,
y^{[i]}  \in B_n^+(0,2R) ,
\\
-\pp_{y_n^{[i]}} \phi_i
=
\frac{n}{n-2}
U^{\frac{2}{n-2}}(y^{[i]}) \phi_i
+
\mathcal{H}_{2,i} [ \psi, \mu_i, \xi^{[i]}]
& \mbox{ \ for \ }
t\in \left(0,T\right) ,
y^{[i]}  \in B_{n-1}(0,2R) \times \{ 0 \},
\end{cases}
\end{equation}
where
\begin{equation}\label{24Jan04-1}
	\begin{aligned}
		&
		\mathcal{H}_{1,i} [ \mu_i,\xi^{[i]} ]:=
  \eta\Big(\frac{y^{[i]}}{4R}\Big)
		\mu_i^{\frac{n+2}{2}}
		\mathcal{E}_1\left[U_{\mu_i,\xi^{[i]}} (x)\right]
		=
		\eta\Big(\frac{y^{[i]}}{4R}\Big)
		\Big(
		\dot{\mu}_i
		\mu_i  Z_n(y^{[i]})
		+
		\mu_i
		\dot{\xi}^{[i]} \cdot  \left( \nabla_{\tilde{y}^{[i]}} U \right)(\tilde{y}^{[i]},y_n^{[i]})
		\Big)
		,
		\\
		&
		\mathcal{H}_{2,i} [ \psi, \mu_i, \xi^{[i]} ]:=
		\frac{n}{n-2}
		\mu_i^{\frac{n}{2}-1}
		U^{\frac{2}{n-2}}(\tilde{y}^{[i]} , 0)
		\eta\Big(\frac{\tilde{y}^{[i]}}{4R} ,0\Big)
		\Big(
		\Theta_{l_i}\big( (\mu_i \tilde{y}^{[i]} + \xi^{[i]} -\tilde{q}^{[i]},0) ,t \big)
		+\psi\big( ( \mu_i \tilde{y}^{[i]} + \xi^{[i]},0),t\big)
		\Big).
	\end{aligned}
\end{equation}

{\textbf{The outer problem:}}
\begin{equation}\label{cyl-outer}
\partial_t \psi = \Delta_x \psi  +  \mathcal{G}_1 [ \bm{\phi},\bm{\mu},\bm{\xi} ]
\mbox{ \ in \ } \mathbb{R}_+^n \times\left(0,T\right) ,
\quad
-
\partial_{x_n} \psi
=  \mathcal{G}_2 [\psi, \bm{\phi},\bm{\mu},\bm{\xi} ]
\mbox{ \ on \ }
\partial\mathbb{R}_+^n \times\left(0,T\right)  ,
\end{equation}
where
\begin{equation}\label{G1G2}
\begin{aligned}
&
\mathcal{G}_1 [ \bm{\phi},\bm{\mu},\bm{\xi} ]
:=
\sum_{i=1}^{\mathfrak{o}}
\Big[
\mathcal{E}_1\left[U_{\mu_i,\xi^{[i]}} (x)\right] \left(\eta_{2\delta}(x^{[i]})
-
\eta_{R}(y^{[i]})
\right)
+
\mathcal{E}_{U,i}^{\rm{cut}}
+
\mathcal{E}_{\Theta,i}^{\rm{cut}}
+
\Lambda_{1,i}[\phi_i,\mu_i,\xi^{[i]}] + \Lambda_{2,i}[\phi_i,\mu_i,\xi^{[i]}]
\Big],
\\
&
\mathcal{G}_2 [\psi, \bm{\phi},\bm{\mu},\bm{\xi} ]
:=
\bigg[  \mathcal{N}\left[\psi,\bm{\phi},\bm{\mu},\bm{\xi}\right]
+
\sum_{i=1}^{\mathfrak{o}}
\Big\{
\left(U_{\mu_i,\xi^{[i]}} (x) \right)^{\frac{n}{n-2}}
\left(
\eta_{2\delta}^{\frac{n}{n-2}}(x^{[i]})
-
\eta_{2\delta}(x^{[i]})
\right)
\\
& +
\frac{n}{n-2}
\left(U_{\mu_i,\xi^{[i]}} (x) \right)^{\frac{2}{n-2}}
\Big[
\left(
\eta_{\delta}(x^{[i]})
-
\eta_{R}(y^{[i]})
\right)
\Theta_{l_i}(x^{[i]},t)
+
\left(
\eta_{2\delta}^{\frac{2}{n-2}}(x^{[i]})
-
\eta_{R}(y^{[i]})
\right)
\psi(x,t)
\\
&
+
\left(
\eta_{2\delta}^{\frac{2}{n-2}}(x^{[i]}) - 1
\right)  \eta_{R}(y^{[i]})
\mu_{i}^{-\frac{n-2}{2}} \phi_i(y^{[i]},t)
\Big]
\Big\} \bigg] \bigg|_{x_n=0} .
\end{aligned}
\end{equation}

In order to avoid the difficulty of the compactness argument due to the singularity of right-hand sides as $t\uparrow T$, we will solve \eqref{cyl-outer}, \eqref{cyl-inner} for $t\in (0, T_{\sigma_0})$ with $T_{\sigma_0}:=T-\sigma_0, \sigma_0\in (0, T)$ instead of $\left(0, T\right)$. Details will be given in Section \ref{section-gluing}. For applying Proposition \ref{prop-23Oct24-1} in $\mathbb{R}_+^n$ for solving
\eqref{cyl-inner}, we impose the cut-off functions $\eta(\frac{y^{[i]}}{4R}), \eta(\frac{\tilde{y}^{[i]}}{4R} ,0)$ to restrict the spatial growth of $\mathcal{H}_{1,i}, \mathcal{H}_{2,i} $ respectively.

\section{Proof of Proposition \ref{prop-23Oct24-1}}\label{app-blowup}

In this section, we will prove Proposition \ref{prop-23Oct24-1}, which will be used for solving the inner problems.
The following nondegeneracy lemma is prepared for the forthcoming blow-up argument.
\begin{lemma}\label{qd23Nov28-2-lem}
	For an integer $n\ge 3$, all bounded solutions of the equation $\Delta\phi=0$ in $\mathbb{R}^{n}_+$, $-\pp_{x_n} \phi  = \frac{n}{n-2}U^{\frac{2}{n-2}}\phi$ on $\partial \mathbb{R}^n_+$ are the linear combination of $Z_j(x)$, $j=1,2,\dots,n$, given in \eqref{qd2023Nov28-1}.
	
\end{lemma}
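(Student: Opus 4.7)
The plan is to use a conformal Kelvin-type transformation mapping $\mathbb{R}^n_+$ onto an open ball in $\mathbb{R}^n$, under which the linearized problem becomes a Steklov-type eigenvalue problem with constant Robin coefficient, and then to classify the kernel by solid-spherical-harmonic expansion.

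First, I would invert $\mathbb{R}^n_+$ about a suitably chosen sphere centered on the hyperplane below the boundary, say the unit sphere centered at $p=(0,\dots,0,-1)$, obtaining a conformal diffeomorphism $\Phi:\mathbb{R}^n_+\to B$ onto an open ball $B\subset\mathbb{R}^n$ whose boundary $\partial B$ is the image of $\partial\mathbb{R}^n_+\cup\{\infty\}$. The pull-back $\tilde\phi(y):=|x-p|^{n-2}\phi(x(y))$ is harmonic in $B$. The crucial fact is that $U^{2/(n-2)}(\tilde x,0)=(n-2)(1+|\tilde x|^2)^{-1}$ is, up to a universal constant, exactly the conformal factor produced by $\Phi$ along $\partial\mathbb{R}^n_+$; this upgrades the Robin condition $-\partial_{x_n}\phi=\frac{n}{1+|\tilde x|^2}\phi$ into the constant-coefficient Steklov condition $\partial_\nu\tilde\phi=\tilde\phi$ on $\partial B$. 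The $L^\infty$ bound on $\phi$ translates, via the factor $|x-p|^{n-2}$, into a growth bound $|\tilde\phi(y)|\lesssim|y-\Phi(\infty)|^{-(n-2)}$ at the single point $\Phi(\infty)\in\partial B$; a standard boundary removable-singularity argument for harmonic functions with Robin data shows $\tilde\phi$ extends to a bounded harmonic function on $\overline B$.

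Next, expanding $\tilde\phi$ in solid spherical harmonics about the center $y_0$ of $B$,
\[
\tilde\phi(y)=\sum_{k=0}^{\infty} r^{k}Y_{k}(\omega),\qquad y-y_0=r\omega,\ \omega\in S^{n-1},
\]
with $Y_k$ a (possibly vanishing) degree-$k$ spherical harmonic, the Steklov condition on $\partial B$ reduces to $(k-1)Y_k\equiv0$ for every $k$, so $Y_k\equiv0$ whenever $k\neq 1$. Hence $\tilde\phi$ lies in the $n$-dimensional space of degree-one solid harmonics, and the kernel of the original problem has dimension at most $n$. The explicit $Z_1,\dots,Z_n$ from \eqref{qd2023Nov28-1} are bounded and harmonic on $\overline{\mathbb{R}^n_+}$, satisfy \eqref{Zj-eq}, and are linearly independent (by distinct symmetry types and decay rates); they therefore exhaust the kernel.

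The main technical obstacle is the bookkeeping for the Kelvin transform: verifying that the induced conformal weight on $\partial B$ matches $\frac{n}{1+|\tilde x|^2}$ with exactly the right universal constant so that the transformed Steklov eigenvalue is $1$, and justifying the removable-singularity step at $\Phi(\infty)$. An alternative elementary route avoids the conformal map entirely: decompose $\phi$ in spherical harmonics on $S^{n-2}$ acting on the tangential variable $\tilde x$, obtain for each degree $k\in\mathbb{N}$ a $2$D Fuchsian problem in $(|\tilde x|,x_n)$, and show by direct ODE analysis (via an Emden--Fowler-type substitution) that bounded solutions exist only for $k\in\{0,1\}$ and span the same $n$-dimensional space.
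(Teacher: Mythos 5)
Your approach is genuinely different from the paper's. The paper restricts $\phi$ to $\partial\mathbb{R}^n_+$, identifies the Dirichlet-to-Neumann operator with $(-\Delta_{\tilde x})^{1/2}$, recognizes the resulting boundary equation as the linearization of the half-Laplacian Yamabe equation around the fractional bubble, and then directly cites the known nondegeneracy theorem of D\'avila--del Pino--Sire; the harmonic extension from the boundary back to the half-space is then handled with the Liouville theorem. Your route is the conformal one, working on the extension domain: Kelvin-transform $\mathbb{R}^n_+$ to a ball, turning the weighted Robin condition into a constant-coefficient Steklov condition, and classifying by solid spherical harmonics. This is a valid alternative strategy, and the bookkeeping you flag as the main obstacle does in fact go through cleanly: with the inversion you suggest one gets a ball of radius $\tfrac12$, the boundary Yamabe operator's mean-curvature contribution comes out to $n-2$, the transformed Steklov eigenvalue is $n-(n-2)=2$, and $k/R=2$ with $R=\tfrac12$ selects exactly the degree $k=1$ solid harmonics, whose pullbacks are precisely (constant multiples of) $Z_1,\dots,Z_n$.

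However, there is a genuine gap in the removable-singularity step, and it is more delicate than the conformal bookkeeping. From $\|\phi\|_{L^\infty(\mathbb{R}^n_+)}<\infty$ you only get $|\tilde\phi(y)|\lesssim|y-p|^{2-n}$ near $p=\Phi(\infty)\in\partial B$, which is exactly the fundamental-solution growth rate. At this borderline rate, removability of a boundary singularity for a harmonic function is \emph{not} automatic, and there is no off-the-shelf ``standard boundary removable-singularity argument'' to invoke: without further input, $\tilde\phi$ could in principle carry a singular part $a|y-p|^{2-n}$, whose pullback is a bounded solution on $\mathbb{R}^n_+$ tending to a nonzero constant $a$ at infinity. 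Indeed, an integration by parts in $B\setminus B_\epsilon(p)$ produces an $O(1)$ flux term from $\partial B_\epsilon(p)\cap B$ that one must show vanishes, and the $L^2$ norm of $\tilde\phi$ restricted to concentric spheres blows up as the radius approaches $R$, so one cannot simply pass the spherical-harmonic expansion to the boundary. The mechanism that rules out the singular part is that $|y-p|^{2-n}$ sits in the Steklov eigenspace with eigenvalue $2-n\ne 2$, so it is incompatible with the Robin condition in the limit at $p$; but turning this heuristic into a proof (e.g.\ via a flux identity at $p$, or equivalently via a decay estimate for $\phi$ at infinity on the half-space side) is the real content of the removability step and needs to be supplied. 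Your alternative ``elementary route'' has its own issue: the boundary weight $(1+|\tilde x|^2)^{-1}$ does not separate, so each $S^{n-2}$-mode produces a genuine two-variable boundary-value problem in $(|\tilde x|,x_n)$, not an ODE to which an Emden--Fowler substitution directly applies.
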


\begin{proof}
	
	Since $  (-\Delta_{\tilde{x}})^{\frac12} \phi(\tilde{x},0) - \frac{n}{n-2} \left(U^{\frac{2}{n-2}} \phi \right) (\tilde{x},0)   =0$ in $\mathbb{R}^{n-1}$ and \cite[Theorem 1.1]{frac-nondege2013}, we have $\phi(\tilde{x},0)=\sum\limits_{j=1}^n c_jZ_j(\tilde{x},0)$ for some  $c_j\in \mathbb{R}$, and thus $\phi(x)=\sum\limits_{j=1}^n c_jZ_j(x)$  in $\mathbb{R}^n_+$, where we used the fact that the bounded solution of the equation $\Delta u= 0$ in $\mathbb{R}_+^n$,
	$u =0$ on $\pp\mathbb{R}_+^n$ is zero.
\end{proof}

Recall the norms defined in \eqref{qd24Jan25-10}.
\begin{prop}\label{prop-23Oct04-1}
Given an integer $n\ge 5$, consider
	\begin{equation}\label{23Oct04-1}
		\pp_\tau\phi=\Delta \phi+ g
		\mbox{ \ in \ }  \mathbb{R}^{n}_+\times (\tau_0, \tau_1),
		\quad
		-\pp_{y_n} \phi =
		\frac{n}{n-2}U^{\frac{2}{n-2}}\phi+h
		\mbox{ \ on \ }\pp \mathbb{R}^n_+\times(\tau_0,\tau_1) ,
		\quad
		\phi(y,\tau_0)= 0
		\mbox{ \ in \ }  \mathbb{R}^{n}_+ ,
	\end{equation}
	where $\phi$ is given by
	\begin{equation}\label{23Oct28-1}
		\phi(y,\tau) =
		\int_{\tau_0}^{\tau}
		\int_{\mathbb{R}_+^n}
		G_n(y,\tau,z,s) g(z,s) dz ds
		+
		\int_{\tau_0}^{\tau}\int_{\mathbb{R}^{n-1}}
		G_n(y,\tau,(\tilde{z},0),s) 	\left[\frac{n}{n-2}\left(U^{\frac{2}{n-2}} \phi\right) (  ( \tilde{z},0),s )+ h(\tilde{z},s ) \right]  d\tilde{z} ds
	\end{equation}
with $G_n$ given in \eqref{qd23Dec22-1}.
	Suppose that $1\le \tau_0 <\tau_1\le \infty$, $\ell(\tau)$ satisfies $C_\ell^{-1} \tau^p\le \ell(\tau) \le C_\ell \tau^p$ with a constant $C_\ell\ge 1$,
	\begin{equation}\label{23Oct22-2}
		2<a<n-2,
		\quad
		a^{-1}<p\le \frac{1}{2},
		\quad
		\iota\in (0,\frac{1}{4}),
		\quad
		\sigma -p a  +2\iota n > 0  ,
		\quad
		\varsigma\in (0,1),
	\end{equation}    $\|g\|_{\sigma,2+a,\ell(\tau),\mathbb{R}^n_+,\tau_0,\tau_1} <\infty$, $\|h\|_{\sigma,1+a,\ell(\tau),\varsigma,\mathbb{R}^{n-1},\tau_0,\tau_1}<\infty$, and
	$g=g(y,\tau)$, $h=h(\tilde{y},\tau)$ satisfy the orthogonality conditions
	\begin{equation}\label{gh-ortho}
		\int_{\mathbb{R}^n_+} g(y,\tau)Z_j(y)dy
		+
		\int_{\mathbb{R}^{n-1}} h(\tilde{y},\tau) Z_j(\tilde{y},0) d \tilde{y} =0
		\quad
		\mbox{ \ for \ } \tau\in (\tau_0, \tau_1),
		\quad j=0,1,\dots,n
	\end{equation}
 with $Z_j$ given in \eqref{qd2023Nov28-1} and \eqref{Z0-eq}, then there exists a unique solution $\phi$ in $L^\infty\big(\mathbb{R}^n_+\times (\tau_0,\tilde \tau)\big)$ for all $\tilde{\tau} \in (\tau_0,\tau_1] \cap (\tau_0,\infty)$ and satisfying
	\begin{equation}\label{phi-ortho-23Oct}
		\int_{\mathbb{R}^n_+}\phi(y,\tau)Z_j(y)dy=0
		\quad \mbox{ \ for \ }  \tau \in (\tau_0,\tau_1) ,
		\quad
		j=0,1, \dots,n,
	\end{equation}
	and
	\begin{equation}\label{23Oct04-4}
		\begin{aligned}
			|\phi|\lesssim \ &
			\Big(
			\tau^\sigma \langle y \rangle^{-a}
			\1_{|y|\le \ell(\tau) }
			+
			\tau^{\sigma} \ell^{-a}(\tau) e^{- \iota \frac{|y|^2}{\tau}}
			\1_{|y|> \ell(\tau) }
			\Big) \Big( \|g\|_{\sigma,2+a,\ell(\tau),\mathbb{R}^n_+,\tau_0,\tau_1} + \|h\|_{\sigma,1+a,\ell(\tau),\mathbb{R}^{n-1},\tau_0,\tau_1} \Big),
			\\
			|\nabla\phi|  \lesssim \ &
			\Big(
			\tau^\sigma \langle y \rangle^{-1-a}
			\1_{|y|\le \ell(\tau) }
			+
			\tau^{\sigma} \ell^{-a}(\tau) |y|^{-1}
			\1_{\ell(\tau)<|y|\le \tau^{\frac{1}{2}} }
			+
			\tau^{\sigma-\frac{1}{2}} \ell^{-a}(\tau)  e^{-  \tilde{\iota}  \frac{|y|^2}{\tau}}
			\1_{ |y|> \tau^{\frac{1}{2}} }
			\Big)
			\\
			&\times
			\Big( \|g\|_{\sigma,2+a,\ell(\tau),\mathbb{R}^n_+,\tau_0,\tau_1} + \|h\|_{\sigma,1+a,\ell(\tau),\varsigma,\mathbb{R}^{n-1},\tau_0,\tau_1} \Big)
		\end{aligned}
	\end{equation}
	with a constant $\tilde{\iota} \in (0,\iota)$, where both ``$\lesssim$'' are independent of $\tau_0, \tau_1, g, h$.

\end{prop}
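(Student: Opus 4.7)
The plan is to prove \eqref{23Oct04-4} by combining a contradiction blow-up argument in the near region $\{|y|\le \ell(\tau)\}$ with direct convolution estimates in the far region $\{|y|>\ell(\tau)\}$, to construct $\phi$ by a fixed-point/approximation procedure for \eqref{23Oct28-1}, and finally to upgrade the pointwise bound to the gradient estimate through parabolic Schauder regularity. A first step is to verify the orthogonality \eqref{phi-ortho-23Oct}: testing \eqref{23Oct04-1} against $Z_j$, integrating by parts, and using \eqref{Zj-eq} together with \eqref{Z0-eq}, the boundary condition of $\phi$, and \eqref{gh-ortho} produces the ODE $\tfrac{d}{d\tau}\int_{\mathbb{R}_+^n}\phi Z_j\,dy = -\lambda_j \int_{\mathbb{R}_+^n}\phi Z_j\,dy$ (with $\lambda_j=0$ for $j\ge 1$), and the zero initial data then propagate.

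The core step is the a priori bound $\|\phi\|_* \lesssim \|g\|_{**} + \|h\|_{**}$, where $\|\cdot\|_*$ denotes the reciprocal of the weight on the right of the first line of \eqref{23Oct04-4} and $\|\cdot\|_{**}$ denotes the source norms. If the bound failed, one could select sequences $g_k, h_k, \phi_k$ satisfying all hypotheses with $\|g_k\|_{**}+\|h_k\|_{**}\to 0$ yet $\|\phi_k\|_*=1$, together with points $(y_k,\tau_k)$ realizing (half of) the supremum. If $|y_k|\le \ell(\tau_k)$, rescale $\tilde\phi_k(w,s):=\tau_k^{-\sigma}\langle y_k\rangle^a\phi_k(y_k+\lambda_k w,\tau_k+\lambda_k^2 s)$ with $\lambda_k:=\min\{\langle y_k\rangle,\sqrt{\tau_k}\}$; Lemma \ref{24Jan08-1-lem} supplies locally uniform $C^{1+\varsigma,(1+\varsigma)/2}$-control so that a subsequence converges to a bounded limit $\tilde\phi_\infty$ that either (a) solves $\Delta\tilde\phi_\infty=0$ in $\mathbb{R}_+^n$ with $-\partial_{w_n}\tilde\phi_\infty=\tfrac{n}{n-2}U^{2/(n-2)}\tilde\phi_\infty$ on $\partial\mathbb{R}_+^n$ — forced to lie in $\mathrm{span}\{Z_0,\dots,Z_n\}$ by Lemma \ref{qd23Nov28-2-lem} and then killed by the propagated orthogonality — or (b) is an entire bounded caloric function on $\mathbb{R}^n$ (possibly with a Neumann half-space condition), forced to vanish by its behaviour at the initial time. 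Either branch contradicts $\tilde\phi_\infty(0,0)\ne 0$. For $|y_k|>\ell(\tau_k)$, the sources $g, h$ have empty support there, so starting from \eqref{23Oct28-1} one estimates the convolution directly via the Gaussian tail combined with the support restriction $|z|\le \ell(s)\le C_\ell s^p$; the arithmetic condition $\sigma - pa + 2\iota n>0$ in \eqref{23Oct22-2} is exactly what guarantees that the time-integrated Gaussian tail is controlled by $\tau^\sigma \ell(\tau)^{-a} e^{-\iota|y|^2/\tau}$. The implicit boundary term $\tfrac{n}{n-2}U^{2/(n-2)}\phi$ on the right of \eqref{23Oct28-1} is absorbed into the left using $U^{2/(n-2)}\sim\langle\tilde y\rangle^{-2}$ combined with the interior bound already established.

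Existence is then obtained by truncating $g, h$ in time and space, solving the resulting well-posed Volterra-type integral equation \eqref{23Oct28-1} (the implicit boundary term is a contraction in $L^\infty$ on short time intervals and is extended to all of $(\tau_0,\tilde\tau)$ by reapplying the a priori bound), and passing to the limit via Arzel\`a--Ascoli; uniqueness in $L^\infty(\mathbb{R}_+^n\times(\tau_0,\tilde\tau))$ follows from the same a priori bound applied to the difference of two solutions. The gradient estimate is obtained by applying the Schauder bound \eqref{qd24Jan08-5} on parabolic cylinders of radius $\rho\sim\min\{\langle y\rangle,\sqrt{\tau}\}$, which explains the appearance of the $C^{\varsigma,\varsigma/2}$-seminorm of $h$; the loss from $\iota$ to some $\tilde\iota<\iota$ on $\{|y|>\sqrt\tau\}$ is an artifact of shrinking the Gaussian kernel by a fixed factor during the Schauder rescaling. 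The main obstacle is expected to be the careful bookkeeping in the far-region convolution estimate, where the balance between $\sigma$, $p$, $a$ and $\iota$ is tight; the ODE-based approach of \cite{Green16JEMS} is unavailable because there is no analogous radial reduction, which is why the blow-up argument originating in \cite[Section 7.3]{2020HMF} becomes indispensable here.
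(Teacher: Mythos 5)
Your overall architecture (contradiction blow-up argument for the interior region, convolution estimates for the far region, Schauder regularity for the gradient estimate) matches the paper's strategy, but the blow-up step as you describe it has two serious gaps that prevent the contradiction from closing.

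First, you do not explain why the contradiction points $(y_k,\tau_k)$ must stay in a fixed bounded set. The paper does not merely ``handle the far region by direct convolution'': it derives a \emph{self-improving} bound $|\phi_k(y,\tau)| \le C\, w(y,\tau)\,\big(o_k(1) + \langle y\rangle^{-\epsilon}\1_{|y|\le\tau^p} + \tau^{-p\epsilon}\1_{|y|>\tau^p}\big)$ by estimating the convolution of the implicit boundary source $U^{2/(n-2)}\phi_k$ (which has full, not compact, support on $\partial\mathbb{R}_+^n$), using Lemmas~\ref{bd-ann-23Lem} and~\ref{lem:far-23Oct16} to extract the extra decay factor $\langle y\rangle^{-\epsilon}$. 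Only this extra factor forces $|y_k|\le C$. Without it, your proposed spatial rescaling $\lambda_k = \min\{\langle y_k\rangle,\sqrt{\tau_k}\}$ allows $|y_k|\to\infty$, and your case (b) — an ``entire bounded caloric function forced to vanish by its behaviour at the initial time'' — is not convincingly ruled out: after rescaling, the relevant time interval typically becomes $(-\infty,0]$, so there is no initial slice to appeal to, and a nonzero bounded ancient caloric function with Neumann boundary is not excluded by the information you retain.

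Second, and more importantly, in your case (a) you go directly from a nontrivial bounded limit $\tilde\phi_\infty$ to ``solves $\Delta\tilde\phi_\infty = 0$'' and invoke the elliptic nondegeneracy Lemma~\ref{qd23Nov28-2-lem}. But the limit of a parabolic blow-up is a \emph{parabolic} (ancient) solution, not automatically stationary: it satisfies $\partial_t\tilde\phi = \Delta\tilde\phi$ in $\mathbb{R}_+^n\times(-\infty,0]$ with the linearized boundary condition. The paper proves time-independence by an energy argument: since $\tilde\phi_t$ is orthogonal to $Z_0$ (which spans the only negative direction of the quadratic form $B[\cdot,\cdot]$), one has $B[\tilde\phi_t,\tilde\phi_t]\ge 0$, hence $\|\tilde\phi_t\|_{L^2}^2$ is nonincreasing; combined with $\int_{-\infty}^0\|\tilde\phi_t\|_{L^2}^2\,dt<\infty$ (from uniform bounds on $B[\tilde\phi,\tilde\phi]$), this forces $\tilde\phi_t\equiv 0$. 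This is precisely where the orthogonality to $Z_0$ enters essentially, and omitting it leaves the argument incomplete. (A minor note: Lemma~\ref{qd23Nov28-2-lem} characterizes bounded harmonic solutions as $\mathrm{span}\{Z_1,\dots,Z_n\}$, not including $Z_0$, since $Z_0$ is not harmonic.)

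The existence/uniqueness and gradient-estimate steps in your proposal are in line with the paper.
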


\medskip
We first use Proposition \ref{prop-23Oct04-1} to deduce Proposition \ref{prop-23Oct24-1}.
\begin{proof}[Proof of Proposition \ref{prop-23Oct24-1}]
	Set $\phi(y,\tau) =\phi_1(y,\tau)+ c(\tau) \tilde{Z}_0(y)$. It suffices to consider
	\begin{equation*}
		\pp_\tau\phi_1 =\Delta \phi_1 + g_1
		\mbox{ \ in \ }  \mathbb{R}^{n}_+\times (\tau_0, \tau_1),
		\quad
		-\pp_{y_n} \phi_1 =
		\frac{n}{n-2} U^{\frac{2}{n-2}}\phi_1+h_1
		\mbox{ \ on \ }\pp \mathbb{R}^n_+\times(\tau_0,\tau_1) ,
		\quad
		\phi_1(y,\tau_0)= 0
		\mbox{ \ in \ }  \mathbb{R}^{n}_+ ,
	\end{equation*}
	where
	\begin{equation*}
		\begin{aligned}
			&
			g_1=g_1(y,\tau):= g + c(\tau) \Delta \tilde{Z}_0(y) -c'(\tau) \tilde{Z}_0(y),
			\\
			&
			h_1=h_1(\tilde{y},\tau):=
			h+
			c(\tau)
			\frac{n}{n-2} \left(U^{\frac{2}{n-2}}  \tilde{Z}_0 \right)(\tilde{y},0)
			+
			c(\tau)
			\left(\pp_{y_n} \tilde{Z}_0\right)(\tilde{y},0) .
		\end{aligned}
	\end{equation*}
	For $j=0,1,\dots,n$, by \eqref{Zj-eq}, \eqref{Z0-eq}, \eqref{gh-ortho-Oct20}, orthogonality conditions in \eqref{til-Z0-Oct20}, we have
	\begin{equation*}
		\begin{aligned}
			&
			\int_{\mathbb{R}^n_+} g_1(y,\tau)Z_j(y)dy
			+
			\int_{\mathbb{R}^{n-1}} h_1(\tilde{y},\tau) Z_j(\tilde{y},0) d \tilde{y}
			\\
			= \ &
			\int_{\mathbb{R}_+^n} g(y,\tau)Z_j(y) dy
			+
			\int_{\mathbb{R}^{n-1}} h(\tilde{y},\tau) Z_j(\tilde{y},0) d\tilde{y}
			+
			c(\tau)
			\int_{\mathbb{R}^{n-1}}
			\tilde{Z}_0(\tilde{y},0)
			\left(
			\pp_{y_n} Z_j
			+
			\frac{n}{n-2} U^{\frac{2}{n-2}} Z_j
			\right)(\tilde{y},0) d\tilde{y}
			\\
			& +
			c(\tau) \int_{\mathbb{R}_+^n} \tilde{Z}_0 \Delta Z_j dy
			-
			c'(\tau)
			\int_{\mathbb{R}_+^n}  \tilde{Z}_0  Z_j dy
			\\
			= \ &
			\delta_{j0}
			\left(
			\int_{\mathbb{R}_+^n} g(y,\tau)Z_0(y) dy
			+
			\int_{\mathbb{R}^{n-1}} h(\tilde{y},\tau) Z_0(\tilde{y},0) d\tilde{y}
			-\lambda_0
			c(\tau) \int_{\mathbb{R}_+^n} \tilde{Z}_0  Z_0 dy
			-
			c'(\tau)
			\int_{\mathbb{R}_+^n}  \tilde{Z}_0  Z_0 dy
			\right)
		\end{aligned}
	\end{equation*}
with $\lambda_0<0$.	Since $\int_{\mathbb{R}_+^n}  \tilde{Z}_0  Z_0 dy \ne 0$, we can take
	\begin{equation*}
		c(\tau) =
		-\bigg( \int_{\mathbb{R}_+^n}  \tilde{Z}_0  Z_0 dy \bigg)^{-1} e^{-\lambda_0 \tau}
		\int_\tau^{\tau_1}
		e^{\lambda_0 s}
		\bigg( \int_{\mathbb{R}_+^n} g(y,s)Z_0(y) dy
		+
		\int_{\mathbb{R}^{n-1}} h(\tilde{y},s) Z_0(\tilde{y},0) d\tilde{y} \bigg)
		ds ,
	\end{equation*}
	which makes $g_1, h_1$ satisfy \eqref{gh-ortho}.
	Since $Z_0(y)$ decays exponentially, by \eqref{23Oct07-1}, we have
	\begin{equation*}
		|c(\tau)| + |c'(\tau)| \lesssim \tau^\sigma \left( \|g\|_{\sigma,2+a,\ell(\tau),\mathbb{R}^n_+,\tau_0,\tau_1} + \|h\|_{\sigma,1+a,\ell(\tau),\mathbb{R}^{n-1},\tau_0,\tau_1}\right) .
	\end{equation*}
	Combining $\tilde{Z}_0(y) \in C^2( \mathbb{R}^{n}_+ )\cap C^{1,\varsigma}(\overline{\mathbb{R}^{n}_+ })$, $\tilde{Z}_0(y)=0$ for
	$|y|\ge C_0$, for $C_1= C_\ell \max\left\{ C_0,1\right\}$, we have
	\begin{equation*}
 \begin{aligned}
&
		\|g_1\|_{\sigma,2+a,C_1\ell(\tau),\mathbb{R}^n_+,\tau_0,\tau_1}
  +
  \|h_1\|_{\sigma,1+a,C_1\ell(\tau),\mathbb{R}^{n-1},\tau_0,\tau_1}
		\lesssim
 \|g\|_{\sigma,2+a,\ell(\tau),\mathbb{R}^n_+,\tau_0,\tau_1} + \|h\|_{\sigma,1+a,\ell(\tau),\mathbb{R}^{n-1},\tau_0,\tau_1},
  \\
&  \|h_1\|_{\sigma,1+a,C_1\ell(\tau),\varsigma,\mathbb{R}^{n-1},\tau_0,\tau_1}
		\lesssim
\|g\|_{\sigma,2+a,\ell(\tau),\mathbb{R}^n_+,\tau_0,\tau_1}
+
\|h\|_{\sigma,1+a,\ell(\tau),\varsigma,\mathbb{R}^{n-1},\tau_0,\tau_1}.
  \end{aligned}
	\end{equation*}
Thus, we can use Proposition \ref{prop-23Oct04-1} to find $\phi_1$ satisfying \eqref{phi-ortho-23Oct}, \eqref{23Oct04-4}. Set $C_{\phi} = c(\tau_0)$. Combining \eqref{til-Z0-Oct20}, we complete the proof.		
\end{proof}

\begin{proof}[Proof of Proposition \ref{prop-23Oct04-1}]

By the contraction mapping theorem, it is easy to get the existence and uniqueness for \eqref{23Oct28-1} in $L^{\infty}(\mathbb{R}^n \times (\tau_0,\tilde{\tau}))$ for all $\tilde{\tau} \in (\tau_0, \tau_1] \cap (\tau_0,\infty)$ and \eqref{23Oct04-1} holds in the weak sense. By the parabolic regularity theory, $\phi\in C(\overline{\mathbb{R}_+^n}\times [\tau_0,\tilde{\tau}) )$.
	
We will use the comparison theorem for parabolic equations with the oblique derivative.
See
\cite[p.122, 13.5 Theorem]{Daners-Koch1992} for instance.
Set $\tilde{\phi}(y,\tau) =  e^{-\kappa \frac{|\tilde{y}|^2+\left(y_n+1\right)^2}{\tau}} $.
Using the supports of $g,h$, we have
	\begin{equation*}
		\begin{aligned}
			&
			\left( \pp_\tau -\Delta \right) \tilde{\phi} - g
			=
			\tau^{-2}\left\{
			2\kappa n \tau + \kappa\left(1-4\kappa\right)
			\left[
			|\tilde{y}|^2+\left(y_n+1\right)^2
			\right]
			\right\}  \tilde{\phi}
			\quad
			\mbox{ \ in \ } |y|> \ell(\tau),
			\\
			&
			\left(-\pp_{y_n} - \frac{n}{n-2}U^{\frac{2}{n-2}} \right) \tilde{\phi} -h
			=
			\left(
			2\kappa \tau^{-1}  - \frac{n}{n-2}U^{\frac{2}{n-2}}\right)
			\tilde{\phi}
			\quad
			\mbox{ \ on \ } |\tilde{y}|> \ell(\tau) , y_n=0 .
		\end{aligned}
	\end{equation*}
	$\kappa\in (0,\frac{1}{4}]$ is a necessary condition to make $C_1 \tilde{\phi}$ be a barrier function. We take $\kappa =\frac{1}{4}$.
	For any $\tilde{\tau} \in (\tau_0,\tau_1] \cap (\tau_0,\infty)$, $C_1  \tilde{\phi}$ is a barrier function in   $\left\{
	(y,\tau) \ | \ |y|\ge C_2 \tau^{\max\left\{ \frac{1}{2}, p \right\}}, \ \tau\in (\tau_0,\tilde{\tau})
	\right\}$ with sufficiently large constants $C_1$ depending on $\tilde{\tau}$ and $C_2>C_\ell$. By $\|g\|_{\sigma,2+a,\ell(\tau),\mathbb{R}^n_+,\tau_0,\tau_1} <\infty , \|h\|_{\sigma,1+a,\ell(\tau),\varsigma,\mathbb{R}^{n-1},\tau_0,\tau_1}<\infty$, and \eqref{qd24Jan08-5} in Lemma \ref{24Jan08-1-lem}, $\nabla \phi(\cdot,\tau)$ has exponential decay in space.

	For $j=0,1,\dots,n$, we test \eqref{23Oct04-1} with $Z_j(y)\eta(M^{-1} y)$ and integrate by parts,
	\begin{equation*}
		\begin{aligned}
			&
			\int_{\mathbb{R}_+^n} \phi(y,\tau) Z_j(y) \eta(M^{-1} y) dy
			-
			\int_{\mathbb{R}_+^n} \phi(y,\tau_0) Z_j(y) \eta(M^{-1} y) dy
			\\
			= \ &
			\int_{\tau_0}^\tau \int_{\mathbb{R}^{n-1}}
			\left(-\pp_{y_n} \phi Z_j \eta(M^{-1} \cdot) \right) ((\tilde{y},0),s)  d\tilde{y} ds
			+
			\int_{\tau_0}^\tau
			\int_{\mathbb{R}^{n-1}}
			\left[\phi \pp_{y_n}\left(Z_j \eta(M^{-1} \cdot)\right)\right]
			((\tilde{y},0),s) d\tilde{y} ds
			\\
			& +
			\int_{\tau_0}^\tau
			\int_{\mathbb{R}_+^n}
			\phi \Delta \left(Z_j (y) \eta(M^{-1} y)\right) dy ds
			+
			\int_{\tau_0}^\tau
			\int_{\mathbb{R}_+^n} g Z_j (y) \eta(M^{-1} y) dy ds
			\\
			= \ &
			\int_{\tau_0}^\tau \int_{\mathbb{R}^{n-1}}
			\left[
			\left( \frac{n}{n-2}U^{\frac{2}{n-2}}\phi+h \right)
			Z_j \eta(M^{-1} \cdot) \right] ((\tilde{y},0),s)  d\tilde{y} ds
			\\
			&
			+
			\int_{\tau_0}^\tau
			\int_{\mathbb{R}^{n-1}}
			\left[ \phi Z_j \pp_{y_n}\left(\eta(M^{-1} \cdot)\right)
			+
			\phi
			\eta(M^{-1} \cdot) \pp_{y_n} Z_j
			\right]
			((\tilde{y},0),s) d\tilde{y} ds
			\\
			& +
			\int_{\tau_0}^\tau
			\int_{\mathbb{R}_+^n}
			\phi
			\left[
			\eta(M^{-1} y) \Delta Z_j (y)
			+
			2\nabla Z_j (y) \cdot
			\nabla \left( \eta(M^{-1} y)\right)
			+
			Z_j (y) \Delta \left( \eta(M^{-1} y)\right)
			\right]
			dy ds
			\\
			&
			+
			\int_{\tau_0}^\tau
			\int_{\mathbb{R}_+^n} g Z_j (y) \eta(M^{-1} y) dy ds  .
		\end{aligned}
	\end{equation*}
	Taking $M\to \infty$ gives
	\begin{equation*}
		\begin{aligned}
			&
			\int_{\mathbb{R}_+^n} \phi(y,\tau) Z_j(y)  dy
			-
			\int_{\mathbb{R}_+^n} \phi(y,\tau_0) Z_j(y)  dy
			\\
			= \ &
			\int_{\tau_0}^\tau \int_{\mathbb{R}^{n-1}}
			\left( \frac{n}{n-2}U^{\frac{2}{n-2}}\phi Z_j +h Z_j + \phi  \pp_{y_n} Z_j  \right)  ((\tilde{y},0),s)  d\tilde{y} ds
			+
			\int_{\tau_0}^\tau
			\int_{\mathbb{R}_+^n}
			\phi   \Delta Z_j (y)
			dy ds
			+
			\int_{\tau_0}^\tau
			\int_{\mathbb{R}_+^n} g Z_j (y)  dy ds  .
		\end{aligned}
	\end{equation*}
	Using  \eqref{Zj-eq}, \eqref{Z0-eq}, \eqref{gh-ortho}, and $\int_{\mathbb{R}_+^n} \phi(y,\tau_0) Z_j(y) dy=0$, we have
	\begin{equation*}
		\int_{\mathbb{R}_+^n} \phi(y,\tau) Z_j(y)  dy
		=
		-
		\delta_{j0}
		\lambda_0
		\int_{\tau_0}^\tau \int_{\mathbb{R}_+^n} \phi Z_0 dy ,
	\end{equation*}
	which implies \eqref{phi-ortho-23Oct}.
	Define
	\begin{equation*}
		\|f\|_{\sigma,a,p,\tau_0,\tau_1}^{\#}
		:=
		\inf \left\{
		C \ | \
		|f(y,\tau)| \le C  w(y,\tau)
		\mbox{ \ for \ } y\in \mathbb{R}_+^n, \tau_0 < \tau <\tau_1
		\right\}  ,
	\end{equation*}
	where
	\begin{equation*}
		w(y,\tau):=
		\tau^\sigma \langle y \rangle^{-a}
		\1_{|y|\le \tau^{p} }
		+
		\tau^{\sigma-p a} e^{-\iota \frac{|y|^2}{\tau}}
		\1_{|y|> \tau^{p}} ,
		\quad \iota\in (0,\frac{1}{4}] .
	\end{equation*}
	The weight $w(y,\tau)$ is partially determined by the forthcoming estimate \eqref{23Oct15-1}, \eqref{23Oct16-1}. By the barrier function $C_1  \tilde{\phi}$, for $1\le \tau_0<\tau_1<\infty$, we have $e^{- \frac{|\tilde{y}|^2+\left(y_n+1\right)^2}{4 \tau}} \le C_3 w(y,\tau)$ for $y\in \mathbb{R}_+^n$, $\tau_0 < \tau <\tau_1$ with a constant $C_3>0$ depending on $\tau_1$, which implies
	\begin{equation*}
		\|\phi\|_{\sigma,a,p,\tau_0,\tau_1}^{\#} <\infty .
	\end{equation*}

	Claim: For all $1\le \tau_0<\tau_1<\infty$,  there exists a constant $C_5>0$ independent of $\tau_0,\tau_1, g, h$ such that
	\begin{equation}\label{23Oct14-2}
		\|\phi\|_{\sigma,a,p,\tau_0,\tau_1}^{\#} \le C_5 \left(  \|g\|_{\sigma,2+a,\ell(\tau),\mathbb{R}_+^n,\tau_0,\tau_1 }
		+
		\|h\|_{\sigma,1+a,\ell(\tau),\mathbb{R}^{n-1},\tau_0,\tau_1 } \right) .
	\end{equation}
	Indeed, for $p\in [0,\frac{1}{2}]$, one taking $\tau_1\rightarrow \infty$, \eqref{23Oct14-2} deduces the estimate of $\phi$ in \eqref{23Oct04-4} for the case $\tau_1=\infty$. Combining $\|g\|_{\sigma,2+a,\ell(\tau),\mathbb{R}^n_+,\tau_0,\tau_1} <\infty, \|h\|_{\sigma,1+a,\ell(\tau),\varsigma,\mathbb{R}^{n-1},\tau_0,\tau_1}<\infty$, and \eqref{qd24Jan08-5} in Lemma \ref{24Jan08-1-lem}, we get the desired bound of $\nabla \phi$ in \eqref{23Oct04-4}.

	We prove \eqref{23Oct14-2} by contradiction argument. Suppose that there exists a sequence $\left(\phi_k, g_k,h_k,\tau_{0k},\tau_{1k}\right)_k$ satisfying
	\begin{equation}\label{23Oct14-1}
		\begin{cases}
			\pp_\tau\phi_k=\Delta \phi_k+g_k
			\mbox{ \ in \ }  \mathbb{R}^{n}_+\times(\tau_{0k},\tau_{1k}),
			\quad
			-\pp_{y_n}\phi_k =\frac{n}{n-2}U^{\frac{2}{n-2}}\phi_k+h_k  \mbox{ \ on \ }\pp \mathbb{R}^n_+\times(\tau_{0k},\tau_{1k}),
			\\
			\phi_k(y,\tau_{0k})=0
			\mbox{ \ in \ }  \mathbb{R}^{n}_+
			,
			\quad
			\int_{\mathbb{R}^n_+}\phi_k(y,\tau) Z_j(y) dy=0 \mbox{ \ for all \ } \tau\in (\tau_{0k},\tau_{1k}),
			\quad
			j=0,1,\dots,n,
		\end{cases}
	\end{equation}
	and
	\begin{equation}\label{23Oct14-3}
		\|\phi_k\|_{\sigma,a,p,\tau_{0k},\tau_{1k}}^{\#}=1,
		\quad \|g_k\|_{\sigma,2+a,\ell(\tau),\mathbb{R}_+^n,\tau_{0k},\tau_{1k}} =o_k(1),
		\quad
		\|h_k\|_{\sigma,1+a,\ell(\tau),\mathbb{R}^{n-1},\tau_{0k},\tau_{1k}} =o_k(1) ,
	\end{equation}
	where $o_k(1) \to 0$
	as  $k\to \infty$. Therein,
	$\tau_{1k} \to \infty$ otherwise $\|\phi_k\|_{\sigma,a,p,\tau_{0k},\tau_{1k}}^{\#} \to 0$.
	Thus, there exists a sequence $\left( y_k,\tau_{2k}\right)_k$, $y_k\in \overline{\mathbb{R}_+^n}$, $\tau_{0k} < \tau_{2k} < \tau_{1k}$ such that
	\begin{equation}\label{23Oct14-5}
		\left( w \left(y_k,\tau_{2k}\right) \right)^{-1}
		\left|\phi_k(y_k,\tau_{2k})\right| \ge  \delta_1>0
	\end{equation}
with a small constant $\delta_1>0$ independent of $k$, where $\tau_{2k} \rightarrow \infty$ otherwise \eqref{23Oct14-5} fails.
	
	For $(y,\tau) \in \mathbb{R}^n \times [\tau_{0k},\tau_{1k})$, denote
	\begin{equation*}
			\mathcal{T}_1\left[f_1\right]:=
			\int_{\tau_{0k}}^{\tau}
			\int_{\mathbb{R}_+^n}
			G_n(y,\tau,z,s) f_1(z,s) dz ds,
   \quad
			\mathcal{T}_2\left[f_2\right]:=
			\int_{\tau_{0k}}^{\tau}\int_{\mathbb{R}^{n-1}}
			G_n(y,\tau,(\tilde{z},0),s)  f_2(\tilde{z},s )   d\tilde{z} ds
	\end{equation*}
for some admissible functions $f_1, f_2$. Then
 \begin{equation*}
		\phi_k(y,\tau) =   \mathcal{T}_1\left[g_k\right] + \mathcal{T}_2\left[
		\frac{n}{n-2}\left(U^{\frac{2}{n-2}} \phi_k\right) ( (\tilde{y},0),\tau) + h_k \right].
	\end{equation*}
	
	Hereafter, all general constants, like $C_i$, $C(*,*,\dots)$, are independent of $k$.
	
	By \eqref{23Oct14-3}, Lemma \ref{2023Oct01-1-lem},
	\begin{equation}\label{23Oct15-1}
		\left| \mathcal{T}_1\left[ g_k \right] \right|
		\le C_6
		o_k(1)
		\mathcal{T}_1\left[ \tau^{\sigma} \langle y \rangle^{-2-a} \1_{|y|\le \ell(\tau) } \right]
		\le C_6
		o_k(1)
		\tau^\sigma
		\left(
		\langle y \rangle^{-a} \1_{|y|\le \tau^p}
		+
		\tau^{-p a} e^{-\frac{|y|^2}{4\tau}}
		\1_{|y|>\tau^p }
		\right)
	\end{equation}
	under the assumption
	\begin{equation}\label{23Oct15-2}
		p\ge 0,
		\quad
		0<a<n-2,
		\quad
		\begin{cases}
			p \le \frac{1}{2},
			\sigma-p a+\frac{n}{2} \ge 0 ,
			&
			\mbox{ \ if \ }
			\sigma+p (n-2-a) \ne -1
			\\
			p <\frac{1}{2}
			\left(\Leftrightarrow
			\sigma-p a+\frac{n}{2} > 0 \right),
			&
			\mbox{ \ if \ }
			\sigma+p (n-2-a)=-1 ,
		\end{cases}
	\end{equation}
	where $C_6$ varies from line to line.
	
	By \eqref{23Oct14-3}, Lemma \ref{bd-ann-23Lem},
	\begin{equation}\label{23Oct16-1}
		\left|\mathcal{T}_2\left[h_k \right] \right|
		\le C_6
		o_k(1) \mathcal{T}_2\left[ \tau^{\sigma} \langle \tilde{y} \rangle^{-1-a} \1_{|\tilde{y}|\le \ell(\tau) } \right]
		\le
		C_6
		o_k(1) \tau^\sigma
		\left(
		\langle y \rangle^{-a} \1_{|y|\le \tau^p }
		+
		\tau^{-p a} e^{-\frac{|y|^2}{4\tau}}
		\1_{|y|>\tau^p }
		\right)
	\end{equation}
	under the assumption \eqref{23Oct15-2}.
	
	By \eqref{23Oct14-3}, Lemma \ref{bd-ann-23Lem}, Lemma \ref{lem:far-23Oct16},
	\begin{equation*}
		\begin{aligned}
			&
			\left| \mathcal{T}_2\left[
			\left(U^{\frac{2}{n-2}} \phi_k\right)( (\tilde{y},0),\tau) \right] \right|
			\le C_6
			\mathcal{T}_2\left[
			\tau^\sigma \langle \tilde{y} \rangle^{-a-2}
			\1_{|\tilde{y}|\le \tau^p }
			+
			\tau^{\sigma-p a}
			\langle \tilde{y} \rangle^{-2}
			e^{- \iota \frac{ |\tilde{y}|^2}{\tau}}
			\1_{|\tilde{y}|> \tau^p} \right]
			\\
			\le \ &
			C_6
			\mathcal{T}_2\left[
			\tau^\sigma \langle \tilde{y} \rangle^{-a-1-\epsilon}
			\1_{|\tilde{y}|\le \tau^p }
			+
			\tau^{\sigma-p a}
			|\tilde{y}|^{-1-\epsilon}
			\1_{ \tau^p < |\tilde{y}| \le \tau^{\frac{1}{2}} }
			+
			\tau^{\sigma-p a}
			|\tilde{y}|^{-1-\epsilon}
			e^{- \iota \frac{|\tilde{y}|^2}{\tau}}
			\1_{|\tilde{y}|> \tau^{\frac{1}{2}}}
			\right]
			\\
			\le \ &
			C_6
			\bigg[
			\tau^{\sigma}
			\left( \langle y \rangle^{-a-\epsilon}
			\1_{|y|\le \tau^p}
			+
			\tau^{-p (a+\epsilon)}
			e^{-\frac{|y|^2}{4\tau}}
			\1_{|y|> \tau^p}
			\right)
			\\
			& +
			\tau^{\sigma-p a}
			\left(
			\tau^{-p \epsilon}\1_{|y|\le \tau^p}
			+
			|y|^{-\epsilon}
			\1_{\tau^p<|y|\le \tau^{\frac{1}{2}} }
			+
			\tau^{-\frac{\epsilon}{2}}
			e^{-\frac{|y|^2}{4\tau}}
			\1_{|y|>\tau^{\frac{1}{2}} }
			\right)
			+
			\tau^{\sigma-p a -\frac{\epsilon}{2}} e^{-\iota \frac{|y|^2}{\tau}}
			\bigg]
		\end{aligned}
	\end{equation*}
	for any constant $\epsilon \in \left(0, \min\left\{n-2-a, 1\right\} \right)$, under the assumption
	\begin{equation*}
		\begin{aligned}
			&
			0<a<n-2,
			\quad
			p \ge 0,
			\quad
			\begin{cases}
				p  \le \frac{1}{2},
				\sigma-p (a+\epsilon)+\frac{n}{2} \ge 0 ,
				&
				\mbox{ \ if \ }
				\sigma+p (n-2-a-\epsilon)\ne -1
				\\
				p <\frac{1}{2}
				,
				&
				\mbox{ \ if \ }
				\sigma+p (n-2-a-\epsilon)=-1  ,
			\end{cases}
			\\
			& \iota\in (0,\frac{1}{4}),
			\quad
			\sigma -p a -\frac{\epsilon}{2} +2 \iota n \ge 0  .
		\end{aligned}
	\end{equation*}
	Thus,
	\begin{equation}\label{23Oct16-2}
		\left| \mathcal{T}_2\left[
		\left(U^{\frac{2}{n-2}} \phi_k\right)( (\tilde{y},0),\tau) \right] \right|
		\le C_6 \tau^{\sigma}
		\left( \langle y \rangle^{-a-\epsilon}
		\1_{|y|\le \tau^p}
		+
		\tau^{-p (a+\epsilon)}
		e^{-\iota \frac{|y|^2}{\tau}}
		\1_{|y|> \tau^p}
		\right).
	\end{equation}
	\eqref{23Oct15-1}, \eqref{23Oct16-1}, \eqref{23Oct16-2} imply
	\begin{equation*}
		\left| \phi_k(y,\tau)
		\right|
		\le C_6 w(y,\tau)
		\left(
		o_k(1)
		+
		\langle y \rangle^{-\epsilon}
		\1_{|y|\le \tau^p}
		+
		\tau^{-p \epsilon }
		\1_{|y|> \tau^p}
		\right).
	\end{equation*}
	Using $p >0$, $\tau_{2k} \rightarrow \infty$, and \eqref{23Oct14-5},  we conclude that there exists a constant $C_7>0$ such that $|y_k|\le C_7$ for all $k$.
	By \eqref{23Oct14-5}, for $k$ large,
	there exists a positive constant $C_8$ such that
	\begin{equation}\label{23Oct22-6}
		\tau_{2k}^{-\sigma} \left|\phi_k(y_k,\tau_{2k}) \right|\geq C_8 >0 .
	\end{equation}
	Set
	\begin{equation*}
		\tilde{\phi}_k(y,t) = \tau_{2k}^{-\sigma} \phi_k(y,\tau_{2k} + t),
		\quad
		\tilde{g}_k(y,t) = \tau_{2k}^{-\sigma} g_k(y,\tau_{2k} + t),
		\quad
		\tilde{h}_k(\tilde{y},t) = \tau_{2k}^{-\sigma} h_k(\tilde{y},\tau_{2k} + t)
		.
	\end{equation*}
	Then,
	\begin{equation}\label{23Oct22-7}
		|\tilde{\phi}_k(y_k,0)|\ge C_8>0 .
	\end{equation}
	
	The following argument is in the same spirit as the proof of \cite[Lemma 7.7]{2020HMF}.
	
	{\bf Case 1:} There exists a subsequence, (without loss of generality, we still use $k$ as the serial number), such that  $\tau_{2k} \ge 9\tau_{0k}$.
	By \eqref{23Oct14-1}, we have
	\begin{equation}\label{23Oct17-1}
		\begin{cases}
			\pp_t \tilde{\phi}_k=\Delta \tilde{\phi}_k+\tilde{g}_k
			\quad
			\mbox{ \ in \ }  \mathbb{R}^{n}_+\times (\tau_{0k}- \frac{\tau_{2k}}{2},0] ,
			\quad
			-\pp_{y_n}\tilde{\phi}_k =\frac{n}{n-2}U^{\frac{2}{n-2}}\tilde{\phi}_k+\tilde{h}_k
			\quad	\mbox{ \ on \ }\pp \mathbb{R}^n_+\times (\tau_{0k}- \frac{\tau_{2k}}{2},0],
			\\
			\int_{\mathbb{R}^n_+}\tilde{\phi}_k(y,t) Z_j(y) dy=0
			\quad
			\mbox{ \ for all \ } t\in  (\tau_{0k}- \frac{\tau_{2k}}{2},0],
			\quad
			j=0,1,\dots,n.
		\end{cases}
	\end{equation}
	And \eqref{23Oct14-3} implies
	\begin{equation}\label{23Oct22-1}
		\begin{aligned}
			&
			|\tilde{\phi}_k(y,t) |\le C(\sigma,a,p,\iota)
			\left(
			\langle y\rangle^{-a}
			\1_{|y| <  \tau_{2k}^p }
			+
			\tau_{2k}^{-p a}
			e^{-\iota \frac{|y|^2}{\tau_{2k}}}
			\1_{|y|\ge   \tau_{2k}^p }
			\right),
			\\
			&
			|\tilde{g}_k (y,t) |\le o_k(1) C(\sigma)
			\langle y\rangle^{-2-a} \1_{|y|\le C_\ell \tau_{2k}^p } ,
			\quad
			|\tilde{h}_k (\tilde{y},t) |\le o_k(1) C(\sigma)
			\langle \tilde{y}\rangle^{-1-a} \1_{|\tilde{y}|\le C_\ell \tau_{2k}^p }
		\end{aligned}
	\end{equation}
	holding in $t \in (\tau_{0k}- \frac{\tau_{2k}}{2},0]$, $y\in \mathbb{R}_+^n$, $\tilde{y}\in \mathbb{R}^{n-1}$. Since $\tau_{2k} \ge 9\tau_{0k}$, $\tau_{2k}\to \infty$ implies $\tau_{0k}- \frac{\tau_{2k}}{2} \to -\infty$.
	
	{\bf Case 2:} $\tau_{0k}<\tau_{2k} < 9\tau_{0k}$ holds for all $k$.
	Similarly, by \eqref{23Oct14-1}, we have
	\begin{equation}\label{23Oct22-4}
		\begin{cases}
			\pp_t \tilde{\phi}_k=\Delta \tilde{\phi}_k+\tilde{g}_k
			\quad
			\mbox{ \ in \ }  \mathbb{R}^{n}_+\times (\tau_{0k}- \tau_{2k},0] ,
			\quad
			-\pp_{y_n}\tilde{\phi}_k =\frac{n}{n-2}U^{\frac{2}{n-2}}\tilde{\phi}_k+\tilde{h}_k
			\quad	\mbox{ \ on \ }\pp \mathbb{R}^n_+\times (\tau_{0k}- \tau_{2k},0],
			\\
			\tilde{\phi}_k(y,\tau_{0k}- \tau_{2k})=0
			\mbox{ \ in \ }
			\mathbb{R}_+^n,
			\quad
			\int_{\mathbb{R}^n_+}\tilde{\phi}_k(y,t) Z_j(y) dy=0
			\quad \mbox{ \ for all \ } t\in  (\tau_{0k}- \tau_{2k},0],
			\quad
			j=0,1,\dots,n
		\end{cases}
	\end{equation}
	and \eqref{23Oct14-3} makes \eqref{23Oct22-1} hold for all $t \in (\tau_{0k}- \tau_{2k},0]$, $y\in \mathbb{R}_+^n$, $\tilde{y}\in \mathbb{R}^{n-1}$. \eqref{23Oct22-7} implies $\tau_{0k}- \tau_{2k}\to -\infty$.
	
	We will handle the above two cases in a unified way. Denote $t_k:=
	\begin{cases}
		\tau_{0k}- \frac{\tau_{2k}}{2},
		&
		\mbox{ \ for Case 1}
		\\
		\tau_{0k}- \tau_{2k},
		&
		\mbox{ \ for Case 2}
	\end{cases}$.
	Since $t_k \to -\infty$,
	by \eqref{23Oct22-1} and the parabolic regularity theorem (see \cite[p.2418]{Lieberman2012CPAA}), up to a subsequence, we have
	\begin{equation}\label{23Oct23-1}
		\tilde{\phi}_k\to\tilde{\phi}
		\mbox{ \ in \ } C_{\mathrm{loc}}^{\alpha,\frac{\alpha}{2}}\left( \overline{\mathbb{R}_+^n} \times (-\infty, 0] \right)
		\mbox{ \ with a constant \ }
		\alpha\in (0,1).
	\end{equation}
	Combining \eqref{23Oct22-7} with $|y_k|\le C_7$, \eqref{23Oct22-1}, we have
	\begin{equation}\label{23Oct17-5}
		\tilde{\phi}\not\equiv 0 ,
		\mbox{ \ and \ }
		|\tilde{\phi}(y,t) |\le
		C(\sigma,a,p,\iota)
		\langle y\rangle^{-a}
		\mbox{ \ for \ }(y,t)\in  \mathbb{R}_+^n\times (-\infty,0]    .
	\end{equation}

	Given $t\in (-\infty,0]$,
	for any $\epsilon_1>0$,  taking a large constant $R_1$, then for all $j=0,1,\dots,n$, when $k$ is sufficiently large, we have
	\begin{equation*}
		\begin{aligned}
			&
			\bigg| \int_{\mathbb{R}_+^n\cap \{|y|\ge R_1 \} } \tilde{\phi}_k(y,t) Z_j(y) dy \bigg|
			\le
			C(\sigma,a,p,\iota,n)
			\int_{|y|\ge R_1}
			\left(
			|y|^{2-n-a}
			\1_{|y| <  \tau_{2k}^p }
			+
			|y|^{2-n}
			\tau_{2k}^{-p a}
			e^{-\iota \frac{|y|^2}{\tau_{2k}}}
			\1_{|y|\ge   \tau_{2k}^p }
			\right)  dy
			\\
			\le \ &
			C(\sigma,a,p,\iota,n)
			\bigg(
			R_1^{2-a}
			+
			\tau_{2k}^{1-p a}
			\int_{\tau_{2k}^{2p-1}}^\infty e^{-\iota z} dz
			\bigg) <\epsilon_1 ,
		\end{aligned}
	\end{equation*}
	where in the last step, we require  the assumption
	\begin{equation*}
		a>2,
		\quad
		pa>1.
	\end{equation*}
	Similarly, for $a>2$, we have
	$
	\big| \int_{\mathbb{R}_+^n\cap \{|y|\ge R_1 \}} \tilde{\phi}(y,t) Z_j(y) dy \big|<\epsilon_1 $.
	By \eqref{23Oct23-1}, we have
	\begin{equation*}
		\lim\limits_{k\to \infty}
		\int_{\mathbb{R}_+^n\cap \{|y|< R_1 \}} \tilde{\phi}_k(y,t) Z_j(y) dy
		=
		\int_{\mathbb{R}_+^n\cap \{|y|< R_1 \}} \tilde{\phi}(y,t) Z_j(y) dy .
	\end{equation*}
	Thus, the orthogonality conditions in \eqref{23Oct17-1}, \eqref{23Oct22-4} yields
	\begin{equation*}
		\int_{\mathbb{R}_+^n}\tilde{\phi}(y, t)  Z_j(y)dy = 0 \quad \text{ for all } t \in (-\infty, 0],
		\quad j= 0, 1,\dots, n .
	\end{equation*}

	By \eqref{23Oct17-1}, \eqref{23Oct22-4}, we have
	\begin{equation*}
		\begin{aligned}
			& \tilde{\phi}_k(y,t) =
			\int_{\mathbb{R}_+^n}
			G_n(y,t,z,t_k) \tilde{\phi}_k(z,t_k)  dz
			+
			\int_{t_k}^{t}
			\int_{\mathbb{R}_+^n}
			G_n(y,t,z,s) \tilde{g}_k(z,s) dz ds
			\\
			&
			+
			\int_{t_k}^{t}\int_{\mathbb{R}^{n-1}}
			G_n(y,t,(\tilde{z},0),s) 	\left[ \frac{n}{n-2} \left(U^{\frac{2}{n-2}}\tilde{\phi}_k\right)(  ( \tilde{z},0),s )+\tilde{h}_k(\tilde{z},s) \right]  d\tilde{z} ds   .
		\end{aligned}
	\end{equation*}
	Taking $k\to \infty$, using \eqref{23Oct22-1}, $a>0$, \cite[Lemma A.3]{infi4D} for the first part in Case 1, $
	\int_0^{\infty} \left(4\pi t\right)^{-\frac{n}{2}} e^{-\frac{|x|^2}{4t}} dt
	=
	|x|^{2-n}
	\pi^{-\frac{n}{2}}
	2^{-1} (n-2)^{-1}
	\Gamma(\frac{n}{2}) $ with $n>2$ for the second and third parts,
	we have
	\begin{equation}\label{23Oct17-2}
		\tilde{\phi}(y,t) =
		\int_{-\infty}^{t}\int_{\mathbb{R}^{n-1}}
		G_n(y,t,(\tilde{z},0),s)  \frac{n}{n-2} \left(U^{\frac{2}{n-2}}\tilde{\phi} \right)(  ( \tilde{z},0),s )   d\tilde{z} ds   ,
	\end{equation}
	which satisfies
	\begin{equation}\label{23Oct17-3}
		\begin{cases}
			\pp_t \tilde{\phi}=\Delta \tilde{\phi}
			\quad
			\mbox{ \ in \ }  \mathbb{R}^{n}_+\times (-\infty,0] ,
			\quad
			-\pp_{y_n}\tilde{\phi} =\frac{n}{n-2}U^{\frac{2}{n-2}}\tilde{\phi}
			\quad
			\mbox{ \ on \ }\pp \mathbb{R}^n_+\times (-\infty,0] ,
			\\
			\int_{\mathbb{R}^n_+}\tilde{\phi}(y,t) Z_j(y) dy=0
			\quad
			\mbox{ \ for all \ } t\in  (-\infty,0] ,
			\quad j=0,1,\dots,n .
		\end{cases}
	\end{equation}
	
	Using \eqref{23Oct17-2}, there exists a constant $C_9$ varying from line to line , such that
	\begin{equation}\label{23Oct23-3}
		| \tilde{\phi} |
		\le C_9 \langle y \rangle^{2-n} ,
	\end{equation}
	and $\tilde{\phi}$ is smooth by the parabolic regularity theory (See \cite{Lieberman1986, Lieberman2012CPAA}). By the scaling argument, we have
	\begin{equation}\label{23Oct23-4}
		\langle y\rangle^{-1} | D \tilde{\phi}| + |\tilde{\phi}_t| + | D^2 \tilde{\phi}|\le C_9 \langle y \rangle^{-n}.
	\end{equation}
	Differentiating \eqref{23Oct17-3}, we get
	\begin{equation}\label{23Oct17-4}
		\begin{cases}
			\pp_t \tilde{\phi}_t=\Delta \tilde{\phi}_t
			\quad
			\mbox{ \ in \ }  \mathbb{R}^{n}_+\times (-\infty,0] ,
			\quad
			-\pp_{y_n}\tilde{\phi}_t =\frac{n}{n-2}U^{\frac{2}{n-2}}\tilde{\phi}_t
			\quad
			\mbox{ \ on \ }\pp \mathbb{R}^n_+\times (-\infty,0] ,
			\\
			\int_{\mathbb{R}^n_+}\tilde{\phi}_t(y,t) Z_j(y) dy=0
			\quad
			\mbox{ \ for all \ } t\in  (-\infty,0]
			,
			\quad
			j=0,1,\dots,n
		\end{cases}
	\end{equation}
	and then the scaling argument gives
	\begin{equation*}
		\langle y \rangle^{-1}| D \tilde{\phi}_t| + |\tilde{\phi}_{tt}| + | D^2 \tilde{\phi}_t|\le C_9 \langle y \rangle^{-n-2} .
	\end{equation*}
	
	Moreover, multiplying \eqref{23Oct17-4} by $\tilde{\phi}_{t}$ and integrating by parts, we get
	\begin{equation*}
		\frac{1}{2}\pp_t\int_{\mathbb{R}^n_+}|\tilde \phi_t|^2 dy+B[\tilde\phi_t,\tilde\phi_t]=0,
	\end{equation*}
	where
	$B[u,v]:=\int_{\mathbb{R}^n_+}\nabla u\cdot\nabla v dy-\frac{n}{n-2}\int_{\mathbb{R}^{n-1}} \left(U^{\frac{2}{n-2}} u v \right)(\tilde{y},0) d\tilde{y}$.

	Then $B[\tilde{\phi}_t,
	\tilde{\phi}_t ]\ge 0$ by $\int_{\mathbb{R}_+^n}  \tilde{\phi}_t(y, t)  Z_0(y)dy = 0$ since $Z_0$ is the only eigenfucntion of \eqref{Z0-eq} with negative eigenvalue. Thus, $\partial_t\int_{\mathbb{R}_+^n}|\tilde{\phi}_t|^2 dy \le 0$.

	Multiplying \eqref{23Oct17-3}  by $\tilde{\phi}_t$ and integrating by parts, we have
	\begin{equation*}
		\int_{\mathbb{R}_+^n}|\tilde{\phi}_t|^2 dy = -\frac{1}{2}\partial_t B[\tilde{\phi}, \tilde{\phi}].
	\end{equation*}
	
	By \eqref{23Oct23-3}, \eqref{23Oct23-4}, for $n>2$, $|B[\tilde\phi,\tilde\phi](t)|$ is uniformly bounded for $t\in(-\infty,0]$. Thus, we have
	\begin{equation*}
		\int_{-\infty}^0 \int_{\mathbb{R}_+^n}|\tilde{\phi}_t|^2 dy dt < \infty.
	\end{equation*}
	Hence $\tilde{\phi}_t = 0$, that is, $\tilde{\phi}=\tilde{\phi}(y)$ is independent of $t$.
	
	By \eqref{23Oct17-3} and Lemma \ref{qd23Nov28-2-lem}, we have $\tilde{\phi}\equiv 0$, which contradicts $\tilde{\phi}\not\equiv 0$ in \eqref{23Oct17-5}.
	As a result, \eqref{23Oct14-2} holds.
	Due to the arbitrarily small choice of $\epsilon$, we conclude the parameters restriction \eqref{23Oct22-2}.
\end{proof}	

\section{Leading term of $\mu_i$ and topology of the inner and outer problems}\label{formal derivation}

Hereafter, we focus on the case $n=5$ unless otherwise specified.

In order to apply Proposition \ref{prop-23Oct24-1} to solve the inner problems \eqref{cyl-inner}, suitable $\bm{\mu}, \bm{\xi}$ will be taken to satisfy the orthogonality conditions \eqref{gh-ortho-Oct20}.

Recall \eqref{eq-heat-1}. $\Theta_{l_i}(0,t)=-(T-t)^{l_i}$ is a leading role in the orthogonal equations. Roughly speaking, $\psi$ is a smaller term compared with $\Theta_{l_i}(0,t)$ in the inner problem. As the leading term of $\mu_i$, $\mu_{i,0}$ is determined by
\begin{equation}\label{qd23Jan01-2}
\begin{aligned}
&
\dot{\mu}_{i,0}
\mu_{i,0} \int_{\mathbb{R}_+^n}
Z_n^2(y) \eta\Big(\frac{y}{4R}\Big) dy
+
\int_{\mathbb{R}^{n-1}}
\frac{n}{n-2}
\mu_{i,0}^{\frac{n}{2}-1}
U^{\frac{2}{n-2}}(\tilde{y},0)
\eta\Big(\frac{\tilde{y}}{4R},0\Big) \Theta_{l_i}(0,t) Z_n(\tilde{y},0) d\tilde{y} =0
\\
\Leftrightarrow \ &
\dot{\mu}_{i,0} \mu_{i,0}^{2-\frac{n}{2}}
=
-\left(T-t\right)^{l_i} A_R
\end{aligned}
\end{equation}
with
\begin{equation}\label{24Jan01-6}
\begin{aligned}
&
A_R := -\Big(\int_{\mathbb{R}_+^n} Z_n^2(y) \eta\Big(\frac{y}{4R}\Big) dy \Big)^{-1} \int_{\mathbb{R}^{n-1}} \frac{n}{n-2}
U^{\frac{2}{n-2}}(\tilde{y},0)
\eta\Big(\frac{\tilde{y}}{4R} , 0 \Big)  Z_n(\tilde{y},0) d\tilde{y}
\\
= \ &
\frac{n-2}{2} \Big(\int_{\mathbb{R}_+^n} Z_n^2(y) dy \Big)^{-1} \int_{\mathbb{R}^{n-1}} U^{\frac{n}{n-2}} (\tilde{y},0) d\tilde{y} + O(R^{-1})  ,
\end{aligned}
\end{equation}
where we used \eqref{qd2023Nov28-1} and $n=5$ for the last step of $A_R$.
We take
\begin{equation}\label{24Jan01-7}
\mu_{i,0}(t) =
\left[ A_R \frac{6-n}{2} \left(l_i+1\right)^{-1}
\right]^{\frac{2}{6-n}}
\left(T-t\right)^{\frac{2}{6-n} \left(l_i+1\right)} .
\end{equation}

Let $\mu_{i,1}(t):=\mu_{i}(t)-\mu_{i,0}(t)$ be the minor term of $\mu_{i}(t)$. Denote
\begin{equation*}
		\bm{\mu}_{,0}
		=
		(\mu_{1,0}, \mu_{2,0},\dots , \mu_{\mathfrak{o},0}  ),
		\quad
		\bm{\mu}_{,1}
		=
		(\mu_{1,1}, \mu_{2,1},\dots , \mu_{\mathfrak{o},1}  )  .
\end{equation*}

For $T\ll 1$, one plugging $n=5$, then there exists a constant $C_{\mu_{i,0}}> 9$ sufficiently large such that
\begin{equation}\label{qd24Jan02-1}
	9 C_{\mu_{i,0}}^{-1} \left(T-t\right)^{2 l_i+2}
	\le
	\mu_{i,0}(t) \le 9^{-1} C_{\mu_{i,0}} \left(T-t\right)^{2l_i+2},
	\quad
	\left|\dot{\mu}_{i,0}(t) \right|
	\le
	9^{-1}
	C_{\mu_{i,0}} \left(T-t\right)^{2l_i + 1 }  .
\end{equation}
We make the ansatz that
\begin{equation}\label{outer-assumption2}
	\begin{aligned}
		&
		C_{\mu_{i,0}}^{-1} \left(T-t\right)^{2 l_i+2}
		\le  \mu_i \le
		C_{\mu_{i,0}} (T-t)^{2l_i+2},
		\quad
		|\dot{\mu}_i|\le
		C_{\mu_{i,0}} (T-t)^{2l_i+1},
		\\
		&
		\left| \xi^{[i]}-\tilde{q}^{[i]}\right|
		\le
		C_{\mu_{i,0}} (T-t)^{2l_i+2}  ,
		\quad
		|\dot{\xi}^{[i]}|\le  C_{\mu_{i,0}} (T-t)^{2l_i+1} .
	\end{aligned}
\end{equation}
Under the ansatz \eqref{outer-assumption2},
$\left|\dot{\mu}_i
\mu_i  Z_5(y^{[i]}) \right| \eta(\frac{y^{[i]}}{4R}) \lesssim \left(T-t\right)^{4 l_i+3} \langle y^{[i]} \rangle^{-3} \eta(\frac{y^{[i]}}{4R}) \lesssim
R^{\frac{3}{2}} \left(T-t\right)^{4 l_i+3} \langle y^{[i]} \rangle^{-\frac{9}{2}} \eta(\frac{y^{[i]}}{4R}) $. Using Proposition \ref{prop-23Oct24-1} with $n=5$ formally, for $i=1,2,\dots, \mathfrak{o}$, we define $T_{\sigma_0} := T- \sigma_0 $ with  $\sigma_0\in [0,T)$, and the spaces
\begin{equation}
B_{{\rm in},\sigma_0}  :=
\Big\{
(f_1,f_2,\dots, f_{\mathfrak{o}})
\ \big| \
f_i,\nabla f_i \in L^{\infty}\left(B_5^+(0,2R)\times(0,T_{\sigma_0})
\right),  i=1,2,\dots,\mathfrak{o},
\quad
\sup_{i=1,2,\dots,\mathfrak{o}}
\| f_i \|_{{\rm{in}},l_i,\sigma_0} \le 1
\Big\}
\end{equation}
with the norm
\begin{equation*}
\| f \|_{{\rm{in}},l_i,\sigma_0} := \sup_{(y,t) \in B_5^+(0,2R) \times  (0,T_{\sigma_0} ) }
\big( R^{2}(T-t)^{4l_i+3}
\langle y \rangle^{-\frac{5}{2}} \big)^{-1}
\left( |f(y,t)|+\langle y \rangle  |\nabla f(y,t) | \right)  ,
\end{equation*}
where we used $R^{2}$ instead of $R^{\frac{3}{2}}$ in the norm $\|\cdot\|_{{\rm{in}},l_i,\sigma_0}$ for the final fixed point argument.

\medskip

In order to get the vanishing property around the blow-up points for the outer problem, we adopt the ideas in \cite[p.318]{harada2019higher}, \cite[p.13]{Zhang-Zhao2023} to define the following space for the outer problem
\begin{equation}\label{qd23Jan04-6}
\mathcal{X}_{\delta_0,\sigma_0}
: =
\left\{
f\in L^\infty(\overline{B_5^+(0,\sigma_0^{-1}) } \times (0,T_{\sigma_0} ) )
\ | \ \| f \|_{\mathcal{X},\sigma_0 }  \le \delta_0
\right\}
\end{equation}
with the norm
\begin{equation*}
	\| f \|_{\mathcal{X},\sigma_0 }
	:= \sup_{(x,t)\in \overline{B_5^+(0,\sigma_0^{-1}) } \times (0,T_{\sigma_0})}
	\Big[
	\langle x\rangle^{-2}
	\1_{ \cap_{i=1}^{\mathfrak{o}}
		\big\{ |z^{[i]}| > e^{\frac{l_i s}{2 l_i +2}  } \big\} }
	+
	\sum\limits_{i=1}^{\mathfrak{o}}
	(T-t)^{l_i} \langle z^{[i]} \rangle^{2l_i+2}
	\1_{|z^{[i]}| \le e^{\frac{l_i s}{2 l_i +2}  } }
	\Big]^{-1}
	|f(x,t)|
\end{equation*}
with $z^{[i]} =\frac{x-q^{[i]}}{\sqrt{T-t}}$ and $s=-\ln(T-t)$, where we admit $B_5^+(0,\sigma_0^{-1})\big|_{\sigma_0=0} = \mathbb{R}_+^5$ and the constant $\delta_0 \in (0,1)$ will be determined later.
Note that  $|z^{[i]}| \le e^{\frac{l_i s}{2 l_i +2}  } \Leftrightarrow |x-q^{[i]}|\le (T-t)^{\frac{1}{2 l_i+2}}$, which implies $\{ |z^{[i]}| \le e^{\frac{l_i s}{2 l_i +2}  }  \} \cap \{ |z^{[j]}| \le e^{\frac{l_j s}{2 l_j +2}  }  \} = \emptyset$ for $i\ne j$ with $T\ll 1$.
Given $q\in \mathbb{R}_+^5$, $\langle x\rangle \sim_{q} \langle x-q\rangle$.

\section{Priori estimate of the outer problem}\label{section-outer}

Set $N_{\rm{max}} = \max\limits_{i=1,2,\dots, \mathfrak{o}} N(\lceil 5l_i/3 \rceil+1 )$ with $N(\cdot)$ defined in \eqref{qd24Feb13-1}. We set $\tilde{e}_j(z)$ by Corollary \ref{orth-lem} satisfying the properties
\begin{equation}\label{qd23Dec11-1}
\begin{aligned}
&
\tilde{e}_j(z) = \sum\limits_{\iota=0}^{N_{\rm{max}}} a_{j\iota} e_\iota (z) \eta(\frac{z}{C_{\tilde{e}}})
\mbox{ \ with a constant \ } C_{\tilde{e}}>0 ,
\quad  j=0,1, \dots, N_{\rm{max}}  ,
\\
&
\partial_{z_n}\tilde{e}_i=0
\quad	\mbox{ \ on \ }\pp\mathbb{R}_+^n
\quad
\mbox{ \ and \ }
\quad
\left( \tilde e_i , e_j  \right)_{L_\rho^2(\mathbb{R}_+^n)} =\delta_{ij }
\quad \mbox{ \ for \ } i, j =0,1,\dots, N_{\rm{max}}
\end{aligned}
\end{equation}
with a constant matrix $(a_{il})_{(N_{\rm{max}} +1)\times (N_{\rm{max}} +1)}$. Denote
\begin{equation}\label{24Dec01-1}
	\bm{\tilde e}_i(z) :=\left(\tilde e_{0}(z), \tilde e_{1}(z) , \dots, \tilde e_{N(\lceil 5l_i/3 \rceil+1 )} (z)  \right)  .
\end{equation}

We emphasize the following remark before further analysis.
\begin{remark}\label{qd24Feb23-1rk}
The property of $\bm{\mu} (\bm{\xi})$ being the original function of $\dot{\bm{\mu}} (\dot{\bm{\xi} })$  plays no role in this section, for which we can regard $\bm{\mu}, \dot{\bm{\mu}}, \bm{\xi}, \dot{\bm{\xi}} $ as four independent functions in this section.

\end{remark}

In order to find a solution for the outer problem \eqref{cyl-outer} with fast time decay near the blow-up points, we consider the following equation with a suitable initial value
\begin{equation}\label{outer-1}
\left\{
	\begin{aligned}
		&
		\partial_t \psi=\Delta_x \psi+\mathcal{G}_{1}[\bm{\phi},\bm{\mu},\bm{\xi}]
		\text{ \ in \ } \mathbb{R}_+^5 \times (0,T),
		\quad
		-\partial_{x_5}\psi=\mathcal{G}_{2}[ \psi,\bm{\phi},\bm{\mu},\bm{\xi}]
		\text{ \ on \ } \partial\mathbb{R}_+^5 \times (0,T),
\\
& \psi(x,0)=\sum\limits_{i=1}^{\mathfrak{o}}\bm b_i\cdot \bm{ \tilde{e}}_i\big( T^{-\frac{1}{2}}(x-q^{[i]}) \big)+\varphi_0(x)
		\text{ \ in \ } \mathbb{R}_+^5 ,
	\end{aligned}
\right.
\end{equation}
where $\varphi_0(x)\in C_{\rm c}^\infty(\overline{\mathbb{R}^5_+})$ and  $\bm{b}_i=(b_{i,0},b_{i,1},\dots,b_{i,N(\lceil 5l_i/3 \rceil+1 )})\in \mathbb{R}^{N(\lceil 5l_i/3 \rceil+1 )}$ will be determined later. By \eqref{qd2023Dec03-3}, \eqref{outer-1} is rewritten as
\begin{equation}\label{qd24Jan01-2}
	\begin{aligned}
& \psi(x,t) = \mathcal{T}^{\rm {out}}[ \psi,\bm{\phi},\bm{\mu},\bm{\xi}] := \int_0^{t}
		\int_{\mathbb{R}_+^5}
		G_5(x,t,z,s)
		\mathcal{G}_{1}[\bm{\phi},\bm{\mu},\bm{\xi}](z,s) dz ds
		\\
		&
		+
		\int_0^{t}\int_{\mathbb{R}^{4}}
		G_5(x,t,(\tilde{z},0),s)  \mathcal{G}_{2}[ \psi,\bm{\phi},\bm{\mu},\bm{\xi}](  ( \tilde{z},0),s )   d\tilde{z} ds
		+
		\int_{\mathbb{R}_+^5}
		G_5(x,t,z,0)
		\Big[
		\sum\limits_{i=1}^{\mathfrak{o}}\bm b_i\cdot \bm{ \tilde{e}}_i\big( T^{-\frac{1}{2}}(x-q^{[i]}) \big)+\varphi_0(z)
		\Big] dz.
	\end{aligned}
\end{equation}

Adopting the idea in \cite[p.14]{Zhang-Zhao2023} about the distribution of the right-hand side,
we decompose $\psi=\sum_{i=1}^{\mathfrak{o}+1} \psi_i$, where $\psi_i$  satisfy the following equations respectively. For $i=1,2,\dots, \mathfrak{o}$,
\begin{equation}\label{outeri}
        \partial_t \psi_i=\Delta_x \psi_i+\mathcal{G}_{1,i}
        \text{ \ in \ } \mathbb{R}_+^5 \times (0,T) ,
        \quad
    -\partial_{x_5}\psi_i=\mathcal{G}_{2,i}
    \text{ \ on \ } \partial\mathbb{R}_+^5 \times (0,T)
    ,
    \quad
    \psi_i(x,0) ={\bm{b}}_i\cdot {\bm{\tilde{e} }}_i\big( T^{-\frac{1}{2}}(x-q^{[i]}) \big)
    \text{ \ in \ } \mathbb{R}_+^5,
\end{equation}
and for $i=\mathfrak{o}+1$,
\begin{equation}\label{qd23Dec23-3}
        \partial_t \psi_{\mathfrak{o}+1}=\Delta_x \psi_{\mathfrak{o}+1}\text{ \ in \ } \mathbb{R}_+^5 \times (0,T) ,
        \quad
    -\partial_{x_5}\psi_{\mathfrak{o}+1}=\mathcal{G}_{2,\mathfrak{o}+1}
    \text{ \ on \ } \partial\mathbb{R}_+^5 \times (0,T),
    \quad
    \psi_{\mathfrak{o}+1}(x,0)=\varphi_0(x)
    \text{ \ in \ } \mathbb{R}_+^5,
\end{equation}
where for $i=1,2,\dots,\mathfrak{o}$,
\begin{equation}\label{G1i-G2i}
	\begin{aligned}
	&	\mathcal{G}_{1,i} =\mathcal{G}_{1,i}[\phi_i,\mu_i,\xi^{[i]}]  :=
		\Lambda_{1,i}[\phi_i,\mu_i,\xi^{[i]}] + \Lambda_{2,i}[\phi_i,\mu_i,\xi^{[i]}]
		+ \mathcal{E}_1\left[U_{\mu_i,\xi^{[i]}} (x)\right] \left(\eta_{2\delta}(x^{[i]})
		-
		\eta_{R}(y^{[i]})
		\right)
		+
		\mathcal{E}_{U,i}^{\rm{cut}}
		+
		\mathcal{E}_{\Theta,i}^{\rm{cut}}
		,
		\\
	&	\mathcal{G}_{2,i} = \mathcal{G}_{2,i}\left[\psi,\bm{\phi},\bm{\mu},\bm{\xi}\right] := \Big\{
		\eta_{4\delta}(x^{[i]}) \mathcal{N}\left[\psi,\bm{\phi},\bm{\mu},\bm{\xi}\right]
		+
		\left(U_{\mu_i,\xi^{[i]}} (x) \right)^{\frac{5}{3}}
		\left(
		\eta_{2\delta}^{\frac{5}{3}}(x^{[i]})
		-
		\eta_{2\delta}(x^{[i]})
		\right)
		\\
		& \quad  +
		\frac{5}{3}
		\left(U_{\mu_i,\xi^{[i]}} (x) \right)^{\frac{2}{3}}
		\Big[
		\left(
		\eta_{\delta}(x^{[i]})
		-
		\eta_{R}(y^{[i]})
		\right)
		\Theta_{l_i}(x^{[i]},t)
		+
		\left(
		\eta_{2\delta}^{\frac{2}{3}}(x^{[i]})
		-
		\eta_{R}(y^{[i]})
		\right)
		\psi(x,t)
		\Big] \Big\} \Big|_{x_5=0} ,
		\\
	&	\mathcal{G}_{2,\mathfrak{o}+1}
	=
	\mathcal{G}_{2,\mathfrak{o}+1}[\psi] :=\mathcal{G}_2-\sum_{i=1}^{\mathfrak{o}}\mathcal{G}_{2,i}=
		\Big\{
		\Big(1-\sum_{i=1}^{\mathfrak{o}} \eta_{4\delta}(x^{[i]}) \Big) \mathcal{N}\left[\psi,\bm{\phi},\bm{\mu},\bm{\xi}\right] \Big\} \Big|_{x_5=0}
		\\
		&
		\quad
	=
	\Big\{
	\Big(1-\sum_{i=1}^{\mathfrak{o}} \eta_{4\delta}(x^{[i]}) \Big) |\psi|^{\frac{2}{3}} \psi \Big\} \Big|_{x_5=0}
		 .
	\end{aligned}
\end{equation}
Here we used \eqref{outer-assumption2}  and $T\ll 1$ for $\mathcal{G}_{2,\mathfrak{o}+1}$ and
	$\left(
	\eta_{2\delta}^{\frac{2}{3}}(x^{[i]}) - 1
	\right)  \eta_{R}(y^{[i]})
	\mu_{i}^{-\frac{3}{2}} \phi_i(y^{[i]},t) \equiv 0 $ for $\mathcal{G}_{2,i}$.

For $i=1,2, \dots, \mathfrak{o}$, set
\begin{equation}\label{qd23Dec21-2}
\psi_i(x,t) = \Psi_i \big( (T-t)^{-\frac 1 2 } \left(x -q^{[i]}\right) , - \ln (T-t) \big)
	,
	\quad
	\mbox{ that is, }
	\quad
\Psi_i(z^{[i]},s) = \psi_i\big( e^{-\frac{s}{2}} z^{[i]} +q^{[i]}, T-e^{-s} \big)  .
\end{equation}
By Lemma \ref{z24Feb13-2-lem}, \eqref{outeri} is rewritten as
\begin{equation*}
\left\{
\begin{aligned}
&
		\partial_s \Psi_i
		=   A_{z^{[i]}} \Psi_i
		+
		g_{1,i}(z^{[i]} ,s)
		\quad
		\mbox{ \ in \ }  \mathbb{R}_+^5 \times  (s_0,\infty  ),
\quad
	-\partial_{z_5^{[i]}} \Psi_i =
	g_{2,i}(\tilde{z}^{[i]},s)
		\quad	\mbox{ \ on \ }  \partial\mathbb{R}_+^5 \times  (s_0,\infty),
\\
&
	\Psi_i\big( z^{[i]},s_0\big)
		=
		\bm{b}_i\cdot \bm{\tilde e }_i ( z^{[i]} )
			\quad
		\mbox{ \ in \ }  \mathbb{R}_+^5  ,
\end{aligned}
\right.
\end{equation*}
where
\begin{equation}\label{qd23Dec12-3}
s_0:=-\ln T,
\quad
g_{1,i}(z^{[i]} ,s) := e^{-s}
\mathcal{G}_{1,i}(e^{-\frac{s}{2}} z^{[i]} + q^{[i]},T-e^{-s}),
\quad
g_{2,i}(\tilde{z}^{[i]},s) := e^{-\frac{s}{2}} \mathcal{G}_{2,i}(e^{-\frac{s}{2}} \tilde{z}^{[i]} + \tilde{q}^{[i]},T-e^{-s}) .
\end{equation}
 In order to find a solution   $\Psi_i(z^{[i]} ,s)$ of the form
\begin{equation}\label{qd23Dec21-1}
	\Psi_i(z^{[i]} ,s)=\bm{d}_i(s)\cdot\bm{\tilde{e}}_i(z^{[i]}) +
	\Phi_i(z^{[i]} ,s)
\end{equation}
with $\bm{d}_i(s)=(d_{i,0}(s), d_{i,1}(s),\dots,d_{i,N(\lceil 5l_i/3 \rceil+1 )}(s))\in C^1([s_0,\infty))$ to be determined later, one using $\partial_{z_n}\tilde{e}_i=0 $
on $\pp\mathbb{R}_+^n $ in \eqref{qd23Dec11-1}, it suffices to solve
\begin{equation}\label{outeri2}
		\partial_s \Phi_i=A_{z^{[i]}} \Phi_i+\tilde g_{1,i}(z^{[i]} ,s)
		\text{ \ in \ } \mathbb{R}_+^5 \times (s_0,\infty) ,
		\quad
		-\partial_{z^{[i]}_5}\Phi_i=g_{2,i}(\tilde z^{[i]} ,s)
		\text{ \ on \ } \partial\mathbb{R}_+^5 \times (s_0,\infty),
		\quad
		\Phi_i(\cdot,s_0)=0
		\text{ \ in \ } \mathbb{R}_+^5 ,
\end{equation}
where we set
\begin{equation}\label{gi10}
	\bm{b}_i:=\bm{d}_i(s_0),
	\quad
	\tilde{g}_{1,i}(z^{[i]} ,s):=g_{1,i}(z^{[i]} ,s)+\sum_{j=0}^{N(\lceil 5l_i/3 \rceil+1 )} \left( d_{i,j}(s)A_{z^{[i]}}\tilde e_{j}(z^{[i]} )- \dot{d}_{i,j}(s)\tilde e_{j}(z^{[i]} )\right) .
\end{equation}
In order to recover the time decay of the right-hand sides of \eqref{outeri2} for $\Phi_i$, we impose the orthogonality conditions
\begin{equation}\label{qd2023Dec17-1}
	\int_{\mathbb R^5_+}\tilde {g}_{1,i}(z^{[i]},s)e_j(z^{[i]}) e^{-\frac{|z^{[i]}|^2}{4}} dz^{[i]}  +\int_{\mathbb R^4} g_{2,i}(\tilde{z}^{[i]},s)e_j(\tilde{z}^{[i]},0)e^{-\frac{|\tilde{z}^{[i]}|^2}{4}}d\tilde{z}^{[i]} =0, \quad j=0,1,\dots, N(\lceil 5l_i/3 \rceil+1 ) .
\end{equation}
One using \eqref{qd23Dec11-2}, \eqref{qd2023Dec17-2},
 and \eqref{qd23Dec11-1},
then \eqref{qd2023Dec17-1} is equivalent to
\begin{equation}\label{eq-5.7}
	\dot{d}_{i,j}(s) + \lambda_j d_{i,j}(s)
	=\int_{\mathbb{R}^5_+}g_{1,i}(z^{[i]},s)e_{j}(z^{[i]} )e^{-\frac{|z^{[i]}|^2}{4}}  dz^{[i]}  +
	\int_{\mathbb{R}^4}g_{2,i}(\tilde{z}^{[i]} ,s) e_j(\tilde{z}^{[i]},0 )e^{-\frac{|\tilde{z}^{[i]}|^2}{4}} d\tilde{z}^{[i]}
\end{equation}
for $j=0,1,\dots , N(\lceil 5l_i/3 \rceil+1 )$.

\subsection{Estimates of $\mathcal{G}_{2,\mathfrak{o}+1}$, $\mathcal{G}_{1,i}$, $\mathcal{G}_{2,i}$, $i=1,2,\dots, \mathfrak{o}$ in \texorpdfstring{\eqref{G1i-G2i}}{$(\ref{G1i-G2i})$}, and $d_{i,j}(s)$}

\begin{lemma}\label{G1G2est-lem}

Suppose that $\bm{\mu}, \dot{\bm{\mu}}, \bm{\xi}, \dot{\bm{\xi}} $ satisfy \eqref{outer-assumption2}, $\psi\in \mathcal{X}_{\delta_0, 0}$, and $T\ll 1$, then
\begin{equation}\label{Gk+12estimate}
	\left| \mathcal{G}_{2,\mathfrak{o}+1}
	\right|
	\le
	\delta_0^{\frac{5}{3}}
	\langle \tilde{x} \rangle^{-\frac{10}{3}}
	\1_{ \cap_{i=1}^{\mathfrak{o}} \{ |\tilde{x}^{[i]}|\ge 4\delta \} } .
\end{equation}
Under the additional assumption that $\bm{\phi} \in B_{{\rm in},0}$, then there exists a constant $C$ independent of $T, \delta_0$ such that for $i=1,2,\dots,\mathfrak{o}$, $\mathcal{G}_{1,i}$, $\mathcal{G}_{2,i}$  have the pointwise upper bounds
\begin{equation}\label{Gi1estimate}
	\begin{aligned}
	|\mathcal{G}_{1,i}|
	\le \ &
	C
	\Big\{
		R^{-\frac{1}{4}}
		\mu_i^{-2} e^{-l_is}
		\left(
		\langle y^{[i]}\rangle^{-\frac{9}{4}}
		\mathbf{1}_{|y^{[i]}|\le 2R }
		+
		\langle y^{[i]} \rangle^{-\frac{11}{4}}
		\1_{|y^{[i]}| > R}
		\right)
		\1_{|z^{[i]} | \le 1}
		\\
		&
	+
	\left[
		e^{-(3l_i +\frac{1}{2})s} |z^{[i]}|^{-3}
		+
		\1_{\delta e^{\frac{s}{2}} \le |z^{[i]}|\le 2\delta e^{\frac{s}{2}} }
		\right]
	\1_{1<|z^{[i]}|\le 4\delta e^{\frac{s}{2}} }
	\Big\},
\end{aligned}
\end{equation}
\begin{equation}\label{Gi2estimate}
\begin{aligned}
	|\mathcal{G}_{2,i}|
	\le \ &
	C
	\Big[
		\left(
		R^{-\frac{1}{4}}
		\mu_i^{-1} e^{-l_i s} \langle \tilde{y}^{[i]} \rangle^{-\frac{7}{4}}
		+
		e^{-\frac{5}{3} l_i s}
		\right)
		\1_{|\tilde{z}^{[i]} |\le 1}
+
		e^{-\frac{5}{3} l_i s}
		| \tilde{z}^{[i]} |^{\frac{10}{3} l_i+ \frac{10}{3} }	
		\1_{1<|\tilde{z}^{[i]} |\le e^{\frac{l_i s}{2l_i+2} }}
		\\
		& +
		\left(
		e^{-\frac{5}{3} l_i s}
		| \tilde{z}^{[i]} |^{\frac{10}{3} l_i }
		+
		\delta_0^{\frac{5}{3}}
		\right)
		\1_{e^{\frac{l_i s}{2l_i+2} } <	|\tilde{z}^{[i]}| \le 8\delta e^{\frac{s}{2}} }
\Big].
\end{aligned}
\end{equation}

\end{lemma}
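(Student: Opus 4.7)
The plan is to bound each of the three quantities by unpacking the definitions in \eqref{G1i-G2i} and \eqref{qd23Dec11-3} term by term and estimating each contribution pointwise from three inputs: the ansatz \eqref{outer-assumption2} on $\bm\mu,\bm\xi$, the inner bound $\|\phi_i\|_{{\rm in},l_i,0}\le 1$, and the outer bound $\|\psi\|_{\mathcal{X},0}\le\delta_0$. The spatial regions indicated on the right-hand sides of \eqref{Gk+12estimate}--\eqref{Gi2estimate} already match the supports of the three families of cut-offs $\eta_R(y^{[i]})$, $\eta_\delta(x^{[i]})$, $\eta_{2\delta}(x^{[i]})$, which live on the three scales $\mu_i\sim(T-t)^{2l_i+2}$, $\sqrt{T-t}$, and $\delta$; one moves between the coordinates $x^{[i]}$, $y^{[i]}$, and $z^{[i]}$ freely using these relations.

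For $\mathcal{G}_{2,\mathfrak{o}+1}$, I would first observe that $\mathrm{supp}(1-\sum_i\eta_{4\delta}(x^{[i]}))\subset\bigcap_i\{|x^{[i]}|\ge 4\delta\}$, a consequence of the disjointness of the balls $B(q^{[i]},8\delta)$ guaranteed by $\delta<|q^{[i]}-q^{[j]}|/32$. On this set every cut-off entering the approximate solution vanishes, so $u=\psi$ and $\mathcal{N}[\psi,\bm\phi,\bm\mu,\bm\xi]=|\psi|^{5/3}\psi$. For $T\ll 1$ this set also lies in $\bigcap_i\{|z^{[i]}|>e^{l_is/(2l_i+2)}\}$, where the norm $\|\cdot\|_{\mathcal{X},0}$ yields $|\psi|\le\delta_0\langle x\rangle^{-2}$, whence \eqref{Gk+12estimate} follows at once.

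For $\mathcal{G}_{1,i}$ I would treat each of its five summands separately. The term $\mathcal{E}_1[U_{\mu_i,\xi^{[i]}}](\eta_{2\delta}-\eta_R)$, explicit from \eqref{qd23Dec11-3}, has pointwise size $|\dot\mu_i|\mu_i^{-5/2}\langle y^{[i]}\rangle^{-3}+|\dot\xi^{[i]}|\mu_i^{-5/2}\langle y^{[i]}\rangle^{-4}$ on $R\le|y^{[i]}|\le 8\delta/\mu_i$, which by \eqref{outer-assumption2} is $\lesssim\mu_i^{-2}(T-t)^{l_i}\langle y^{[i]}\rangle^{-3}$; trading the excess decay $\langle y^{[i]}\rangle^{-1/4}\le R^{-1/4}$ on $\{|y^{[i]}|\ge R\}$ converts this into the claimed $R^{-1/4}\langle y^{[i]}\rangle^{-11/4}$ piece, while rewriting the same term in $z^{[i]}$-variables in the region $|z^{[i]}|>1$ gives the $e^{-(3l_i+1/2)s}|z^{[i]}|^{-3}$ contribution to \eqref{Gi1estimate}. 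The cut-off errors $\mathcal{E}_{U,i}^{\rm cut},\mathcal{E}_{\Theta,i}^{\rm cut}$ are supported on $\delta\le|x^{[i]}|\le 2\delta$ and read off directly from the formulas for $U_{\mu_i,\xi^{[i]}}$ and $\Theta_{l_i}$, producing the indicator on $\delta e^{s/2}\le|z^{[i]}|\le 2\delta e^{s/2}$. The correction terms $\Lambda_{1,i},\Lambda_{2,i}$ combine $\|\phi_i\|_{{\rm in},l_i,0}\le 1$ (which yields $|\phi_i|+\langle y\rangle|\nabla\phi_i|\lesssim R^2(T-t)^{4l_i+3}\langle y^{[i]}\rangle^{-5/2}$) with the bounds on $\dot\mu_i,\dot\xi^{[i]}$, and are supported on $|y^{[i]}|\le 2R$ (resp.\ the thin annulus $R\le|y^{[i]}|\le 2R$ coming from derivatives of $\eta(y/R)$). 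For $\mathcal{G}_{2,i}$ the same strategy splits the task into: (a) the boundary cut-off error $(U_{\mu_i,\xi^{[i]}})^{5/3}(\eta_{2\delta}^{5/3}-\eta_{2\delta})$ of size $\mu_i^{5/2}|\tilde x^{[i]}|^{-5}$ on $|\tilde z^{[i]}|\sim\delta e^{s/2}$; (b) the $\Theta_{l_i}\cdot U^{2/3}$ interaction, bounded via $|\Theta_{l_i}|\lesssim(T-t)^{l_i}\langle x^{[i]}\rangle^{2l_i}$ and $U^{2/3}(\tilde y^{[i]},0)\lesssim\mu_i^{-1}\langle\tilde y^{[i]}\rangle^{-2}$, producing the growing piece $e^{-5l_is/3}|\tilde z^{[i]}|^{10l_i/3+10/3}$; (c) the $\psi\cdot U^{2/3}$ interaction, where the two-piece weight of $\|\cdot\|_{\mathcal{X},0}$ contributes respectively the inner ($|\tilde z^{[i]}|\le 1$) and outer ($|\tilde z^{[i]}|>e^{l_is/(2l_i+2)}$) parts; and (d) the nonlinear remainder $\eta_{4\delta}(x^{[i]})\mathcal{N}$, which a Taylor expansion of $|u|^{5/3}u$ around the approximate solution reduces to lower-order cross-products of the corrections with $U^{2/3}(\eta_{2\delta}^{2/3}-1)$, absorbed by the previous bounds.

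The main obstacle is bookkeeping: no single estimate is individually hard, but one must carefully partition space into the regions $\{|y^{[i]}|\le R\}$, $\{R<|y^{[i]}|\le 2R\}$, $\{1<|z^{[i]}|\le 2\delta e^{s/2}\}$, etc., shift freely among the three spatial variables, and at each step pick the right balance between powers of $\langle y^{[i]}\rangle$ and $R=|\ln T|$ so that the final bounds land exactly on the weights required by the outer linear theory (via Lemma \ref{outerbarrieri1}). A secondary subtle point is that the clean smallness $\delta_0^{5/3}$ in \eqref{Gk+12estimate} must come from $|\psi|^{5/3}$ alone and not be polluted by interaction terms, which requires verifying that on the support of $\mathcal{G}_{2,\mathfrak{o}+1}$ all the other contributions to $\mathcal{N}$ indeed vanish, as sketched above.
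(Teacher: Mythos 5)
Your overall strategy — term-by-term estimation, moving between the three coordinate systems $x^{[i]},y^{[i]},z^{[i]}$ via $\langle y^{[i]}\rangle\sim\langle e^{(2l_i+\frac32)s}z^{[i]}\rangle$, and matching the spatial regions to the cut-off supports — is exactly what the paper does, and your treatment of $\mathcal{G}_{2,\mathfrak{o}+1}$, of the five pieces of $\mathcal{G}_{1,i}$, and of items (a)--(c) of $\mathcal{G}_{2,i}$ all reproduce the paper's proof faithfully. However, your item (d) contains a substantive error. You claim that a Taylor expansion of $|u|^{5/3}u$ ``reduces $\mathcal{N}$ to lower-order cross-products of the corrections with $U^{2/3}(\eta_{2\delta}^{2/3}-1)$.'' This conflates two different things. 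The factor $U^{2/3}(\eta_{2\delta}^{2/3}-1)\eta_R\mu_i^{-3/2}\phi_i$ is not part of $\mathcal{N}$ at all: it is an explicit cut-off-mismatch term in \eqref{qd24Jan13-3}, and the paper observes it is \emph{identically zero} because $\eta_R(y^{[i]})$ is supported in $|x^{[i]}|\lesssim R\mu_i\ll 2\delta$, where $\eta_{2\delta}\equiv1$; that is why it does not appear in $\mathcal{G}_{2,i}$ in \eqref{G1i-G2i} at all. The genuine nonlinear remainder $\mathcal{N}$ is handled via the elementary inequality $\big||a+b|^{p-1}(a+b)-|a|^{p-1}a-p|a|^{p-1}b\big|\le p|b|^p$ for $p\in(1,2]$, with $a=U_{\mu_i,\xi^{[i]}}\eta_{2\delta}$ and $b=\Theta_{l_i}\eta_\delta+\mu_i^{-3/2}\phi_i\eta_R+\psi$, yielding $|\mathcal{N}|\lesssim|b|^{5/3}$; this is crucial because a naive second-order Taylor expansion would produce a coefficient $|a|^{p-2}$ that degenerates as $a\to0$ and cannot be uniformly controlled on the region $|x^{[i]}|\le 8\delta$. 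Without this specific inequality the $|b|^{5/3}$ bound, and hence all three displayed contributions in \eqref{Gi2estimate} arising from $\Theta_{l_i}$, $\phi_i$, and $\psi$, do not follow. You should also be explicit that the final $|\tilde z^{[i]}|\gtrsim\delta e^{s/2}$ pieces of $\mathcal{G}_{1,i}$ and $\mathcal{G}_{2,i}$ come exclusively from $\mathcal{E}_{U,i}^{\rm cut}$, $\mathcal{E}_{\Theta,i}^{\rm cut}$, and $(U_{\mu_i,\xi^{[i]}})^{5/3}(\eta_{2\delta}^{5/3}-\eta_{2\delta})$, and verify that these and the $\mathcal{N}$-contribution combine into precisely the stated indicators.
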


\begin{proof}
	
The estimate of $\mathcal{G}_{2,\mathfrak{o}+1}$ is straightforward.
By \eqref{outer-assumption2}, we have
\begin{equation}\label{qd23Dec15-1}
\langle y^{[i]} \rangle
\sim
2 C_{\mu_i,0}^2 + |y^{[i]}|
\sim
\langle \mu_i^{-1} x^{[i]} \rangle
=
\langle \mu_i^{-1} \left(T-t\right)^{\frac{1}{2}}
z^{[i]} \rangle
\sim
\langle e^{(2l_i +\frac{3}{2})s}
z^{[i]} \rangle .
\end{equation}
	
	\textbf{Step 1: Estimate of $\mathcal{G}_{1,i}$.}  Recall \eqref{qd23Dec11-3}.
One using $R=|\ln{T}|$, \eqref{outer-assumption2}, $\| \phi_i \|_{{\rm{in}},l_i,0} \le 1$ (by $\bm{\phi} \in B_{{\rm in},0}$), then
\begin{equation*}
\begin{aligned}
&	|\Lambda_{1,i}+\Lambda_{2,i}|
 \lesssim \mu_i^{-2}\mu_i^{-\frac{3}{2}}\left(
R^{-2}|\phi_i|
+
	R^{-1} |\nabla_{y^{[i]}} \phi_i|
+
\mu_i|\dot\mu_i||\phi_i|+\mu_i
R^{-1} |\dot\xi^{[i]}|
|\phi_i|\right)\mathbf{1}_{R\le |y^{[i]}|\le 2R}\\
	&\qquad \qquad \qquad +\mu_i^{-2}\left[
	|\dot\mu_i|\mu_i^{-\frac{1}{2}}
	\left(
	\left|\phi_i  \right|
	+
	\left|y^{[i]} \cdot \nabla_{y^{[i]}} \phi_i  \right|
	\right)
	+\mu_i^{-\frac{1}{2}}|\dot\xi^{[i]}||\nabla_{\tilde y^{[i]}}\phi_i|\right]\mathbf{1}_{|y^{[i]}|\le 2R}
	\\
	\lesssim \ &
\mu_i^{-2} \left(T-t\right)^{l_i}
R^{-\frac{5}{2}}
\mathbf{1}_{R\le |y^{[i]}|\le 2R}
+
\mu_i^{-2}
R^2 \left(T-t\right)^{5l_i+3}
\langle y^{[i]} \rangle^{-\frac{5}{2}}
\mathbf{1}_{|y^{[i]}|\le 2R}
\lesssim
R^{-\frac{1}{4}}	\mu_i^{-2} e^{-l_is}
	\langle y^{[i]}\rangle^{-\frac{9}{4}}
	\mathbf{1}_{|y^{[i]}|\le 2R}.
\end{aligned}
\end{equation*}
Note that $|y^{[i]}|\le 2R$ implies $|z^{[i]}| \le 1$.
\begin{equation*}
\begin{aligned}
&
	\left|
	\mathcal{E}_1\left[U_{\mu_i,\xi^{[i]}} (x)\right] \left(\eta_{2\delta}(x^{[i]})
	-
	\eta_{R}(y^{[i]})
	\right)
	\right|
	\lesssim
	\left( \1_{|x^{[i]}| \le 4\delta} - \1_{|y^{[i]}| \le R} \right)
	\left(T-t\right)^{-3l_i -4}
	\langle y^{[i]} \rangle^{-3}
\\
= \ &
\1_{|x^{[i]}| \le 4\delta, \  |y^{[i]}| > R}
\left(T-t\right)^{-3l_i -4}
\langle y^{[i]} \rangle^{-3}
\lesssim
\mu_i^{-2} e^{-l_i s}
\langle y^{[i]} \rangle^{-3}
\1_{|z^{[i]}| \le 1, \  |y^{[i]}| > R}
+
e^{-(3l_i +\frac{1}{2})s} |z^{[i]}|^{-3}
\1_{1<|z^{[i]}|\le 4\delta e^{\frac{s}{2}} }  ,
\end{aligned}
\end{equation*}
where we used \eqref{qd23Dec15-1} for the last ``$\lesssim$''.
\begin{equation*}
\left| \mathcal{E}_{U,i}^{\rm{cut}} \right|
\lesssim
\left(\mu_i^{-\frac{3}{2}} \langle y^{[i]} \rangle^{-3}
+\mu_i^{-\frac{5}{2}} \langle y^{[i]} \rangle^{-4}
  \right) \1_{2\delta\le |x^{[i]}| \le 4\delta }
\sim
e^{-(3l_i +\frac{3}{2})s} |z^{[i]}|^{-3}
\1_{2\delta e^{\frac{s}{2}} \le |z^{[i]}|\le 4\delta e^{\frac{s}{2}} }  ,
\end{equation*}
where we used \eqref{qd23Dec15-1} for the last ``$\sim$''. Using \eqref{eq-heat-1}, then
\begin{equation*}
\left| \mathcal{E}_{\Theta,i}^{\rm{cut}} \right| \lesssim
\1_{\delta\le |x^{[i]}| \le 2\delta }
=
\1_{\delta e^{\frac{s}{2}} \le |z^{[i]}|\le 2\delta e^{\frac{s}{2}} }  .
\end{equation*}
Summarizing all the estimates reaches the upper bound of $\mathcal{G}_{1, i}$.

\medskip

\textbf{Step2: Estimate of $\mathcal{G}_{2,i}$.} By \eqref{qd23Dec15-1},
\begin{equation*}
\begin{aligned}
&
\left| \left(U_{\mu_i,\xi^{[i]}} (x) \right)^{\frac{5}{3}}
\left(
\eta_{2\delta}^{\frac{5}{3}}(x^{[i]})
-
\eta_{2\delta}(x^{[i]})
\right)   \right|
\lesssim
\mu_i^{-\frac{5}{2}}
\langle y^{[i]} \rangle^{-5} \1_{2\delta\le |x^{[i]}| \le 4\delta }
\sim
e^{-(5l_i +\frac{5}{2}) s }
|z^{[i]}|^{-5}
\1_{2\delta e^{\frac{s}{2}} \le |z^{[i]}|\le 4\delta e^{\frac{s}{2}} }  ,
\\
&
	\left|
\left(U_{\mu_i,\xi^{[i]}} (x) \right)^{\frac{2}{3}}
\left(
\eta_{\delta}(x^{[i]})
-
\eta_{R}(y^{[i]})
\right)
\Theta_{l_i}(x^{[i]},t)
\right|
\lesssim
\mu_i^{-1} \langle y^{[i]} \rangle^{-2}
e^{-l_i s} \langle z^{[i]} \rangle^{2 l_i}
\1_{|z^{[i]}|\le 2\delta e^{\frac{s}{2} }, \ |y^{[i]}|>R }
\\
\sim \ &
\mu_i^{-1} \langle y^{[i]} \rangle^{-2}
e^{-l_i s}
\1_{|z^{[i]}|\le 1, \ |y^{[i]}|>R }
+
e^{-(3l_i+1) s} |z^{[i]}|^{2 l_i-2}
\1_{1< |z^{[i]}| \le 2\delta e^{\frac{s}{2} } }   .
\end{aligned}
\end{equation*}
For $\psi\in \mathcal{X}_{\delta_0,0}$,
\begin{equation*}
\begin{aligned}
&
\left|
\left(U_{\mu_i,\xi^{[i]}} (x) \right)^{\frac{2}{3}}
\left(
\eta_{2\delta}^{\frac{2}{3}}(x^{[i]})
-
\eta_{R}(y^{[i]})
\right)
\psi
\right|
\\
\lesssim \ &
\mu_i^{-1} \langle y^{[i]} \rangle^{-2}
\1_{|x^{[i]}|\le 4\delta, \ |y^{[i]}|>R } \
\delta_0
\Big[
	(T-t)^{l_i} \langle z^{[i]} \rangle^{2l_i+2} \1_{|z^{[i]} |\le e^{\frac{l_is}{2l_i+2}}}
+
\langle x \rangle^{-2} \1_{|z^{[i]} | > e^{\frac{l_i s}{2l_i+2}} , \ |x^{[i]}|\le 4\delta }
\Big]
\\
\lesssim \ &
\delta_0
\Big[
	\mu_i^{-1} \langle y^{[i]} \rangle^{-2}
	e^{-l_i s} \1_{|z^{[i]} |\le 1 , |y^{[i]}|>R}
+
	e^{-(3 l_i+1) s} |z^{[i]}|^{2l_i}
 \1_{1<|z^{[i]} |\le e^{\frac{l_i s}{2l_i+2}}}
+
e^{-\big( 2l_i +2-\frac{1}{l_i+1} \big) s }
	\langle x^{[i]} \rangle^{-2}
 \1_{e^{\frac{l_i s}{2l_i+2} } < |z^{[i]} | \le 4\delta e^{\frac{s}{2}}}
 \Big],
\end{aligned}
\end{equation*}
where we used \eqref{qd23Dec15-1} and $\langle x \rangle\sim \langle x^{[i]} \rangle$ for the last ``$\lesssim $''.

For $|x^{[i]}|\le 8\delta$, $u=
	U_{\mu_i,\xi^{[i]}} (x)
\eta_{2\delta}(x^{[i]})
	+
	\Theta_{l_i}(x^{[i]},t)\eta_{\delta}(x^{[i]})+
	\mu_{i}^{-\frac{n-2}{2}} \phi_i(y^{[i]},t) \eta_{R}(y^{[i]}) +\psi$.
By the elementary inequality
$ \left||a+b|^{p-1}(a+b)-|a|^{p-1}a-p|a|^{p-1}b \right|\le p |b|^p
$
for $p\in (1,2]$, $a,b\in\mathbb{R}$, we have
\begin{equation*}
\begin{aligned}
&
\left|
\eta_{4\delta}(x^{[i]}) \mathcal{N}\left[\psi,\bm{\phi},\bm{\mu},\bm{\xi}\right]
\right|
=
\eta_{4\delta}(x^{[i]})
\Big|
|u|^\frac{2}{3}u -  \left(U_{\mu_i,\xi^{[i]}} (x) \right)^{\frac{5}{3}}
\eta_{2\delta}^{\frac{5}{3}}(x^{[i]})
\\
& \qquad
-
\frac{5}{3}
\left(U_{\mu_i,\xi^{[i]}} (x) \right)^{\frac{2}{3}}
\eta_{2\delta}^{\frac{2}{3}}(x^{[i]})
\left(
\Theta_{l_i}(x^{[i]},t)\eta_{\delta}(x^{[i]})+
\mu_{i}^{-\frac{3}{2}} \phi_i(y^{[i]},t) \eta_{R}(y^{[i]}) +\psi(x,t)
\right)
\Big|
\\
\lesssim \ &
\eta_{4\delta}(x^{[i]})
\left|
\Theta_{l_i}(x^{[i]},t)\eta_{\delta}(x^{[i]})+
\mu_{i}^{-\frac{3}{2}} \phi_i(y^{[i]},t) \eta_{R}(y^{[i]}) +\psi(x,t)
\right|^{\frac{5}{3}}
\\
\lesssim \ &
e^{-\frac{5}{3} l_i s}
\langle z^{[i]} \rangle^{\frac{10}{3} l_i }
\1_{|z^{[i]}|\le 2\delta e^{\frac{s}{2}} }
+
R^{\frac{10}{3}}
e^{-\frac{5}{3} l_i s}
\langle y^{[i]} \rangle^{-\frac{25}{6}}
\1_{|y^{[i]}|\le 2R}
\\
&
+
\delta_0^{\frac{5}{3}}
e^{-\frac{5}{3} l_i s}
\langle z^{[i]} \rangle^{\frac{10}{3} l_i+ \frac{10}{3} }
\1_{|z^{[i]} |\le e^{\frac{l_i s}{2l_i+2} }}
+
\delta_0^{\frac{5}{3}}
\langle x^{[i]} \rangle^{-\frac{10}{3}}
\1_{e^{\frac{l_i s}{2l_i+2} } <	|z^{[i]}| \le 8\delta e^{\frac{s}{2}}}  .
\end{aligned}
\end{equation*}
Combining the estimates above, we have the estimate of $\mathcal{G}_{2, i}$.
\end{proof}

To exploit the spectrum properties of the $-A_z$, we give the following estimates.
\begin{lemma}\label{g1g2-L2rho-lem}
	Under all assumptions in Lemma \ref{G1G2est-lem}, then for $i=1,2,\dots,\mathfrak{o}$, $s\in[s_0,\infty)$, it holds that
	\begin{equation*}
		\| g_{1,i}(\cdot,s) \|_{L_{\rho}^2(\mathbb{R}_+^5)}
		\le C e^{-(2l_i+\frac{3}{4}) s},
		\quad
		\| g_{2,i}(\cdot,s) \|_{L_\rho^2(\mathbb R^4)}\le C
		e^{-(\frac{5}{3} l_i + \frac{1}{2} ) s}
	\end{equation*}
with a constant $C$ independent of $T, \delta_0$ and varying from line to line.
For $j\in \mathbb{N}$,
\begin{equation}\label{dij}
	d_{i,j}(s)=e^{-\lambda_j s}\int_{s_{0*}}^s
	e^{\lambda_j \sigma}
	\bigg[
	\int_{\mathbb{R}^5_+}g_{1,i}(z^{[i]},\sigma)e_{j}(z^{[i]} )e^{-\frac{|z^{[i]}|^2}{4}}  dz^{[i]}  +
	\int_{\mathbb{R}^4}g_{2,i}(\tilde{z}^{[i]} ,\sigma) e_j(\tilde{z}^{[i]},0 )e^{-\frac{|\tilde{z}^{[i]}|^2}{4}} d\tilde{z}^{[i]}
	\bigg]
	d\sigma
\end{equation}
with $
s_{0*}:=
\begin{cases}
s_0  &  \mbox{ \ if \ } \lambda_j \ge  \frac{5}{3} l_i + \frac{1}{2}
\\
\infty
&
\mbox{ \ if \ } \lambda_j < \frac{5}{3} l_i + \frac{1}{2}
\end{cases}
$
are integrable and satisfy \eqref{eq-5.7}. Moreover, given $k\in \mathbb{N}$, for $j=0,1,\dots, k$,
\begin{align}\label{bikcik}
&
	|d_{i,j}(s)| +
	| \dot{d}_{i,j}(s)|\le C \begin{cases}
	e^{-(\frac{5}{3}l_i+\frac{1}{2})s}
	&  \mbox{ \ if \ } \lambda_j \ne  \frac{5}{3} l_i + \frac{1}{2}
	\\
	s e^{-(\frac{5}{3}l_i+\frac{1}{2})s}
	&  \mbox{ \ if \ } \lambda_j =  \frac{5}{3} l_i + \frac{1}{2}  ,
\end{cases}
	\\
&
\label{diestimate}
\sum_{j=0}^{k}
\big(
|\dot{d}_{i,j}(s) |
+
| d_{i,j}(s)|
\big)
\left(
\left|A_{z^{[i]}}\tilde {e}_{j}(z^{[i]} )
\right|
+
\left|\tilde{e}_{j}(z^{[i]} )\right|
\right)
\le C
\1_{|z^{[i]}|\le 2 C_{\tilde{e}}}
\begin{cases}
	e^{-(\frac{5}{3}l_i+\frac{1}{2})s}
	&  \mbox{ \ if \ } \lambda_k <  \frac{5}{3} l_i + \frac{1}{2}
	\\
	s e^{-(\frac{5}{3}l_i+\frac{1}{2})s}
	&  \mbox{ \ if \ } \lambda_k \ge  \frac{5}{3} l_i + \frac{1}{2}
\end{cases}
\end{align}
with $\tilde{e}_j$ given in \eqref{qd23Dec11-1}.
In particular,
\begin{equation}
\| \tilde{g}_{1,i}(\cdot,s) \|_{L_{\rho}^2(\mathbb{R}_+^5)}
		\le C s e^{-(\frac{5}{3} l_i + \frac{1}{2} ) s}.
\end{equation}

\end{lemma}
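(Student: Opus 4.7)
The plan is to establish the four claims in the order stated. \textbf{First}, the $L^2_\rho$ bounds on $g_{1,i}$ and $g_{2,i}$ follow from integrating the pointwise estimates of Lemma~\ref{G1G2est-lem} against the Gaussian weight under the scaling \eqref{qd23Dec12-3}. For $g_{1,i}$, using $\mu_i\sim e^{-(2l_i+2)s}$ and $y^{[i]}\sim e^{(2l_i+3/2)s}z^{[i]}$, the dominant contribution is the inner-region piece $R^{-1/4}\mu_i^{-2}e^{-l_is}\langle y^{[i]}\rangle^{-9/4}\mathbf{1}_{|y^{[i]}|\le 2R}$: after squaring and changing variables $z^{[i]}\mapsto y^{[i]}$ (Jacobian $\sim e^{-(10l_i+15/2)s}$), using that the Gaussian weight is $O(1)$ on $|z^{[i]}|\le 1$ and that $\int\langle y\rangle^{-9/2}dy$ converges in $\mathbb R^5$, I obtain $\|g_{1,i}\|_{L^2_\rho}^2\lesssim R^{-1/2}e^{-(4l_i+3/2)s}$. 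The remaining regions of $\mathcal G_{1,i}$ yield either strictly faster decay (e.g.\ $e^{-(3l_i+3/2)s}$ from the $|z^{[i]}|^{-3}$ tail) or super-exponentially small contributions from the Gaussian weight on $|z^{[i]}|\ge\delta e^{s/2}$. The same strategy applied to $g_{2,i}$ identifies the flat piece $e^{-5l_is/3}\mathbf{1}_{|\tilde z^{[i]}|\le 1}$ of $\mathcal G_{2,i}$ as the dominant contribution and gives $\|g_{2,i}\|_{L^2_\rho}\lesssim e^{-(5l_i/3+1/2)s}$.

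\textbf{Second}, the formula \eqref{dij} is just the variation-of-constants representation of the linear first-order ODE \eqref{eq-5.7}, $\dot d_{i,j}+\lambda_j d_{i,j}=F_{i,j}(s)$, where $F_{i,j}$ denotes the bracketed source. Cauchy--Schwarz combined with the $L^2_\rho$ bounds just obtained (using the elementary inequality $2l_i+\tfrac34\ge\tfrac{5}{3}l_i+\tfrac12$) gives $|F_{i,j}(s)|\lesssim e^{-(5l_i/3+1/2)s}$. The choice of lower endpoint $s_{0*}$ is dictated by integrability: when $\lambda_j<\tfrac{5}{3}l_i+\tfrac12$, a forward integration from $s_0$ would produce a growing homogeneous component $e^{-\lambda_j s}$ and I instead integrate backward from $\infty$; when $\lambda_j\ge\tfrac{5}{3}l_i+\tfrac12$ the backward integral diverges and I integrate forward from $s_0$. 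A three-case split on $\lambda_j$ versus $\tfrac{5}{3}l_i+\tfrac12$ then produces \eqref{bikcik}: in the two non-resonant cases the respective integrals evaluate to $\sim e^{-(5l_i/3+1/2)s}$; at the resonance $\lambda_j=\tfrac{5}{3}l_i+\tfrac12$ the integrand reduces to a constant, producing the extra logarithmic $s$ factor. The derivative bound follows from $\dot d_{i,j}=-\lambda_j d_{i,j}+F_{i,j}$.

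\textbf{Third}, estimate \eqref{diestimate} uses the compact support $|z^{[i]}|\le 2C_{\tilde e}$ of $\tilde e_j$ from \eqref{qd23Dec11-1} together with uniform $C^2$ bounds on $\tilde e_j$ and $A_{z^{[i]}}\tilde e_j$ on that set: summing the finitely many terms for $j\le k$, the $s$ factor appears exactly when the largest eigenvalue $\lambda_k$ in the sum reaches $\tfrac{5}{3}l_i+\tfrac12$ (so that at least one term carries the resonant prefactor). Finally, the $L^2_\rho$ bound on $\tilde g_{1,i}$ is the triangle inequality applied to \eqref{gi10}: the correction $\sum_j(d_{i,j}A_{z^{[i]}}\tilde e_j-\dot d_{i,j}\tilde e_j)$ is supported in $|z|\le 2C_{\tilde e}$ and bounded pointwise by $s\,e^{-(5l_i/3+1/2)s}$ via \eqref{diestimate}, while $\|g_{1,i}\|_{L^2_\rho}\lesssim e^{-(2l_i+3/4)s}$ is strictly smaller. \textbf{The main difficulty} is the careful bookkeeping across the several spatial regimes of Lemma~\ref{G1G2est-lem}: the powers of $\mu_i$, the Jacobian of $z^{[i]}\mapsto y^{[i]}$, and the Gaussian weight combine differently in each regime, and one must verify region by region that the inner $y$-regime is dominant for $g_{1,i}$ and the flat piece is dominant for $g_{2,i}$; in addition, the resonant case $\lambda_j=\tfrac{5}{3}l_i+\tfrac12$ must be isolated in order to extract the sharp logarithmic factor in \eqref{bikcik} and \eqref{diestimate} rather than losing it to the non-resonant bound.
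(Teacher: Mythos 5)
Your overall strategy matches the paper's proof closely, but there is a genuine computational error in the $L^2_\rho$ estimate for $g_{1,i}$: you assert that $\int_{\mathbb{R}^5}\langle y\rangle^{-9/2}\,dy$ converges, which is false, since in $\mathbb{R}^5$ one has $\int_1^\infty r^{4}\cdot r^{-9/2}\,dr=\int_1^\infty r^{-1/2}\,dr=\infty$. On the relevant region $|y^{[i]}|\le 2R$ the integral in fact grows like $R^{1/2}$, and this $R^{1/2}$ exactly cancels the $R^{-1/2}$ coming from squaring the $R^{-1/4}$ prefactor in the pointwise bound of Lemma~\ref{G1G2est-lem}. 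The correct conclusion is therefore $\|g_{1,i}\|_{L^2_\rho}^2\lesssim e^{-(4l_i+3/2)s}$ with no residual negative power of $R$; your stated bound $R^{-1/2}e^{-(4l_i+3/2)s}$ is strictly stronger and not actually available. The paper reaches the correct power by explicitly splitting the annulus $\mu_i(T-t)^{-1/2}<|z^{[i]}|\le 4R\mu_i(T-t)^{-1/2}$, integrating $|z^{[i]}|^{-9/2}$, and reading off the $R^{1/4}$ gain in the $L^2$ norm. Fortunately, the target inequality $\|g_{1,i}\|_{L^2_\rho}\le Ce^{-(2l_i+3/4)s}$ still follows from the corrected calculation, so the lemma survives; but the intermediate estimate must be repaired.

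A minor imprecision elsewhere: for $\lambda_j\ge 0$ the homogeneous mode $e^{-\lambda_j s}$ is not "growing" under forward integration from $s_0$ when $\lambda_j<\frac{5}{3}l_i+\frac{1}{2}$; rather it decays too slowly to match the desired rate $e^{-(\frac{5}{3}l_i+\frac{1}{2})s}$, which is why one integrates from $\infty$ in that case. The rest of your argument — variation of constants, Cauchy--Schwarz reduction of $F_{i,j}$ using $2l_i+\frac{3}{4}\ge\frac{5}{3}l_i+\frac{1}{2}$, the resonance case analysis giving the factor $s$, the compact support of $\tilde e_j$, and the triangle inequality for $\tilde g_{1,i}$ — agrees with the paper.
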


\begin{proof}
	
	By \eqref{qd23Dec12-3}, Lemme \ref{G1G2est-lem}, \eqref{qd23Dec15-1},
	\begin{equation*}
		\begin{aligned}
			&
			\| g_{1,i}(\cdot,s) \|_{L_{\rho}^2(\mathbb{R}_+^5)}
			\lesssim
			e^{-s}
			R^{-\frac{1}{4}}
			\mu_i^{-2} e^{-l_is}
			\big\|
			\langle \mu_i^{-1} \left(T-t\right)^{\frac{1}{2}}
			z^{[i]} \rangle^{-\frac{9}{4}}
			\mathbf{1}_{|z^{[i]}|\le 4R \mu_i (T-t)^{-\frac{1}{2}} }
			\\
			&
			+
			\langle \mu_i^{-1} \left(T-t\right)^{\frac{1}{2}}
			z^{[i]} \rangle^{-\frac{11}{4}}
			\1_{2^{-1} R \mu_i (T-t)^{-\frac{1}{2}} < |z^{[i]}| \le 1}  \big\|_{L_{\rho}^2(\mathbb{R}_+^5)}
			\\
			&
			+
			e^{-s}
			\big\|
			e^{-(3l_i +\frac{1}{2})s} |z^{[i]}|^{-3}
			\1_{1<|z^{[i]}|\le 4\delta e^{\frac{s}{2}}}
			+
			\1_{\delta e^{\frac{s}{2}} \le |z^{[i]}|\le 2\delta e^{\frac{s}{2}} }
			\big\|_{L_{\rho}^2(\mathbb{R}_+^5)}
			\\
			\lesssim \ &
			e^{-(2l_i+\frac{3}{4}) s}
			+
			e^{-s} \left[
			e^{-(3l_i+\frac{1}{2}) s}
			+e^{-\frac{1}{16} \delta^2 e^s }
			\right]
			\sim
			e^{-(2l_i+\frac{3}{4}) s} ,
		\end{aligned}
	\end{equation*}
	where we used
	\begin{equation*}
		\begin{aligned}
			&
			\big\|
			\langle \mu_i^{-1} \left(T-t\right)^{\frac{1}{2}}
			z^{[i]} \rangle^{-\frac{9}{4}}
			\mathbf{1}_{|z^{[i]}|\le 4R \mu_i (T-t)^{-\frac{1}{2}} }
			\big\|_{L_{\rho}^2(\mathbb{R}_+^5)}
			\\
			\sim \ &
			\big\|
			\mathbf{1}_{|z^{[i]}|\le \mu_i (T-t)^{-\frac{1}{2}} }
			+
			\mu_i^{\frac{9}{4}} \left(T-t\right)^{-\frac{9}{8}}
			|z^{[i]}|^{-\frac{9}{4}}
			\mathbf{1}_{\mu_i (T-t)^{-\frac{1}{2}}<|z^{[i]}|\le 4R \mu_i (T-t)^{-\frac{1}{2}} }
			\big\|_{L_{\rho}^2(\mathbb{R}_+^5)}
			\\
			\lesssim \ &
			\mu_i^{\frac{5}{2}} (T-t)^{-\frac{5}{4}}
			+
			R^{\frac{1}{4}} \mu_i^{\frac{5}{2}} (T-t)^{-\frac{5}{4}}
			\sim
			R^{\frac{1}{4}}
			e^{-(5 l_i+\frac{15}{4}) s }
			,
			\\
			&
			\big\|
			\langle \mu_i^{-1} \left(T-t\right)^{\frac{1}{2}}
			z^{[i]} \rangle^{-\frac{11}{4}}
			\1_{2^{-1} R \mu_i (T-t)^{-\frac{1}{2}} < |z^{[i]}| \le 1}  \big\|_{L_{\rho}^2(\mathbb{R}_+^5)}
			\\
			\sim \ &
			\mu_i^{\frac{11}{4}}
			(T-t)^{-\frac{11}{8}}
			\big\|
			|z^{[i]}|^{-\frac{11}{4}}
			\1_{2^{-1} R \mu_i (T-t)^{-\frac{1}{2}} < |z^{[i]}| \le 1}  \big\|_{L_{\rho}^2(\mathbb{R}_+^5)}
			\sim
			R^{-\frac{1}{4}}
			\mu_i^{\frac{5}{2}} (T-t)^{-\frac{5}{4}}
			\sim
			R^{-\frac{1}{4}}
			e^{-(5 l_i+\frac{15}{4}) s }
			.
		\end{aligned}
	\end{equation*}
	\begin{equation*}
		\| g_{2,i}(\cdot,s) \|_{L_\rho^2(\mathbb R^4)}\lesssim
		e^{-\frac{s}{2}}
		\left[
		R^{-\frac{1}{4}}
		\mu_i^{-1} e^{-l_i s}
		e^{-(\frac{7}{2} l_i +\frac{21}{8}) s }
		+
		e^{-\frac{5}{3} l_i s}
		+
		\left(
		e^{-\frac{5}{3} l_i s}
		+
		\delta_0^{\frac{5}{3}}
		\right)
		e^{-\frac{1}{16} e^{\frac{l_i}{l_i +1} s} }
		\right]
		\sim
		e^{-(\frac{5}{3} l_i + \frac{1}{2} ) s}  ,
	\end{equation*}
	where we used \eqref{qd23Dec15-1} and
	\begin{equation*}
		\begin{aligned}
			&
			\| \langle \tilde{y}^{[i]} \rangle^{-\frac{7}{4}}
			\1_{|\tilde{z}^{[i]} |\le 1} \|_{L_\rho^2(\mathbb R^4)}
			\sim
			\| \langle \mu_i^{-1} \left(T-t\right)^{\frac{1}{2}}
			\tilde{z}^{[i]} \rangle^{-\frac{7}{4}}
			\1_{|\tilde{z}^{[i]} |\le 1} \|_{L^2(\mathbb R^4)}
			\\
			\sim \ &
			\|
			\1_{|\tilde{z}^{[i]} |\le \mu_i \left(T-t\right)^{-\frac{1}{2}} }
			+
			\mu_i^{\frac{7}{4}} \left(T-t\right)^{-\frac{7}{8}}
			|\tilde{z}^{[i]}|^{-\frac{7}{4}}
			\1_{\mu_i \left(T-t\right)^{-\frac{1}{2}} <|\tilde{z}^{[i]} |\le 1}
			\|_{L^2(\mathbb R^4)}
			\\
			\lesssim \ &
			\mu_i^2 (T-t)^{-1}
			+
			\mu_i^{\frac{7}{4}} \left(T-t\right)^{-\frac{7}{8}}
			\sim
			e^{-(\frac{7}{2} l_i +\frac{21}{8}) s } .
		\end{aligned}
	\end{equation*}

It follows that
\begin{equation*}
		\bigg|
		\int_{\mathbb{R}^5_+}g_{1,i}(z^{[i]},s)e_{j}(z^{[i]} )e^{-\frac{|z^{[i]}|^2}{4}}  dz^{[i]}
		\bigg| + \bigg|
\int_{\mathbb{R}^4}g_{2,i}(\tilde{z}^{[i]} ,s) e_j(\tilde{z}^{[i]},0 )e^{-\frac{|\tilde{z}^{[i]}|^2}{4}} d\tilde{z}^{[i]} \bigg|
		\lesssim  e^{- (\frac{5}{3} l_i + \frac{1}{2} ) s} .
\end{equation*}
Now it is easy to get that $d_{i,j}(s)$ given in \eqref{dij} are well-defined and satisfy \eqref{bikcik}.
Due to the support of $\tilde{e}_j$, we deduce \eqref{diestimate}.
The bound of $\| \tilde{g}_{1,i}(\cdot,s) \|_{L_{\rho}^2(\mathbb{R}_+^5)}$ is deduced by \eqref{gi10}, the estimate of $\| g_{1,i}(\cdot,s) \|_{L_{\rho}^2(\mathbb{R}_+^5)}$, and
\eqref{diestimate}.
\end{proof}

\subsection{Estimates of $\Phi_{i}$ satisfying \texorpdfstring{\eqref{outeri2}}{$(\ref{outeri2})$}, $i=1,2,\dots,\mathfrak{o}$}

This subsection is inspired by \cite{harada2019higher, Zhang-Zhao2023}. We will derive the estimate of $\|\Phi_{i}(\cdot,s)\|_{L_\rho^2(\mathbb R^5_+)}$ and then give the pointwise estimate of $\Phi_i$.
\begin{lemma}\label{Phii2norm}
	Under all assumptions in Lemma \ref{G1G2est-lem}, then there exists a constant $C>0$ independent of $T, \delta_0$ such that
\begin{equation*}
\|\Phi_{i}(\cdot,s)\|_{L_\rho^2(\mathbb R^5_+)} \le C s e^{-(\frac{5}{3} l_i +\frac{1}{2}) s } .
\end{equation*}
\end{lemma}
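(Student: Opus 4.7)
The strategy is a weighted $L^2_\rho$-energy estimate for \eqref{outeri2}, in which the orthogonality conditions \eqref{qd2023Dec17-1} translate into a sharp spectral gap for $-A_{z^{[i]}}$ that produces precisely the decay rate $\tfrac{5}{3}l_i + \tfrac{1}{2}$. The first step is to propagate the orthogonality of $\Phi_i$ against the eigenfunctions along the flow. Testing \eqref{outeri2} against $e_j(z^{[i]})\rho(z^{[i]})$ and using the integration-by-parts identity \eqref{qd23Dec11-2}, the Neumann condition $\partial_{z_5^{[i]}} e_j = 0$ on $\partial \mathbb{R}^5_+$, the eigen-relation $-A_{z^{[i]}} e_j = \lambda_j e_j$, and the inhomogeneous Neumann datum $-\partial_{z_5^{[i]}} \Phi_i = g_{2,i}$ yields
\begin{equation*}
\frac{d}{ds}(\Phi_i, e_j)_{L^2_\rho(\mathbb{R}^5_+)} + \lambda_j (\Phi_i, e_j)_{L^2_\rho(\mathbb{R}^5_+)} = \int_{\mathbb{R}^5_+} \tilde g_{1,i}\, e_j\, \rho\, dz^{[i]} + \int_{\mathbb{R}^4} g_{2,i}\, e_j(\cdot,0)\, \rho(\cdot,0)\, d\tilde z^{[i]}.
\end{equation*}
By \eqref{qd2023Dec17-1}, the right-hand side vanishes for every $j = 0, 1, \dots, N(\lceil 5 l_i /3 \rceil + 1)$, and the initial condition $\Phi_i(\cdot, s_0) = 0$ then forces $(\Phi_i(\cdot, s), e_j)_{L^2_\rho(\mathbb{R}^5_+)} \equiv 0$ for all $s \ge s_0$ and all such $j$.

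Next I would derive the energy identity by testing \eqref{outeri2} against $\Phi_i \rho$ and applying \eqref{qd23Dec11-2}, obtaining
\begin{equation*}
\frac{1}{2}\frac{d}{ds}\|\Phi_i\|^2_{L^2_\rho(\mathbb{R}^5_+)} + \|\nabla \Phi_i\|^2_{L^2_\rho(\mathbb{R}^5_+)} = (\tilde g_{1,i}, \Phi_i)_{L^2_\rho(\mathbb{R}^5_+)} + \int_{\mathbb{R}^4} g_{2,i}(\tilde z^{[i]}, s)\, \Phi_i(\tilde z^{[i]},0, s)\, \rho(\tilde z^{[i]}, 0)\, d\tilde z^{[i]}.
\end{equation*}
Combining the preserved orthogonality with Lemma \ref{lemma-eigen}\eqref{zz23Nov30-2} produces the spectral-gap estimate $\|\nabla \Phi_i\|^2_{L^2_\rho} \ge \lambda_\star\, \|\Phi_i\|^2_{L^2_\rho}$ with $\lambda_\star > \lceil 5l_i/3 \rceil + 1 \ge \tfrac{5}{3} l_i + 1 > \tfrac{5}{3}l_i + \tfrac{1}{2}$, since the next eigenvalue beyond those annihilated by the orthogonality strictly exceeds $\lceil 5l_i/3 \rceil + 1$. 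The interior forcing $(\tilde g_{1,i}, \Phi_i)_{L^2_\rho}$ is handled by Cauchy--Schwarz and Young's inequality. For the boundary term I would invoke the weighted trace estimate
\begin{equation*}
\int_{\mathbb{R}^4} |\Phi_i(\tilde z, 0, s)|^2 e^{-|\tilde z|^2/4}\, d\tilde z \;\lesssim\; \|\nabla \Phi_i\|^2_{L^2_\rho(\mathbb{R}^5_+)} + \|\Phi_i\|^2_{L^2_\rho(\mathbb{R}^5_+)},
\end{equation*}
which follows from the pointwise identity $\Phi_i^2(\tilde z, 0)\, e^{-|\tilde z|^2/4} = -\int_0^\infty \partial_{z_5}[\Phi_i^2\, e^{-|z|^2/4}]\, dz_5$, Cauchy--Schwarz in $z_5$, and integration over $\tilde z$.

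Applying Young's inequality with a small parameter then absorbs fractions of $\|\nabla \Phi_i\|^2_{L^2_\rho}$ and $\|\Phi_i\|^2_{L^2_\rho}$ into the left-hand side, reducing the energy identity to a differential inequality of the form
\begin{equation*}
\frac{d}{ds}\|\Phi_i\|^2_{L^2_\rho(\mathbb{R}^5_+)} + 2\lambda_{\star\star}\, \|\Phi_i\|^2_{L^2_\rho(\mathbb{R}^5_+)} \;\lesssim\; \|\tilde g_{1,i}\|^2_{L^2_\rho(\mathbb{R}^5_+)} + \|g_{2,i}\|^2_{L^2_\rho(\mathbb{R}^4)}
\end{equation*}
for some $\lambda_{\star\star} > \tfrac{5}{3}l_i + \tfrac{1}{2}$. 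Plugging in $\|\tilde g_{1,i}\|^2_{L^2_\rho} \lesssim s^2 e^{-(10 l_i/3 + 1)s}$ and $\|g_{2,i}\|^2_{L^2_\rho} \lesssim e^{-(10l_i/3 + 1)s}$ from Lemma \ref{g1g2-L2rho-lem} and applying Gronwall with vanishing initial datum at $s = s_0$ yields $\|\Phi_i(\cdot, s)\|^2_{L^2_\rho} \lesssim s^2 e^{-(10 l_i /3 + 1)s}$, which is the claim. The principal technical obstacle is arranging the Young parameter and the trace estimate so that the effective gap $\lambda_{\star\star}$ remains strictly larger than $\tfrac{5}{3} l_i + \tfrac{1}{2}$; this is what enables the subsequent integral $\int_{s_0}^s e^{2\lambda_{\star\star} \sigma}\, \sigma^2 e^{-(10 l_i/3 + 1)\sigma}\, d\sigma$ to be estimated by $s^2 e^{[2\lambda_{\star\star} - (10 l_i /3 + 1)]s}$ rather than by a larger polynomial factor.
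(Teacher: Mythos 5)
Your strategy matches the paper's proof essentially line for line: propagate the orthogonality $(\Phi_i,e_j)_{L^2_\rho}=0$ for $j\le N(\lceil 5l_i/3\rceil+1)$ from \eqref{qd2023Dec17-1}, run the $L^2_\rho$ energy identity, invoke the spectral gap from Lemma \ref{lemma-eigen}\eqref{zz23Nov30-2} (noting the next eigenvalue is $\lceil 5l_i/3\rceil+\tfrac32>\tfrac53 l_i+\tfrac12$), control the boundary term by a weighted trace estimate, and close by Gr\"onwall using Lemma \ref{g1g2-L2rho-lem}. One point to correct: your sketched derivation of the trace estimate is circular. Expanding $\partial_{z_5}[\Phi_i^2 e^{-|z|^2/4}]$ gives, in addition to $2\Phi_i\,\partial_{z_5}\Phi_i\,\rho$ (which Cauchy--Schwarz handles), the extra term $-\tfrac{z_5}{2}\Phi_i^2\rho$; integrating by parts once more in $z_5$ to eliminate the factor $z_5$ merely reproduces the original boundary term, so the identity collapses to $0=0$ and yields no bound. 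The paper's Lemma \ref{Sob-lem1} circumvents this by substituting $u=e^{|x|^2/8}v$, which converts the weighted trace problem into an unweighted one with potential $V=|x|^2/16+1$, and it is that lemma you should cite. A second minor omission: the paper first establishes the energy identity under the hypothesis that $\mathcal{G}_{1,i},\mathcal{G}_{2,i}$ are smooth (so all integrations by parts are justified) and then passes to the general case by mollifying in space and time and taking the limit; your write-up should record this regularization step, since $\Phi_i$ is a priori defined only via Duhamel's formula.
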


\begin{proof}
We first assume $\mathcal{G}_{1,i}, \mathcal{G}_{2,i}$ are smooth about $z^{[i]}, s$ and $\tilde{z}^{[i]}, s$ respectively and the derivatives of $\mathcal{G}_{1,i}, \mathcal{G}_{2,i}$ belong to $L^{\infty}(\mathbb{R}_+^5 \times (s_0,s))$ and $L^{\infty}(\mathbb{R}^4 \times (s_0,s))$ respectively for all $s\in (s_0,\infty)$. Then $\Phi_{i}$ is smooth and the derivatives of $\Phi_{i}$ are bounded for any fixed $s\in (s_0,\infty)$.
For $j=0,1,\dots, N(\lceil 5l_i/3 \rceil+1 )$,	testing the equation \eqref{outeri2} by $e_j(z^{[i]} ) \rho(z^{[i]})$, integrating by parts as \eqref{qd23Dec11-2},  and using \eqref{qd2023Dec17-2}, \eqref{qd2023Dec17-1}, we obtain that
\begin{equation*}
	\begin{aligned}
	&	\partial_s(\Phi_{i}(\cdot,s),e_j )_{L_{\rho}^2(\mathbb{R}_+^5)}
		=
		(\Phi_i(\cdot,s),A_{z^{[i]} }e_j )_{{L_{\rho}^2(\mathbb{R}_+^5)}}
		\\
		&
		+
		\int_{\mathbb R^5_+}\tilde g_{1,i}(z^{[i]} ,s)e_j(z^{[i]} )e^{-\frac{|z^{[i]} |^2}{4}} dz^{[i]}
		+\int_{\mathbb R^4}g_{2,i}(\tilde z^{[i]} ,s)e_j(\tilde z^{[i]} ,0)e^{-\frac{|\tilde z^{[i]} |^2}{4}}  d\tilde z^{[i]}
		=-\lambda_j
		(\Phi_{i}(\cdot,s),e_j )_{L_{\rho}^2(\mathbb{R}_+^5)} .
	\end{aligned}
\end{equation*}
By $\Phi_i(\cdot,s_0)=0$, we have
\begin{equation}\label{qd2023Dec17-4}
(\Phi_{i}(\cdot,s) ,e_j )_{L_{\rho}^2(\mathbb{R}_+^5)} =0,
\quad
j=0,1,\dots, N(\lceil 5l_i/3 \rceil+1 )
\quad
\mbox{ \ for all \ } s\ge s_0.
\end{equation}
	
	Testing the equation \eqref{outeri2} by $\Phi_{i} \rho(z^{[i]})$ and integrating by parts as \eqref{qd23Dec11-2}, we obtain
\begin{equation*}
	\begin{aligned}
 &
		\frac{1}{2}\partial_s
		\left(
		\|\Phi_{i}(\cdot,s) \|^2_{L_\rho^2(\mathbb R^5_+)}  \right)
		= -\|\nabla\Phi_{i}(\cdot,s) \|_{L_\rho^2(\mathbb R^5_+)}^{2}+\int_{\mathbb R^5_+}\tilde g_{1,i}(z^{[i]} ,s)\Phi_{i}(z^{[i]} ,s)e^{-\frac{|z^{[i]} |^2}{4}} dz^{[i]}
  \\
  &
		+\int_{\mathbb R^4}g_{2,i}(\tilde z^{[i]} ,s)\Phi_{i}( (\tilde z^{[i]} ,0),s)e^{-\frac{|\tilde z^{[i]} |^2}{4}}  d\tilde {z}^{[i]}
		\le -\|\nabla\Phi_{i}(\cdot,s) \|_{L_\rho^2(\mathbb R^5_+)}^{2}
		+C f_1(s) \|\Phi_i(\cdot,s) \|_{H_{\rho}^1(\mathbb R^5_+)}
	\end{aligned}
\end{equation*}
with  $f_1(s):= \| g_{1,i}(\cdot,s) \|_{L_{\rho}^2(\mathbb{R}_+^5)}
		+
		\| g_{2,i}(\cdot,s) \|_{L_\rho^2(\mathbb R^4)} + \sum_{j=0}^{N(\lceil 5l_i/3 \rceil+1 )} \left( |d_{i,j}(s)| + |\dot{d}_{i,j}(s)|\right)$,
    where we used \eqref{gi10} and Lemma \ref{Sob-lem1} for the last inequality.
Then by \eqref{qd2023Dec17-4}, Lemma \ref{lemma-eigen} \eqref{zz23Nov30-2}, we have
\begin{equation*}
	\begin{aligned}
		\frac{1}{2}\partial_s \left(\|\Phi_{i}(\cdot,s)\|^2_{L_\rho^2(\mathbb R^5_+)}  \right)
		\le -
		\left(1-\epsilon\right)\left(\lceil 5 l_i/3 \rceil+ 3/2 \right)
		\| \Phi_{i}(\cdot,s) \|_{L_\rho^2(\mathbb R^5_+)}^{2}+\frac{C(\epsilon)}{2} (f_1(s))^2 ,
	\end{aligned}
\end{equation*}
where $\epsilon>0$ can be close to $0$ arbitrarily. By $\Phi_i(\cdot,s_0)=0$,
\begin{equation}\label{z24Feb19-1}
\|\Phi_{i}(\cdot,s)\|^2_{L_\rho^2(\mathbb R^5_+)}
\le e^{-(1-\epsilon)\left(2 \lceil 5 l_i/3 \rceil+ 3 \right) s} \int_{s_0}^s C(\epsilon) (f_1(a))^2 e^{(1-\epsilon)\left(2 \lceil 5 l_i/3 \rceil+ 3 \right) a } da .
\end{equation}

Generally, when $\mathcal{G}_{1,i}, \mathcal{G}_{2,i}$ are not smooth, we apply the mollifier to $\mathcal{G}_{1,i}, \mathcal{G}_{2,i}$ in spatial and time variables. Then repeating the above process and taking the limitation will deduce \eqref{z24Feb19-1}.
Using Lemma \ref{g1g2-L2rho-lem} and
taking $\epsilon$ sufficiently small to make $\left(1-\epsilon\right)\left(2\lceil 5 l_i/3 \rceil+ 3 \right) >10 l_i/3+1$, we get the desired estimate.
\end{proof}

The representation formula of $\Phi_i$ in \eqref{outeri2} is given by \eqref{qd23Dec04-1} of the form
\begin{equation*}
	\Phi_{i}(z^{[i]} ,s)
		=
		\int_{s_0}^{s}
		\int_{\mathbb{R}_+^5}
		H_5\left(z^{[i]},s,w,\sigma\right)  \tilde{g}_{1,i}(w, \sigma) dw d\sigma
		+
		\int_{s_0}^{s}
		\int_{\mathbb{R}^{4}}
		H_5\left(z^{[i]},s,(\tilde{w},0),\sigma\right)  g_{2,i}(\tilde{w}, \sigma) d \tilde{w} d\sigma .
\end{equation*}

\begin{lemma}\label{greenestimatei}
	Under all assumptions in Lemma \ref{G1G2est-lem}, then there exists a constant $C>0$ independent of $T, \delta_0$ such that for any $z^{[i]}\in \overline{\mathbb{R}^5_+}$, $s>s_1\ge s_0$, we have
\begin{equation*}
	\begin{aligned}
		&
		\int_{s_1}^{s}
		\int_{\mathbb{R}_+^5}
		H_5\left(z^{[i]},s,w,\sigma\right)  \left|\tilde{g}_{1,i}(w, \sigma) \right| dw d\sigma
		+
		\int_{s_1}^{s}
		\int_{\mathbb{R}^{4}}
		H_5\left(z^{[i]},s,(\tilde{w},0),\sigma\right)  \left|g_{2,i}(\tilde{w}, \sigma) \right| d \tilde{w} d\sigma
		\\
		\le  \ &
		C
		\Big[
		R^{-\frac{1}{4}}
		e^{-l_i s} \langle e^{(2l_i +\frac{3}{2}) s} z^{[i]} \rangle^{-\frac{1}{4}}
		+
		R^{-\frac{1}{4}}
		e^{-( l_i +\frac{3}{8}) s_1 }
		+
		\min\Big\{
		e^{-s_1} ,
		e^{-s_1} e^{-(l_i+1)s} |z^{[i]}|^{2l_i+2}
		\Big\}
		\\
		& 	+
		\min\Big\{
		e^{-\frac{s_1}{2}},
		e^{-(\frac{5}{3} l_i +\frac{1}{2}) s}
		|\tilde{z}^{[i]}|^{\frac{10}{3} l_i +\frac{10}{3}}
		\Big\}  \Big] .
	\end{aligned}
\end{equation*}
	
\end{lemma}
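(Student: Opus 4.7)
The argument is a pointwise convolution estimate obtained by decomposing the sources and applying the barrier estimates of Lemma~\ref{outerbarrieri1}. First split
\[
\tilde g_{1,i}=g_{1,i}+\sum_{j=0}^{N(\lceil 5l_i/3\rceil+1)}\bigl(d_{i,j}(s)A_{z^{[i]}}\tilde e_j(z^{[i]})-\dot d_{i,j}(s)\tilde e_j(z^{[i]})\bigr),
\]
and then decompose $g_{1,i}$ and $g_{2,i}$ along the support strata furnished by Lemma~\ref{G1G2est-lem} --- namely the ``inner'' piece where $|y^{[i]}|\le 2R$, the ``transition'' region $1<|z^{[i]}|\le e^{l_is/(2l_i+2)}$, the $\Theta_{l_i}$-cutoff annulus $\delta e^{s/2}\le|z^{[i]}|\le 2\delta e^{s/2}$, and the analogous boundary strata for $g_{2,i}$. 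Rewriting each piece in self-similar variables via the definition \eqref{qd23Dec12-3}, the principal interior piece reads $|g_{1,i}(z,\sigma)|\lesssim R^{-1/4}e^{(3l_i+3)\sigma}\langle e^{(2l_i+3/2)\sigma}z\rangle^{-9/4}$, using \eqref{outer-assumption2} and $\mu_i\sim e^{-(2l_i+2)\sigma}$.

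\textbf{Term-by-term matching.} The first two summands $R^{-1/4}e^{-l_i s}\langle e^{(2l_i+3/2)s}z^{[i]}\rangle^{-1/4}+R^{-1/4}e^{-(l_i+3/8)s_1}$ come from the principal interior piece by applying Lemma~\ref{outerbarrieri1}\eqref{qd23Dec06-1} with $a=\tfrac14$, $\alpha_1=2l_i+\tfrac32$, $\alpha_2=3l_i+3$, whose hypothesis $(2+a)\alpha_1-\alpha_2=\tfrac{3l_i}{2}+\tfrac38>0$ holds. The third summand $\min\{e^{-s_1},e^{-s_1}e^{-(l_i+1)s}|z^{[i]}|^{2l_i+2}\}$ is produced by the remaining pieces of $g_{1,i}$ (the $\mathcal{E}_{\Theta,i}^{\rm cut}$-annulus and the transition regime $e^{-(3l_i+1/2)\sigma}|z|^{-3}$) together with the Hermite adjustment $\sum_j(d_{i,j}A_{z^{[i]}}\tilde e_j-\dot d_{i,j}\tilde e_j)$, which by \eqref{diestimate} is supported in $|z^{[i]}|\le 2C_{\tilde e}$ with size $\lesssim s e^{-(5l_i/3+1/2)\sigma}$; an $L^\infty$-integration (Lemma~\ref{outerbarrieri1}\eqref{lemma-barrier3}) gives the $e^{-s_1}$ bound, while the pointwise vanishing $|z^{[i]}|^{2l_i+2}$ is extracted from the orthogonality relations \eqref{qd2023Dec17-1}, as explained below. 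The fourth summand $\min\{e^{-s_1/2},e^{-(5l_i/3+1/2)s}|\tilde z^{[i]}|^{10l_i/3+10/3}\}$ is obtained by applying Lemma~\ref{outerbarrieri1}\eqref{lemma-barrier4-0} to the explicit piece $|g_{2,i}|\lesssim e^{-(5l_i/3+1/2)\sigma}|\tilde z|^{10l_i/3+10/3}\mathbf 1_{1<|\tilde z|\le e^{l_is/(2l_i+2)}}$, choosing $\alpha=-5l_i/3-1/2$ and $a=10l_i/3+10/3$; this delivers simultaneously the $|\tilde z^{[i]}|^a$-weighted estimate and the uniform $e^{-s_1/2}$ coming from the bounded $L^\infty$-mass of $g_{2,i}$ on its support (where $e^{-(5l_i/3+1/2)\sigma}|\tilde z|^{10l_i/3+10/3}\lesssim e^{-\sigma/2}$).

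\textbf{Main obstacle.} The delicate step is producing the $|z^{[i]}|^{2l_i+2}$ vanishing in the third term, which requires genuinely exploiting \eqref{qd2023Dec17-1}. The plan is to use the Mehler-type expansion
\[
H_5(z,s,w,\sigma)\,\rho^{-1}(w)=\sum_{j\ge 0}e^{-\lambda_j(s-\sigma)}\,e_j(z)\,e_j(w)\big/\|e_j\|_{L_\rho^2(\mathbb R_+^5)}^2,
\]
valid for $s>\sigma$ by Lemma~\ref{lemma-eigen} and the Neumann reflection in $G_5$, and then to observe that \eqref{qd2023Dec17-1} cancels all modes with $\lambda_j\le\lceil 5l_i/3\rceil+1$ in the combined integral $\int H_5\tilde g_{1,i}\,dw+\int H_5 g_{2,i}\,d\tilde w$. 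Only $\lambda_j\ge\lceil 5l_i/3\rceil+\tfrac32$ modes survive, whose eigenfunctions $e_j(z)$ are divisible by Hermite polynomials of total degree $\ge 2l_i+2$ at $z=0$, yielding the desired factor $|z^{[i]}|^{2l_i+2}$ after combining with the $L_\rho^2$-mass of $\tilde g_{1,i},g_{2,i}$ from Lemma~\ref{g1g2-L2rho-lem}. The subtle points are justifying the Mehler expansion uniformly on the support of interest and balancing it against the pointwise $L^\infty$ estimate so as to recover the $\min$ structure rather than merely one of its two arguments; an alternative self-contained route is to Taylor-expand $H_5(\cdot,s,w,\sigma)$ in $z$ up to order $2l_i+1$ and use \eqref{qd2023Dec17-1} to kill the polynomial part, estimating the remainder by weighted Gaussian integration.
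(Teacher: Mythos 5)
Your treatment of the first, second, and fourth summands is on the right track and uses the same barrier estimates from Lemma~\ref{outerbarrieri1} that the paper employs; the applications of parts~\eqref{qd23Dec06-1},~\eqref{lemma-barrier3} and~\eqref{lemma-barrier4-0} match the paper's computation. However, your ``Main obstacle'' paragraph contains a genuine misconception about where the $|z^{[i]}|^{2l_i+2}$ vanishing in the third summand comes from.

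The orthogonality relations~\eqref{qd2023Dec17-1} play \emph{no role} in this lemma. They are used instead in Lemma~\ref{Phii2norm}, where~\eqref{qd2023Dec17-1} forces $(\Phi_i(\cdot,s),e_j)_{L^2_\rho}=0$ for $j\le N(\lceil 5l_i/3\rceil+1)$, which via the spectral gap in Lemma~\ref{lemma-eigen}\eqref{zz23Nov30-2} yields the $L^2_\rho$ decay of $\Phi_i$; the pointwise bound of Proposition~\ref{Phii2pointwise1} is then obtained by splitting $\Phi_i$ into a short-time Green's piece (this lemma) plus a free-evolution piece controlled by $\|\Phi_i(\cdot,s_1)\|_{L^2_\rho}$. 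The Mehler expansion you propose is therefore both unnecessary for the present lemma and misplaced architecturally: the orthogonality feeds into a spectral-gap argument, not into a cancellation inside the Green's integral.

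The actual source of the $|z^{[i]}|^{2l_i+2}$ factor is much more elementary and purely geometric. The only piece of $\tilde g_{1,i}$ contributing to the third summand is the annulus term $e^{-\sigma}\mathbf 1_{\delta e^{\sigma/2}\le|w|\le 2\delta e^{\sigma/2}}$ coming from $\mathcal E^{\rm cut}_{\Theta,i}$ (the transition piece $e^{-(3l_i+1/2)\sigma}|z|^{-3}$ and the Hermite-adjustment terms are simply absorbed into $se^{-(5l_i/3+1/2)\sigma}$, hence feed the second summand, not the third). On this annulus one has $|w|\sim e^{\sigma/2}$, so $e^{-\sigma}\lesssim e^{-(l_i+2)\sigma}|w|^{2l_i+2}$ automatically, and Lemma~\ref{outerbarrieri1}\eqref{outerheatkerneli2} (heat evolution with initial data $|w|^a$) converts the factor $|w|^{2l_i+2}$ into $e^{(l_i+1)(\sigma-s)}|z^{[i]}|^{2l_i+2}+(1-e^{\sigma-s})^{l_i+1}$; integrating in $\sigma$ over $(s_1,s)$ and comparing with the crude $L^\infty$ bound $\int_{s_1}^s e^{-\sigma}\,d\sigma\lesssim e^{-s_1}$ yields exactly $\min\{e^{-s_1},\,e^{-s_1}e^{-(l_i+1)s}|z^{[i]}|^{2l_i+2}+e^{-(l_i+2)s_1}\}$, whose extra piece $e^{-(l_i+2)s_1}$ is absorbed into $R^{-1/4}e^{-(l_i+3/8)s_1}$ since $s_1\ge s_0=|\ln T|$. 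The ``alternative self-contained route'' (Taylor-expanding $H_5$ and invoking~\eqref{qd2023Dec17-1}) that you propose as a fallback would again be fighting the wrong battle; the statement is a pure convolution estimate that holds without any orthogonality input.
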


\begin{proof}
	
	By \eqref{gi10}, \eqref{qd23Dec12-3}, \eqref{diestimate}, Lemma \ref{G1G2est-lem},
	\begin{equation*}
		\begin{aligned}
			&
			\left| \tilde{g}_{1,i}(z^{[i]} ,s) \right|
			\lesssim
			e^{-s}
			\left|\mathcal{G}_{1,i} \right|
			+
			s e^{-(\frac{5}{3}l_i+\frac{1}{2})s}
			\1_{|z^{[i]}|\le 2 C_{\tilde{e}}}
			\lesssim
			R^{-\frac{1}{4}}
			\mu_i^{-2} e^{-(l_i+1)s}
			\langle y^{[i]}\rangle^{-\frac{9}{4}}
			+
			s e^{-(\frac{5}{3}l_i+\frac{1}{2})s}
			+
			e^{-s} \1_{\delta e^{\frac{s}{2}} \le |z^{[i]}|\le 2\delta e^{\frac{s}{2}} }  ,
			\\
			&
			\left| g_{2,i}(\tilde{z}^{[i]},s) \right| \lesssim  e^{-\frac{s}{2}} \left| \mathcal{G}_{2,i} \right|
			\lesssim
			R^{-\frac{1}{4}}
			\mu_i^{-1} e^{-(l_i + \frac{1}{2} ) s} \langle \tilde{y}^{[i]} \rangle^{-\frac{7}{4}}
			+
			e^{- (\frac{5}{3} l_i + \frac{1}{2} ) s}
			+
e^{-\frac{s}{2}}
\min\Big\{1, e^{-\frac{5}{3} l_i  s}
			| \tilde{z}^{[i]} |^{\frac{10}{3} l_i+ \frac{10}{3} }
 \Big\}    ,
		\end{aligned}
	\end{equation*}
	where we used
	$\delta_0\in (0,1)$.
	By \eqref{outer-assumption2}, \eqref{qd23Dec15-1},
	\begin{equation*}
		R^{-\frac{1}{4}}
		\mu_i^{-2} e^{-(l_i+1)s}
		\langle y^{[i]}\rangle^{-\frac{9}{4}}
		\sim
		R^{-\frac{1}{4}}
		e^{(3l_i +3)s}
		\langle e^{(2l_i +\frac{3}{2})s}
		z^{[i]} \rangle^{-\frac{9}{4}},
		\quad
		R^{-\frac{1}{4}}
		\mu_i^{-1} e^{-(l_i + \frac{1}{2} ) s} \langle \tilde{y}^{[i]} \rangle^{-\frac{7}{4}}
		\sim
		R^{-\frac{1}{4}}
		e^{(l_i+\frac{3}{2}) s }
		\langle e^{(2l_i +\frac{3}{2})s}
		\tilde{z}^{[i]} \rangle^{-\frac{7}{4}}  .
	\end{equation*}
	
	By Lemma \ref{outerbarrieri1} \eqref{qd23Dec06-1},
	\begin{equation*}
		\begin{aligned}
			&
			\int_{s_1}^{s}
			\int_{\mathbb{R}_+^5}
			H_5\left(z^{[i]},s,w,\sigma\right)
			R^{-\frac{1}{4}}
			e^{(3l_i +3)\sigma}
			\langle e^{(2l_i +\frac{3}{2}) \sigma}
			w \rangle^{-\frac{9}{4}}
			dw d\sigma
			\\
			&
			+
			\int_{s_1}^{s}
			\int_{\mathbb{R}^{4}}
			H_5\left(z^{[i]},s,(\tilde{w},0),\sigma\right)
			R^{-\frac{1}{4}}
			e^{(l_i+\frac{3}{2}) \sigma }
			\langle e^{(2l_i +\frac{3}{2}) \sigma}
			\tilde{w} \rangle^{-\frac{7}{4}}
			d \tilde{w} d\sigma
			\\
			\lesssim \ &
			R^{-\frac{1}{4}}
			\left[
			e^{-l_i s} \langle e^{(2l_i +\frac{3}{2}) s} z^{[i]} \rangle^{-\frac{1}{4}}
			+
			e^{-(\frac{3}{2} l_i +\frac{3}{8}) s_1 }
			+
			e^{-l_i s} \langle e^{(2l_i +\frac{3}{2}) s} z^{[i]} \rangle^{-\frac{3}{4}}
			+
			e^{-(\frac{5}{2} l_i +\frac{9}{8}) s_1 }
			\right]
			\\
			\sim \ &
			R^{-\frac{1}{4}}
			\left[
			e^{-l_i s} \langle e^{(2l_i +\frac{3}{2}) s} z^{[i]} \rangle^{-\frac{1}{4}}
			+
			e^{-(\frac{3}{2} l_i +\frac{3}{8}) s_1 }
			\right]
			.
		\end{aligned}
	\end{equation*}

By Lemma \ref{outerbarrieri1} \eqref{lemma-barrier3}, for an arbitrarily small constant $\epsilon\in (0,1/4)$,
\begin{equation*}
		\int_{s_1}^{s}
		\int_{\mathbb{R}_+^5}
		H_5\left(z^{[i]},s,w,\sigma\right)
		\sigma e^{-(\frac{5}{3}l_i+\frac{1}{2}) \sigma}
		dw d\sigma
\lesssim_{\epsilon}
\int_{s_1}^{s}
\int_{\mathbb{R}_+^5}
H_5\left(z^{[i]},s,w,\sigma\right)     e^{-(\frac{5}{3}l_i+\frac{1}{2} -\epsilon ) \sigma}
dw d\sigma
\lesssim_{\epsilon}
e^{-(\frac{5}{3} l_i +\frac{1}{2} -\epsilon) s_1}.
\end{equation*}

By Lemma \ref{outerbarrieri1} \eqref{lemma-barrier3} \eqref{outerheatkerneli2},
\begin{equation*}
	\begin{aligned}
		&
		\int_{s_1}^{s}
		\int_{\mathbb{R}_+^5}
		H_5\left(z^{[i]},s,w,\sigma\right)
		\left( e^{-\sigma} \1_{\delta e^{\frac{\sigma}{2}} \le |w|\le 2\delta e^{\frac{\sigma}{2}} }
		\right)
		dw d\sigma
\\
\lesssim \ &
\int_{s_1}^{s}
\int_{\mathbb{R}_+^5}
H_5\left(z^{[i]},s,w,\sigma\right)
\min\Big\{e^{-\sigma} ,
e^{-(l_i+2)\sigma} |w|^{2l_i+2} \Big\}
dw d\sigma
\\
\lesssim \ &
\min\Big\{
e^{-s_1} ,
\int_{s_1}^s
\left[
e^{-\sigma} e^{-(l_i+1)s} |z^{[i]}|^{2l_i+2}
+
e^{-(l_i+2)\sigma}
\right] d \sigma
\Big\}
\lesssim
\min\Big\{
e^{-s_1} ,
e^{-s_1} e^{-(l_i+1)s} |z^{[i]}|^{2l_i+2}
+
e^{-(l_i+2) s_1}
\Big\} .
	\end{aligned}
\end{equation*}

By Lemma \ref{outerbarrieri1} \eqref{lemma-barrier4-0},
\begin{equation*}
\begin{aligned}
&
	\int_{s_1}^{s}
	\int_{\mathbb{R}^{4}}
	H_5\left(z^{[i]},s,(\tilde{w},0),\sigma\right)
	\left[ e^{- (\frac{5}{3} l_i + \frac{1}{2} ) \sigma}
	+
	\min\Big\{ e^{-\frac{\sigma}{2}}, e^{- (\frac{5}{3} l_i + \frac{1}{2} )  \sigma}
	| \tilde{w} |^{\frac{10}{3} l_i+ \frac{10}{3} }
	\Big\}
	\right] d \tilde{w} d\sigma
	\\
\lesssim \ &
\min\Big\{
e^{-\frac{s_1}{2}},
e^{-(\frac{5}{3} l_i +\frac{1}{2}) s}
|\tilde{z}^{[i]}|^{\frac{10}{3} l_i +\frac{10}{3}}
+
e^{-(\frac{5}{3} l_i +\frac{1}{2}) s_1}
\Big\} .
\end{aligned}
\end{equation*}
	
Combining the above estimates and using $C_1+\min\{ C_2,C_3\} = \min\{ C_1+C_2, C_1+C_3\}$, then we get the desired estimate.	
\end{proof}

\begin{prop}\label{Phii2pointwise1}
Under all assumptions in Lemma \ref{G1G2est-lem}, then there exists a constant $C>0$ independent of $T, \delta_0$ such that for any $z^{[i]} \in \overline{\mathbb{R}^5_+}$, $s>s_0 $, we have
\begin{equation}\label{esti-8.23}
	\begin{aligned}
	\left| \Phi_{i} (z^{[i]},s) \right|
	\le \ &
C \Big[
	\Big(
	R^{-\frac{1}{4}}
	e^{-l_i s} \langle e^{(2l_i +\frac{3}{2}) s} z^{[i]} \rangle^{-\frac{1}{4}}
	+
	T^{\frac{3}{8}}
	R^{-\frac{1}{4}}
	e^{-l_i s }
	\langle |z^{[i]}|^{2l_i} \rangle
	+
	e^{-(l_i+\frac{1}{2})s} |z^{[i]}|^{2l_i+2}
	\Big) \1_{|z^{[i]} | \le e^{\frac{l_i s}{2l_i+2} }}
\\
&
	+
	R^{-\frac{1}{4}}
	\1_{|z^{[i]} | > e^{\frac{l_i s}{2l_i+2} }} \Big].
	\end{aligned}
\end{equation}

\end{prop}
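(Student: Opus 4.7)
The plan is to prove Proposition \ref{Phii2pointwise1} by combining the mild formulation of \eqref{outeri2} with the $L^2_\rho$ estimate from Lemma \ref{Phii2norm}, the pointwise Green's function estimate from Lemma \ref{greenestimatei}, and the $L^2_\rho\to L^\infty$ heat-kernel estimate from Lemma \ref{outerbarrieri1}\eqref{outerheatkerneli2}. The key tool is a time-splitting: for any $s_1\in[s_0,s]$, the semigroup property gives
\begin{equation*}
    \Phi_i(z^{[i]},s) = \int_{\mathbb{R}_+^5} H_5(z^{[i]},s,w,s_1)\Phi_i(w,s_1)\,dw
    + \int_{s_1}^{s}\!\!\int_{\mathbb{R}_+^5} H_5\,\tilde g_{1,i}
    + \int_{s_1}^{s}\!\!\int_{\mathbb{R}^4} H_5\,g_{2,i}.
\end{equation*}
The last two pieces are controlled directly by Lemma \ref{greenestimatei} applied on $(s_1,s)$, while the first piece is the heat semigroup of a datum whose $L^2_\rho$ norm is already bounded by $Cs_1 e^{-(5l_i/3+1/2)s_1}$ thanks to Lemma \ref{Phii2norm}. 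Applying Lemma \ref{outerbarrieri1}\eqref{outerheatkerneli2} yields the pointwise bound $C(1-e^{s_1-s})^{-5/4}\,e^{|z^{[i]}|^2/[4(1+e^{s-s_1})]}\cdot s_1 e^{-(5l_i/3+1/2)s_1}$ for that piece. The task then reduces to choosing $s_1$ optimally in terms of $|z^{[i]}|$ and $s$.

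In the outer region $|z^{[i]}|>e^{l_is/(2l_i+2)}$, I would simply take $s_1=s_0$, so the first piece is absent and only Lemma \ref{greenestimatei} is invoked with $s_1=s_0$. The leading term $R^{-\frac14}e^{-l_is}\langle e^{(2l_i+\frac32)s}z^{[i]}\rangle^{-\frac14}$ is trivially $\le C R^{-\frac14}$ (the bracket is already huge), while the remaining terms $R^{-\frac14}e^{-(l_i+\frac38)s_0}=R^{-\frac14}T^{l_i+\frac38}$, $e^{-s_0}=T$, and $e^{-s_0/2}=T^{\frac12}$ are all dominated by $C R^{-\frac14}$ for $T\ll 1$ (note $R=|\ln T|$). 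This gives the second line of \eqref{esti-8.23}.

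In the inner region $|z^{[i]}|\le e^{l_is/(2l_i+2)}$, I would split cases by the size of $|z^{[i]}|$. For bounded $|z^{[i]}|$, pick $s_1$ so that $s-s_1$ is of order one: the prefactor $(1-e^{s_1-s})^{-5/4}$ and the Gaussian exponential $e^{|z^{[i]}|^2/[4(1+e^{s-s_1})]}$ are then both $O(1)$, and the $L^2$-based piece becomes $Cs_1 e^{-(5l_i/3+1/2)s_1}\lesssim e^{-(l_i+\frac12)s}\langle z^{[i]}\rangle^{2l_i+2}$ after elementary manipulation. For $|z^{[i]}|$ large (but still $\le e^{l_is/(2l_i+2)}$), I would take $s_1=s-2\log|z^{[i]}|$, so that $e^{s-s_1}=|z^{[i]}|^2$ and the Gaussian exponential is uniformly bounded by $e^{1/4}$. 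The $L^2$-based piece then contributes $Cs_1 e^{-(5l_i/3+\frac12)s}|z^{[i]}|^{10l_i/3+1}$, which, after using $|z^{[i]}|\le e^{l_is/(2l_i+2)}$, can be absorbed into the target term $e^{-(l_i+\frac12)s}|z^{[i]}|^{2l_i+2}$. The tail piece from Lemma \ref{greenestimatei} with this $s_1$ produces exactly the terms $R^{-\frac14}e^{-l_is}\langle e^{(2l_i+\frac32)s}z^{[i]}\rangle^{-\frac14}$ and $T^{\frac38}R^{-\frac14}e^{-l_is}\langle|z^{[i]}|^{2l_i}\rangle$ (the latter arising from $R^{-\frac14}e^{-(l_i+\frac38)s_1}=R^{-\frac14}e^{-(l_i+\frac38)s}|z^{[i]}|^{2l_i+\frac34}$ combined with $|z^{[i]}|^{\frac34}\lesssim e^{\frac38 s}=T^{-\frac38}$ up to appropriate absorption).

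The main obstacle is the delicate balance in choosing $s_1$. A too-small $s_1$ makes the Gaussian factor $e^{|z^{[i]}|^2/[4(1+e^{s-s_1})]}$ in the $L^2\to L^\infty$ estimate blow up polynomially (or worse) in $|z^{[i]}|$, while a too-large $s_1$ destroys the time decay $e^{-(5l_i/3+1/2)s_1}$. The choice $s_1=s-2\log|z^{[i]}|$ is pinned by the requirement of keeping the Gaussian bounded, and one must verify that the exponents on $|z^{[i]}|$ produced by this choice can be absorbed into the polynomial growth $|z^{[i]}|^{2l_i+2}$ using $|z^{[i]}|\le e^{l_is/(2l_i+2)}$; this is where the specific structure $5l_i/3+1/2=(l_i+1/2)+2l_i/3$ comes in and makes the bookkeeping tight. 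A careful treatment of the transition region $|z^{[i]}|\sim 1$, matching the two regimes smoothly, together with the logarithmic factor $s_1$ from Lemma \ref{Phii2norm}, completes the proof.
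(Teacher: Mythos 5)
Your strategy is essentially the paper's: split the Duhamel representation at an intermediate time $s_1$, bound the inhomogeneous tail by Lemma~\ref{greenestimatei} on $(s_1,s)$, feed the free-propagation piece through Lemma~\ref{outerbarrieri1}\eqref{outerheatkerneli2} together with the $L^2_\rho$ estimate of Lemma~\ref{Phii2norm}, and optimize $s_1$ regionwise: $s_1=s_0$ (outer region), $s_1=s-O(1)$ (bounded $z^{[i]}$), and $s_1=s-2\log|z^{[i]}|$ (intermediate $|z^{[i]}|$). These are precisely the paper's Cases~1--4 and the identical choice of $s_1$.

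However, your case split omits the admissibility constraint $s_0\le s_1\le s-1$, and this leaves a genuine gap. First, when $s\in(s_0,s_0+1]$ the choice $s_1=s-1$ drops below $s_0$; the correct treatment is simply $s_1=s_0$ there (the paper's Case~1). Second, and more seriously, $s_1=s-2\log|z^{[i]}|\ge s_0$ is equivalent to $|z^{[i]}|\le e^{(s-s_0)/2}$, whereas your ``inner region'' is $|z^{[i]}|\le e^{l_is/(2l_i+2)}$. Since $e^{l_is/(2l_i+2)}>e^{(s-s_0)/2}$ precisely when $s<(l_i+1)s_0$, there is a nonempty sub-region $e^{(s-s_0)/2}<|z^{[i]}|\le e^{l_is/(2l_i+2)}$ (for a substantial range of $s$) where your choice of $s_1$ is inadmissible. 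In this sub-region one must take $s_1=s_0$ and exploit $e^{-s_0}\le e^{-s}|z^{[i]}|^2$ to resolve the minima in Lemma~\ref{greenestimatei} into the polynomial form $T^{\frac38}R^{-\frac14}e^{-l_is}|z^{[i]}|^{2l_i}+Te^{-(l_i+1)s}|z^{[i]}|^{2l_i+2}+e^{-(\frac53l_i+\frac12)s}|z^{[i]}|^{\frac{10}{3}l_i+\frac{10}{3}}$, which is exactly the paper's Case~4. Without this case the stated bound does not follow in that region, since the crude bound $|\Phi_i|\lesssim R^{-\frac14}$ is not dominated by the inner-region target (e.g.\ $R^{-\frac14}\not\lesssim e^{-(l_i+\frac12)s}|z^{[i]}|^{2l_i+2}$ when $s\approx s_0$).

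Two of your absorption claims are also off, though both are fixable. For bounded $|z^{[i]}|$ you claim $s_1e^{-(\frac53l_i+\frac12)s_1}\lesssim e^{-(l_i+\frac12)s}\langle z^{[i]}\rangle^{2l_i+2}$; this fails for $l_i=0$ (it would require $s\lesssim 1$). The correct absorption (used in the paper) is into the second target term via $se^{-(\frac53l_i+\frac12)s}\lesssim R^{-\frac14}e^{-(l_i+\frac38)s}\le T^{\frac38}R^{-\frac14}e^{-l_is}\langle|z^{[i]}|^{2l_i}\rangle$. Similarly, your step ``$|z^{[i]}|^{\frac34}\lesssim e^{\frac38s}=T^{-\frac38}$'' is not an identity for $s>s_0$; the correct inequality is $|z^{[i]}|^{\frac34}=e^{\frac38(s-s_1)}\le T^{\frac38}e^{\frac38s}$, which then yields $R^{-\frac14}e^{-(l_i+\frac38)s_1}\le T^{\frac38}R^{-\frac14}e^{-l_is}|z^{[i]}|^{2l_i}$.
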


\begin{proof}
Applying Lemma \ref{greenestimatei} with $s_1=s_0$, we get $|\Phi_i| \lesssim R^{-\frac{1}{4}}$ in $\overline{\mathbb{R}_+^5}\times (s_0,\infty)$.
For the more delicate estimate, we separate the domain into the following four parts and estimate separately.
	
{ \textbf{ Case 1: Fix $z^{[i]} \in \overline{\mathbb{R}^5_+}$ and $s\in (s_0,s_0+2]$. } } By Lemma \ref{greenestimatei} with $s_1=s_0$,
\begin{equation*}
\big| \Phi_{i} (z^{[i]}, s) \big|
\lesssim
R^{-\frac{1}{4}}
e^{-l_i s} \langle e^{(2l_i +\frac{3}{2}) s} z^{[i]} \rangle^{-\frac{1}{4}}
+
R^{-\frac{1}{4}}
e^{-( l_i +\frac{3}{8}) s_0 }
+
e^{-s_0} e^{-(l_i+1)s} |z^{[i]}|^{2l_i+2}
+
e^{-(\frac{5}{3} l_i +\frac{1}{2}) s}
|z^{[i]}|^{\frac{10}{3} l_i +\frac{10}{3}}  .
\end{equation*}
	
{\textbf{Case 2: Fix $|z^{[i]} |\le 2$ and $s\in (s_0+2, \infty)$. } }
	By uniqueness, we split $\Phi_{i}=\Phi_{i}^{(1)}+\Phi_{i}^{(2)}$ and rewrite \eqref{outeri2} as
	\begin{equation*}
\begin{aligned}
&
		\left\{
		\begin{aligned}
			&
			\partial_{s_*}\Phi_{i}^{(1)}=A_{w }\Phi_{i}^{(1)}+\tilde {g}_{1,i}(w,s_*)
			\mbox{ \ for \  }
			(w,s_*) \in
			\mathbb{R}^5_+\times (s-1,\infty),
	\\
	&
		-\partial_{w_{5}}\Phi_{i}^{(1)}((\tilde{w},0),s_*)=g_{2,i}(\tilde{w} ,s_*)
			\mbox{ \ for \ }
			(\tilde{w},s_*) \in
			\mathbb{R}^4\times (s-1,\infty)
			,
		\quad
			\Phi_{i}^{(1)}(\cdot ,s-1) =0
			\mbox{ \ in \ }
			\mathbb{R}^5_+  ,
		\end{aligned}
	\right.
\\
&
\left\{
\begin{aligned}
	&
	\partial_{s_*}\Phi_{i}^{(2)}=A_{w}\Phi_{i}^{(2)}
	\mbox{ \ for \ }
	(w,s_*)\in
	\mathbb{R}^5_+\times (s -1,\infty),
\\
&
-\partial_{w_{5}}\Phi_{i}^{(2)}((\tilde{w},0),s_*)=0
	\mbox{ \ for \  }
	(\tilde{w},s_*)
	\in \mathbb{R}^4 \times (s -1,\infty)
	,
\quad
	\Phi_{i}^{(2)}(\cdot,s-1)=\Phi_{i}(\cdot,s-1)
	\mbox{ \ in \ }
	\mathbb{R}^5_+.
\end{aligned}
\right.
\end{aligned}
	\end{equation*}

For $\Phi_{i}^{(1)}$, applying Lemma \ref{greenestimatei} with $s_1=s-1$ at the point $(z^{[i]},s)$, we have
\begin{equation*}
\big| \Phi_{i}^{(1)}(z^{[i]},s) \big| \lesssim
R^{-\frac{1}{4}}
e^{-l_i s} \langle e^{(2l_i +\frac{3}{2}) s} z^{[i]} \rangle^{-\frac{1}{4}}
+
R^{-\frac{1}{4}}
e^{-( l_i +\frac{3}{8}) s }
+
e^{-s} e^{-(l_i+1)s} |z^{[i]}|^{2l_i+2}
+
e^{-(\frac{5}{3} l_i +\frac{1}{2}) s}
|z^{[i]}|^{\frac{10}{3} l_i +\frac{10}{3}}  .
\end{equation*}

For $\Phi_{i}^{(2)}$, by Lemma \ref{outerbarrieri1} \eqref{outerheatkerneli2} with $s_1=s-1$, $|z^{[i]} |\le 2$, and Lemma \ref{Phii2norm}, then
\begin{equation*}
\big| \Phi_{i}^{(2)}(z^{[i]},s) \big|
\lesssim
\| \Phi_{i}(\cdot,s-1) \|_{L_\rho^2(\mathbb{R}_+^5)}
\lesssim
s e^{-(\frac{5}{3} l_i +\frac{1}{2}) s }  .
\end{equation*}

Since $s e^{-(\frac{5}{3} l_i +\frac{1}{2}) s } \lesssim R^{-\frac{1}{4}}
e^{-( l_i +\frac{3}{8}) s }$, we have
\begin{equation*}
	\left| \Phi_{i} (z^{[i]},s) \right|
\lesssim
	R^{-\frac{1}{4}}
	e^{-l_i s} \langle e^{(2l_i +\frac{3}{2}) s} z^{[i]} \rangle^{-\frac{1}{4}}
	+
	R^{-\frac{1}{4}}
	e^{-( l_i +\frac{3}{8}) s }
	+
	e^{-s}
 e^{-(l_i+1)s} |z^{[i]}|^{2l_i+2}
	+
	e^{-(\frac{5}{3} l_i +\frac{1}{2}) s}
	|z^{[i]}|^{\frac{10}{3} l_i +\frac{10}{3}}  .
\end{equation*}

{\textbf{ Case 3: Fix $2<|z^{[i]} |<e^{\frac{s-s_0}{2}}$ and $s \in (s_0+2,\infty)$.} }
We choose $s_1\in (s_0, s-1)$ such that $|z^{[i]} |=e^{\frac{s-s_1}{2}}$.
Similar to Case 2, we split $\Phi_{i}=\Phi_{i}^{(3)}+\Phi_{i}^{(4)}$ and the equation \eqref{outeri2} is rewritten as
	\begin{equation*}
\begin{aligned}
&
\left\{
\begin{aligned}
&
			\partial_{s_*}\Phi_{i}^{(3)}=A_{w}\Phi_{i}^{(3)}+\tilde {g}_{1,i}(w,s_*)
			\mbox{ \ for \ }
			(w,s_*) \in  \mathbb{R}^5_+\times (s_1,\infty),
			\\
		&
	-\partial_{w_{5}}\Phi_{i}^{(3)} ( (\tilde{w},0) ,s_*)
		=g_{2,i}(\tilde{w} ,s_*)
			\mbox{ \ for \  }
			(\tilde{w} ,s_*)
			\in
			\mathbb{R}^4\times (s_1,\infty)  ,
\quad
			\Phi_{i}^{(3)}(\cdot,s_1)=0
			\mbox{ \ in \ }
			\mathbb{R}^5_+  ,
\end{aligned}
\right.
\\
&
\left\{
\begin{aligned}
&
\partial_{s_*}\Phi_{i}^{(4)}=A_{w}\Phi_{i}^{(4)}
\mbox{ \ for \ } (w,s_*)\in \mathbb{R}^5_+\times (s_1,\infty),
\\
&
-\partial_{w_{5}}\Phi_{i}^{(4)}( (\tilde{w},0) ,s_*)
=0
\mbox{ \ for \  }
(\tilde{w},s_*)
\in \mathbb{R}^4 \times (s_1,\infty)
,
\quad
\Phi_{i}^{(4)}(\cdot,s_1)=\Phi_{i}(\cdot,s_1)
\mbox{ \ in \ }
\mathbb{R}^5_+.
\end{aligned}
\right.
\end{aligned}
	\end{equation*}

	By Lemma \ref{greenestimatei}, we have	
\begin{equation*}
\big| \Phi_{i}^{(3)} (z^{[i]}, s) \big|
\lesssim
T^{\frac{3}{8}}
R^{-\frac{1}{4}}
e^{-l_i s } |z^{[i]}|^{2l_i}
+
T e^{-(l_i+1)s } |z^{[i]}|^{2l_i+2}
+
e^{-(\frac{5}{3} l_i +\frac{1}{2}) s}
|z^{[i]}|^{\frac{10}{3} l_i +\frac{10}{3}}  ,
\end{equation*}
where we used
\begin{equation*}
\begin{aligned}
&
e^{-s_1 } = e^{-s} |z^{[i]}|^2,
\quad
R^{-\frac{1}{4}}
e^{-l_i s} \langle e^{(2l_i +\frac{3}{2}) s} z^{[i]} \rangle^{-\frac{1}{4}}
\lesssim
R^{-\frac{1}{4}}
e^{-l_i s_1}
 e^{-(\frac{l_i}{2} +\frac{3}{8}) s}
| z^{[i]} |^{-\frac{1}{4}}
\lesssim
T^{\frac{3}{8}}
R^{-\frac{1}{4}}
e^{-l_i s} |z^{[i]}|^{2l_i} ,
\\
&
R^{-\frac{1}{4}}
e^{-( l_i +\frac{3}{8}) s_1 }
=
e^{-\frac 38 s_1 }
R^{-\frac{1}{4}} e^{-l_i s} |z^{[i]}|^{2l_i}
\le T^{\frac{3}{8}}
R^{-\frac{1}{4}}
e^{-l_i s} |z^{[i]}|^{2l_i} .
\end{aligned}
\end{equation*}
	
By Lemma \ref{outerbarrieri1} \eqref{outerheatkerneli2}, $|z^{[i]} |=e^{\frac{s-s_1}{2}}$, $s_1\le  s-1$, and Lemma \ref{Phii2norm}, we obtain that
\begin{equation*}
\big| \Phi_{i}^{(4)} (z^{[i]}, s) \big|
\lesssim
\|  \Phi_{i} (\cdot, s_{1}) \|_{L_\rho^2(\mathbb{R}_+^5)}
\lesssim
s_1 e^{-(\frac{5}{3} l_i +\frac{1}{2}) s_1 }
\lesssim
(-\ln T) T^{\frac{1}{2}} e^{-l_i s} |z^{[i]}|^{2l_i}  .
\end{equation*}

Since $(-\ln T) T^{\frac{1}{2}}\lesssim T^{\frac{3}{8}}
R^{-\frac{1}{4}}$, we have
\begin{equation*}
	\big| \Phi_{i} (z^{[i]}, s) \big|
	\lesssim
	T^{\frac{3}{8}}
	R^{-\frac{1}{4}}
	e^{-l_i s} |z^{[i]}|^{2l_i}
	+
	T e^{-(l_i+1)s} |z^{[i]}|^{2l_i+2}
	+
	e^{-(\frac{5}{3} l_i +\frac{1}{2}) s}
	|z^{[i]}|^{\frac{10}{3} l_i +\frac{10}{3}}  .
\end{equation*}

{\textbf{ Case 4: Fix $|z^{[i]} |\ge e^{\frac{s-s_0}{2}}$ and $s\in(s_0+2,\infty)$. } } By Lemma \ref{greenestimatei} with $s_1=s_0$,
\begin{equation*}
\left| \Phi_i(z^{[i]}, s) \right|
\lesssim
T^{\frac{3}{8}}
R^{-\frac{1}{4}}
e^{-l_i s} |z^{[i]}|^{2 l_i}
+
T e^{-(l_i+1)s} |z^{[i]}|^{2l_i+2}
+
e^{-(\frac{5}{3} l_i +\frac{1}{2}) s}
|z^{[i]}|^{\frac{10}{3} l_i +\frac{10}{3}}  ,
\end{equation*}
where we used
\begin{equation*}
\begin{aligned}
&
e^{-s_0} \le e^{-s} |z^{[i]}|^2,
\quad
R^{-\frac{1}{4}}
e^{-l_i s} \langle e^{(2l_i +\frac{3}{2}) s} z^{[i]} \rangle^{-\frac{1}{4}}
\lesssim
R^{-\frac{1}{4}}
e^{-l_i s_0}
e^{-(\frac{l_i}{2} + \frac{3}{8}) s}
|z^{[i]}|^{-\frac{1}{4}}
\lesssim
T^{\frac{3}{8}}
R^{-\frac{1}{4}}
e^{-l_i s} |z^{[i]}|^{2 l_i}  ,
\\
&
R^{-\frac{1}{4}}
e^{-( l_i +\frac{3}{8}) s_0 }
\lesssim
T^{\frac{3}{8}}
R^{-\frac{1}{4}}
e^{-l_i s} |z^{[i]}|^{2 l_i}   .
\end{aligned}
\end{equation*}

Combining the above four cases, we get
\begin{equation*}
	\left| \Phi_{i} (z^{[i]},s) \right|
	\lesssim
	R^{-\frac{1}{4}}
	e^{-l_i s} \langle e^{(2l_i +\frac{3}{2}) s} z^{[i]} \rangle^{-\frac{1}{4}}
	+
	T^{\frac{3}{8}}
	R^{-\frac{1}{4}}
	e^{-l_i s }
	\langle |z^{[i]}|^{2l_i} \rangle
	+
	T
	e^{-(l_i+1)s} |z^{[i]}|^{2l_i+2}
	+
	e^{-(\frac{5}{3} l_i +\frac{1}{2}) s}
	|z^{[i]}|^{\frac{10}{3} l_i +\frac{10}{3}} .
\end{equation*}
In particular,
\begin{equation*}
	\left| \Phi_{i} (z^{[i]},s) \right|
	\lesssim
	R^{-\frac{1}{4}}
	e^{-l_i s} \langle e^{(2l_i +\frac{3}{2}) s} z^{[i]} \rangle^{-\frac{1}{4}}
	+
	T^{\frac{3}{8}}
	R^{-\frac{1}{4}}
	e^{-l_i s }
	\langle |z^{[i]}|^{2l_i} \rangle
	+
	e^{-(l_i+\frac{1}{2})s} |z^{[i]}|^{2l_i+2}
\mbox{ \ for \ }
|z^{[i]} | \le e^{\frac{l_i s}{2l_i+2} } .
\end{equation*}
Integrating $|\Phi_i| \lesssim R^{-\frac{1}{4}}$ in $\overline{\mathbb{R}_+^5}\times (s_0,\infty)$, we attain the conclusion.
\end{proof}

\begin{prop}\label{prop-phi0}
Under all assumptions in Lemma \ref{G1G2est-lem},
then for $\mathcal{T}^{\rm {out}}[ \psi,\bm{\phi},\bm{\mu},\bm{\xi}]$ given in \eqref{qd24Jan01-2}, there exist $\varphi_0(x)\in C^\infty_{\rm c}(\overline{\mathbb{R}^5_+})$ in \eqref{outer-1} and a constant $C>0$ independent of $T, \delta_0$ such that
\begin{equation*}
\begin{aligned}
&
	|\mathcal{T}^{\rm {out}}[ \psi,\bm{\phi},\bm{\mu},\bm{\xi}](x,t)|
	\le C
	R^{-\frac{1}{4}}
	\Big[
	\1_{\cap_{i=1}^{\mathfrak{o}} \big\{ |z^{[i]} | > e^{\frac{l_i s}{2l_i+2} }
		\big\} }
	+
	\sum_{i=1}^{\mathfrak{o}}
	(T-t)^{l_i}
	\langle |z^{[i]}|^{2l_i+2} \rangle
	\1_{ |z^{[i]} | \le e^{\frac{l_i s}{2l_i+2} } }
	\Big] ,
\\
&  \mathcal{T}^{\rm {out}}[ \psi,\bm{\phi},\bm{\mu},\bm{\xi}](x,0)=\sum\limits_{i=1}^{\mathfrak{o}}{\bm{b}}_i\cdot {\bm{ \tilde{e}} }_i\big( T^{-\frac{1}{2}}(x-q^{[i]}) \big)+ \sum_{i=1}^{\mathfrak{o}} \sum_{ \mathbf{p}\in \mathbb{N}^5, \| \mathbf{p}\|_{\ell_1} \le 4l_{\rm{max}}+4, p_5\in 2 \mathbb{N}}
C_{q^{[i]}, \mathbf{p}}
\varphi_{q^{[i]},\mathbf{p},0}(x) ,
\end{aligned}
\end{equation*}
where $\bm{b}_i = {\bm{b}}_i[ \psi,\bm{\phi}, \bm{\mu}, \bm{\xi} ] $ are constant vectors and
$| {\bm{b}}_i | \le C |\ln T| T^{ \frac{5}{3}l_i+\frac{1}{2} }$; ${\bm{ \tilde{e}} }_i$ are given in \eqref{24Dec01-1};
$C_{q^{[i]}, \mathbf{p}} = C_{q^{[i]}, \mathbf{p}} [ \psi,\bm{\phi},\bm{\mu}, \bm{\xi} ]$ are constants and $|C_{q^{[i]}, \mathbf{p}}| \le C e^{-\frac{9 \delta^2}{22 T} }$;
$\varphi_{q^{[i]},\mathbf{p},0}(x) \in C^\infty_{\rm c}(\overline{\mathbb{R}^5_+})$ and $\varphi_{q^{[i]},\mathbf{p},0}(x) = 0$ in $\overline{\mathbb{R}^{5}_+  \backslash B_5^+(q^{[i]} ,2\delta)}$.
In particular, for $\mathbf{m} \in \mathbb{N}^5$, there exists a constant $C_{\mathbf{m}}>0$ independent of $T, \delta_0$ such that
\begin{equation*}
|D_x^{\mathbf{m}} \mathcal{T}^{\rm {out}}[ \psi,\bm{\phi},\bm{\mu},\bm{\xi}](x,0)| \le
C_{\mathbf{m}} |\ln T| T^{ \frac{5}{3}l_i+\frac{1}{2} -\frac{\| \mathbf{m}\|_{\ell_1}}{2} }
\sum_{i=1}^{\mathfrak{o}} \1_{|x-q^{[i]}| \le 2\delta}.
\end{equation*}

\end{prop}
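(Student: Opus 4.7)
I plan to write the right-hand side of \eqref{qd24Jan01-2} as $\sum_{i=1}^{\mathfrak o+1}\psi_i$ following the splitting \eqref{outeri}--\eqref{qd23Dec23-3}, estimate each piece separately, and finally choose $\varphi_0$ to produce the prescribed form of the initial value. For $i=1,\dots,\mathfrak o$, I pass to self-similar variables via \eqref{qd23Dec21-2} and write $\Psi_i(z^{[i]},s)=\bm d_i(s)\cdot\bm{\tilde e}_i(z^{[i]})+\Phi_i(z^{[i]},s)$ as in \eqref{qd23Dec21-1}, with $\bm d_i$ given by the explicit formula \eqref{dij} so that the solvability conditions \eqref{qd2023Dec17-1}, equivalently \eqref{eq-5.7}, hold. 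Since $\Phi_i(\cdot,s_0)=0$ in \eqref{outeri2}, setting $\bm b_i:=\bm d_i(s_0)$ automatically produces the initial value prescribed in \eqref{outeri}, and the bound $|\bm b_i|\lesssim|\ln T|T^{\frac{5}{3}l_i+\frac{1}{2}}$ is immediate from \eqref{bikcik}. Combining the pointwise estimate in Proposition \ref{Phii2pointwise1} for $\Phi_i$ with \eqref{diestimate} for $\bm d_i\cdot\bm{\tilde e}_i$ (a polynomial of degree at most $\lceil 5l_i/3\rceil+1$ cut off by $\eta(\cdot/C_{\tilde e})$, cf.\ \eqref{qd23Dec11-1}) yields the claimed weight on $\{|z^{[i]}|\le e^{l_is/(2l_i+2)}\}$ after absorbing the factors $T^{3/8}$ and $e^{-s/2}\le T^{1/2}$ into $R^{-1/4}$; on the complementary region the polynomial part vanishes by the cut-off and $\Phi_i$ contributes at most $R^{-1/4}$.

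For $\psi_{\mathfrak o+1}$, I split the representation \eqref{qd2023Dec03-3} into the boundary convolution of $\mathcal G_{2,\mathfrak o+1}$ and the initial-value convolution of $\varphi_0$. Since $\mathcal G_{2,\mathfrak o+1}$ is supported outside $\cup_{i}\{|\tilde x^{[i]}|<4\delta\}$ by \eqref{Gk+12estimate}, Lemma \ref{lem-smooth} (applied with $r=4\delta$, $b_2=-\tfrac{10}{3}$, $a_2=0$) shows that its contribution is smooth on each $\overline{B_5^+(q^{[i]},2\delta)}\times[0,T]$ and that it, together with all of its $(x,t)$-derivatives there, is bounded by $\delta_0^{\frac{5}{3}}t^{3}e^{-r^{2}/(22t)}$. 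Away from the blow-up points, I pass to the self-similar variables \eqref{qd23Dec21-2} about each $q^{[i]}$ and invoke Lemma \ref{outerbarrieri1}\eqref{lemma-barrier4-0}, using $\delta_0^{\frac{5}{3}}\ll R^{-1/4}$ for $T\ll 1$, to produce the outer weight $R^{-1/4}\langle x\rangle^{-2}$ required by the $\mathcal X_{\delta_0,\sigma_0}$ topology.

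Finally I set $\varphi_0(x)=\sum_{i=1}^{\mathfrak o}\sum_{\mathbf p}C_{q^{[i]},\mathbf p}\,\varphi_{q^{[i]},\mathbf p,0}(x)$, where each $\varphi_{q^{[i]},\mathbf p,0}=V_{q^{[i]},\mathbf p,0}$ is supplied by Proposition \ref{lemma-cons1} with $\mathfrak l=\mathfrak o$, $d=\delta$, $N_0=4l_{\max}+4$; the $\psi$-dependent coefficients $C_{q^{[i]},\mathbf p}$ are chosen by the linear system requiring $D_x^{\mathbf k}\psi_{\mathfrak o+1}(q^{[j]},T)=0$ for all $\|\mathbf k\|_{\ell_1}\le 4l_{\max}+4$ with $k_5\in 2\mathbb N$. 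The orthonormality $D_x^{\mathbf k}V_{q^{[i]},\mathbf p}(q^{[j]},T)=\delta_{\mathbf p,\mathbf k}\delta_{ij}$ from Proposition \ref{lemma-cons1} makes this system diagonal and invertible, and its right-hand side—the derivatives of the boundary piece at $(q^{[j]},T)$—is of size $O(e^{-9\delta^2/(22T)})$ by the $t^{3}e^{-r^{2}/(22t)}$ estimate of Lemma \ref{lem-smooth}, giving $|C_{q^{[i]},\mathbf p}|\lesssim e^{-9\delta^2/(22T)}$ and making the corresponding contribution to $\psi_{\mathfrak o+1}$ much smaller than $R^{-1/4}$. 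The pointwise derivative bound on $\mathcal{T}^{\rm {out}}(x,0)$ then follows from the chain-rule factor $T^{-\|\mathbf m\|_{\ell_1}/2}$ arising when differentiating $\bm{\tilde e}_i(T^{-1/2}(x-q^{[i]}))$ together with the support of $\bm{\tilde e}_i$ inside $\{|z|\le 2C_{\tilde e}\}$, which yields the indicator $\mathbf 1_{|x-q^{[i]}|\le 2\delta}$ for $T$ small. The hard part is the last step: one has to check that the vanishing-adjustment basis at time $T$ remains orthonormal to the required order in the presence of the nonlinear self-coupling through $\mathcal G_{2,\mathfrak o+1}[\psi]$, and one has to propagate the constants through Lemma \ref{lem-smooth} to obtain the sharp exponent $9/22$ rather than a cruder $1/21$ or $1/22$; this amounts to carefully routing the $|z|\ge r$ bound in \eqref{qd24Feb07-1} through the time integration in \eqref{qd24Feb07-2}.
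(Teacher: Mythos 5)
Your overall decomposition and the treatment of $\psi_1,\dots,\psi_{\mathfrak o}$ match the paper, and the use of Proposition~\ref{lemma-cons1} to supply vanishing adjustment functions is the right idea. But there is a genuine gap in the choice of the cancellation target, and it is the crux of the whole argument.

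You propose to determine the coefficients $C_{q^{[i]},\mathbf p}$ by requiring $D_x^{\mathbf k}\psi_{\mathfrak o+1}(q^{[j]},T)=0$. That only removes the Taylor coefficients at $(q^{[j]},T)$ of the remainder $\psi_{\mathfrak o+1}=\psi_{\mathfrak o+1}^{(1)}+\psi_{\mathfrak o+1}^{(2)}$. However, the cross-bubble pieces $\psi_k$ for $k\ne j$ also contribute nonzero Taylor coefficients at $(q^{[j]},T)$: their sources are supported near $q^{[k]}$, at distance $\ge 24\delta$ from $q^{[j]}$, so by Lemma~\ref{lem-smooth} they are smooth near $q^{[j]}$ with derivatives of size $O(e^{-9\delta^2/(22T)})$ — exponentially small, but nonvanishing as $t\uparrow T$. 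If, say, $l_j\ge 1$ and $\sum_{k\ne j}\psi_k(q^{[j]},T)\ne 0$, then at $x=q^{[j]}$ the required weight $R^{-1/4}(T-t)^{l_j}$ tends to $0$ while $|\psi(q^{[j]},t)|$ stays bounded below by a positive constant as $t\uparrow T$, and the claimed pointwise estimate fails. This is precisely why the paper introduces $\hat\psi_i:=\psi_{\mathfrak o+1}^{(1)}+\sum_{k\ne i}\psi_k$ and chooses $\psi_{\mathfrak o+1}^{(2)}$ to satisfy $D_x^{\mathbf m}\bigl(\hat\psi_j+\psi_{\mathfrak o+1}^{(2)}\bigr)(q^{[j]},T)=0$: the Taylor expansion in \eqref{qd23Dec30-4} is applied to $\hat\psi_j+\psi_{\mathfrak o+1}^{(2)}$, not to $\psi_{\mathfrak o+1}$ alone. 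You must replace your cancellation condition accordingly.

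Two smaller remarks. The "hard part" you flag — that the vanishing-adjustment basis might lose its orthonormality because of $\mathcal G_{2,\mathfrak o+1}[\psi]$ — is not actually an issue: the property $D_x^{\mathbf k}V_{p^{[i]},\mathbf m}(p^{[j]},T)=\delta_{\mathbf m,\mathbf k}\delta_{p^{[i]},p^{[j]}}$ in Proposition~\ref{lemma-cons1} is intrinsic to those basis functions and does not depend on $\psi$; what depends on $\psi$ is only the right-hand side of the (diagonal) linear system. Similarly, the constant $9/22$ is immediate from Lemma~\ref{lem-smooth} applied with $r=3\delta$ (so $r^2=9\delta^2$) evaluated at $t=T$; there is nothing to "route" beyond choosing $r$. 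Finally, in the region where all $|z^{[i]}|$ are large the proposition only asks for $|\mathcal T^{\rm out}|\le CR^{-1/4}$, with no $\langle x\rangle^{-2}$ decay; the paper obtains this from $|\psi_{\mathfrak o+1}^{(1)}|\lesssim t^{1/2}$ via the Green's formula together with $|\psi_k|\lesssim R^{-1/4}$ and $|\psi_{\mathfrak o+1}^{(2)}|\lesssim e^{-9\delta^2/(22T)}$, so the appeal to Lemma~\ref{outerbarrieri1}\eqref{lemma-barrier4-0} is not needed for this particular statement (the spatial decay belongs to Lemma~\ref{lem-8.12}).
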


\begin{proof}
Recalling \eqref{qd23Dec21-2}, \eqref{qd23Dec21-1}, we have
 \begin{equation*}
\begin{aligned}
&
\psi_i(x,t) ={\bm{d}}_i(s)\cdot\bm{\tilde e}_i(z^{[i]})+\Phi_{i}(z^{[i]},s)
\quad
\mbox{ \ with \ } z^{[i]} = (T-t)^{-\frac 1 2 } \left(x -q^{[i]}\right) , \quad s=- \ln (T-t) ,
\\
&
\psi_i(x,0) ={\bm{b}}_i \cdot\bm{\tilde e}_i\big( T^{-\frac{1}{2}} (x-q^{[i]} ) \big)
\end{aligned}
\end{equation*}
with $\bm{b}_i = {\bm{b}}_i[ \psi,\bm{\phi}, \bm{\mu}, \bm{\xi} ] =\bm{d}_i(s_0)$ given in \eqref{gi10}.
Lemma \ref{g1g2-L2rho-lem} deduces that $| {\bm{b}}_i | \lesssim |\ln T| T^{ \frac{5}{3}l_i+\frac{1}{2} }$ and $\big| {\bm{d}}_i(s)\cdot\bm{\tilde e}_i(z^{[i]}) \big| \lesssim
\1_{|z^{[i]}|\le 2 C_{\tilde{e}}} s e^{-(\frac{5}{3}l_i+\frac{1}{2})s} $.
Combining \eqref{esti-8.23} and $T^{\frac{3}{8}} \ll R^{-\frac{1}{4}}$ due to $T\ll 1$, we have
\begin{equation}\label{outer200}
|\psi_i(x,t)|
\lesssim
R^{-\frac{1}{4}}
\Big[
(T-t)^{l_i}
\langle |z^{[i]}|^{2l_i+2} \rangle
\1_{ |z^{[i]} | \le e^{\frac{l_i s}{2l_i+2} } }
+
\1_{|z^{[i]} | > e^{\frac{l_i s}{2l_i+2} }}
\Big]
,
\quad
i=1,2,\dots, \mathfrak{o}.
\end{equation}

By uniqueness, we set $\psi_{\mathfrak{o}+1} =\psi_{\mathfrak{o}+1}^{(1)}(x,t)+\psi_{\mathfrak{o}+1}^{(2)}(x,t)$ and decompose the equation \eqref{qd23Dec23-3} of $\psi_{\mathfrak{o}+1}$ into the following two equations.
\begin{equation}\label{esti-8.37}
\begin{aligned}
&
\partial_t \psi^{(1)}_{\mathfrak{o}+1}=\Delta_x \psi^{(1)}_{\mathfrak{o}+1}
\mbox{ \ in \ }
\mathbb{R}_+^5 \times (0,T) ,
\quad
    -\partial_{x_5}\psi_{\mathfrak{o}+1}^{(1)}=\mathcal{G}_{2, \mathfrak{o}+1}
    \mbox{ \ on \ } \partial\mathbb{R}_+^5 \times (0,T),
    \quad
    \psi_{\mathfrak{o}+1}^{(1)}(x,0)=0
    \mbox{ \ in \ } \mathbb{R}_+^5 ,
\\
&
        \partial_t \psi_{\mathfrak{o}+1}^{(2)}=\Delta_x \psi_{\mathfrak{o}+1}^{(2)}
        \mbox{ \ in \ } \mathbb{R}_+^5 \times (0,T),
\quad
    -\partial_{x_5}\psi_{\mathfrak{o}+1}^{(2)}=0
    \mbox{ \ on \ } \partial\mathbb{R}_+^5 \times (0,T),
    \quad
    \psi_{\mathfrak{o}+1}^{(2)}(x,0)=\varphi_0(x)
    \mbox{ \ in \ } \mathbb{R}_+^5.
\end{aligned}
\end{equation}
Set
\begin{equation}
\hat \psi_i(x,t):=\psi^{(1)}_{\mathfrak{o}+1}(x,t)+\sum_{j=1, j\not=i}^{\mathfrak{o}}\psi_j(x,t)
\quad \mbox{ \ for \ }
i=1,2,\dots, \mathfrak{o}
.
\end{equation}
By \eqref{outeri}, \eqref{esti-8.37}, $\hat \psi_i$ satisfies
 \begin{equation}\label{qd24Feb21-1}
    \begin{cases}
        \partial_t \hat{\psi}_i=\Delta_x \hat\psi_i+ h_{1,i}(x,t)
        \quad
        \mbox{ \ in \ } \mathbb{R}_+^5 \times (0,T),
 \quad
    -\partial_{x_5}\hat{\psi}_i=h_{2,i}(\tilde{x}, t)
      \quad
    \mbox{ \ on \ } \partial\mathbb{R}_+^5 \times (0,T),
    \\
    \hat{\psi}_i(x,0)=
    \sum_{j=1, j\not=i}^{\mathfrak{o}}
    {\bm{b}}_j \cdot {\bm{\tilde e }}_j\big( T^{-\frac{1}{2}}(x-q^{[j]}) \big)
     \quad
      \mbox{ \ in \ } \mathbb{R}_+^5 ,
    \end{cases}
\end{equation}
where
\begin{equation*}
h_{1,i}(x,t):=\sum_{j=1, j\not=i}^{\mathfrak{o}}\mathcal{G}_{1,j}(x,t) ,
\quad
h_{2,i}(\tilde x, t):=\sum_{j=1, j\not=i}^{\mathfrak{o}+1}
\mathcal{G}_{2,j}(\tilde{x},t).
\end{equation*}
By the properties of ${\bm{\tilde e }}_j$ given in \eqref{24Dec01-1}, \eqref{qd23Dec11-1}, $|\hat{\psi}_i(x,0) | \lesssim
|\ln T| T^{\frac{1}{2}} \sum_{j=1, j\not=i}^{\mathfrak{o}} \1_{|x-q^{[j]}|\le 2C_{\tilde{e}} T^{\frac{1}{2}} } $, $\hat{\psi}_i(x,0) \in C_c^{\infty} \big( \cup_{j=1, j\not=i}^{\mathfrak{o}} \big\{ x \in \overline{\mathbb{R}_+^5} \ \big| \ |x-q^{[j]}|\le 2C_{\tilde{e}} T^{\frac{1}{2}} \big\} \big) $.
By Lemma \ref{G1G2est-lem}, one sees that
\begin{equation*}
\begin{aligned}
&
\left| h_{1,i}(x,t) \right|
\lesssim
R^{-\frac{1}{4}} \left(T-t\right)^{-4-3l_{\rm{max}}}
\1_{ \cup_{j=1, j\not=i}^{\mathfrak{o}} \{|x-q^{[j]}|\le 4\delta \}},
\\
&
\left| h_{2,i}(\tilde x, t) \right|
\lesssim
\1_{\cap_{j=1}^{\mathfrak{o}} \{ |\tilde{x}-\tilde{q}^{[j]}|\ge 4\delta \} }
+
R^{-\frac{1}{4}}
\left(T-t\right)^{-2-l_{\rm{max}}}
\1_{ \cup_{j=1, j\not=i}^{\mathfrak{o}} \{ |\tilde{x}-\tilde{q}^{[i]}|\le 8\delta \} },
\mbox{ \ with \ }
\quad l_{\rm{max}} = \max\limits_{j=1,2,\dots, \mathfrak{o}} l_j  .
\end{aligned}
\end{equation*}

$\hat{\psi}_i$ can be represented by the formula
\eqref{qd2023Dec03-3} with $n=5$ and $t_0=0$.
Since $h_{1,i}(x,t)= \hat{\psi}_i(x,0) = 0$ for $|x-q^{[i]}|\le 3\delta$ and $h_{2,i}(\tilde x, t)= 0$ for $|\tilde{x}-\tilde{q}^{[i]}|\le 3\delta$, applying Lemma \ref{lem-smooth} with $r=3\delta$, we have
\begin{equation*}
\hat{\psi}_i \in C^\infty( \overline{B_5^+(q^{[i]} , 3\delta/2 ) } \times [0,T])
\mbox{ \ and  \ }
|\partial_t^{\iota} D_{x}^{\mathbf{m}}\hat{\psi}_i | \lesssim_{\iota, \mathbf{m}} e^{-\frac{9 \delta^2}{22 T} }
\mbox{ \ in \ } \overline{B_5^+(q^{[i]} , 3\delta/2 ) } \times [0,T]
\mbox{ \ for all \ }
\iota\in \mathbb{N}, \mathbf{m}\in \mathbb{N}^5,
\end{equation*}
where $\hat{\psi}_i$ can be defined by extension naturally for $t=T$, $x\in \overline{B_5^+(q^{[i]} , 3\delta/2 ) }$.
Under the additional assumption $m_5\in 2\mathbb{N} +1$, we have  $\partial_t^{\iota} D_{x}^{\mathbf{m}}\hat{\psi}_i((\tilde{x},0),t) = 0 $ in $\overline{B_4(\tilde{q}^{[i]} , 3\delta/2 )} \times [0,T]$.

By Proposition \ref{lemma-cons1},
for $j=1,2,\dots,\mathfrak{o}$, $\mathbf{p}\in \mathbb{N}^5$, $\| \mathbf{p}\|_{\ell_1} \le 4l_{\rm{max}}+4$, $p_5 \in 2\mathbb{N}$, we take the vanishing adjustment functions $\varphi_{ q^{[j]}, \mathbf{p} }(x,t)$ to satisfy
\begin{equation*}
	\pp_t \varphi_{ q^{[j]}, \mathbf{p} } =\Delta_x \varphi_{q^{[j]}, \mathbf{p} }
	\mbox{ \ in \ }  \mathbb{R}^{5}_+\times (0,T] ,
	\quad
	-\pp_{x_5} \varphi_{q^{[j]}, \mathbf{p} }=0
	\mbox{ \ on \ } \pp \mathbb{R}^{5}_+ \times (0,T] ,
\quad
\varphi_{q^{[j]},\mathbf{p}}(x,0)
=
\varphi_{q^{[j]},\mathbf{p},0}(x)
\mbox{ \ in \ }  \mathbb{R}^{5}_+ ,
\end{equation*}
and the following properties hold:
\begin{enumerate}
\item
$\varphi_{q^{[j]}, \mathbf{p},0}(x)$
is smooth in $\overline{\mathbb{R}_+^5}$ and $\varphi_{q^{[j]}, \mathbf{p},0}(x) = 0$ in $\overline{\mathbb{R}^{5}_+  \backslash B_5^+(q^{[j]} ,2\delta)}$.
	
\item
$\partial_t^\iota D_{x}^{\mathbf{m}} \varphi_{q^{[j]}, \mathbf{p} }((\tilde{x},0),t) = 0$ for $\tilde{x}\in \mathbb{R}^{4}$, $t\in [0,T]$, and $\iota\in \mathbb{N}$, $\mathbf{m}\in \mathbb{N}^5$, $m_5\in 2\mathbb{N}+1$.

\item
$D_x^{\mathbf{m}} \varphi_{q^{[j]} , \mathbf{p}}(q^{[k]},T) = \delta_{\mathbf{p}, \mathbf{m}} \delta_{q^{[j]}, q^{[k]}}$ for $\mathbf{m}\in \mathbb{N}^5$, $\| \mathbf{m}\|_{\ell_1} \le 4 l_{\rm{max}} +4$, $k=1,2,\dots,\mathfrak{o}$.

\item
$\|\partial_t^{\iota} D_x^{\mathbf{m}} \varphi_{ q^{[j]}, \mathbf{p}}   \|_{L^\infty(\mathbb{R}_+^5 \times [0,T] ) } \lesssim 1$ for $\iota\in\mathbb{N}$, $\mathbf{m}\in \mathbb{N}^5$, $2\iota + \| \mathbf{m} \|_{\ell_1} \le 4l_{\rm{max}}+4$.

\end{enumerate}
We take
\begin{equation}\label{qd23Dec30-3}
\psi_{\mathfrak{o}+1}^{(2)} =
\sum_{j=1}^{\mathfrak{o}} \sum_{ \mathbf{p}\in \mathbb{N}^5, \| \mathbf{p}\|_{\ell_1} \le 4l_{\rm{max}}+4, p_5\in 2 \mathbb{N}}
-\left(  D_x^{\mathbf{p} }
\hat{\psi}_j \right)(q^{[j]},T)
\varphi_{ q^{[j]}, \mathbf{p} }(x,t).
\end{equation}
Then for all $i=1,2,\dots, \mathfrak{o} $, $\mathbf{m}\in \mathbb{N}^5$, $\| \mathbf{m}\|_{\ell_1} \le 4l_{\rm{max}}+4$, $m_5 \in 2\mathbb{N}$, we have $D_x^{\mathbf{m} } \big( \hat{\psi}_i + \psi_{\mathfrak{o}+1}^{(2)} \big)(q^{[i]},T) = 0$. One using \eqref{esti-8.37} and the support of $h_{1,i}$ in \eqref{qd24Feb21-1}, it follows that
\begin{equation*}
\partial_t^{\iota} D_x^{\mathbf{m} } \big( \hat{\psi}_i + \psi_{\mathfrak{o}+1}^{(2)} \big)(q^{[i]},T) = \Delta_x^{\iota} D_x^{\mathbf{m} } \big( \hat{\psi}_i + \psi_{\mathfrak{o}+1}^{(2)} \big)(q^{[i]},T) = 0
\mbox{ \ for \ }
\iota\in \mathbb{N}, \mathbf{m}\in \mathbb{N}^5, 2\iota+ \| \mathbf{m}\|_{\ell_1} \le 4l_{\rm{max}}+4, m_5 \in 2\mathbb{N}.
\end{equation*}
Besides, we have
\begin{equation*}
\begin{aligned}
&
\partial_t^{\iota} D_x^{\mathbf{m} } \big( \hat{\psi}_i + \psi_{\mathfrak{o}+1}^{(2)} \big)(q^{[i]},T)  = 0
\mbox{ \ for \ }
\iota\in \mathbb{N}, \mathbf{m}\in \mathbb{N}^5, m_5 \in 2\mathbb{N}+1;
\\
&
\big|\partial_t^{\iota} D_x^{\mathbf{m} } \big( \hat{\psi}_i + \psi_{\mathfrak{o}+1}^{(2)} \big) \big| \lesssim e^{-\frac{9 \delta^2}{22 T} }
\mbox{ \ for \ }
\iota\in \mathbb{N}, \mathbf{m}\in \mathbb{N}^5, 2\iota + \| \mathbf{m} \|_{\ell_1} \le 4l_{\rm{max}}+4
\mbox{ \ in \ }
\overline{B_5^+(q^{[i]} , 3\delta/2 ) } \times [0,T].
\end{aligned}
\end{equation*}
Applying Taylor expansion to $\hat{\psi}_i + \psi_{\mathfrak{o}+1}^{(2)}$ at the point $(q^{[i]},T)$, for $(x,t) \in \overline{B_5^+(q^{[i]} , 3\delta/2 ) } \times [0,T]$, we have
\begin{equation}\label{qd23Dec30-4}
\big|\big(  \hat{\psi}_i + \psi_{\mathfrak{o}+1}^{(2)} \big)(x,t)\big|
\lesssim
	e^{-\frac{9 \delta^2}{22 T} }
	\left[ (T-t)^{2 l_{\rm{max}}+2}+|x-q^{[i]}|^{2l_{\rm{max}}+2}
	\right]
	\lesssim e^{-\frac{9 \delta^2}{22 T} } (T-t)^{l_i+1} \langle |z^{[i]}|^{2l_i+2} \rangle .
\end{equation}

By \eqref{Gk+12estimate}, $\left| \mathcal{G}_{2,\mathfrak{o}+1}
\right|
\le 1 $. Recall the equation of $\psi^{(1)}_{\mathfrak{o}+1}$ in \eqref{esti-8.37}.
By the Green's formula \eqref{qd2023Dec03-3}, we have
$| \psi^{(1)}_{\mathfrak{o}+1} | \lesssim t^{\frac{1}{2}}$.
By \eqref{qd23Dec30-3}, $|\psi_{\mathfrak{o}+1}^{(2)}| \lesssim e^{-\frac{9 \delta^2}{22 T} }$. By \eqref{outer200}, $|\psi_k|
\lesssim R^{-\frac{1}{4}}$, $k=1,2,\dots, \mathfrak{o}$. Hence,
\begin{equation*}
\big| \hat{\psi}_i + \psi_{\mathfrak{o}+1}^{(2)} \big|
\lesssim R^{-\frac{1}{4}}
\mbox{ \ in \ } \overline{\mathbb{R}_+^5} \times (0,T).
\end{equation*}
Combining \eqref{qd23Dec30-4}, we get
\begin{equation}\label{qd23Dec30-6}
	\big| \hat{\psi}_i + \psi_{\mathfrak{o}+1}^{(2)} \big|
	\lesssim e^{-\frac{9 \delta^2}{22 T} } (T-t)^{l_i+1} \langle |z^{[i]}|^{2l_i+2} \rangle \1_{|x-q^{[i]}|\le 3\delta/2}
+
R^{-\frac{1}{4}}
\1_{|x-q^{[i]}| > 3\delta/2} .
\end{equation}

Integrating \eqref{outer200}, \eqref{qd23Dec30-6}, and arbitrary choice of $i=1,2,\dots, \mathfrak{o}$, we complete the proof.
\end{proof}

\begin{lemma}\label{lem-8.12}
Under all assumptions in Lemma \ref{G1G2est-lem},
then for $\mathcal{T}^{\rm {out}}[ \psi,\bm{\phi},\bm{\mu},\bm{\xi}]$ given in \eqref{qd24Jan01-2},
	there exists a constant $C>0$ independent of $T, \delta_0$ such that
	\begin{equation}\label{barrier20}
		|\mathcal{T}^{\rm {out}}[ \psi,\bm{\phi},\bm{\mu},\bm{\xi}](x,t)|\le C  \max\{ R^{-\frac{1}{4}} , \delta_0^{\frac{5}{3}} \} \langle x\rangle^{-\frac{7}{3}}
		\quad
\mbox{ \ for \ }
|x|\ge 99  \max_{i = 1,2,\dots, \mathfrak{o}} |q^{[i]}|
.
	\end{equation}
\end{lemma}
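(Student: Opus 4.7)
The plan is to split $\mathcal{T}^{\mathrm{out}}$ defined in \eqref{qd24Jan01-2} into four pieces according to where the sources sit: (i) $\int_0^t\!\!\int G_5\,\mathcal{G}_{1,i}$ for each $i$, (ii) $\int_0^t\!\!\int G_5\,\mathcal{G}_{2,i}$ for each $i$, (iii) $\int_0^t\!\!\int G_5\,\mathcal{G}_{2,\mathfrak{o}+1}$, and (iv) the initial-data convolution against $G_5(\cdot,\cdot,z,0)$. The crucial geometric observation is that when $|x|\ge 99\max_j|q^{[j]}|$ one has $|x-q^{[j]}|\ge |x|/2$ for every $j$, so all sources appearing in (i), (ii), (iv) are supported in small neighborhoods of the points $q^{[j]}$ and therefore at distance $\gtrsim\langle x\rangle$ from the evaluation point, whereas the source in (iii) is supported away from every $q^{[j]}$ and carries its own algebraic decay $\langle\tilde{z}\rangle^{-10/3}$ by \eqref{Gk+12estimate}.

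For the near-blow-up pieces (i), (ii), (iv), I will use the pointwise bound $\int_0^\infty G_5(x,t,z,s)\,ds\lesssim |x-z|^{-3}$, which follows from $\int_0^\infty(4\pi\tau)^{-5/2}e^{-|x-z|^2/(4\tau)}d\tau\sim|x-z|^{-3}$ applied to each summand of \eqref{qd23Dec22-1}. On the supports this factor is $\lesssim\langle x\rangle^{-3}$, so the remaining task is to show that the space-time integrals of the amplitudes are $o(R^{-1/4})$. Using Lemma \ref{G1G2est-lem}, the inner piece $R^{-1/4}\mu_i^{-2}e^{-l_is}\langle y^{[i]}\rangle^{-9/4}\mathbf{1}_{|y^{[i]}|\le 2R}$ transforms under $x\mapsto y^{[i]}$ with Jacobian $\mu_i^5$ and spatial integral of order $R^{11/4}$, giving pointwise contribution of order $R^{5/2}(T-t)^{7l_i+6}$, which integrates in time to $\lesssim R^{5/2}T^{7l_i+7}$; the annular pieces supported in $\{\delta\le|x^{[i]}|\le 2\delta\}$ have $O(1)$ amplitudes on $O(1)$-volume and contribute $O(T)$; the $\mathcal{G}_{2,i}$ pieces are handled analogously. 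For (iv), Proposition \ref{prop-phi0} provides $|\bm{b}_i|\le C|\ln T|T^{5l_i/3+1/2}$ with compactly supported $\bm{\tilde{e}}_i$, and $|C_{q^{[i]},\mathbf{p}}|\le Ce^{-9\delta^2/(22T)}$, so the heat-kernel convolution is exponentially small. Altogether (i)+(ii)+(iv) contribute at most $CT^{1/2}\langle x\rangle^{-3}$, which is $\ll R^{-1/4}\langle x\rangle^{-7/3}$ on the region of interest.

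The essential piece is (iii). Combining the boundary version $\int_0^\infty G_5(x,t,(\tilde{z},0),s)\,ds\lesssim |x-(\tilde{z},0)|^{-3}$ with the bound $|\mathcal{G}_{2,\mathfrak{o}+1}|\le\delta_0^{5/3}\langle\tilde{z}\rangle^{-10/3}$ from \eqref{Gk+12estimate}, piece (iii) is dominated by
$$C\delta_0^{5/3}\int_{\mathbb{R}^4}|x-(\tilde{z},0)|^{-3}\,\langle\tilde{z}\rangle^{-10/3}\,d\tilde{z}.$$
Splitting the integration into $\{|\tilde{z}|\le|x|/2\}$, $\{|x|/2<|\tilde{z}|\le 2|x|\}$, and $\{|\tilde{z}|>2|x|\}$, and invoking the elementary balance $-3-\tfrac{10}{3}+4=-\tfrac{7}{3}$ in $\mathbb{R}^4$, each region contributes $\lesssim\langle x\rangle^{-7/3}$. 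Summing over the four pieces yields \eqref{barrier20} with the stated constant $\max\{R^{-1/4},\delta_0^{5/3}\}$.

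The hard part will be the bookkeeping for (i) and (ii): Lemma \ref{G1G2est-lem} splits the amplitudes into several regimes with distinct structure, and in each regime the singular time factor $\mu_i^{-2}e^{-l_is}\sim(T-t)^{-3l_i-4}$ must be shown to be exactly absorbed by the Jacobian and the measure of the corresponding support, so that the net contribution is truly subdominant to the $\delta_0^{5/3}\langle x\rangle^{-7/3}$ from (iii). Once these regime-by-regime integrations are carried out, the far-field decay in (iii) gives the advertised $\langle x\rangle^{-7/3}$ rate.
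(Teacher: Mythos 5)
Your route is genuinely different from the paper's and, if the bookkeeping you defer is carried out, it is sound. The paper instead constructs a barrier: by Lemma \ref{lemma-barrier1}, $P(x)=\big|(\tilde{x},x_5+1+\vartheta_1|\tilde{x}|)\big|^{-7/3}$ satisfies $-\pp_{x_5}P\sim\langle x\rangle^{-10/3}$ and $\Delta P\lesssim-\langle x\rangle^{-13/3}$, so $P$ is a supersolution in $\mathbb{R}^5_+\setminus B_5(0,r_0)$ with the Neumann boundary data you must dominate. Since $\mathcal{G}_1\equiv0$ and $\mathcal{T}^{\rm out}(\cdot,0)\equiv0$ outside $B_5(0,r_0)$, the only sources that matter in the exterior region are the boundary term (which by \eqref{Gk+12estimate} is $\le\delta_0^{5/3}\langle\tilde{x}\rangle^{-10/3}$) and the lateral values on $\partial B_5(0,r_0)\cap\mathbb{R}^5_+$ (which Proposition \ref{prop-phi0} bounds by $R^{-1/4}$); one comparison-principle step then finishes the proof. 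This completely bypasses your pieces (i), (ii), (iv). Your direct Duhamel estimate of piece (iii) via the convolution balance $-3-\tfrac{10}{3}+4=-\tfrac{7}{3}$ is correct and essentially equivalent to applying the barrier; but your pieces (i) and (ii) force you to reprocess every case of Lemma \ref{G1G2est-lem}, which is precisely the labor the barrier argument was introduced to avoid. One small inaccuracy worth flagging: your claim that (i)+(ii)+(iv) contribute at most $CT^{1/2}\langle x\rangle^{-3}$ is not literally correct, because the term $\big(e^{-\frac{5}{3}l_is}|\tilde{z}^{[i]}|^{\frac{10}{3}l_i}+\delta_0^{5/3}\big)\1_{e^{l_is/(2l_i+2)}<|\tilde{z}^{[i]}|\le 8\delta e^{s/2}}$ in \eqref{Gi2estimate} carries a bare $\delta_0^{5/3}$ prefactor with no compensating power of $T$; nevertheless its support stays near $q^{[i]}$, so the Green's function still supplies $\langle x\rangle^{-3}\le\langle x\rangle^{-7/3}$ and the contribution is absorbed into $C\,\delta_0^{5/3}\langle x\rangle^{-7/3}$, preserving the stated conclusion.
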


\begin{proof}
	
Recall the equation \eqref{outer-1} of $\psi$, and $\mathcal{G}_{1} =\sum_{i=1}^{\mathfrak{o}} \mathcal{G}_{1,i}$, $\mathcal{G}_{2}=\sum_{i=1}^{\mathfrak{o}+1} \mathcal{G}_{2,i}$.
Denote $r_0 = 99  \max_{i = 1,2,\dots, \mathfrak{o}} |q^{[i]}| $.
By Lemma \ref{G1G2est-lem}  and Proposition \ref{prop-phi0},
\begin{equation*}
\begin{aligned}
&
\mathcal{G}_{1},
\mathcal{T}^{\rm {out}}[ \psi,\bm{\phi},\bm{\mu},\bm{\xi}](x,0) \equiv 0 \mbox{ \ for \ }
x\in \mathbb{R}_+^5\backslash B_5(0,r_0),
\quad
| \mathcal{G}_{2} | =
| \mathcal{G}_{2,\mathfrak{o}+1} |
\le
\delta_0^{\frac{5}{3}}
\langle \tilde{x} \rangle^{-\frac{10}{3}}
\mbox{ \ for \ }
\tilde{x} \in \mathbb{R}^4\backslash B_4(0,r_0) ,
\\
&
|\mathcal{T}^{\rm {out}}[ \psi,\bm{\phi},\bm{\mu},\bm{\xi}](x,t)| \lesssim R^{-\frac{1}{4}}
\mbox{ \ for \ }
x\in \mathbb{R}_+^5 \cap \partial B_5(0,r_0) .
\end{aligned}
\end{equation*}
By Lemma \ref{lemma-barrier1}, set $P(x)= \big|(\tilde{x},x_5+1+\vartheta_1 |\tilde{x}|)\big|^{-\frac{7}{3}}$ with a constant $\vartheta_1>0$ sufficiently small. Then $P(x)$ satisfies
$
-\pp_{x_5}P
\sim
\langle x \rangle^{-\frac{10}{3}} $,
$
 \Delta P \lesssim  - \langle x \rangle^{-\frac{13}{3}} $.
Thus, $C_1 \max\{ R^{-\frac{1}{4}} , \delta_0^{\frac{5}{3}} \} P(x)$ with a large constant $C_1>0$ is a barrier function of \eqref{outer-1} in $\mathbb{R}_+^5\backslash B_5(0,r_0)$.
\end{proof}	

\section{Completion of the proof of Theorem \ref{main}}\label{section-gluing}

Recall $T_{\sigma_0} =T-\sigma_0, \sigma_0\in (0, T)$.
In order to avoid the difficulties of the singularity of the right-hand sides at $t=T$ (see \eqref{Gi1estimate} for example) and
achieve the compactness for the mappings by the regularity theory,
we solve the outer problem in $B_5^+(0,\sigma_0^{-1}) \times (0, T_{\sigma_0})$,
the orthogonal equations in $(0, T_{\sigma_0})$, and the inner problems in $B_5^+(0,2R) \times (0, T_{\sigma_0})$ in order. Then we get a solution $u_{\sigma_0}$ of \eqref{fractextend} in $B_5^+(0,\sigma_0^{-1}) \times (0,T_{\sigma_0})$. Finally, we take $\sigma_0\downarrow 0$ and use the compactness argument to conclude Theorem \ref{main}.

\subsection{Solving the outer problem in $B_5^+(0,\sigma_0^{-1}) \times (0,T_{\sigma_0})$}

\begin{lemma}\label{24Jan01-5-lem}
	
	Suppose that $\delta_0=R^{-\frac{1}{5}}$, $\sigma_0\in (0,T)$,
$ |\mu_{i,1}| \le  (9 C_{\mu_{i,0}})^{-1} (T-t)^{2l_i+2},
|\dot{\mu}_{i,1}| \le  (9 C_{\mu_{i,0}})^{-1}
		 (T-t)^{2l_i+1}  $ in $(0,T_{\sigma_0})$
for $i=1,2,\dots,\mathfrak{o}$, $\bm{\xi}, \dot{\bm{\xi}}$ satisfy \eqref{outer-assumption2} in $(0,T_{\sigma_0})$,
 $\bm{\phi} \in B_{{\rm in},\sigma_0}$,
	then for $T\ll 1$, there exists
	$\psi_{\sigma_0} = \psi_{\sigma_0}[\bm{\phi},\bm{\mu}_{,1},\bm{\xi}] \in \mathcal{X}_{\delta_0,\sigma_0} $
	solving the outer problem \eqref{cyl-outer} in $B_5^+(0,\sigma_0^{-1}) \times (0,T_{\sigma_0})$ with an initial value $\psi_{\sigma_0}(x,0) \in C_c^{\infty}(\overline{\mathbb{R}_+^5})$  satisfying
	\begin{equation*}
		\psi_{\sigma_0}(x,0) =\sum\limits_{i=1}^{\mathfrak{o}}
		{\bm{b}}_{i,\sigma_0}
		\cdot {\bm{ \tilde{e}} }_i\big( T^{-\frac{1}{2}}(x-q^{[i]}) \big)+ \sum_{i=1}^{\mathfrak{o}} \sum_{ \mathbf{p}\in \mathbb{N}^5, \| \mathbf{p}\|_{\ell_1} \le 4l_{\rm{max}}+4, p_5\in 2 \mathbb{N}}
		C_{q^{[i]}, \mathbf{p}, \sigma_0} \varphi_{ q^{[i]}, \mathbf{p},0}(x)  ,
	\end{equation*}
	where
	${\bm{b}}_{i,\sigma_0} = {\bm{b}}_{i,\sigma_0}[ \psi_{\sigma_0} , \bm{\phi} ,\bm{\mu}_{,1}, \bm{\xi} ] $ are constant vectors and $| {\bm{b}}_{i,\sigma_0} | \le C |\ln T| T^{ \frac{5}{3}l_i+\frac{1}{2} }$ with a constant $C>0$ independent of $T, \sigma_0$; ${\bm{ \tilde{e}} }_i$ are given in \eqref{24Dec01-1};
 $C_{q^{[i]}, \mathbf{p},\sigma_0} = C_{q^{[i]}, \mathbf{p},\sigma_0} [ \psi_{\sigma_0} , \bm{\phi} ,\bm{\mu}_{,1}, \bm{\xi} ]$ are constants
and $|C_{q^{[i]}, \mathbf{p},\sigma_0}| \le C e^{-\frac{9 \delta^2}{22 T} }$;
$\varphi_{q^{[i]}, \mathbf{p},0 }(x)\in C_c^\infty(\overline{\mathbb{R}_+^5})$ and $\varphi_{q^{[i]}, \mathbf{p},0 }(x) = 0$ in $\overline{\mathbb{R}^{5}_+  \backslash B_5^+(q^{[i]} ,2\delta)}$.
	
For any compact set $K\subset \overline{\mathbb{R}_+^5} \times [0,T)$, there exists $\sigma_{K}\in (0,T)$ sufficiently small such that for all $\sigma_0\in (0,\sigma_{K})$, $\psi_{\sigma_0}$ are well-defined and uniformly H\"older continuous in $K$.

Moreover, there exists a universal constant $ \varsigma_0 \in (0,1)$ independent of any parameters such that if $\varsigma_1 \in (0,\varsigma_0]$, there exists a constant $C_1>0$ independent of $T, \sigma_0$ such that for all
	\begin{equation*}
		t_* \in (0,T_{\sigma_0}),
		\quad x_*\in \overline{B_5^+(q^{[j]} , R^{-1} (T-t_*)^{1/2 } ) },
		\quad
  j=1,2,\dots,\mathfrak{o},
  \quad
		\rho \in (0,R^{-1} (T-t_*)^{1/2 } ] ,
	\end{equation*}
	we have
	\begin{equation}\label{qd24Jan09-2}
		\left[ \psi_{\sigma_0} \right]_{C^{\varsigma_1,\varsigma_1/2} \big( \overline{B_5^+(x_*,\rho) }
			\times \big( \max\{0, t_*-\rho^2 \}, t_* \big]
			\big) }
		\le C_1
		\rho^{-\varsigma_1}
		\left[ \delta_0 (T-t_*)^{l_j}
		+ \rho^2 R^{-\frac{1}{4}} (T-t_*)^{-3 l_j -4}
		\right].
	\end{equation}
	
\end{lemma}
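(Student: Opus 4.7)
I would solve the outer problem by a Schauder fixed-point argument applied to the operator $\mathcal{F}: \psi\mapsto \mathcal{T}^{\rm out}[\psi,\bm{\phi},\bm{\mu},\bm{\xi}]$ defined in \eqref{qd24Jan01-2}, where, for each admissible $\psi$, the free parameters $\bm{b}_i$ and $\varphi_0$ in the initial data are selected exactly as in the algorithm underlying Proposition~\ref{prop-phi0} (namely $\bm{b}_i=\bm{d}_i(s_0)$ from \eqref{gi10}, chosen to enforce the orthogonality \eqref{qd2023Dec17-1}, together with $\varphi_0=\psi_{\mathfrak{o}+1}^{(2)}(\cdot,0)$ from \eqref{qd23Dec30-3} to kill the derivatives at each $q^{[i]}$ up to order $4l_{\rm{max}}+4$). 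Remark~\ref{qd24Feb23-1rk} allows us to freeze $\bm{\mu}$, $\dot{\bm{\mu}}$, $\bm{\xi}$, $\dot{\bm{\xi}}$ as independent data throughout this step. Under the standing assumptions the ansatz \eqref{outer-assumption2} holds in $(0,T_{\sigma_0})$, so Lemmas \ref{G1G2est-lem}, \ref{g1g2-L2rho-lem}, \ref{Phii2norm}, \ref{greenestimatei}, and Proposition~\ref{Phii2pointwise1} apply uniformly.

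\textbf{Step 1 (self-map).} By Proposition~\ref{prop-phi0}, for every $\psi\in\mathcal{X}_{\delta_0,\sigma_0}\subset\mathcal{X}_{\delta_0,0}$ (extending by the same weight) the image $\mathcal{F}(\psi)$ is controlled pointwise by $C R^{-1/4}$ times the weight defining $\|\cdot\|_{\mathcal{X},\sigma_0}$. Because $\delta_0=R^{-1/5}$ with $R=|\ln T|$, we have $CR^{-1/4}\le\delta_0$ for $T\ll 1$, hence $\mathcal{F}(\mathcal{X}_{\delta_0,\sigma_0})\subset\mathcal{X}_{\delta_0,\sigma_0}$. The initial data bounds on $\bm{b}_{i,\sigma_0}$ and $C_{q^{[i]},\mathbf{p},\sigma_0}$ are read off directly from Proposition~\ref{prop-phi0}.

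\textbf{Step 2 (continuity and compactness).} I would endow $\mathcal{X}_{\delta_0,\sigma_0}$ with the topology of uniform convergence on compact subsets of $\overline{B_5^+(0,\sigma_0^{-1})}\times[0,T_{\sigma_0}]$. Continuity of $\mathcal{F}$ in this topology follows from the global H\"older continuity of the nonlinearity $u\mapsto |u|^{2/3}u$ (exponent $5/3>1$), together with the fact that the coefficients $\bm{d}_i$ and the correction $\psi_{\mathfrak{o}+1}^{(2)}(\cdot,0)$ depend linearly (and continuously in $L^\infty$) on the data $\mathcal{G}_{1,i}$, $\mathcal{G}_{2,i}$. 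For compactness, apply Lemma~\ref{24Jan08-1-lem} to $\mathcal{F}(\psi)$ on any compactum of $\overline{B_5^+(0,\sigma_0^{-1})}\times[0,T_{\sigma_0}]$; the right-hand side bounds from Lemma~\ref{G1G2est-lem} and the $\mathcal{X}$-bound on $\psi$ give uniform $C^{\alpha,\alpha/2}$ estimates, and Arzel\`a–Ascoli provides precompactness. Schauder then yields a fixed point $\psi_{\sigma_0}\in\mathcal{X}_{\delta_0,\sigma_0}$ with the advertised initial value structure.

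\textbf{Step 3 (uniform H\"older continuity and \eqref{qd24Jan09-2}).} The bounds produced in Steps~1--2 depend on $\sigma_0$ only through the size of the domain; hence on any fixed compact $K\subset\overline{\mathbb{R}_+^5}\times[0,T)$ one gets, for all $\sigma_0$ small enough, a uniform H\"older modulus for $\psi_{\sigma_0}$ by invoking Lemma~\ref{24Jan08-1-lem} with a uniform right-hand side majorant from Lemma~\ref{G1G2est-lem}. For the refined estimate \eqref{qd24Jan09-2}, I would apply \eqref{qd24Jan08-2} of Lemma~\ref{24Jan08-1-lem} to $\psi_{\sigma_0}$ viewed as the solution of \eqref{outer-1} (so $\beta^0\equiv 0$, and the boundary datum is the full $\mathcal{G}_2$), at the point $(x_*,t_*)$ at scale $\rho\le R^{-1}(T-t_*)^{1/2}$. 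In $B_5^+(x_*,2\rho)\times(\max\{0,t_*-4\rho^2\},t_*]$ we have $|x-q^{[j]}|\le 3R^{-1}(T-t_*)^{1/2}$, so $|z^{[j]}|\lesssim R^{-1}\le 1$ and the membership $\psi_{\sigma_0}\in\mathcal{X}_{\delta_0,\sigma_0}$ gives $\|\psi_{\sigma_0}\|_{L^\infty}\lesssim \delta_0(T-t_*)^{l_j}$; Lemma~\ref{G1G2est-lem} in the region $|y^{[j]}|\le 2R$ gives $\|\mathcal{G}_1\|_{L^\infty}+\rho^{-1}\|\mathcal{G}_2\|_{L^\infty}\lesssim R^{-1/4}(T-t_*)^{-3l_j-4}$, while the initial-data contribution is either vacuous (if $\sqrt{t_*}>4\rho$) or absorbed into $\delta_0(T-t_*)^{l_j}$ using Proposition~\ref{prop-phi0}. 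Plugging these into \eqref{qd24Jan08-2} with $\alpha=\varsigma_1\in(0,\alpha_0]$ yields \eqref{qd24Jan09-2}.

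\textbf{Main obstacle.} The delicate point is Step~2: the fixed-point map simultaneously adjusts the initial data (through $\bm{b}_i=\bm{d}_i(s_0)$ and $\varphi_0$), so one must verify that these corrections depend continuously on $\psi$ in the chosen topology while still preserving the $\mathcal{X}$-bound with a factor strictly smaller than~$1$. The choice $\delta_0=R^{-1/5}$ versus the sharper $R^{-1/4}$ in Proposition~\ref{prop-phi0} leaves precisely the logarithmic cushion needed for the self-map property, and this is what ultimately forces $T$ to be small.
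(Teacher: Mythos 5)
Your plan follows essentially the same route as the paper: fix the map $\mathcal{F}=\mathcal{T}^{\rm out}$ with $\bm{b}_i,\varphi_0$ chosen as in Proposition~\ref{prop-phi0}, verify the self-map property on $\mathcal{X}_{\delta_0,\sigma_0}$, establish compactness from uniform H\"older estimates (possible because $T_{\sigma_0}<T$ keeps the source terms uniformly bounded on the fixed bounded domain), invoke Schauder, and then run Lemma~\ref{24Jan08-1-lem} with $\beta^0\equiv 0$ for \eqref{qd24Jan09-2}.

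There is one concrete slip in your Step~3, in the handling of the boundary contribution in the scaling lemma. You assert that $\rho^{-1}\|\mathcal{G}_2\|_{L^\infty}\lesssim R^{-1/4}(T-t_*)^{-3l_j-4}$. But Lemma~\ref{G1G2est-lem} on the parabolic cylinder of radius $\rho$ only gives $\|\mathcal{G}_2\|_{L^\infty}\lesssim R^{-1/4}(T-t_*)^{-l_j-2}$, so your claim would require $\rho\gtrsim (T-t_*)^{2l_j+2}$, which fails for small $\rho$ (the statement must hold for all $\rho\in(0,R^{-1}(T-t_*)^{1/2}]$). The paper keeps the boundary term as $\rho\|\mathcal{G}_2\|\lesssim\rho R^{-1/4}(T-t_*)^{-l_j-2}$ and absorbs it into the two terms on the right of \eqref{qd24Jan09-2} via the AM\nobreakdash-GM inequality
\begin{equation*}
\delta_0(T-t_*)^{l_j}+\rho^2 R^{-1/4}(T-t_*)^{-3l_j-4}\ \ge\ 2(\delta_0 R^{-1/4})^{1/2}\,\rho\,(T-t_*)^{-l_j-2}\ \ge\ \rho R^{-1/4}(T-t_*)^{-l_j-2},
\end{equation*}
using $\delta_0=R^{-1/5}$ so that $2(\delta_0R^{-1/4})^{1/2}\ge R^{-1/4}$; this is exactly the logarithmic cushion you mention at the end but did not deploy in the right place. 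Relatedly, the initial-data contribution (relevant when $\sqrt{t_*}\le 4\rho$) must be bounded explicitly: one needs $\rho^\alpha[\mathcal{T}^{\rm out}(\cdot,0)]_{C^\alpha(B_5^+(x_*,2\rho))}\lesssim R^{-1}|\ln T|T^{5l_j/3+1/2}$, which comes from the explicit structure $\bm{b}_{j,\sigma_0}\cdot\bm{\tilde e}_j(T^{-1/2}(x-q^{[j]}))+\sum C_{q^{[j]},\mathbf{p},\sigma_0}\varphi_{q^{[j]},\mathbf{p},0}(x)$ and the smallness of $\bm{b}_{j,\sigma_0}$ and $C_{q^{[j]},\mathbf{p},\sigma_0}$; your phrase ``absorbed into $\delta_0(T-t_*)^{l_j}$'' is correct in spirit but the bound has to be checked, and it uses $t_*\le 16R^{-2}T$ together with $R^{-1}\ll 1$ precisely as in the paper. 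Apart from these quantitative details, your argument is sound and matches the paper's proof.
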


\begin{remark}

Recall $\mathcal{H}_{2,i}$ given in \eqref{24Jan04-1}. The quantitative H\"older estimate of $\psi_{\sigma_0}$ is required for the application of
Proposition \ref{prop-23Oct24-1} with $n=5$ in solving the inner problems.
\end{remark}

\begin{proof}

Given a function $f$ defined in a domain of $\mathbb{R}^5_+\times (0,T)$, $(0,T)$ or $\ B_5^+(0,2R) \times  (0,T)$,
denote $f_{\star 1}$, $f_{\star 2}$ or $f_{\star 3}$ as the zero extension of $f$ in $ \mathbb{R}^5_+\times (0,T)$, $(0,T)$ or $\ B_5^+(0,2R) \times (0,T)$ respectively. Denote $\xi^{[i]}_{\star \tilde{q}^{[i]}}(t) = \xi^{[i]}(t) \1_{t\in (0, T_{\sigma_0})} + \tilde{q}^{[i]} \1_{t\in [T_{\sigma_0}, T)}$ and $\bm{\xi}_{\star \bm{\tilde{q} }  } = (\xi^{[1]}_{\star  \tilde{q}^{[1]}}, \xi^{[2]}_{\star \tilde{q}^{[2]}},\dots, \xi^{[\mathfrak{o}]}_{\star  \tilde{q}^{[\mathfrak{o}]}})$. Denote $\bm{A} = \big(\bm{\mu}_{,0} + (\bm{\mu}_{,1})_{\star 2}$, $\dot{\bm{\mu}}_{,0} + (\dot{\bm{\mu}}_{,1})_{\star 2}$, $\bm{\xi}_{\star \bm{\tilde{q} } }$, $\dot{\bm{\xi}}_{*2} \big)$. It follows that $\bm{A}$
satisfies \eqref{outer-assumption2} in $(0,T)$.

By Remark \ref{qd24Feb23-1rk}, we make a distinction between $\bm{\mu} (\bm{\xi})$ and $\dot{\bm{\mu}} (\dot{\bm{\xi}})$ in Proposition \ref{prop-phi0} and Lemma \ref{lem-8.12}.
For $g \in \mathcal{X}_{\delta_0,\sigma_0} $,
we denote $\mathcal{T}^{\rm {out}}[g_{\star 1}] = \mathcal{T}^{\rm {out}}[ g_{\star 1} , \bm{\phi}_{\star 3} ,\bm{A} ]$ for brevity in this proof and the following properties hold
	\begin{equation*}
		\begin{aligned}
			&
			|\mathcal{T}^{\rm {out}}[g_{\star 1}](x,t)|
			\lesssim
			\max\{ R^{-\frac{1}{4}} , \delta_0^{\frac{5}{3}} \} \langle x\rangle^{-\frac{7}{3}}   \1_{\cap_{i=1}^{\mathfrak{o}} \big\{ |z^{[i]} | > e^{\frac{l_i s}{2l_i+2} }
				\big\} }
			+
			R^{-\frac{1}{4}}  \sum_{i=1}^{\mathfrak{o}}
			(T-t)^{l_i}
			\langle |z^{[i]}|^{2l_i+2} \rangle
			\1_{ |z^{[i]} | \le e^{\frac{l_i s}{2l_i+2} } }
			,
			\\
			&
			 |D_x^{\mathbf{m}} \mathcal{T}^{\rm {out}}[ g_{\star 1} ](x,0)| \lesssim_{\mathbf{m}}
			|\ln T| T^{ \frac{5}{3}l_i+\frac{1}{2} -\frac{\| \mathbf{m}\|_{\ell_1}}{2} } \sum_{i=1}^{\mathfrak{o}} \1_{|x-q^{[i]}| \le 2\delta}
			\quad \mbox{ \ for \ } \mathbf{m} \in \mathbb{N}^5 ,
			\\
			&  \mathcal{T}^{\rm {out}}[ g_{\star 1}](x,0)=\sum\limits_{i=1}^{\mathfrak{o}}{\bm{b}}_i\cdot {\bm{ \tilde{e}} }_i\big( T^{-\frac{1}{2}}(x-q^{[i]}) \big)+ \sum_{i=1}^{\mathfrak{o}} \sum_{ \mathbf{p}\in \mathbb{N}^5, \| \mathbf{p}\|_{\ell_1} \le 4l_{\rm{max}}+4, p_5\in 2 \mathbb{N}}
			C_{q^{[i]}, \mathbf{p}}
			\varphi_{ q^{[i]}, \mathbf{p},0 }(x) ,
		\end{aligned}
	\end{equation*}
	where
	${\bm{b}}_i = {\bm{b}}_i[ g_{\star 1} , \bm{\phi}_{\star 3} , \bm{A}] $ are constant vectors and  $C_{q^{[i]}, \mathbf{p}} = C_{q^{[i]}, \mathbf{p}} [  g_{\star 1} , \bm{\phi}_{\star 3} ,\bm{A}]$ are constants, which satisfy
	$| {\bm{b}}_i | \lesssim |\ln T| T^{ \frac{5}{3}l_i+\frac{1}{2} }$ and $|C_{q^{[i]}, \mathbf{p}} | \lesssim e^{-\frac{9 \delta^2}{22 T} }$.   	
For $\delta_0=R^{-\frac{1}{5}}$ and $T \ll 1$, then
	$\mathcal{T}^{\rm {out}}[g_{\star 1}] \in \mathcal{X}_{\delta_0, 0} \cap \mathcal{X}_{\delta_0,\sigma_0}$.
	
By $\mathcal{G}_1=\sum_{i=1}^{\mathfrak{o}} \mathcal{G}_{1,i}, \mathcal{G}_2=\sum_{i=1}^{\mathfrak{o} +1} \mathcal{G}_{2,i}$, Lemma \ref{G1G2est-lem},
$\mathcal{G}_{1}, \mathcal{G}_{2}$ are uniformly bounded for $t\in (0,T_{\sigma_0})$. One combining the uniform boundedness of $ |D_x^{\mathbf{m}} \mathcal{T}^{\rm {out}}[ g_{\star 1} ](x,0)|$ with $\mathbf{m} \in \mathbb{N}^5, \| \mathbf{m}\|_{\ell_1} \le 1 $, by the parabolic regularity theory, $\{ \mathcal{T}^{\rm {out}}[g_{\star 1}] \ | \ g\in \mathcal{X}_{\delta_0,\sigma_0}  \}$ is uniformly H\"older continuous in $\overline{B_5^+(0,\sigma_0^{-1}) } \times [0,T_{\sigma_0}) $.
	Hence, $g \mapsto \mathcal{T}^{\rm {out}}[g_{\star 1}]$ is a compact operator from $\mathcal{X}_{\delta_0,\sigma_0} $ to $\mathcal{X}_{\delta_0,\sigma_0} $.The Schauder fixed-point theorem then yields the existence of a fixed point   $\psi_{\sigma_0} = \mathcal{T}^{\rm {out}}[\psi_{\sigma_0, \star 1 } ]$ in $\overline{B_5^+(0,\sigma_0^{-1})} \times (0, T_{\sigma_0}) $ and $\psi_{\sigma_0} \in \mathcal{X}_{\delta_0,\sigma_0} $, which implies that $\psi_{\sigma_0}$
	solves \eqref{cyl-outer} in $B_5^+(0,\sigma_0^{-1}) \times (0 ,T_{\sigma_0}) $. Set $\bm{b}_{i,\sigma_0}:= {\bm{b}}_i[ \psi_{\sigma_0,\star 1} , \bm{\phi}_{\star 3} , \bm{A}]$ and $C_{q^{[i]}, \mathbf{p},\sigma_0}:= C_{q^{[i]}, \mathbf{p}} [  \psi_{\sigma_0,\star 1} , \bm{\phi}_{\star 3} ,\bm{A}]$. Then the initial value has the desired form.
	
The uniform H\"older continuity of $\psi_{\sigma_0}$ in compact sets of
$\overline{\mathbb{R}_+^5} \times [0,T)$ for $\sigma_0$ sufficiently small is straightforward by the parabolic estimate.
	
Finally, we will give the quantitative H\"older estimate of $\psi_{\sigma_0}$ around the blow-up points.
We always assume $T\ll 1$ to make $B_5^+(0,\sigma_0^{-1})$ sufficiently large to make the following estimates well-defined.
By $\mathcal{G}_1=\sum_{i=1}^{\mathfrak{o}} \mathcal{G}_{1,i}, \mathcal{G}_2=\sum_{i=1}^{\mathfrak{o} +1} \mathcal{G}_{2,i}$, Lemma \ref{G1G2est-lem}, and $\mathcal{T}^{\rm {out}}[\psi_{\sigma_0, \star 1 } ] \in \mathcal{X}_{\delta_0, 0} $, for $j=1,2,\dots, \mathfrak{o}$, we have
	\begin{equation*}
		\begin{aligned}
			&
			|\mathcal{G}_1| \lesssim R^{-\frac{1}{4}} (T-t)^{-3 l_j -4}
			& & 	\mbox{ \ in \ }
			Q_{\mathcal{G}_1, j}:=
			\left\{ (x,t) \ | \ t\in (0,T),
			x\in B_5^+(q^{[j]}, (T-t)^{1/2} )  \right\} ,
			\\
			&
			|\mathcal{G}_2| \lesssim R^{-\frac{1}{4}} (T-t)^{-l_j-2}
			& &
			\mbox{ \ in \ }
			Q_{\mathcal{G}_2, j} :=
			\left\{ (\tilde{x}, t) \ | \
			t\in (0, T),
			\tilde{x}\in
			B_4( \tilde{q}^{[j]}, (T-t)^{1/2})
			\right\}  ,
			\\
			&
			|\mathcal{T}^{\rm {out}}[\psi_{\sigma_0, \star 1 } ]| \lesssim \delta_0 (T-t)^{l_j}
			& &
			\mbox{ \ in \ } Q_{\mathcal{G}_1, j} .
		\end{aligned}
	\end{equation*}
	Recall the definition of $\mathcal{T}^{\rm {out}}[\psi_{\sigma_0, \star 1 } ]$ in \eqref{qd24Jan01-2}. Given
	\begin{equation*}
		t_* \in (0,T),
		\quad
		x_*\in \overline{B_5^+(q^{[j]} , R^{-1} (T-t_*)^{1/2 } ) },
		\quad
		\rho \in (0,R^{-1} (T-t_*)^{1/2 } ] ,
	\end{equation*}
	notice that for any $t_1 \in \big( \max\{ 0,t_*-4\rho^2\} , t_* \big]$, by $R^{-1} \ll 1$, we have $T-t_1 \in \left[ T-t_*, 2(T-t_*) \right]$. Then for any $w\in B_5^+(x_*,2\rho)$, it holds that $|w-q^{[j]}| \le |w-x_*| + |x_*-q^{[j]}| \le 3 R^{-1} (T-t_*)^{1/2 } \le (T-t_1)^{1/2}$, which implies $B_5^+(x_*,2\rho)  \times \big( \max\{ 0,t_*-4\rho^2\} , t_* \big] \subset Q_{\mathcal{G}_1, j}$. Similarly,
	$ B_4(\tilde{x}_*,2 \rho ) \times \big( \max\{ 0,t_*-4\rho^2\} , t_* \big] \subset Q_{\mathcal{G}_2, j}$ holds. Hence, for any $(w,t_1) \in   B_5^+(x_*,2\rho)  \times \big( \max\{ 0,t_*-4\rho^2\} , t_* \big] $, we have $| \mathcal{T}^{\rm {out}}[\psi_{\sigma_0, \star 1 } ] (w,t_1) | \lesssim \delta_0 (T-t_1)^{l_j} \sim \delta_0 (T-t_*)^{l_j}$, that is,
	\begin{equation*}
		\| \mathcal{T}^{\rm {out}}[\psi_{\sigma_0, \star 1 } ] \|_{L^\infty
			\big(  B_5^+(x_*,2\rho)  \times \big( \max\{ 0,t_*-4\rho^2\} , t_* \big] \big) }
		\lesssim
		\delta_0 (T-t_*)^{l_j}.
	\end{equation*}
Similarly, we can deduce that
	\begin{equation*}
		\begin{aligned}
			&
			\rho^2 \|\mathcal{G}_1\|_{L^\infty
				\big( B_5^+(x_*,2\rho) \times \big( \max\{ 0,t_*-4\rho^2\} , t_* \big] \big) }
			\lesssim
			\rho^2 R^{-\frac{1}{4}} (T-t_*)^{-3 l_j -4} ,
			\\
			&
			\rho
			\| \mathcal{G}_2 \|_{L^\infty  \big( B_{4}(\tilde{x}_*,2 \rho ) \times \big( \max\{ 0,t_*-4\rho^2\} , t_* \big] \big) }
			\lesssim
			\rho  R^{-\frac{1}{4}} (T-t_*)^{-l_j-2}  .
		\end{aligned}
	\end{equation*}
	Since for any $w\in B_5^+(x_*,2\rho)$, $|w-q^{[j]}| \le 3 R^{-1} (T-t_*)^{1/2 } $, then for any $\alpha\in (0,1)$,
	\begin{equation*}
		\begin{aligned}
			&
			\| \mathcal{T}^{\rm {out}}[\psi_{\sigma_0, \star 1 } ] (\cdot, 0) \|_{L^{\infty} ( B_5^+(x_*,2 \rho ) ) }
			\lesssim
			|\ln T| T^{ \frac{5}{3}l_j+\frac{1}{2} } ,
			\\
			&
			\rho^\alpha [  \mathcal{T}^{\rm {out}}[\psi_{\sigma_0, \star 1 } ] (\cdot, 0) ]_{C^{\alpha } (B_5^+(x_*,2\rho )) }
			\\
			= \ &
			\rho^\alpha
			\Big[ {\bm{b}}_{j,\sigma_0}\cdot {\bm{ \tilde{e}} }_j\big( T^{-\frac{1}{2}}(x-q^{[j]}) \big)+  \sum_{ \mathbf{p}\in \mathbb{N}^5, \| \mathbf{p}\|_{\ell_1} \le 4l_{\rm{max}}+4, p_5\in 2 \mathbb{N}}
			C_{q^{[j]}, \mathbf{p},\sigma_0}
			\varphi_{ q^{[j]}, \mathbf{p},0 }(x) \Big]_{C^{\alpha } (B_5^+(x_*,2\rho )) }
			\\
			\lesssim \ &
			\rho^\alpha
			\big( |\ln T| T^{ \frac{5}{3}l_j+\frac{1}{2} }
			T^{-\frac{1}{2}} \rho^{1-\alpha}
			+
			e^{-\frac{9 \delta^2}{22 T} }
			\rho^{1-\alpha} \big)
			\sim
			\rho
			|\ln T| T^{ \frac{5}{3}l_j } \le R^{-1} |\ln T| T^{ \frac{5}{3}l_j +\frac{1}{2}}  .
		\end{aligned}
	\end{equation*}
	By Lemma \ref{24Jan08-1-lem},
there exist positive universal constants $\varsigma_0\in (0,1), K_1$ independent of any parameters such that if $\varsigma_1 \in (0,\varsigma_0]$, we have
	\begin{equation*}
		\begin{aligned}
			&
			\left[\mathcal{T}^{\rm {out}}[\psi_{\sigma_0, \star 1 } ] \right]_{C^{\varsigma_1,\varsigma_1/2} \big( \overline{B_5^+(x_*,\rho) }
				\times \big( \max\{0, t_*-\rho^2 \}, t_* \big]
				\big) }
			\\
			\le \ & K_1
			\rho^{-\varsigma_1}
			\Big[ \delta_0 (T-t_*)^{l_j}
			+ \rho^2 R^{-\frac{1}{4}} (T-t_*)^{-3 l_j -4}
			+
			\1_{x_{*5} \le 4\rho}
			\
			\rho  R^{-\frac{1}{4}} (T-t_*)^{-l_j-2}
			+
			\1_{\sqrt{t_*} \le 4\rho} \
			|\ln T| T^{ \frac{5}{3}l_j+\frac{1}{2} }	
			\Big].
		\end{aligned}
	\end{equation*}
Therein, by the inequality $c_1+c_2\ge 2\sqrt{c_1 c_2}$ for $c_1, c_2 \ge 0$, it holds that
	\begin{equation*}
		\delta_0 (T-t_*)^{l_j}
		+ \rho^2 R^{-\frac{1}{4}} (T-t_*)^{-3 l_j -4}
		\ge 2 (\delta_0 R^{-\frac{1}{4}})^{\frac{1}{2}} \rho (T-t_*)^{-l_j -2} \ge
		\rho  R^{-\frac{1}{4}} (T-t_*)^{-l_j-2}.
	\end{equation*}
Besides, $\sqrt{t_*} \le 4\rho$,
	$\rho \in (0,R^{-1} (T-t_*)^{1/2 } ]$ deduce $t_*\le 16 R^{-2} T$. Combining $R^{-1}\ll 1$, then $\1_{\sqrt{t_*} \le 4\rho} \
	|\ln T| T^{ \frac{5}{3}l_j+\frac{1}{2} } \lesssim \delta_0 (T-t_*)^{l_j}$.
Thus, we get
	\begin{equation*}
		\left[\mathcal{T}^{\rm {out}}[\psi_{\sigma_0, \star 1 } ] \right]_{C^{\varsigma_1,\varsigma_1/2} \big( \overline{B_5^+(x_*,\rho) }
			\times \big( \max\{0, t_*-\rho^2 \}, t_* \big]
			\big) }
		\lesssim
		\rho^{-\varsigma_1}
		\left[ \delta_0 (T-t_*)^{l_j}
		+ \rho^2 R^{-\frac{1}{4}} (T-t_*)^{-3 l_j -4}
		\right].
	\end{equation*}
Using $\psi_{\sigma_0} = \mathcal{T}^{\rm {out}}[\psi_{\sigma_0, \star 1 } ]$ in $B_5^+(0,\sigma_0^{-1}) \times [0,T_{\sigma_0}) $, we finally achieve \eqref{qd24Jan09-2}. \end{proof}

\subsection{Solving the orthogonal equations in $(0,T_{\sigma_0})$}\label{subsec-orth}

In order to apply Proposition \ref{prop-23Oct24-1} with $n=5$ to solve the inner problems \eqref{cyl-inner} in $B_5^+(0,2R) \times (0,T_{\sigma_0})$, we need to choose suitable $\bm{\mu}, \bm{\xi} $ to attain the orthogonality conditions \eqref{gh-ortho-Oct20}.  In other words, we need to solve the following formal orthogonal equations.
\begin{equation}\label{orth-1}
\begin{aligned}
&
	\int_{\mathbb{R}^n_+} \eta\Big(\frac{y}{4R}\Big)
	\Big(
	\dot{\mu}_i
	\mu_i  Z_n(y )
	+
	\mu_i
	\dot{\xi}^{[i]} \cdot  \left( \nabla_{\tilde{y} } U \right)(\tilde{y},y_n)
	\Big)Z_j(y ) dy
\\
&
	+
	\int_{\mathbb{R}^{n-1}}
\frac{n}{n-2}
\mu_i^{\frac{n}{2}-1}
U^{\frac{2}{n-2}}(\tilde{y},0)
\eta\Big(\frac{\tilde{y} }{4R} , 0 \Big)
\Big(
\psi_{\sigma_0}\big(
(\mu_i \tilde{y} +\xi^{[i]},0),t\big)
+
\Theta_{l_i}\big(
(\mu_i \tilde{y} +\xi^{[i]}- \tilde{q}^{[i]} , 0) ,t \big)
\Big) Z_j(\tilde{y},0) d \tilde{y}
 =0
\end{aligned}
\end{equation}
for $j=1,2,\dots,n$, $t\in (0, T_{\sigma_0})$,
where $\psi_{\sigma_0} = \psi_{\sigma_0}[\bm{\phi},\bm{\mu}_{,1},\bm{\xi}] \in \mathcal{X}_{\delta_0,\sigma_0} $ is given by Lemma \ref{24Jan01-5-lem} with $\bm{\mu}_{,1}, \bm{\xi}$ in a suitable space
to meet the assumption in Lemma \ref{24Jan01-5-lem}.
Here we used $y$ instead of $y^{[i]}$ as the integral variable.

\begin{lemma}\label{lem-9.2}

Suppose that $\delta_0=R^{-\frac{1}{5}}$, $\sigma_0\in (0,T)$,
$\bm{\phi} \in B_{{\rm in},\sigma_0}$,
then for $T\ll 1$, there exist $ \bm{\mu}_{,1,\sigma_0} = \bm{\mu}_{,1,\sigma_0}[\bm{\phi}], \bm{\xi}_{\sigma_0} = \bm{\xi}_{\sigma_0}[\bm{\phi}] $  solving  \eqref{orth-1} with $n=5$ in $(0,T_{\sigma_0})$ and satisfying
	\begin{equation}\label{orth-101}
|\mu_{i,1,\sigma_0}| + |\xi_{\sigma_0}^{[i]}-\tilde{q}^{[i]}| \le  \delta_0^{\frac13} (T-t)^{2l_i+2},
		\quad
|\dot{\mu}_{i,1,\sigma_0}| + |\dot{\xi}_{\sigma_0}^{[i]}|  \le  \delta_0^{\frac13}
		 (T-t)^{2l_i+1}
	\end{equation}
for $t\in [0,T_{\sigma_0})$, $i=1,2,\dots,\mathfrak{o}$. In particular, the ansatz \eqref{outer-assumption2} holds in $[0,T_{\sigma_0})$.

Moreover, for any compact set $K\subset  [0,T)$, there exists $\sigma_{K}\in (0,T)$ sufficiently small such that for all $\sigma_0\in (0,\sigma_{K})$, $\dot{\mu}_{i,1,\sigma_0}$ $(\dot{\xi}_{\sigma_0}^{[i]})$ are well-defined and uniformly H\"older continuous in $K$.

\end{lemma}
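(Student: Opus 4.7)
The plan is to treat \eqref{orth-1} as a nonlinear coupled ODE system for $(\bm{\mu}_{,1},\bm{\xi})$ and solve it by a Schauder fixed-point argument, where the dependence on $\bm{\phi}$ is absorbed through the outer problem supplied by Lemma~\ref{24Jan01-5-lem}. First, I would simplify \eqref{orth-1} using the parities induced by the radial cut-off $\eta(y/(4R))$: since $Z_n(y)$ is even in each $\tilde y_j$ while $Z_j=\partial_{y_j}U$ and $\partial_{\tilde y_j}U$ are odd in $\tilde y_j$, the cross-coupling integrals $\int_{\mathbb{R}^5_+} Z_n\,\partial_{\tilde y_j}U\,\eta\,dy$, $\int_{\mathbb{R}^5_+} Z_j Z_k\eta\,dy$ for $j\neq k$, and the boundary integrals $\int_{\mathbb{R}^4}U^{2/3}Z_j(\tilde y,0)\eta\,d\tilde y$ for $j<5$ all vanish. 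This diagonalizes the leading-order linear system in $(\dot\mu_i,\dot\xi^{[i]})$ with invertible diagonal entries $\int Z_n^2\eta\,dy$ and $\int Z_j^2\eta\,dy$. Writing $\mu_i=\mu_{i,0}+\mu_{i,1}$ and subtracting the defining relation \eqref{qd23Jan01-2}, the constant-in-$\tilde y$ part $\Theta_{l_i}(0,t)=-(T-t)^{l_i}$ is absorbed; the remaining right-hand sides are integrals against $\psi_{\sigma_0}(\mu_i\tilde y+\xi^{[i]},\cdot)$ and $\Theta_{l_i}(\mu_i\tilde y+\xi^{[i]}-\tilde q^{[i]},\cdot)-\Theta_{l_i}(0,\cdot)$, plus lower-order products of $\mu_{i,1}$ with $\dot\mu_{i,0}$. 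Solving for $(\dot\mu_{i,1},\dot\xi^{[i]})$ and integrating backward from $T_{\sigma_0}$ under the terminal conditions $\mu_{i,1}(T_{\sigma_0})=0$ and $\xi^{[i]}(T_{\sigma_0})=\tilde q^{[i]}$ defines the candidate fixed-point map $\mathcal{F}:(\bm{\mu}_{,1},\bm{\xi})\mapsto(\tilde{\bm{\mu}}_{,1},\tilde{\bm{\xi}})$.

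Second, I would verify that $\mathcal{F}$ preserves the closed convex set
\begin{equation*}
\mathcal{B}_{\sigma_0}:=\bigl\{(\bm{\mu}_{,1},\bm{\xi})\in C^1([0,T_{\sigma_0}])\ :\ \eqref{orth-101}\ \text{and the above terminal conditions hold}\bigr\}.
\end{equation*}
The main inputs are the inner-region bound $|\psi_{\sigma_0}(\mu_i\tilde y+\xi^{[i]},t)|\lesssim\delta_0(T-t)^{l_i}$ on $|\tilde y|\le 4R$ (immediate from $\psi_{\sigma_0}\in\mathcal{X}_{\delta_0,\sigma_0}$ since $|z^{[i]}|\ll 1$ there), together with the Taylor estimate $|\Theta_{l_i}(\tilde x,t)-\Theta_{l_i}(0,t)|\lesssim(T-t)^{l_i-1}|\tilde x|^2$ at $\tilde x=\mu_i\tilde y+\xi^{[i]}-\tilde q^{[i]}$. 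Multiplying by $\mu_i^{n/2-1}\sim(T-t)^{3l_i+3}$ and dividing by $\mu_{i,0}\int Z_n^2\eta\,dy$ (resp.\ $\mu_{i,0}\int Z_j^2\eta\,dy$) yields $|\dot{\tilde\mu}_{i,1}|,|\dot{\tilde\xi}^{[i]}|\le C\delta_0(T-t)^{2l_i+1}$. Since $\delta_0=R^{-1/5}=|\ln T|^{-1/5}$, we have $C\delta_0\le\delta_0^{1/3}$ for $T\ll 1$, and one backward integration produces the bounds on $\mu_{i,1}$ and $\xi^{[i]}-\tilde q^{[i]}$ in \eqref{orth-101} with a margin to spare.

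Third, compactness and continuity of $\mathcal{F}$ in the $C^0$-topology on $[0,T_{\sigma_0}-\varepsilon]$ follow from Arzel\`a–Ascoli: the quantitative H\"older estimate \eqref{qd24Jan09-2}, applied on parabolic cylinders of radius $\rho=R^{-1}(T-t_*)^{1/2}$ centered at $(\xi^{[i]}(t_*),0)$, gives a uniform $C^{\varsigma_1,\varsigma_1/2}$ bound on $\psi_{\sigma_0}$ in the inner region, which translates into a uniform $C^{\varsigma_1/2}$-modulus in $t$ for the integrands defining $\mathcal{F}$; hence $\dot{\tilde\mu}_{i,1}$ and $\dot{\tilde\xi}^{[i]}$ are equicontinuous on $[0,T_{\sigma_0}-\varepsilon]$ with bounds independent of the choice of $(\bm{\mu}_{,1},\bm{\xi})\in\mathcal{B}_{\sigma_0}$. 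Schauder's theorem then produces a fixed point $(\bm{\mu}_{,1,\sigma_0},\bm{\xi}_{\sigma_0})$ satisfying \eqref{orth-1} in $(0,T_{\sigma_0})$ and \eqref{orth-101}. The uniform H\"older continuity of $\dot\mu_{i,1,\sigma_0}$ and $\dot\xi^{[i]}_{\sigma_0}$ on any compact $K\subset[0,T)$ as $\sigma_0\downarrow 0$ comes from the same a~priori H\"older bound via a diagonal subsequence.

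\emph{Main obstacle.} The delicate step is establishing continuity (or even upper semicontinuity in a useful sense) of $\mathcal{F}$, because $\psi_{\sigma_0}$ depends on $(\bm{\mu}_{,1},\bm{\xi})$ in a strongly nonlocal way through the outer Schauder construction of Lemma~\ref{24Jan01-5-lem}. Direct pointwise control of this dependence is unavailable; the weighted H\"older estimate \eqref{qd24Jan09-2}, tailored to the inner region where the orthogonality integrals live, is precisely what compensates for this and permits the Arzel\`a–Ascoli argument to close.
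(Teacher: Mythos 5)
Your overall strategy — parity diagonalization, subtracting the leading $\mu_{i,0}$-ODE, the inner-region bound for $\psi_{\sigma_0}$ and the Taylor estimate for $\Theta_{l_i}$, Schauder with the quantitative H\"older estimate \eqref{qd24Jan09-2} for compactness — is the paper's strategy. But there is a concrete gap in how you handle the linear feedback term in the $\mu_{i,1}$-equation, and the gap is not cosmetic.

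After subtracting \eqref{qd23Jan01-2} from \eqref{orth-1} with $j=n$ and dividing by $\mu_{i,0}\int Z_n^2\eta\,dy$, the equation is \emph{not} of the form $\dot\mu_{i,1}=\text{(small forcing)}+\text{(lower-order remainders)}$. The subtraction leaves a linear term in $\mu_{i,1}$ with a $(T-t)^{-1}$ coefficient: in the paper's notation this is $\beta(t)=\tfrac{n-4}{6-n}(l_i+1)(T-t)^{-1}=(l_i+1)(T-t)^{-1}$ for $n=5$, see \eqref{qd24Feb24-1}--\eqref{orth-5}. It arises jointly from the cross-term $\dot\mu_{i,0}\mu_{i,1}$ and from the Taylor remainder $\mu_i^{(n-4)/2}-\mu_{i,0}^{(n-4)/2}$ acting on $\Theta_{l_i}(0,t)$. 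You describe the first as ``lower-order products of $\mu_{i,1}$ with $\dot\mu_{i,0}$'', but after dividing by $\mu_{i,0}$ it is $(\dot\mu_{i,0}/\mu_{i,0})\mu_{i,1}\sim(T-t)^{-1}\mu_{i,1}$, which has exactly the same size as the quantity $\dot\mu_{i,1}$ you are estimating. If, as your formulation suggests, this term stays on the forcing side and the fixed-point map is defined by backward integration of $\dot{\tilde\mu}_{i,1}(t):=-\beta(t)\mu_{i,1}(t)+\mathcal{F}_i(t)$ with $\mu_{i,1}$ the input, then the map does not preserve your set $\mathcal{B}_{\sigma_0}$: one gets only
\begin{equation*}
|\dot{\tilde\mu}_{i,1}(t)|\le(l_i+1)\,\delta_0^{\frac13}(T-t)^{2l_i+1}+C\delta_0(T-t)^{2l_i+1},
\end{equation*}
which exceeds the allowed $\delta_0^{1/3}(T-t)^{2l_i+1}$ for every $l_i\ge0$. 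Your verification line ``$|\dot{\tilde\mu}_{i,1}|\le C\delta_0(T-t)^{2l_i+1}$'' omits the $\beta\mu_{i,1}$ contribution altogether.

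The fix, which is what the paper does, is to keep $\beta\mu_{i,1}$ on the left and solve the linear ODE $\dot\mu_{i,1}+\beta\mu_{i,1}=\mathcal{F}_i$ exactly by the integrating factor, $\mu_{i,1}(t)=\int_{T_{\sigma_0}}^t e^{\int_t^s\beta(a)\,da}\mathcal{F}_i(s)\,ds$, and then pose the Schauder fixed point on $(\dot{\bm\mu}_{,1},\dot{\bm\xi})$ via \eqref{orth-6}. Since $e^{\int_t^s\beta}=\bigl(\tfrac{T-t}{T-s}\bigr)^{l_i+1}$ and $|\mathcal{F}_i(s)|\lesssim\delta_0(T-s)^{2l_i+1}$ (from \eqref{orth-10}), one obtains $|\mu_{i,1}(t)|\lesssim\delta_0(T-t)^{2l_i+2}$ and $|\dot\mu_{i,1}(t)|=|\mathcal{F}_i-\beta\mu_{i,1}|\lesssim\delta_0(T-t)^{2l_i+1}$, a genuine gain of a factor $\delta_0$ over the $\delta_0^{1/3}$ target, which is why the paper works inside the ball of radius $\delta_0^{1/2}$ in the derivative norms and then recovers \eqref{orth-101} a posteriori. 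With this modification your preservation, compactness, and Schauder steps close as you describe. The $\xi$-equations have no analogous $\beta$-term (there is no $\xi_{i,0}$-ODE to subtract), so your treatment of $\dot\xi^{[i]}$ is fine.
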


\begin{proof}
By the radial property of $\eta(x)$ and the parity of $U(y)$ and $Z_j(y)$ given in \eqref{qd24Jan14-U}, \eqref{qd2023Nov28-1} respectively,
we have
\begin{equation*}
\int_{\mathbb{R}^n_+} \eta (\frac{y}{4R} ) Z_j(y) Z_k(y) dy =0
\mbox{ \ for \ } j,k =1,2,\dots, n, j \ne k;
\
\int_{\mathbb{R}^{n-1}}
U^{\frac{2}{n-2}}(\tilde{y},0)
\eta(\frac{\tilde{y} }{4R} , 0)  Z_j(\tilde{y},0) d \tilde{y}
=0  \mbox{ \ for \ } j=1,2,\dots, n-1.
\end{equation*}
Hence, \eqref{orth-1} is equivalent to
	\begin{align}
& \label{orth-2}
\begin{aligned}
	\dot{\mu}_i = \ & -\frac{n}{n-2}
	\Big(\int_{\mathbb{R}^n_+}  Z_n^2(y) \eta\Big(\frac{y}{4R}\Big) d y\Big)^{-1}\mu_{i}^{\frac{n-4}{2}}\times
	\\ &\int_{\mathbb{R}^{n-1} }U^{\frac{2}{n-2}}(\tilde y,0)\eta \Big(\frac{\tilde{y}}{4R} , 0 \Big)
	\Big[
	\psi_{\sigma_0}\big( (\mu_i\tilde y+ \xi^{[i]},0),t \big)
	+
	\Theta_{l_i}\big((\mu_i \tilde {y}+ \xi^{[i]}-\tilde q^{[i]},0),t\big)  \Big]Z_n(\tilde y,0)d\tilde{y} ,
\end{aligned}
\\
& 	\label{orth-3}
	 \dot{\xi}^{[i]}= \mathcal{S}^{[i]}[\bm{\mu}_{,1},  \bm{\xi} ]:=
		\left(
		\mathcal{S}_1^{[i]}[\bm{\mu}_{,1},  \bm{\xi} ],
		\mathcal{S}_2^{[i]}[\bm{\mu}_{,1},  \bm{\xi}],
		\dots,\mathcal{S}_{n-1}^{[i]}[\bm{\mu}_{,1},  \bm{\xi} ]
		\right)
	\end{align}
for $i=1,2,\dots, \mathfrak{o}$, where for $j=1,2,\dots, n-1$,
	\begin{equation}\label{orth-300}
		\begin{aligned}	
			\mathcal{S}_j^{[i]}[\bm{\mu}_{,1},  \bm{\xi}] := \ & -\frac{n}{n-2}
			\Big(\int_{\mathbb{R}^n_+}  Z_j^2(y) \eta\Big(\frac{y}{4R}\Big) d y\Big)^{-1}\mu_{i}^{\frac{n-4}{2}} \int_{\mathbb{R}^{n-1}}U^{\frac{2}{n-2}}(\tilde y,0)\eta \Big(\frac{\tilde y}{4R} ,0 \Big)
			\Big[\psi_{\sigma_0}\big( (\mu_i\tilde{y} + \xi^{[i]},0),t \big)\\
			&
			+
			\Theta_{l_i}\big((\mu_i \tilde y+ \xi^{[i]}
			-
			\tilde{q}^{[i]},0),t\big)-\Theta_{l_i}(0,t)\Big]Z_j(\tilde{y},0)d\tilde{y} .
		\end{aligned}
	\end{equation}

Recalling \eqref{qd23Jan01-2} and $\Theta_{l_i}(0,t)=-(T-t)^{l_i}$, we have
\begin{equation}\label{qd24Jan01-6}
\dot{\mu}_{i,0}
=
-
\frac{n}{n-2}
\Big(\int_{\mathbb{R}_+^n}
 Z_n^2(y) \eta\Big(\frac{y}{4R}\Big) dy \Big)^{-1}
\Theta_{l_i}(0,t) \mu_{i,0}^{\frac{n-4}{2}  }  \int_{\mathbb{R}^{n-1}}
U^{\frac{2}{n-2}}(\tilde{y},0)
\eta\Big(\frac{\tilde{y}}{4R} , 0 \Big)  Z_n(\tilde{y},0) d\tilde{y}  .
\end{equation}
\eqref{orth-2} minus \eqref{qd24Jan01-6} implies
\begin{equation}\label{qd24Feb24-1}
	\dot{\mu}_{i,1} =   \mathcal{F}_i[\bm{\mu}_{,1}, \bm{\xi} ](t)
-\frac{n}{n-2}
\Big(\int_{\mathbb{R}^n_+}Z_n^2(y) \eta\Big(\frac{y}{4R}\Big) d y\Big)^{-1}
\Theta_{l_i} (0,t )
\frac{n-4}{2}
\mu_{i,0}^{\frac{n-6}{2} }
\mu_{i,1}
\int_{\mathbb{R}^{n-1} }U^{\frac{2}{n-2}}(\tilde y,0)\eta \Big(\frac{\tilde{y}}{4R} , 0 \Big)
Z_n(\tilde y,0)d\tilde{y}
,
\end{equation}
where
\begin{equation}\label{orth-500}
\begin{aligned}
&
\mathcal{F}_i[\bm{\mu}_{,1}, \bm{\xi} ](t)
:=
\frac{-n}{n-2}
\Big(\int_{\mathbb{R}^n_+}Z_n^2(y) \eta\Big(\frac{y}{4R}\Big) d y\Big)^{-1}
\bigg\{
\mu_{i}^{\frac{n-4}{2}}
\int_{\mathbb{R}^{n-1} }U^{\frac{2}{n-2}}(\tilde y,0)\eta \Big(\frac{\tilde{y}}{4R} , 0 \Big)
\psi_{\sigma_0}\big( (\mu_i\tilde y+ \xi^{[i]},0),t \big) Z_n(\tilde y,0)d\tilde{y}
\\
& +
\mu_{i}^{\frac{n-4}{2}}
\int_{\mathbb{R}^{n-1} }U^{\frac{2}{n-2}}(\tilde y,0)\eta \Big(\frac{\tilde{y}}{4R} , 0 \Big)
\big[
\Theta_{l_i}\big((\mu_i \tilde {y}+ \xi^{[i]}-\tilde q^{[i]},0),t\big)
-
\Theta_{l_i} (0,t )
\big]  Z_n(\tilde y,0)d\tilde{y}
\\
& +
\Theta_{l_i} (0,t )
\Big(
\mu_{i}^{\frac{n-4}{2}}
-
\mu_{i,0}^{\frac{n-4}{2} }
-
\frac{n-4}{2}
\mu_{i,0}^{\frac{n-6}{2} }
\mu_{i,1}
\Big)
\int_{\mathbb{R}^{n-1} }U^{\frac{2}{n-2}}(\tilde y,0)\eta \Big(\frac{\tilde{y}}{4R} , 0 \Big)
Z_n(\tilde y,0)d\tilde{y}
\bigg\} .
\end{aligned}
\end{equation}
By \eqref{24Jan01-6}, \eqref{24Jan01-7}, then \eqref{qd24Feb24-1} is equivalent to
\begin{equation}\label{orth-5}
\dot{\mu}_{i,1}
+
\beta(t)  \mu_{i,1} =   \mathcal{F}_i[\bm{\mu}_{,1}, \bm{\xi} ](t)
\quad
\mbox{ \ with \ }
\quad
\beta(t) := \frac{n-4}{6-n} (l_i +1) (T-t)^{-1}.
\end{equation}

To obtain a solution of \eqref{orth-3}, \eqref{orth-5}, it suffices to solve the following fixed-point problem   about $\dot{\bm{\mu}}_{,1}, \dot{\bm{\xi} }$.
	\begin{equation}\label{orth-6}
		\begin{aligned}
&
( \dot{\bm{\mu}}_{,1} ,\dot{\bm{\xi} } ) =
\mathcal{T}^{\rm{ort}}[\bm{\mu}_{,1},\bm{\xi} ] := \big( (\mathcal{S}_1[\bm{\mu}_{,1},\bm{\xi} ] ,\mathcal{S}_2[\bm{\mu}_{,1},\bm{\xi} ] ,\dots, \mathcal{S}_{\mathfrak{o}}[\bm{\mu}_{,1},\bm{\xi} ] ), ( \mathcal{S}^{[1]}[\bm{\mu}_{,1},\bm{\xi} ] , \mathcal{S}^{[2]}[\bm{\mu}_{,1},\bm{\xi} ] ,\dots, \mathcal{S}^{[\mathfrak{o}]}[\bm{\mu}_{,1},\bm{\xi} ] ) \big) ,
\\
&
\mathcal{S}_i[\bm{\mu}_{,1},\bm{\xi} ]
:=\frac{d}{dt}\Big(\int_{T_{\sigma_0}}^te^{\int_t^s\beta(a)da}\mathcal{F}_i[\bm{\mu}_{,1}, \bm{\xi} ](s)ds\Big)
=
-\beta(t)\int_{T_{\sigma_0}}^te^{\int_t^s\beta(a)da}\mathcal{F}_i[\bm{\mu}_{,1}, \bm{\xi} ](s)ds
			+
			\mathcal{F}_i[\bm{\mu}_{,1}, \bm{\xi} ](t) ,
\\
&
\mu_{i,1}=\mu_{i,1}[\dot{\mu}_{i,1}](t)=\int_{T_{\sigma_0}}^t\dot{\mu}_{i,1}(a)da,
\quad
\xi^{[i]}=  \xi^{[i]}[\dot{\xi}^{[i]}](t)=\int_{T_{\sigma_0}}^t\dot{\xi}^{[i]}(a)da+\tilde q^{[i]} ,
\quad
i =1,2,\dots, \mathfrak{o} .
		\end{aligned}
	\end{equation}
We will solve $\dot{\bm{\mu}}_{,1}, \dot{\bm{\xi} }$ in suitable space to make the above integrals and $\psi_{\sigma_0} =\psi_{\sigma_0}[\bm{\phi},\bm{\mu}_{,1},
\bm{\xi}]\in \mathcal{X}_{\delta_0,\sigma_0} $ given by Lemma \ref{24Jan01-5-lem} well-defined.
Set the norm
\begin{equation*}
	\|f\|_{l_i,\sigma_0}:=\sup_{s\in (0,T_{\sigma_0} )}  (T-s)^{-(2l_i+1)} |f(s)| .
\end{equation*}
We will find a solution $( \dot{\bm{\mu}}_{,1} ,\dot{\bm{\xi} } )$ of \eqref{orth-6} in the space
$B_{*1,\sigma_0} \times B_{*2,\sigma_0}$, where
\begin{equation*}
	\begin{aligned}
		&
		B_{*1,\sigma_0}:=
		\Big\{ (f_1,f_2,\dots, f_{\mathfrak{o}})  \ \big| \
		f_i \in C\big( (0,T_{\sigma_0} ),\mathbb{R} \big), i=1,2,\dots, \mathfrak{o} ,
		\
		\max_{i=1,2,\dots, \mathfrak{o}} \|f_i \|_{l_i,\sigma_0}
		\le \delta_0^{\frac{1}{2}}
		\Big\},
		\\
		&
		B_{*2,\sigma_0} :=
		\Big\{
		(\mathbf{f}^{[1]}, \mathbf{f}^{[2]}, \dots, \mathbf{f}^{[\mathfrak{o}]})
		\ \big| \
		\mathbf{f}^{[i]} \in C\big( (0,T_{\sigma_0} ),\mathbb{R}^{n-1} \big), i=1,2,\dots, \mathfrak{o} ,
		\
		\max_{i=1,2,\dots, \mathfrak{o}} \|\mathbf{f}^{[i]} \|_{l_i,\sigma_0}
		\le \delta_0^{\frac{1}{2}}
		\Big\} .
	\end{aligned}
\end{equation*}

Hereafter, we plug in $n=5$.
Given any $(\dot{\bm{\mu}}_1,\dot{\bm{ \xi}}  )\in B_{*1,\sigma_0} \times B_{*2,\sigma_0}$, for $t\in (0,T_{\sigma_0} )$, we have
	\begin{equation}\label{orth-602}
		|\mu_{i,1}|
\le
\int_t^{T_{\sigma_0}} |\dot{\mu}_{i,1}(a)|da
		 \lesssim \delta_0^\frac12(T-t)^{2l_i+2}   ,
		\quad
		|\xi^{[i]}-\tilde{q}^{[i]}|
		\le
		\int_t^{T_{\sigma_0}}
		|\dot{\xi}^{[i]}(a)| da
		\lesssim \delta_0^\frac12(T-t)^{2l_i+2}.
	\end{equation}
In particular, the integrals about $\mu_{i,1}, \xi^{[i]}$ in \eqref{orth-6} are well-defined.
Due to the small quantity $\delta_0^{\frac{1}{2}}$ and $T\ll 1$, $\bm{\mu}, \dot{\bm{\mu}}, \bm{\xi}, \dot{\bm{\xi}} $ satisfy \eqref{outer-assumption2} in $(0,T_{\sigma_0} )$ and the assumption in Lemma \ref{24Jan01-5-lem} holds, which implies that $\psi_{\sigma_0} =\psi_{\sigma_0}[\bm{\phi},\bm{\mu}_{,1},
\bm{\xi}]\in \mathcal{X}_{\delta_0,\sigma_0} $ given by Lemma \ref{24Jan01-5-lem} is well-defined.
For $|\tilde{y}|\le 8R$, $t\in (0,T_{\sigma_0} )$,
	\begin{equation*}
		|(\mu_i\tilde y+ \xi^{[i]},0)- q^{[i]}|
		\le |\mu_i \tilde{y}|+
		 |\xi^{[i]}-\tilde {q}^{[i]}|
		 \lesssim  R (T-t)^{2l_i+2}\ll(T-t)^{\frac{1}{2l_i+2}}.
	\end{equation*}
Combining $\psi_{\sigma_0} \in \mathcal{X}_{\delta_0, \sigma_0}$ and $\Theta_{l_i} $ given in \eqref{eq-heat-1} with $n=5$, we have
	\begin{align}
& \label{orth-7}
		|\psi_{\sigma_0} \big( (\mu_i\tilde y+ \xi^{[i]},0),t \big)|\le \delta_0(T-t)^{l_i}
		\big\langle
		(T-t)^{-\frac{1}{2}}
		|\mu_i\tilde{y}+  \xi^{[i]}-\tilde{q}^{[i]}|
		\big\rangle^{2l_i+2}
		\sim \delta_0(T-t)^{l_i} ,
\\
&
\label{orth-8}
| \Theta_{l_i}\big((\mu_i \tilde y+ \xi^{[i]}
-
\tilde{q}^{[i]},0),t\big)-\Theta_{l_i}(0,t) |\lesssim(T-t)^{l_i}\sum_{j=1}^{l_i}\left[
(T-t)^{-1} |\mu_i\tilde{y}+ \xi^{[i]}-\tilde{ q}^{[i]}|^2 \right]^j
\lesssim
R^2 (T-t)^{5 l_i +3}.
	\end{align}
It then follows from \eqref{outer-assumption2}, \eqref{orth-7} and \eqref{orth-8} that $\mathcal{S}_j^{[i]}[\bm{\mu}_{,1},  \bm{\xi}]$ given in \eqref{orth-300} has the upper bound
	\begin{equation}\label{orth-9}
		|\mathcal{S}_j^{[i]}[\bm{\mu}_{,1},  \bm{\xi}] | \lesssim \delta_0(T-t)^{2l_i+1}.
	\end{equation}

By \eqref{qd24Jan02-1}, \eqref{orth-602}, and $\delta_0 \ll 1$,
$$|\mu_{i}^{\frac{1}{2}}-\mu_{i,0}^{\frac{1}{2}}- 2^{-1} \mu_{i,0}^{-\frac{1}{2}}\mu_{i,1}|
	\lesssim
\mu_{i,0}^{-\frac{3}{2}} \mu_{i,1}^2	 \lesssim \delta_0(T-t)^{l_i+1}.$$
Combining \eqref{outer-assumption2}, \eqref{orth-7}, \eqref{orth-8}, $\mathcal{F}_i[\bm{\mu}_{,1}, \bm{\xi} ](t)$ given in \eqref{orth-500} has the upper bound
	\begin{equation}\label{orth-10}
|\mathcal{F}_i[\bm{\mu}_{,1}, \bm{\xi} ](t)| \lesssim \delta_0(T-t)^{2l_i+1} .
	\end{equation}
	This together with $\beta(t)$ given in \eqref{orth-5} implies that for $t\in (0,T_{\sigma_0} )$,
\begin{equation}\label{orth-11}
\Big|\beta(t)\int_{T_{\sigma_0}}^te^{\int_t^s\beta(a)da}\mathcal{F}_i[\bm{\mu}_{,1}, \bm{\xi} ](s)ds \Big|
\lesssim
(T-t)^{-1}
\int_t^{T_{\sigma_0}} e^{ (l_i +1) \int_t^s (T-a)^{-1} da}
\delta_0(T-s)^{2l_i+1}
ds
\lesssim \delta_0 (T-t)^{2l_i +1} .
\end{equation}
	
By \eqref{orth-9}, \eqref{orth-10}, \eqref{orth-11}, for $\delta_0\ll 1$, we deduce that
$ \mathcal{T}^{\rm{ort}}[\bm{\mu}_{,1},\bm{\xi} ] \in B_{*1,\sigma_0} \times B_{*2,\sigma_0} $.

For all $(\dot{\bm{\mu}}_{,1},\dot{\bm{ \xi}}  )\in B_{*1,\sigma_0} \times B_{*2,\sigma_0}$, $\psi_{\sigma_0}[\bm{\phi}, \bm{\mu}_{,1}, \bm{\xi}  ] $ are uniformly H\"older continuous
in $\overline{B_5^+(0,\sigma_0^{-1}/2) } \times [0,T_{\sigma_0} )$. By \eqref{orth-6},  $\mathcal{T}^{\rm{ort}}[\bm{\mu}_{,1},\bm{\xi} ]$
is uniformly H\"older continuous
in $[0,T_{\sigma_0} )$. Hence, $(\bm{\mu}_{,1},\bm{\xi}) \mapsto \mathcal{T}^{\rm{ort}}[\bm{\mu}_{,1},\bm{\xi}]$ is a compact mapping from $B_{*1,\sigma_0} \times B_{*2,\sigma_0}$ to itself by the Arzel\`a-Ascoli theorem. By the Schauder fixed-point theorem, we find a solution $(\dot{\bm{\mu}}_{,1,\sigma_0}, \dot{\bm{\xi}}_{\sigma_0} )$  of \eqref{orth-6} in $B_{*1,\sigma_0} \times B_{*2,\sigma_0}$. Combining \eqref{orth-602}, $\delta_0=R^{-\frac{1}{5}}$, $T\ll1$, and
uniform H\"older continuity of $\mathcal{T}^{\rm{ort}}[\bm{\mu}_{,1},\bm{\xi} ]$
in $[0,T_{\sigma_0} )$, we conclude the final estimate.
\end{proof}

\subsection{Solving the inner problems in $B_5^+(0,2R) \times (0,T_{\sigma_0})$}

Now the inner problems \eqref{cyl-inner} for $t\in (0, T_{\sigma_0})$ with $n=5$ can be rewritten as the following form formally.
For $i=1,2,\dots, \mathfrak{o}$,
\begin{equation}\label{z24Feb24-3}
\begin{cases}
(\mu_{i,\sigma_0}[\bm{\phi}])^2  \pp_t \phi_i
=
\Delta_{y_{\sigma_0}^{[i]}} \phi_i
+
\mathcal{H}_{1,i}[\bm{\phi} ](y_{\sigma_0}^{[i]},t)
& \mbox{ \ for \ }
t\in \left(0,T_{\sigma_0}\right) ,
y_{\sigma_0}^{[i]}  \in B_5^+(0,2R) ,
\\
-\pp_{y_{5,\sigma_0}^{[i]}} \phi_i
=
\frac{5}{3}
U^{\frac{2}{3}}(y_{\sigma_0}^{[i]}) \phi_i
+
\mathcal{H}_{2,i}[\bm{\phi} ](\tilde{y}_{\sigma_0}^{[i]},t)
& \mbox{ \ for \ }
t\in \left(0,T_{\sigma_0} \right) ,
y_{\sigma_0}^{[i]}  \in B_{4}(0,2R) \times \{ 0 \},
\end{cases}
\end{equation}
where $ \bm{\mu}_{,1,\sigma_0}[\bm{\phi}]$, $\bm{\xi}_{\sigma_0}[\bm{\phi}] $, $ \psi_{\sigma_0}[\bm{\phi}]=\psi_{\sigma_0}[\bm{\phi}, \bm{\mu}_{,1,\sigma_0}[\bm{\phi}] , \bm{\xi}_{\sigma_0}[\bm{\phi}] ]$ are given by Lemma \ref{lem-9.2} and Lemma \ref{24Jan01-5-lem} respectively, $\mu_{i,\sigma_0}[\bm{\phi}] = \mu_{i,0}+ \mu_{i,1,\sigma_0}[\bm{\phi}]$,
$ y_{\sigma_0}^{[i]} =\frac{x-(\xi_{\sigma_0}^{[i]}[\bm{\phi}] ,0) }{\mu_{i,\sigma_0}[\bm{\phi}] }$,
$\tilde{y}_{\sigma_0}^{[i]}$ is the first four components of $y_{\sigma_0}^{[i]}$, $y_{5,\sigma_0}^{[i]}$ is the fifth component of $y_{\sigma_0}^{[i]}$,
\begin{equation}
	\begin{aligned}
		\mathcal{H}_{1,i}[\bm{\phi} ](y_{\sigma_0}^{[i]},t)
		= \ &
		\eta\Big(\frac{y_{\sigma_0}^{[i]} }{4R}\Big)
		\Big(
		\dot{\mu}_{i,\sigma_0}[\bm{\phi} ]
		\mu_{i,\sigma_0}[\bm{\phi} ]  Z_5(y_{\sigma_0}^{[i]})
		+
		\mu_{i,\sigma_0}[\bm{\phi} ]
		\dot{\xi}_{i,\sigma_0}^{[i]}[\bm{\phi} ] \cdot  \left( \nabla_{\tilde{y}_{\sigma_0}^{[i]}} U \right)(\tilde{y}_{\sigma_0}^{[i]},y_{5,\sigma_0}^{[i]})
		\Big)
		,
		\\
  \mathcal{H}_{2,i}[\bm{\phi} ](\tilde{y}_{\sigma_0}^{[i]},t) = \ &
		\frac{5}{3}
		\mu_{i,\sigma_0}^{\frac{3}{2}}[\bm{\phi}]
		U^{\frac{2}{3}}(\tilde{y}_{\sigma_0}^{[i]}, 0)
		\eta\Big(\frac{\tilde{y}_{\sigma_0}^{[i]}}{4R} , 0 \Big)
		\Big(
		\Theta_{l_i}\big( ( \mu_{i,\sigma_0}[\bm{\phi}] \tilde{y}_{\sigma_0}^{[i]} + \xi_{\sigma_0}^{[i]}[\bm{\phi}] -\tilde{q}^{[i]} ,0),t \big)
		\\
		&
		+\psi_{\sigma_0}[\bm{\phi}]\big( (\mu_{i,\sigma_0}[\bm{\phi}] \tilde{y}_{\sigma_0}^{[i]} + \xi^{[i]}[\bm{\phi}] ,0),t \big) \Big).
	\end{aligned}
\end{equation}

\begin{lemma}\label{24Jan05-1-lem}

Suppose that $\delta_0=R^{-\frac{1}{5}}$, $\sigma_0\in (0,T)$,
then for $T\ll 1$, there exists a solution $\bm{\phi}_{\sigma_0} = (\phi_{1,\sigma_0}, \phi_{2,\sigma_0},\dots, \phi_{\mathfrak{o},\sigma_0}) \in B_{{\rm in},\sigma_0}$ of \eqref{z24Feb24-3}. Moreover, for $i=1,2,\dots, \mathfrak{o}$, the initial value $\phi_{i,\sigma_0}(\cdot, 0) = C_{{\rm in}, i,\sigma_0} \tilde{Z}_0$ in $B_5^+(0,2R)$, where $\tilde{Z}_0 \in C^{\infty}(\overline{B_5^+(0,2R)})$ and a constant $C_{{\rm in}, i,\sigma_0}$ satisfies $|C_{{\rm in}, i,\sigma_0}| \le  C T^{4l_i+3} R^{\frac{3}{2}}$ with a constant $C>0$ independent of $T, \sigma_0$.

Furthermore, for any compact set $K\subset [0,T)$, there exists $\sigma_{K}\in (0,T)$ sufficiently small such that for all $\sigma_0\in (0,\sigma_{K})$, $i=1,2,\dots, \mathfrak{o}$, $\phi_{i,\sigma_0}$ are well-defined and uniformly $C^{1+\varsigma_1,(1+\varsigma_1)/2}$ bounded in $\overline{B_5^+(0,2R)} \times K$ with a constant $\varsigma_1\in(0,1/10)$ independent of $T, \sigma_0$.

\end{lemma}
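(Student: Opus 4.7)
\textbf{Proof proposal for Lemma \ref{24Jan05-1-lem}.}
My plan is to solve \eqref{z24Feb24-3} by a Schauder fixed-point argument in $B_{{\rm in},\sigma_0}$. Given any trial $\bm{\phi}\in B_{{\rm in},\sigma_0}$, Lemma \ref{lem-9.2} produces parameters $(\bm{\mu}_{,1,\sigma_0}[\bm{\phi}],\bm{\xi}_{\sigma_0}[\bm{\phi}])$ satisfying \eqref{orth-101}, and then Lemma \ref{24Jan01-5-lem} produces the outer function $\psi_{\sigma_0}[\bm{\phi}]\in\mathcal{X}_{\delta_0,\sigma_0}$ together with the quantitative H\"older estimate \eqref{qd24Jan09-2}. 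For each $i$, I then introduce the intrinsic time $\tau=\tau_i(t)$ defined by $d\tau/dt=\mu_{i,\sigma_0}^{-2}[\bm{\phi}]$ with $\tau_i(0)=\tau_{0,i}\sim T^{-4l_i-3}$, which converts the $i$-th equation of \eqref{z24Feb24-3} into $\partial_\tau\phi_i=\Delta_y\phi_i+\mathcal{H}_{1,i}$ in $\mathbb{R}^5_+\times(\tau_{0,i},\tau_{1,i,\sigma_0})$ with Robin boundary condition $-\partial_{y_5}\phi_i=\tfrac{5}{3}U^{2/3}\phi_i+\mathcal{H}_{2,i}$ (after extending $\mathcal{H}_{1,i},\mathcal{H}_{2,i}$ by zero outside $B_5^+(0,2R)$, which is licit because of the cut-off $\eta(\cdot/4R)$). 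This puts the equation in the exact form of Proposition \ref{prop-23Oct24-1}.

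Next I verify the hypotheses of Proposition \ref{prop-23Oct24-1} with $n=5$, $a=5/2$, $p=1/2$, some $\iota\in(9/40,1/4)$, and $\sigma=-1$. Using \eqref{outer-assumption2}--\eqref{orth-101} and the bound $Z_5\lesssim\langle y\rangle^{-3}$, together with $|y|\le 8R$ giving $\langle y\rangle^{-3}\le(8R)^{3/2}\langle y\rangle^{-9/2}$, I get
\[
|\mathcal{H}_{1,i}|\;\lesssim\;R^{3/2}(T-t)^{4l_i+3}\langle y\rangle^{-9/2}\,\mathbf{1}_{|y|\le 8R}\;\sim\;R^{3/2}\tau^{-1}\langle y\rangle^{-9/2}\mathbf{1}_{|y|\le \ell(\tau)},
\]
and analogously for $\mathcal{H}_{2,i}$ with $\langle\tilde y\rangle^{-7/2}$, so that
$\|\mathcal{H}_{1,i}\|_{\sigma,2+a,\ell(\tau),\mathbb{R}^5_+}+\|\mathcal{H}_{2,i}\|_{\sigma,1+a,\ell(\tau),\mathbb{R}^4}\lesssim R^{3/2}$;
the H\"older seminorm of $\mathcal{H}_{2,i}$ is handled by plugging Lemma \ref{24Jan01-5-lem} \eqref{qd24Jan09-2} into $\psi_{\sigma_0}$ and using the explicit smoothness of $\Theta_{l_i}$ and $U^{2/3}\eta$. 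The essential orthogonality conditions \eqref{gh-ortho-Oct20} are \emph{exactly} the equations \eqref{orth-1} satisfied by $(\bm{\mu}_{,1,\sigma_0},\bm{\xi}_{\sigma_0})$ from Lemma \ref{lem-9.2}. Proposition \ref{prop-23Oct24-1} then yields a solution $\phi_i$ with initial datum $C_{{\rm in},i,\sigma_0}\tilde{Z}_0$ (see Remark \ref{qd24Jan14-9-rem}) satisfying
\[
|\phi_i|+\langle y\rangle|\nabla\phi_i|\;\lesssim\;R^{3/2}\tau^{-1}\langle y\rangle^{-5/2}\;\sim\;R^{3/2}(T-t)^{4l_i+3}\langle y\rangle^{-5/2},\quad|C_{{\rm in},i,\sigma_0}|\lesssim\tau_{0,i}^{-1}R^{3/2}\lesssim T^{4l_i+3}R^{3/2}.
\]
Since $R^{3/2}\ll R^2$ for $T\ll 1$, this shows $\mathcal{T}^{\rm in}[\bm{\phi}]:=\bm{\phi}^{\rm new}\in B_{{\rm in},\sigma_0}$ and gives the stated bound on $C_{{\rm in},i,\sigma_0}$.

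For the fixed-point argument, convexity and closedness of $B_{{\rm in},\sigma_0}$ in the weighted $C^0\cap W^{1,\infty}$-norm are immediate. Compactness of $\mathcal{T}^{\rm in}$ follows from standard parabolic Schauder estimates applied to the inner equation on $B_5^+(0,2R)\times(0,T_{\sigma_0})$: the forcing $\mathcal{H}_{1,i}[\bm{\phi}]$ is uniformly bounded (with bounded H\"older norm in $t$) on any compact time interval, the boundary forcing $\mathcal{H}_{2,i}[\bm{\phi}]$ is uniformly $C^{\varsigma_1,\varsigma_1/2}$ by \eqref{qd24Jan09-2}, and the Robin coefficient $\frac{5}{3}U^{2/3}$ is smooth. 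Thus $\mathcal{T}^{\rm in}[B_{{\rm in},\sigma_0}]$ is relatively compact in the topology of $B_{{\rm in},\sigma_0}$, and continuity of $\mathcal{T}^{\rm in}$ in $\bm{\phi}$ follows from the linear and continuous dependence of $(\bm{\mu}_{,1,\sigma_0},\bm{\xi}_{\sigma_0},\psi_{\sigma_0})$ through Lemmas \ref{24Jan01-5-lem}, \ref{lem-9.2}. Schauder's theorem then produces the desired $\bm{\phi}_{\sigma_0}$. The final uniform $C^{1+\varsigma_1,(1+\varsigma_1)/2}$ estimate on compact sets $K\Subset[0,T)$ follows from one more application of the interior-boundary Schauder estimate (Lemma \ref{24Jan08-1-lem}) combined with \eqref{qd24Jan09-2}.

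The main technical obstacle is balancing the parameter window in \eqref{23Oct24-1}: the natural time decay of $\mathcal{H}_{1,i},\mathcal{H}_{2,i}$ in the $\tau$-variable is only $\tau^{-1}$, forcing $\sigma=-1$, so the inequality $\sigma-pa+2\iota n>0$ leaves only the narrow range $(9/40,1/4)$ for $\iota$; choosing $a=5/2$ and $p=1/2$ keeps us safely inside. A secondary subtlety is that $Z_5$ decays only like $|y|^{-3}$ rather than $|y|^{-9/2}$, so the improvement in the weight is bought at the cost of an $R^{3/2}$-factor; the gap $R^{3/2}\ll R^2$ in the ansatz of $B_{{\rm in},\sigma_0}$ is exactly what closes the fixed-point argument and provides the room for the outer corrections to later fit back into $\mathcal{X}_{\delta_0,\sigma_0}$.
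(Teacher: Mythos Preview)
Your proposal is correct and follows essentially the same route as the paper: pass to the intrinsic time $\tau_i$, bound $\mathcal{H}_{1,i},\mathcal{H}_{2,i}$ by $R^{3/2}\tau^{-1}\langle y\rangle^{-9/2}$ and $R^{3/2}\tau^{-1}\langle\tilde y\rangle^{-7/2}$, invoke Proposition \ref{prop-23Oct24-1} with $n=5$, $a=5/2$, $\sigma=-1$ and $p\in(2/5,1/2]$ (your choice $p=1/2$, $\iota\in(9/40,1/4)$ is the endpoint case of the paper's range), feed \eqref{orth-1} into \eqref{gh-ortho-Oct20}, and close with Schauder. The only place the paper is substantially more careful than your sketch is the H\"older seminorm of $\mathcal{H}_{2,i}$ in the $(\tilde y,\tau)$ variables: one must center the application of \eqref{qd24Jan09-2} at $(w^{[1]}+q^{[i]},t_i(s_1))$ with $\rho\sim(T-t)^{2l_i+2}\ln R$ rather than at $q^{[i]}$, since the naive centering produces an extraneous $\langle\tilde y\rangle^{2}$ factor that $R^{-1/4}$ cannot absorb (see the Remark immediately following the proof).
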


\begin{proof}

For any $\bm{\phi} \in B_{{\rm in},\sigma_0}$, $ \bm{\mu}_{,1,\sigma_0}[\bm{\phi}],  \bm{\xi}_{\sigma_0}[\bm{\phi}], \psi_{\sigma_0}[\bm{\phi} ]$ are well-defined.
For $i=1,2,\dots, \mathfrak{o}$, set the new time variable
\begin{equation}\label{qd24Jan04-5}
	\tau_{i,\sigma_0} = \tau_{i,\sigma_0}(t) := \int_0^t \left(\mu_{i,\sigma_0}[\bm{\phi}](s) \right)^{-2} ds
	+
	C_{\mu_{i,0}}^2 T^{-4 l_i -3} \sim (T-t)^{-4l_i-3}
	\mbox{ \ for \ }
	t\in (0,T_{\sigma_0}),
\end{equation}
where we used \eqref{outer-assumption2} for the last step. For brevity, we use $y^{[i]}, \tau_i$ to denote $y_{\sigma_0}^{[i]}, \tau_{i, \sigma_0}$ in this proof. We set the corresponding inverse function about $\tau_i$ as
\begin{equation*}
t=t_i(\tau_i) \mbox{ \ for \ } \tau_i \in \left(\tau_i(0), \tau_i(T_{\sigma_0}) \right),
\end{equation*}
which is monotonically increasing in $\tau_i$.
Since $(\mu_{i,\sigma_0}[\bm{\phi}])^2  \pp_t \phi_i = \partial_{\tau_i} \phi_i $, we consider
\begin{equation}
	\begin{cases}
		\partial_{\tau_i} \phi_i
		=
		\Delta_{y^{[i]}} \phi_i
		+   \mathcal{H}_{1,i} [\bm{\phi}](y^{[i]},t_i(\tau_i))
		& \mbox{ \ for \ }
		\tau_i \in \left(\tau_i(0), \tau_i(T_{\sigma_0}) \right) ,
		y^{[i]}  \in \mathbb{R}_+^5 ,
		\\
		-\pp_{y_{5}^{[i]}} \phi_i
		=
		\frac{5}{3}
		U^{\frac{2}{3}}(y^{[i]})    \phi_i
		+ \mathcal{H}_{2,i} [\bm{\phi} ](\tilde{y}^{[i]},t_i(\tau_i))
		& \mbox{ \ for \ }
		\tau_i \in \left(\tau_i(0), \tau_i(T_{\sigma_0}) \right) ,
		y^{[i]}  \in \partial \mathbb{R}_+^5.
	\end{cases}
\end{equation}

By \eqref{qd24Jan04-5},
\eqref{qd24Jan02-1}, \eqref{orth-101}, we make the following preparation for estimates about $\mathcal{H}_{1,i}[\bm{\phi}], \mathcal{H}_{2,i}[\bm{\phi}]$.
\begin{equation}\label{qd24Jan09-3}
\begin{aligned}
&
T-t_i(\tau_i) \sim \tau_i^{-\frac{1}{4l_i +3}} ;
\quad
T-t_i(a_1) \sim T-t_i(a_2)
\quad
\mbox{ \ for \ }  a_1, a_2 \in \left(\tau_i(0), \tau_i(T_{\sigma_0}) \right), \ a_1 \sim  a_2 ;
\\
&
\frac{d }{d \tau_i} t_i(\tau_i)
=\mu_{i,\sigma_0}^2[\bm{\phi}](t_i(\tau_i) ) ;
\quad
\mu_{i,\sigma_0}[\bm{\phi}]( t_i(\tau_i))
\sim
\left(T-t_i(\tau_i) \right)^{2 l_i+2}
\sim \tau_i^{-\frac{2 l_i+2}{4l_i +3}}
;
\\
&
\Big|
\frac{d}{d \tau_i}
\Big(
\mu_{i,\sigma_0}[\bm{\phi}](t_i(\tau_i)) \Big) \Big|
=
\Big|
\frac{d}{d t}
\Big(
\mu_{i,\sigma_0}[\bm{\phi}] \Big) (t_i(\tau_i))
\frac{d}{d \tau_i} t_i(\tau_i)
 \Big|
 \lesssim
(T-t_i(\tau_i))^{6 l_i +5 }
\sim
\tau_i^{-\frac{6 l_i +5}{4l_i+3 } } ;
\\
&
|\xi_{\sigma_0}^{[i]}[\bm{\phi}](t_i(\tau_i))  -\tilde{q}^{[i]}| \le  \delta_0^{\frac13} (T-t_i(\tau_i))^{2l_i+2}
\sim
\delta_0^{\frac13}
\tau_i^{-\frac{2 l_i+2}{4l_i +3}}  ;
\\
&
\Big|
\frac{d}{d \tau_i}
\Big(
\xi_{\sigma_0}^{[i]}[\bm{\phi}](t_i(\tau_i)) \Big) \Big|
=
\Big|
\frac{d}{d t}
\Big( \xi_{\sigma_0}^{[i]}[\bm{\phi}] \Big) (t_i(\tau_i))
\frac{d}{d \tau_i} t_i(\tau_i)
\Big|
\lesssim
\delta_0^{\frac13}
(T-t_i(\tau_i))^{6 l_i +5 }
\sim
\delta_0^{\frac13}
\tau_i^{-\frac{6 l_i +5}{4l_i+3 } } .
\end{aligned}
\end{equation}

For $\mathcal{H}_{1,i} [\bm{\phi}]$,
\begin{equation}\label{24Jan04-7}
	\big|\mathcal{H}_{1,i} [\bm{\phi}] (y^{[i]},t_i(\tau_i)) \big|
	\lesssim
	(T-t(\tau_i))^{4l_i+3}  \langle y^{[i]} \rangle^{-3} \eta\Big(\frac{y^{[i]}}{4R} \Big)
\lesssim
R^{\frac{3}{2} }
\tau_i^{-1}  \langle y^{[i]} \rangle^{-\frac{9}{2}} \eta\Big(\frac{y^{[i]}}{4R} \Big),
\end{equation}
where we used the trick in \cite[p.19]{finite-frac2020} for the last step.

For the application of Proposition \ref{prop-23Oct24-1}, pointwise and H\"older estimates are required for $\mathcal{H}_{2,i} [\bm{\phi} ](\tilde{y}^{[i]},t_i(\tau_i))$.
For $|\tilde{y}^{[i]}| \le \tau_i^{1/2}$, denote $Q_{\tilde{y}^{[i]}, \tau_i}  :=
\left\{ (\tilde{v},s)\in \mathbb{R}^{4} \times \left(\tau_i(0), \tau_i(T_{\sigma_0}) \right) \ \big| \ |\tilde{v}-\tilde{y}^{[i]}|\le \frac{|\tilde{y}^{[i]}|}{2} , \tau_i- \frac{|\tilde{y}^{[i]}|^2}{4}\le s \le \tau_i \right\}$.
For any $(\tilde{v}^{[j]} , s_j )\in Q_{\tilde{y}^{[i]}, \tau_i}$,  $j=1,2$ with $s_2\le s_1$, it holds that $t_i(s_2) \le t_i(s_1)$,
$|\tilde{v}^{[j]}| \in \big[ |\tilde{y}^{[i]}|/2, 3 |\tilde{y}^{[i]}|/2 \big]$,
and
$s_j \in [\tau_i- \frac{|\tilde{y}^{[i]}|^2}{4}, \tau_i] \subset [3 \tau_i/4 , \tau_i]$.
Due to $\eta(\frac{\tilde{y}^{[i]}}{4R} , 0 )$, $[\mathcal{H}_{2,i} \big[\bm{\phi} ](\cdot,t_i(\cdot)) \big]_{C^{\alpha,\alpha/2} (Q_{\tilde{y}^{[i]}, \tau_i}) } = 0$ for $|\tilde{y}^{[i]}|>16R$ and $\mathcal{H}_{2,i} [\bm{\phi} ](\tilde{y}^{[i]},t_i(\tau_i))=0$ for $|\tilde{y}^{[i]}|>8R$. Hence, we always assume $|\tilde{y}^{[i]}|\le 16R$
in this proof.
By \eqref{qd24Jan09-3},
\begin{equation}\label{qd24Jan-2}
	\begin{aligned}
		&
		\mu_{i,\sigma_0}^{\frac{3}{2}}[\bm{\phi}]( t_i(s_j))
		\sim
		\tau_i^{-\frac{3 l_i+3}{4l_i +3}}
		,
		\\
		&
		\big| \mu_{i,\sigma_0}^{\frac{3}{2}}[\bm{\phi}]( t_i(s_1))
		-
		\mu_{i,\sigma_0}^{\frac{3}{2}}[\bm{\phi}]( t_i(s_2))
		\big|
		=
		\Big| \frac{d}{d s}
		\big(
		\mu_{i,\sigma_0}^{\frac{3}{2}}[\bm{\phi}]( t_i(s)) \big)\big|_{s= \theta s_1 + (1-\theta ) s_2 } (s_1-s_2)
		\Big|
		\\
		\lesssim \ &
		\left[ \theta s_1 + (1-\theta ) s_2 \right]^{-\frac{l_i+1}{4l_i +3}}
		\left[ \theta s_1 + (1-\theta ) s_2 \right]^{-\frac{6 l_i +5}{4l_i+3 } } |s_1-s_2|
		\sim
		\tau_i^{-\frac{7 l_i +6}{4l_i+3 } } |s_1-s_2|
	\end{aligned}
\end{equation}
with some $\theta\in [0,1]$. In this proof, $\theta\in [0,1]$ will be used repetitively and will vary from line to line.

Recall $U(x)$ given in \eqref{qd24Jan14-U} with $n=5$.
\begin{equation}\label{qd24Jan-3}
	\begin{aligned}
		&
		U^{\frac{2}{3}}(\tilde{v}^{[j]}, 0)
		\sim \langle \tilde{v}^{[j]} \rangle^{-2}
		\sim \langle \tilde{y}^{[i]} \rangle^{-2} ,
		\\
		&
		\big|
		U^{\frac{2}{3}}(\tilde{v}^{[1]}, 0)
		-
		U^{\frac{2}{3}}(\tilde{v}^{[2]}, 0)
		\big|
		= 3  \left[ \theta |\tilde{v}^{[1]}|^2 + (1-\theta) |\tilde{v}^{[2]}|^2
		+1 \right]^{-2}
		\left| |\tilde{v}^{[1]}|^2  - |\tilde{v}^{[2]}|^2 \right|
		\lesssim
		\langle \tilde{y}^{[i]} \rangle^{-3}
		|\tilde{v}^{[1]} - \tilde{v}^{[2]}| .
\\
&
\Big| \eta\Big(\frac{\tilde{v}^{[1]}}{4R} , 0\Big)
- \eta\Big(\frac{\tilde{v}^{[2]}}{4R} , 0\Big) \Big|
\lesssim R^{-1} |\tilde{v}^{[1]} - \tilde{v}^{[2]}| \1_{|\tilde{y}^{[i]}| \le 16 R}.
	\end{aligned}
\end{equation}

Denote $w^{[j]} = \big( \mu_{i,\sigma_0}[\bm{\phi}](t_i(s_j)) \tilde{v}^{[j]} + \xi_{\sigma_0}^{[i]}[\bm{\phi}](t_i(s_j))  -\tilde{q}^{[i]} ,0 \big)$.  By \eqref{qd24Jan09-3}, for $|\tilde{y}^{[i]}| \le 16 R$,
\begin{equation}\label{qd24Jan10-4}
	\begin{aligned}
		&
		|w^{[j]} |
		\lesssim
		\left(T-t_i(s_j) \right)^{2 l_i+2} \langle \tilde{y}^{[i]} \rangle
		\sim
		\left(T-t_i(\tau_i) \right)^{2 l_i+2}
		\langle \tilde{y}^{[i]} \rangle
		\ll R^{-1} (T-t_i(s_j) )^{1/2} ,
		\\
		&
		|w^{[1]} -w^{[2]} |
		\lesssim
		(T-t_i(\tau_i))^{6 l_i +5 }
		\langle \tilde{y}^{[i]} \rangle   |s_1-s_2|
		+
		\left(T-t_i(\tau_i) \right)^{2 l_i+2}
		|\tilde{v}^{[1]} - \tilde{v}^{[2]}|  ,
\\
&
|t_i(s_1) - t_i(s_2)| \lesssim (T-t_i(\tau_i))^{4l_i +4} |s_1-s_2| ,
	\end{aligned}
\end{equation}
where in the second line, $(T-t_i(\tau_i))^{6 l_i +5 }
\langle \tilde{y}^{[i]} \rangle   |s_1-s_2| \ll
T \left(T-t_i(\tau_i) \right)^{2 l_i+2}$.
Recall $\Theta_{l_i}$ given in \eqref{eq-heat-1} with $n=5$.
By \eqref{qd24Jan10-4}, \eqref{qd24Jan09-3},
\begin{equation}\label{qd24Jan-4}
	\begin{aligned}
		&
		\big|
		\Theta_{l_i}\big( w^{[j]},t_i(s_j) \big) \big|
		\sim (T- t_i(s_j) )^{l_i}
		\sim
		\tau_i^{-\frac{l_i}{4l_i +3}}  ,
		\\
		&
		\big|
		\Theta_{l_i}\big( w^{[1]},t_i(s_1) \big)
		-
		\Theta_{l_i}\big( w^{[2]},t_i(s_2) \big)
		\big|
		=
		\left(L_{l_i}^{\frac{3}{2}}(0)\right)^{-1}
		\bigg|
		\left[
		(T-t_i(s_1))^{l_i}
		-
		(T-t_i(s_2))^{l_i}
		\right]
		L_{l_i}^{\frac{3}{2}} \left(\frac{|w^{[1]}|^2}{4(T-t_i(s_1))} \right)
		\\
		&
		\quad
		+
		(T-t_i(s_2))^{l_i}
		\bigg[
		L_{l_i}^{\frac{3}{2}} \left(\frac{|w^{[1]}|^2}{4(T-t_i(s_1))} \right)
		-
		L_{l_i}^{\frac{3}{2}} \left(\frac{|w^{[2]}|^2}{4(T-t_i(s_2))} \right)
		\bigg]
		\bigg|
		\\
		\lesssim \ &
		\Big|
		\frac{d}{ds}
		(T-t_i(s))^{l_i} \big|_{s=\theta s_1 +(1-\theta)s_2 }
		(s_1-s_2) \Big|
		+
		(T-t_i(s_2))^{l_i}
		\bigg|
		\frac{|w^{[1]}|^2}{T-t_i(s_1)}
		-
		\frac{|w^{[2]}|^2}{T-t_i(s_2)}
		\bigg|
		\\
		\lesssim \ &
		\big(T-t_i(\theta s_1 +(1-\theta)s_2) \big)^{5l_i +3 }
		|s_1-s_2|
		\\
		&
		+
		(T-t_i(s_2))^{l_i}
		\bigg|
		\frac{|w^{[1]}|^2 (t_i(s_1) - t_i(s_2) ) }{(T-t_i(s_1) ) (T-t_i(s_2) ) }
		+
		\frac{ (|w^{[1]}| + |w^{[2]}| )
			(|w^{[1]}| - |w^{[2]}| ) }{T-t_i(s_2)}
		\bigg|
		\\
		\lesssim \ &
		\big(T-t_i(\tau_i) \big)^{5l_i +3 }
		|s_1-s_2|
		+
		(T-t_i(\tau_i))^{l_i}
		\Big\{
		\left(T-t_i(\tau_i) \right)^{8 l_i+6} \langle \tilde{y}^{[i]} \rangle^2
		|s_1-s_2|
		\\
		&
		+
		\left(T-t_i(\tau_i) \right)^{2 l_i+1}
		\langle \tilde{y}^{[i]} \rangle
		\left[
		(T-t_i(\tau_i))^{6 l_i +5 }
		\langle \tilde{y}^{[i]} \rangle   |s_1-s_2|
		+
		\left(T-t_i(\tau_i) \right)^{2 l_i+2}
		|\tilde{v}^{[1]} - \tilde{v}^{[2]}|
		\right]
		\Big\}
		\\
		\sim \ &
		\big(T-t_i(\tau_i) \big)^{5l_i +3 }
		\left(|s_1-s_2|
		+
		\langle \tilde{y}^{[i]} \rangle
		|\tilde{v}^{[1]} - \tilde{v}^{[2]}|
		\right)
		\sim
		\tau_i^{-\frac{5l_i +3 }{4l_i+3}}
		\left(|s_1-s_2|
		+
		\langle \tilde{y}^{[i]} \rangle
		|\tilde{v}^{[1]} - \tilde{v}^{[2]}|
		\right).
	\end{aligned}
\end{equation}

By \eqref{qd24Jan10-4}, $T\ll 1$ makes $w^{[j]} + q^{[i]} \in \overline{B_5^+(0,\sigma_0^{-1}) } $.
Using
$ \psi_{\sigma_0}[\bm{\phi}] \in \mathcal{X}_{\delta_0,\sigma_0} $
and $|w^{[j]} |
\ll R^{-1} (T-t_i(s_j) )^{1/2} $ in \eqref{qd24Jan10-4},
we have
\begin{equation}\label{qd24Jan-5}
	\big| \psi_{\sigma_0}[\bm{\phi}]\big( w^{[j]} + q^{[i]},t_i(s_j) \big) \big|
	\lesssim \delta_0 (T-t_i(s_j))^{l_i}
	\sim
	\delta_0 \tau_i^{-\frac{l_i}{4l_i +3}}.
\end{equation}

If $\max\{ |\tilde{v}^{[1]} - \tilde{v}^{[2]}|, |s_1-s_2| \} > 1$,
by \eqref{qd24Jan-5}, $\delta_0=R^{-\frac{1}{5}}$, $|\tilde{y}^{[i]}|\le 16R$, for any $\varsigma_1\in (0,1/10)$, we have
\begin{equation}\label{qd24Jan-7}
	\frac{\big| \psi_{\sigma_0}[\bm{\phi}]\big( w^{[1]} + q^{[i]},t_i(s_1) \big)
		-
		\psi_{\sigma_0}[\bm{\phi}]\big( w^{[2]} + q^{[i]},t_i(s_2) \big)
		\big|}{\left(
		\max\left\{
		|\tilde{v}^{[1]} - \tilde{v}^{[2]}| ,
		|s_1-s_2|^{1/2} \right\}
		\right)^{\varsigma_1} }
	\lesssim  \delta_0 \tau_i^{-\frac{l_i}{4l_i +3}}
	\lesssim
	R^{-\frac{1}{10}}
	\langle \tilde{y}^{[i]} \rangle^{-\varsigma_1}
	\tau_i^{-\frac{l_i}{4l_i +3}} .
\end{equation}

If $\max\{ |\tilde{v}^{[1]} - \tilde{v}^{[2]}|, |s_1-s_2| \} \le 1$, by \eqref{qd24Jan10-4} and $T- t_i(s_j) \sim T- t_i(\tau_i)$, we have
$|w^{[1]}| \le R^{-1} (T-t_i(s_1))^{1/2}$,
$|w^{[1]} -w^{[2]} | + |t_i(s_1) - t_i(s_2)|^{1/2} \lesssim \left(T-t_i(s_1) \right)^{2 l_i+2}$.
Hence,
by the H\"older estimate \eqref{qd24Jan09-2} with $(x_*,t_*)=(w^{[1]}+q^{[i]}, t_i(s_1))$, $\rho =  \left(T-t_i(s_1) \right)^{2 l_i+2} \ln R$, where obviously $\left(T-t_i(s_1) \right)^{2 l_i+2} \ll \rho \le R^{-1} (T- t_i(s_1) )^{1/2 }$, then there exists a constant $\varsigma_1\in(0,1/10)$  independent of $T, \sigma_0$
such that
\begin{equation}\label{qd24Jan-6}
	\begin{aligned}
		&
		\big| \psi_{\sigma_0}[\bm{\phi}]\big( w^{[1]} + q^{[i]},t_i(s_1) \big)
		-
		\psi_{\sigma_0}[\bm{\phi}]\big( w^{[2]} + q^{[i]},t_i(s_2) \big)
		\big|
		\\
		\lesssim \ &
		\rho^{-\varsigma_1}
		\left[ \delta_0 (T-t_i(s_1))^{l_i}
		+ \rho^2 R^{-\frac{1}{4}} (T-t_i(s_1))^{-3 l_i -4}
		\right]
		\left(
		\max\left\{ |w^{[1]} - w^{[2]}|,
		|t_i(s_1) - t_i(s_2)|^{1/2} \right\}
		\right)^{\varsigma_1}
		\\
		\lesssim \ &
		\rho^{-\varsigma_1}
		\left[ \delta_0 (T-t_i(\tau_i))^{l_i}
		+ \rho^2 R^{-\frac{1}{4}} (T-t_i(\tau_i))^{-3 l_i -4}
		\right]
		\left(
		\left(T-t_i(\tau_i) \right)^{2 l_i+2}
		\max\left\{
		|\tilde{v}^{[1]} - \tilde{v}^{[2]}| ,
		|s_1-s_2|^{1/2} \right\}
		\right)^{\varsigma_1}
\\
\sim \ &
(\ln R)^{-\varsigma_1}
\left( \delta_0 + (\ln R)^2 R^{-\frac{1}{4}} \right)
\left(T-t_i(\tau_i)\right)^{l_i}
\left(
\max\left\{
|\tilde{v}^{[1]} - \tilde{v}^{[2]}| ,
|s_1-s_2|^{1/2} \right\}
\right)^{\varsigma_1}
\\
\lesssim \ &
R^{-\frac{1}{10}}
\langle \tilde{y}^{[i]} \rangle^{-\varsigma_1}
\tau_i^{-\frac{l_i}{4l_i +3}}
\left(
\max\left\{
|\tilde{v}^{[1]} - \tilde{v}^{[2]}| ,
|s_1-s_2|^{1/2} \right\}
\right)^{\varsigma_1} ,
	\end{aligned}
\end{equation}
where we used \eqref{qd24Jan10-4} for the second ``$\lesssim$''.

Combining \eqref{qd24Jan-2}, \eqref{qd24Jan-3}, \eqref{qd24Jan-4},
\eqref{qd24Jan-5}, \eqref{qd24Jan-7}, \eqref{qd24Jan-6},
we obtain
\begin{equation}\label{24Jan04-8}
\begin{aligned}
&
	\big| \mathcal{H}_{2,i} [\bm{\phi}](\tilde{y}^{[i]},t_i(\tau_i)) \big| \lesssim
\tau_i^{-1}
	\langle \tilde{y}^{[i]}  \rangle^{-2}
	\eta\Big(\frac{\tilde{y}^{[i]}}{4R}, 0 \Big)
	\lesssim
	R^{\frac{3}{2}}
	\tau_i^{-1}
	\langle \tilde{y}^{[i]}  \rangle^{-\frac{7}{2}}
	\eta\Big(\frac{\tilde{y}^{[i]}}{4R}, 0\Big) ,
\\
&
[\mathcal{H}_{2,i} \big[\bm{\phi} ](\cdot,t_i(\cdot)) \big]_{C^{\varsigma_1,\varsigma_1/2} (Q_{\tilde{y}^{[i]}, \tau_i}) }
\lesssim
\tau_i^{-1}
\langle \tilde{y}^{[i]}  \rangle^{-2}
\1_{|\tilde{y}^{[i]}| \le 16R }
\Big[
\tau_i^{-1} |\tilde{y}^{[i]}|^{2-\varsigma_1}
+
\left(
\langle \tilde{y}^{[i]}  \rangle^{-1}
+
\tau_i^{-1} \langle \tilde{y}^{[i]}  \rangle
\right)
|\tilde{y}^{[i]}|^{1-\varsigma_1}
\\
&
\qquad
+
R^{-\frac{1}{10}}
\langle \tilde{y}^{[i]}  \rangle^{-\varsigma_1}
\Big]
\lesssim
\tau_i^{-1}
\langle \tilde{y}^{[i]}  \rangle^{-2-\varsigma_1}
\1_{|\tilde{y}^{[i]}| \le 16R }
\lesssim
R^{\frac{3}{2}}
\tau_i^{-1}
\langle \tilde{y}^{[i]}  \rangle^{-\frac{7}{2}-\varsigma_1}
\1_{|\tilde{y}^{[i]}| \le 16R }.
\end{aligned}
\end{equation}

Recalling the norms given in \eqref{qd24Jan25-10},
by \eqref{24Jan04-7}, \eqref{24Jan04-8}, we have
$\|\mathcal{H}_{1,i} [\bm{\phi}](\cdot,t_i(\cdot))\|_{-1,\frac{9}{2},\tau_i^p,\mathbb{R}^5_+,\tau_i(0),\tau_i(T_{\sigma_0})} \lesssim R^{\frac{3}{2}}$, \\
$\|\mathcal{H}_{2,i} [\bm{\phi}](\cdot,t_i(\cdot))\|_{-1,\frac{7}{2},\tau_i^p,\varsigma_1,\mathbb{R}^4,\tau_i(0),\tau_i(T_{\sigma_0})} \lesssim R^{\frac{3}{2}}$ provided $\tau_i^p>16R$.
The choice of $\bm{\mu}_{,1,\sigma_0}[\bm{\phi}],  \bm{\xi}_{\sigma_0}[\bm{\phi}]$ in Lemma \ref{lem-9.2} meets the orthogonal equations \eqref{orth-1}. Namely, $\mathcal{H}_{1,i} [\bm{\phi}] (y^{[i]},t_i(\tau_i)), \mathcal{H}_{2,i} [\bm{\phi}](\tilde{y}^{[i]},t_i(\tau_i))$ satisfy the orthogonality conditions \eqref{gh-ortho-Oct20} with $n=5$ for $\tau_i \in \left(\tau_i(0), \tau_i(T_{\sigma_0}) \right)$.
Thus, for $T\ll 1$, Proposition \ref{prop-23Oct24-1} with $n=5,
\tau=\tau_i \in \big( \tau_i(0), \tau_i(T_{\sigma_0}) \big)$,
$\sigma=-1, a=\frac{5}{2}, \ell(\tau) = \tau_i^p, p\in (\frac{2}{5}, \frac{3}{5}), \iota \in \big(\frac{1}{10}(\frac{5}{2} p +1) , \frac{1}{4} \big)$, $\varsigma=\varsigma_1$, $g=\mathcal{H}_{1,i} [\bm{\phi}](y^{[i]},t_i(\tau_i))$,
$h=\mathcal{H}_{2,i} [\bm{\phi}](\tilde{y}^{[i]},t_i(\tau_i))$,
and $\tilde{Z}_0 \in C_c^\infty( \overline{\mathbb{R}_+^5} )$ satisfying \eqref{til-Z0-Oct20} (see Remark \ref{qd24Jan14-9-rem} for the existence of $\tilde{Z}_0$)
yields a mapping $\mathcal{T}_i^{{\rm in}}[\bm{\phi}] = \mathcal{T}_i^{{\rm in}}[\bm{\phi}](y^{[i]},\tau_i)$ with the estimate
\begin{equation*}
 |\mathcal{T}_i^{{\rm in}}[\bm{\phi}]|+\langle y^{[i]} \rangle  |\nabla_{y^{[i]}} \mathcal{T}_i^{{\rm in}}[\bm{\phi}]|
\lesssim
R^{\frac{3}{2}}
\tau_i^{-1}
\langle y^{[i]}  \rangle^{-\frac{5}{2}}
\sim
R^{-\frac{1}{2}}
R^2
(T-t_i(\tau_i))^{4l_i+3}
\langle y^{[i]}  \rangle^{-\frac{5}{2}}
\end{equation*}
for $ (y^{[i]}, t_i(\tau_i)) \in B_5^+(0,9R) \times (0,T_{\sigma_0})$.
For solving \eqref{z24Feb24-3}, it suffices to solve the fixed-point problem
\begin{equation*}
	\bm{\phi} = \mathcal{T}^{{\rm in}}[\bm{\phi}] := \big( \mathcal{T}_1^{{\rm in}}[\bm{\phi}](y^{[1]},\tau_1(t)),
	\mathcal{T}_2^{{\rm in}}[\bm{\phi}](y^{[2]},\tau_2(t) ),
	\dots, \mathcal{T}_{\mathfrak{o}}^{{\rm in}}[\bm{\phi}](y^{[\mathfrak{o}]},\tau_{\mathfrak{o}}(t) ) \big)
\end{equation*}
for $t\in \left(0,T_{\sigma_0}\right) ,
y^{[i]}  \in B_5^+(0,2R)$, $i=1,2,\dots, \mathfrak{o}$.
Due to the small quantity $R^{-\frac{1}{2}}$,  $\mathcal{T}^{{\rm in}}[\bm{\phi}] \in B_{{\rm in},\sigma_0}$.

For all $\bm{\phi} \in B_{{\rm in},\sigma_0}$,
the uniform H\"older continuity of $\nabla_{y^{[i]}} \mathcal{T}_i^{{\rm in}}[\bm{\phi}]$
in $t\in \left(0,T_{\sigma_0}\right) ,
y^{[i]}  \in \overline{B_5^+(0,8R)}$, $i=1,2,\dots, \mathfrak{o}$
follows from the parabolic regularity theory. Hence $\mathcal{T}^{\rm in}[\cdot]$ is a compact mapping from $B_{{\rm in},\sigma_0}$ to itself. The Schauder fixed-point theorem gives a fixed point $\bm{\phi}_{\sigma_0} = (\phi_{1,\sigma_0}, \phi_{2,\sigma_0},\dots, \phi_{\mathfrak{o},\sigma_0}) \in B_{{\rm in},\sigma_0}$. Moreover, Proposition \ref{prop-23Oct24-1} gives  $\phi_{i,\sigma_0}(\cdot, 0) = C_{{\rm in}, i,\sigma_0} \tilde{Z}_0$ in $B_5^+(0,2R)$ with a constant $C_{{\rm in}, i,\sigma_0}$ satisfying $|C_{{\rm in}, i,\sigma_0}| \lesssim (\tau_i(0))^{-1} R^{\frac{3}{2}} \sim T^{4l_i+3} R^{\frac{3}{2}}$.
The last uniform $C^{1+\varsigma_1,(1+\varsigma_1)/2}$ boundedness in $\overline{B_5^+(0,2R)} \times K$ follows from \eqref{24Jan04-7}, \eqref{24Jan04-8}, and the parabolic regularity theory.
\end{proof}

\begin{remark}
We did not apply the H\"older estimate \eqref{qd24Jan09-2} to $\psi_{\sigma_0}[\bm{\phi}]$ with $(x_*,t_*)=(q^{[i]}, t_i(\tau_i))$. Since due to \eqref{qd24Jan10-4}, we require $\rho \gtrsim \left(T-t_i(\tau_i) \right)^{2 l_i+2}
\langle \tilde{y}^{[i]} \rangle $. Then the term $\rho^{2-\varsigma_1} R^{-\frac{1}{4}} (T-t_i(\tau_i))^{-3 l_i -4} $ will lead to additional spatial growth $\langle \tilde{y}^{[i]} \rangle^2$, which can not be eliminated by $R^{-\frac{1}{4}}$.
\end{remark}

\subsection{Proof of Theorem \ref{main}}\label{qd24Jan29-1-sec}

\begin{proof}[Proof of Theorem \ref{main}]

Combining Lemma \ref{24Jan01-5-lem}, Lemma \ref{lem-9.2}, Lemma \ref{24Jan05-1-lem}, for $\sigma_0\in (0,T)$, we set
\begin{equation}\label{qd24Feb24-6}
	v_{\sigma_0}(x,t) :=
	\psi_{\sigma_0}(x,t)
	+
	\sum_{i=1}^{\mathfrak{o}}\bigg(
	U_{\mu_{i,\sigma_0},\xi_{\sigma_0}^{[i]}} (x)
	\eta\Big(\frac{x^{[i]}}{2\delta} \Big)
	+
	\Theta_{l_i}(x^{[i]},t)\eta\Big(\frac{x^{[i]} }{\delta} \Big)+
	\mu_{i,\sigma_0}^{-\frac{3}{2}} \phi_{i,\sigma_0}\Big(\frac{x-\xi_{\sigma_0}^{[i]}}{\mu_{i,\sigma_0}},t \Big) \eta\Big(\frac{x-\xi_{\sigma_0}^{[i]}}{\mu_{i,\sigma_0} R } \Big) \bigg)
\end{equation}
with $\mu_{i,\sigma_0} = \mu_{i,0} + \mu_{i,1,\sigma_0}$, and then $v_{\sigma_0}$ satisfies
\begin{equation}\label{qd24Jan15-1}
\left\{
\begin{aligned}
&
\partial_t v_{\sigma_0}=\Delta v_{\sigma_0}
\mbox{ \ in \ }  B_5^+(0,\sigma_0^{-1}) \times (0,T_{\sigma_0}) ,
\quad
\left(-\partial_{x_5} v_{\sigma_0} \right)(\tilde{x},0) =
\big(|v_{\sigma_0}|^{\frac{2}{3}} v_{\sigma_0} \big)(\tilde{x},0)   \mbox{ \ on \ } B_4(0,\sigma_0^{-1}) \times (0,T_{\sigma_0})  ,
\\
&
v_{\sigma_0}(x,0) =
\sum\limits_{i=1}^{\mathfrak{o}}
\bigg[
{\bm{b}}_{i,\sigma_0}
\cdot {\bm{ \tilde{e}} }_i\big( T^{-\frac{1}{2}} x^{[i]} \big)
+
\sum_{ \mathbf{p}\in \mathbb{N}^5, \| \mathbf{p}\|_{\ell_1} \le 4l_{\rm{max}}+4, p_5\in 2 \mathbb{N}}
C_{q^{[i]}, \mathbf{p}, \sigma_0} \varphi_{ q^{[i]}, \mathbf{p},0 }(x)
	+
	U_{\mu_{i,\sigma_0}(0),\xi_{\sigma_0}^{[i]}(0) } (x)
	\eta\Big(\frac{x^{[i]} }{2\delta} \Big)
\\
& \quad
	+
	\Theta_{l_i}(x^{[i]},0)\eta\Big(\frac{x^{[i]} }{\delta} \Big)
	+
	\big(\mu_{i,\sigma_0}(0) \big)^{-\frac{3}{2}}
	C_{{\rm in}, i,\sigma_0} \tilde{Z}_0\Big(\frac{x-\xi_{\sigma_0}^{[i]}(0) }{\mu_{i,\sigma_0}(0) } \Big) \eta
	\Big(\frac{x-\xi_{\sigma_0}^{[i]}(0)}{\mu_{i,\sigma_0}(0) R } \Big) \bigg]
\mbox{ \ in \ }	 B_5^+(0,\sigma_0^{-1}) ,
\end{aligned}
\right.
\end{equation}
where
${\bm{b}}_{i,\sigma_0} $ are constant vectors and  $C_{q^{[i]}, \mathbf{p},\sigma_0}$, $C_{{\rm in}, i,\sigma_0}$ are constants satisfying
$| {\bm{b}}_{i,\sigma_0} | \le C |\ln T| T^{ \frac{5}{3}l_i+\frac{1}{2} }$, $|C_{q^{[i]}, \mathbf{p},\sigma_0}| \le C e^{-\frac{9 \delta^2}{22 T} }$, and $|C_{{\rm in}, i,\sigma_0}| \le  C T^{4l_i+3} R^{\frac{3}{2}}$ with a constant $C$ independent of $T, \sigma_0$; ${\bm{ \tilde{e}} }_i$ are given in \eqref{24Dec01-1}; $\varphi_{q^{[i]}, \mathbf{p},0 } \in C_c^{\infty}(\overline{\mathbb{R}_+^5})$ and $\varphi_{q^{[i]}, \mathbf{p},0 } = 0$ in $\overline{\mathbb{R}^{5}_+  \backslash B_5^+(q^{[i]} ,2\delta)}$;
$\tilde{Z}_0 \in C^{\infty}(\overline{B_5^+(0,2R)})$.

Up to a subsequence, there exist constant vectors ${\bm{b}}_{i,0}$ and constants $C_{q^{[i]}, \mathbf{p},0}, C_{{\rm in}, i,0}$ such that ${\bm{b}}_{i,\sigma_0} \to {\bm{b}}_{i,0}$, $C_{q^{[i]}, \mathbf{p}, \sigma_0} \to C_{q^{[i]}, \mathbf{p},0}, C_{{\rm in}, i,\sigma_0} \to C_{{\rm in}, i,0}$ as $\sigma_0\downarrow 0$. For simplicity of exposition, we will often take a subsequence with $\sigma_0 \downarrow 0$ but will not state it.
By Lemma \ref{24Jan01-5-lem}, by the diagonal method, there exists a function $\psi_0$ such that $\psi_{\sigma_0} \to \psi_0$ in $L_{\rm{loc}}^{\infty} (\overline{\mathbb{R}_+^5} \times [0, T) )$, and then $\psi_0 \in \mathcal{X}_{\delta_0,0}$. Similarly,
by Lemma \ref{lem-9.2},  $\mu_{i,1,\sigma_0} \to \mu_{i,1,0}, \xi_{\sigma_0}^{[i]} \to \xi_0^{[i]}$ in $C_{\rm{loc}}^{1} ( [0,T) )$,
and then $\mu_{i,1,0}, \xi_0^{[i]}$ satisfy \eqref{orth-101} with $\sigma_0 =0$. Denote $\mu_{i,0} = \mu_{i,0} + \mu_{i,1,0}$.
By Lemma \ref{24Jan05-1-lem},
$\phi_{i,\sigma_0} \to \phi_{i,0}$ and $\nabla \phi_{i,\sigma_0} \to \nabla \phi_{i,0}$
in
$L^{\infty}\big(\overline{B_5^+(0,2R)} \times K \big)$ for any compact set $K\subset [0,T)$, and  then $\phi_{i,0}\in B_{{\rm in},0}$.

Now we can extend the definition \eqref{qd24Feb24-6} of $v_{\sigma_0}$ at $\sigma_0=0$ naturally. Then $v_{0}(x,0)\in C_c^{\infty}(\overline{\mathbb{R}_+^5})$. By the convergence argument above, $v_{\sigma_0} \to v_{0}$ in $L_{\rm{loc}}^{\infty} (\overline{\mathbb{R}_+^5} \times [0,T) )$ and
$v_{\sigma_0}(x,0) \to v_{0}(x,0)$ in $C_{\rm{loc}}^{k} (\overline{\mathbb{R}_+^5})$ for any $k \in \mathbb{N}$. One testing \eqref{qd24Jan15-1} with arbitrary functions in $C_c^{\infty}(\overline{\mathbb{R}_+^5} \times [0,T) )$ with $\sigma_0$ sufficiently small, then taking $\sigma_0\downarrow 0$ deduces that $v_{0}$ is a weak solution of
\eqref{fractextend}. Set $u=v_0, \mu_i=\mu_{i,0}, \xi^{[i]}=\xi_0^{[i]}$, and then we get Theorem \ref{main}.
\end{proof}

\begin{remark}
We can not use contraction mapping like \cite{Green16JEMS} to solve $\dot{\bm{\mu}}_{,1}, \dot{\bm{\xi} }$ under the current topology since we do not have gradient estimate of $\psi$ and can not get the Lipschitz continuity about $\dot{\bm{\mu}}_{,1}, \dot{\bm{\xi} }$ in \eqref{orth-6} with $T_{\sigma_0} = T$.
\end{remark}

\section*{Acknowledgements}

We appreciate the discussions about the vanishing adjustment method for the heat equation with Yifu Zhou, the distribution of the right-hand side of the outer problem with Jianfeng Zhao, and the linear theory for inner problems with Youquan Zheng. Qidi Zhang wrote part of this paper when visiting Wuhan University and appreciates
the hospitality of Yifu Zhou. Xiaoyu Zeng appreciates the warm hospitality from the University of British Columbia and the Department of Mathematics, and he is supported by China Scholarship Council and NSFC (Grant Nos. 12322106, 11931012, 12171379).

\appendix

\section{A type I solution of \texorpdfstring{\eqref{qd2023Nov19-1}}{$(\ref{qd2023Nov19-1})$}}

\begin{prop}\label{2023Nov23-3-prop}
	Suppose $\alpha \in (0,1)$, $p>1$, then
	$u_1$ given in \eqref{qd2023Nov21-u1-def} satisfies \eqref{qd2023Nov19-1} with $\left(t_0,t_1\right) = \left(-\infty,T\right)$ and \eqref{qd2023Nov21-2}.
\end{prop}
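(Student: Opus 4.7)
Write $u_1(x,t) = \pm C_{\alpha,p}\,\Psi(x_n, T-t)$ with
\[
\Psi(x_n,y) := \frac{x_n^{2\alpha}}{4^\alpha\Gamma(\alpha)} \int_0^\infty \frac{e^{-x_n^2/(4\tau)}}{\tau^{1+\alpha}}(y+\tau)^{-\alpha/(p-1)}\,d\tau.
\]
The function $K(x_n,\tau):=\frac{x_n^{2\alpha}}{4^\alpha\Gamma(\alpha)}\tau^{-1-\alpha}e^{-x_n^2/(4\tau)}$ is the Bessel heat kernel for the operator $L := \partial_{x_n x_n} + \frac{1-2\alpha}{x_n}\partial_{x_n} = x_n^{-(1-2\alpha)}\partial_{x_n}(x_n^{1-2\alpha}\partial_{x_n}\,\cdot\,)$ and satisfies $\partial_\tau K = L K$ in $\mathbb{R}_+\times(0,\infty)$ by direct computation. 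Since $u_1$ is independent of $\tilde x$ and $\partial_t = -\partial_y$, the PDE in \eqref{qd2023Nov19-1} reduces to $-\partial_y \Psi = L\Psi$. Using $\partial_y[(y+\tau)^{-\alpha/(p-1)}] = \partial_\tau[(y+\tau)^{-\alpha/(p-1)}]$, integrating by parts in $\tau$ (the boundary terms vanish because $K$ decays exponentially as $\tau\downarrow 0$ for $x_n>0$ and algebraically as $\tau\to\infty$), and commuting $L$ with the $\tau$-integral gives exactly $-\partial_y\Psi = L\Psi$.

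Next, compute the boundary trace. The substitution $\tau = x_n^2/(4s)$ recasts $\Psi$ as $\Psi(x_n,y)=\frac{1}{\Gamma(\alpha)}\int_0^\infty e^{-s}s^{\alpha-1}(y+x_n^2/(4s))^{-\alpha/(p-1)}ds$; passing to the limit $x_n\downarrow 0$ by dominated convergence and using $\int_0^\infty e^{-s}s^{\alpha-1}ds = \Gamma(\alpha)$ gives $\Psi(0,y) = y^{-\alpha/(p-1)}$, hence the first identity in \eqref{qd2023Nov21-2}. For the oblique boundary condition, decompose
\[
\Psi(x_n,y) = y^{-\alpha/(p-1)} - J(x_n,y), \qquad J(x_n,y) := \frac{x_n^{2\alpha}}{4^\alpha\Gamma(\alpha)}\int_0^\infty \frac{e^{-x_n^2/(4\tau)}}{\tau^{1+\alpha}}\bigl[y^{-\alpha/(p-1)}-(y+\tau)^{-\alpha/(p-1)}\bigr]d\tau,
\]
using $\int_0^\infty K(x_n,\tau)d\tau\equiv 1$. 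The substitution $\tau = y(z-1)$ recasts $J$ as
\[
J(x_n,y) = \frac{x_n^{2\alpha} y^{-\alpha-\alpha/(p-1)}}{4^\alpha\Gamma(\alpha)}\int_1^\infty \frac{e^{-x_n^2/(4y(z-1))}\bigl(1-z^{-\alpha/(p-1)}\bigr)}{(z-1)^{1+\alpha}}dz,
\]
where the integrand is integrable near $z=1$ (since $1-z^{-\alpha/(p-1)}\sim \frac{\alpha}{p-1}(z-1)$) and at infinity. Letting $x_n\downarrow 0$ gives $J(x_n,y) = x_n^{2\alpha}y^{-\alpha-\alpha/(p-1)}C_{\alpha,p}^{p-1} + o(x_n^{2\alpha})$, so $\partial_{x_n}J = 2\alpha x_n^{2\alpha-1}y^{-\alpha-\alpha/(p-1)}C_{\alpha,p}^{p-1}+o(x_n^{2\alpha-1})$. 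Therefore
\[
\lim_{x_n\downarrow 0}\frac{-x_n^{1-2\alpha}\partial_{x_n}u_1}{2\alpha} = \pm C_{\alpha,p}^{\,p}\,(T-t)^{-\alpha-\alpha/(p-1)} = \bigl(|u_1|^{p-1}u_1\bigr)(\tilde x,0,t),
\]
which is the nonlinear boundary condition.

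Finally, derive the two-sided bound in \eqref{qd2023Nov21-2}. The representation $\Psi(x_n,y)=\frac{1}{\Gamma(\alpha)}\int_0^\infty e^{-s}s^{\alpha-1}(y+x_n^2/(4s))^{-\alpha/(p-1)}ds$ immediately gives $\Psi \sim y^{-\alpha/(p-1)}$ when $x_n^2 \lesssim y$; in the complementary regime $x_n^2\gtrsim y$, the further substitution $\tau = x_n^2 u/4$ yields $\Psi(x_n,y) = \frac{1}{\Gamma(\alpha)}\int_0^\infty e^{-1/u}u^{-1-\alpha}(y+x_n^2u/4)^{-\alpha/(p-1)}du$, and the dominant contribution from $u\sim 1$ produces $\Psi\sim x_n^{-2\alpha/(p-1)}$ (the integral $\int_0^\infty e^{-1/u}u^{-1-\alpha-\alpha/(p-1)}du$ converges since $e^{-1/u}$ kills the singularity at $u=0$). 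Combining the two regimes yields $\Psi(x_n,y)\sim (\max\{y,x_n^2\})^{-\alpha/(p-1)}$.

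The main obstacle is the boundary-condition step: the integral $\int_0^\infty \tau^{-1-\alpha}(T-t+\tau)^{-\alpha/(p-1)}d\tau$ diverges at $\tau=0$, so one cannot simply take the pointwise limit $x_n\downarrow 0$ inside $\partial_{x_n}u_1$. The decomposition $u_1 = (T-t)^{-\alpha/(p-1)}\mp C_{\alpha,p}J$ isolates this divergence into the explicit constant prefactor and must be combined with the $\tau = y(z-1)$ substitution to recover precisely the integral defining $C_{\alpha,p}$, thereby closing the nonlinear identity exactly.
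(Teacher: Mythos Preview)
Your proof is correct and takes a genuinely different route from the paper's. The paper invokes the Stinga--Torrea extension theory \cite{stinga2017regularity} as a black box: it first computes the fractional derivative $(\partial_t)^\alpha(T-t)^a$ via \cite[Corollary~1.4]{stinga2017regularity} to determine $a=-\alpha/(p-1)$ and $C_{\alpha,p}$ so that the nonlocal equation $(\partial_t-\Delta_{\tilde x})^\alpha u(\tilde x,0,t)=(|u|^{p-1}u)(\tilde x,0,t)$ holds on the trace, and then cites \cite[Theorem~1.7, (1.5)]{stinga2017regularity} to identify $u_1$ as the corresponding extension, so that the interior PDE and the oblique boundary condition in \eqref{qd2023Nov19-1} come for free from that theorem. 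Your argument is entirely self-contained: you verify the interior equation directly from the Bessel heat-kernel identity $\partial_\tau K=LK$ and integration by parts, and you recover the nonlinear boundary condition by the decomposition $\Psi=y^{-\alpha/(p-1)}-J$ together with the change of variables $\tau=y(z-1)$, which produces exactly the integral defining $C_{\alpha,p}$. Your approach buys independence from the extension literature; the paper's buys brevity. For the asymptotic bound both arguments use the same substitution $\tau=x_n^2/(4s)$ and a splitting at the scale $x_n^2\sim T-t$.

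One small remark: the passage from $J(x_n,y)=x_n^{2\alpha}y^{-\alpha-\alpha/(p-1)}C_{\alpha,p}^{p-1}+o(x_n^{2\alpha})$ to the corresponding asymptotic for $\partial_{x_n}J$ does not follow formally from the asymptotic alone; you would need to check that $x_n F'(x_n)\to 0$ where $J=x_n^{2\alpha}F(x_n)$. This is true (it is $O(x_n^{2-2\alpha})$ after a further substitution), but since \eqref{qd2023Nov19-1} also records the difference-quotient form $-\lim_{x_n\downarrow 0}x_n^{-2\alpha}(u(\tilde x,x_n,t)-u(\tilde x,0,t))$, you can bypass this step entirely by reading off the boundary condition directly from your expansion of $J$.
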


\begin{proof}
	By \cite[Corollary 1.4]{stinga2017regularity}, for $t<T$,
	\begin{equation*}
		\begin{aligned}
			&
			\left(\partial_t -\Delta \right)^\alpha \left(T-t\right)^a =
			\left(\partial_t \right)^\alpha \left(T-t\right)^a
			\\
			= \ &
			\frac{1}{\left|\Gamma(-\alpha)\right|}
			\int_{-\infty}^t
			\frac{\left(T-t\right)^a -\left(T-\tau\right)^a }{\left(t-\tau\right)^{1+\alpha} } d\tau
			=
			\frac{1}{\left|\Gamma(-\alpha)\right|}
			\left(T-t\right)^{a-\alpha}
			\int_1^\infty \frac{1-z^a}{\left(z-1\right)^{1+\alpha} } dz  ,
		\end{aligned}
	\end{equation*}
	where we changed the variable $z=\frac{T-\tau}{T-t}$, and $\max\left\{ 0,a\right\} <\alpha <1$ is required for the integrability.

	Given $u\left(\tilde{x},0,t\right)$ of the form $ C\left(T-t\right)^a$, by \cite[Theorem 1.7, (1.7)]{stinga2017regularity}, in order to make
	\begin{equation*}
		\frac{\left|\Gamma(-\alpha)\right|}{4^\alpha \Gamma(\alpha)}
		\left(\partial_t -\Delta_{\tilde{x} } \right)^\alpha
		\left[ C \left(T-t\right)^a \right]
		=
		\left| C \left(T-t\right)^a \right|^{p-1} C \left(T-t\right)^a ,
	\end{equation*}
	we take $a=\frac{-\alpha}{p-1}$ with $p>1$, and $C= \pm C_{\alpha,p}$ with $C_{\alpha,p}>0$ given in \eqref{qd2023Nov21-u1-def}. Thus
	$ u\left(\tilde{x},0,t\right) = \pm C_{\alpha,p} \left(T-t\right)^{\frac{-\alpha}{p-1}}  $.
By \cite[Theorem 1.7, (1.5)]{stinga2017regularity}, for $x_n>0$,
	\begin{equation*}
		\begin{aligned}
			&
			u_1\left(\tilde{x},x_n,t\right) =
			\frac{x_n^{2\alpha}}{4^\alpha \Gamma(\alpha)}
			\int_0^\infty e^{-\frac{x_n^2}{4\tau}}
			\int_{\mathbb{R}^{n-1}} \left(4\pi \tau\right)^{-\frac{n-1}{2}} e^{-\frac{|z|^2}{4\tau}}
			\left(\pm C_{\alpha,p} \right)
			\left(T-t+\tau\right)^{\frac{-\alpha}{p-1}} dz \tau^{-1-\alpha} d\tau
			\\
			= \ &
			\frac{x_n^{2\alpha}}{4^\alpha \Gamma(\alpha)}
			\int_0^\infty e^{-\frac{x_n^2}{4\tau}}
			\left(\pm C_{\alpha,p} \right)
			\left(T-t+\tau\right)^{\frac{-\alpha}{p-1}}   \tau^{-1-\alpha} d\tau
			=
			\frac{ \pm C_{\alpha,p} }{\Gamma(\alpha)}
			\int_0^\infty e^{-s}
			\left(T-t+\frac{x_n^2}{4s}\right)^{\frac{-\alpha}{p-1}}
			s^{\alpha-1} ds
			\\
			= \ &
			\frac{\pm C_{\alpha,p}}{\Gamma(\alpha)}
			\left( \frac{x_n^2}{4}\right)^{\frac{-\alpha}{p-1}}
			\bigg(
			\int_0^{\frac{x_n^2}{T-t}}
			+
			\int_{\frac{x_n^2}{T-t}}^\infty
			\bigg) e^{-s} s^{\alpha-1}
			\left[\frac{4(T-t)}{x_n^2}+s^{-1} \right]^{\frac{-\alpha}{p-1}}
			ds
			\\
			\sim \ &
			\frac{\pm C_{\alpha,p}}{\Gamma(\alpha)}
			\left( \frac{x_n^2}{4}\right)^{\frac{-\alpha}{p-1}}
			\bigg[
			\int_0^{\frac{x_n^2}{T-t}}
			e^{-s} s^{\alpha-1+\frac{\alpha}{p-1}}
			ds
			+
			\left(\frac{T-t}{x_n^2}  \right)^{\frac{-\alpha}{p-1}}
			\int_{\frac{x_n^2}{T-t}}^\infty
			e^{-s} s^{\alpha-1}
			ds
			\bigg]
			\sim
			\pm \left(
			\max\left\{	T-t , x_n^2 \right\} \right)^{\frac{-\alpha}{p-1}}  .
		\end{aligned}
	\end{equation*}
\end{proof}

\section{Convolution estimates}\label{convo-sec}

\subsection{Inequalities toolkit}

	\begin{lemma}\label{23Dec23-3-lem}	
\begin{enumerate}
\item\label{23Oct10-2}
Given $a>0$, $b>0$, then for any $r>0$, $r^a e^{-b r} \le b \int_r^\infty  x^a e^{-b x} dx $.
Given $a < 0$, $b>0$, then for any $r>0$, $b \int_r^\infty  x^a e^{-bx} dx
		\le
		r^a e^{-br} $.
Given $a \in \mathbb{R}$, $b>0$, $r_0>0$, then for any $r>r_0$,
	\begin{equation}\label{23Oct07-1}  \int_r^\infty  x^a e^{-bx} dx
		\sim  C(a,b,r_0)  r^a  e^{-br} ,
	\end{equation}
	where $C(a,b,r_0)>0$ is a constant depending on $a, b, r_0$, and ``$\sim$'' does not depend on any parameters.
	
\item\label{23Oct10-3}
Given $a\in \mathbb{R}$, $r_0>0$, then for any $A>B\ge r_0$,
\begin{equation}\label{23Oct09-5}
	\int_B^A  r^a \left(1+\ln A -\ln r\right) e^{-r} dr
	\le
	\left(1+ \ln \left(B^{-1}A\right) \right)
	\int_B^A  r^a  e^{-r} dr
	\le C(a,r_0) B^a \left(1+\ln \left(B^{-1}A \right) \right) e^{-B} .
\end{equation}

\item\label{23Oct10-4}
Given $b>0$, $a\ge 0$, then $r^{a} e^{b r}$ is monotone increasing in $r>0$. Given $b>0$, $a<0$, $r_0>0$, then for any $r_2 > r_1 \ge r_0$,
\begin{equation}\label{23Oct09-1}
	r_1^{a} e^{b r_1} \le
	C(a,b,r_0)  r_2^{a} e^{b r_2}
\end{equation}
with a constant $C(a,b,r_0)>0$ depending on $a, b, r_0$.

Given $b<0$, $a\le 0$, then $r^{a} e^{b r}$ is monotone decreasing in $r>0$. Given $b<0$, $a> 0$, $r_0>0$, then for any $r_2 > r_1 \ge r_0$,
\begin{equation}
	r_1^{a} e^{b r_1} \ge
	C(a,b,r_0)  r_2^{a} e^{b r_2}
\end{equation}
with a constant $C(a,b,r_0)>0$ depending on $a, b, r_0$.

\item\label{23Oct01-6} 	Given $a<0$, $r_0>0$, then for any $r_2>r_1\ge r_0$,
\begin{equation}\label{23Oct09-3}
	r_1^a \left(1+ \left|\ln r_1 \right| \right) \ge
	C(a,r_0)
	r_2^a \left(1+ \left|\ln r_2 \right| \right) ,
\end{equation}
\begin{equation}\label{23Oct09-4}
	r_1^{a} \left(1+ \ln \left( r_0^{-1} r_1 \right) \right)
	\ge
	C(a)
	r_2^{a} \left(1+ \ln \left(r_0^{-1} r_2 \right)\right)
\end{equation}
with a constant $C(a,r_0)>0$ depending on $a, r_0$ and a constant $C(a)>0$ depending on $a$.

\item\label{23Oct10-7}
Given $a<-1$, $b\in \mathbb{R}$, $r_0>0$, then for any $r\ge r_0$,
\begin{equation}\label{23Oct10-1}
	\int_{r}^\infty x^a \left(1+ \left|\ln x\right|\right)^b dx \le C(a,b,r_0)
	r^{a+1} \left(1+ \left|\ln r\right|\right)^b
\end{equation}
with a constant $C(a,b,r_0)>0$ depending on $a,b,r_0$.
	
\end{enumerate}
\end{lemma}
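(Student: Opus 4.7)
The lemma collects five standard calculus facts about tail integrals and pointwise comparisons of functions of the form $x^a e^{\pm bx}(1+|\ln x|)^c$. The plan is to prove them in the order listed, since each part is either elementary in isolation or reduces to an earlier part via a single observation about monotonicity or integration by parts.

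For part \eqref{23Oct10-2}, I would first establish the two elementary sandwich bounds by monotonicity: when $a>0$ and $x\ge r$ one has $x^a\ge r^a$, giving $b\int_r^\infty x^a e^{-bx}\,dx\ge r^a e^{-br}$; the $a<0$ case is the reversed comparison. The asymptotic equivalence \eqref{23Oct07-1} for general $a\in\mathbb{R}$ then follows from one integration by parts,
\[
b\int_r^\infty x^a e^{-bx}\,dx = r^a e^{-br} + a\int_r^\infty x^{a-1}e^{-bx}\,dx,
\]
iterated finitely many times until the exponent on $x$ is sufficiently negative that the already-established $a<0$ bound shows the remainder is lower order uniformly on $r\ge r_0$. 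Part \eqref{23Oct10-3} is then immediate: on $[B,A]$ the log factor satisfies $1+\ln A-\ln r\le 1+\ln(B^{-1}A)$, pulls out of the integral, and the remaining $\int_B^A x^a e^{-x}\,dx\le \int_B^\infty x^a e^{-x}\,dx$ is controlled by part \eqref{23Oct10-2} at the endpoint $B\ge r_0$.

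For part \eqref{23Oct10-4}, monotonicity of $r^a e^{br}$ with $b>0$ is trivial for $a\ge 0$; for $a<0$ the derivative $r^{a-1}e^{br}(a+br)$ vanishes only at $r=-a/b$, giving a unique minimum, past which the function is strictly increasing. I would split the comparison of $r_1<r_2$ according to their position relative to $-a/b$, handling the bounded regime $[r_0,-a/b]$ by compactness to absorb everything into a constant $C(a,b,r_0)$; the $b<0$ case is the mirror image. Part \eqref{23Oct01-6} is the same strategy applied to $r^a(1+|\ln r|)$ and $r^a(1+\ln(r_0^{-1}r))$, which are eventually decreasing for $a<0$. The sharper constant $C(a)$ in \eqref{23Oct09-4}, independent of $r_0$, comes from writing the variable as $s=\ln(r_0^{-1}r)$: the function $r^a(1+s)=r_0^a e^{as}(1+s)$ has its extremum location and decay rate depending only on $a$ once the prefactor $r_0^a$ is factored out.

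Finally, for part \eqref{23Oct10-7} the natural device is integration by parts with $v=x^{a+1}/(a+1)$:
\[
\int_r^\infty x^a(1+|\ln x|)^b\,dx = -\frac{r^{a+1}(1+|\ln r|)^b}{a+1} + \frac{1}{a+1}\int_r^\infty x^{a+1}\,\partial_x(1+|\ln x|)^b\,dx,
\]
where the boundary term at infinity vanishes since $a+1<0$, and $\partial_x(1+|\ln x|)^b=O(x^{-1}(1+|\ln x|)^{b-1})$ so the remainder gains a factor $x^{-1}$ and, after a finite iteration stabilising the log power, is absorbed into the leading term. The only genuine obstacle in the whole lemma is bookkeeping rather than analysis: one must track precisely which parameters each constant depends on (distinguishing $C(a,b,r_0)$ from the sharper $C(a)$ in \eqref{23Oct09-4}) and verify that the bounds remain uniform as $r\downarrow r_0$, in particular across the borderline cases $r_0<1<r$ where $|\ln x|$ changes sign in its derivative.
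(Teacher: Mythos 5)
Your plan for parts \eqref{23Oct10-3}, \eqref{23Oct10-4}, and \eqref{23Oct01-6} matches the paper's argument (pull out the monotone log factor and use part \eqref{23Oct10-2}; derivative analysis plus compactness on the bounded-in-$r$ piece; the change of variable $r_0^{-1}r$ for \eqref{23Oct09-4}). For part \eqref{23Oct10-2} you take a different route: you iterate integration by parts on the whole ray $[r,\infty)$, whereas the paper splits $[r,\infty)=[r,Mr]\cup[Mr,\infty)$, using $x\sim r$ on the near piece (which yields both sides of the $\sim$ directly) and a single integration by parts with an absorbable error on the far piece. Your route can be made to work, but the remark ``the remainder is lower order'' needs care: after iterating, the coefficients $a(a-1)\cdots$ alternate in sign, and for $a\le 0$ the boundary terms alone do not obviously sandwich $r^{a}e^{-br}$; you need either the paper's large-$M$ absorption or the compactness of $\{r_0\le r\le C\}$ to control the lower bound. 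You hint at uniformity near $r_0$, but this is the step that actually carries the weight and should be spelled out.

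Part \eqref{23Oct10-7} is where your proposal has a genuine gap. First, the claim that the remainder ``gains a factor $x^{-1}$'' is incorrect: with $v=x^{a+1}/(a+1)$ and $du = O\big(x^{-1}(1+|\ln x|)^{b-1}\big)\,dx$, the new integrand is $x^{a}(1+|\ln x|)^{b-1}$ — the power of $x$ is unchanged, only the log exponent drops by one. Second, ``a finite iteration stabilising the log power'' does not stabilise anything, since each step reduces $b$ by $1$ indefinitely, and for $b<0$ the strategy fails outright: stopping once the log exponent is very negative leaves a remainder dominated by $\int_r^\infty x^a\,dx = r^{a+1}/|a+1|$, which cannot be bounded by $r^{a+1}(1+|\ln r|)^b$ uniformly in $r$ because $(1+|\ln r|)^b\to 0$ as $r\to\infty$. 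The paper instead performs a \emph{single} integration by parts and absorbs the remainder using the pointwise inequality $(1+\ln x)^{b-1}\le (1+\ln M)^{-1}(1+\ln x)^b$ for $x\ge M$ with $M=M(a,b)$ chosen large enough that the coefficient $|b|/\big(|a+1|(1+\ln M)\big)$ is strictly less than $1$; the bounded range $r\in[r_0,M]$ is handled by compactness. You should replace the iteration idea with this one-step absorption.
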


\begin{proof}

\eqref{23Oct10-2}. The first two inequalities are straightforward.
Given $a \in \mathbb{R}$, $b=1$, $r_0>0$, for $M>1$,
	\begin{equation*}
		\int_r^{Mr}  x^a e^{-x} dx
		\sim C(M,a) r^a  \int_r^{Mr} e^{-x} dx
		=
		C(M,a) r^a e^{-r}
		\left[
		1-e^{-(M-1)r}
		\right]
		.
	\end{equation*}
One taking $M=M(r_0)$ large to make
	$(M-1)r_0 \ge 1$, then
$ \int_r^{Mr}  x^a e^{-x} dx
		\sim
		C(M,a) r^a e^{-r} $.
	For the other part,
	\begin{equation*}
		\int_{Mr}^\infty  x^a e^{-x} dx
		=
		(Mr)^a e^{-Mr}
		+
		\int_{Mr}^\infty a x^{a-1} e^{-x} dx ,
\mbox{ \ and \ }
\left|\int_{Mr}^\infty a x^{a-1} e^{-x} dx\right|
\le
\frac{|a|}{M r_0}  \int_{Mr}^\infty x^a e^{-x} dx .
	\end{equation*}
One taking $M=M(|a|,r_0)$ large to make $\frac{|a|}{M r_0} < 9^{-1}$, then
	\begin{equation*}
	\int_{Mr}^\infty  x^a e^{-x} dx
		\sim
		(Mr)^a e^{-Mr}
		\le
		e^{(1-M)r_0} M^a r^a e^{-r}
		.
	\end{equation*}
	Thus, we conclude \eqref{23Oct07-1} for $b=1$, which implies the general case $b>0$ by
\begin{equation*}
\int_r^\infty  x^a e^{-bx} dx
=
b^{-a-1} \int_{br}^\infty z^a e^{-z} dz
\sim C(a,b,r_0) r^a e^{-br} .
\end{equation*}
	
	\medskip
	
\eqref{23Oct10-3} is deduced by \eqref{23Oct10-2}.

	\medskip

\eqref{23Oct10-4}. 	For $b>0$, $a<0$, $r_0>0$, denote
$f(r) = r^{a} e^{b r}$, then $
f'(r) = r^a e^{b r} \left( b + r^{-1} a\right) $ and $f'(r) > 0$ for $r\ge 9|a| b^{-1}$.
For $r_0\le r \le \max\left\{ 9|a| b^{-1}, r_0 \right\}$, we have $ r^{a} e^{b r}
\sim C(a,b,r_0) r_0^{a} e^{b r_0}$. Thus, \eqref{23Oct09-1} holds.

The inequality for the case $b<0$, $a>0$, $r_0>0$ is deduced by applying the above result to $r^{-a} e^{-b r}$.

	\medskip

\eqref{23Oct01-6}. Denote $g(r) = r^a \left(1+ \left|\ln r\right| \right)$. For $r>1$,
$ g'(r) =
r^{a-1} \left[ a\left(1+\ln r\right)+1 \right] $ and $g'(r) <0$
for $r \ge C_1$ with a large constant $C_1=C_1(a) >1$.
For $r \in \left[r_0,C_1 \right]$, $g(r) \sim C(C_1,r_0)$. Thus, \eqref{23Oct09-3} holds.

\eqref{23Oct09-4} is deduced by changing the variable $z= r_0^{-1} r$ and the above result.

\medskip

\eqref{23Oct10-7}. 	For any $M>\max\left\{ r_0,1\right\}$, $r\in \left[r_0,M\right]$, we have
\begin{equation*}
	\int_{r}^\infty x^a \left(1+ \left|\ln x\right|\right)^b dx \le C(a,b,r_0,M)
	r^{a+1} \left(1+ \left|\ln r\right|\right)^b .
\end{equation*}
For $r>M$,
\begin{equation*}
	\int_{r}^\infty x^a \left(1+ \ln x \right)^b dx
	=
	\frac{-1}{a+1} r^{a+1} \left(1+\ln r\right)^b - \frac{b}{a+1}
	\int_r^\infty x^a \left(1+\ln x\right)^{b-1} dx ,
\end{equation*}
where
\begin{equation*}
	\left| \frac{b}{a+1}
	\int_r^\infty x^a \left(1+\ln x\right)^{b-1} dx \right|
	\le \frac{1}{9 }
	\int_{r}^\infty x^a \left(1+ \ln x \right)^b dx
\end{equation*}
for $M=M(a,b)$ sufficiently large. Thus, we conclude \eqref{23Oct10-1}.

\end{proof}

\subsection{Beyond Neumann boundary value \texorpdfstring{$v(t) |\tilde{x}|^{-b} \1_{ l_1(t) \le |\tilde{x}| \le l_2(t) }$ and $v(t) \left(|\tilde{x}|+l_1(t)\right)^{-b} \1_{ |\tilde{x}| \le l_2(t) }$}{in the self-similar region}}

 \begin{lemma}\label{bd-annu-Lem}
 	
 	Let  $n>2$ be an integer, $t>t_0 \ge 0$, $b\in \mathbb{R}$. Suppose that
 	$v(s)\ge 0$ for $s\in [t_0,t]$;  $0\le l_1(s) \le l_2(s) \le C_{*} s^{\frac{1}{2}}$ for $s\in [t_0,t]$,
 	$C_l^{-1} l_1(t) \le l_1(s)$, and $l_2(s) \le C_l l_2(t)  $, for all $s\in[\max\{t_0,\frac{t}{2} \},t] $,  where $C_*>0,C_l\ge1$ are constants. Given $x=(\tilde{x},x_n)$, $\tilde{x}\in \mathbb{R}^{n-1}$, $x_n \ge 0$, $C_0>0$, we define
 	\begin{equation*}
 \mathcal{T}\left[f\right](x,t):=
 		\int_{t_0}^t\int_{\mathbb{R}^{n-1}}
 		(t-s)^{-\frac{n}{2}}
 		e^{-C_0 \frac{|\tilde{x}-y|^2+ x_n^2 }{t-s}}
 		f(y,s)
 		dy ds
 	\end{equation*}
for an admissible function $f$.
Then for any $\epsilon\in (0,1)$, we have
\begin{equation*}
\mathcal{T}\left[v(t) |\tilde{x}|^{-b} \1_{ l_1(t) \le |\tilde{x}| \le l_2(t) } \right](x,t) \le C
\Big(
w_1 + \sup\limits_{ t_1 \in \left[\max\{t_0,t/2 \},t \right] }
v(t_1)
w_2 \Big),
\end{equation*}
where $C$ is a constant only depending on $n,b,C_*,C_l,C_0$,
\begin{equation*}
\begin{aligned}
w_{1} := \ &
	t^{-\frac n2}
	e^{-C_0 \frac{x_n^2 }{t-t_0}}
	\left(\1_{|\tilde{x}|\le C_* \left[1+\left(1-\epsilon\right)^{\frac{1}{2}}\right] \epsilon^{-1} t^{\frac{1}{2}}} + e^{-C_0\left(1-\epsilon\right) \frac{|\tilde{x}|^2 }{t-t_0}} \1_{|\tilde{x}|> C_* \left[1+\left(1-\epsilon\right)^{\frac{1}{2}}\right] \epsilon^{-1} t^{\frac{1}{2}}} \right)
\\
&\times		
	\int_{t_0} ^{\max\{t_0,\frac{t}{2} \}}
	v(s)
	\begin{cases}
		l_2^{n-1-b}(s) ,
		&
		b<n-1
		\\
		\ln (\frac{l_2(s)}{ l_1(s) } ) ,
		&
		b=n-1
		\\
		l_1^{n-1-b}(s) ,
		&
		b>n-1
	\end{cases}
	d s,
\end{aligned}
\end{equation*}
\begin{equation*}
	w_2:=
	\begin{cases}
		\begin{cases}
			l_2^{1-b}(t)
			,
			&b< 1
			\\
			\langle 	\ln (\frac{l_2(t)}{l_1(t)} )
			\rangle
			,
			&b = 1
			\\
			l_1^{1-b}(t)
			,
			& b >1
		\end{cases}
		,
		&
		|x|\le l_1(t)
		\\
		\begin{cases}
			l_2^{1-b}(t) , &b<1
			\\
			\langle \ln (\frac{l_2(t)}{|x|}) \rangle
			, &b=1
			\\
			|x|^{1-b}
			, &1<b<n-1
			\\
			|x|^{2-n} \langle \ln(\frac{|x|}{l_1(t)}) \rangle
			, &b=n-1
			\\
			|x|^{2-n} l_1^{n-1-b}(t)
			,
			&b>n-1
		\end{cases}
		,&  l_1(t) < |x| \le l_2(t)
		\\
		\begin{cases}
			l_2^{n-1-b}(t)
			,
			&b<n-1
			\\
			\langle \ln (  \frac{l_2(t)}{l_1(t)} ) \rangle
			,
			&b=n-1
			\\
			l_1^{n-1-b}(t)
			,
			&b>n-1
		\end{cases}
		\begin{cases}
			|x|^{2-n} ,&  l_2(t) < |x| \le C_* t^{\frac{1}{2}}
			\\
			|x|^{-2} t^{2-\frac{n}{2}}
			e^{-2C_0 \frac{
					x_n^2  + \frac{64}{81}|\tilde{x}|^2 }{t}} ,
			& |x|> C_*  t^{\frac{1}{2}}
		\end{cases}
	\end{cases}
\end{equation*}
with the convention $\frac{l_2(s)}{ l_1(s) }=1$ $\left(\frac{l_2(t)}{ l_1(t) }=1\right)$ if $l_1(s) = l_2(s)=0$ $\left(l_1(t) = l_2(t)=0\right)$.

Under the additional assumption that $l_1(s) \le C_l l_1(t)  $ for all $s\in[\max\{t_0,\frac{t}{2} \},t] $, one replacing $\ln (\frac{l_2(s)}{ l_1(s) } )$ by $\langle\ln (\frac{l_2(s)}{ l_1(s) } ) \rangle$ in the definition of $w_1$, then the same upper bound is held for
$\mathcal{T}\left[
v(t) \left(|\tilde{x}|+l_1(t)\right)^{-b} \1_{ |\tilde{x}| \le l_2(t) }
 \right]$.

 \end{lemma}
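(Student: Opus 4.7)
The plan is to decompose the time interval as $[t_0,t]=[t_0,\max\{t_0,t/2\}]\cup[\max\{t_0,t/2\},t]$ and estimate the two pieces separately, which is the standard short-time/long-time split for convolution with the heat kernel. For the far-time piece, on $s\in[t_0,t/2]$ we have $t-s\sim t$, so the kernel $(t-s)^{-n/2}e^{-C_0(|\tilde x-y|^2+x_n^2)/(t-s)}$ is pointwise bounded by a multiple of $t^{-n/2}$. To extract the Gaussian factor in $|\tilde x|$ that appears in $w_1$ when $|\tilde x|$ is large, I would use the inequality $|\tilde x-y|^2\ge (1-\epsilon)|\tilde x|^2-\epsilon^{-1}|y|^2$ on the set where $|y|\le l_2(s)\le C_* s^{1/2}\le C_*((1-\epsilon)/\epsilon)^{1/2}t^{1/2}$, so that $\epsilon^{-1}|y|^2/(t-s)\lesssim 1$; on the complementary set where $|\tilde x|\le C_*[1+(1-\epsilon)^{1/2}]\epsilon^{-1}t^{1/2}$, the Gaussian factor is simply dropped. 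The remaining spatial integral $\int_{l_1(s)\le|y|\le l_2(s)}|y|^{-b}dy$ is computed directly in polar coordinates in $\mathbb{R}^{n-1}$ and splits into the three cases $b<n-1$, $b=n-1$, $b>n-1$, yielding exactly $w_1$.

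For the near-time piece on $s\in[t/2,t]$ (when nonempty), I first pull $v$ out by the supremum and replace $l_1(s),l_2(s)$ by constant multiples of $l_1(t),l_2(t)$ using the stability hypothesis, reducing to estimating $\int_{t/2}^t\!\int_{\mathbb{R}^{n-1}} (t-s)^{-n/2}e^{-C_0(|\tilde x-y|^2+x_n^2)/(t-s)}|y|^{-b}\mathbf 1_{C_l^{-1}l_1(t)\le|y|\le C_l l_2(t)}\,dy\,ds$. The key reduction is to perform the time integration first: using $\int_0^\infty \tau^{-n/2}e^{-C|z|^2/\tau}\,d\tau\lesssim|z|^{2-n}$ (or a truncated version on $[0,t/2]$), one obtains an upper bound of the form $\int_{|y|\in[C_l^{-1}l_1(t),C_ll_2(t)]} (|\tilde x-y|^2+x_n^2)^{(2-n)/2}|y|^{-b}dy$, up to the truncation remainder which gives the $t^{2-n/2}|x|^{-2}e^{-\cdots/t}$ Gaussian term needed only in the region $|x|>C_* t^{1/2}$. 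This is the main workhorse: it is a static (elliptic-type) integral that can be split into the regimes for $|x|$ appearing in $w_2$.

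The case analysis proceeds as follows. When $|x|\le l_1(t)$, use $|\tilde x-y|\sim|y|\sim|y|$ on the annulus, reducing to $\int_{l_1(t)}^{l_2(t)}r^{n-2-b-(n-2)}dr=\int_{l_1(t)}^{l_2(t)}r^{-b}dr$, giving the three subcases by $b$ vs $1$. When $l_1(t)<|x|\le l_2(t)$, split the annulus into $\{|y|\le |x|/2\}$, $\{|y|\ge 2|x|\}$ (both with $|\tilde x-y|\sim|x|$), and $\{|x|/2\le|y|\le 2|x|\}$; on the last set one replaces $|y|^{-b}$ by $|x|^{-b}$ and uses the standard computation $\int_{B(\tilde x,|x|)}|\tilde x-y|^{2-n}dy\lesssim|x|$, while on the other two sets one uses the constancy of $|\tilde x-y|$ in $y$ and integrates $r^{-b}$; collecting produces the five subcases in $w_2$ for this region. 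When $l_2(t)<|x|\le C_* t^{1/2}$, we have $|\tilde x-y|\sim|x|$ throughout the annulus, so the spatial integral factors as $|x|^{2-n}$ times the three-case radial integral of $|y|^{-b}$ over the annulus. When $|x|>C_* t^{1/2}$, the time integration cannot reach the full Newtonian regime, and I would instead keep the Gaussian factor and use $(t-s)^{-n/2}e^{-C_0|\tilde x-y|^2/(t-s)}\lesssim t^{-n/2}e^{-2C_0(|\tilde x|^2/2+x_n^2)/t}$ (with the $64/81$ factor arising from separating $|\tilde x-y|^2\ge \frac{8}{9}|\tilde x|^2-8|y|^2$ and absorbing the $|y|$ terms using $|y|\le l_2(t)\le C_* t^{1/2}\le|x|$); the leftover $|x|^{-2}$ comes from $\int_0^t (t-s)^{-n/2}e^{-\text{const}\cdot|x|^2/(t-s)}ds\lesssim|x|^{-2}t^{2-n/2}e^{-\cdots/t}$ via the change of variable $\tau=|x|^2/(t-s)$.

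The main obstacle will be the bookkeeping of the borderline logarithmic cases $b=1$ and $b=n-1$ (where the radial integrals contribute $\ln(l_2/l_1)$ or $\ln(l_2/|x|)$ or $\ln(|x|/l_1)$ factors) and matching them with the $\langle\ln\rangle$ form in $w_2$, as well as ensuring that when $|x|$ lies in or between $l_1(t)$ and $l_2(t)$ the correct logarithmic truncation emerges. For the final statement, replacing the annular integrand by $v(t)(|\tilde x|+l_1(t))^{-b}\mathbf 1_{|\tilde x|\le l_2(t)}$ requires only splitting $\{|y|\le l_1(t)\}$ and $\{l_1(t)<|y|\le l_2(t)\}$: on the first set one uses the constant bound $l_1(t)^{-b}$ and on the second set one has $|y|^{-b}$ up to constants; the only change relative to the annular case is that the radial integral of the almost-constant piece on $[0,l_1(t)]$ may add a constant of order $1$ (when $b<n-1$) or an extra logarithm (when $b=n-1$), which is exactly what the upgrade from $\ln(l_2(s)/l_1(s))$ to $\langle\ln(l_2(s)/l_1(s))\rangle$ absorbs under the additional stability hypothesis $l_1(s)\le C_ll_1(t)$.
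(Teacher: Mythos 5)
Your approach is genuinely different from the paper's. For the near-time piece ($s\in[t/2,t]$) the paper keeps the time integral throughout, splits it into sub-intervals according to where $t-s$ sits relative to $l_1^2(t),\ l_2^2(t),\ |\tilde x|^2$, and invokes incomplete Gamma-function asymptotics (their Lemma~\ref{23Dec23-3-lem}) piecewise, whereas you do the time integral first via the Newtonian-potential identity $\int_0^\infty\tau^{-n/2}e^{-C|z|^2/\tau}\,d\tau\sim|z|^{2-n}$ and reduce everything to a static annulus convolution. Your reduction is cleaner and more conceptual in the range $|x|\lesssim t^{1/2}$, and your subsequent annulus-splitting into $\{|y|\le|x|/2\}$, $\{|y|\ge2|x|\}$, $\{|x|/2\le|y|\le2|x|\}$ reproduces $w_2$ correctly in all three radial regimes. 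The treatment of the $v(t)(|\tilde x|+l_1(t))^{-b}\1_{|\tilde x|\le l_2(t)}$ variant by splitting off $\{|y|\le l_1(t)\}$ also agrees in spirit with the paper (which uses the pointwise comparison $(|\tilde x|+l_1(t))^{-b}\1_{|\tilde x|\le l_2(t)}\sim l_1^{-b}(t)\1_{|\tilde x|\le l_1(t)}+|\tilde x|^{-b}\1_{l_1(t)\le|\tilde x|\le l_2(t)}$ and re-applies the first conclusion).

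However there are two concrete gaps. First, your treatment of $|x|>C_*t^{1/2}$ is internally inconsistent: you propose to bound $(t-s)^{-n/2}e^{-C_0|\tilde x-y|^2/(t-s)}$ pointwise by $t^{-n/2}e^{-2C_0(\cdots)/t}$ \emph{and} to derive the leftover $|x|^{-2}$ from the time integral $\int(t-s)^{-n/2}e^{-c|x|^2/(t-s)}\,ds\lesssim|x|^{-2}t^{2-n/2}e^{-c'|x|^2/t}$. These two steps cannot both be used: taking the pointwise supremum of the kernel over $s\in[t/2,t]$ and then multiplying by the interval length $t/2$ only yields $t^{1-n/2}e^{-\cdots}$, which is larger by a factor $|x|^2/t\ge C_*^2$ than the claimed $|x|^{-2}t^{2-n/2}e^{-\cdots}$, so you would \emph{not} recover $w_2$ that way. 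The correct route is the second one alone, keeping $t-s$ in the exponential and integrating exactly as in the paper's $u_{233}$ and $\tilde u_2$ computations. Moreover the inequality $|\tilde x-y|^2\ge\frac89|\tilde x|^2-8|y|^2$ followed by ``absorbing the $|y|$ terms using $|y|\le C_*t^{1/2}\le|x|$'' does not work, because the term to be absorbed is $e^{8C_0|y|^2/(t-s)}$, and $t-s$ can be arbitrarily small on $[t/2,t]$ while $|y|$ stays of order $l_2(t)$. The paper avoids this by restricting to $|\tilde x|>9C_ll_2(t)$ and using the direct triangle inequality $|\tilde x-y|\ge|\tilde x|-|y|\ge\frac89|\tilde x|$ with no compensating term, and by handling the complementary regime (small $|\tilde x|$, large $x_n$) entirely through the $e^{-C_0x_n^2/(t-s)}$ factor in its Cases~1 and~2. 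You need that same dichotomy.

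Second, for the far-time piece your proposed inequality $|\tilde x-y|^2\ge(1-\epsilon)|\tilde x|^2-\epsilon^{-1}|y|^2$ (Young) produces an absorption constant $e^{C_0\epsilon^{-1}|y|^2/(t-t_0)}$ which, even with $|y|^2\lesssim t$ and $t-t_0\gtrsim t$, is of order $e^{c\epsilon^{-1}}$ and hence depends on $\epsilon$. The lemma however asserts that $C$ depends only on $n,b,C_*,C_l,C_0$ and not on $\epsilon$. The paper achieves this by the sharp radial inequality: when $|\tilde x|>Mt^{1/2}$ with $M=C_*\left[1+(1-\epsilon)^{1/2}\right]\epsilon^{-1}$ and $|y|\le C_*t^{1/2}$, one has $|\tilde x-y|\ge|\tilde x|-|y|\ge(1-M^{-1}C_*)|\tilde x|=(1-\epsilon)^{1/2}|\tilde x|$ with no $\epsilon$-dependent leftover, so the constant is genuinely uniform in $\epsilon$. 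Replacing your Young-inequality step by this triangle inequality is easy but necessary to match the stated statement.
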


 \begin{proof} In this proof, we assume $\int_{t_1}^{t_2}\cdots ds=0$ if $t_1\ge t_2$ and $\int_{\mathbb{R}^{n-1}} \1_{c_1 \le |y| \le c_2} \cdots dy =0$ if $c_1\ge c_2$.
 	We emphasize that all ``$\lesssim$'', ``$\sim$'' in this proof are independent of $\epsilon$, $t_0$.	
 	Denote $t_*:=\max\{t_0,\frac{t}{2} \}$.	
 	\begin{equation*}
 		\begin{aligned}
 			&	\mathcal{T}\left[v(t) |\tilde{x}|^{-b} \1_{ l_1(t) \le |\tilde{x}| \le l_2(t) } \right]
 			\lesssim
 			t^{-\frac{n}{2}}
 			e^{-C_0 \frac{x_n^2 }{t-t_0}}
 			\int_{t_0}^{t_*}
 			\int_{\mathbb{R}^{n-1}}
 			e^{-C_0 \frac{|\tilde{x}-y|^2 }{t-t_0}}
 			v(s) |y|^{-b} \1_{ l_1(s) \le |y| \le l_2(s) }
 			dy ds
 			\\
 			& +
 			\sup\limits_{ t_1 \in [t_*,t] } v(t_1)
 			\int_{t_*}^t
 			(t-s)^{-\frac{n}{2}}
 			e^{-C_0 \frac{x_n^2 }{t-s}}
 			\int_{\mathbb{R}^{n-1}}
 			e^{-C_0 \frac{|\tilde{x}-y|^2 }{t-s}}
 			|y|^{-b} \1_{ l_1(s) \le |y| \le l_2(s)}
 			dy ds
 			:=
 			u_{1} + \sup\limits_{ t_1 \in [t_*,t] } v(t_1) \tilde{u}_{2} .
 		\end{aligned}
 	\end{equation*}
Obviously,
\begin{equation*}
\tilde{u}_2 \le u_2:=
\int_{t_*}^t
(t-s)^{-\frac{n}{2}}
e^{-C_0 \frac{x_n^2 }{t-s}}
\int_{\mathbb{R}^{n-1}}
e^{-C_0 \frac{|\tilde{x}-y|^2 }{t-s}}
|y|^{-b} \1_{ C_l^{-1} l_1(t) \le |y| \le C_l l_2(t)}
dy ds.
\end{equation*}
 	
 	For $u_{1}$, notice $|y|\le C_* t^{\frac 12}$. For $|\tilde{x}|\le M t^{\frac 12}$, we use $e^{-C_0 \frac{|\tilde{x}-y|^2 }{t-t_0}} \le 1$;
 Given a constant $\epsilon\in (0,1)$, for $|\tilde{x}| > M t^{\frac 12}$ with $M= C_* \left[1+\left(1-\epsilon\right)^{\frac{1}{2}}\right] \epsilon^{-1}$, one has  $|\tilde{x}-y|\ge \left(1-M^{-1} C_*\right) |\tilde{x}| \ge \left(1-\epsilon\right)^{\frac{1}{2}}|\tilde{x}|$. Then, $u_1\lesssim w_1$.

 	Let us estimate $u_2$ in different regions.
 	
 	For $|\tilde{x}| \le 2^{-1} C_l^{-1} l_1(t)$,
 	we have $|\tilde{x}-y|^2 \ge 4^{-1} |y|^2$, and then
 	\begin{equation*}
 		\begin{aligned}
 			&	u_{2}
 			\le
 			\int_{t_*}^t
 			(t-s)^{-\frac{n}{2}}
 			e^{-C_0 \frac{x_n^2 }{t-s}}
 			\int_{\mathbb{R}^{n-1}}
 			e^{-4^{-1} C_0  \frac{|y|^2 }{t-s}}
 			|y|^{-b} \1_{ C_l^{-1} l_1(t) \le |y| \le C_l l_2(t)}
 			dy ds
 			\\
 			\sim \ &
 			\left(
 			\int_{t_*}^{t-\frac{l_2^2(t)}{(9C_*)^2} }
 			+
 			\int_{t-\frac{l_2^2(t)}{(9C_*)^2}}^{t-\frac{l_1^2(t)}{(9C_*)^2}}
 			+
 			\int_{t- \frac{l_1^2(t)}{(9C_*)^2} }^{t}
 			\right)
 			e^{-C_0 \frac{x_n^2 }{t-s}}
 			(t-s)^{-\frac{b+1}{2}}
 			\int_{4^{-1} C_0 C_{l}^{-2} \frac{l_1^2(t)}{t-s} }^{4^{-1} C_0  C_{l}^{2} \frac{l_2^2(t)}{t-s}}
 			e^{-z} z^{\frac{n-b}{2}-\frac{3}{2}}
 			dz ds
 			:=    u_{21} + u_{22} + u_{23} .
 		\end{aligned}
 	\end{equation*}

 	For $u_{21} $, using $e^{-z}\sim 1$ and $t_*\ge \frac{t}{2}$, we have
 	\begin{equation*}
 		u_{21}
 		\lesssim
 		x_n^{2-n}
 		\int_{2C_0 \frac{x_n^2}{t}}^{C_0 \frac{{(9C_*)^2} x_n^2}{l_2^2(t)}}
 		e^{-r} r^{\frac{n}{2} -2} dr
 		\begin{cases}
 			l_2^{n-b-1}(t)
 			,
 			& b<n-1
 			\\
 			\ln\left(C_l^2 \frac{l_2(t)}{l_1(t)} \right)  ,
 			& b=n-1
 			\\
 			l_1^{n-b-1}(t) ,
 			& b>n-1 ,
 		\end{cases}
 	\end{equation*}
 	where we always use the convention $\frac{l_2(t)}{l_1(t)}=1$ if $l_1(t) = l_2(t)=0$.
 	Since $n>2$,  we have $\frac{n}{2} -2>-1$. Then
 	\begin{equation*}
 		u_{21}\lesssim
 		\begin{cases}
 			l_2^{n-b-1}(t)
 			,
 			& b<n-1
 			\\
 			\ln\left(C_l^2 \frac{l_2(t)}{l_1(t)} \right)  ,
 			& b=n-1
 			\\
 			l_1^{n-b-1}(t) ,
 			& b>n-1
 		\end{cases}
 		\begin{cases}
 			l_2^{2-n}(t)
 			,&x_n\le l_2(t)
 			\\
 			x_n^{2-n} ,
 			& l_2(t)< x_n\le C_* t^{\frac{1}{2}}
 			\\
 			x_n^{-2} t^{2-\frac{n}{2}} e^{-2C_0 \frac{x_n^2}{t}} ,
 			&
 			x_n> C_* t^{\frac{1}{2}} ,
 		\end{cases}
 	\end{equation*}
 	where we used \eqref{23Oct07-1} for the case $x_n> C_* t^{\frac{1}{2}}$.

 	For $u_{22}$, since $\frac{l_1^2(t)}{(9C_*)^2} \le t-s \le \frac{l_2^2(t)}{(9C_*)^2}$, then
 	\begin{equation*}
 		\begin{aligned}
 			u_{22}	
 			\lesssim \ &
 			\int_{t-\frac{l_2^2(t)}{(9C_*)^2}}^{t-\frac{l_1^2(t)}{(9C_*)^2}}
 			e^{-C_0 \frac{x_n^2 }{t-s}}
 			(t-s)^{-\frac{b+1}{2}}
 			\begin{cases}
 				1,
 				&b<n-1
 				\\
 				\langle \ln ( \frac{ t-s}{l_1^2(t)})  \rangle
 				,
 				&b=n-1
 				\\
 				( \frac{l_1^2(t)}{t-s})^{\frac{n-b}{2} -\frac{1}{2} } ,
 				&b>n-1
 			\end{cases}
 			ds
 			\\
 			\sim \ &
 			\begin{cases}
 				x_n^{1-b}
 				\int_{C_0 \frac{(9C_*)^2 x_n^2}{l_2^2(t)}}^{C_0 \frac{(9C_*)^2 x_n^2}{l_1^2(t)}}
 				e^{-r} r^{\frac{b}{2}-\frac{3}{2}} dr
 				,
 				&b<n-1
 				\\
 				x_n^{2-n}
 				\int_{C_0 \frac{(9C_*)^2 x_n^2}{l_2^2(t)}}^{C_0 \frac{(9C_*)^2 x_n^2}{l_1^2(t)}}
 				e^{-r} r^{\frac{n}{2} -2}
 				\langle \ln (\frac{C_0 x_n^2}{r l_1^2(t)}) \rangle dr
 				,
 				&b=n-1
 				\\
 				l_1^{n-b-1}(t)
 				x_n^{2-n}
 				\int_{C_0 \frac{(9C_*)^2 x_n^2}{l_2^2(t)}}^{C_0 \frac{(9C_*)^2 x_n^2}{l_1^2(t)}}
 				e^{-r} r^{\frac{n}{2}-2} dr
 				,
 				&b>n-1 .
 			\end{cases}
 		\end{aligned}
 	\end{equation*}	
 	Since $n>2$, we have
 	\begin{equation*}
 		u_{22} \lesssim
 		\begin{cases}
 			\begin{cases}
 				l_2^{1-b}(t)
 				,
 				&b< 1
 				\\
 				\ln (\frac{l_2(t)}{l_1(t)} )
 				,
 				&b = 1
 				\\
 				l_1^{1-b}(t)
 				,
 				& b >1
 			\end{cases}
 			,& x_n\le l_1(t)
 			\\
 			\begin{cases}
 				l_2^{1-b}(t)
 				,
 				&b< 1
 				\\
 				\langle \ln (\frac{l_2(t)}{x_n}) \rangle
 				,
 				&b=1
 				\\
 				x_n^{1-b}
 				,
 				&1<b<n-1
 				\\
 				x_n^{2-n}
 				\langle \ln (\frac{x_n}{ l_1(t)}) \rangle
 				,
 				&b=n-1
 				\\
 				x_n^{2-n} l_1^{n-b-1}(t)
 				,
 				&b>n-1
 			\end{cases}
 			,& l_1(t) < x_n\le l_2(t)
 			\\
 			\begin{cases}
 				x_n^{-2} l_2^{3-b}(t)
 				e^{-C_0 \frac{(9C_*)^2 x_n^2}{l_2^2(t)}}
 				,
 				&b<n-1
 				\\
 				x_n^{-2} l_2^{4-n}(t)
 				\langle \ln (\frac{l_2(t)}{l_1(t)}) \rangle
 				e^{-C_0 \frac{(9C_*)^2 x_n^2}{l_2^2(t)}}
 				,
 				&b=n-1
 				\\
 				x_n^{-2}
 				l_2^{4-n}(t)
 				l_1^{n-b-1}(t)
 				e^{-C_0 \frac{(9C_*)^2 x_n^2}{l_2^2(t)}}
 				,
 				&b>n-1
 			\end{cases}
 			, &x_n>l_2(t) ,
 		\end{cases}
 	\end{equation*}
 	where we used \eqref{23Oct09-5}, \eqref{23Oct10-1} for the case $l_1(t) < x_n\le l_2(t)$, $b=n-1$,
 	and \eqref{23Oct07-1}, \eqref{23Oct09-5} for the case $x_n>l_2(t)$.

 	For $u_{23}$, by \eqref{23Oct07-1}, we have
 	\begin{equation*}
 		\begin{aligned}
 			&
 			u_{23} \lesssim
 			l_1^{n-b-3}(t)
 			\int_{t-\frac{l_1^2(t)}{(9C_*)^2}}^{t}
 			e^{-C_0
 				\left(
 				x_n^2 + 4^{-1} C_{l}^{-2}
 				l_1^2(t) \right) \frac{1}{t-s}}
 			(t-s)^{1-\frac{n}{2}}
 			ds
 			\\
 			= \ &
 			l_1^{n-b-3}(t)
 			\left[
 			C_0
 			\left(
 			x_n^2 + 4^{-1} C_{l}^{-2}
 			l_1^2(t) \right)
 			\right]^{2-\frac{n}{2}}
 			\int_{ C_0 (9C_*)^2
 				\frac{ x_n^2 + 4^{-1} C_l^{-2} l_1^2(t) }{l_1^2(t)}
 			}^\infty
 			e^{-r} r^{\frac{n}{2}-3} dr
 			\\
 			\lesssim \ &
 			l_1^{3-b}(t)
 			\left(
 			x_n^2 + 4^{-1} C_{l}^{-2}
 			l_1^2(t) \right)^{-1}
 			e^{- C_0 (9C_*)^2
 				\frac{ x_n^2 +4^{-1}C_l^{-2} l_1^2(t) }{l_1^2(t)}
 			}
 			\sim
 			l_1^{3-b}(t)
 			\left( x_n^2 + l_1^2(t) \right)^{-1}
 			e^{- C_0 \frac{ (9C_*)^2 x_n^2 }{l_1^2(t) }
 			}
 			.
 		\end{aligned}
 	\end{equation*}

 	In sum, when $n>2$, for $|\tilde{x}| \le 2^{-1} C_l^{-1} l_1(t)$, we have
 \begin{equation*}
u_2\lesssim w_{21}:=
\begin{cases}
	\begin{cases}
		l_2^{1-b}(t)
		,
		&b< 1
		\\
		\langle 	\ln (\frac{l_2(t)}{l_1(t)} )
		\rangle
		,
		&b = 1
		\\
		l_1^{1-b}(t)
		,
		& b >1
	\end{cases}
	,
	&
	x_n\le l_1(t)
	\\
	\begin{cases}
		l_2^{1-b}(t)
		,
		&b< 1
		\\
		\langle \ln (\frac{l_2(t)}{x_n}) \rangle
		,
		&b=1
		\\
		x_n^{1-b}
		,
		&1<b<n-1
		\\
		x_n^{2-n}
		\langle \ln (\frac{x_n}{ l_1(t)}) \rangle
		,
		&b=n-1
		\\
		x_n^{2-n} l_1^{n-1-b}(t)
		,
		&b>n-1
	\end{cases}
	,
	&
	l_1(t) < x_n\le l_2(t)
	\\
	\begin{cases}
		l_2^{n-1-b}(t)
		,
		& b<n-1
		\\
		\langle \ln( \frac{l_2(t)}{l_1(t)} )  \rangle,
		& b=n-1
		\\
		l_1^{n-1-b}(t) ,
		& b>n-1
	\end{cases}
	\begin{cases}
		x_n^{2-n} , & l_2(t) <x_n \le C_* t^{\frac{1}{2}}
		\\
		x_n^{-2} t^{2-\frac{n}{2}} e^{-2C_0 \frac{x_n^2}{t}} ,
		& x_n > C_* t^{\frac{1}{2}} ,
	\end{cases}
\end{cases}
 \end{equation*}
 	where we used \eqref{23Oct09-4} for the case $l_1(t) < x_n\le l_2(t)$, $b=n-1$, and Lemma \ref{23Dec23-3-lem} \eqref{23Oct10-4} for the case $x_n > C_* t^{\frac{1}{2}}$.
 	
 	\medskip
 	
 	For $2^{-1} C_l^{-1} l_1(t) < |\tilde{x}| \le \min\left\{9C_l l_2(t), 9C_* t^{\frac{1}{2}} \right\}$,
 	\begin{equation*}
 		\begin{aligned}
 			u_{2}
 			\le \ &  \int_{\frac{t}{2}}^t
 			(t-s)^{-\frac{n}{2}}
 			e^{-C_0 \frac{x_n^2 }{t-s}}
 			\int_{\mathbb{R}^{n-1}}
 			e^{-C_0 \frac{|\tilde{x}-y|^2 }{t-s}}
 			|y|^{-b}
 			\left(
 			\1_{ C_l^{-1} l_1(t) \le |y| < \frac{|\tilde{x}|}{2} }
 			+
 			\1_{\frac{|\tilde{x}|}{2} \le |y| \le 2|\tilde{x}| }
 			+
 			\1_{ 2|\tilde{x}| < |y| \le C_l l_2(t) }
 			\right)
 			dy ds
 			\\
 			:= \ &
 			u_{21} + u_{22} + u_{23} .
 		\end{aligned}
 	\end{equation*}
 	
 	For $u_{21}$, since $  |\tilde{x} -y|\ge \frac{|\tilde{x}|}{2}$, we have
 	\begin{equation*}
 		\begin{aligned}
 			&
 			u_{21}
 			\lesssim
 			\int_{\frac{t}{2}}^t
 			(t-s)^{-\frac{n}{2}}
 			e^{-C_0\left( x_n^2 + \frac{|\tilde{x}|^2}{4}\right) \frac{1}{t-s}}
 			ds
 			\begin{cases}
 				|\tilde{x}|^{n-1-b}, &
 				b<n-1
 				\\
 				\langle\ln ( \frac{|\tilde{x}|}{l_1(t)} ) \rangle,
 				&
 				b=n-1
 				\\
 				l_1^{n-1-b}(t), &
 				b>n-1
 			\end{cases}
 			\\
 			= \ &
 			\left[
 			C_0 \left(x_n^2+\frac{|\tilde{x}|^2}{4}\right)
 			\right]^{1-\frac{n}{2}}
 			\int_{2C_0 \frac{ x_n^2+\frac{|\tilde{x}|^2}{4} }{t}}^\infty
 			e^{-r} r^{\frac{n}{2} -2} dr
 			\begin{cases}
 				|\tilde{x}|^{n-1-b}, &
 				b<n-1
 				\\
 				\langle\ln ( \frac{|\tilde{x}|}{l_1(t)} ) \rangle,
 				&
 				b=n-1
 				\\
 				l_1^{n-1-b}(t), &
 				b>n-1  .
 			\end{cases}
 		\end{aligned}
 	\end{equation*}
 	Since $n>2$, using \eqref{23Oct07-1} for the case $x_n > t^{\frac{1}{2}}$,
 	we have
 	\begin{equation*}
 		u_{21}
 		\lesssim
 		\begin{cases}
 			|\tilde{x}|^{n-1-b}, &
 			b<n-1
 			\\
 			\langle\ln ( \frac{|\tilde{x}|}{l_1(t)} ) \rangle,
 			&
 			b=n-1
 			\\
 			l_1^{n-1-b}(t), &
 			b>n-1
 		\end{cases}
 		\begin{cases}
 			|x|^{2-n}
 			,
 			& x_n\le t^{\frac{1}{2}}
 			\\
 			|x|^{-2} t^{2-\frac{n}{2}}
 			e^{- 2C_0 \frac{x_n^2 }{t} }
 			,
 			& x_n > t^{\frac{1}{2}} .
 		\end{cases}
 	\end{equation*}
 	
 	For $u_{22}$, using $n>1$, we have
 	\begin{equation*}
 		\begin{aligned}
 			& u_{22} \lesssim
 			|\tilde{x}|^{-b}   \int_{\frac{t}{2}}^t
 			(t-s)^{-\frac{n}{2}}
 			e^{-C_0 \frac{x_n^2 }{t-s}}
 			\int_{\mathbb{R}^{n-1}}
 			e^{-C_0 \frac{|\tilde{x}-y|^2 }{t-s}}
 			\1_{|\tilde{x}-y| \le 3|\tilde{x}| }
 			dy ds
 			\\
 			\sim \ &
 			|\tilde{x}|^{-b}   \int_{\frac{t}{2}}^t
 			(t-s)^{-\frac{1}{2}}
 			e^{-C_0 \frac{x_n^2 }{t-s}}
 			\int_0^{9C_0 \frac{|\tilde{x}|^2}{t-s}}
 			e^{-z} z^{\frac{n}{2} -\frac{3}{2}}
 			dz ds
 \\
 \lesssim \ &
 |\tilde{x}|^{-b}
 \left[
 |\tilde{x}|^{n-1}
 \int_{\frac{t}{2}}^{t-\frac{|\tilde{x}|^2}{(99 C_*)^2}}
 (t-s)^{-\frac{n}{2}} e^{-C_0 \frac{x_n^2}{t-s}} ds
 +
 \int_{t-\frac{|\tilde{x}|^2}{(99 C_*)^2}}^t
 (t-s)^{-\frac{1}{2}}
 e^{-C_0 \frac{x_n^2 }{t-s}}
 ds
 \right]
 \\
 = \ &
 |\tilde{x}|^{-b}
 \left[
 |\tilde{x}|^{n-1}
 \left( C_0 x_n^2 \right)^{1-\frac{n}{2}}
 \int_{2C_0 \frac{ x_n^2}{t}}^{C_0 \frac{(99 C_*)^2 x_n^2}{|\tilde{x}|^2}}
 e^{-r} r^{\frac{n}{2}-2} dr
 +
 \left(C_0 x_n^2\right)^{\frac{1}{2}}
 \int_{C_0 \frac{(99 C_*)^2 x_n^2}{|\tilde{x}|^2}}^\infty
 e^{-r} r^{-\frac{3}{2}} dr
 \right].
 		\end{aligned}
 	\end{equation*}
 	Since $n>2$, using \eqref{23Oct07-1} to the case $|\tilde{x}|<x_n\le t^{\frac{1}{2}}$, and \eqref{23Oct07-1}, \eqref{23Oct09-1} to the case $x_n > t^{\frac{1}{2}} $, we have
 	\begin{equation*}
 		u_{22}
 		\lesssim
 		\begin{cases}
 			|\tilde{x}|^{1-b}, & x_n\le |\tilde{x}|
 			\\
 			x_n^{2-n} |\tilde{x}|^{n-1-b}
 			, &  |\tilde{x}|<x_n\le t^{\frac{1}{2}}
 			\\
 			x_n^{-2}
 			|\tilde{x}|^{n-1-b}
 			t^{2-\frac{n}{2}}
 			e^{-2C_0 \frac{x_n^2}{t}}
 			, &  x_n > t^{\frac{1}{2}} ,
 		\end{cases}
 	\end{equation*}
 where we assume that if $|\tilde{x}|\ge t^{\frac{1}{2}}$, the case $|\tilde{x}|<x_n\le t^{\frac{1}{2}}$ is vacuum, and the cases $x_n\le |\tilde{x}|$ and $x_n > t^{\frac{1}{2}}$ have the common part $t^{\frac{1}{2}} < x_n \le |\tilde{x}|$ with the same upper bound up to a multiplicity of a constant. The same convention is used for the other similar conditions.
 	
 	For $u_{23} $,
 	\begin{equation*}
 		\begin{aligned}
 			&
 			u_{23}
 			\le  \int_{\frac{t}{2}}^t
 			(t-s)^{-\frac{n}{2}}
 			e^{-C_0 \frac{x_n^2 }{t-s}}
 			\int_{\mathbb{R}^{n-1}}
 			e^{-\frac{C_0}{4} \frac{|y|^2 }{t-s}}
 			|y|^{-b}
 			\1_{ 2|\tilde{x}| < |y| \le C_l l_2(t) }
 			dy ds
 			\\
 			\sim \ &
 			\left(
 			\int_{\frac{t}{2}}^{t-\frac{l_2^2(t)}{(9C_*)^2}}
 			+
 			\int_{t-\frac{l_2^2(t)}{(9 C_*)^2}}^{t-\frac{|\tilde{x}|^2}{(99C_l C_*)^2}}
 			+
 			\int_{t-\frac{|\tilde{x}|^2}{{(99C_l C_*)^2}}}^t
 			\right)
 			(t-s)^{-\frac{b}{2}-\frac{1}{2}}
 			e^{-C_0 \frac{x_n^2 }{t-s}}
 			\int_{C_0 \frac{|\tilde{x}|^2}{t-s}}^{\frac{C_0 C_l^2}{4} \frac{l_2^2(t)}{t-s}}
 			e^{-z} z^{\frac{n-b}{2} -\frac{3}{2}}
 			dz
 			ds
 			:=
 			u_{231} + u_{232} + u_{233}  ,
 		\end{aligned}
 	\end{equation*}
 where $\frac{|\tilde{x}|^2}{(99C_l C_*)^2}\le \frac{l_2^2(t)}{(9C_*)^2} \le \frac{t}{2}$.
 	
 	For $u_{231}$, using $e^{-z}\sim 1$ for this part, we have
 	\begin{equation*}
 		\begin{aligned}
 			&
 			u_{231}
 			\lesssim
 			\int_{\frac{t}{2}}^{t-\frac{l_2^2(t)}{(9C_*)^2}}
 			(t-s)^{-\frac{n}{2}}
 			e^{-C_0 \frac{x_n^2 }{t-s}}
 			ds
 			\begin{cases}
 				l_2^{n-1-b}(t)
 				,
 				&b<n-1
 				\\
 				\langle \ln \left( |\tilde{x}|^{-1} l_2(t)\right) \rangle
 				,
 				&b=n-1
 				\\
 				|\tilde{x}|^{n-1-b}
 				,
 				&b>n-1
 			\end{cases}
 			\\
 			= \ &
 			\left( C_0 x_n^2\right)^{1-\frac{n}{2}}
 			\int_{2C_0 \frac{x_n^2}{t}}^{C_0 \frac{(9C_*)^2 x_n^2}{l_2^2(t)}}
 			e^{-r} r^{\frac{n}{2}-2} dr
 			\begin{cases}
 				l_2^{n-1-b}(t)
 				,
 				&b<n-1
 				\\
 				\langle \ln \left( |\tilde{x}|^{-1} l_2(t)\right) \rangle
 				,
 				&b=n-1
 				\\
 				|\tilde{x}|^{n-1-b}
 				,
 				&b>n-1 .
 			\end{cases}
 		\end{aligned}
 	\end{equation*}
 	Since $n>2$, using \eqref{23Oct07-1} for the case $x_n > C_* t^{\frac{1}{2}}$, we have
 	\begin{equation*}
 		u_{231} \lesssim
 		\begin{cases}
 			l_2^{n-1-b}(t)
 			,
 			&b<n-1
 			\\
 			\langle \ln (  \frac{l_2(t)}{|\tilde{x}|} ) \rangle
 			,
 			&b=n-1
 			\\
 			|\tilde{x}|^{n-1-b}
 			,
 			&b>n-1
 		\end{cases}
 		\begin{cases}
 			l_2^{2-n}(t), & x_n\le l_2(t)
 			\\
 			x_n^{2-n}
 			, &  l_2(t) < x_n \le C_* t^{\frac{1}{2}}
 			\\
 			x_n^{-2} t^{2-\frac{n}{2}}
 			e^{-2C_0 \frac{x_n^2}{t}}
 			, &  x_n > C_*  t^{\frac{1}{2}}  .
 		\end{cases}
 	\end{equation*}

For $u_{232}$, since $\frac{|\tilde{x}|^2}{(99C_l C_*)^2}  \le t-s \le \frac{l_2^2(t)}{(9 C_*)^2}$, we have
 	\begin{equation*}
 		\begin{aligned}
 			& u_{232}
 			\lesssim
 			\begin{cases}
 				\int_{t-\frac{l_2^2(t)}{(9 C_*)^2}}^{t-\frac{|\tilde{x}|^2}{(99C_l C_*)^2}}
 				(t-s)^{-\frac{b}{2}-\frac{1}{2}}
 				e^{-C_0 \frac{x_n^2 }{t-s}} ds, &b<n-1
 				\\
 				\int_{t-\frac{l_2^2(t)}{(9 C_*)^2}}^{t-\frac{|\tilde{x}|^2}{(99C_l C_*)^2}}
 				(t-s)^{-\frac{n}{2}}
 				e^{-C_0 \frac{x_n^2 }{t-s}}	\langle \ln \left( |\tilde{x}|^{-2} (t-s)\right) \rangle ds
 				, &b=n-1
 				\\
 				|\tilde{x}|^{n-1-b}
 				\int_{t-\frac{l_2^2(t)}{(9 C_*)^2}}^{t-\frac{|\tilde{x}|^2}{(99C_l C_*)^2}}
 				(t-s)^{-\frac{n}{2}}
 				e^{-C_0 \frac{x_n^2 }{t-s}}	
 				ds,
 				&b>n-1
 			\end{cases}
 			\\
 			= \ &
 			\begin{cases}
 				\left(C_0 x_n^2\right)^{\frac{1-b}{2}}
 				\int_{C_0 \frac{(9 C_*)^2 x_n^2}{l_2^2(t)}}^{C_0 \frac{(99C_l C_*)^2x_n^2}{|\tilde{x}|^2}}
 				e^{-r} r^{\frac{b}{2} -\frac{3}{2}} dr
 				, &b<n-1
 				\\
 				(C_0 x_n^2)^{1-\frac{n}{2}}
 				\int_{C_0 \frac{(9 C_*)^2 x_n^2}{l_2^2(t)}}^{C_0 \frac{(99C_l C_*)^2 x_n^2}{|\tilde{x}|^2}}
 				e^{-r} r^{\frac{n}{2} - 2}
 				\langle \ln(\frac{C_0 x_n^2}{r|\tilde{x}|^2}) \rangle dr
 				, &b=n-1
 				\\
 				|\tilde{x}|^{n-1-b}
 				(C_0 x_n^2)^{1-\frac{n}{2}}
 				\int_{C_0 \frac{(9 C_*)^2 x_n^2}{l_2^2(t)}}^{C_0 \frac{(99C_l C_*)^2 x_n^2}{|\tilde{x}|^2}}
 				e^{-r} r^{\frac{n}{2}-2} dr
 				,
 				&b>n-1 .
 			\end{cases}
 		\end{aligned}
 	\end{equation*}
 	Since $n>2$, we have
 	\begin{equation*}
 			u_{232}
 			\lesssim
 			\begin{cases}
 				\begin{cases}
 					l_2^{1-b}(t)
 					, &b<1
 					\\
 					\left|	\ln (\frac{l_2(t)}{|\tilde{x}|}) \right|
 					, &b=1
 					\\
 					|\tilde{x}|^{1-b}
 					, &b>1
 				\end{cases}
 				,& x_n \le |\tilde{x}|
 				\\
 				\begin{cases}
 					l_2^{1-b}(t) , &b<1
 					\\
 					\langle \ln (\frac{l_2(t)}{x_n}) \rangle
 					, &b=1
 					\\
 					x_n^{1-b}
 					, &1<b<n-1
 					\\
 					x_n^{2-n} \langle \ln(\frac{x_n}{|\tilde{x}|}) \rangle
 					, &b=n-1
 					\\
 					x_n^{2-n} |\tilde{x}|^{n-1-b}
 					,
 					&b>n-1
 				\end{cases}
 				,&  |\tilde{x}| < x_n \le l_2(t)
 				\\
 				\begin{cases}
 					x_n^{-2} l_2^{3-b}(t)
 					e^{-C_0 \frac{(9 C_*)^2 x_n^2}{l_2^2(t)} }
 					, &b<n-1
 					\\
 					x_n^{-2} l_2^{4-n}(t) \langle \ln (\frac{l_2(t)}{|\tilde{x}|})\rangle e^{-C_0 \frac{(9 C_*)^2 x_n^2}{l_2^2(t)}}
 					, &b=n-1
 					\\
 					x_n^{-2} |\tilde{x}|^{n-1-b}
 					l_2^{4-n}(t)
 					e^{-C_0 \frac{(9 C_*)^2 x_n^2}{l_2^2(t)}}
 					,
 					&b>n-1
 				\end{cases}
 				,&  x_n > l_2(t) ,
 			\end{cases}
 	\end{equation*}
 	where \eqref{23Oct07-1}, \eqref{23Oct09-5} are utilized for the case $x_n > l_2(t)$;
 	for the case $|\tilde{x}| < x_n \le l_2(t)$, $b=n-1$, we used $n>2$, \eqref{23Oct10-1}, \eqref{23Oct09-5} to get
 	\begin{equation*}
 	\begin{aligned}
 &
 		\bigg(
 		\int_{C_0 \frac{(9 C_*)^2 x_n^2}{l_2^2(t)}}^{1}
 		+
 		\int_{1}^{C_0 \frac{(99C_l C_*)^2 x_n^2}{|\tilde{x}|^2}}
 		\bigg)
 		e^{-r} r^{\frac{n}{2} - 2}
 		\langle \ln(\frac{C_0 x_n^2}{r|\tilde{x}|^2}) \rangle dr
 		\\
 		\lesssim \ &
 		\left( C_0 \frac{x_n^2}{|\tilde{x}|^2}\right)^{\frac{n}{2}-1}
 		\int_{C_0 \frac{x_n^2}{|\tilde{x}|^2}}^{\frac{l_2^2(t)}{(9 C_*)^2 |\tilde{x}|^2}} z^{-\frac{n}{2}} \langle \ln z\rangle dz
 		+
 		\langle \ln (\frac{x_n}{|\tilde{x}|})\rangle
 		\sim
 		\langle \ln (\frac{x_n}{|\tilde{x}|})\rangle .
 		\end{aligned}
 	\end{equation*}
 	
 	For $u_{233}$, by \eqref{23Oct07-1}, then
 	\begin{equation*}
 	\begin{aligned}
 		u_{233} \lesssim \ &
 		|\tilde{x}|^{n-3-b}
 		\int_{t-\frac{|\tilde{x}|^2}{(99C_l C_*)^2 } }^t
 		(t-s)^{1-\frac{n}{2}}
 		e^{-C_0 \frac{ |x|^2 }{t-s}}
 		ds
 		=
 		|\tilde{x}|^{n-3-b}
 		\left( C_0 |x|^2\right)^{2-\frac{n}{2}}
 		\int_{C_0 \frac{(99C_l C_*)^2 |x|^2}{|\tilde{x}|^2}}^\infty
 		e^{-r} r^{\frac{n}{2}-3} dr
 		\\
 		\lesssim \ &
 		|x|^{-2}
 		|\tilde{x}|^{3-b}
 		e^{-C_0 \frac{(99C_l C_*)^2 |x|^2}{|\tilde{x}|^2}}
 	\sim
 	|x|^{-2}
 	|\tilde{x}|^{3-b}
 	e^{-C_0 \frac{(99C_l C_*)^2 x_n^2}{|\tilde{x}|^2}}
 		.
 			\end{aligned}
 	\end{equation*}
 	
 	In sum, when $n>2$, for $2^{-1} C_l^{-1} l_1(t) < |\tilde{x}| \le \min\left\{9C_l l_2(t), 9C_* t^{\frac{1}{2}} \right\}$,
 	through tedious comparison, we have
 \begin{equation*}
 u_2\lesssim w_{22}:= 	
 \begin{cases}
 	\begin{cases}
 		l_2^{1-b}(t)
 		, &b<1
 		\\
 		\langle	\ln (\frac{l_2(t)}{|\tilde{x}|}) \rangle
 		, &b=1
 		\\
 		|\tilde{x}|^{1-b}
 		, &1<b<n-1
 		\\
 		|\tilde{x}|^{2-n}  \langle\ln ( \frac{|\tilde{x}|}{l_1(t)} ) \rangle,
 		&
 		b=n-1
 		\\
 		|\tilde{x}|^{2-n} 	l_1^{n-1-b}(t), &
 		b>n-1
 	\end{cases}
 	,& x_n \le |\tilde{x}|
 	\\
 	\begin{cases}
 		l_2^{1-b}(t) , &b<1
 		\\
 		\langle \ln (\frac{l_2(t)}{x_n}) \rangle
 		, &b=1
 		\\
 		x_n^{1-b}
 		, &1<b<n-1
 		\\
 		x_n^{2-n} \langle \ln(\frac{x_n}{l_1(t)}) \rangle
 		, &b=n-1
 		\\
 		x_n^{2-n} l_1^{n-1-b}(t)
 		,
 		&b>n-1
 	\end{cases}
 	,&  |\tilde{x}| < x_n \le l_2(t)
 	\\
 	\begin{cases}
 		l_2^{n-1-b}(t)
 		,
 		&b<n-1
 		\\
 		\langle \ln (  \frac{l_2(t)}{l_1(t)} ) \rangle
 		,
 		&b=n-1
 		\\
 		l_1^{n-1-b}(t)
 		,
 		&b>n-1
 	\end{cases}
 	\begin{cases}
 		x_n^{2-n} ,&  l_2(t) < x_n \le C_* t^{\frac{1}{2}}
 		\\
 		x_n^{-2} t^{2-\frac{n}{2}}
 		e^{-2C_0 \frac{x_n^2}{t}} ,
 		& x_n> C_* t^{\frac{1}{2}} ,
 	\end{cases}
 \end{cases}
 \end{equation*}
 	where we used Lemma \ref{23Dec23-3-lem} \eqref{23Oct10-4} for the case $x_n > C_* t^{\frac{1}{2}}$;
 	\eqref{23Oct09-4} for the case $x_n \le |\tilde{x}|$, $b=n-1$, and the case $|\tilde{x}| < x_n \le l_2(t)$, $b=n-1$.
 	
 	\medskip
 	
 	For $|\tilde{x}| > \min\left\{9C_l l_2(t), 9C_* t^{\frac{1}{2}} \right\} $, we estimate $\tilde{u}_2$ instead of $u_2$. For $\tilde{u}_2$, note that  $ |\tilde{x}-y|^2 \ge \frac{64}{81}|\tilde{x}|^2 $, then
 	\begin{equation*}
 		\begin{aligned}
 			&
 			\tilde{u}_2 \le
 			\int_{\frac{t}{2}}^t
 			(t-s)^{-\frac{n}{2}}
 			e^{-C_0\left(
 				x_n^2  + \frac{64}{81} |\tilde{x}|^2
 				\right) \frac{1}{t-s}}
 			\int_{\mathbb{R}^{n-1}}
 			|y|^{-b} \1_{ C_l^{-1} l_1(t) \le |y| \le C_l l_2(t)}
 			dy ds
 			\\
 			\lesssim \ &
 			\left[
 			C_0\left(
 			x_n^2  + \frac{64}{81} |\tilde{x}|^2
 			\right)
 			\right]^{1-\frac{n}{2}}
 			\int_{2C_0\frac{
 					x_n^2  + \frac{64}{81}|\tilde{x}|^2 }{t}}^\infty
 			e^{-r} r^{\frac{n}{2}-2} dr
 			\begin{cases}
 				l_2^{n-1-b}(t) ,
 				&  b<n-1
 				\\
 				\ln  (C_l^2 \frac{l_2(t)}{l_1(t)} ) ,
 				&  b=n-1
 				\\
 				l_1^{n-1-b}(t) ,
 				&  b>n-1 .
 			\end{cases}
 		\end{aligned}
 	\end{equation*}
 	Since $n>2$, using \eqref{23Oct07-1} for the case $|x| > t^{\frac{1}{2}}$, we have
 \begin{equation*}
\tilde{u}_2\lesssim w_{23}:= \begin{cases}
	l_2^{n-1-b}(t) ,
	&  b<n-1
	\\
	\langle	\ln (  \frac{l_2(t)}{l_1(t)} ) \rangle,
	&  b=n-1
	\\
	l_1^{n-1-b}(t) ,
	&  b>n-1
\end{cases}
\begin{cases}
	|x|^{2-n}, &|x|\le t^{\frac{1}{2}}
	\\
	|x|^{-2} t^{2-\frac{n}{2}} e^{-2C_0 \frac{
			x_n^2  + \frac{64}{81}|\tilde{x}|^2 }{t}}
	,
	&  |x| > t^{\frac{1}{2}} .
\end{cases}
 \end{equation*}

Since $w_{21} \sim w_{22}$ when $|\tilde{x}|\sim l_1(t)$, $\min\left\{9C_l l_2(t), 9C_* t^{\frac{1}{2}} \right\} \sim l_2(t)$ due to the assumption $l_2(t)\le C_* t^{\frac{1}{2}}$, and $w_{22} \sim w_{23}$ when $|\tilde{x}|\sim l_2(t)$, we get the first conclusion
\begin{equation*}
	\mathcal{T}\left[v(t) |x|^{-b} \1_{ l_1(t) \le |x| \le l_2(t) } \right] \lesssim
	w_1 + \sup\limits_{ t_1 \in \left[\max\{t_0,t/2 \},t \right] }
	v(t_1)
	\left(w_{21}\1_{|\tilde{x}|\le l_1(t)}
	+
	w_{22}\1_{ l_1(t)< |\tilde{x}|\le l_2(t)}
	+
	w_{23}\1_{|\tilde{x}|> l_2(t)}
	\right),
\end{equation*}
where $ w_{21}\1_{|\tilde{x}|\le l_1(t)}
+
w_{22}\1_{ l_1(t)< |\tilde{x}|\le l_2(t)}
+
w_{23}\1_{|\tilde{x}|> l_2(t)}
 \sim w_2$.

The upper bound of $\mathcal{T}\left[
v(t) \left(|\tilde{x}|+l_1(t)\right)^{-b} \1_{ |\tilde{x}| \le l_2(t) }
\right]$ is deduced by the first conclusion and
 \begin{equation*}
v(t) \left(|\tilde{x}|+l_1(t)\right)^{-b} \1_{ |\tilde{x}| \le l_2(t) } \sim
v(t) l_1^{-b}(t) \1_{|\tilde{x}|\le l_1(t)}
+
v(t)|\tilde{x}|^{-b} \1_{ l_1(t) \le |\tilde{x}| \le l_2(t) } .
 \end{equation*}

 \end{proof}

\subsection{Exponential decay estimates for Neumann boundary \texorpdfstring{$t^a |\tilde{x}|^{-b} \1_{ l_1(t) \le |\tilde{x}| \le l_2(t) } $ and $ t^a \left(|\tilde{x}|+l_1(t)\right)^{-b} \1_{ |\tilde{x}| \le l_2(t) }$}{in the self-similar region}}

 \begin{lemma}\label{bd-ann-23Lem}
 	Let  $n>2$ be an integer, $t>t_0 \ge 1$, $x=(\tilde{x},x_n)$, $\tilde{x}\in \mathbb{R}^{n-1}$, $x_n \ge 0$. For an admissible function $f$, we define
 \begin{equation*}
 		\mathcal{T}_1\left[f\right](x,t):=
 		2 \int_{t_0}^t\int_{\mathbb{R}^{n-1}}
 		\left[4\pi(t-s)\right]^{-\frac{n}{2}}
 		e^{- \frac{|\tilde{x}-y|^2+ x_n^2 }{4(t-s)}}
 		f(y,s)
 		dy ds,
 	\end{equation*}
 	which satisfies the equation
 	\begin{equation*}
 		\pp_t u =
 		\Delta u
 		\mbox{ \ in \ } \mathbb{R}_+^n \times (t_0,\infty),
 		\quad
 		-\pp_{x_n} u =
 		f(\tilde{x},t)
 		\mbox{ \ on \ } \pp\mathbb{R}_+^n \times (t_0,\infty) ,
 		\quad
 		u(\cdot,t_0)=0
 		\mbox{ \ in \ } \mathbb{R}_+^{n} .
 	\end{equation*}
 	
 	Suppose that $0\le l_1(s) \le l_2(s) \le C_{*} s^{\frac{1}{2}}$ for $s\in [t_0,t]$,
 	$C_l^{-1} l_1(t) \le l_1(s)$ for all $s\in[\max\{t_0,\frac{t}{2} \},t] $, $C_l^{-1} t^{p_2} \le l_2(t) \le C_l t^{p_2} $, where $C_*>0,C_l\ge1$ are constants,
 	\begin{equation*}
 		b<n-1,
 		\quad
 		\begin{cases}
 			p_2 \le \frac{1}{2},
 			a+p_2(1-b)+\frac{n}{2} \ge 0 ,
 			&
 			\mbox{ \ if \ }
 			a+p_2(n-1-b)\ne -1
 			\\
 			p_2<\frac{1}{2}
 			,
 			&
 			\mbox{ \ if \ }
 			a+p_2(n-1-b)=-1  ,
 		\end{cases}
 	\end{equation*}
 	then
 	\begin{equation*}
 		\mathcal{T}_1\left[t^a |\tilde{x}|^{-b} \1_{ l_1(t) \le |\tilde{x}| \le l_2(t) } \right](x,t) \le C
 		t^a
 		\begin{cases}
 			\begin{cases}
 				l_2^{1-b}(t)
 				,
 				&b< 1
 				\\
 				\langle 	\ln (\frac{l_2(t)}{l_1(t)} )
 				\rangle
 				,
 				&b = 1
 				\\
 				l_1^{1-b}(t)
 				,
 				& b >1
 			\end{cases}
 			,
 			&
 			|x|\le l_1(t)
 			\\
 			\begin{cases}
 				l_2^{1-b}(t) , &b<1
 				\\
 				\langle \ln (\frac{l_2(t)}{|x|}) \rangle
 				, &b=1
 				\\
 				|x|^{1-b}
 				, & b>1
 			\end{cases}
 			,&  l_1(t) < |x| \le l_2(t)
 			\\
 			l_2^{1-b}(t) e^{-  \frac{|\tilde{x}|^2+|x_n+1|^2}{4 t}}
 			,&  |x| > l_2(t)
 		\end{cases}
 	\end{equation*}
 	with the convention $\frac{l_2(t)}{ l_1(t) }=1 $ if $l_1(t) = l_2(t)=0$, where $C$ is a constant only depending on $n, a, b, p_2, C_*, C_l$.

 	Under the additional assumption that $l_1(s) \le C_l l_1(t)  $ for all $s\in[\max\{t_0,\frac{t}{2} \},t] $, the same conclusion is held for
 	$\mathcal{T}_1\left[ t^a \left(|\tilde{x}|+l_1(t)\right)^{-b} \1_{ |\tilde{x}| \le l_2(t) }
 	\right]$.

 \end{lemma}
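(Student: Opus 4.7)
The natural plan is to invoke Lemma~\ref{bd-annu-Lem} with $C_0 = 1/4$ and $v(s) = s^a$, observing that $\mathcal{T}_1[f] = 2(4\pi)^{-n/2}\mathcal{T}[f]$. Since $v(s) = s^a$ is monotone on $[t_0, t] \subset [1, \infty)$, the supremum $\sup_{s \in [\max\{t_0, t/2\}, t]} s^a \sim t^a$ up to an absolute constant, so the $v\cdot w_2$ term directly transcribes into the interior ($|x| \le l_2(t)$) part of the conclusion, since we are in the $b < n-1$ branch of the $w_2$ formula.

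For the $w_1$ contribution, using $l_2(s) \sim s^{p_2}$ one computes
\[
\int_{t_0}^{\max\{t_0,t/2\}} s^a l_2^{n-1-b}(s)\,ds \lesssim \int_{t_0}^{t/2} s^{a+p_2(n-1-b)}\,ds,
\]
which equals (up to constants) $t^{a+p_2(n-1-b)+1}$, $\ln(t/t_0)$, or $t_0^{a+p_2(n-1-b)+1}$ according to whether $a + p_2(n-1-b)$ is $>-1$, $=-1$, or $<-1$. Multiplying by the $w_1$ prefactor $t^{-n/2}$ and comparing against the target $t^a l_2^{1-b}(t) = t^{a+p_2(1-b)}$ produces three inequalities. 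The first demands $p_2(n-2) + 1 - n/2 \le 0$, i.e.\ $p_2 \le 1/2$; the second needs strict $p_2 < 1/2$ to absorb the $\ln t$; the third uses $t_0 \ge 1$ and requires $a + p_2(1-b) + n/2 \ge 0$ to keep the resulting $t$-power from decaying excessively. These are exactly the hypotheses of the lemma.

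The delicate step is consolidating the $w_2$ bound in the exterior region $|x| > l_2(t)$, where Lemma~\ref{bd-annu-Lem} yields $l_2^{n-1-b}(t)|x|^{2-n}$ for $l_2(t) < |x| \le C_*t^{1/2}$ and $l_2^{n-1-b}(t)|x|^{-2}t^{2-n/2}e^{-(x_n^2/2+32|\tilde x|^2/81)/t}$ for $|x| > C_*t^{1/2}$. In the first range, $|x|^{2-n} \le l_2^{2-n}(t)$ immediately collapses the prefactor to $l_2^{1-b}(t)$, while the target exponential $e^{-(|\tilde x|^2+|x_n+1|^2)/(4t)}$ is pinched between positive absolute constants by $|x| \le C_*t^{1/2}$ and $t \ge 1$. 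In the second range, $|x|^{-2}t^{2-n/2} \le t^{1-n/2}$ and $l_2^{n-2}(t)t^{1-n/2} \le t^{p_2(n-2)+1-n/2} \le 1$ by $p_2 \le 1/2$, so the prefactor again collapses to $l_2^{1-b}(t)$; the elementary inequality $(|\tilde x|^2+|x_n+1|^2)/4 - 32|\tilde x|^2/81 - x_n^2/2 \le 1/2$ (using $1/4 < 32/81$ together with $(x_n+1)^2/4 - x_n^2/2 \le 1/2$ for $x_n \ge 0$) combined with $t \ge 1$ converts Lemma~\ref{bd-annu-Lem}'s Gaussian into the target form, at the cost of an absolute multiplicative constant. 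The $w_1$ exponential tails are absorbed similarly by choosing $\epsilon$ adaptively (e.g.\ $\epsilon = t_0/t$) so that $(1-\epsilon)/(t-t_0) = 1/t$ and the Gaussian constant $1/4$ is recovered. The second assertion for $t^a(|\tilde x|+l_1(t))^{-b}\1_{|\tilde x| \le l_2(t)}$ then follows from the analogous ``moreover'' clause of Lemma~\ref{bd-annu-Lem}, since the reduction above is insensitive to the annular structure of the support. The principal obstacle is precisely this exterior consolidation, reconciling the polynomial and Gaussian sub-regimes to produce a uniform bound with the sharp coefficient $1/4$.
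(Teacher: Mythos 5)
Your treatment of the interior region $|x|\le l_2(t)$ matches the paper: you apply Lemma~\ref{bd-annu-Lem}, observe $\sup_{s\in[\max\{t_0,t/2\},t]}s^a\sim t^a$, and reduce the $w_1$-domination to the integral estimate that the stated hypotheses guarantee. Your consolidation of the $w_2$ bound in the exterior region is also sound: the elementary inequality $\tfrac{1}{4}<\tfrac{32}{81}$ together with $(x_n+1)^2/4-x_n^2/2\le 1/2$ and the observation that $l_2^{n-2}(t)|x|^{-2}t^{2-n/2}\le 1$ in the respective sub-regimes do convert Lemma~\ref{bd-annu-Lem}'s Gaussian into the target form. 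The decisive difference from the paper is the exterior treatment of $w_1$: the paper abandons Lemma~\ref{bd-annu-Lem} altogether once the bound on $\{|x|=l_2(t)\}$ is obtained, and instead constructs the barrier $f=t^{a+p_2(1-b)}e^{-(|\tilde x|^2+(x_n+1)^2)/(4t)}$ (verifying $(\partial_t-\Delta)f\ge 0$, $-\partial_{x_n}f\ge 0$ on the boundary, and using the comparison principle on $\{|x|>l_2(t)\}$), whereas you attempt a direct consolidation of the $w_1$ tail.

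That direct consolidation has a genuine gap. Recall $w_1$ gives \emph{no} decay in $|\tilde x|$ on the sub-region $|\tilde x|\le M(\epsilon)t^{1/2}$ with $M(\epsilon)=C_*(1+(1-\epsilon)^{1/2})\epsilon^{-1}$, and only gives $e^{-C_0(1-\epsilon)|\tilde x|^2/(t-t_0)}$ beyond that threshold. The target, however, demands $e^{-|\tilde x|^2/(4t)}$ throughout $\{|x|>l_2(t)\}$. For the Gaussian piece one needs $(1-\epsilon)/(t-t_0)\ge 1/t$, i.e.\ $\epsilon\le t_0/t$; but then $M(\epsilon)\gtrsim t/t_0$, so the flat (non-decaying) piece of $w_1$ covers $l_2(t)<|\tilde x|\lesssim (t/t_0)t^{1/2}$, and on most of that range $|\tilde x|^2/(4t)$ is arbitrarily large, so $w_1\sim t^{a+p_2(1-b)}e^{-x_n^2/(4(t-t_0))}$ cannot be dominated by $t^{a+p_2(1-b)}e^{-(|\tilde x|^2+(x_n+1)^2)/(4t)}$. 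Conversely, with a fixed $\epsilon>0$ the threshold $M(\epsilon)$ is bounded and the intermediate region is fine, but then the Gaussian constant $(1-\epsilon)/(t-t_0)$ falls short of $1/t$ for $t>t_0/\epsilon$ and the far region fails. No single choice of $\epsilon$ reconciles the two; the intermediate regime $t^{1/2}\ll|\tilde x|\lesssim M(\epsilon)t^{1/2}$ is exactly where Lemma~\ref{bd-annu-Lem}'s $w_1$ is too crude to yield the Gaussian target. The paper's barrier argument sidesteps this entirely since it never needs $w_1$ outside $\{|x|\le l_2(t)\}$; you would need to do the same, or strengthen the $w_1$ estimate, to close the argument.
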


 \begin{proof}
 	By Lemma \ref{bd-annu-Lem}, the upper bound estimate of $\mathcal{T}_1\left[f\right](x,t)$ has two parts. For $b<n-1$, $p_2\le \frac{1}{2}$, in order to make the second part dominate the first part in the range $|x|\le l_2(t)$,
 	it suffices to guarantee
 	\begin{equation*}
 	t^{-\frac n2}	\int_{t_0}^{\max\{t_0,\frac{t}{2} \} }
 		s^{a+p_2(n-1-b)}
 		d s
 		\lesssim
 		t^{a+p_2(1-b)   }  ,
 	\end{equation*}
 	which can be deduced under the assumption
 	\begin{equation*}
 		\begin{cases}
 			p_2 \le \frac{1}{2},
 			a+p_2(1-b)+\frac{n}{2} \ge 0 ,
 			&
 			a+p_2(n-1-b)\ne -1
 			\\
 			p_2<\frac{1}{2}
 			,
 			&
 			a+p_2(n-1-b)=-1  .
 		\end{cases}
 	\end{equation*}
Then $\mathcal{T}_1\left[t^a |\tilde{x}|^{-b} \1_{ l_1(t) \le |\tilde{x}| \le l_2(t) } \right] \lesssim t^{a+p_2(1-b)}$ on $|x|=l_2(t)$. Denote $f:= t^{a+p_2(1-b)} e^{-\kappa \frac{|\tilde{x}|^2+|x_n+1|^2}{t}}$.
 	Then
 	\begin{equation*}
 		\left( \pp_t -\Delta\right) f =
 		e^{-\kappa \frac{|\tilde{x}|^2+|x_n+1|^2}{t}}
 		t^{-1+a+p_2(1-b) }
 		\left[
 		\left( \kappa -4\kappa^2\right) t^{-1} \left( |\tilde{x}|^2+|x_n+1|^2 \right)
 		+2\kappa n
 		+ a+
 		p_2(1-b)
 		\right],
 	\end{equation*}
 	which is non-negative in $\mathbb{R}^n\times \left(t_0,\infty\right)$ if
 	$\kappa\in[0,\frac{1}{4}]$ and $2\kappa n
 	+ a+
 	p_2(1-b)\ge 0$.
 	\begin{equation*}
 		\left(-\pp_{x_n} f\right)\left( \tilde{x},0,t\right)
 		=
 		2\kappa
 		t^{a+p_2(1-b)-1} e^{-\kappa \frac{|\tilde{x}|^2+1}{t}} .
 	\end{equation*}
 	
 	Take $\kappa=\frac{1}{4}$. As a result, $Cf$ is a barrier function in the range $|x|>l_2(t)$ when $C$ is sufficiently large and the first conclusion holds.

The pointwise upper bound of $\mathcal{T}_1\left[ t^a \left(|\tilde{x}|+l_1(t)\right)^{-b} \1_{ |\tilde{x}| \le l_2(t) }
 	\right]$ is deduced similarly.
 	
 \end{proof}

\subsection{Neumann boundary value \texorpdfstring{$ t^a |\tilde{x}|^{-b} e^{-\ell \frac{|\tilde{x}|^2}{t}} \1_{|\tilde{x}| \ge t^{\frac{1}{2}}  }$}{outside the self-similar region}}

\begin{lemma}\label{lem:far-23Oct16}

Let  $n\ge 2$ be an integer, $\ell>0$, $\kappa\in (0,\frac{1}{4})$, $\frac{1}{2}+a-\frac{b}{2} + 2 \kappa n \ge 0 $, $t> t_0\ge 1$, $x=(\tilde{x},x_n)$, $\tilde{x}\in \mathbb{R}^{n-1}$, $x_n \ge 0$, $\mathcal{T}_1\left[ \cdot\right]$ defined in Lemma \ref{bd-ann-23Lem}, then
	\begin{equation*}
	\mathcal{T}_1\left[ t^a |\tilde{x}|^{-b} e^{-\ell \frac{|\tilde{x}|^2}{t}} \1_{|\tilde{x}| \ge t^{\frac{1}{2}}  } \right](x,t) \le
	t^{a-\frac{b}{2}+\frac{1}{2}} e^{-\kappa \frac{|x|^2}{t}}
		\begin{cases}
			C(\kappa)
			 , & \mbox{ \ if \ } b\ge -1, \kappa \in \left(0,\ell \right]\cap (0,\frac{1}{4})
			\\
			C(b,\ell,\kappa)
			, & \mbox{ \ if \ }  b< -1, \kappa \in  (0,\min\{\ell,\frac{1}{4}\} )  ,
		\end{cases}
	\end{equation*}
	where $C(\kappa)>0$ is a constant depending on $\kappa$ and $C(b,\ell,\kappa)>0$ is a constant depending on $b,\ell,\kappa$.
	
\end{lemma}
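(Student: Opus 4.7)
My plan is to apply the parabolic comparison principle on $\mathbb R^n_+\times(t_0,\infty)$. Observe that $u(x,t):=\mathcal T_1[f](x,t)$ with $f(\tilde x,t)=t^a|\tilde x|^{-b}e^{-\ell|\tilde x|^2/t}\mathbf 1_{|\tilde x|\ge t^{1/2}}$ satisfies $\partial_t u=\Delta u$ on $\mathbb R^n_+\times (t_0,\infty)$, $-\partial_{x_n}u|_{x_n=0}=f$, and $u(\cdot,t_0)=0$. Hence, it suffices to exhibit a non-negative classical supersolution $\bar u$ with $-\partial_{x_n}\bar u|_{x_n=0}\ge f$ and $\bar u\le C\,t^{a-b/2+1/2}e^{-\kappa|x|^2/t}$.

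The natural candidate, in the spirit of the barriers used in Lemma \ref{lemma-barrier1} and in the proof of Lemma \ref{bd-ann-23Lem}, is $\bar u(x,t)=C\,t^{a-b/2+1/2}e^{-\kappa g/t}$ with $g(x,t)=|\tilde x|^2+(x_n+\alpha)^2$, where I take either $\alpha$ constant or $\alpha=\beta\sqrt{t}$ with $\beta>0$ small. Direct computation yields $(\partial_t-\Delta)\bar u=t^{-1}\bar u\bigl[\tfrac12+a-\tfrac b2+2\kappa n-2\kappa(x_n+\alpha)\alpha'+\kappa(1-4\kappa)g/t\bigr]$. Using $|\nabla g|^2=4g$, $\Delta g=2n$, and the pointwise lower bound $g/t\ge (x_n+\alpha)^2/t\ge \beta^2+2\beta x_n/\sqrt t$ (when $\alpha=\beta\sqrt t$), the negative linear-in-$x_n$ contribution can be absorbed into $\kappa(1-4\kappa)g/t$ via Young's inequality; the hypothesis $\tfrac12+a-\tfrac b2+2\kappa n\ge 0$ and $\kappa<\tfrac14$ secure nonnegativity of $(\partial_t-\Delta)\bar u$ for $\beta$ sufficiently small.

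The boundary flux is $-\partial_{x_n}\bar u|_{x_n=0}=2\kappa\alpha C t^{a-b/2-1/2}e^{-\kappa(|\tilde x|^2+\alpha^2)/t}$. Substituting $r:=|\tilde x|^2/t\ge 1$ and $\alpha=\beta\sqrt t$, the requirement $-\partial_{x_n}\bar u|_{x_n=0}\ge f$ simplifies to
\begin{equation*}
2\kappa\beta C\, e^{-\kappa\beta^2}\,r^{b/2}\;\ge\;e^{(\kappa-\ell)r}\qquad\text{for all }r\ge 1.
\end{equation*}
If $\kappa<\ell$, the right-hand side decays exponentially in $r$ and any polynomial $r^{b/2}$ can be beaten by choosing $C=C(b,\ell,\kappa)$ large, giving the second case. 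If $\kappa\le\ell$ with $b\ge 0$, then $r^{b/2}\ge 1$ and the inequality is immediate. If $\kappa<\ell$ with $-1\le b<0$, the strict exponential margin $e^{-(\ell-\kappa)r}$ still beats $r^{|b|/2}$ on $r\ge 1$, so a constant $C=C(\kappa)$ suffices (independent of $b$ when $b\in[-1,0)$).

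The main obstacle will be the marginal subcase $\kappa=\ell$ with $-1\le b<0$ (possible only when $\ell\le\tfrac14$), in which the right-hand side equals $1$ while $r^{b/2}\to 0$ as $r\to\infty$, so the pure exponential barrier cannot pointwise dominate $f$ on the outer region. I expect to resolve this by a direct self-similar computation of $\mathcal T_1[f]$: after substituting $y=\sqrt s\,z$ and $s=t\sigma$ and completing the square in $z$, the exponential factor $e^{-\ell|\tilde x|^2/(tB(\sigma))}$ with $B(\sigma)=\sigma+4\ell(1-\sigma)\le 1$ (using $\ell\le\tfrac14$) emerges uniformly in $\sigma\in[0,1]$, furnishing the $e^{-\ell|\tilde x|^2/t}$ decay in $\tilde x$, while the $x_n$-decay $e^{-\kappa x_n^2/t}$ is inherited from the Gaussian factor $e^{-x_n^2/(4t(1-\sigma))}$ of the heat kernel after reducing the $\sigma$-integral to a finite constant depending only on $\kappa$. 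Combining the barrier argument with this direct estimate in the marginal subcase closes all parameter ranges.
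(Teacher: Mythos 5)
Your single-barrier strategy has two genuine gaps, one of which you noticed and one of which you did not; both are avoided by the paper's actual construction.

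\textbf{Gap 1 (unnoticed): the shifted barrier is not a supersolution at the equality boundary of the hypothesis.} With $g=|\tilde x|^2+(x_n+\alpha)^2$ and $\alpha=\beta\sqrt t$, your own computation gives
\begin{equation*}
(\partial_t-\Delta)\bar u=\tfrac{\bar u}{t}\Bigl[\tfrac12+a-\tfrac b2+2\kappa n+\kappa(1-4\kappa)\tfrac gt-2\kappa(x_n+\alpha)\alpha'\Bigr],
\end{equation*}
with $2\kappa(x_n+\alpha)\alpha'=\kappa(x_n+\alpha)\beta/\sqrt t$. Evaluate this at $\tilde x=0$, $x_n=0$: there $g/t=\beta^2$ and $(x_n+\alpha)/\sqrt t=\beta$, so the bracket equals $\bigl(\tfrac12+a-\tfrac b2+2\kappa n\bigr)+\kappa(1-4\kappa)\beta^2-\kappa\beta^2=\bigl(\tfrac12+a-\tfrac b2+2\kappa n\bigr)-4\kappa^2\beta^2$. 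The hypothesis only asserts $\tfrac12+a-\tfrac b2+2\kappa n\ge 0$, with equality allowed; in the equality case the bracket is $-4\kappa^2\beta^2<0$ for every $\beta>0$, so no choice of small $\beta$ rescues the supersolution property. The Young-inequality absorption you describe cannot work here because the positive term $\kappa(1-4\kappa)g/t$ and the negative term $\kappa(x_n+\alpha)\beta/\sqrt t$ are of the same size $\sim\kappa\beta^2$ at $x=0$, and $1-4\kappa<1$ makes the balance strictly negative.

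\textbf{Gap 2 (noticed, but not closed): the boundary margin $r^{b/2}$ is too weak.} Your Neumann flux inequality reduces to $2\kappa\beta Ce^{-\kappa\beta^2}r^{b/2}\ge e^{(\kappa-\ell)r}$ on $r=|\tilde x|^2/t\ge 1$. For $-1\le b<0$ and $\kappa=\ell$ (a case inside the lemma's parameter range) the right side is $1$ while $r^{b/2}\to 0$, so no constant $C$ works. You flag this and propose a direct self-similar computation, but do not carry it out; as written the proof is incomplete in exactly the range $\kappa=\ell$, $b\in[-1,0)$. Even for $\kappa<\ell$ and $b\in[-1,0)$, the constant your bound produces grows like $\sup_{r\ge 1}r^{|b|/2}e^{-(\ell-\kappa)r}$, which blows up as $\ell\downarrow\kappa$, so the claimed $C=C(\kappa)$ uniformity is also not reached.

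\textbf{How the paper circumvents both.} Instead of a shifted Gaussian in $x_n$, the paper takes
$f_1=t^{a-\frac b2+\frac12}e^{-\kappa(|\tilde x|^2+x_n^2)/t}$ and
$f_2=t^{a-\frac b2+\frac12}e^{-\kappa(|\tilde x|+x_n)^2/t}$, and uses $C(f_1+C_2(\kappa)f_2)$. The non-radial factor $(|\tilde x|+x_n)^2$, not a translate of $x_n^2$, is the key: since $\partial_{x_n}(|\tilde x|+x_n)^2\big|_{x_n=0}=2|\tilde x|$, the Neumann flux of $f_2$ is $2\kappa t^{a-\frac b2-\frac12}|\tilde x|e^{-\kappa|\tilde x|^2/t}$, carrying an extra factor $|\tilde x|=s\sqrt t$. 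In the rescaled variable $s=|\tilde x|/\sqrt t\ge 1$ the flux inequality becomes $C C_2(\kappa)2\kappa\,s^{1+b}\ge e^{(\kappa-\ell)s^2}$, and the margin $s^{1+b}\ge 1$ for $b\ge -1$ closes the $\kappa=\ell$ case and gives $C=C(\kappa)$ uniformly. Moreover $f_2$ involves no time-dependent shift, so there is no $\alpha'$ term; the only potentially harmful contribution $\kappa(1-8\kappa)(|\tilde x|+x_n)^2$ in $(\partial_t-\Delta)f_2$ is absorbed, for $C_2(\kappa)$ small, by the strictly positive contribution $\kappa(1-4\kappa)(|\tilde x|^2+x_n^2)$ from $(\partial_t-\Delta)f_1$. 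If you revise your proof, replace your barrier $g=|\tilde x|^2+(x_n+\alpha)^2$ by the pair $f_1,f_2$ above and the argument closes along these lines.
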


\begin{proof}
	
	Set $ f_1(x,t) = t^{a-\frac{b}{2}+\frac{1}{2}} e^{-\kappa \frac{|\tilde x|^2+ x_n^2}{t}}$,
$f_2(x,t) = t^{a-\frac{b}{2}+\frac{1}{2}} e^{-\kappa \frac{\left(|\tilde{x}|+x_n\right)^2}{t}} $
	 with a constant $\kappa$ to be determined later.
	Direct calculation gives
	\begin{equation*}
		\begin{aligned}
			&
			\left( \pp_t -\Delta \right) f_1
			=
			e^{-\kappa\frac{|\tilde{x}|^2+ x_n^2}{t}}
			t^{a-\frac{b}{2}-\frac{3}{2}}
			\left[
		\left(\frac{1}{2}+a-\frac{b}{2}+2\kappa n\right)t
		+\kappa\left(1-4\kappa\right)
		\left( |\tilde{x}|^2+ x_n^2\right)	\right]
			,
			\\
			&
			\left(-\pp_{x_n} f_1 \right)\left(\tilde{x},0,t\right)
			= 0
			,
\\
&
\left( \pp_t -\Delta \right) f_2
=
e^{-\kappa \frac{\left(|\tilde{x}|+x_n\right)^2}{t}}
t^{a-\frac{b}{2}-\frac{3}{2}}
\left[
\left(\frac{1}{2}+a-\frac{b}{2}+2\kappa n\right)t + 2\kappa\left(n-2\right) t
\frac{x_n}{|\tilde{x}|}
+
\kappa \left(1-8\kappa\right)
\left(|\tilde{x}|+x_n\right)^2
\right],
\\
&
\left(-\pp_{x_n} f_2 \right)\left(\tilde{x},0,t\right)
=2\kappa t^{a-\frac{b}{2}-\frac{1}{2}}
|\tilde{x}| e^{-\kappa \frac{|\tilde{x}|^2}{t}} .
		\end{aligned}
	\end{equation*}
Since $n\ge 2$, $\kappa\in (0,\frac{1}{4})$, $\frac{1}{2}+a-\frac{b}{2} + 2 \kappa n \ge 0 $, $x_n\ge 0$, there exists $C_2(\kappa)>0$ small such that $\left( \pp_t -\Delta \right) \left(f_1 + C_2(\kappa) f_2 \right)\ge 0$.  Additionally,
	\begin{equation*}
	\begin{aligned}
&
		\left[-\pp_{x_n} \left(f_1 + C_2(\kappa) f_2 \right) \right]\left(\tilde{x},0,t\right)
		\ge
	C_2(\kappa) 2\kappa t^{a-\frac{b}{2}-\frac{1}{2}}
	|\tilde{x}| e^{-\kappa \frac{|\tilde{x}|^2}{t}}
	\\
		\ge \ &
		t^a  |\tilde{x}|^{-b} e^{-\ell \frac{|\tilde{x}|^2}{t}} \1_{|\tilde{x}| \ge t^{\frac{1}{2}}  }
		\begin{cases}
			C(\kappa) , &
			\mbox{ \ if \ } b\ge -1, \kappa \in (0,\ell]
			\\
			C(b,\ell,\kappa) , &
			\mbox{ \ if \ }
			 b< -1, \kappa \in (0,\ell)
		\end{cases}
	\end{aligned}
	\end{equation*}
for some positive constants $C(\kappa), C(b,\ell,\kappa)$.
Thus, $C \left(f_1 + C_2(\kappa) f_2 \right) $ is a barrier function with $C$ sufficiently large.
 	
\end{proof}

\subsection{Right-hand sides \texorpdfstring{$t^a |x|^{-b} \1_{ l_1(t) \le |x| \le l_2(t) }$ and $t^a \left(|x|+l_1(t)\right)^{-b} \1_{|x|\le l_2(t)} $}{in the self-similar region}  }

\begin{lemma}\label{2023Oct01-1-lem}
	Let  $n>2$ be an integer, $t>t_0 \ge 1$, $x \in \mathbb{R}^n$. For an admissible function $f$, we define
	\begin{equation}\label{zz24Jan02-3}
		\mathcal{T}_{\mathbb{R}^n }\left[f\right](x,t):=  \int_{t_0}^t\int_{\mathbb{R}^n }
		\left[4\pi(t-s)\right]^{-\frac{n}{2}}
		e^{- \frac{|x-y|^2}{4(t-s)}}
		f(y,s)
		dy ds,
	\end{equation}
	which satisfies the equation $
		\pp_t u =
		\Delta u  +f $  in $\mathbb{R}^n \times (t_0,\infty)$,
		$ u(\cdot,t_0)=0 $
		in $\mathbb{R}^{n}$.
	
	Suppose that $0\le l_1(s) \le l_2(s) \le C_{*} s^{\frac{1}{2}}$ for $s\in [t_0,t]$,
	$C_l^{-1} l_1(t) \le l_1(s)$ for all $s\in[\max\{t_0,\frac{t}{2} \},t] $, $C_l^{-1} t^{p_2} \le l_2(t) \le C_l t^{p_2} $, where $C_*>0,C_l\ge1$ are constants,
\begin{equation*}
	b<n,
	\quad
	\begin{cases}
		p_2 \le \frac{1}{2},
		a+p_2(2-b)+\frac{n}{2} \ge 0 ,
		& \mbox{ \ if \ }
		a+p_2(n-b)\ne -1
		\\
		p_2<\frac{1}{2}
		,
		& \mbox{ \ if \ }
		a+p_2(n-b)=-1  ,
	\end{cases}
\end{equation*}
then
\begin{equation*}
	\mathcal{T}_{\mathbb{R}^n }\left[t^a |x|^{-b} \1_{ l_1(t) \le |x| \le l_2(t) }\right](x,t)
	\le C
	t^a
	\begin{cases}
		\begin{cases}
			l_2^{2-b}(t)
			& \mbox{ \ if \ }
			b<2
			\\
			\langle	\ln (\frac{l_2(t)}{l_1(t)}) \rangle
			& \mbox{ \ if \ }
			b=2
			\\
			l_1^{2-b}(t)
			& \mbox{ \ if \ }
			b>2
		\end{cases} ,
		&  |x| \le l_1(t)
		\\
		\begin{cases}
			l_2^{2-b}(t)
			&
			\mbox{ \ if \ } b<2
			\\
			\langle\ln(\frac{l_2(t)}{|x|} )  \rangle
			&
			\mbox{ \ if \ } b=2
			\\
			|x|^{2-b}
			&
			\mbox{ \ if \ } b>2
		\end{cases} ,
		&
		l_1(t) < |x| \le l_2(t)
		\\
		l_2^{2-b}(t) e^{- \frac{|x|^2}{4 t}} ,
		&
		|x| > l_2(t)
	\end{cases}
\end{equation*}
with the convention $\frac{l_2(t)}{ l_1(t) }=1 $ if $l_1(t) = l_2(t)=0$, where $C$ is a constant only depending on $n, a, b, p_2, C_*, C_l$.

Under the additional assumption that $l_1(s) \le C_l l_1(t)  $ for all $s\in[\max\{t_0,\frac{t}{2} \},t] $, the same conclusion is held for
$\mathcal{T}_{\mathbb{R}^n }\left[t^a \left(|x|+l_1(t)\right)^{-b} \1_{|x|\le l_2(t)}
\right]$.

\end{lemma}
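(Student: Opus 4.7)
The plan is to mirror the two-step strategy that was used for Lemma \ref{bd-ann-23Lem}, replacing boundary convolutions by full-space ones. The main work is an analogue of Lemma \ref{bd-annu-Lem} for the interior kernel, followed by a Gaussian barrier argument in the far field.

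First, I would establish a pointwise estimate in the spirit of Lemma \ref{bd-annu-Lem}, but with $n$-dimensional Lebesgue integration in $y$ replacing the $(n-1)$-dimensional boundary integration. Split the time integration at $t_\ast := \max\{t_0, t/2\}$. On $(t_\ast, t)$, the hypotheses $C_l^{-1}l_1(t)\le l_1(s)$ and $l_2(s)\le C_l l_2(t)$ make $l_1(s), l_2(s)$ comparable to $l_1(t), l_2(t)$; splitting the $y$-integral into annular pieces $|y|<|x|/2$, $|x|/2\le |y|\le 2|x|$, $|y|>2|x|$ (each combined with the constraint $l_1(s)\le |y|\le l_2(s)$) and evaluating Gaussian integrals via changes of variable $z = C_0|y|^2/(t-s)$ yields bounds that match the claimed form on $|x|\le l_1(t)$ and $l_1(t)<|x|\le l_2(t)$. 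The standard case analysis $b<2$, $b=2$, $b>2$ arises from splitting $\int z^{(n-b)/2-1}\,dz$ near the origin. The toolbox in Lemma \ref{23Dec23-3-lem} handles the resulting one-dimensional integrals verbatim.

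On $(t_0, t_\ast)$, the kernel factor $(t-s)^{-n/2}e^{-|x-y|^2/(4(t-s))}$ is bounded by $t^{-n/2}e^{-|x|^2/(8t)}$ for $|x|\gtrsim t^{1/2}$ and by $t^{-n/2}$ otherwise; direct integration in $y$ gives a factor of $l_2^{n-b}(s)\sim s^{p_2(n-b)}$ (the $b<n$ hypothesis ensuring integrability), so the "past" contribution is dominated by
\[
t^{-n/2} \int_{t_0}^{t_\ast} s^{a+p_2(n-b)}\,ds.
\]
The hypothesis $a+p_2(2-b)+n/2\ge 0$ (respectively the strict $p_2<1/2$ when $a+p_2(n-b)=-1$, which forces that inequality to be strict and leaves room to absorb a $\log$) is exactly what is needed to ensure this quantity is $\lesssim t^{a+p_2(2-b)}$, i.e., controlled by the recent contribution. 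This is the step where the dichotomy in the hypothesis is essential, and tracking it carefully is the main obstacle; the rest is routine case analysis.

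For the far-field region $|x|>l_2(t)$, I would build a barrier, following the approach at the end of the proof of Lemma \ref{bd-ann-23Lem}. Set
\[
F(x,t) := C\, t^{a+p_2(2-b)}\, e^{-\frac{|x|^2}{4t}}.
\]
A direct computation gives
\[
(\partial_t - \Delta) F = t^{-1} F \Bigl[a + p_2(2-b) + \tfrac{n}{2}\Bigr],
\]
which is non-negative by assumption. On the sphere $|x|=l_2(t)\sim t^{p_2}$ with $p_2\le 1/2$, one has $F(x,t)\gtrsim C t^{a+p_2(2-b)}$, dominating the inner bound already proved there. Since the right-hand side $t^a|x|^{-b}\mathbf{1}_{l_1\le |x|\le l_2}$ is supported in $\{|x|\le l_2(t)\}$, $C F$ is a supersolution of the heat equation in $\{|x|>l_2(t)\}$ with $CF\ge \mathcal{T}_{\mathbb{R}^n}[\,\cdot\,]$ on the parabolic boundary, and the comparison principle yields the claim. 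Finally, the additional conclusion for $(|x|+l_1(t))^{-b}\mathbf{1}_{|x|\le l_2(t)}$ follows from the pointwise equivalence
\[
(|x|+l_1(t))^{-b}\mathbf{1}_{|x|\le l_2(t)} \sim l_1(t)^{-b}\mathbf{1}_{|x|\le l_1(t)} + |x|^{-b}\mathbf{1}_{l_1(t)\le |x|\le l_2(t)}
\]
together with the case $l_1(s) = l_1(t)$ being an admissible choice in the first conclusion.
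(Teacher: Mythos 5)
Your proposal is correct and follows essentially the same route as the paper: split the time integral at $\max\{t_0,t/2\}$, control the recent contribution via the annular decomposition of the spatial integral (exactly as in the boundary case of Lemma \ref{bd-annu-Lem}), absorb the past contribution using the arithmetic of $a$, $b$, $p_2$, and close in the far field with the Gaussian barrier $t^{a+p_2(2-b)}e^{-|x|^2/(4t)}$, whose $(\partial_t-\Delta)$ you compute correctly. The only difference is presentational: the paper delegates the full-space pointwise estimate to an external reference (\cite[Lemma A.1]{infi4D}) and merely records that its hypotheses can be relaxed as in Lemma \ref{bd-annu-Lem}, whereas you propose to re-derive the interior analogue of Lemma \ref{bd-annu-Lem} from scratch; both lead to the same estimate and the same case analysis in $b$, so the substance is identical.
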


\begin{proof}
Similar to the proof of Lemma \ref{bd-annu-Lem}, the requirement ``$C_l^{-1} l_i(t) \le l_i(s) \le C_l l_i(t)$, $i=1,2$, for all $\frac{t}{2} \le s \le t$, $t\ge t_0$'' in \cite[Lemma A.1]{infi4D} can be relaxed to ``$C_l^{-1} l_1(t) \le l_1(s)$, and $l_2(s) \le C_l l_2(t)  $, for all $s\in[\max\{t_0,\frac{t}{2} \},t] $'' and the ``$\lesssim$'' in \cite[Lemma A.1]{infi4D} is indeed independent of $t_0$.

One recalling \cite[Lemma A.1]{infi4D}, the estimate of $\mathcal{T}_{\mathbb{R}^n}\left[ t^a |x|^{-b} \1_{ l_1(t) \le |x| \le l_2(t) }\right]$ is split into two parts. We call the term including $\int_{t_0/2}^{t/2}$ as the first part and the other term as the second part. For $b<n$, $p_2\le \frac{1}{2}$, in order to make the second part dominate the first part in the range $|x|\le l_2(t)$,
it suffices to make
\begin{equation*}
t^{-\frac n2}
\int_{t_0}^{t/2 }
s^{a+p_2(n-b)}
d s
\le C
t^{a+p_2(2-b) }
\end{equation*}
with a constant $C$ independent of $t_0$. This can be deduced under the assumption
\begin{equation*}
\begin{cases}
	p_2 \le \frac{1}{2},
	a+p_2(2-b)+\frac{n}{2} \ge 0 ,
	& \mbox{ \ if \ }
	a+p_2(n-b)\ne -1
\\
p_2<\frac{1}{2}
,
& \mbox{ \ if \ }
a+p_2(n-b)=-1  .
\end{cases}
\end{equation*}
Then $ \mathcal{T}_{\mathbb{R}^n }\left[t^a |x|^{-b} \1_{ l_1(t) \le |x| \le l_2(t) }\right]  \lesssim t^{a+p_2(2-b)}$ on $|x|=l_2(t)$. Denote $f:= t^{a+p_2(2-b)} e^{-\kappa \frac{|x|^2}{t}}$ with a constant $\kappa$ to be determined later. Then
\begin{equation*}
	\left( \pp_t -\Delta\right) f =
e^{-\kappa \frac{|x|^2}{t}}
t^{-1+a+p_2(2-b) }
\left[
\left( \kappa -4\kappa^2\right) t^{-1} |x|^2
+2\kappa n
+ a+
p_2(2-b)
\right],
\end{equation*}
which is non-negative in $\mathbb{R}^n\times \left(t_0,\infty\right)$ if
$\kappa\in[0,\frac{1}{4}]$ and $2\kappa n
+ a+
p_2(2-b)\ge 0$. For $b<n$, $p_2\le \frac{1}{2}$, we take $\kappa=\frac{1}{4}$ and $Cf$ is a barrier function in the range $|x|>l_2(t)$ when $C$ is sufficiently large.

The estimate about $\mathcal{T}_{\mathbb{R}^n }\left[t^a \left(|x|+l_1(t)\right)^{-b} \1_{|x|\le l_2(t)}
\right] $ is similar.
\end{proof}

\section{Sobolev-type lemmas}

\begin{lemma}\label{z24Jan15-8}

		Given a domain $\Omega\subset\mathbb{R}^n$ (possibly unbounded), suppose that
  $2\le p <\infty $ if $n =1,2$, $2\le p \le \frac{2n}{n-2}$ if $n\ge 3$, and a sequence $(u_k)_{k \ge 1} \subset H^1(\Omega)$ satisfies
		\begin{equation*}
			u_k \rightharpoonup v_1 \mbox{ \ in \ } H^1(\Omega) ,
			\quad
			\nabla u_k \rightharpoonup \bm{v_2} = (v_{21},v_{22},\dots, v_{2n}) \mbox{ \ in \ } L^2(\Omega) ,
			\quad
			u_k  \rightharpoonup v_3 \mbox{ \ in \ } L^p(\Omega),
		\end{equation*}
		then  $\nabla v_1 = \bm{v_2} $ in $L^2(\Omega)$, $v_1 = v_3$ in $L^p(\Omega)$.
		
\end{lemma}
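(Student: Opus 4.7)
\textbf{Proof proposal for Lemma \ref{z24Jan15-8}.}

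The plan is to identify both pairs of limits by testing the weak convergences against smooth compactly supported functions and using the uniqueness of weak limits in Lebesgue spaces. Since the two assertions are independent, I would treat them separately, each by a short duality argument.

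First, to show $\nabla v_1 = \bm{v_2}$ in $L^2(\Omega)$, I would fix an arbitrary $\bm{\phi} = (\phi_1,\dots,\phi_n)\in C_c^\infty(\Omega;\mathbb{R}^n)$ and integrate by parts:
\begin{equation*}
\int_\Omega \nabla u_k \cdot \bm{\phi}\,dx = -\int_\Omega u_k\, (\nabla\cdot\bm{\phi})\,dx.
\end{equation*}
The weak convergence $\nabla u_k \rightharpoonup \bm{v_2}$ in $L^2(\Omega)$ sends the left-hand side to $\int_\Omega \bm{v_2}\cdot\bm{\phi}\,dx$, while $u_k \rightharpoonup v_1$ in $H^1(\Omega)$ implies in particular $u_k\rightharpoonup v_1$ in $L^2(\Omega)$, so the right-hand side tends to $-\int_\Omega v_1 (\nabla\cdot\bm{\phi})\,dx$. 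Thus $\bm{v_2} = \nabla v_1$ in the distributional sense, and since both sides lie in $L^2(\Omega)$, equality holds in $L^2(\Omega)$.

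Second, to show $v_1 = v_3$ in $L^p(\Omega)$, I would pick an arbitrary $\varphi\in C_c^\infty(\Omega)$. Because $\varphi\in L^2(\Omega)\cap L^{p'}(\Omega)$ (with $p'$ the H\"older conjugate of $p$, which lies in $(1,2]$ under the stated range), the two weak convergences give
\begin{equation*}
\int_\Omega u_k\varphi\,dx \to \int_\Omega v_1\varphi\,dx \quad\text{and}\quad \int_\Omega u_k\varphi\,dx \to \int_\Omega v_3\varphi\,dx.
\end{equation*}
Hence $\int_\Omega (v_1-v_3)\varphi\,dx = 0$ for every $\varphi\in C_c^\infty(\Omega)$, so $v_1=v_3$ almost everywhere on $\Omega$, and thus in $L^p(\Omega)$.

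There is no substantive obstacle; the only mild point to verify is that $C_c^\infty(\Omega)$ sits in $L^{p'}(\Omega)$ for every admissible $p$, which is automatic because compactly supported smooth functions are bounded and have finite support measure on any open $\Omega$, so they lie in every Lebesgue space.
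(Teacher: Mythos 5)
Your proof is correct, and it follows the same overall strategy as the paper --- identify the weak limits by testing against suitable functionals and invoking uniqueness --- but with a small and genuinely useful variation worth noting. The paper tests against arbitrary $f \in L^2(\Omega)$ for the gradient identification and arbitrary $f_1 \in L^{p'}(\Omega)$ for the $L^p$ identification; in the second case it therefore has to observe that $g \mapsto \int_\Omega f_1 g$ is a bounded linear functional on $H^1(\Omega)$, which requires the Sobolev embedding $H^1(\Omega) \hookrightarrow L^p(\Omega)$ and hence the stated restriction on $p$. You instead test only against $\varphi \in C_c^\infty(\Omega)$ and then use density. Since compactly supported smooth functions lie in $L^2 \cap L^{p'}$ automatically, the relevant functionals are trivially bounded on $H^1$ (even on $L^2$) and the Sobolev embedding is never invoked; as you implicitly note, your argument would give $v_1 = v_3$ a.e.\ for any $p \in [1,\infty)$, so the constraint $p \le 2n/(n-2)$ in the statement is there to match the regime where the lemma is later applied, not a logical prerequisite of this particular conclusion. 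Your first-part detour through integration by parts is a harmless extra step: the paper compares $\int_\Omega f \,\partial_{x_i} u_k$ against the $H^1$ and $L^2$ weak limits directly without distributional derivatives, but passing through the distributional gradient as you do is equally valid, since $\nabla v_1$ and $\bm{v_2}$ both lie in $L^2$ and $C_c^\infty$ is dense there.
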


\begin{proof}
	
	Given a function $f\in L^2(\Omega)$, $\int_{\Omega} f\pp_{x_i} u_k \rightarrow  \int_{\Omega} f  v_{2i} $, $i=1,2,\dots, n$.
	For any $g \in H^1(\Omega)$, $
		g	\mapsto \int_{\Omega} f \pp_{x_i} g $
	is a bounded linear mapping on $H^1(\Omega)$. By $u_k \rightharpoonup v_1$ in $H^1(\Omega)$, we have
	$ \int_{\Omega} f \pp_{x_i} u_k \rightarrow
		\int_{\Omega} f \pp_{x_i} v_1 $.
	Due to the arbitrary choice of $f\in L^2(\Omega)$, then $\pp_{x_i} v_1 = v_{2i}  $ in $L^2(\Omega)$.

	For the other part, given a function $f_1\in L^{p'}(\Omega)$, where $\frac{1}{p} + \frac{1}{p'} =1$, $
		\int_{\Omega} f_1 u_k \rightarrow  \int_{\Omega} f_1 v_3 $.
	For any $g\in H^1(\Omega)$, $
		g	\mapsto	\int_{\Omega} f_1 g $
	is also a bounded linear mapping on $H^1(\Omega)$ by the Sobolev embedding theorem and the choice of $p$. By $u_k \rightharpoonup v_1$ in $H^1(\Omega)$, we have $
		\int_{\Omega}  f_1 u_k \rightarrow
		\int_{\Omega}  f_1 v_1 $.
	Due to the arbitrary choice of $f_1\in L^{p'}(\Omega)$, then $v_1 = v_3$ in $L^p(\Omega) $.
\end{proof}

\begin{lemma}\label{z24Jan15-6}
	
Let $V(\tilde{x}) \in L^\infty( \mathbb{R}^{n-1})$, $V(\tilde{x}) \to 0$ as $|\tilde{x}| \to \infty$. If a sequence $(u_k)_{k\ge 1}$ satisfies
$u_k \rightharpoonup u_0$ in $H^1(\mathbb{R}_+^n)$ as $k\to \infty$, then up to a subsequence, $\int_{\mathbb{R}^{n-1}} |V(\tilde{x})| \left|(u_k-u_0)(\tilde{x},0) \right|^2 d\tilde{x} \to 0$ and $\int_{\mathbb{R}^{n-1}} V(\tilde{x}) \left|u_k(\tilde{x},0) \right|^2 d\tilde{x} \to \int_{\mathbb{R}^{n-1}} V(\tilde{x}) \left|u_0(\tilde{x},0) \right|^2 d\tilde{x}$ as $k\to \infty$.

\end{lemma}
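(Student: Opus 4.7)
The plan is to combine the trace theorem with a localization argument that exploits the decay of $V$ at infinity. First, since $u_k \rightharpoonup u_0$ in $H^1(\mathbb{R}_+^n)$, the sequence $(u_k)$ is uniformly bounded in $H^1(\mathbb{R}_+^n)$, and the trace theorem yields a uniform bound $\|u_k(\cdot,0)\|_{L^2(\mathbb{R}^{n-1})} \le C \|u_k\|_{H^1(\mathbb{R}_+^n)} \le C'$. Moreover, on each bounded half-ball $B_n^+(0,m)$, the restriction $u_k|_{B_n^+(0,m)}$ is bounded in $H^1(B_n^+(0,m))$, so by the trace $H^1(B_n^+(0,m)) \to H^{1/2}(B_{n-1}(0,m))$ followed by the compact Rellich--Kondrachov embedding $H^{1/2}(B_{n-1}(0,m)) \hookrightarrow \hookrightarrow L^2(B_{n-1}(0,m))$, I can extract a convergent subsequence on each ball. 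A standard diagonal extraction over $m \in \mathbb{N}$ produces a single subsequence (still denoted $(u_k)$) such that $u_k(\cdot,0) \to w$ in $L^2_{\mathrm{loc}}(\mathbb{R}^{n-1})$ for some $w$; by Lemma~\ref{z24Jan15-8} (applied locally together with the trace identification for $u_0$), one has $w = u_0(\cdot,0)$.

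Next, given $\epsilon > 0$, I would use $V(\tilde x) \to 0$ to pick $R = R(\epsilon)$ so large that $|V(\tilde x)| < \epsilon$ for $|\tilde x| \ge R$, and split
\begin{equation*}
\int_{\mathbb{R}^{n-1}} |V|\, |(u_k - u_0)(\tilde x, 0)|^2 d\tilde x
= \int_{B_{n-1}(0,R)} + \int_{\mathbb{R}^{n-1}\setminus B_{n-1}(0,R)}.
\end{equation*}
The far piece is bounded by $\epsilon \, \|u_k - u_0\|_{L^2(\partial\mathbb{R}_+^n)}^2 \le C''\epsilon$ uniformly in $k$ by the trace bound, while the near piece is bounded by $\|V\|_\infty \|u_k - u_0\|_{L^2(B_{n-1}(0,R))}^2$, which tends to $0$ by the local convergence above. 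Letting $k\to\infty$ and then $\epsilon \downarrow 0$ yields the first stated convergence.

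For the second convergence, use the algebraic identity $|u_k(\tilde x,0)|^2 - |u_0(\tilde x,0)|^2 = (u_k - u_0)(u_k + u_0)(\tilde x,0)$ and Cauchy--Schwarz:
\begin{equation*}
\left| \int_{\mathbb{R}^{n-1}} V\bigl(|u_k(\cdot,0)|^2 - |u_0(\cdot,0)|^2\bigr) d\tilde x \right|
\le \left(\int |V|\,|u_k - u_0|^2 \right)^{1/2} \left(\int |V|\,|u_k + u_0|^2 \right)^{1/2}.
\end{equation*}
The first factor tends to $0$ by the first part, and the second factor is uniformly bounded by $\|V\|_\infty^{1/2} \|u_k + u_0\|_{L^2(\partial\mathbb{R}_+^n)} \le C'''$ by the trace bound and the uniform $H^1$ bound.

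No step is truly difficult; the only point requiring care is the diagonal extraction of one subsequence that works on all exhausting balls, which is needed because the compact embedding is only local on $\mathbb{R}^{n-1}$. Once that subsequence is fixed, the decay $V(\tilde x) \to 0$ absorbs the tail contribution uniformly, which is the essential ingredient making the lemma work on the unbounded boundary.
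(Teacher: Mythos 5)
Your proof is correct and follows essentially the same approach as the paper: a uniform trace bound controls the tail where $|V|$ is small, and compactness of the trace on bounded subsets of $\partial\mathbb{R}_+^n$ handles the remainder. The one notable difference is that the paper first reduces to $u_0 = 0$ by a harmless ``without loss of generality,'' which makes the identification of the local $L^2$ limit of the traces immediate, whereas you keep $u_0$ general and then need a small extra step (your appeal to Lemma~\ref{z24Jan15-8}) to identify the limit as $u_0(\cdot,0)$; that step works but is slightly more convenient after the normalization. You are also more explicit than the paper about the diagonal extraction over an exhaustion of $\mathbb{R}^{n-1}$, which the paper leaves implicit when it says ``up to a subsequence.'' Both proofs conclude the second convergence the same way, from the first convergence plus the uniform $L^2$-trace bound via the factorization $|u_k|^2 - |u_0|^2 = (u_k-u_0)(u_k+u_0)$ and Cauchy--Schwarz.
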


\begin{proof}
	
Without loss of generality, assume $u_k \rightharpoonup 0$.
Since the Sobolev embedding $H^1(\mathbb{R}^n_+) \hookrightarrow L^2(\pp \mathbb{R}_+^n)$ is continuous, we have $\sup_{k\ge 0} \| u_k(\cdot,0) \|_{L^2(\mathbb{R}^{n-1})} \le C$. Thus, for any $\epsilon>0$, there exists  $R$ sufficiently large such that
$ \int_{|\tilde{x}|\ge R} \left| V(\tilde{x}) \right| \left|u_k(\tilde{x},0) \right|^2 d\tilde{x}$ $ <\epsilon $.
Since for any compact set $\Omega \subset \pp \mathbb{R}^n_+$, the Sobolev embedding $H^1(\mathbb{R}_+^n) \hookrightarrow L^2(\Omega)$ is compact, up to a subsequence, $
\lim\limits_{k\to \infty}	\int_{|\tilde{x}| < R} V(\tilde{x}) \left|u_k(\tilde{x},0) \right|^2 d \tilde{x}  =0 $. Thus, $\int_{\mathbb{R}^{n-1}} |V(\tilde{x})| \left|(u_k-u_0)(\tilde{x},0) \right|^2 d\tilde{x} \to 0$ as $k\to \infty$. Combining $\sup_{k\ge 0}$$ \| u_k(\cdot,0) \|_{L^2(\mathbb{R}^{n-1})}$ $ \le C$, we have the second convergence result.
\end{proof}

Recall $L^2_{\rho}(\mathbb{R}^{n-1}), H^1_\rho(\mathbb{R}^n_+)$ defined in \eqref{z24Jan25-8}, \eqref{z24Jan25-7} respectively.
\begin{lemma}\label{Sob-lem1}
Given an integer $n\ge2$, for $u\in H^1_\rho(\mathbb{R}^n_+)$, we have
	$\int_{\mathbb{R}^{n-1}} u^2(\tilde{x},0) e^{-\frac{|\tilde x|^2}{4}} d\tilde x \le \frac{n+4}{4}\|u\|_{H_\rho^1(\mathbb{R}_+^n)}^2$.
	
\end{lemma}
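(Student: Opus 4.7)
\textbf{Proof proposal for Lemma \ref{Sob-lem1}.}

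The idea is to transfer the Gaussian weight from the norm to the function, so that the standard unweighted trace inequality on the half-space can be applied with constant $1$. Concretely, set
\[
v(x) := u(x)\rho(x)^{1/2} = u(x)e^{-|x|^{2}/8},
\]
and observe that the left-hand side of the desired inequality is exactly $\int_{\mathbb{R}^{n-1}}v^{2}(\tilde x,0)\,d\tilde x$, while $\|v\|_{L^{2}(\mathbb{R}^{n}_{+})}^{2}=\|u\|_{L^{2}_{\rho}(\mathbb{R}^{n}_{+})}^{2}$.

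The first step is the classical unweighted trace estimate: for $v\in C_{c}^{\infty}(\overline{\mathbb{R}^{n}_{+}})$,
\[
v^{2}(\tilde x,0)=-2\int_{0}^{\infty}v\,\partial_{x_{n}}v\,dx_{n}\le\int_{0}^{\infty}\bigl(v^{2}+(\partial_{x_{n}}v)^{2}\bigr)dx_{n},
\]
which upon integrating in $\tilde x$ gives $\int_{\mathbb{R}^{n-1}}v^{2}(\tilde x,0)\,d\tilde x\le\|v\|_{L^{2}}^{2}+\|\partial_{x_{n}}v\|_{L^{2}}^{2}\le\|v\|_{L^{2}}^{2}+\|\nabla v\|_{L^{2}}^{2}$.

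Next I would expand $\nabla v=\rho^{1/2}\bigl(\nabla u-\tfrac{x}{4}u\bigr)$, so that
\[
|\nabla v|^{2}=\rho|\nabla u|^{2}-\tfrac{1}{2}\rho\,u(x\cdot\nabla u)+\tfrac{|x|^{2}}{16}\rho\,u^{2}.
\]
The cross term is handled by integration by parts: writing $u(x\cdot\nabla u)=\tfrac{1}{2}x\cdot\nabla(u^{2})$, and using that the outer normal on $\partial\mathbb{R}^{n}_{+}$ is $-e_{n}$ so $x\cdot\nu|_{x_{n}=0}=-x_{n}|_{x_{n}=0}=0$ kills the boundary contribution, one obtains
\[
\int_{\mathbb{R}^{n}_{+}}\rho\,u(x\cdot\nabla u)\,dx=-\tfrac{1}{2}\int_{\mathbb{R}^{n}_{+}}u^{2}\nabla\cdot(\rho x)\,dx=-\tfrac{n}{2}\|u\|_{L^{2}_{\rho}}^{2}+\tfrac{1}{4}\int_{\mathbb{R}^{n}_{+}}|x|^{2}u^{2}\rho\,dx,
\]
because $\nabla\cdot(\rho x)=(n-|x|^{2}/2)\rho$. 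Plugging this back yields
\[
\|\nabla v\|_{L^{2}}^{2}=\|\nabla u\|_{L^{2}_{\rho}}^{2}+\tfrac{n}{4}\|u\|_{L^{2}_{\rho}}^{2}-\tfrac{1}{16}\int_{\mathbb{R}^{n}_{+}}|x|^{2}u^{2}\rho\,dx\le\|\nabla u\|_{L^{2}_{\rho}}^{2}+\tfrac{n}{4}\|u\|_{L^{2}_{\rho}}^{2}.
\]
Combining this with the trace inequality from the first step gives
\[
\int_{\mathbb{R}^{n-1}}u^{2}(\tilde x,0)e^{-|\tilde x|^{2}/4}d\tilde x\le\|\nabla u\|_{L^{2}_{\rho}}^{2}+\tfrac{n+4}{4}\|u\|_{L^{2}_{\rho}}^{2}\le\tfrac{n+4}{4}\|u\|_{H^{1}_{\rho}}^{2},
\]
the last bound using $\tfrac{n+4}{4}\ge 1$. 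Note that the crucial sign: the unwanted moment $\int|x|^{2}u^{2}\rho$ appears with a \emph{negative} coefficient and can simply be discarded, which is why no Gaussian Poincar\'e-type estimate is needed and the constant $(n+4)/4$ drops out exactly.

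The main technical point is that the identity for $\|\nabla v\|_{L^{2}}^{2}$ uses the finiteness of $\int|x|^{2}u^{2}\rho\,dx$, which is not guaranteed for $u\in H^{1}_{\rho}(\mathbb{R}^{n}_{+})$ (take a slowly growing $u$). I would resolve this by density: the chain of inequalities above is rigorous for $u\in C_{c}^{\infty}(\overline{\mathbb{R}^{n}_{+}})$, and applied to differences $u_{k}-u_{l}$ of an approximating sequence shows that the traces $u_{k}(\cdot,0)$ form a Cauchy sequence in $L^{2}(\mathbb{R}^{n-1},e^{-|\tilde x|^{2}/4}d\tilde x)$; passing to the limit (and identifying the limit with $u(\cdot,0)$ via the local standard trace theorem, since $\rho$ is bounded below on compacts) yields the inequality for general $u\in H^{1}_{\rho}(\mathbb{R}^{n}_{+})$. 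This density step is really the only delicate part; once it is in place the algebraic identities from Steps 2--4 deliver the sharp constant automatically.
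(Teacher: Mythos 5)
Your argument is correct and takes essentially the same route as the paper: both proofs make the same substitution $v = u\,e^{-|x|^2/8}$, expand $|\nabla v|^2$ to obtain the identity $\|\nabla v\|_{L^2}^2 = \|\nabla u\|_{L^2_\rho}^2 + \tfrac{n}{4}\|u\|_{L^2_\rho}^2 - \tfrac{1}{16}\int_{\mathbb{R}^n_+}|x|^2 u^2\rho\,dx$ (exploiting the favorable sign of the moment term), apply the elementary half-space trace bound $\int v^2(\tilde x,0)\,d\tilde x \le \|v\|_{L^2}^2 + \|\nabla v\|_{L^2}^2$, and close the estimate using $\tfrac{n+4}{4}\ge 1$, with a density argument via $C_c^\infty(\overline{\mathbb{R}^n_+})$ handling the general case. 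Your remark about the possible non-finiteness of $\int|x|^2 u^2\rho$ for general $u\in H^1_\rho$ and its harmlessness due to the sign is a fair clarification of the density step that the paper states more tersely.
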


\begin{proof}
Set the norm
 $\|f\|_{H_V^1(\mathbb{R}^n_+)}:=\big[ \int_{\mathbb{R}^n_+}(|\nabla f|^2+V(x)f^2)dx \big]^{1/2} $ with $V(x):=\frac{|x|^2}{16}+1$.
	For any $u\in C_c^\infty(\overline{\mathbb{R}^n_+})$, let $u(x)=e^\frac{|x|^2}{8}v(x)$, then direct calculation gives that
	\begin{equation*}
 \begin{aligned}
     &
		\int_{\mathbb{R}^n_+}|u|^2e^{-\frac{|x|^2}{4}}dx=\int_{\mathbb{R}^n_+}v^2dx, \quad
		\int_{\mathbb{R}^n_+}|\nabla u|^2e^{-\frac{|x|^2}{4}}dx=\int_{\mathbb{R}^n_+}\left[|\nabla v|^2+\left(\frac{|x|^2}{16}-\frac{n}{4}\right)v^2\right] dx,
  \\
  &
  \|v\|_{H_V^1(\mathbb{R}^n_+)}^2=\int_{\mathbb{R}^n_+}\left(|\nabla u|^2+  \frac{n+4}{4} u^2 \right)e^{-\frac{|x|^2}{4}}dx,
  \end{aligned}
  \end{equation*}
	which indicates
	$ \|u\|_{H_\rho^1(\mathbb{R}^n_+)}^2\leq \|v\|_{H_V^1(\mathbb{R}^n_+)}^2\leq \frac{n+4}{4}\|u\|_{H_\rho^1(\mathbb{R}^n_+)}^2$.
	Thus,
	\begin{equation}\label{trace-4}
		\begin{aligned}
			&
			\int_{\mathbb{R}^{n-1}} u^2(\tilde{x},0) e^{-\frac{|\tilde x|^2}{4}} d\tilde x=\int_{ \mathbb{R}^{n-1}} v^2(\tilde{x},0) d\tilde{x}
			=
			-
			\int_{\mathbb{R}_+^n} \pp_{x_n}( v^2) dx
			=
			- 2
			\int_{\mathbb{R}_+^n} v \pp_{x_n} v dx
			\\
			\le \ & 2
			\| v \|_{L^2(\mathbb{R}_+^n)}
			\| \pp_{x_n} v \|_{L^2(\mathbb{R}_+^n)}
			\le \
			\| v \|_{L^2(\mathbb{R}_+^n)}^2
			+
			\| \nabla v \|_{L^2(\mathbb{R}_+^n)}^2\le \frac{n+4}{4}\|u\|_{H_\rho^1(\mathbb{R}^n_+)}^2 ,
		\end{aligned}
	\end{equation}
	which implies the lemma by approximating general functions in $H^1_\rho(\mathbb{R}^n_+)$ by $C_c^\infty(\overline{\mathbb{R}^n_+})$.
\end{proof}

\section{Negative eigenvalue and exponential decay of the eigenfunction}\label{negative-eigenvalue}

This section is devoted to the eigenvalue problem about the linearized equation around the ground state $U(x)$. The main result is Proposition \ref{z24Jan25-6-prop}.

Define
$D^{1,2}(\mathbb{R}_+^n)$ as the completion of $C_c^{\infty}(\overline{\mathbb{R}_+^n})$ ($f(\tilde{x},0)$ can be nonzero) under the norm $\left\| \nabla f \right\|_{L^2(\mathbb{R}_+^n)}$. Obviously, $H^1(\mathbb{R}^n_+) \subsetneqq D^{1,2}(\mathbb{R}_+^n)$.
Given an integer $n\ge 3$, $p =\frac{n}{n-2}$,
for $v, f, g\in D^{1,2}(\mathbb{R}_+^n)$, set functionals
\begin{equation}\label{z24Jan23-1}
\begin{aligned}
&
J[v] :=  \int_{\mathbb{R}_+^n} \frac{\left| \nabla v \right|^2}{2}  dx -
\int_{ \mathbb{R}^{n-1} } \frac{ \left| v\right|^{p+1}(\tilde{x},0) }{p+1}  d\tilde{x} ,
\quad
I[v] :=
 \int_{\mathbb{R}_+^n} \left| \nabla v \right|^2 dx
 -
\int_{\mathbb{R}^{n-1} } \left| v\right|^{p+1}(\tilde{x},0) d\tilde{x} ,
\\
&
B_v[f,g] := \int_{\mathbb{R}_+^n} \nabla f\cdot \nabla g dx - p
\int_{\mathbb{R}^{n-1}}
\left( |v|^{p-1}  f g\right)(\tilde{x},0) d \tilde{x}  .
\end{aligned}
\end{equation}
Direct calculation deduces that $J[\cdot], I[\cdot] \in C^2\left( D^{1,2}(\mathbb{R}_+^n) ,\mathbb{R} \right)$ and $I[v]=
\langle J'[v], v\rangle$, $B_v[f,g] := \langle J''[v] f,g \rangle$.
Define
\begin{equation}\label{z24Jan24-6}
	\begin{aligned}
		&
		D_*^{1,2}(\mathbb{R}_+^n)
		:=
		\left\{ f \in D^{1,2}(\mathbb{R}_+^n) \ | \
		f(\tilde{x},0) \in L^{p+1}(\mathbb{R}^{n-1}) \backslash \{ 0 \}
		\right\} ,
		\\
		&
		Q(f) :=
		\Big( \int_{ \mathbb{R}^{n-1} }
		|f|^{p+1}(\tilde{x},0) d\tilde{x} \Big)^{- \frac{2}{p+1} }
		\int_{\mathbb{R}_+^n} |\nabla f|^2 dx
		,
		\quad
		f\in D_*^{1,2}(\mathbb{R}_+^n) .
	\end{aligned}
\end{equation}
Define the Nehari manifold
\begin{equation}\label{qd24Jan20-9}
\mathbf{Ne}:= \left\{ f \in D^{1,2}(\mathbb{R}_+^n) \backslash \{ 0 \} \ | \ I[f]=0 \right\} .
\end{equation}

By \cite[Theorem 1]{Escobar88},
for all
$f\in D^{1,2}(\mathbb{R}_+^n) $,
\begin{equation}\label{z24Jan30-1}
	(n-2)^{\frac{1}{2}} 2^{-\frac{1}{2}} |S^{n-1}|^{\frac{1}{2(n-1)}} \| f (\cdot, 0) \|_{L^{p+1}(\mathbb{R}^{n-1}) } \le  \| \nabla f \|_{L^2(\mathbb{R}_+^{n}) }
	\quad
	\mbox{ \ with \ }
	p+1 = \frac{2(n-1)}{n-2}, \ n\ge 3 ,
\end{equation}
where $|S^{n-1}|$ is the volume of the $n-1$ dimensional unit sphere.
The equality sign is attained only by
\begin{equation}\label{minima-func}
	\varphi(x) = c_1 c_2^{\frac{n-2}{2}}
	\left[   |\tilde{x}-\tilde{v}|^2 + \left( x_n + c_2 \right)^2  \right]^{-\frac{n-2}{2}}
	, \quad  x=(\tilde{x},x_n)\in \mathbb{R}_+^n
\end{equation}
with constants
$c_1 \ne 0$, $c_2>0$, and a constant vector $\tilde{v} \in \mathbb{R}^{n-1}$. It follows that \eqref{minima-func} attains $ \inf\limits_{f \in D_*^{1,2}(\mathbb{R}_+^n) } Q(f)$.
By \eqref{z24Jan30-1}, we have
\begin{equation}\label{z24Jan28-1}
\int_{\mathbb{R}_+^n} \left| \nabla f \right|^2 dx
=
\int_{\mathbb{R}^{n-1} } \left| f\right|^{p+1}(\tilde{x},0) d\tilde{x} \ge \big[ (n-2)^{\frac{1}{2}} 2^{-\frac{1}{2}} |S^{n-1}|^{\frac{1}{2(n-1)}} \big]^{\frac{2(p+1)}{p-1} }
\mbox{ \ for \ }
f\in \mathbf{Ne};
\quad
\mathbf{Ne} \subsetneqq D_*^{1,2}(\mathbb{R}_+^n) .
\end{equation}

Define the tangent space at $v \in \mathbf{Ne}$  as
\begin{equation}
	T_{\mathbf{Ne}} v :=
	\Big\{
	f\in D^{1,2}(\mathbb{R}_+^n) \ | \ \langle I'[v],f \rangle =
2\int_{\mathbb{R}_+^n} \nabla v \cdot \nabla f dx -
\left(p+1\right) \int_{ \mathbb{R}^{n-1}} \left(\left| v \right|^{p-1} v f \right)(\tilde{x},0) d \tilde{x} = 0
	\Big\}  .
\end{equation}

\begin{lemma}\label{z24Jan20-9-lem}

Suppose that $n\ge 3$ is an integer, $p =\frac{n}{n-2}$, and $v \in \mathbf{Ne}$ attains $\inf\limits_{f \in \mathbf{Ne} } J[f] \in \mathbb{R}$, then
\begin{equation*}
	\int_{\mathbb{R}_+^n}
	\nabla v \cdot \nabla g  dx - \int_{\mathbb{R}^{n-1}}
	\left( | v |^{p-1} v g \right)(\tilde{x},0) d \tilde{x} = 0 \quad \mbox{ \ for \ } g\in D^{1,2}(\mathbb{R}_+^n) ;
\quad
B_v[\varphi,\varphi] \ge 0
\quad
\mbox{ \ for \ }
\varphi\in T_{\mathbf{Ne}} v .
\end{equation*}
	
\end{lemma}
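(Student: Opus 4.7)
My plan is to treat this as a standard constrained variational problem: part (i) will follow from a Lagrange multiplier computation, and part (ii) will follow from the second variation along a suitable curve inside $\mathbf{Ne}$. Both parts rely on the fact that the functionals $J[\cdot]$ and $I[\cdot]$ are of class $C^2$ on $D^{1,2}(\mathbb{R}_+^n)$ (visible from the explicit formulas in \eqref{z24Jan23-1} together with the Sobolev trace inequality \eqref{z24Jan30-1}), and on the nondegeneracy estimate
\begin{equation*}
\langle I'[v],v\rangle
= 2\int_{\mathbb{R}_+^n}|\nabla v|^2 dx -(p+1)\int_{\mathbb{R}^{n-1}}|v|^{p+1}(\tilde x,0)d\tilde x
= (1-p)\int_{\mathbb{R}_+^n}|\nabla v|^2 dx,
\end{equation*}
which, combined with $p=n/(n-2)>1$ and the lower bound in \eqref{z24Jan28-1}, gives $\langle I'[v],v\rangle<0$; in particular $\langle I'[v],v\rangle\ne 0$.

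For part (i), since $v$ is a constrained minimizer of $J$ on the level set $\{I=0\}$ and $I'[v]\ne 0$ (by the nondegeneracy above), the classical Lagrange multiplier principle on Banach manifolds yields a scalar $\lambda\in\mathbb{R}$ with $J'[v]=\lambda I'[v]$ in $(D^{1,2}(\mathbb{R}_+^n))^*$. Testing this identity with the admissible vector $g=v$, the left-hand side equals $\langle J'[v],v\rangle = I[v]=0$ (recall $v\in\mathbf{Ne}$), while the right-hand side equals $\lambda\langle I'[v],v\rangle$ with $\langle I'[v],v\rangle\ne 0$. Hence $\lambda=0$, i.e.\ $J'[v]=0$, which is precisely the weak Euler-Lagrange identity in (i).

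For part (ii), given $\varphi\in T_{\mathbf{Ne}}v$ I will construct a $C^2$ curve in $\mathbf{Ne}$ with tangent $\varphi$ at $v$. Set $F(t,s):=I[v+t\varphi+s v]$; then $F(0,0)=0$ and $\partial_s F(0,0)=\langle I'[v],v\rangle\ne 0$, so the implicit function theorem provides $s\in C^2$ near $0$ with $s(0)=0$ and $F(t,s(t))\equiv 0$. Differentiating this identity at $t=0$ and using $\langle I'[v],\varphi\rangle=0$ (the tangency condition) gives $s'(0)=0$. Define $\gamma(t):=v+t\varphi+s(t)v\in\mathbf{Ne}$, so that $\gamma(0)=v$ and $\gamma'(0)=\varphi$. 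Since $v$ minimizes $J$ on $\mathbf{Ne}$, the scalar function $j(t):=J[\gamma(t)]$ satisfies $j(0)\le j(t)$ for small $t$, hence $j''(0)\ge 0$; expanding and using $J'[v]=0$ from part (i),
\begin{equation*}
0\le j''(0)=\langle J''[v]\gamma'(0),\gamma'(0)\rangle+\langle J'[v],\gamma''(0)\rangle=B_v[\varphi,\varphi],
\end{equation*}
which is the desired inequality.

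The only mildly delicate step is verifying the hypotheses for the implicit function theorem in the $D^{1,2}$ setting, i.e.\ checking that $F$ is jointly $C^2$ in $(t,s)$; this reduces to the continuity of the boundary trace map from $D^{1,2}(\mathbb{R}_+^n)$ into $L^{p+1}(\mathbb{R}^{n-1})$ (supplied by \eqref{z24Jan30-1}) together with the fact that $u\mapsto \int|u|^{p+1}$ and $u\mapsto\int|u|^{p-1}u\cdot$ are $C^2$ on $L^{p+1}(\mathbb{R}^{n-1})$, a routine pointwise-plus-dominated-convergence computation. No single step is truly hard; the main point to get right is the sign of $\langle I'[v],v\rangle$, since this is what both kills the Lagrange multiplier and makes the curve $\gamma$ well defined.
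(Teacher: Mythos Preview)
Your proposal is correct and follows essentially the same route as the paper: Lagrange multipliers (testing against $v$ to kill the multiplier) for part (i), and for part (ii) the implicit function theorem applied to $I[v+t\varphi+s\psi]$ with the transversal direction $\psi=v$, followed by the second variation along the resulting curve, using part (i) to eliminate the $\gamma''(0)$ term. The paper's proof is essentially a terse version of yours; your extra remarks on $C^2$ regularity and the sign of $\langle I'[v],v\rangle$ are correct and simply spell out what the paper takes for granted.
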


\begin{proof}
	
The first result is deduced by the Lagrange multiplier method.
For the second result, there exists $\psi \in D^{1,2}(\mathbb{R}_+^n)$ such that $\langle I'[v],\psi \rangle \ne 0$. Indeed, since $v \in \mathbf{Ne} $ and $\langle I'[v],v \rangle = \left(1-p\right) \int_{ \mathbb{R}^{n-1}} \left| v \right|^{p+1}(\tilde{x},0)  d \tilde{x} \ne 0$, we can take $\psi=v$.
Given $\varphi\in T_{\mathbf{Ne}} v$,	set $H(t,s):= I\left[ v+ t\varphi + s\psi \right] $.
	Then $H(0,0)=0$ and $\pp_s H(t,s) \big|_{(t,s)=(0,0)} = \langle I'[v],\psi \rangle \ne 0$. For $|t|, |s|$ sufficiently small, $H(t,s)$ is smooth about $t,s$.
	By the implicit function theorem, there exists $\delta>0$ small such that
	\begin{equation*}
		H\left( t,s(t) \right) \equiv 0 \quad
		\forall t\in (-\delta,\delta);
		\quad
		s=s(t) \in C^2(-\delta,\delta);
		\quad s(0) = 0;
		\quad
		s'(0) = -\frac{\langle I'[v], \varphi\rangle}{ \langle I'[v], \psi \rangle }  =0 .
	\end{equation*}
	Set $h(t) :=t\varphi +s(t) \psi $, and then $h(t)$ satisfies
	\begin{equation*}
		h(0)=0,
		\quad h'(0)=\varphi,
		\quad
		h(t) \in C^2((-\delta,\delta), D^{1,2}(\mathbb{R}_+^n) ),
		\quad
		v+h(t) \in \mathbf{Ne}
		\mbox{ \ for \ } t\in (-\delta,\delta), 0< \delta \ll 1 ,
	\end{equation*}
where we take $\delta\ll 1$ to make $v+h(t) \ne 0$.
The fact that $v \in \mathbf{Ne}$ attains $\inf\limits_{f \in \mathbf{Ne} } J[f]$ implies $
		\frac{d^2}{dt^2} J(v+h(t))\big|_{t=0} \ge 0 $. Therein,
	\begin{equation*}
		\begin{aligned}
			\frac{d^2}{dt^2} J(v+h(t)) \Big|_{t=0}
			= \ &
			\int_{\mathbb{R}_+^n}
			\left|\nabla h'(0) \right|^2 dx
			+
			\int_{\mathbb{R}_+^n}
			\nabla v \cdot \nabla h''(0) dx
			-  p
			\int_{\mathbb{R}^{n-1}}
			\left(
			\left| v \right|^{p-1}
			\left(h'(0) \right)^2\right)(\tilde{x},0) d \tilde{x}
			\\
			&
			-
			\int_{\mathbb{R}^{n-1}}
			\left(
			\left| v \right|^{p-1} v h''(0) \right)(\tilde{x},0) d \tilde{x}
			=
			B_v[\varphi,\varphi] ,
		\end{aligned}
	\end{equation*}
	where we used the first result for the last step.  Hence the second result holds.
\end{proof}

\begin{lemma}\label{z24Jan20-5-lem}

Assume that $n\ge 3$ is an integer, $p =\frac{n}{n-2}$.
For $f \in \mathbf{Ne}$, we have
\begin{equation}\label{qd24Jan20-10}
J[f]  =
\left(
\frac{1}{2}
- \frac{1}{p+1}
\right)
\int_{\mathbb{R}_+^n} \left| \nabla f \right|^2 dx
=
\left(
\frac{1}{2}
- \frac{1}{p+1}
\right)
\left( Q(f) \right)^{\frac{p+1}{p-1}} .
\end{equation}
Given $f \in D_*^{1,2}(\mathbb{R}_+^n)$, for a constant $c_f>0$,
\begin{equation}\label{qd24Jan20-7}
\begin{aligned}
&
	I[c_f f ]
	=
	c_f^2
	\int_{\mathbb{R}_+^n} \left| \nabla f \right|^2 dx
	-
	c_f^{p+1}
	\int_{\mathbb{R}^{n-1} } \left| f\right|^{p+1}(\tilde{x},0) d\tilde{x}  = 0 \Leftrightarrow
	c_f =
	\Bigg(
	\frac{\int_{\mathbb{R}_+^n} \left| \nabla f \right|^2 dx}{\int_{\mathbb{R}^{n-1} } \left| f\right|^{p+1}(\tilde{x},0) d\tilde{x}}
	\Bigg)^{\frac{1}{p-1}} ,
\\
&
\mbox{and then \ }
c_f f \in \mathbf{Ne},
\quad
		J[c_f f ]
		=
		\left(
		\frac{1}{2}
		- \frac{1}{p+1}
		\right)
		\left( Q(f) \right)^{\frac{p+1}{p-1}} .
\end{aligned}
\end{equation}
Moreover,
\begin{equation*}
	\inf\limits_{f \in \mathbf{Ne} } J[f]
	= \left(
	\frac{1}{2}
	- \frac{1}{p+1}
	\right)
	\Big( \inf\limits_{f \in D_*^{1,2}(\mathbb{R}_+^n) } Q(f) \Big)^{\frac{p+1}{p-1}}  .
\end{equation*}
In particular, if $f_1 \in \mathbf{Ne}$ attains $\inf\limits_{f \in \mathbf{Ne} } J[f]$, then $f_1$ attains $ \inf\limits_{f \in D_*^{1,2}(\mathbb{R}_+^n) } Q(f)$;
conversely, if $f_2 \in D_*^{1,2}(\mathbb{R}_+^n)$ attains $ \inf\limits_{f \in D_*^{1,2}(\mathbb{R}_+^n) } Q(f)$, then $c_{f_2} f_2 \in \mathbf{Ne}$  and $c_{f_2} f_2 $ attains $\inf\limits_{f \in \mathbf{Ne} } J[f]$.
\end{lemma}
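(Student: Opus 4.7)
The plan is to observe that once one is on the Nehari manifold $\mathbf{Ne}$, the functional $J$ collapses to a simple multiple of the Dirichlet energy, and that for any $f\in D_*^{1,2}(\mathbb{R}_+^n)$ there is an explicit dilation taking it onto $\mathbf{Ne}$. Combining these two observations with the scale invariance of $Q$ then identifies the two infima.

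First I would prove \eqref{qd24Jan20-10}. For $f\in\mathbf{Ne}$ the constraint $I[f]=0$ reads $\int_{\mathbb{R}_+^n}|\nabla f|^2\,dx=\int_{\mathbb{R}^{n-1}}|f|^{p+1}(\tilde x,0)\,d\tilde x$. Substituting this into the definition \eqref{z24Jan23-1} of $J$ immediately gives $J[f]=\bigl(\tfrac12-\tfrac{1}{p+1}\bigr)\int|\nabla f|^2$. For the second equality in \eqref{qd24Jan20-10}, insert the same constraint into \eqref{z24Jan24-6}:
\begin{equation*}
Q(f)=\Bigl(\int_{\mathbb{R}^{n-1}}|f|^{p+1}(\tilde x,0)\,d\tilde x\Bigr)^{-\frac{2}{p+1}}\int_{\mathbb{R}_+^n}|\nabla f|^2\,dx=\Bigl(\int_{\mathbb{R}_+^n}|\nabla f|^2\,dx\Bigr)^{\frac{p-1}{p+1}},
\end{equation*}
so $(Q(f))^{(p+1)/(p-1)}=\int|\nabla f|^2$, as claimed.

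Next I would verify \eqref{qd24Jan20-7}. For $f\in D_*^{1,2}(\mathbb{R}_+^n)$ and $c_f>0$, $I[c_f f]=c_f^2\int|\nabla f|^2-c_f^{p+1}\int|f|^{p+1}(\tilde x,0)\,d\tilde x$; since $p>1$, this vanishes exactly at the displayed $c_f$, which is well-defined because $f\in D_*^{1,2}(\mathbb{R}_+^n)$ ensures the boundary integral is strictly positive. Thus $c_ff\in\mathbf{Ne}$. To compute $J[c_ff]$ I would apply \eqref{qd24Jan20-10} and note the scale invariance $Q(c_ff)=Q(f)$ (visible directly from the definition of $Q$), yielding $J[c_ff]=\bigl(\tfrac12-\tfrac{1}{p+1}\bigr)(Q(f))^{(p+1)/(p-1)}$.

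For the identification of infima, the key is that the map $f\mapsto c_ff$ sends $D_*^{1,2}(\mathbb{R}_+^n)$ onto $\mathbf{Ne}$ (surjectivity is trivial since $c_g=1$ when $g\in\mathbf{Ne}\subset D_*^{1,2}$ by \eqref{z24Jan28-1}), and along this map $J$ takes exactly the value $\bigl(\tfrac12-\tfrac{1}{p+1}\bigr)(Q(f))^{(p+1)/(p-1)}$. Since $x\mapsto x^{(p+1)/(p-1)}$ is monotone increasing on $[0,\infty)$ and the prefactor is positive (as $p>1$), taking the infimum commutes with the monotone transformation, giving
\begin{equation*}
\inf_{f\in\mathbf{Ne}}J[f]=\Bigl(\tfrac12-\tfrac{1}{p+1}\Bigr)\Bigl(\inf_{f\in D_*^{1,2}(\mathbb{R}_+^n)}Q(f)\Bigr)^{\frac{p+1}{p-1}}.
\end{equation*}
The two correspondences between minimizers follow: if $f_1\in\mathbf{Ne}$ realizes $\inf J$, then $Q(f_1)=(\int|\nabla f_1|^2)^{(p-1)/(p+1)}$ already equals the infimum of $Q$; conversely if $f_2$ realizes $\inf Q$, then $c_{f_2}f_2\in\mathbf{Ne}$ achieves the infimum of $J$ by the identity above.

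There is no real obstacle here — the whole statement is a consequence of elementary algebra combined with the scale invariance of $Q$ and the structural relation $I[\cdot]=\langle J'[\cdot],\cdot\rangle$. The only mild bookkeeping point is keeping the exponents $(p+1)/(p-1)$ and $-2/(p+1)$ straight when reconciling the two forms in \eqref{qd24Jan20-10}, which is handled by the single substitution above.
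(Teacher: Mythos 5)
Your proof is correct and follows essentially the same route as the paper: both establish \eqref{qd24Jan20-10} and \eqref{qd24Jan20-7} by direct substitution of the Nehari constraint and scale invariance of $Q$, and both identify the infima via the map $f\mapsto c_ff$ together with the inclusion $\mathbf{Ne}\subset D_*^{1,2}(\mathbb{R}_+^n)$. The only cosmetic difference is that you package the final step as surjectivity of $f\mapsto c_ff$ onto $\mathbf{Ne}$ plus monotonicity of $x\mapsto x^{(p+1)/(p-1)}$, whereas the paper writes out the two resulting inequalities explicitly.
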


\begin{proof}
\eqref{qd24Jan20-10} and \eqref{z24Jan28-1} imply
\begin{equation*}
	\inf\limits_{f \in \mathbf{Ne} } J[f]
	=
	\left(
	\frac{1}{2}
	- \frac{1}{p+1}
	\right)
	\Big( \inf\limits_{f \in \mathbf{Ne} } Q(f) \Big)^{\frac{p+1}{p-1}}
	\ge
	\left(
	\frac{1}{2}
	- \frac{1}{p+1}
	\right)
	\Big( \inf\limits_{f \in D_*^{1,2}(\mathbb{R}_+^n) } Q(f) \Big)^{\frac{p+1}{p-1}}  .
\end{equation*}
Given $f \in D_*^{1,2}(\mathbb{R}_+^n)$, the choice of $c_f$ yields $c_f f \in \mathbf{Ne}$. One combining \eqref{qd24Jan20-10}, then
$ J[c_f f ]
  =
		\left(
		\frac{1}{2}
		- \frac{1}{p+1}
		\right)
		\left( Q(f) \right)^{\frac{p+1}{p-1}} $,
which implies
\begin{equation*}
	\left(
	\frac{1}{2}
	- \frac{1}{p+1}
	\right)
	\Big( \inf\limits_{f \in D_*^{1,2}(\mathbb{R}_+^n) } Q(f) \Big)^{\frac{p+1}{p-1}}
	=
	\inf\limits_{f \in D_*^{1,2}(\mathbb{R}_+^n) }
	J[c_f f ]
	\ge
	\inf\limits_{c_f f \in \mathbf{Ne} }
	J[c_f f ]
	\ge
	\inf\limits_{ f \in \mathbf{Ne} }
	J[f].
\end{equation*}
\end{proof}

\begin{lemma}\label{qd24Jan24-2-lem}
	
	Given an integer $n\ge 2$ and constants $\lambda = -m^2$, $m> 0$, then for $u\in C^2(\mathbb{R}_+^n) \cap C^1(\overline{\mathbb{R}_+^n})$ and $x\in \overline{\mathbb{R}_+^n}$, we have
	\begin{equation}\label{qd24Jan25-1}
		\begin{aligned}
			u(x) = \ &
			\int_{\mathbb{R}_+^n} \left(\Delta u + \lambda u \right)(y) \left[ -E_n^m(|x-y|) -E_n^m ( |x-(\tilde{y}, -y_n) | ) \right] dy
			\\
			&
			-
			\int_{\mathbb{R}^{n-1}} (-\partial_{y_n} u)(\tilde{y},0) \left[
			-2 E_n^m ( |x-(\tilde{y}, 0) | )
			\right]   d\tilde{y} ,
		\end{aligned}
	\end{equation}
	where $
	E_n^m(r)
	=
	m^{\frac{n}{2}-1} (2\pi)^{-\frac{n}{2}} r^{1-\frac{n}{2}} K_{\frac{n}{2}-1}(mr) $ for  $r> 0 $ and $K_{\frac{n}{2}-1}$ is the second kind modified Bessel function of order $\frac{n}{2}-1$. Moreover,
	\begin{equation}\label{qd24Jan29-2}
		\left| E_n^m(r) \right|
		\le
		C(m,n)
		\bigg(
		\1_{r<1}
		\begin{cases}
			r^{2-n},  & n>2
			\\
			\langle \ln r \rangle ,
			& n=2
		\end{cases}
		+
		\1_{r\ge 1}	e^{-m r}
		\bigg)
		\quad
		\mbox{ \ for \ }  r>0 .
	\end{equation}
\end{lemma}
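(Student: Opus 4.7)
\medskip

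\noindent\textbf{Proof proposal for Lemma \ref{qd24Jan24-2-lem}.}

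The plan is to recognize $E_n^m(|x|)$ as the fundamental solution of the Helmholtz-type operator $-\Delta + m^2$ on $\mathbb{R}^n$ (since $K_\nu$ solves the modified Bessel ODE), and then to build the Neumann Green's function for $-\Delta - \lambda = -\Delta + m^2$ on $\mathbb{R}_+^n$ by the method of images. Concretely, I will set
\[
G_N(x,y) := E_n^m(|x-y|) + E_n^m(|x-(\tilde y,-y_n)|),\qquad x,y\in\overline{\mathbb{R}_+^n},\; x\neq y,
\]
so that $(-\Delta_y + m^2)G_N(x,\cdot) = \delta_x$ distributionally on $\mathbb{R}_+^n$, and $\partial_{y_n}G_N(x,y)|_{y_n=0}=0$ by the reflection symmetry of the second summand.

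Next I will apply Green's second identity on the truncated domain $(\mathbb{R}_+^n\cap B_R(0))\setminus B_\varepsilon(x)$ to the pair $(u, G_N(x,\cdot))$, noting that the outward unit normal on $\partial\mathbb{R}_+^n$ is $-e_n$. Using $\Delta_y G_N = -\lambda G_N - \delta_x$ this yields
\[
\int_{\mathbb{R}_+^n\cap B_R\setminus B_\varepsilon(x)}\!\!\!\! G_N(\Delta u+\lambda u)\,dy
+\int_{\partial B_\varepsilon(x)\cap\mathbb{R}_+^n}\!\!\!\!(G_N\partial_\nu u-u\partial_\nu G_N)\,dS
= \int_{\mathbb{R}^{n-1}\cap B_R}\!\!\!\!\big(-G_N\partial_{y_n}u+u\partial_{y_n}G_N\big)\big|_{y_n=0}d\tilde y
+\text{(spherical term at $R$)}.
\]
The $\partial_{y_n}G_N$ boundary term vanishes identically, the $\varepsilon\to 0$ limit of the singular surface integral produces $-u(x)$ (exactly as for the Newtonian potential, since $G_N$ matches $E_n^m(|\cdot-x|)$ to leading order at $y=x$), and the spherical term at $R$ vanishes as $R\to\infty$ thanks to the exponential decay of $E_n^m$ and $|\nabla E_n^m|$ at infinity (assuming polynomial control on $u$ and $\nabla u$, which is the natural setting in which \eqref{qd24Jan25-1} is used later). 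Rearranging gives exactly \eqref{qd24Jan25-1}, upon observing that $G_N(x,(\tilde y,0))=2E_n^m(|x-(\tilde y,0)|)$.

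For the pointwise bound \eqref{qd24Jan29-2}, I will invoke the standard asymptotics of the modified Bessel function $K_{\nu}$: near zero, $K_\nu(s)\sim \tfrac{1}{2}\Gamma(\nu)(s/2)^{-\nu}$ for $\nu>0$ and $K_0(s)\sim -\ln s$; at infinity, $K_\nu(s)\sim \sqrt{\pi/(2s)}\,e^{-s}$. Substituting $s=mr$ into $E_n^m(r)=m^{n/2-1}(2\pi)^{-n/2}r^{1-n/2}K_{n/2-1}(mr)$ converts these to the two cases in \eqref{qd24Jan29-2}: for $r<1$ one picks up the $r^{2-n}$ singularity when $n>2$ and the logarithmic singularity when $n=2$, while for $r\geq 1$ one obtains a bound $C(m,n)r^{(1-n)/2}e^{-mr}\le C(m,n)e^{-mr}$.

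The main obstacle is the justification of the $R\to\infty$ step: the lemma as stated imposes no decay hypothesis on $u$, so strictly speaking the representation formula needs additional integrability to make sense. I would handle this by recording the implicit assumption that $u$ grows sufficiently slowly at infinity (subexponentially with rate less than $m$) so that the exponential decay of $E_n^m$ and $\nabla E_n^m$ from the Bessel asymptotics kills the spherical boundary term; in the intended application within Proposition \ref{z24Jan25-6-prop}, the eigenfunction is in $H^1(\mathbb{R}_+^n)$ and the formula will be used to bootstrap to the exponential decay claimed there, so any mild growth restriction is automatically satisfied.
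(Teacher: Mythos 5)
Your proposal is correct and follows essentially the same route as the paper: recognize $-E_n^m$ as the free fundamental solution of $\Delta+\lambda$ (the paper cites Stuart for this), symmetrize across the hyperplane so the even reflection gives a Green's function with vanishing normal derivative on $\{y_n=0\}$, and apply Green's second identity. The paper writes the integration by parts without explicitly excising a small ball (it invokes the distributional identity directly) and handles boundary points $x\in\partial\mathbb{R}_+^n$ by first proving the formula for $x_n>0$ and then sending $x_n\downarrow 0$, whereas you cut out $B_\varepsilon(x)$ and truncate at $B_R$ explicitly; the two are equivalent in rigor. Your concern about the $R\to\infty$ step is well placed but applies equally to the paper's version, which also performs the integration by parts over the full half-space without a stated decay hypothesis on $u$; the paper acknowledges this informality in the remark immediately following the lemma.
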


\begin{remark}
Similar to \eqref{qd2023Dec03-3}, \eqref{qd24Jan25-1} can be used to represent solutions with some rough data.
\end{remark}

\begin{proof}
	
By \cite[Theorem 2.5]{stuart1998}, the fundamental solution of $\Delta + \lambda$ in $\mathbb{R}^n$ is given by $-E_n^m(|x|)$, namely $\left(\Delta + \lambda \right) (-E_n^m(|x|)) = \delta(x) $ in $\mathbb{R}^n$. The properties of $-E_n^m$ are given in \cite[pp.9-10]{stuart1998}.
	
	Given $\tilde{x}  \in \mathbb{R}^{n-1}$, $x_n>0$, $x=(\tilde{x}, x_n)$,
	take $v(y) = -E_n^m(|x-y|) -E_n^m ( |x-(\tilde{y}, -y_n) | ) $.
	Direct calculation yields $\partial_{y_n} v\big|_{y_n=0} = 0$,
$\left(\Delta + \lambda \right) v
=
\left(\Delta_y + \lambda \right)
\left[
-E_n^m(|y-x|) -E_n^m  \big( |y-(\tilde{x}, -x_n) | \big)
\right]
=
\delta_{y-x} + \delta_{y-(\tilde{x}, -x_n)} $ in $\mathbb{R}^n$. Integration by parts yields
	\begin{equation*}
		\begin{aligned}
&
\int_{\mathbb{R}_+^n} \left(\Delta u + \lambda u \right) v dy
=
\int_{ \mathbb{R}^{n-1}} \left[ (-\partial_{y_n} u) v - (-\partial_{y_n} v) u \right] (\tilde{y},0) d\tilde{y} +
\int_{\mathbb{R}_+^n} u \left(\Delta v + \lambda v \right) dy
\\
 =  \ &
			\int_{\mathbb{R}^{n-1}} \left[(-\partial_{y_n} u) v \right] (\tilde{y},0) d\tilde{y} +
			\int_{\mathbb{R}^n} \left(u(y)\1_{y_n> 0} + 0 \1_{y_n\le 0} \right) \left( \delta_{y-x} + \delta_{y-(\tilde{x}, -x_n)} \right) dy
			\\
			= \ &
			\int_{\mathbb{R}^{n-1}} \left[(-\partial_{y_n} u) v \right] (\tilde{y},0) d\tilde{y} +
			u(x) .
		\end{aligned}
	\end{equation*}
Taking $x_n\downarrow 0$ deduces
the case $x\in \partial \mathbb{R}_+^n$.
Plugging $v(y)$, then we complete the proof.
\end{proof}

\medskip

Given $V_1(x)\in L^\infty(\mathbb{R}_+^n)$ and $V_2(\tilde{x})\in L^\infty(\mathbb{R}^{n-1})$, we say that $f\in H^1(\mathbb{R}_+^n)$ satisfies the equation
\begin{equation*}
	-\Delta f = V_1(x)  f \mbox{ \ in \ } \mathbb{R}_+^n ,
	\quad
	-\partial_{x_n} f = V_2(\tilde{x}) f \mbox{ \ on \ } \pp \mathbb{R}_+^n
\end{equation*}
in the weak sense if
\begin{equation*}
	\int_{\mathbb{R}_+^n} \nabla f\cdot \nabla g dx - \int_{\mathbb{R}^{n-1}} V_2(\tilde{x}) (f g)(\tilde{x},0) d \tilde{x} = \int_{\mathbb{R}_+^n} V_1(x) f g dx
	\quad
	\mbox{ \ holds for all \ }
	g\in H^1(\mathbb{R}_+^n) .
\end{equation*}

\begin{lemma}\label{z24Jan24-3-lem}

Suppose that $n\ge 2$ is an integer, $\lambda<0$, $V(\tilde x)$ satisfies
$V(\tilde x)\in L^\infty(\mathbb{R}^{n-1}) $ and $ \lim_{|\tilde x|\to\infty}V(\tilde x)=0 $,
	let $\phi \in H^1(\mathbb{R}^n_+)$ satisfy
	\begin{equation}\label{inner-eigen00}
		-\Delta \phi =\lambda \phi
		\quad \mbox{ \ in \ }   \mathbb{R}^{n}_+ ,
		\quad
		-\pp_{x_n} \phi   =
		V(\tilde x) \phi
		\quad \mbox{ \ on \ }  \pp \mathbb{R}^{n}_+
	\end{equation}
	in the weak sense.
	Then for all  $\nu \in [0,\sqrt{-\lambda})$,
	we have $ |\phi(x)|\le C e^{-\nu|x|} $ in $\overline{\mathbb{R}_+^n}$ with a constant $C$ depending on $n, \lambda, \nu, \| \phi \|_{L^2(\mathbb{R}^n_+)}, V(\tilde{x})$.
	
\end{lemma}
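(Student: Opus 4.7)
The plan splits into two stages: first upgrade $\phi$ to $L^\infty(\mathbb{R}_+^n)$, and then convert the eigenvalue equation into an integral equation whose kernel carries built-in exponential decay. For the first stage, since $\phi\in H^1(\mathbb{R}_+^n)$ is a weak solution of a linear oblique-derivative problem with $V\in L^\infty(\mathbb{R}^{n-1})$, boundary Moser iteration for oblique boundary problems (Lieberman \cite{lieberman1996second, Lieberman2012CPAA}) together with the Sobolev embedding and dyadic far-field estimates exploiting $V(\tilde x)\to 0$ yields a global bound $\|\phi\|_{L^\infty(\mathbb{R}_+^n)}\le C(n,\|V\|_\infty, \|\phi\|_{L^2})$. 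The same regularity theory upgrades $\phi$ to $C^{1,\alpha}(\overline{\mathbb{R}_+^n})\cap C^2(\mathbb{R}_+^n)$ locally, which is enough to apply Lemma~\ref{qd24Jan24-2-lem} after a standard approximation argument.

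Setting $m=\sqrt{-\lambda}>0$ and applying \eqref{qd24Jan25-1}, the bulk integral vanishes because $\Delta\phi+\lambda\phi\equiv 0$, and the boundary term incorporates $-\partial_{y_n}\phi=V(\tilde y)\phi(\tilde y,0)$, collapsing the representation to
$$
\phi(x)=2\int_{\mathbb{R}^{n-1}}V(\tilde y)\,\phi(\tilde y,0)\,E_n^m\bigl(|x-(\tilde y,0)|\bigr)\,d\tilde y.
$$
By the kernel bound~\eqref{qd24Jan29-2}, $E_n^m(r)\lesssim e^{-mr}$ for $r\ge 1$ and is only mildly singular for $r<1$, so the right-hand side is absolutely convergent in view of $V,\phi\in L^\infty$ and serves as the mechanism for exponential decay.

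The decay rate is then extracted by a bootstrap. Assume inductively that $|\phi(y)|\le A_k e^{-\nu_k|y|}$ with $\nu_k\in[0,m)$; the base case $\nu_0=0$ is the $L^\infty$ bound from stage one. Pick $\nu_{k+1}\in(\nu_k,m)$; using $V(\tilde y)\to 0$, choose $R$ large enough that $|V(\tilde y)|\le\varepsilon$ on $\{|\tilde y|>R\}$, with $\varepsilon$ to be determined. Split the representation integral at $|\tilde y|=R$. On $\{|\tilde y|\le R\}$ and $|x|\ge 2R$ one has $|x-(\tilde y,0)|\ge|x|/2$, hence this contribution is bounded by $C(R,V,\phi)\,e^{-m|x|/2}$; on $\{|\tilde y|>R\}$ the induction hypothesis combined with a convolution estimate
$$
\int_{\mathbb{R}^{n-1}} e^{-\nu_k|\tilde y|}\,E_n^m(|x-(\tilde y,0)|)\,d\tilde y \le C(n,m,\nu_k,\nu_{k+1})\,e^{-\nu_{k+1}|x|}
$$
yields an upper bound of order $\varepsilon\cdot A_k\, e^{-\nu_{k+1}|x|}$. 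Choosing $\varepsilon$ small absorbs the multiplicative constant, closes the induction with $A_{k+1}$ uniformly bounded in $k$, and in finitely many steps brings $\nu_k$ arbitrarily close to $m$, covering the full range $\nu\in[0,m)$.

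The main obstacle is justifying the convolution decay estimate displayed above uniformly in $x\in\overline{\mathbb{R}_+^n}$. The argument proceeds by splitting the $\tilde y$-integral at $|\tilde y|\sim|x|/2$: in the near regime the factor $e^{-\nu_k|\tilde y|}$ is essentially a constant, while $E_n^m$ supplies exponential decay through $|x-(\tilde y,0)|\gtrsim|x|$; in the far regime $e^{-\nu_k|\tilde y|}$ itself yields exponential decay in $|x|$ and the kernel contributes only an integrable remainder. Balancing the two regimes forces $\nu_{k+1}<m$ strictly, which is exactly the content of the lemma. A subsidiary technicality is verifying the representation \eqref{qd24Jan25-1} for weak solutions $\phi\in H^1$ only; this is handled by approximating $V\phi(\tilde y,0)$ by smooth data and passing to the limit via dominated convergence, using the already-established $L^\infty$ bound on $\phi$.
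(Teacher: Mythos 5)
The representation-formula setup matches the paper: you both reduce to
\[
\phi(x)=2\int_{\mathbb{R}^{n-1}}V(\tilde y)\,\phi(\tilde y,0)\,E_n^m\bigl(|x-(\tilde y,0)|\bigr)\,d\tilde y,
\]
after establishing continuity and decay of $\phi$ (the paper invokes \cite[Theorems 5.36, 5.45]{lieberman2013oblique-book}; your Moser iteration plus approximation is acceptable for this part). The difficulty is in the second stage, and your bootstrap does not close.

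The convolution estimate you assert,
\[
\int_{\mathbb{R}^{n-1}} e^{-\nu_k|\tilde y|}\,E_n^m\bigl(|x-(\tilde y,0)|\bigr)\,d\tilde y \;\lesssim\; e^{-\nu_{k+1}|x|},
\qquad \nu_{k+1}>\nu_k,
\]
is false. For $\nu<m$ the triangle inequality gives $\nu|\tilde y|+m|x-(\tilde y,0)|\ge\nu|x|+(m-\nu)|x-(\tilde y,0)|$, hence the convolution is $\lesssim e^{-\nu|x|}$ — and one checks by taking $\tilde y$ near $\tilde x$ (where $E_n^m$ carries mass of order one near its singularity) that the rate $e^{-\nu_k|x|}$ is actually attained, not beaten. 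Convolution with $E_n^m$ preserves a decay rate strictly below $m$; it never upgrades it. The smallness $|V|\le\varepsilon$ at infinity shrinks the multiplicative constant $A_{k+1}$ but has no effect on the exponent. Starting from $\nu_0=0$ you therefore remain at $\nu_k=0$ for all $k$, and your iteration cannot reach any positive $\nu$, let alone the full range $[0,\sqrt{-\lambda})$.

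The paper avoids this by an Agmon-type comparison argument (following Hislop): fix the target rate $\nu<m$ from the start, set $A(x)=\sup_w|\phi(w,0)|e^{-\nu|x-(w,0)|}$ and $B(x)=\int 2|E_n^m|e^{\nu|\cdot|}|V|$, so that $|\phi(x)|\le A(x)B(x)$. Because $|E_n^m(r)|e^{\nu r}$ is integrable ($\nu<m$) and $V(\tilde x-z)\to0$ pointwise, dominated convergence gives $B(x)\to 0$, so $|\phi(x)|\le\frac12 A(x)$ on $\{|x|\ge R_1\}$. The self-similar identity $A(x)=\sup_z A(z,0)e^{-\nu|x-(z,0)|}$ then forces the supremum defining $A(x)$ to be attained on $\{|w|\le R_1\}$, yielding $A(x)\le e^{-\nu|x|}\sup_{|w|\le R_1}|\phi(w,0)|e^{\nu|w|}$. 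This hits the full target rate in one step. If you want to save your strategy, you would need to replace the rate-improving convolution estimate by exactly this comparison-function mechanism; the bootstrap framing is not just stylistically different but structurally unable to produce the exponent.
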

\begin{proof}
Denote $m=\sqrt{-\lambda}$. Since $\| \phi \|_{L^2(\mathbb{R}^n_+)}<\infty$, by \cite[Theorem 5.36, Theorem 5.45]{lieberman2013oblique-book}, $\phi \in C(\overline{\mathbb{R}_+^n })$ and $\left|\phi(x)\right| \to 0$ as $|x|\to \infty$.
	By the representation formula \eqref{qd24Jan25-1} in Lemma \ref{qd24Jan24-2-lem} and uniqueness of the weak solution of \eqref{inner-eigen00} in $H^1(\mathbb{R}^n_+)$, $\phi$ can be written as
	\begin{equation}\label{qd24Jan25-3}
		\phi(x) =   2
		\int_{\mathbb{R}^{n-1}} V(w) \phi(w,0)
		E_n^m ( |x-(w, 0) | )    dw ,
	\end{equation}
	where $E_n^m$ is given in Lemma \ref{qd24Jan24-2-lem} and  satisfies \eqref{qd24Jan29-2}.

The following argument is in the same spirit of \cite[Proof of Theorem 2.1]{hislop2000}. Given $\nu \in [0,m)$, denote
	\begin{equation*}
		A(x):= \sup\limits_{w\in \mathbb{R}^{n-1} }
		\left| \phi(w,0)\right| e^{-\nu \left|x- (w,0)\right|}
		,
		\quad
		B(x) :=
		\int_{\mathbb{R}^{n-1}}
		2 \left|E_n^m(|x-(w,0)|) \right|
		e^{\nu \left|x- (w,0)\right|} \left|V(w) \right| dw  .
	\end{equation*}
If $A(x)=0$ for some $x\in \overline{\mathbb{R}_+^n}$, then $\phi(\cdot,0)\equiv 0$ in $\mathbb{R}^{n-1}$, which implies that $\phi\equiv 0$ in $\overline{\mathbb{R}_+^n}$ by \eqref{qd24Jan25-3} and the conclusion holds. Hereafter, we always assume $A(x)>0$ in $\overline{\mathbb{R}_+^n}$.
Obviously, $\left|\phi(x) \right| \le A(x) B(x) $. By \eqref{qd24Jan29-2}, $\nu<m$, the properties of $V(x)$, and Lebesgue's dominated convergence theorem,
\begin{equation*}
		B(x)
		=
		\int_{\mathbb{R}^{n-1}}
		2 \left|E_n^m(|(z,x_n)|) \right|
		e^{\nu \left|(z,x_n)\right|} \left|V(\tilde{x} -z) \right| dz
		\to 0
		\mbox{ \ as \ }
		|x|\to \infty   ,
	\end{equation*}
which implies that there exists $R_1 = R_1(n,m,V(\tilde{x}))>0$ sufficiently large such that
	\begin{equation}\label{qd24Jan25-6}
		\left|\phi (x) \right| \le 2^{-1} A(x) \quad
		\mbox{ \ for \ }
		x\in \overline{\mathbb{R}_+^n},
		\  \left|x\right|\ge R_1 .
	\end{equation}
	Note that for $\nu\ge 0$,
	\begin{equation*}
	\begin{aligned}
		A(x)
		= \ &
		 \sup\limits_{w\in \mathbb{R}^{n-1} }
		\left| \phi(w,0)\right|
		\sup\limits_{z\in \mathbb{R}^{n-1} }
		e^{-\nu \left|x- (z,0)\right|}
		e^{-\nu \left|(z,0)-(w,0)\right|}
		\\
		= \ &
		\sup\limits_{z\in \mathbb{R}^{n-1} }
		\sup\limits_{w\in \mathbb{R}^{n-1} }
		\left| \phi(w,0)\right|
		e^{-\nu \left|x- (z,0)\right|}
		e^{-\nu \left|(z,0)-(w,0)\right|}
		=
		\sup\limits_{z\in \mathbb{R}^{n-1} }
		A(z,0)
		e^{-\nu \left|x- (z,0)\right|}  .
		\end{aligned}
	\end{equation*}
Combining \eqref{qd24Jan25-6}, we have
	\begin{equation*}
		\sup\limits_{|w|> R_1 }
		\left| \phi(w,0)\right| e^{-\nu \left|x- (w,0)\right|}
		\le
		2^{-1}
		\sup\limits_{|w|> R_1 }
		A(w,0) e^{-\nu \left|x- (w,0)\right|}
		\le
		2^{-1} A(x) < A(x) .
	\end{equation*}
By the definition of $A(x)$, it implies
	\begin{equation*}
			A(x)
			=
			\sup\limits_{|w|\le R_1 }
			\left| \phi(w,0)\right| e^{-\nu \left|x- (w,0)\right|}
			\le
			e^{-\nu \left|x \right|}
			\sup\limits_{|w|\le R_1 }
			\left| \phi(w,0)\right| e^{\nu \left|w\right|}
			.
	\end{equation*}
Combining \eqref{qd24Jan25-6} and the local $L^\infty$ estimate \cite[Theorem 5.36]{lieberman2013oblique-book} in $\overline{B_n^+(0,2R_1) }$, we conclude this lemma.
\end{proof}

\begin{prop}\label{z24Jan25-6-prop}
	Given an integer $n\ge 3$, $p=\frac{n}{n-2}$, and $U(x)$ given in \eqref{qd24Jan14-U}, there exists only one negative eigenvalue $\lambda_0$ for the following eigenvalue problem in $H^1(\mathbb{R}_+^n)$,
	\begin{equation}\label{z24Jan20-1}
		-\Delta f =\lambda_0  f \mbox{ \ in \ } \mathbb{R}_+^n ,
		\quad
		-\partial_{x_n} f =pU^{p-1} f \mbox{ \ on \ } \pp \mathbb{R}_+^n  .
	\end{equation}
The eigenvalue $\lambda_0$ is simple with an eigenfunction $Z_0(x) \in C^\infty(\overline{\mathbb{R}_+^n}) \cap H^1(\mathbb{R}_+^n)$ satisfying $\| Z_0 \|_{L^2(\mathbb{R}^n_+)} =1$,
$0<Z_0(x) \le C e^{-\nu|x|} $ in $\overline{\mathbb{R}_+^n}$ for all  $\nu \in \left[0, \sqrt{-\lambda_0} \right)$
with a constant $C$ depending on $n, \lambda_0, \nu$.
Moreover,
$ \lambda_0 = \inf\limits_{0\not\equiv f\in H^{1}(\mathbb{R}^n_+)}
\| f\|_{L^2 (\mathbb{R}^n_+) }^{-2}
B_U[f,f] $.

\end{prop}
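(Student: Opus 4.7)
The plan is to identify $\lambda_0$ as the infimum of the Rayleigh quotient
\[
Q_{\rm R}[f] := \frac{B_U[f,f]}{\|f\|_{L^2(\mathbb{R}_+^n)}^2}, \qquad f \in H^1(\mathbb{R}_+^n) \setminus \{0\},
\]
and to derive its stated properties by combining a direct variational argument with the Nehari-level information already supplied by Lemmas \ref{z24Jan20-9-lem}--\ref{z24Jan20-5-lem}. First I would verify $-\infty < \lambda_0 < 0$. The lower bound uses the trace-type inequality $\|f(\cdot,0)\|_{L^2(\mathbb{R}^{n-1})}^2 \lesssim \|f\|_{L^2(\mathbb{R}_+^n)}\|\nabla f\|_{L^2(\mathbb{R}_+^n)}$ (obtained by integrating $-\partial_{x_n}(f^2)$ over $\mathbb{R}_+^n$), boundedness of $U^{p-1}$, and Young's inequality. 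For strict negativity, $U$ itself satisfies $I[U] = 0$, hence $\int|\nabla U|^2\,dx = \int_{\mathbb{R}^{n-1}} U^{p+1}(\tilde{x},0)\,d\tilde{x} < \infty$; testing the quotient with $f_R := \eta_R U \in H^1$ (the cutoff handles the $L^2$ deficit of $U$ when $n=3,4$) gives $B_U[f_R,f_R] \to (1-p)\int U^{p+1}(\tilde{x},0)\,d\tilde{x} < 0$ as $R\to\infty$.

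For attainment, a minimizing sequence $(f_k)$ with $\|f_k\|_{L^2}=1$ is bounded in $H^1$ by the trace inequality above. Passing to a weak limit $Z_0$ in $H^1$, Lemma \ref{z24Jan15-6} applied with $V(\tilde{x}) = pU^{p-1}(\tilde{x},0) \to 0$ at infinity gives $\int U^{p-1} f_k^2(\tilde{x},0)\,d\tilde{x} \to \int U^{p-1} Z_0^2(\tilde{x},0)\,d\tilde{x}$. Then $Z_0 \not\equiv 0$: otherwise $\|\nabla f_k\|_{L^2}^2 = B_U[f_k,f_k] + p\int U^{p-1}f_k^2\,d\tilde{x} \to \lambda_0 < 0$, impossible. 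Weak lower semicontinuity of $\|\nabla\cdot\|_{L^2}^2$ combined with the strict negativity of $\lambda_0$ then forces $\|Z_0\|_{L^2}=1$ and $B_U[Z_0,Z_0]=\lambda_0$, because $\|Z_0\|_{L^2} < 1$ would give $B_U[Z_0,Z_0] \ge \lambda_0 \|Z_0\|_{L^2}^2 > \lambda_0$, contradicting semicontinuity. The Euler--Lagrange equation then shows $Z_0$ solves \eqref{z24Jan20-1} weakly with $\lambda = \lambda_0$. Replacing $Z_0$ by $|Z_0|$ preserves $Q_{\rm R}$, so we may assume $Z_0 \ge 0$; iterative Schauder estimates up to the boundary yield $Z_0 \in C^\infty(\overline{\mathbb{R}_+^n})$, and the strong maximum principle combined with the Hopf lemma applied to $-\partial_{x_n}Z_0 = pU^{p-1}Z_0 \ge 0$ forces $Z_0 > 0$ throughout $\overline{\mathbb{R}_+^n}$. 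The exponential decay for every $\nu \in [0,\sqrt{-\lambda_0})$ is then a direct consequence of Lemma \ref{z24Jan24-3-lem}.

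For simplicity of $\lambda_0$ and the absence of further negative eigenvalues I invoke Lemma \ref{z24Jan20-9-lem}. By Lemma \ref{z24Jan20-5-lem} together with the classification \eqref{minima-func} of extremals in the sharp trace Sobolev inequality, $U \in \mathbf{Ne}$ attains $\inf_{\mathbf{Ne}} J$ (the normalization $c_U=1$ follows from $I[U]=0$); hence $B_U[\varphi,\varphi] \ge 0$ for every $\varphi \in T_{\mathbf{Ne}}U$. Let $E_-$ denote the span of all eigenfunctions of \eqref{z24Jan20-1} with negative eigenvalues. Suppose $\dim E_- \ge 2$; pick two $L^2$-orthonormal eigenfunctions $\phi_1,\phi_2 \in E_-$ with eigenvalues $\lambda_1,\lambda_2 < 0$ (orthogonality for distinct eigenvalues follows by integration by parts, and within a single eigenspace one applies Gram--Schmidt). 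The single linear constraint $\langle I'[U], c_1\phi_1 + c_2\phi_2\rangle = 0$ on two parameters admits a nonzero solution $(c_1,c_2)$, producing $f := c_1\phi_1 + c_2\phi_2 \in T_{\mathbf{Ne}}U \cap H^1$ with $B_U[f,f] = c_1^2 \lambda_1 + c_2^2 \lambda_2 < 0$, contradicting Lemma \ref{z24Jan20-9-lem}. Hence $\dim E_- = 1$, so $\lambda_0$ is the unique negative eigenvalue and it is simple, with eigenspace spanned by $Z_0$.

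The step I expect to require the most care is the attainment, i.e.\ concentration-compactness on the noncompact half-space $\mathbb{R}_+^n$. The mechanism closing the argument is however quite clean: Lemma \ref{z24Jan15-6} converts the weak convergence of the nonlocal boundary term into strong convergence because $U^{p-1}(\tilde{x},0)$ vanishes at infinity, while the strict negativity $\lambda_0 < 0$ simultaneously rules out vanishing of the weak limit and dilution of the $L^2$ mass.
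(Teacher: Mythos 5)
Your proposal is correct and follows essentially the same route as the paper: the Rayleigh-quotient variational formulation of $\lambda_0$, the trace inequality for the lower bound on the infimum, the cutoff test function $\eta_R U$ for strict negativity, attainment via a bounded minimizing sequence together with Lemma \ref{z24Jan15-6} (compactness of the boundary term since $U^{p-1}(\tilde x,0)\to 0$), positivity and smoothness by replacing the minimizer with its absolute value plus the strong maximum/Hopf principles, exponential decay via Lemma \ref{z24Jan24-3-lem}, and finally uniqueness/simplicity by producing from two independent negative-modes a nonzero element of $T_{\mathbf{Ne}}U$ with strictly negative second variation, contradicting Lemma \ref{z24Jan20-9-lem}. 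Your phrasing of the last step — directly solving the single scalar constraint $\langle I'[U],c_1\phi_1+c_2\phi_2\rangle=0$ — is a slightly cleaner way of carrying out the same orthogonal projection that the paper organizes through a codimension-one decomposition of $D^{1,2}(\mathbb{R}_+^n)$, but the mechanism is identical.
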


\begin{proof}

\textbf{Step 1.}
For any $f \in H^1(\mathbb{R}_+^n)$ satisfying \eqref{z24Jan20-1} in the weak sense, by parabolic regularity theorem, $f\in C^\infty(\overline{\mathbb{R}_+^n})$.
By Lemma \ref{z24Jan24-3-lem},
for all  $\nu \in \left[ 0, \sqrt{-\lambda_0} \right)$,
we have $ |f(x)|\le C e^{-\nu|x|} $ in $\overline{\mathbb{R}_+^n}$ with a constant $C$ depending on $n, \lambda_0, \nu, \| f \|_{L^2(\mathbb{R}^n_+)}$.
	
	\textbf{Step 2.} Denote $ \lambda_* = \inf\limits_{0\not\equiv f\in H^{1}(\mathbb{R}^n_+)}
	\| f\|_{L^2 (\mathbb{R}^n_+) }^{-2}
	B_U[f,f] $, where $B_U[f,f]$ is well-defined since $U(x)\in D^{1,2}(\mathbb{R}_+^n)$ for $n>2$.
	Notice that
	$
	B_U[U,U]
	=
	\left(1- p\right) \int_{ \mathbb{R}^{n-1}} U^{p+1} (\tilde{x},0)  d \tilde{x} < 0
	$. Since $U \not\in L^2(\mathbb{R}_+^n)$ when $n\le 4$, instead, applying $\| U \eta(x/R) \|_{L^2(\mathbb{R}_+^n)}^{-2} B_U[U \eta(x/R),U \eta(x/R) ] $ with $R$ sufficiently large implies
	$\lambda_* <0$ .
For any $f\in H^1(\mathbb{R}_+^n)$,
$ \int_{\mathbb{R}^{n-1}} f^2(\tilde{x},0) d\tilde{x}
	\le
	2
	\| f \|_{L^2(\mathbb{R}_+^n)}
	\| \pp_{x_n} f \|_{L^2(\mathbb{R}_+^n)} $,
	then for any $\epsilon>0$,
	\begin{equation*}
		p
		\int_{\mathbb{R}^{n-1}} \left( U^{p-1} f^2\right)(\tilde{x},0) d \tilde{x}
		\le
		\epsilon
		\| \nabla f \|_{L^2(\mathbb{R}_+^n)}^2
		+
		C(\epsilon) \| f \|_{L^2(\mathbb{R}_+^n)}^2 ,
	\end{equation*}
	 which implies  $\lambda_*>-\infty$.

\textbf{Step 3.}
Take a sequence $(f_k)_{k\ge 1} \subset H^1(\mathbb{R}^n_+)$ such that $\| f_k \|_{L^2 (\mathbb{R}^n_+) }=1$ and $B_U[f_k,f_k] =\lambda_* + o(1)$ with $o(1)\to 0$ as $k\to \infty$. It implies
	\begin{equation*}
		\epsilon
		\| \nabla f_k \|_{L^2(\mathbb{R}_+^n)}^2
		+
		C(\epsilon) \| f_k \|_{L^2(\mathbb{R}_+^n)}^2\geq p\int_{  \mathbb{R}^{n-1}} \left(U^{p-1} f_k^2\right)(\tilde{x},0) d \tilde{x} =
		\int_{\mathbb{R}_+^n} \left|\nabla f_k \right|^2 dx -\lambda_* +o(1) \geq -\frac{\lambda_*}{2}>0
	\end{equation*}
	when $k$ is sufficiently large.
	Thus
	\begin{equation*}
		\left( 1-\epsilon \right)\| \nabla f_k \|_{L^2(\mathbb{R}_+^n)}^2 \le C(\epsilon) + \lambda_* + o(1) ,
		\quad
		p\int_{\mathbb{R}^{n-1}}
		\left(U^{p-1} f_k^2\right)(\tilde{x},0) d \tilde{x}
		\ge
		-\frac{\lambda_*}{2} .
	\end{equation*}
	It follows that $\sup\limits_{k \ge 1} \|f_k \|_{H^1(\mathbb{R}_+^n)} <\infty$. By Lemma \ref{z24Jan15-8}, \ref{z24Jan15-6}, up to a subsequence,
	\begin{equation*}
		\begin{aligned}
			&
			f_k  \rightharpoonup f_*
			\mbox{ \ in \ } H^1(\mathbb{R}_+^n),
			\quad
			\nabla f_k  \rightharpoonup \nabla  f_*
			\mbox{ \ in \ } L^2(\mathbb{R}_+^n),
			\quad
			f_k  \rightharpoonup f_*
			\mbox{ \ in \ } L^2(\mathbb{R}_+^n),
			\\
			&
			p\int_{\mathbb{R}^{n-1}} \left(U^{p-1} f_k^2\right)(\tilde{x},0) d \tilde{x}
			\to
			p\int_{\mathbb{R}^{n-1}} \left( U^{p-1} f_*^2 \right)(\tilde{x},0) d \tilde{x} \geq -\frac{\lambda_*}{2} ,
		\end{aligned}
	\end{equation*}
	which implies
	\begin{equation*}
		B_U[f_*,f_*] \le \lambda_*<0 ,
		\quad
		0<\| f_* \|_{L^2 (\mathbb{R}^n_+) } \le 1
	\end{equation*}
	and then
	\begin{equation*}
		\| f_* \|_{L^2 (\mathbb{R}^n_+) }^{-2} B_U[f_*,f_*]\le B_U[f_*,f_*] \le \lambda_* .
	\end{equation*}
	By the definition of $\lambda_*$, we know that $\| f_* \|_{L^2 (\mathbb{R}^n_+) }^{-2} B_{U}[f_*,f_*] \ge \lambda_*$. Thus $\| f_* \|_{L^2 (\mathbb{R}^n_+) }=1$ and $B_U[f_*,f_*]=\lambda_*$. Since $|\nabla |f|| \le |\nabla f|$ a.e., by the definition of $\lambda_*$, we also have $B_U[|f_*|,|f_*|]= \lambda_*$.
	
Denote $S_{<}:= \big\{ f\in H^1(\mathbb{R}_+^n) \ | \ $ $f$ satisfies \eqref{z24Jan20-1} with some $\lambda_0 <0$ in the weak sense $\big\}$. By the Lagrange multiplier method, $f_*, |f_*| \in S_{<}$ with the eigenvalue $\lambda_*$. Step 1 shows that the elements in $S_{<}$ are smooth with exponential decay. By strong maximum principle and Hopf theorem, $|f_*|>0$ in $\overline{\mathbb{R}_+^n}$.

\textbf{Step 4.}
Claim: the dimension of $S_{<}$ is $1$.

Assume the opposite that there exists two linearly independent functions $f_1, f_2 \in S_{<}$ with negative eigenvalues $\lambda_1, \lambda_2$ respectively.
Note that $\lambda_1 (f_1,f_2)_{L^2(\mathbb{R}_+^n)} = B_U[f_1,f_2] =  \lambda_2 (f_1,f_2)_{L^2(\mathbb{R}_+^n)}$. We can assume $(f_1,f_2)_{L^2(\mathbb{R}_+^n)} =0$ since if $\lambda_1\ne \lambda_2$,
it holds automatically, and if $\lambda_1 = \lambda_2$,
we replace $f_1$ by $f_1 - \|f_2\|_{L^2(\mathbb{R}_+^n) }^{-2} (f_1,f_2)_{L^2(\mathbb{R}_+^n)} f_2  \not\equiv 0$ by the linear independence of $f_1, f_2$. Then
$ B_U[f_1,f_2] = 0$. For $i=1,2$, $f_i \ne 0$ deduces $B_U[f_i,f_i] = \lambda_i \|f_i\|_{L^2(\mathbb{R}_+^n)}^2 <0$.

Since $J[\cdot]\in C^2\left( D^{1,2}(\mathbb{R}_+^n) ,\mathbb{R} \right)$, $\langle J''[g] g, g \rangle = (1-p)\int_{\mathbb{R}_+^n} |\nabla g|^2 dx \ne 0$ for $g\in \mathbf{Ne}$, and \eqref{z24Jan28-1} holds,
by \cite[Proposition 5.75]{2014Topological-Book}, $\mathbf{Ne}$ is a complete $C^1$-Banach submanifold of $D^{1,2}(\mathbb{R}_+^n)$ of
codimension $1$.  Since $U(x) \in \mathbf{Ne}$ for $n>2$, in particular,
$T_{\mathbf{Ne}} U$ is codimension $1$ in $D^{1,2}(\mathbb{R}_+^n)$.

Thus, there exists $w_0 \in D^{1,2}(\mathbb{R}_+^n) \backslash T_{\mathbf{Ne}} U$, such that $f_1= a_1 w_0 + a_2 w_1$, $f_2 = b_1 w_0 + b_2 w_2$ for some constants $a_1, b_1, a_2, b_2\in \mathbb{R}$ and $w_1,w_2 \in T_{\mathbf{Ne}}U$.
Taking
$c_1=c_2=1, \tilde{v}=0$ in \eqref{minima-func} and then
using Lemma \ref{z24Jan20-5-lem}, \eqref{qd24Jan20-7}, we get that $U(x)$ given in \eqref{qd24Jan14-U}
attains $\inf\limits_{f \in \mathbf{Ne} } J[f]$.
By Lemma \ref{z24Jan20-9-lem} and $B_U[f_i,f_i]<0$, $i=1,2$, we have $a_1\ne 0, b_1\ne 0$.
So we can set $f_3:= f_1-  b_1^{-1} a_1 f_2 \in T_{\mathbf{Ne}}U$. One using $ B_U[f_1,f_2] = 0$, then $B_U[f_3,f_3] = B_U[f_1,f_1] + (b_1^{-1} a_1)^2 B_U[f_2,f_2] <0$, which contradicts with
Lemma \ref{z24Jan20-9-lem}.

Taking $\lambda_0 = \lambda_*$ and $Z_0 = |f_*|$, we complete the proof.
\end{proof}


\end{document}